\patchcmd{\@bibitem}{\ignorespaces}{\label{bib-#1}\ignorespaces}{}{}
\numberwithin{equation}{subsection}
\newtheorem{thm}{Theorem}[subsection]
\newtheorem{cor}[thm]{Corollary}
\newtheorem{lem}[thm]{Lemma} 
\newtheorem{prop}[thm]{Proposition}
 \theoremstyle{definition}
 \theoremstyle{definition}
\newtheorem{defn}[thm]{Definition} \theoremstyle{remark}
\newtheorem{rem}[thm]{\bf Remark}
\newtheorem{para}[thm]{\bf} 
\newtheorem{exa}[thm]{\bf Example}
\DeclareMathOperator{\Id}{Id}
\DeclareMathOperator{\Hom}{Hom}
\DeclareMathOperator{\End}{End}
\DeclareMathOperator{\Ext}{Ext}
\DeclareMathOperator{\im}{im}
\DeclareMathOperator{\coker}{coker}
\DeclareMathOperator{\Spa}{Spa}
\DeclareMathOperator{\Spf}{Spf}
\DeclareMathOperator{\Fil}{Fil}
\DeclareMathOperator{\gr}{gr}
\DeclareMathOperator{\Gal}{Gal}
\DeclareMathOperator{\Lie}{Lie}
\DeclareMathOperator{\Frob}{Frob}
\DeclareMathOperator{\Sym}{Sym}
\def\cont{\mathrm{cont}}
\def\cris{\mathrm{cris}}
\def\rig{\mathrm{rig}}
\def\dR{\mathrm{dR}}
\def\Sen{\mathrm{Sen}}
\def\inf{\mathrm{inf}}
\def\HT{\mathrm{HT}}
\def\GM{\mathrm{GM}}
\def\LT{\mathrm{LT}}
\def\Dr{\mathrm{Dr}}
\def\Ind{\mathrm{Ind}}
\def\sm{\mathrm{sm}}
\def\lalg{\mathrm{lalg}}
\def\ord{\mathrm{ord}}
\def\Ig{\mathrm{Ig}}
\def\an{\mathrm{an}}
\def\la{\mathrm{la}}
\def\proet{\mathrm{pro\acute{e}t}}
\def\proket{\mathrm{prok\acute{e}t}}
\def\et{\mathrm{\acute{e}t}}
\newcommand{\Z}{\mathbb{Z}}
\newcommand{\F}{\mathbb{F}}
\newcommand{\bH}{\mathbb{H}}
\newcommand{\Q}{\mathbb{Q}}
\newcommand{\R}{\mathbb{R}}
\newcommand{\T}{\mathbb{T}}
\newcommand{\A}{\mathbb{A}}
\newcommand{\bC}{\mathbb{C}}
\newcommand{\B}{\mathbb{B}}
\newcommand{\tr}{\mathrm{tr}}
\newcommand{\GL}{\mathrm{GL}}
\newcommand{\SL}{\mathrm{SL}}
\newcommand{\Fl}{{\mathscr{F}\!\ell}}
\newcommand{\cF}{\mathcal{F}}
\newcommand{\cO}{\mathcal{O}}
\newcommand{\kb}{\mathfrak{b}}
\newcommand{\kh}{\mathfrak{h}}
\newcommand{\km}{\mathfrak{m}}
\newcommand{\kn}{\mathfrak{n}}
\newcommand{\sC}{\mathscr{C}}
\newcommand{\RNum}[1]{\uppercase\expandafter{\romannumeral #1\relax}}
\newcommand{\overbar}[1]{\mkern 1.5mu\overline{\mkern-1.5mu#1\mkern-1.5mu}\mkern 1.5mu}
\begin{document}

\title[On locally analytic vectors of completed cohomology II]{On locally analytic vectors of the completed cohomology of modular curves II}
\author{Lue Pan}
\address{Department of Mathematics, Princeton University, Fine Hall, Washington Road
Princeton NJ 08544}
\email{lpan@princeton.edu}
\begin{abstract} 
This is a continuation of our previous work on the locally analytic vectors of the completed cohomology of modular curves. We construct differential operators on modular curves with infinite level at $p$ in both ``holomorphic'' and ``anti-holomorphic'' directions. As applications, we reprove a classicality result of Emerton which says that every absolutely irreducible two dimensional Galois representation which is regular de Rham at $p$ and appears in the completed cohomology of modular curves comes from an eigenform. Moreover we give a geometric description of the locally analytic representations of $\GL_2(\Q_p)$ attached to such a Galois representation in the completed cohomology.
\end{abstract}

\maketitle

\tableofcontents

\section{Introduction}
\subsection{Main results}
\begin{para}
This is a continuation of our previous work \cite{Pan20} on the locally analytic vectors of the completed cohomology of modular curves. Roughly speaking, in \cite{Pan20} we study the \textit{Hodge-Tate} structure of the completed cohomology. The main goal of this work is to study the \textit{de Rham} structure. 

We introduce some notations first. See \cite[\S 1]{Pan20} for more details. Fix a prime number $p$ throughout this paper.  Let $K^p=\prod_{l\neq p}K_l$ be an open compact subgroup of $\GL_2(\A_f^p)$. Set
\[\tilde{H}^i(K^p,\Z/p^n):=\varinjlim_{K_p\subseteq\GL_2(\Q_p)} H^i(Y_{K^pK_p}(\bC),\Z/p^n),\]
where $K_p$ runs through all open compact subgroups of $\GL_2(\Q_p)$, and $Y_{K^pK_p}$ denotes the modular curve of level $K^pK_p$ over $\Q$ whose $\bC$-points are given by the usual quotient $Y_{K^pK_p}(\bC)=\GL_2(\Q)\setminus (\bH^{\pm}\times\GL_2(\A_f)/K)$, and $\bH^{\pm}=\bC-\R$ denotes the union of the upper and lower half planes. The completed cohomology of tame level $K^p$, introduced by Emerton, is defined as
\[\tilde{H}^i(K^p,\Z_p):=\varprojlim_n \tilde{H}^i(K^p,\Z/p^n).\]
It is p-adically complete and equipped with a natural continuous action of $\GL_2(\Q_p)\times G_{\Q}$, where $G_{\Q}=\Gal(\overline\Q/\Q)$ denotes the absolute Galois group of $\Q$.  We are interested in the finite dimensional Galois representations of $G_{\Q}$ inside of $\tilde{H}^1(K^p,\Z_p)$. 
\end{para}

\begin{thm}[Theorem \ref{MTCH}] \label{MTI}
Let $E$ be a finite extension of $\Q_p$ and 
\[\rho:G_{\Q}\to\GL_2(E)\]
be a two-dimensional continuous absolutely irreducible representation of $G_{\Q}$. By abuse of notation, we  also use $\rho$ to denote its underlying representation space. Suppose that
\begin{enumerate}
\item $\rho$ appears in $\tilde{H}^1(K^p,E):=\tilde{H}^1(K^p,\Z_p)\otimes_{\Z_p} E$, i.e. $\Hom_{E[G_{\Q}]}(\rho,\tilde{H}^1(K^p,E))\neq 0$.
\item $\rho|_{G_{\Q_p}}$ is de Rham of Hodge-Tate weights $0,k$ for some integer $k>0$, where $G_{\Q_p}\subseteq G_\Q$ denotes a decomposition group at $p$.
\end{enumerate}
Then  $\rho$ arises from a cuspidal eigenform of weight $k+1$.
\end{thm}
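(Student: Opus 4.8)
The plan is to reduce the classicality statement to the assertion that the $\GL_2(\Q_p)$-representation cut out by $\rho$ in the completed cohomology has nonzero locally algebraic vectors of the expected weight, and then to force the existence of such vectors from hypothesis (2) by means of the differential operators constructed in this paper.

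Since $\rho$ is absolutely irreducible, $M:=\Hom_{E[G_\Q]}(\rho,\tilde H^1(K^p,E))$ is a nonzero admissible Banach representation of $\GL_2(\Q_p)$, and $\tilde H^1(K^p,E)$ contains $\rho\otimes_E M$ as a $G_\Q\times\GL_2(\Q_p)$-stable subspace. By Emerton's comparison between the locally algebraic vectors of completed cohomology and the cohomology of automorphic local systems on modular curves, and because an Eisenstein contribution would make $\rho$ reducible, it suffices to produce a nonzero vector in $M^{\lalg}$ for the algebraic representation $\Sym^{k-1}E^2$ (suitably twisted by a power of the determinant); equivalently, to show that $\rho$ occurs in $H^1$ of the weight-$(k+1)$ automorphic local system at some finite level $K_p$. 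One therefore passes to the dense $U(\mathfrak{gl}_2)$-stable subspace $M^{\la}$ of locally analytic vectors.

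The crucial step uses hypothesis (2). Its Hodge-Tate part already pins down, via the Sen-theoretic description of $\tilde H^1(K^p,E)^{\la}$ obtained in \cite{Pan20}, the eigenvalues of the Sen operator --- equivalently the action of a Cartan element of $\mathfrak{gl}_2$ --- on the relevant vectors of $M^{\la}$, forcing them to carry the integral weight dictated by $k$. The extra de Rham information is extracted geometrically on the infinite-level (perfectoid) modular curve equipped with its Hodge-Tate period map and de Rham period structure: the two-dimensionality of $D_{\dR}(\rho|_{G_{\Q_p}})$ yields a nonzero horizontal section which, phrased in terms of the operators of this paper, is annihilated by the ``anti-holomorphic'' differential operator, so the de Rham classes are holomorphic in the direction dual to the Hodge-Tate weight $0$ and descend to sections of a coherent sheaf at finite level. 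Applying the ``holomorphic'', weight-raising differential operator then realizes this section inside classical coherent cohomology in weight $k+1$, and compatibility with the Hodge filtration (the jump at $k$) guarantees that this does not annihilate the class. This produces the sought-after nonzero vector of $M^{\lalg}$, whence $\rho$ arises from a cuspidal eigenform of weight $k+1$.

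The main obstacle is the heart of this step: making rigorous the passage from ``$\rho|_{G_{\Q_p}}$ de Rham'' to ``the associated classes in $\tilde H^1(K^p,E)^{\la}$ are killed by the anti-holomorphic operator and lie in the image of the holomorphic one'', and then controlling that image so that one genuinely lands in \emph{classical} coherent cohomology of the rigid modular curve rather than in merely overconvergent or locally analytic sections. This requires the precise compatibility of the differential operators with the $\GL_2(\Q_p)$- and $G_\Q$-actions, careful bookkeeping of Hodge filtrations and Sen weights, and the relevant finiteness results --- the geometric input that here replaces the eigencurve and Kisin-type classicality arguments used in Emerton's original proof.
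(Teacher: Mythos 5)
Your overall architecture --- exploit the de Rham hypothesis through the differential operators on the infinite-level modular curve and conclude classicality --- is the right one, but the central deduction in your ``crucial step'' is not correct. What the de Rham condition buys, via Fontaine's theory, is the vanishing of the Fontaine operator $N$ on the weight-$0$ Hodge--Tate piece of $\tilde{H}^1(K^p,C\otimes_{\Q_p}E)[\lambda]^{\la}$, and the content of Theorem \ref{MT} is that $N$ coincides, up to a nonzero scalar, with the \emph{composite} $I_{k-1}=\bar{d}'^{k}\circ d^{k}$ of the holomorphic and anti-holomorphic operators. It does not follow that the de Rham classes are annihilated by the anti-holomorphic operator $\bar{d}^{k}$ alone: by Proposition \ref{dbarsurj} the kernel of $\bar{d}^{k}$ is exactly the locally algebraic subsheaf, so your claim would amount to asserting from the outset that the classes are locally algebraic and descend to finite level --- essentially the conclusion you are trying to prove --- and it is false at the level of individual classes: when $\pi_p$ is supercuspidal the de Rham classes in $\ker I^1_{k-1}[\lambda]$ live in the graded piece coming from the Drinfeld tower (the piece $\gr_2$ of Theorem \ref{dec1}), not in $\ker H^1(\bar{d}^{k})$.

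Consequently the real work is absent from your sketch: one must control the much larger space $\ker I^1_{k-1}=\ker H^1(\bar{d}'^{k}\circ d^{k})$ and show that its Hecke spectrum is nonetheless classical. In the paper this is the spectral decomposition (Theorem \ref{sd}), proved by stratifying $\Fl=\Omega\sqcup\mathbb{P}^1(\Q_p)$: on $\Omega$ one uses the uniformization of the supersingular locus and the Lubin--Tate/Drinfeld duality at infinite level (under which $d$ and $\bar{d}$ are swapped) to identify $\ker d^{k}$ with quaternionic automorphic forms, and on $\mathbb{P}^1(\Q_p)$ one identifies $\coker d^{k}$ with the rigid cohomology of Igusa curves; the computation of $\kn$-invariants (Theorem \ref{kernk}) then pins the resulting spectrum to weight-$(k+1)$ eigenforms. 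Without this input, ``annihilated by the composite'' yields nothing classical. Separately, your reduction to exhibiting a nonzero vector of $M^{\lalg}$ is stronger than necessary and is not how the paper concludes: one only needs the eigensystem $\lambda$ to occur in $M_{k+1}(K^p)$, which falls out of the decomposition of $\ker I^1_{k-1}[\lambda]$ without ever producing locally algebraic vectors in $M$ itself (and producing them directly would, in the supercuspidal case, essentially re-prove Colmez's theorem).
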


\begin{rem}
This result was first obtained by Matthew Emerton in his famous work \cite{Eme1} (at least in the generic  cases) and was used by him in the same paper to attack the Fontaine-Mazur conjecture. To do this, Emerton  proved a local-global compatibility result of the completed cohomology which implies that  $\Hom_{E[G_{\Q}]}(\rho,\tilde{H}^1(K^p,E))$  corresponds to $\rho|_{G_{\Q_p}}$ under the $p$-adic local Langlands correspondence for $\GL_2(\Q_p)$ (up to some multiplicities). Then the claim follows from Colmez's result on the existence of locally algebraic vectors and the relationship between locally algebraic vectors in the completed cohomology and the cohomology of certain local systems on modular curves. 

We remark that our method is geometric and  uses the intertwining operator constructed in this paper as a key input. In particular, we do not need the $p$-adic local Langlands correspondence for $\GL_2(\Q_p)$ in our argument. It is conceivable that our method can be generalized in more general contexts and hopefully can shed some light on the conjectural $p$-adic local Langlands correspondence for groups beyond $\GL_2(\Q_p)$.
\end{rem}

\begin{rem}
$\rho|_{G_{\Q_l}}$ is unramified for all but finitely many $l$ and $\rho|_{G_\mathbb{R}}$ is odd since $\rho$ appears in $\tilde{H}^1(K^p,E)$. Hence this Theorem is a special case of the Fontaine-Mazur conjecture. In fact, in Emerton's approach to the Fontaine-Mazur conjecture \cite{Eme1}, he first showed that any two-dimensional $p$-adic geometric absolutely irreducible odd Galois representation of $G_{\Q}$ appears in $\tilde{H}^1(K^p,E)$ (under some generic assumptions), then he invoked this result to deduce that it actually comes from a  cuspidal eigenform.
\end{rem}

\begin{para}
Let $\rho$ be as in Theorem \ref{MTI}. 
\[\Pi_\rho:=\Hom_{E[G_{\Q}]}(\rho,\tilde{H}^1(K^p,E))\]
 is an $E$-Banach space representation of $\GL_2(\Q_p)$. Our method also gives a geometric description of the $\GL_2(\Q_p)$-locally analytic vectors of $\Pi_\rho$. For the purpose of introduction, we will restrict ourselves to the case when
 \begin{itemize}
 \item $k=1$, i.e. $\rho$ corresponds an eigenform of weight $2$. 
  \end{itemize}
See Subsection \ref{rbpLLC} for results when $k\geq 2$.

Fix an embedding of $\tau:E\to C$, the $p$-adic completion of $\overline\Q_p$. By Theorem \ref{MTI}, $\rho$ corresponds to an irreducible representation $\pi^\infty=\otimes'_l \pi_l$ of $\GL_2(\A_f)$ over $C$, which is isomorphic to the finite part of an automorphic representation of $\GL_2(\A)$. See Section \ref{noraf} below for the precise definition of $\pi^\infty$. 
\end{para}

\begin{para}
We first explain the case when $\pi_p$ is supercuspidal. By the Jacquet-Langlands correspondence, $\pi^\infty$ transfers to an irreducible representation $\pi'^\infty=\otimes'_l \pi'_l$ of $(D\otimes_\Q \A_f)^\times$ where $D$ denotes the quaternion algebra over $\Q$ only ramified at $p,\infty$. In particular, $\pi'_p$ is an irreducible representation of $D_p^\times$.

Not surprisingly, in this case, $\Pi_\rho$ is closely related to the coverings of the Drinfeld upper half plane. Let $\Omega:=\mathbb{P}^1\setminus \mathbb{P}^1(\Q_p)$ be the Drinfeld upper half plane, viewed as an adic space over $\Spa(C,\cO_C)$. There is a natural action of $\GL_2(\Q_p)$ on $\Omega$. As explained by Drinfeld,  $\Omega$ has a sequence of finite \'etale Galois coverings $\{\mathcal{M}_{\Dr,n}^{(0)}\}_{n\geq 0}$ with Galois group $(\cO_{D_p}/p^n\cO_{D_p})^\times$. Let $\GL_2(\Q_p)^o$ denote the subgroup of $\GL_2(\Q_p)$ with determinants in $\Z_p^\times$. There are natural actions of $\GL_2(\Q_p)^o$ on $\mathcal{M}_{\Dr,n}^{(0)}$ so that the projection maps $\pi_{\Dr,n}^{(0)}:\mathcal{M}_{\Dr,n}^{(0)}\to \Omega$ are $\GL_2(\Q_p)^o$-equivariant. We denote by $j:\Omega\to\mathbb{P}^1$ the natural inclusion.
\end{para}

\begin{thm}[Theorem \ref{scch}] \label{scch1}
Suppose $k=1$ and $\pi_p$ is supercuspidal. 
There are a natural embedding 
\[i_\pi:\pi_p\to \left(\varinjlim_n  H^1\left(\mathbb{P}^1,j_!\pi^{(0)}_{\Dr,n*} \cO_{\mathcal{M}_{\Dr,n}^{(0)}}\right)\otimes_C \pi'_p\right)^{\cO_{D_p}^\times}\]
coming from the position of the Hodge filtration of $\rho|_{G_{\Q_p}}$ and a $\GL_2(\Q_p)^o$-equivariant isomorphism
\[\Pi_\rho^{\la}\widehat\otimes_{E,\tau} C\cong(\pi^{\infty,p})^{K^p}\otimes_C \left[\left(\varinjlim_n  H^1\left(\mathbb{P}^1,j_!\pi^{(0)}_{\Dr,n*} \cO_{\mathcal{M}_{\Dr,n}^{(0)}}\right)\otimes_C \pi'_p\right)^{\cO_{D_p}^\times}/i_\pi(\pi_p)\right]\]
where $\Pi_\rho^{\la}\subseteq\Pi_\rho$ is the subspace of $\GL_2(\Q_p)$-locally analytic vectors, and $j_!$ denotes the usual extension by zero, and the cohomology is computed on the analytic site of $\mathbb{P}^1$. This isomorphism can be upgraded to a $\GL_2(\Q_p)$-equivariant isomorphism, cf. Remark \ref{GL2oGL2}.
\end{thm}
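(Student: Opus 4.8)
The plan is to deduce the statement from the general geometric description of the locally analytic vectors of $\tilde H^1(K^p,E)$ developed in this paper (and its companion \cite{Pan20}), together with the $p$-adic uniformization of the supersingular locus of the infinite-level modular curve by the Drinfeld tower. By Theorem \ref{MTI} the representation $\rho$ arises from a cuspidal eigenform, so $\pi^\infty$ is defined; combining strong multiplicity one for $\GL_2$ and newform theory with the structure of completed cohomology, one obtains a factorization $\Pi_\rho\,\widehat\otimes_{E,\tau}C\cong(\pi^{\infty,p})^{K^p}\otimes_C B$, equivariant for $\GL_2(\Q_p)$ and the prime-to-$p$ Hecke algebra, where $B$ is a single $\GL_2(\Q_p)$-Banach representation depending only on $\pi_p$ and $\rho|_{G_{\Q_p}}$; the task is to identify $B^{\la}$. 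For this I invoke the main structural result: $\tilde H^1(K^p,E)^{\la}\,\widehat\otimes C$ is computed, via the Hodge--Tate period map $\pi_{\HT}\colon\mathcal{Y}\to\PP^1$ of the infinite-level modular curve $\mathcal{Y}$ and the holomorphic/anti-holomorphic differential operators constructed here, by a complex on $\PP^1$ assembled from $(\pi_{\HT})_*$ of the structure sheaf; the classical (holomorphic, weight $k+1=2$) modular forms appear as the $H^0$-contribution and give the locally algebraic vectors, while the remainder is an $H^1$ on $\PP^1$ of a coherent object produced from $(\pi_{\HT})_*\cO_{\mathcal{Y}}$. Since $\pi_p$ is cuspidal, the cusps play no role in the relevant localization.

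Next I localize this $H^1$ at the supercuspidal data. Because $\pi_p$ is supercuspidal it has no subquotient parabolically induced from the Borel; translated through $\pi_{\HT}$, this forces the relevant sheaf to vanish in a neighbourhood of $\PP^1(\Q_p)=\pi_{\HT}(\text{ordinary locus})$, so only the supersingular locus contributes and the cohomology is taken with supports along $\Omega=\PP^1\setminus\PP^1(\Q_p)$ --- which is exactly the appearance of $j_!$ with $j\colon\Omega\hookrightarrow\PP^1$. Over $\Omega$ the fibres of $\pi_{\HT}$ are governed by the infinite-level Lubin--Tate space, while the finite set of supersingular points of tame level $K^p$ is the zero-dimensional Shimura set for the quaternion algebra $D$ ramified exactly at $p,\infty$; this is where the prime-to-$p$ Hecke action and a $D_p^\times$-action enter, and via Jacquet--Langlands the eigensystem of $\pi^\infty$ selects $\pi'^\infty$. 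Using the isomorphism between the Lubin--Tate and Drinfeld towers at infinite level (Faltings, Fargues), which is $\GL_2(\Q_p)\times D_p^\times$-equivariant up to the determinant/Weil twist and exchanges the two period maps, the $\pi^\infty$-isotypic part of $(\pi_{\HT})_*\cO_{\mathcal{Y}}|_\Omega$ is identified $\GL_2(\Q_p)^o$-equivariantly with $\varprojlim_n\pi^{(0)}_{\Dr,n*}\cO_{\mathcal{M}_{\Dr,n}^{(0)}}$, on which $\cO_{D_p}^\times$ acts through its finite quotients; extracting the $\pi'_p$-isotypic part via $(-\otimes_C\pi'_p)^{\cO_{D_p}^\times}$ matches the level at $p$, and passing to the colimit over $n$ turns the $H^1$ into $\big(\varinjlim_n H^1(\PP^1,j_!\pi^{(0)}_{\Dr,n*}\cO_{\mathcal{M}_{\Dr,n}^{(0)}})\otimes_C\pi'_p\big)^{\cO_{D_p}^\times}$.

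It remains to compare this with $B^{\la}\,\widehat\otimes C$ and to account for the one-copy discrepancy. The map $i_\pi$ is built from the de Rham data of $\rho|_{G_{\Q_p}}$: over $\Omega$ a point is literally a line in a rank-two bundle, so the Hodge filtration $\Fil^1 D_{\dR}(\rho|_{G_{\Q_p}})$ determines a $\GL_2(\Q_p)^o$-equivariant line in the coherent object above, whose pushforward yields the $\GL_2(\Q_p)$-equivariant embedding $i_\pi\colon\pi_p\hookrightarrow\big(\varinjlim_n H^1(\PP^1,j_!\pi^{(0)}_{\Dr,n*}\cO_{\mathcal{M}_{\Dr,n}^{(0)}})\otimes_C\pi'_p\big)^{\cO_{D_p}^\times}$; its injectivity follows from $\Hom_{E[G_\Q]}(\rho,\tilde H^1(K^p,E))\neq 0$ together with the irreducibility of $\pi_p$. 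Comparing the two computations of the locally analytic completed cohomology --- the complex on $\PP^1$, in which $\pi_p$ occurs as the classical weight-$2$ contribution, and the Drinfeld side --- shows that the compactly supported $H^1$ over $\Omega$ carries exactly one extra copy of $\pi_p$, namely $i_\pi(\pi_p)$, coming from the sections that do not lift to completed cohomology; equivalently, the natural map from the Drinfeld $H^1$ to $B^{\la}\,\widehat\otimes C$ is surjective with kernel $i_\pi(\pi_p)$. Hence $B^{\la}\,\widehat\otimes C\cong\big(\varinjlim_n H^1(\PP^1,j_!\pi^{(0)}_{\Dr,n*}\cO_{\mathcal{M}_{\Dr,n}^{(0)}})\otimes_C\pi'_p\big)^{\cO_{D_p}^\times}/i_\pi(\pi_p)$, and tensoring with $(\pi^{\infty,p})^{K^p}$ gives the asserted isomorphism; the upgrade to a $\GL_2(\Q_p)$-equivariant statement (Remark \ref{GL2oGL2}) is obtained by restoring the uniformizer action that links the components $\mathcal{M}_{\Dr,n}^{(0)}$ to the full Drinfeld tower.

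The main obstacle, I expect, lies in the second and third steps together: establishing the precise $\GL_2(\Q_p)$-equivariant comparison over $\Omega$ between the structure-sheaf pushforward from the infinite-level modular curve and the Drinfeld-tower pushforward --- including the bookkeeping of the Jacquet--Langlands transfer, the levels at $p$, and the central and Weil twists on both sides, and the handling of the $j_!$/compact-support subtlety along $\PP^1(\Q_p)$ --- and then showing that the discrepancy between the two descriptions of $\tilde H^1(K^p,E)^{\la}\,\widehat\otimes C$ is exactly $i_\pi(\pi_p)$ and nothing larger. It is here that the de Rham hypothesis on $\rho|_{G_{\Q_p}}$ is indispensable: the Hodge filtration simultaneously trivialises the de Rham ambiguity in the identification and makes $i_\pi$ well-defined, and one also needs the vanishing of the appropriate $H^0$ and $H^2$ of the relevant $j_!$-sheaf on $\PP^1$ so that the resulting exact sequence has precisely the three terms claimed.
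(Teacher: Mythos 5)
Your proposal traces the same geometric route as the paper (stratify $\Fl=\Omega\sqcup\PP^1(\Q_p)$, uniformize the supersingular locus, pass through Lubin--Tate/Drinfeld duality and Jacquet--Langlands, and quotient by the Hodge-filtration line), but it black-boxes the two steps that actually carry the proof, and in one of them it misattributes the role of the hypothesis. First, the ``main structural result'' you invoke --- that $\Pi_\rho^{\la}\widehat\otimes_{E,\tau}C$ is computed by the two-term de Rham complex $DR_0$ on $\Fl$ modulo its $\Fil^1$ --- is not a general fact about completed cohomology; it is exactly where the de Rham hypothesis on $\rho|_{G_{\Q_p}}$ enters, and it does so through the Fontaine operator, which your proposal never mentions. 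The paper constructs a Fontaine operator $N$ on $W_0=\tilde H^1(K^p,C\otimes E)[\lambda]_0^{\la}$ via the sheaves $\B_{\dR,k+1}^{+,\la,\tilde\chi_k}$, proves $N=c_kI^1_{k-1}=c_kH^1(\bar d'^k\circ d^k)$ (Theorem \ref{MT}, Corollary \ref{I1Fono}), and then uses Fontaine's criterion (Theorem \ref{FondR}) that $\rho|_{G_{\Q_p}}$ de Rham forces $N=0$, giving $W_0=\ker I^1_0[\lambda]=\bH^1(DR_0)[\lambda]/\Fil^1$ (Theorem \ref{HT0kerF}). Your closing claim that the de Rham hypothesis serves to ``trivialise the de Rham ambiguity and make $i_\pi$ well-defined'' is not how it is used: without it the class would simply fail to lie in $\ker I^1_0$, and no amount of bookkeeping over $\Omega$ recovers the statement.

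Second, your identification over $\Omega$ of the relevant sheaf with $\varinjlim_n\pi^{(0)}_{\Dr,n*}\cO_{\mathcal{M}_{\Dr,n}^{(0)}}$ (you wrote an inverse limit, presumably a typo) needs an argument: what must be shown is that $\ker d^1|_{\Omega}$ consists of $D_p^\times$-\emph{smooth} sections, i.e.\ of genuine automorphic functions on $D^\times\backslash(D\otimes\A_f)^\times$. The paper's mechanism is that under the infinite-level duality $\mathcal{M}_{\LT,\infty}\cong\mathcal{M}_{\Dr,\infty}$ the operators $d^1$ and $\bar d^1$ are \emph{swapped}, so the already-established facts that $\bar d^1$ is surjective with kernel the $\GL_2(\Q_p)$-smooth sections transfer to $d^1$ on the Drinfeld side (Corollary \ref{dLTkcok}); this also produces the $j_!$, since $\ker d^1$ then has the stated form on $\Omega$ and (up to the constant piece) vanishes on $\PP^1(\Q_p)$. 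Relatedly, supercuspidality does not make anything ``vanish in a neighbourhood of $\PP^1(\Q_p)$'': $\coker d^1$ is supported exactly on $\PP^1(\Q_p)$ and equals the rigid cohomology of Igusa curves; what one proves is that its $\lambda$-eigenspace vanishes because the Hecke eigensystems occurring there are principal-series or special at $p$ (Corollary \ref{ssIg}). With these two inputs supplied, the rest of your outline --- the identification of $i_\pi$ with $\Fil^1D_{\dR}(\rho|_{G_{\Q_p}})\otimes\pi_p$ inside $H^1_{\dR,c}$ of the Drinfeld tower (Theorem \ref{ijdR}, Remark \ref{scla}) and the factoring out of $(\pi^{\infty,p})^{K^p}$ by new-vector theory --- matches the paper.
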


\begin{rem}
$H^1\left(\mathbb{P}^1,j_!\pi^{(0)}_{\Dr,n*} \cO_{\mathcal{M}_{\Dr,n}^{(0)}}\right)$ is isomorphic to the compactly supported cohomology of $\cO_{\mathcal{M}_{\Dr,n}^{(0)}}$ on $\mathcal{M}_{\Dr,n}^{(0)}$.
 Hence this result essentially verifies a conjecture of Breuil-Strauch on $\Pi_\rho^{\la}$ in the dual form, which was completely solved by Dospinescu-Le Bras \cite{DLB17} before. See Remark \ref{scla} for more details. We emphasize again that our method does not rely on any known results from the $p$-adic local Langlands correspondence for $\GL_2(\Q_p)$.
\end{rem}

\begin{para}
Next assume that $\pi_p$ is a principal series. In this case we need the rigid cohomology of Igusa curves.
Let $\Gamma(p^n)=1+p^n\Z_p\subseteq \GL_2(\Z_p)$ be the principal congruence subgroup of level $n\geq 1$. The modular curve $Y_{K^p\Gamma(p^n)}$ has a natural compactification $X_{K^p\Gamma(p^n)}$. As explained by Katz-Mazur \cite[Theorem 13.7.6]{KM85}, $X_{K^p\Gamma(p^n)}\times_{\Q_p} C$ has a natural integral model $\mathfrak{X}_{K^p\Gamma(p^n)}$ over $\cO_C$ whose irreducible components of its special fiber are indexed by surjective homomorphisms $(\Z/p^n)^2\to \Z/p^n$ and meet at supersingular points. Let $\mathfrak{X}_{K^p\Gamma(p^n),\bar\F_p,c}\subseteq \mathfrak{X}_{K^p\Gamma(p^n),\bar\F_p}$ denote the open subset of the non-supersingular points of irreducible components with indices sending  $(1,0)\in (\Z/p^n)^2$ to $0$. It is an affine variety over $\bar\F_p$. In more classical languages, $\mathfrak{X}_{K^p\Gamma(p^n),\bar\F_p,c}$ is a union of Igusa curves of level $n$ (and tame level $K^p$).

Let $H^1_{\rig}(\Ig(K^p,n)):=H^1_{\rig}(\mathfrak{X}_{K^p\Gamma(p^n),\overline\F_p,c}/W(\bar\F_p)[\frac{1}{p}])\otimes_{W(\bar\F_p)[\frac{1}{p}]} C$ be the $C$-coefficients rigid cohomology of $\mathfrak{X}_{K^p\Gamma(p^n),\bar\F_p,c}$. It forms a direct system when $n$ varies. Let 
\[H^1_{\rig}(\Ig(K^p))=\varinjlim_n H^1_{\rig}(\Ig(K^p,n)).\]
It follows from the construction of $\mathfrak{X}_{K^p\Gamma(p^n),\bar\F_p,c}$ that the upper-triangular Borel subgroup $B\subseteq\GL_2(\Q_p)$ acts on $H^1_{\rig}(\Ig(K^p))$. The $B$-action is smooth, hence locally analytic.

On the other hand, fix a finite set of rational primes $S$ such that $K_l\cong\GL_2(\Z_l),l\notin S$. The spherical Hecke algebra $\T^S$ (cf. Subsection \ref{Hd} below) acts naturally on $H^1_{\rig}(\Ig(K^p))$. By Theorem \ref{MTI}, the Galois representation $\rho$ and the chosen embedding $\tau$ induces a homomorphism $\lambda_\tau:\T^S\to C$. See Subsection \ref{noraf} for the normalization.
\end{para}

\begin{thm}[Theorem \ref{PSla}] \label{PSla1}
Suppose $k=1$ and $\pi_p$ is a principal series.
There are a natural embedding
\[i_\pi:(\pi^{\infty})^{K^p}\to \Ind_B^{\GL_2(\Q_p)} H^1_{\rig}(\Ig(K^p)) [\lambda_\tau]\]
coming from the position of the Hodge filtration of $\rho|_{G_{\Q_p}}$ 
and a natural $\GL_2(\Q_p)$-equivariant isomorphism
\[\Pi_\rho^{\la}\widehat\otimes_{E,\tau} C\cong
\Ind_B^{\GL_2(\Q_p)} H^1_{\rig}(\Ig(K^p)) [\lambda_\tau]/i_\pi\left((\pi^{\infty})^{K^p}\right),\]
where $\Ind_B^{\GL_2(\Q_p)}$ denotes the locally analytic induction and $[\lambda_\tau]$ denotes the $\lambda_\tau$-isotypic part.
\end{thm}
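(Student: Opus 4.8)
The plan is to reduce the statement to the comparison, established in the previous sections (building on \cite{Pan20}) and used already to prove Theorem~\ref{MTI}, between the locally analytic vectors of $\tilde H^1(K^p,C)$ and the cohomology of a $\GL_2(\Q_p)$-equivariant sheaf on the Hodge--Tate flag variety $\Fl=\mathbb{P}^1$, and then to unwind that comparison over the rational points $\mathbb{P}^1(\Q_p)=\GL_2(\Q_p)/B$, where the geometry is governed by Igusa curves. First I would pass to isotypic parts: $\Pi_\rho^{\la}\widehat\otimes_{E,\tau}C$ is cut out of $\tilde H^1(K^p,C)^{\la}$ by the Galois representation $\rho$, and all the actions in sight ($\GL_2(\Q_p)$, $G_{\Q_p}$, $\T^S$) are compatible, so it suffices to describe this space together with its $\lambda_\tau$-eigenspace. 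Via the Hodge--Tate period map $\pi_{\HT}\colon\X_{K^p}\to\Fl$ from the infinite-level compactified modular curve, together with the holomorphic and anti-holomorphic differential operators of this paper, which assemble into the relevant connection $\nabla$, the hypothesis that $\rho|_{G_{\Q_p}}$ is de Rham of Hodge--Tate weights $0,1$ forces the $\rho$-isotypic part to be horizontal for $\nabla$; integrating $\nabla$ then identifies $\Pi_\rho^{\la}\widehat\otimes_{E,\tau}C$ with $H^1(\Fl,\cF_\rho)$ for a $\GL_2(\Q_p)$-equivariant sheaf $\cF_\rho$ on $\Fl$ whose construction depends only on $\lambda_\tau$ and on the position of the Hodge filtration of $\rho|_{G_{\Q_p}}$.

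Next one exploits the hypothesis that $\pi_p$ is a principal series. By classical local Langlands and the local--global compatibility contained in the proof of Theorem~\ref{MTI}, the Weil--Deligne representation attached to $\rho|_{G_{\Q_p}}$ has vanishing monodromy operator; translated through the de Rham comparison this says that the Hodge line feeding into $\cF_\rho$ is ``adapted to the Borel''. Concretely, in the exact triangle on $\Fl$ relating $j_!$ of the restriction of $\cF_\rho$ to the open Drinfeld space $\Omega=\mathbb{P}^1\setminus\mathbb{P}^1(\Q_p)$ to the restriction to the closed set $\mathbb{P}^1(\Q_p)$, the contribution of $\Omega$ to the $\rho$-part vanishes, so that $H^1(\Fl,\cF_\rho)$ is computed along $\mathbb{P}^1(\Q_p)$ alone. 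Since $B$ stabilises $\infty\in\mathbb{P}^1(\Q_p)$ and $\pi_{\HT}$ is $\GL_2(\Q_p)$-equivariant, one gets a $\GL_2(\Q_p)$-equivariant isomorphism $H^1(\Fl,\cF_\rho)\cong\Ind_B^{\GL_2(\Q_p)}$ (locally analytic induction) of the cohomology of the restriction of the connection to the single fibre $\pi_{\HT}^{-1}(\infty)$, cut by $\lambda_\tau$.

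The heart of the argument, and what I expect to be the main obstacle, is to identify that fibre contribution with $H^1_{\rig}(\Ig(K^p))[\lambda_\tau]$. By the analogue for modular curves of the Caraiani--Scholze description of the fibres of the Hodge--Tate period map over rational points of the flag variety, $\pi_{\HT}^{-1}(\infty)$ is a perfectoid Igusa curve, realized as the inverse limit of the good-reduction loci $\mathfrak{X}_{K^p\Gamma(p^n),\bar\F_p,c}$. One then has to prove that the locally analytic (overconvergent de Rham) cohomology along this fibre produced in the previous step coincides with the rigid cohomology of the finite-level characteristic-$p$ Igusa curves. This uses the crystalline (de Rham) comparison theorem, and it requires handling the non-properness of the Igusa curves: they are affine, so $H^1_{\rig}$ carries many classes concentrated at the boundary punctures, which is precisely why the target is an infinite-dimensional space with a smooth $B$-action (through the quotient torus) rather than a finite-dimensional de Rham cohomology. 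That the $F$-isocrystal structure on $H^1_{\rig}(\Ig(K^p))[\lambda_\tau]$ matches the Weil--Deligne parameter of $\rho|_{G_{\Q_p}}$ is what pins down the $\lambda_\tau$-part; making precise the vanishing of the $\Omega$-contribution from the previous paragraph is part of the same bundle of difficulties.

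It remains to describe $i_\pi$ and the quotient. In weight $2$ the locally algebraic vectors of $\Pi_\rho$ are $(\pi^\infty)^{K^p}$, coming from the $H^1$ of finite-level modular curves cut by $\lambda_\tau$, with $\pi_p$ embedded in the relevant smooth principal series; among the copies of $(\pi^\infty)^{K^p}$ inside the smooth part of $\Ind_B^{\GL_2(\Q_p)}H^1_{\rig}(\Ig(K^p))[\lambda_\tau]$ — parametrized by the two Frobenius eigenlines of the $F$-isocrystal $H^1_{\rig}(\Ig(K^p))[\lambda_\tau]$ — the map $i_\pi$ is the one attached to the Hodge filtration line of $\rho|_{G_{\Q_p}}$. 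Running the de Rham trivialisation once more identifies the residual quotient with $\Pi_\rho^{\la}\widehat\otimes_{E,\tau}C$, inside which the locally algebraic vectors reappear as the complementary such copy, and shows that every map is $\GL_2(\Q_p)$-equivariant; the $\T^S$-equivariance and the independence of the construction of auxiliary choices are then formal.
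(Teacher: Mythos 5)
Your overall architecture matches the paper's: reduce to the kernel of the geometric intertwining operator on $\Fl$ via the identification of the Fontaine operator with $\bar{d}'^{k}\circ d^{k}$ and the vanishing of the Fontaine operator for de Rham representations; stratify $\Fl=\Omega\sqcup\mathbb{P}^1(\Q_p)$; kill the supersingular contribution; identify the ordinary contribution with the (locally analytic) induction of $H^1_{\rig}(\Ig(K^p))$ via the stalk computation at $\infty$ (overconvergent forms and $\theta^{k+1}$); and recover $i_\pi$ from the Hodge filtration $\Fil^1\bH^1(DR_0)\cong \Sym^0V^*\otimes M_2(K^p)\otimes D_0^{-1}$. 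Your emphasis on descending from the perfectoid fibre $\pi_{\HT}^{-1}(\infty)$ to the finite-level, non-perfect Igusa curves is exactly where the paper's Lemma \ref{Olainfty} and the differential equation of \cite{Pan20} do the work.

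There is, however, a genuine gap in your second paragraph: you justify the vanishing of the $\Omega$-contribution to the $\lambda_\tau$-isotypic part by the vanishing of the monodromy operator of the Weil--Deligne representation attached to $\rho|_{G_{\Q_p}}$. This cannot be the right reason. A supercuspidal $\pi_p$ also has $N=0$ on its Weil--Deligne parameter, yet in that case the $\Omega$-contribution is the \emph{entire} answer (Theorem \ref{scch1}) and the Igusa contribution vanishes instead; so an argument resting only on $N=0$ would ``prove'' too much. The property of principal series that is actually used is that they do not transfer to $D_p^\times$ under the Jacquet--Langlands correspondence: the supersingular contribution to $\bH^1(DR_0)$ is $\bigl(H^1(\Fl,j_!\omega^{0,D_p^\times-\sm}_{\Dr})\otimes_C\mathcal{A}^{c,K^p}_{D,(0,0)}\bigr)^{\cO_{D_p}^\times}$ (Proposition \ref{bH1DRk}), built from the space of quaternionic automorphic forms, and its $\lambda_\tau$-eigenspace vanishes precisely because $\lambda_\tau$ does not occur in $\mathcal{A}^{c,K^p}_{D,(0,0)}$ when $\pi_p$ is a principal series (cf.\ \ref{Hd} and Corollary \ref{BScon}(2)). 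Relatedly, your description of the copies of $(\pi^\infty)^{K^p}$ inside the induction as ``parametrized by the two Frobenius eigenlines'' is slightly off: the embeddings are parametrized by lines in the two-dimensional space $H^1_{\rig}(\Ig(K^p))[\lambda_\tau]\cong(\pi^{\infty,p})^{K^p}\otimes D_{\cris}(\rho|_{G_{\Q_p}})$, and $i_\pi$ is the one singled out by the Hodge line $\Fil^1D_{\dR}$, which is in general not a Frobenius eigenline.
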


\begin{rem}
There is an isomorphism $H^1_{\rig}(\Ig(K^p)) [\lambda_\tau]\cong (\pi^{\infty,p})^{K^p}\otimes_E D_{\cris}(\rho|_{G_{\Q_p}})$, which is $B$-equivariant if we equip the right hand side with the correct $B$-action using the congruence relation. See Remark \ref{psla} for more details. Hence
\[\Pi_\rho^{\la}\widehat\otimes_{E,\tau} C\cong (\pi^{\infty,p})^{K^p}\otimes \Ind_B^{\GL_2(\Q_p)} D_{\mathrm{cris}}(\rho|_{G_{\Q_p}})/\pi_p\]
and the embedding of $\pi_p\subseteq \Ind_B^{\GL_2(\Q_p)} D_{\mathrm{cris}}(\rho|_{G_{\Q_p}})$ comes from 
\[\Fil^1D_{\dR}(\rho|_{G_{\Q_p}})\subseteq D_\dR(\rho|_{G_{\Q_p}})\cong D_{\mathrm{cris}}(\rho|_{G_{\Q_p}}).\]
Such an isomorphism was previously known by Emerton's local-global compatibility result and the description of the local analytic representation of $\GL_2(\Q_p)$ associated to $\rho|_{G_{\Q_p}}$ under the $p$-adic local Langlands correspondence for $\GL_2(\Q_p)$. In this case, this was essentially conjectured by Berger-Breuil  and Emerton, and was proved by Liu-Xie-Zhang  and Colmez.
\end{rem}

\begin{rem}
We have a slightly weaker result when $\pi_p$ is special. See Remark \ref{scpsConj}.
\end{rem}

\begin{rem} \label{conunif}
In both Theorems \ref{scch1} and \ref{PSla1}, $\Pi_\rho^{\la}\widehat\otimes_{E,\tau} C$ is written as a quotient by $(\pi^\infty)^{K^p}$. This is not a coincidence. In fact, we will show that  there is a natural isomorphism (cf.  Remark \ref{DRstn}, Theorem \ref{HT0kerF})
\[\Pi_\rho^{\la}\widehat\otimes_{E,\tau} C\cong \bH^1(DR_{k-1})[\lambda]/\Fil^1 \bH^1(DR_{k-1})[\lambda]\]
for some two-terms complex $DR_{k-1}$ on $\mathbb{P}^1$, where $\Fil^\bullet$ denotes the Hodge filtration on $\bH^1(DR_{k-1})$. Let $\mathcal{X}_{K^pK_p}$ denote the adic space associated to $X_{K^pK_p}\times C$. When $k=1$,  the de Rham cohomology $H^1_{\dR}(\mathcal{X}_{K^pK_p})$ naturally embeds into $ \bH^1(DR_{0})$ and induces an isomorphism
\[\Fil^1 \bH^1(DR_0)[\lambda]\cong \Fil^1 \varinjlim_{K_p}H^1_{\dR}(\mathcal{X}_{K^pK_p})[\lambda] \cong (\pi^\infty)^{K^p}.\]
This shows that the Breuil-Strauch conjecture in the supercuspidal case and the conjecture of Berger-Breuil, Emerton in the principal series case can be formulated in a uniform way.  It is also interesting to point out that the cohomology group $\bH^1(DR_{k-1})[\lambda]$ itself does not see the information of Hodge filtration. Intuitively, $DR_0$ is some completed tensor product of the de Rham complex of $\mathcal{X}_{K^pK_p}$ and  the structure sheaf $\cO_{\mathbb{P}^1}$.
 See Remark \ref{KSreal} below for more details regarding $\bH^1(DR_{k-1})$ and its analogy with geometric constructions of representations of real groups, e.g. work of Kashiwara-Schmid in \cite{KS94}.
\end{rem}

Our method also yields a finiteness result. Let $\kn\subseteq\Lie(\GL_2(\Q_p))$ denote the upper-triangular nilpotent subalgebra. The following result was also obtained by Dospinescu-Pa\v{s}k\=unas-Schraen \cite{DPS22} very recently by a totally different method, cf. Remark \ref{relDPS}.

\begin{thm} [part (2) of Theorem \ref{MTCH}] \label{Intfin}
Let $\rho$ be as in Theorem \ref{MTI}. Then $ \Pi_\rho^{\Gamma(p^n),\kn}$ is finite-dimensional for any $n\geq 2$,
where $\Pi_\rho^{\Gamma(p^n),\kn}$ denotes the $\kn$-invariants of the $\Gamma(p^n)$-analytic vectors of $\Pi_\rho$.
\end{thm}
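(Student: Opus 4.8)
The plan is to deduce the statement from the geometric description of $\Pi_\rho^{\la}$ established above, by turning $\kn$-invariance into a finiteness statement for horizontal sections over the compact set $\PP^1(\Q_p)$. Since the formation of $\Gamma(p^n)$-analytic vectors and of $\kn$-invariants commutes with the faithfully flat completed base change $-\widehat\otimes_{E,\tau}C$ on admissible representations, and since $\Pi_\rho^{\Gamma(p^n),\kn}$ embeds into $(\Pi_\rho^{\la}\widehat\otimes_{E,\tau}C)^{\Gamma(p^n),\kn}$, it suffices to prove that the latter is finite-dimensional. By Remark \ref{conunif} there is a $\GL_2(\Q_p)$-equivariant identification $\Pi_\rho^{\la}\widehat\otimes_{E,\tau}C\cong\bH^1(DR_{k-1})[\lambda]/\Fil^1\bH^1(DR_{k-1})[\lambda]$. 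As $\kn\cong\Q_p$ is one-dimensional and abelian, for any $\kn$-module $M$ one has $H^0(\kn,M)=M^{\kn}$, $H^1(\kn,M)=M_{\kn}$ and $H^{\geq 2}(\kn,M)=0$; applying the resulting long exact $\kn$-cohomology sequence to the short exact sequence of $\Gamma(p^n)$-analytic vectors attached to $0\to\Fil^1\bH^1(DR_{k-1})[\lambda]\to\bH^1(DR_{k-1})[\lambda]\to\Pi_\rho^{\la}\widehat\otimes_{E,\tau}C\to0$ (the functor of $\Gamma(p^n)$-analytic vectors being exact here, the maps being strict) reduces the claim to the finite-dimensionality of the $\kn$-invariants of the $\Gamma(p^n)$-analytic vectors of $\bH^1(DR_{k-1})[\lambda]$ and of the $\kn$-coinvariants of the $\Gamma(p^n)$-analytic vectors of $\Fil^1\bH^1(DR_{k-1})[\lambda]$.

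The subobject is easy: $\Fil^1\bH^1(DR_{k-1})[\lambda]$ is locally algebraic --- for $k=1$ it is $(\pi^\infty)^{K^p}$ by Remark \ref{conunif}, and in general a twist of a smooth admissible representation by an algebraic one. Because a $\Gamma(p^n)$-analytic function on the connected rigid ball $\Gamma(p^n)$ assuming finitely many values is constant, the $\Gamma(p^n)$-analytic vectors of such a representation coincide with the tensor product of the $\Gamma(p^n)$-\emph{fixed} vectors of its smooth part with its algebraic part; this is finite-dimensional (finiteness of spaces of modular forms, equivalently admissibility of $\tilde{H}^1(K^p,E)$), hence so are its $\kn$-coinvariants.

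The main piece is the $\kn$-invariants of the $\Gamma(p^n)$-analytic vectors of $\bH^1(DR_{k-1})[\lambda]$. Here one exploits the structure of $DR_{k-1}$: passing to the Hecke-eigenspace $[\lambda]$ collapses the ``infinite-level modular tower'' direction to a \emph{smooth} admissible representation of $\GL_2(\Q_p)$ --- its $K_p$-fixed vectors being the finite-dimensional $\lambda$-part of the de Rham cohomology of $X_{K^pK_p}$, i.e.\ $(\pi^{\infty,p})^{K^p}\otimes_C D_{\dR}(\rho|_{G_{\Q_p}})$ up to twist, which is what ``$\bH^1(DR_{k-1})[\lambda]$ does not see the Hodge filtration'' means --- so that $\bH^1(DR_{k-1})[\lambda]$ is assembled from this smooth representation together with a finite-rank object along $\PP^1$ on which $\kn$ acts \emph{geometrically}: through the Hodge--Tate period map, $\kn$ acts by the Lie derivative $\nabla_\xi$ along the fundamental vector field $\xi$ of the upper-triangular nilpotent element, and $\xi$ is non-vanishing on $\PP^1$ away from the unique $N$-fixed point, where it vanishes to order two. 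Therefore a $\kn$-invariant $\Gamma(p^n)$-analytic class forces (i) its smooth component to be $\Gamma(p^n)$-fixed, hence to lie in a finite-dimensional space, and (ii) along $\PP^1$ a section of a finite-rank bundle over the $\Gamma(p^n)$-analytic neighbourhood of $\PP^1(\Q_p)$ that is horizontal for the first-order connection along $\xi$. Since $\PP^1(\Q_p)$ is compact, this neighbourhood is a finite union of closed balls of radius $p^{-n}$, and on each such ball the horizontal sections span a space of dimension at most the rank --- by uniqueness of solutions of a first-order linear differential equation on the balls where $\xi$ is non-vanishing, and a short indicial analysis at the one ball containing the degeneration point of $\xi$. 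The product of the finite-dimensional contributions (i) and (ii) is finite-dimensional, which finishes the argument.

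The main obstacle is not the local first-order differential analysis but extracting from the geometric machinery the two structural facts just used: that after passing to $[\lambda]$ the $\kn$-action on $\bH^1(DR_{k-1})[\lambda]$ is genuinely ``smooth along the modular tower and $\nabla_\xi$ along $\PP^1$'', and that the $\Gamma(p^n)$-analytic vectors are faithfully described by sections over a radius-$p^{-n}$ neighbourhood of $\PP^1(\Q_p)$ --- this is precisely where the hypothesis $n\geq 2$ enters, ensuring $\Gamma(p^n)$ is a uniform pro-$p$ group and its analytic structure is compatible with such a cover. An alternative that bypasses the $DR_{k-1}$-formalism is to run the same $\PP^1(\Q_p)$-argument on the explicit models of Theorems \ref{PSla1} and \ref{scch1}: in the principal-series case $H^1_{\rig}(\Ig(K^p))[\lambda_\tau]\cong(\pi^{\infty,p})^{K^p}\otimes_E D_{\cris}(\rho|_{G_{\Q_p}})$ is already finite-dimensional, so one is left with the $\kn$-invariant $\Gamma(p^n)$-analytic vectors of the locally analytic induction $\Ind_B^{\GL_2(\Q_p)}$ of a finite-dimensional representation --- exactly the finite-balls count on $\PP^1(\Q_p)$; in the supercuspidal case one additionally needs to control the infinite-dimensional coherent cohomology of the Drinfeld coverings under the two constraints, which is why the uniform route via Remark \ref{conunif} is the cleaner one.
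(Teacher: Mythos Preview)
Your reduction to studying $(\ker I^1_{k-1}[\lambda])^{\Gamma(p^n)\text{-}\an,\kn}$, equivalently $(\bH^1(DR_{k-1})[\lambda]/\Fil^1)^{\Gamma(p^n)\text{-}\an,\kn}$, is correct and is exactly how the paper proceeds (Theorem~\ref{HT0kerF} plus Corollary~\ref{knfin}). Your treatment of the $\Fil^1$ piece is also fine.

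The gap is in the ``main piece''. You treat $\bH^1(DR_{k-1})[\lambda]$ as if it were sections of a finite-rank bundle on a neighbourhood of $\PP^1(\Q_p)$, on which $\kn$ acts by a first-order differential operator, so that $\kn$-invariance plus a finite-balls argument gives finiteness. But $\bH^1(DR_{k-1})[\lambda]$ is a \emph{hypercohomology group} on $\Fl$, not a space of sections; the claimed product structure ``smooth along the tower $\otimes$ finite-rank along $\PP^1$'' is not available, and your two ``structural facts'' are asserted, not proved. Concretely, the paper's Theorem~\ref{kernk} computes $(\ker I^1_{k})^{\kn}=A_{(k,0)}\cdot e_1^k\oplus A_{(-1,k+1)}\cdot e_1^{-1}e_2^{k+1}$. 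The weight-$(k,0)$ part \emph{is} locally algebraic (it lies in $\Sym^k V\otimes H^1(\omega^{-k,\sm}_{K^p})$, cf.\ Remark~\ref{A00rinv}) and your ``smooth $\Rightarrow$ fixed $\Rightarrow$ finite'' argument handles it. But the weight-$(-1,k+1)$ part is a subspace of $M^\dagger_{-k}(K^p)/M_{-k}(K^p)$---genuinely overconvergent, not locally algebraic---and controlling its $\Gamma(p^n)$-analytic vectors is the heart of the matter. The paper does this in the proof of Corollary~\ref{knfin} via two nontrivial lemmas: Lemma~\ref{bdan}, which shows that $K_p$-analytic classes in $A_{(-1,k+1)}$ come from sections of $\omega^{-k}$ over a \emph{fixed} affinoid strict neighbourhood of the canonical locus (this is the precise form of your ``radius-$p^{-n}$ neighbourhood'' slogan, and its proof unwinds the \v{C}ech description of $H^1$), and Lemma~\ref{irredigsnbhd}, an irreducibility statement for preimages of Igusa neighbourhoods in the tower, needed to ensure that if $\theta^{k+1}$ of such a section becomes classical at some deep level then it was already classical at the base level. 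Neither of these is an ``indicial analysis at one ball''; in particular Lemma~\ref{irredigsnbhd} is proved using the differential equation of \cite[Theorem 4.2.7]{Pan20} and has no elementary substitute.

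Your alternative via Theorems~\ref{scch1}/\ref{PSla1} runs into the same issue: even in the principal-series case, the $\kn$-invariant $\Gamma(p^n)$-analytic vectors of a locally analytic $\Ind_B^{\GL_2(\Q_p)}$ of a finite-dimensional $W$ are not finite by a balls count alone---one needs to know that the $\kn$-action on the big cell is $u^+=\frac{d}{dx}$ up to a weight shift and then control the pole at $\infty$, which again is exactly the content of the explicit computation in \ref{chininv} and the analysis behind Theorem~\ref{kernk}. So the missing ingredient is precisely the explicit $\kn$-cohomology computation of \S\ref{I1k} together with Lemmas~\ref{bdan} and \ref{irredigsnbhd}.
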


\subsection{Strategy}

\begin{para}
Now we explain our strategy for proving Theorem \ref{MTI}. Fix $k\geq 1$ throughout this subsection. The starting point is a characterization of de Rham Galois representations in Fontaine's classification of almost $B_{\dR}$-representations \cite{Fo04}. More precisely, let $V$ be a two-dimensional continuous representation of $G_{\Q_p}$ over $\Q_p$. Suppose that $V$ is Hodge-Tate of weights $0,k$, i.e. there is a natural decomposition
\[W:=V\otimes_{\Q_p} C=W_0\oplus W_k\]
with both $W_0$ and $W_k$ being non-zero, and  $W_i(i)=W_i(i)^{G_{\Q_p}}\otimes_{\Q_p} C$ for $i=0,k$. Fontaine proved that there is a natural $C$-linear, $G_{\Q_p}$-equivariant operator 
\[N:W_0\to W_k(k)\]
such that $V$ is de Rham exactly when $N=0$. For reader's convenience, we briefly recall Fontaine's construction here. Let $B_{\dR}$ denote the usual de Rham period ring and $H_{\Q_p}=\ker \varepsilon_p$, the kernel of the $p$-adic cyclotomic character $\varepsilon_p:G_{\Q_p}\to\Z_p^\times$. Let $\Gamma=G_{\Q_p}/H_{\Q_p}\cong\Z_p^\times$. Consider 
\[D_{\mathrm{pdR}}(V):= (V\otimes_{\Q_p}B_{\dR})^{H_{\Q_p},\Gamma-unipotent},\]
the subspace of $H_{\Q_p}$-invariants on which the action of $\Gamma$ is unipotent. The $t$-adic filtration on $B_{\dR}$ naturally defines a decreasing filtration on $D_{\mathrm{pdR}}(V)$.
Generalizing Sen's work on the decompletion of $C$-representations, Fontaine showed that $\dim_{\Q_p} D_{\mathrm{pdR}}(V)=\dim_{\Q_p} V$ and
\[\gr^i D_{\mathrm{pdR}}(V)\otimes_{\Q_p} C=W_i(i)\]
where $W_i(i)=0$ if $i\neq 0,k$. The action of $1\in\Q_p\cong\Lie\Gamma$ on $D_{\mathrm{pdR}}(V)$ is nilpotent and preserves the filtration, hence induces a map $\gr^0 D_{\mathrm{pdR}}(V)\to \gr ^k D_{\mathrm{pdR}}(V)$ whose tensor product with $C$ gives $N$.  We will call $N$ the Fontaine operator.
\end{para}

\begin{para} \label{IntFO}
Back to the completed cohomology.  For simplicity, we will assume $E=\Q_p$ in the introduction.
Since the completed cohomology is an admissible Banach space representation of $\GL_2(\Q_p)$, it follows that $\Pi_\rho$ is also admissible. Hence $\Pi^\la_\rho\neq 0$ by Schneider-Teitelbaum.
Moreover by   \cite[Proposition 6.1.5]{Pan20} (see also \cite{DPS20}), $\Pi^\la_\rho$ has an infinitesimal character $\tilde\chi_k:Z(U(\mathfrak{gl}_2(\Q_p)))\to\Q_p$ which is equal to the infinitesimal character of the $(k-1)$-th symmetric power of the dual of the standard representation. In fact, it was shown in \cite{Pan20} that  the $\tilde\chi_k$-isotypic part $\tilde{H}^1(K^p,\Q_p)^{\la,\tilde\chi_k}$ is Hodge-Tate of weights $0,k$, i.e 
there is a natural Hodge-Tate decomposition
\[\tilde{H}^1(K^p,\Q_p)^{\la,\tilde\chi_k}\widehat\otimes_{\Q_p} C=W_0\oplus W_k\]
such that $W_i(i)=W_i(i)^{G_{\Q_p}}\widehat\otimes_{\Q_p} C$, $i=0,k$. Fontaine's construction can also be generalized to this setting: we will show that there exists a natural continuous map
\[N:W_0\to W_k(k)\]
such that for any  two-dimensional irreducible sub-representation $V\subseteq \tilde{H}^1(K^p,\Q_p)^{\la,\tilde\chi_k}$ of $G_{\Q}$, the restriction $N|_{(V\otimes_{\Q_p}C)_0}$ agrees with the Fontaine operator $(V\otimes_{\Q_p}C)_0\to (V\otimes_{\Q_p}C)_k(k)$ for $V$. Therefore the following spectral decomposition theorem implies Theorem \ref{MTI} in view of Fontaine's result. 
\end{para}

\begin{thm} [Theorem \ref{sd}, Corollary \ref{sd'}]
There is natural generalized eigenspace decomposition with respect to the action of $\T^S$
\[\ker N=\bigoplus_{\lambda} \ker N\widetilde{[\lambda]}\]
where $\lambda:\T^S\to C$ runs over all systems of eigenvalues associated to eigenforms of weight $k+1$ and eigenvalues appearing in $\displaystyle \varinjlim_{K_p\subseteq\GL_2(\Q_p)} H^0(Y(K^pK_p)(\bC),\Q_p)$, and $\widetilde{[\lambda]}$ denotes the generalized eigenspace of $\lambda$.
\end{thm}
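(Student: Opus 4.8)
The plan is to reduce the statement to the classical spectral theory of Hecke operators on the de Rham cohomology of modular curves of finite level at $p$, after first obtaining a concrete geometric description of $\ker N$. Recall from \ref{IntFO} that $W_0,W_k$ and $N$ are extracted from the $\Gamma$-action on an $\mathcal{OB}_{\dR}$-type (de Rham) period sheaf on the perfectoid modular curve, pushed forward along the Hodge--Tate period map $\pi_{\HT}$ to $\mathbb{P}^1$; in \cite{Pan20} the Hodge--Tate (Sen-operator) side of this picture was analysed, and the new ingredient here is the ``anti-holomorphic'' differential operator, which up to an invertible factor and lower-order terms is the connection of the $p$-adic de Rham comparison. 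The first and principal step is to prove, using this together with the de Rham comparison for the pro-\'etale structure sheaf, that $\ker N$ is identified, $\GL_2(\Q_p)\times\T^S$-equivariantly, with $\bH^1(DR_{k-1})$ (the object of Remark \ref{conunif}). Since the two-term complex $DR_{k-1}$ on $\mathbb{P}^1$ is built from the relative de Rham complex of $\pi_{\HT}$ together with $\cO_{\mathbb{P}^1}$, and $H^1(\mathbb{P}^1,\cO_{\mathbb{P}^1})=0$, this exhibits $\ker N$ as the filtered colimit over $K_p$ of the finite-dimensional $C$-vector spaces $H^1_{\dR}(X_{K^pK_p}/\Q_p,\mathcal{V}_{k-1})\otimes_{\Q_p}C$, where $\mathcal{V}_{k-1}$ is the filtered vector bundle with (log) connection attached to $\Sym^{k-1}$ of the dual standard representation and GAGA is used to pass between algebraic and analytic de Rham cohomology.

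Granting this, the generalized eigenspace decomposition is formal. For each fixed $K_p$ the $\Q_p$-vector space $H^1_{\dR}(X_{K^pK_p}/\Q_p,\mathcal{V}_{k-1})$ is finite-dimensional and carries a $\Q_p$-linear action of the commutative algebra $\T^S$; by primary decomposition of a finite-dimensional module over a commutative ring it is the direct sum of its generalized eigenspaces, and this decomposition is functorial, hence compatible with the $\T^S$-equivariant transition maps as $K_p$ shrinks. Passing to the colimit gives $\ker N=\bigoplus_\lambda\ker N\widetilde{[\lambda]}$, where $\ker N\widetilde{[\lambda]}$ is the colimit over $K_p$ of the generalized $\lambda$-eigenspaces and $\lambda$ ranges over the systems $\T^S\to C$ occurring at some level.

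It remains to determine which systems $\lambda$ occur, which is classical. By the algebraic Eichler--Shimura decomposition, $H^1_{\dR}(X_{K^pK_p}/\Q_p,\mathcal{V}_{k-1})$ carries a $\T^S$-stable filtration whose interior (parabolic) part, once the Hodge filtration is taken into account, is governed by the space of weight $k+1$ cusp forms of level $K^pK_p$, and so contributes exactly the systems of eigenvalues attached to weight $k+1$ cuspidal eigenforms. The complementary boundary part is analysed via the residue map along the cusps, using the structure of the (finite, $\GL_2$-equivariant) sets of cusps, the local unipotent monodromy on $\Sym^{k-1}$, and the Hecke action on cusps and on connected components of the modular curves; this identifies the systems of eigenvalues occurring outside the cuspidal part with those attached to weight $k+1$ Eisenstein series together with those occurring in $\varinjlim_{K_p}H^0(Y_{K^pK_p}(\bC),\Q_p)$, the precise bookkeeping (including a cyclotomic twist) being dictated by the chosen normalization of $\T^S$. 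Combined with the previous paragraph this yields the asserted list of $\lambda$ and hence the theorem; in view of Fontaine's criterion recalled in \ref{IntFO} it also yields Theorem \ref{MTI}.

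The main obstacle is the first step: making precise, and proving, the identification of $\ker N$ with $\bH^1(DR_{k-1})$, equivalently with the colimit of finite-level de Rham cohomology. This is where the geometry genuinely enters: it requires controlling locally analytic vectors and the $\tilde\chi_k$-projection at the level of pro-\'etale period sheaves; showing that the anti-holomorphic differential operator cuts out precisely the de Rham (``classical'') classes, with no residual $p$-adically completed contribution surviving in $\ker N$; and a descent and finiteness argument showing that $\ker N$ is the honest colimit of its finite-level pieces rather than some larger completion. This last point is the geometric incarnation of the rigidity of de Rham Galois representations, so it cannot be soft, and it is where the differential operators constructed in this paper do the essential work. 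Once the first step is in place, the remaining steps are respectively formal and classical.
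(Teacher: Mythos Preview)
Your proposal has a genuine and fundamental gap in the first step. You claim that $\ker N$ can be identified with the colimit over $K_p$ of the finite-dimensional spaces $H^1_{\dR}(X_{K^pK_p}/\Q_p,\mathcal{V}_{k-1})\otimes_{\Q_p}C$. This is false, and in fact is precisely what makes the theorem nontrivial. The complex $DR_{k-1}$ has terms $\cO^{\la,\chi}_{K^p}$, which are enormous sheaves of locally analytic sections on the infinite-level modular curve; the vanishing of $H^1(\mathbb{P}^1,\cO_{\mathbb{P}^1})$ plays no role, and $\bH^1(DR_{k-1})$ is an infinite-dimensional locally analytic $\GL_2(\Q_p)$-representation, not a smooth one. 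The finite-level de Rham cohomology $\varinjlim_{K_p}\bH^1(D^{k-1})$ does embed into $\bH^1(DR_{k-1})$ (via the $\bar d$-resolution), but its image is only the locally algebraic part. Moreover $\ker N$ is not $\bH^1(DR_{k-1})$ but the quotient $\bH^1(DR_{k-1})/\Fil^1\bH^1(DR_{k-1})$, cf.\ Remark~\ref{conunif} and \S\ref{2nddec}.

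What the paper actually does is stratify $\Fl$ by the Newton stratification $\mathbb{P}^1=\Omega\sqcup\mathbb{P}^1(\Q_p)$ and compute $\ker d^{k+1}$ and $\coker d^{k+1}$ on each stratum separately. On $\Omega$ the Lubin--Tate/Drinfeld duality at infinite level (Theorem~\ref{LTDrdual}) identifies $\ker d^{k+1}|_\Omega$ with an expression built from quaternionic automorphic forms $\mathcal{A}_{D,(k,0)}^{K^p}$ and cohomology of the Drinfeld tower; on $\mathbb{P}^1(\Q_p)$ one finds rigid cohomology of Igusa curves. This yields a three-step filtration of $\ker I^1_k$ (Theorem~\ref{dec1}) whose graded pieces are, respectively, $\Sym^k V\otimes H^1(\omega^{-k,\sm}_{K^p})$, a Drinfeld-tower contribution tensored with $\mathcal{A}_{D,(k,0)}^{K^p}$, and a locally analytic induction of $H^1_{\rig}(\Ig(K^p),\Sym^k)$. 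The first two pieces have classical Hecke spectrum by the usual theory and Jacquet--Langlands; for the third, the paper does \emph{not} know a priori that the Igusa rigid cohomology has classical spectrum, and instead bootstraps: if some $\lambda$ occurred there but were non-classical, then $(\ker I^1_k)^{\kn}\widetilde{[\lambda]}\neq 0$, contradicting the explicit computation of $\kn$-invariants in Theorem~\ref{kernk} which shows these are governed by classical modular forms. This last step is the actual substance, and your argument provides no substitute for it.
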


\begin{rem}
In fact, we will show that $\ker N\widetilde{[\lambda]}=\ker N{[\lambda]}$, the eigenspace of $\lambda$, if the associated automorphic representation is cuspidal and is not special at $p$. Using results from $p$-adic local Langlands correspondence for $\GL_2(\Q_p)$, we can even show that this is true in the Steinberg case. See Remark \ref{stn}.
\end{rem}

\begin{para}
It remains to prove this spectral decomposition. Here comes the main innovation of this paper: we will give another construction of the Fontaine operator $N$ using the $p$-adic geometry of the perfectoid modular curves. We need some construction in our previous work \cite{Pan20}. Let $\mathcal{X}_{K^p}\sim\varprojlim_{K_p\subset\GL_2(\Q_p)}\mathcal{X}_{K^pK_p}$ denote the modular curve of infinite level at $p$ introduced by Scholze in \cite{Sch15}. There is a $\GL_2(\Q_p)$-equivariant Hodge-Tate period map
\[\pi_{\HT}:\mathcal{X}_{K^p}\to \Fl\cong \mathbb{P}^1\]
where $\Fl$ denotes the flag variety of $\GL_2$, viewed as an adic space over $\Spa(C,\cO_C)$. Let $\cO_{K^p}:=\pi_{\HT*}\cO_{\mathcal{X}_{K^p}}$ and $\cO_{K^p}^\la\subseteq \cO_{K^p}$ be the subsheaf of $\GL_2(\Q_p)$-locally analytic sections. In \cite{Pan20}, using the relative Sen theory, we showed that $\cO^{\la}_{K^p} $ is annihilated by the horizontal nilpotent subalgebra $\mathfrak{n}^o$ on $\Fl$. Fix a Cartan subalgebra  $\mathfrak{h}:=\{\begin{pmatrix} * & 0 \\ 0 & * \end{pmatrix}\}\subseteq \{\begin{pmatrix} * & * \\ 0 & * \end{pmatrix}\}\subseteq\mathfrak{gl}_2(\Q_p)$. Hence there is a natural horizontal Cartan action $\theta_\kh$ of  $\mathfrak{h}$ on   $\cO^{\la}_{K^p} $  induced from $\kh\hookrightarrow \mathfrak{b}^o/\mathfrak{n}^o$, where $\kb^o$ denotes the horizontal Borel. Given a weight $\chi=(n_1,n_2)\in\Q_p^2:\kh\to \Q_p$, we will denote by $\cO^{\la,\chi}_{K^p}$ the $\chi$-isotypic part of $\cO^{\la}_{K^p}$ with respect to $\theta_\kh$.

It was shown in \cite{Pan20} that there are natural isomorphisms
\begin{itemize}
\item $W_0\cong H^1(\Fl,\cO^{\la,(1-k,0)}_{K^p})$ when $k\geq 2$;
\item $W_k\cong H^1(\Fl,\cO^{\la,(1,-k)}_{K^p})$.
\end{itemize}
When $k=1$, there is a natural map $H^1(\Fl,\cO^{\la,(1-k,0)}_{K^p})\to W_0$  and the difference between $W_0$ and  $H^1(\Fl,\cO^{\la,(1-k,0)}_{K^p})$ comes from $\tilde{H}^0(K^p,\Z_p)$. In particular,  $W_0[\lambda_\tau]\cong H^1(\Fl,\cO^{\la,(0,0)}_{K^p})[\lambda_\tau]$ still holds.  We recommend the reader to ignore this difference in the following discussion.
\end{para}

\begin{para}
Let $\cO^{\sm}_{K^p}\subseteq\cO_{K^p}$ be the subsheaf of $\GL_2(\Q_p)$-smooth sections. Equivalently, 
\[\displaystyle \cO^{\sm}_{K^p}=\varinjlim_{K_p\subseteq\GL_2(\Q_p)} \pi_{\HT*}\pi_{K_p}^{-1}\cO_{\mathcal{X}_{K^pK_p}},\]
where $\pi_{K_p}:\mathcal{X}_{K^p}\to \mathcal{X}_{K^pK_p}$ denotes the natural projection map. Similarly, set
\[\displaystyle \Omega^{1,\sm}_{K^p}(\mathcal{C})=\varinjlim_{K_p\subseteq\GL_2(\Q_p)} \pi_{\HT*}\pi_{K_p}^{-1}\Omega^1_{\mathcal{X}_{K^pK_p}}(\mathcal{C}_{K_p}),\]
where $\mathcal{C}_{K_p}$ denotes the cusps of $\mathcal{X}_{K^pK_p}$. Clearly $\Omega^{1,\sm}_{K^p}(\mathcal{C})$ is an $\cO^\sm_{K^p}$-module and there is a natural derivation $d:\cO^{\sm}_{K^p}\to \Omega^{1,\sm}_{K^p}(\mathcal{C})$. 

We note that there are natural inclusions $\cO^{\sm}_{K^p},\cO_{\Fl} \subseteq \cO^{\la,(0,0)}_{K^p}$. 
In Section \ref{Ioef} below, we will prove the following result.
\end{para}

\begin{thm}
The natural map $\cO^{\sm}_{K^p}\otimes_{C} \cO_{\Fl} \to\cO^{\la,(0,0)}_{K^p}$ has dense images. Moreover,
\begin{enumerate}
\item There exists a (necessarily unique) continuous map $d^1: \cO^{\la,(0,0)}_{K^p}\to \cO^{\la,(0,0)}_{K^p}\otimes_{\cO^{\sm}_{K^p}}\Omega^{1,\sm}_{K^p}(\mathcal{C})$ which is $\cO_{\Fl}$-linear and extends $d:\cO^{\sm}_{K^p}\to \Omega^{1,\sm}_{K^p}(\mathcal{C})$;
\item There exists a (necessarily unique) continuous map $\bar{d}^1: \cO^{\la,(0,0)}_{K^p}\to \cO^{\la,(0,0)}_{K^p}\otimes_{\cO_\Fl}\Omega^1_{\Fl}$ which is $\cO^{\sm}_{K^p}$-linear and extends $d_\Fl:\cO_{\Fl}\to \Omega^{1}_\Fl$.
\end{enumerate}
\end{thm}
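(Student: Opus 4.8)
The plan is to reduce the whole statement to a local assertion on $\Fl$, establish density there via the relative Sen theory of \cite{Pan20}, deduce the uniqueness of $d^1$ and $\bar{d}^1$ formally, and then construct the two operators by extending $d$ and $d_\Fl$ continuously from the dense subspace. For density, note the claim is local on $\Fl$, so fix an affinoid $U$ in a convenient cover. By construction $\cO^{\la}_{K^p}(U)$ is the space of locally analytic vectors in the perfectoid algebra $\cO_{K^p}(U)$, which the relative Sen theory of \cite{Pan20} computes by decompletion; on the weight $(0,0)$ component both $\mathfrak{n}^o$ and $\theta_\kh$ act trivially, so the only analyticity that survives is transverse to the horizontal Borel, i.e.\ along $\Fl$ itself. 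Made explicit, this presents $\cO^{\la,(0,0)}_{K^p}(U)$ as a locally convex completion of an algebra built from $\cO^\sm_{K^p}(U)$ and an algebra $A_U$ of analytic functions along $U$ in which $\cO_\Fl(U)$ sits densely (truncation of Taylor expansions in a coordinate on $U$). Approximating by truncated products $s\cdot g$ with $s\in\cO^\sm_{K^p}$ and $g\in\cO_\Fl$ shows that $\cO^\sm_{K^p}\otimes_C\cO_\Fl$ has dense image, which is the first assertion after sheafifying.

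Uniqueness of $d^1$ and $\bar{d}^1$ is then formal: a continuous $\cO_\Fl$-linear extension of $d$ is forced on the dense subspace by $g\otimes s\mapsto g\cdot d(s)$ (with $s\in\cO^\sm_{K^p}$, $g\in\cO_\Fl$), and a continuous $\cO^\sm_{K^p}$-linear extension of $d_\Fl$ by $s\otimes g\mapsto s\cdot d_\Fl(g)$. For the existence of $d^1$, observe first that its target $\cO^{\la,(0,0)}_{K^p}\otimes_{\cO^\sm_{K^p}}\Omega^{1,\sm}_{K^p}(\mathcal{C})$ is merely the twist of $\cO^{\la,(0,0)}_{K^p}$ by the invertible $\cO^\sm_{K^p}$-module $\Omega^{1,\sm}_{K^p}(\mathcal{C})$, hence carries a natural topology. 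One defines $d^1$ on $\cO^\sm_{K^p}\otimes_C\cO_\Fl$ by $s\otimes g\mapsto d(s)\otimes g$; the local model shows this descends to the image in $\cO^{\la,(0,0)}_{K^p}$ and extends continuously, because $d\colon\cO^\sm_{K^p}\to\Omega^{1,\sm}_{K^p}(\mathcal{C})$, being induced by the de Rham differentials at finite level, is $\GL_2(\Q_p)$-equivariant and so preserves analyticity levels — in the local model $d^1$ is $d\,\widehat\otimes\,\mathrm{id}_{A_U}$, continuous for the completed tensor topology. The local operators glue by uniqueness, and $\cO_\Fl$-linearity is built in.

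The operator $\bar{d}^1$ is obtained symmetrically as the continuous extension of $\mathrm{id}_{\cO^\sm_{K^p}}\,\widehat\otimes\,(d \text{ on } A_U)$; under the local model the differentiation of $A_U$ is exactly the geometric de Rham differential along $U$, which is why $\bar{d}^1$ extends $d_\Fl$, is $\cO^\sm_{K^p}$-linear, and lands in $\cO^{\la,(0,0)}_{K^p}\otimes_{\cO_\Fl}\Omega^1_{\Fl}$. Alternatively, and making continuity automatic, $\bar{d}^1$ can be extracted from the derived $\GL_2(\Q_p)$-action: since $\mathfrak{n}^o$ annihilates $\cO^{\la}_{K^p}$ and $\theta_\kh$ annihilates the weight $(0,0)$ part, the horizontal Borel $\mathfrak{b}^o$ annihilates $\cO^{\la,(0,0)}_{K^p}$, so the $\mathfrak{gl}_2$-action on it factors through $(\mathfrak{gl}_2\otimes_C\cO_\Fl)/\mathfrak{b}^o\cong T_{\Fl}$, yielding an $\cO_\Fl$-linear map $T_{\Fl}\otimes_{\cO_\Fl}\cO^{\la,(0,0)}_{K^p}\to\cO^{\la}_{K^p}$; an equivariance and weight computation, using the twisting isomorphisms of \cite{Pan20}, shows that its $\cO_\Fl$-adjoint is the desired $\bar{d}^1\colon\cO^{\la,(0,0)}_{K^p}\to\cO^{\la,(0,0)}_{K^p}\otimes_{\cO_\Fl}\Omega^1_{\Fl}$. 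Here $\cO^\sm_{K^p}$-linearity holds because $\mathfrak{gl}_2$ acts by derivations and kills smooth vectors, and $\bar{d}^1|_{\cO_\Fl}=d_\Fl$ because $\GL_2(\Q_p)$ acts on $\cO_\Fl\subseteq\cO_{K^p}$ through its geometric action on $\Fl$.

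The main obstacle is the work hidden in the first paragraph: turning the decompletion of \cite{Pan20} into a genuinely explicit local model for $\cO^{\la,(0,0)}_{K^p}$ over an affinoid cover of $\Fl$ — choosing good coordinates, identifying the surviving analytic direction with $\Fl$, and comparing norms so that $\cO_\Fl$ really sits densely and the $\mathrm{LB}$/Fr\'echet topology on $\cO^{\la,(0,0)}_{K^p}$ matches the completed tensor-product topology. Once that is available, the density statement and the continuity of $d^1$ are essentially routine, and the Lie-algebra description renders the construction of $\bar{d}^1$ formal; it then remains only to check that $d^1$ and $\bar{d}^1$ are morphisms of sheaves with the asserted properties (and, for the later applications, that they interact well with each other and with the $\GL_2(\Q_p)$-action).
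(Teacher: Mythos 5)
Your proposal follows essentially the same route as the paper: the density statement and the local model are exactly the explicit series description of $\cO^{\la,(0,0)}_{K^p}(U)$ recalled in Theorem \ref{str} (imported from \cite{Pan20}), $d^1$ is obtained by termwise differentiation of those series (the paper's proof makes explicit the correction term $-\sum_i (i+1)c_{i+1}(x-x_n)^i\,dx_n$ coming from approximating $x$ by finite-level sections $x_n$, which is precisely where your ``$d\,\widehat\otimes\,\mathrm{id}_{A_U}$'' picture needs care, since the local model is an LB-limit of such presentations rather than a single completed tensor product), and $\bar d^1$ is exactly the Beilinson--Bernstein action of $\mathfrak g^0/\mathfrak b^0\cong T_{\Fl}$, concretely $s\mapsto u^+(s)\otimes dx$. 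So the approach matches the paper's, and the hidden work you flag at the end is precisely what Theorem \ref{str} supplies.
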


\begin{rem}
$\bar{d}^1$ is essentially the pull-back of derivations on $\Fl$ along the Hodge-Tate period map $\pi_{\HT}$. As explained by Scholze in \cite{Sch16}, $\pi_{\HT}$ can be regarded as a $p$-adic analogue of the \textit{anti-holomorphic} Borel embedding. This explains the notation $\bar{d}$ here.
\end{rem}

\begin{para}
We define $I_0$ as the composite map
\[\cO^{\la,(0,0)}_{K^p} \xrightarrow{d^1} \cO^{\la,(0,0)}_{K^p}\otimes_{\cO^{\sm}_{K^p}}\Omega^{1,\sm}_{K^p}(\mathcal{C})\xrightarrow{\bar{d}^1\otimes 1} \Omega^1_{\Fl}\otimes_{\cO_{\Fl}}\cO^{\la,(0,0)}_{K^p}\otimes_{\cO^{\sm}_{K^p}}\Omega^{1,\sm}_{K^p}(\mathcal{C}).\]
There is a natural isomorphism $\Omega^1_{\Fl}\otimes_{\cO_{\Fl}}\cO^{\la,(0,0)}_{K^p}\otimes_{\cO^{\sm}_{K^p}}\Omega^{1,\sm}_{K^p}(\mathcal{C})\cong \cO^{\la,(1,-1)}_{K^p}(1)$ coming from the Kodaira-Spencer isomorphisms: \begin{itemize}
\item $\Omega^1_{\mathcal{X}_{K^pK_p}}(\mathcal{C}_{K_p})\cong \omega^2$ on $\mathcal{X}_{K^pK_p}$, where $\omega$ denotes the usual automorphic line bundle;
\item $\Omega^1_{\Fl}\cong \omega_{\Fl}^{-2}$ on $\Fl$, where $\omega_\Fl$ denotes the tautological ample line bundle on $\Fl=\mathbb{P}^1$.
\end{itemize}

It follows from the construction of $\pi_\HT$ that the pull-backs of $\omega$ and $\omega_{\Fl}$ to $\mathcal{X}_{K^p}$ are isomorphic to each other (up to a Tate twist), which gives the desired isomorphism. Hence $I_0=\bar{d}'^1\circ d^1$ can be viewed as a map $\cO^{\la,(0,0)}_{K^p} \to \cO^{\la,(1,-1)}_{K^p}(1)$, where $\bar{d}'^1=\bar{d}^1\otimes 1$. In general, a standard BGG construction allows us to define $I_{k-1}=\bar{d}'^{k+1}\circ d^{k+1}: \cO^{\la,(1-k,0)}_{K^p} \to \cO^{\la,(1,-k)}_{K^p}(k)$. We denote $H^1(I_{k-1})$ by $I^1_{k-1}$.

Recall that  in \ref{IntFO}, we have the Fontaine operator $N: W_0\cong H^1(\Fl,\cO^{\la,(1-k,0)}_{K^p} ) \to W_k(k)\cong H^1(\Fl,\cO^{\la,(1,-k)}_{K^p}(k))$ when $k\geq 2$. Here is the main result of Section \ref{IopHti}.
\end{para}

\begin{thm}
$I^1_{k-1}\cong c_k N$ when $k\geq 2$ and $I^1_0|_{H^1(\Fl,\cO^{\la,(0,0)}_{K^p})[\lambda_\tau]}\cong c_1 N|_{W_0[\lambda_\tau]}$ when $k=1$ for some $c_k\in\Q_p^\times$.
\end{thm}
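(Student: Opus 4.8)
The plan is to compare the two operators $I^1_{k-1}$ and $N$ by exhibiting both as coming from a common construction at the level of the completed/decompleted cohomology, and then checking agreement on each $G_\Q$-irreducible piece. First I would recall from \cite{Pan20} the identifications $W_0\cong H^1(\Fl,\cO^{\la,(1-k,0)}_{K^p})$ and $W_k(k)\cong H^1(\Fl,\cO^{\la,(1,-k)}_{K^p}(k))$ (with the caveat at $k=1$, where one works on the $\lambda_\tau$-part so that the difference coming from $\tilde H^0$ disappears), so that both $I^1_{k-1}$ and $N$ are endomorphism-type maps between the \emph{same} two cohomology groups. The strategy is then to show these maps differ by a nonzero scalar, and the cleanest route is to identify both with the action of a single Lie-algebra element.

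The key observation is that $N$ is, by Fontaine's construction recalled in \ref{IntFO}, essentially the action of a generator of $\Lie\Gamma$ (the ``nilpotent part'' of the $\Gamma$-action on $D_{\mathrm{pdR}}$), pushed through the Hodge-Tate/Sen decomposition; concretely on $\cO^{\la}_{K^p}$ this is the \emph{vertical} Sen operator, or equivalently — after using that $\cO^{\la}_{K^p}$ is killed by $\kn^o$ — the horizontal/geometric derivation coming from $\Fl$. On the other hand $I_{k-1}=\bar d'^{\,k+1}\circ d^{k+1}$ is built from the tautological derivation $d$ in the ``holomorphic'' direction followed by the pull-back $\bar d$ of derivations on $\Fl$ along $\pi_{\HT}$. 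The heart of the proof is a local computation: on a suitable affinoid chart of $\mathcal X_{K^p}$ adapted to $\pi_{\HT}$, choose coordinates so that the Hodge-Tate filtration, the Kodaira–Spencer isomorphism $\Omega^1\cong\omega^2$, and the period map are all made explicit (as in the computations of \cite{Pan20}); then both $d^1$ composed with $\bar d'^{\,1}$ and the Fontaine/Sen operator are expressed in terms of the same coordinate vector field, and one reads off that $\bar d'^{\,1}\circ d^1$ equals a universal constant $c_1$ times the relevant infinitesimal action, with the constant independent of $K^p$ and of the Galois representation. The BGG machinery then upgrades this from $k=1$ to general $k\geq 2$: applying $\Sym^{k-1}$ of the standard representation intertwines the weight-$(1-k,0)$ and weight-$(1,-k)$ situations with the weight-$(0,0)$, $(1,-1)$ ones, and $I_{k-1}$ is by definition the induced map on the BGG complex, so the scalar $c_k$ is a product of binomial-type factors coming from $\Sym^{k-1}$, still nonzero.

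Finally I would verify compatibility on $G_\Q$-subrepresentations: for a two-dimensional irreducible $V\subseteq\tilde H^1(K^p,\Q_p)^{\la,\tilde\chi_k}$, the inclusion $(V\otimes C)_0\hookrightarrow W_0$ and $(V\otimes C)_k(k)\hookrightarrow W_k(k)$ are $G_{\Q_p}$-equivariant, and by the discussion in \ref{IntFO} the restriction of $N$ to $(V\otimes C)_0$ is literally Fontaine's operator for $V$; since $I^1_{k-1}$ is built from sheaf-theoretic operations it automatically commutes with the Hecke and Galois actions and hence respects this decomposition, so the scalar $c_k$ computed in the universal setting is the same one relating $I^1_{k-1}|_V$ to $N|_V$. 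I expect the main obstacle to be the explicit local computation identifying $\bar d'^{\,1}\circ d^1$ with the Fontaine/Sen operator up to scalar: one must track the Tate twists and the two Kodaira–Spencer isomorphisms carefully and make sure that the ``holomorphic'' derivation $d$ (coming from $\Omega^{1,\sm}_{K^p}(\mathcal C)$) and the ``anti-holomorphic'' pull-back $\bar d$ combine into exactly the unipotent $\Gamma$-action rather than some twist of it — in particular that no spurious factor of $k$ or sign is lost — and to check that, at $k=1$, passing to the $\lambda_\tau$-isotypic part genuinely removes the $\tilde H^0$ discrepancy so that the comparison of $H^1(\Fl,\cO^{\la,(0,0)}_{K^p})[\lambda_\tau]$ with $W_0[\lambda_\tau]$ is an honest isomorphism under which the two operators match.
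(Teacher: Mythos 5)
There is a genuine gap, and it sits at the very first step of your plan: the identification of the Fontaine operator $N$ with ``the vertical Sen operator'' on $\cO^{\la}_{K^p}$. The Sen operator on $\cO^{\la,\tilde\chi_k}_{K^p}$ (equivalently on $W_0\oplus W_k$) is \emph{semisimple} with eigenvalues $0$ and $-k$ --- that is exactly what the Hodge--Tate decomposition says --- so its nilpotent part is zero, and no computation carried out purely modulo $t$ (i.e.\ on $\cO^{\la}_{K^p}$ or on any affinoid chart of $\mathcal X_{K^p}$ at the level of $C$-coefficients) can see $N$. The operator $N$ is the nilpotent part of $\nabla$ acting on the generalized $0$-eigenspace inside the flat $B_{\dR}^+/(t^{k+1})$-module $\tilde H^1(K^p,B_{\dR,k+1}^+)\cong H^1(\Fl,\B_{\dR,k+1}^+)$; it records the extension between the $\gr^0$ and $\gr^k$ pieces of the $t$-adic filtration, which is invisible on the associated graded. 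For the same reason your proposed reduction ``identify both with the action of a single Lie-algebra element'' cannot work even formally: $I_{k-1}=\bar d'^{\,k}\circ d^{\,k}$ is a differential operator of order $2k$ between sheaves of different $\theta_\kh$-weights, not a first-order infinitesimal action.

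What your proposal is missing is the mechanism the paper uses to make $N$ computable at all: one resolves $\B_{\dR,k+1}^{+}$ by the structural period sheaf via the Poincar\'e lemma sequence $0\to\B_{\dR}^+\to\cO\B_{\dR}^+\xrightarrow{\nabla_{\log}}\cO\B_{\dR}^+\otimes\Omega^1_{\mathcal X}(\mathcal C)\to 0$, and then the abstract Lemma \ref{decomN} decomposes $N$ as $-N'_k\circ(E_0(\nabla_{\log})\circ s_{k+1})$, where $s_{k+1}$ is a section of $\theta_{\log}$ built from explicit $\cO\B_{\dR}^+$-periods of the universal local system (this uses the relative de Rham comparison and the $p$-adic Legendre relation, Lemma \ref{pLegen}). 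The two factors are then separately identified: $E_0(\nabla_{\log})\circ s_{k+1}=d^{\,k}$ (Proposition \ref{comdk}), and $N'_k=c_k\bar d'^{\,k}$ (Proposition \ref{comdbark}), the latter via the quadratic relation between the Sen operator and the Casimir (Proposition \ref{SenopCasz}) together with the one-dimensionality of $\Hom_{U(\mathfrak{sl}_2)}(M^{\vee}_{(0,1-k)},M^{\vee}_{(-k,1)})$. Your BGG remark and the final functoriality check for $G_\Q$-subrepresentations are fine as far as they go, but without the $\cO\B_{\dR}^+$-resolution there is no starting point from which to compare the two operators, so the core of the argument is absent.
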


\begin{para} \label{kerdinf}
It remains to prove that $\ker I^1_{k-1}$ has a spectral decomposition. In fact, we will show such a decomposition on the sheaf level. For simplicity we assume $k=1$ here. The following observations for $\bar{d}^1:\cO^{\la,(0,0)}_{K^p}\to \cO^{\la,(0,0)}_{K^p}\otimes_{\cO_\Fl}\Omega^1_\Fl$ are very useful:

\begin{itemize}
\item $\bar{d}^1$ is surjective;
\item $\ker \bar{d}^1=\cO^{\sm}_{K^p}$.
\end{itemize}
We give some very informal explanations here. We view $\cO^{\la,(0,0)}_{K^p}\xrightarrow{\bar{d}^1} \cO^{\la,(0,0)}_{K^p}\otimes_{\cO_\Fl}\Omega^1_\Fl$ as a ``relative de Rham complex of $\mathcal{X}_{K^p}$ over $\mathcal{X}_{K^pK_p}$''. Each fiber of the projection map $\mathcal{X}_{K^p}\to\mathcal{X}_{K^pK_p}$ is a profinite set, i.e. zero-dimensional, hence $\cO^{\la,(0,0)}_{K^p}\xrightarrow{\bar{d}^1} \cO^{\la,(0,0)}_{K^p}\otimes_{\cO_\Fl}\Omega^1_\Fl$  has no $H^1$, which implies the first claim. For the second claim, by Beilinson-Bernstein's theory of localization, the derivation $\bar{d}^1$ on $\Fl$ essentially comes from the action of $\mathfrak{gl}_2(\Q_p)$. Since $\ker{\bar{d}^1}$ is $\GL_2(\Q_p)$-invariant,  the first claim follows.

From this, it is easy to see that $H^1(\bar{d}'^1)$ is injective. Therefore $\ker I^1_0=\ker H^1(d^1)$. Consider the following de Rham complex
\[DR_0: \cO^{\la,(0,0)}_{K^p} \xrightarrow{d^1} \cO^{\la,(0,0)}_{K^p}\otimes_{\cO^{\sm}_{K^p}}\Omega^{1,\sm}_{K^p}(\mathcal{C}).\]
Then $\ker I^1_0=\ker H^1(d^1)=\bH^1(DR_0)/\Fil^1\bH^1(DR_0)$, where $\Fil^\bullet$ denotes the Hodge filtration.
This in particular explains Remark \ref{conunif}.
\end{para}

\begin{para}
Now it is enough to show that $H^\bullet(DR_0)$ has a spectral decomposition. Roughly speaking, for $y\in\Fl$, $H^\bullet(DR_0)_y$ computes the ``de Rham cohomology of the fiber  $\pi_\HT^{-1}(y)$''. Hence we need to stratify $\Fl=\mathbb{P}^1=\Omega\bigsqcup \mathbb{P}^1(\Q_p)$ according to the fibers.

\begin{itemize}
\item On $\Omega$, using the uniformization of supersingular locus in terms of the Lubin-Tate space, we get a natural  spectral decomposition of $H^\bullet(DR_0)|_{\Omega}$ with eigenvalues corresponding to automorphic representations on $(D\otimes_\Q \A)^\times$. Note that each fiber is a profinite set in this case, hence $H^1(DR_0)|_{\Omega}=0$.
\item On $ \mathbb{P}^1(\Q_p)$, the fiber is closely related to the Igusa curves, and we have a natural spectral  decomposition of $H^\bullet(DR_0)|_{\mathbb{P}^1(\Q_p)}$ with eigenvalues appearing in the rigid cohomology $H^1_{\rig}(\Ig(K^p)) $ of Igusa curves.
\end{itemize}

For a $C$-point $y\in\Omega$, there is an isomorphism $\pi_{\HT}^{-1}(y)\cong D^\times\setminus (D\otimes_\Q \A_f)^\times/K^p$. The key point is to show that any element in $H^0(DR_0)_y=(\ker d^1)_y$, viewed as a continuous function on $D^\times\setminus (D\otimes_\Q \A_f)^\times/K^p$, is an automorphic function, i.e. is $D_p^\times$-smooth. To see this, we investigate the duality isomorphism between the Lubin-Tate towers and Drinfeld towers at the infinite level in Subsection \ref{Drinfty} and show that the roles of $\bar{d}^1$ and $d^1$ are swapped under this isomorphism. In particular, that $\ker d^1$ consists of $D_p^\times$-smooth elements is equivalent to that $\ker\bar{d}^1$ consists of $\GL_2(\Q_p)$-smooth elements, which we have already seen in \ref{kerdinf}!

For $y\in  \mathbb{P}^1(\Q_p)$, the fiber $\pi_{\HT}^{-1}(y)$ is some perfect  Igusa curve. The main point here is to show that $H^\bullet(DR_0)_y$ computes the rigid cohomology of the classical, \textit{non-perfect} Igusa curves. I believe this is a consequence of our differential equation \cite[Lemma 4.3.3]{Pan20}.
\end{para}

\subsection{Organization of the paper}
\begin{para}
This paper is organized as follows. The next two sections are preliminary and can be skipped on first reading (although some notations will be introduced in Section \ref{Rf20}). In Section \ref{Lav}, we collect some basic facts about locally analytic representations over a Hausdorff LB-space. In Section \ref{Rf20}, 
we recall some results and constructions from  our previous work \cite{Pan20} and make some slight generalizations.
In Section \ref{Ioef}, we will prove the existence of the differential operators $d^1$ and $\bar{d}^1$ and study the $\mathfrak{n}$-invariants of $\ker H^1(d^{k})$.  The spectral decomposition will be proved in Section \ref{Iosd}. As mentioned above, we will also study a ``local picture": the Lubin-Tate towers and Drinfeld towers. In Section \ref{IopHti}, we prove that the Fontaine operator  has the simple formula $H^1(\bar{d}'^k\circ d^k)$ (up to a non-zero scalar). Not surprisingly, one main ingredient is the period sheaf $\cO\B_\dR^+$. Finally, we collect everything and prove the main results in Section \ref{mrs}.

Although this paper is quite long, I hope it is clear to the reader that at least the main idea is very simple.
\end{para}

\begin{para}
In this work, we also make  improvements in some results obtained in \cite{Pan20}, which may simplify and clarify many arguments in our previous work.
\begin{enumerate}
\item In \cite[\S 2.2]{Pan20}, we introduced two notions: $\mathfrak{LA}$-acyclic and strongly $\mathfrak{LA}$-acyclic. In Proposition \ref{LAasLAa}, we will show that both notions are equivalent. 
\item In \cite[Theorem 4.2.7]{Pan20}, we calculated the differential equation that $\cO^{\la}_{K^p}$ satisfies using a result of Faltings. We will give a more conceptual explanation in the proof of Theorem \ref{LTde} below by exploring the relationship between the Higgs bundle and the variation of Hodge structure attached to a de Rham local system. Roughly speaking, we will show that the differential equation  is essentially given by the Kodaira-Spencer class. This point of view is more flexible and works for general Shimura varieties.
\item In our previous work, the inclusion map in the relative Hodge-Tate sequence (See sequence \eqref{rHT} below) is not Hecke-equivariant. More precisely, we implicitly fix a trivialization of the $H^2$ of the relative de Rham cohomology of the universal elliptic curves. We will make everything independent of choices in Section \ref{HEt} below and explain how our choice of the trivialization affects the Hecke action. In particular, the Hodge-Tate sequence should be sequence \ref{rHTH}.
\item In \cite[Proof of Theorem 5.1.11]{Pan20}, we had to take the largest separated quotient for some map in a \v{C}ech complex. In Subsection \ref{Cech} below, we will use the Weyl group to show that all the \v{C}ech complexes considered are strict, hence there is no need taking the separated quotient. We also explain how to carry out a Cartan-Serre type of argument in this context, cf. Proof of Lemma \ref{Lemcom} (1).
\end{enumerate}

\end{para}

\begin{para}
After finishing this work, Juan Esteban Rodriguez Camargo explained to me a more direct and conceptual construction of  the differential operators $d^1$ and $\bar{d}^1$ using $\B_{\dR}^+/t^2$. It would be very interesting to simplify arguments in Section \ref{IopHti} with his construction.
\end{para}

\subsection*{Acknowledgement} 
I would like to thank Yiwen Ding, Matthew Emerton, Arthur-C\'esar Le-Bras, Juan Esteban Rodriguez Camargo, Junliang Shen for useful discussions and suggestions. I would also like to thank Gabriel Dospinescu for informing me of their recent work \cite{DPS22} and thank Yiweng Ding and  Arthur-C\'esar Le-Bras
for several comments and corrections on an earlier version of this paper. I am supported by a Simons Junior Faculty Fellows award from the Simons Foundation.

\subsection*{Notation}
For a topological space, we denote by $\pi_0(X)$ the set of connected components of $X$. For a map $f:\mathcal{F}\to\mathcal{G}$ of sheaves of abelian groups on $X$, we will denote by $H^i(f):H^i(X,\mathcal{F})\to H^i(X,\mathcal{G})$ the induced map of the $i$-th cohomology groups.
 
Fix an algebraic closure $\overbar\Q$ of $\Q$. Denote by $G_{\Q}$ the absolute Galois group $\Gal(\overbar\Q/\Q)$. For each rational prime $l$, fix an algebraic closure $\overbar\Q_l$ of $\Q_l$ with ring of integers $\overbar\Z_l$, an embedding $\overbar\Q\to\overbar\Q_l$ which determines a decomposition group $G_{\Q_l}\subset G_{\Q}$ at $l$, and a lift of geometric Frobenius $\Frob_l\in G_{\Q_l}$. Our convention for local class field theory sends a uniformizer to a lift of geometric Frobenius.  
\begin{itemize}
\item $\varepsilon:\A_f^\times/\Q_{>0}^{\times}\to\Z_p^\times$ denotes the character corresponding to the $p$-adic cyclotomic character  via global class field theory. By abuse of notation, we will also use $\varepsilon$ to denote its composite with the determinant map  $\GL_2(\A_f)\xrightarrow{\det}\A_f^\times\xrightarrow{\varepsilon}\Z_p^\times$.
\item  $\varepsilon_p:G_{\Q_p}\to\Z_p^\times$ denotes the $p$-adic cyclotomic character.
\item   $\varepsilon'_p:=\varepsilon|_{\Q_p^\times}:\Q_p^\times\to\Z_p^\times$ denotes the composite of $\varepsilon_p$ with the local Artin map. Explicitly, its sends $x\in\Q_p^\times$ to $x|x|$. By abuse of notation, we will also use $\varepsilon'_p$ to denote its composite with the determinant map  $\GL_2(\Q_p)\xrightarrow{\det}\Q_p^\times\xrightarrow{\varepsilon'_p}\Z_p^\times$.
\item For a $\GL_2(\A_f)$-representation $M$, we will denote by $M\cdot D_0^k$ the twist of $M$ by the character  $|\cdot|_{\A}^{-k}\circ \det$, where $|\cdot|_\A:\A^\times_f\to \Q^\times$ denotes the usual adelic norm map.
\end{itemize}
Suppose $S$ is a finite set of rational primes. We denote by $G_{\Q,S}$ the the Galois group of the maximal extension of $\Q$ unramified outside $S$ and $\infty$. 

 Let $B\subseteq\GL_2(\Q_p)$ denote the upper triangular Borel subgroup. For $i=1,2$,  $e_i':B\to \Q_p^\times$ will denote the character sending $\begin{pmatrix} a_1 & b \\ 0 & a_2\end{pmatrix}$ to $a_i$.

We fix an isomorphism $\bar{\Q}_p\cong \mathbb{C}$.

\section{Locally analytic vectors} \label{Lav}
Let $G$ be a $p$-adic Lie group of dimension $d$. In \cite[\S 2]{Pan20}, we consider locally analytic vectors in a $\Q_p$-Banach space representation of $G$. For the purpose of this paper, we also need to study locally analytic vectors of representations over Hausdorff LB-spaces. One good reference is \cite{Eme17}. 

\subsection{Representations on Hausdorff LB-spaces} \label{roHLB}
\begin{para} \label{GanBan}
First we recall some constructions from \cite[\S 2]{Pan20}. Fix a compact open subgroup $G_0$ of $G$ equipped with an integer valued, saturated $p$-valuation. Let $G_n=G_0^{p^n}$. We denote by $\sC^{\an}(G_n,\Q_p)$ the space of $\Q_p$-valued analytic functions on $G_n$. There are two left actions of $G_n$ on $\sC^{\an}(G_n,\Q_p)$: left translation action and right translation action. Unless otherwise stated, we will always use the left translation action. 

For a $\Q_p$-Banach space representation $W$ of $G$, we denote by $\sC^{\an}(G_n,W)$ the space of $W$-valued analytic functions on $G_n$. The group $G_n$ acts on it by
\[(g\cdot f)(h)=g\left(f(g^{-1}h)\right)\]
for any $g,h\in G_n$ and $f\in \sC^{\an}(G_n,W)$. There is a natural $G_n$-equivariant isomorphism $\sC^{\an}(G_n,W)\cong \sC^{\an}(G_n,\Q_p)\widehat{\otimes}_{\Q_p}W$. The subspace of $G_n$-analytic vectors in $W$ (denoted by $W^{G_n-\an}$) is defined as the image of the $G_n$-invariants under the evaluation map at the identity element
\[\sC^{\an}(G_n,W)^{G_n}\xrightarrow{ev_{\mathbf{1}}} W.\]
$G_n$ acts continuously on $W^{G_n-\an}$. One way to see this is by identifying this action with the right translation action of $G_n$ on $\sC^{\an}(G_n,W)$.
If $W^{G_n-\an}=W$, we say the action of $G_n$ on $W$ is analytic. In this case, the evaluation map ${ev_{\mathbf{1}}}$ is an isomorphism, hence it induces a natural isomorphism
\[\sC^{\an}(G_n,\Q_p)\widehat{\otimes}_{\Q_p}W\cong \sC^{\an}(G_n,W)\]
equipped with the usual actions of $G_n$  on $\sC^{\an}(G_n,\Q_p)$ and $ \sC^{\an}(G_n,W)$ but with the trivial action on $W$ on the left hand side. Explicitly, if we identify $\sC^{\an}(G_n,\Q_p)\widehat{\otimes}_{\Q_p}W$ with $\sC^{\an}(G_n,W)$, this isomorphism sends $f\in \sC^{\an}(G_n,W)$ on the left hand side to the function $g\mapsto g(f(g)),g\in G_n$.

If $W$ is a $C$-Banach space representation of $G$, we can view it as a $\Q_p$-Banach space representation. Then it is clear that $W^{G_n-\an}$ is also a $C$-Banach space.
\end{para}

\begin{para} \label{indVn}
Now we extend everything to representations on LB-spaces. By an LB-space, we mean a locally convex topological $\Q_p$-vector space which is isomorphic to a locally convex inductive limit of a countable inductive system of $\Q_p$-Banach spaces. Our reference here is \cite{Eme17}. In this paper, we will only consider Hausdorff LB-space.  Such a space can be written as the inductive limit of a sequence of $\Q_p$-Banach spaces with injective transition maps. See the discussion below \cite[Definition 1.1.16]{Eme17} for more details about this notion.
\end{para}

\begin{exa} \label{exala}
Let $W$ be a $\Q_p$-Banach space representation of $G$. Then the subspace of $G$-locally analytic vectors in $W$
\[W^{\la}=\varinjlim_{n}W^{G_n-\an}\]
is naturally a Hausdorff LB-space. (To see that $W^{\la}$ is Hausdorff, one simply observes that the natural inclusion $W^{\la}\subseteq W$ is continuous because $\sC^{\an}(G_n,\Q_p)\subseteq\sC(G_n,\Q_p)$ is continuous, where $\sC(G_n,\Q_p)$ denotes the space of $\Q_p$-valued continuous functions on $G_n$.)
\end{exa}

\begin{para} \label{discLB}
Let $\displaystyle V=\varinjlim_i V_i$ be a Hausdorff LB-space, where $\{V_i\}_{i\geq 1}$ is an inductive sequence of Banach spaces. We denote by $\sC^{\an}(G_n,V)$ the space of $V$-valued analytic functions on $G_n$, cf. \cite[Definition 2.1.11]{Eme17}. Then it is easy to see that
\[\sC^{\an}(G_n,V)\cong\varinjlim_i \sC^{\an}(G_n,V_i)\cong \varinjlim_i \sC^{\an}(G_n,\Q_p)\widehat{\otimes}_{\Q_p}V_i\]
is a Hausdorff LB-space. As the notation suggests, we need to justify that this  does not depend on the choice of $\{V_i\}_{i\geq 1}$. Let $U_i$ be the kernel of the natural map $V_i\to V$ and $W_i=V_i/U_i$. Since $V$ is Hausdorff, $U_i$ is a closed subspace hence there is a natural Banach space structure on $W_i$.  Moreover $W_i$ can be viewed as a subspace of $V$ and $\bigcup_{i\geq 1} W_i=V$.  We claim that 
\begin{itemize}
\item for any $i$, there exists $i'\geq i$ such that the image of $U_i$ in $U_{i'}$ is zero, i.e. there is an induced map $W_i\to V_{i'}$ through which the inclusion $W_i\subseteq W_{i'}$ factors.
\end{itemize}
Indeed,  $U_i$ is equal to the increasing union $\bigcup_{j\geq i}\ker(V_i\to V_{j})$. Hence our claim follows from the Baire category theorem. In particular, this implies that the natural map
\[\varinjlim_{j} \sC^{\an}(G_n,V_j)\to \varinjlim_{j} \sC^{\an}(G_n,W_j)\]
is an isomorphism. Now it suffices to show $\sC^{\an}(G_n,V)$ does not depend on the choice of $\{W_i\}_{i\geq 1}$. But this is a direct consequence of \cite[Proposition 1.1.10]{Eme17}. We are going to use this result a few times, so we recall it here.
\end{para}

\begin{prop} \label{1.1.10}
Let $V$ be a Hausdorff locally convex topological $\Q_p$-vector space. Suppose that there exists a sequence of $\Q_p$-Banach spaces $\{V_n\}_{n\geq 1}$, and for each $n$, an injective continuous linear mapping $v_n:V_n\to V$ such that $\bigcup_{n\geq 1}v_n(V_n)=V$. Let $W$ be a $\Q_p$-Banach space and $u:W\to V$ a continuous linear mapping. Then $u(W)\subseteq v_n(V_n)$ for some $n$.
\end{prop}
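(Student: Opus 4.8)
The plan is to reduce the statement to a Baire-category argument on the Banach space $W$, combined with the open mapping theorem. For each $n$ set $W_n := u^{-1}\!\bigl(v_n(V_n)\bigr) \subseteq W$. Since $u(W) \subseteq V = \bigcup_{n} v_n(V_n)$, every vector of $W$ lies in some $W_n$, so $W = \bigcup_{n \geq 1} W_n$. The desired conclusion --- that $u(W) \subseteq v_n(V_n)$ for some $n$ --- is exactly the assertion that $W_n = W$ for some $n$; so it suffices to produce an index $n_0$ for which $W_{n_0}$ is non-meager in $W$, and then to invoke the open mapping theorem.

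The technical heart of the argument is to equip each $W_n$ with a Banach topology (a priori finer than the subspace topology from $W$) for which the inclusion $W_n \hookrightarrow W$ remains continuous. Since $v_n$ is injective, each $w \in W_n$ has a unique preimage $\phi_n(w) \in V_n$ with $v_n\bigl(\phi_n(w)\bigr) = u(w)$, and $w \mapsto \bigl(w, \phi_n(w)\bigr)$ identifies $W_n$ with the subset $\Gamma_n := \{\,(w,y) \in W \oplus V_n : u(w) = v_n(y)\,\}$ of the Banach space $W \oplus V_n$. This is precisely where the Hausdorff hypothesis on $V$ is used: the map $W \oplus V_n \to V$, $(w,y) \mapsto u(w) - v_n(y)$, is continuous and $\{0\}$ is closed in $V$, so $\Gamma_n$ is a \emph{closed} linear subspace of $W \oplus V_n$, hence itself a Banach space; the first projection then gives a continuous injective $\Q_p$-linear map $\iota_n \colon \Gamma_n \xrightarrow{\ \sim\ } W_n \hookrightarrow W$.

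To finish: $W$ is complete, hence a Baire space, so $W = \bigcup_n W_n$ forces some $W_{n_0}$ to be non-meager in $W$. A continuous $\Q_p$-linear map of Banach spaces whose image is non-meager is surjective --- the usual proof of the open mapping theorem (the closure of the image of the unit ball has nonempty interior, then one removes the closure using completeness of the source) goes through verbatim over $\Q_p$. Hence $\iota_{n_0}$ is surjective, i.e. $W_{n_0} = W$, which is exactly the claim $u(W) \subseteq v_{n_0}(V_{n_0})$. The step I expect to require the most care is the construction of the Banach structure on $W_n$ together with the verification, via closedness of $\Gamma_n$, that it is complete; this is precisely the place where Hausdorffness of $V$ cannot be dispensed with. (Equivalently, one can phrase the endgame as: a proper subspace of $W$ that is the continuous linear image of a Banach space must be meager, so the $W_n$ cannot all be proper.)
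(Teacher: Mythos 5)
Your proof is correct and is essentially the paper's argument written out in full: the paper simply observes that Hausdorffness of $V$ makes the relevant graph closed and then defers to \cite[Proposition 1.1.10]{Eme17} and Bourbaki, whose proof is exactly your Baire-category-plus-open-mapping argument on the closed subspaces $\Gamma_n \subseteq W \oplus V_n$. Nothing is missing; the step you flag as delicate (closedness of $\Gamma_n$, which is where Hausdorffness enters) is precisely the observation the paper makes before citing the reference.
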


\begin{proof}
This is essentially \cite[Proposition 1.1.10]{Eme17} which follows from \cite[prop. 1, p. I.20]{Bou87} by observing that the graph of $u$ is closed in $W\times V$ by our assumption.
\end{proof}

\begin{para}
Suppose everything admits a continuous action of $G_0$, i.e. $\{V_i\}_{i\geq 1}$ is an inductive sequence of Banach space representations of $G_0$. There is a natural action of $G_n$ on 
$\sC^{\an}(G_n,V)$ defined by
\[(g\cdot f)(h)=g\left(f(g^{-1}h)\right)\]
for any $g,h\in G_n$ and $f\in \sC^{\an}(G_n,V)$. We define the subspace of $G_n$-analytic vectors in $V$ as the image of the $G_n$-invariants under the evaluation map at the identity element
\[\sC^{\an}(G_n,V)^{G_n}\xrightarrow{ev_{\mathbf{1}}} V\]
equipped with the subspace topology from $\sC^{\an}(G_n,V)$ and denote it by $V^{G_n-\an}$. Equivalently, 
\[V^{G_n-\an}=\varinjlim_{i} V_i^{G_n-\an}\subseteq V.\]
It is clear from this definition that  $V^{G_n-\an}$ is a Hausdorff LB-space and $\{V^{G_n-\an}\}_n$ forms a inductive sequence. The subspace of locally analytic vectors in $V$ is defined as the inductive limit $\displaystyle \varinjlim_n V^{G_n-\an}$ and will be denoted by $V^{\la}$. This is a Hausdorff LB-space. It is easy to check that $V^{\la}$ does not depend on the choice of $G_0$ in the sense that we get the same $V^{\la}$ if we replace $G_0$ by a  compact open subgroup of $G_0$ equipped with an integer valued, saturated $p$-valuation.
\end{para}

\subsection{LB-space of compact type}
\begin{para}
We will need a sufficient condition for an LB-space being Hausdorff. For simplicity, all the discussions in this subsection are restricted to $\Q_p$-coefficients. But all the results actually hold with $\Q_p$ replaced by a finite extension.

Recall that a linear operator $f:X\to Y$ between two $\Q_p$-Banach spaces is called compact if the closure of $f(X^o)$ in $Y$ is compact. Here $X^o\subseteq X$ denotes the unit ball of $X$. Equivalently, $f$ is compact if and only if the closure of $f(T)$ is compact for any bounded subset $T$ of $X$. From this description, it is clear that the notion of compact operator only depends on the topology on $X$ and $Y$, not the norms. 

A compact operator is necessarily continuous. Conversely, suppose $f:X\to Y$ is a continuous linear map between two $\Q_p$-Banach spaces. Then $f(X^o)\subseteq p^kY^o$ for some integer $k$. By definition, $f$ is compact if and only if the image of $X^o\xrightarrow{f} p^kY^o/p^{k+n}Y^o$ is finite for any $n\geq 0$. 

Suppose $g:Y\to Z$ and $h:W\to X$ are continuous operators between Banach spaces. Then $g\circ f$ and $f\circ h$ are also compact if $f$ is compact.
\end{para}

\begin{exa} \label{compexa}
Here is a standard example of a compact operator in rigid analytic geometry. Consider the natural inclusion map between Tate algebras $\Q_p\langle T \rangle \to \Q_p \langle \frac{T}{p}\rangle$. It is easy to see that this map is compact. Geometrically, it corresponds to the restriction from the closed unit disc to the closed disc with radius $\|p\|$.

More generally, suppose $B$ is a $\Q_p$-Banach algebra. Then a continuous $\Q_p$-algebra homomorphism $f:\Q_p\langle T_1,\cdots, T_n \rangle \to B$ is compact  if $f(T_1),\cdots, f(T_n)$ are topologically nilpotent. We will need a variant of this in our later application. Consider the $\Q_p$-Banach algebra $A=\Z_p\langle T_1,\cdots, T_n\rangle [[x]]\otimes_{\Z_p}\Q_p$ with unit open ball $\Z_p\langle T_1,\cdots, T_n\rangle [[x]]$. It is easy to see that again a continuous $\Q_p$-algebra homomorphism $g:A \to B$ is compact  if $g(T_1),\cdots, g(T_n)$ and $g(x)$ are topologically nilpotent.  
\end{exa}

The following results will only be used in \ref{LBH}. 

\begin{prop} \label{comHau}
Suppose $\{V_i\}_{i\geq 1}$ is an inductive sequence of $\Q_p$-Banach spaces with injective compact transition maps. Then the locally convex inductive limit $\displaystyle \varinjlim_i V_i$ is Hausdorff.
\end{prop}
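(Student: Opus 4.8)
The plan is to deduce Hausdorffness from \emph{regularity} of the inductive limit: I will show that every bounded subset of $V:=\varinjlim_i V_i$ lies in, and is bounded in, a single $V_n$, and then conclude formally. For the formal step, recall that in any topological vector space the closure $\overline{\{0\}}$ is a linear subspace, and, being contained in every neighbourhood of $0$, it is bounded. Writing $\phi_i\colon V_i\to V$ for the canonical maps, regularity gives $\overline{\{0\}}\subseteq\phi_n(V_n)$ for some $n$ with $\phi_n^{-1}(\overline{\{0\}})$ bounded in $V_n$; since the transition maps are injective, so is $\phi_n$, hence $\phi_n^{-1}(\overline{\{0\}})$ is a bounded linear subspace of a \emph{normed} space, i.e.\ $\{0\}$, and therefore $\overline{\{0\}}=\phi_n(\{0\})=\{0\}$, so $V$ is Hausdorff. (In fact one only needs this applied to $\overline{\{0\}}$ itself.)

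It remains to prove regularity, and this is the only place the compactness hypothesis enters — injectivity of the transition maps alone is far from enough. Let $B\subseteq V$ be bounded, let $f_i\colon V_i\to V_{i+1}$ be the transition maps and $B_{V_i}\subseteq V_i$ the unit balls. One first shows $B\subseteq\phi_n(V_n)$ for some $n$: otherwise one extracts $b_k\in B$ with $b_k\in\phi_{i_k}(V_{i_k})$ but $b_k\notin\phi_{i_k-1}(V_{i_k-1})$ along a strictly increasing sequence $(i_k)$, and then constructs radii $(\epsilon_i)$ for which the standard neighbourhood $\sum_i\phi_i(\epsilon_i B_{V_i})$ of $0$ fails to absorb $\{b_k\}$, contradicting boundedness of $B$. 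The construction of the $\epsilon_i$ uses compactness through the fact that each $f_i(B_{V_i})$ is totally bounded in $V_{i+1}$, hence contained, up to an arbitrarily small error, in a finite-dimensional subspace — this lets one pick $\epsilon_{i+1}$ small enough to control the level-$i$ contribution to any putative expansion of $b_k$. A parallel diagonal argument, again fed by total boundedness of the $f_i(B_{V_i})$, shows that once $B\subseteq\phi_n(V_n)$ the preimage $\phi_n^{-1}(B)$ is bounded in $V_n$.

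The main obstacle is precisely this diagonal/total-boundedness argument establishing regularity; everything else is formal, and it is instructive that dropping compactness allows the limit to be non-Hausdorff. I would also keep in mind an alternative, more hands-on route bypassing regularity: a continuous linear functional on $V$ is the same as a compatible family $(\ell_i)\in\varprojlim_i(V_i',f_i^*)$; the adjoint $f_i^*\colon V_{i+1}'\to V_i'$ is again compact and has weak-$*$-dense image since $f_i$ is injective, and one upgrades this to weak-$*$-density of the projection $\varprojlim_i V_i'\to V_n'$ by a Mittag--Leffler-type argument using compactness. Combined with the Hahn--Banach theorem for the Banach space $V_n$ (valid since $\Q_p$ is spherically complete), this produces, for each $0\neq v=\phi_n(w)\in V$, a continuous functional on $V$ not vanishing at $v$, which again yields Hausdorffness.
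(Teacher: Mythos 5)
Your formal reduction is fine: $\overline{\{0\}}$ is a bounded linear subspace of $V$, the maps $\phi_n$ are injective because the transition maps are, and a bounded linear subspace of a normed space is zero; so regularity would indeed give Hausdorffness. The problem is that the entire content of the proposition is now concentrated in the regularity step, and the one mechanism you offer for how compactness enters there --- ``$f_i(B_{V_i})$ is totally bounded, hence contained up to an arbitrarily small error in a finite-dimensional subspace, which lets one pick $\epsilon_{i+1}$ small enough'' --- does not actually carry the argument. Concretely, to exclude a rescaled $b_k=\phi_{i_k}(w_k)$ from $U=\sum_i\phi_i(\epsilon_iB_{V_i})$ you must show, for every $N\ge i_k$, that the image of $w_k$ in $V_N$ avoids $C_N+\epsilon_NB_{V_N}$, where $C_N=\sum_{i<N}\epsilon_if_{iN}(B_{V_i})$; the inductive step needs ``$w\notin\overline{C_N}$ in $V_N$'' to propagate to ``$f_N(w)\notin\overline{f_N(C_N)}$ in $V_{N+1}$''. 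That propagation is \emph{false} for an injective compact map and a general open lattice: already for a diagonal injective compact operator $f$ on $c_0(\Q_p)$ one can produce an open lattice $L$ and $v\notin L$ with $f(v)\in\overline{f(L)}$. What rescues the construction is that $C_N$ is a \emph{finite} sum of sets with compact closure (with the paper's definition of compactness, each $f_{iN}(B_{V_i})$ with $i<N$ has compact closure in $V_N$, since it factors through $f_{N-1,N}$ applied to a bounded set), so $\overline{C_N}$ is compact, $f_N|_{\overline{C_N}}$ is a closed embedding by injectivity, and only then can one choose $\epsilon_N$ using $d\bigl(f_N(w),\overline{f_N(C_N)}\bigr)>0$. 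You never isolate this fact, and ``finite-dimensional up to small error'' applied to each single ball is not a substitute for the compactness of the accumulated sums. The same missing ingredient affects the ``parallel diagonal argument'' for boundedness of $\phi_n^{-1}(B)$.

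The alternative dual route is also not sound as sketched: a Mittag--Leffler argument needs complete metrizable spaces with metrically dense transition maps, whereas injectivity of $f_i$ only gives weak-$*$ density of $\im(f_i^*)$; norm density of $\im(f_i^*)$ does not follow from injectivity, and the weak-$*$ topology is not one in which Mittag--Leffler applies. For comparison, the paper does not prove the proposition but cites \cite[Lemma 16.9]{Sch02}, where Hausdorffness is obtained directly by the inductive construction of a separating open lattice along the lines described above, without passing through regularity.
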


\begin{proof}
\cite[Lemma 16.9]{Sch02}.
\end{proof}

We will also need the following generalization.

\begin{cor} \label{comWHau}
Suppose $\{V_i\}_{i\geq 1}$ is an inductive sequence of $\Q_p$-Banach spaces with injective compact transition maps and $W$ is a $\Q_p$-Banach space. Then the locally convex inductive limit $\displaystyle \varinjlim_i V_i\widehat\otimes_{\Q_p}W$ is Hausdorff.
\end{cor}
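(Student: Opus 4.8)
The plan is to reduce Corollary \ref{comWHau} to Proposition \ref{comHau} by showing that the transition maps $V_i \widehat\otimes_{\Q_p} W \to V_{i+1}\widehat\otimes_{\Q_p} W$ are again injective and compact, so that $\varinjlim_i (V_i\widehat\otimes_{\Q_p} W)$ is an inductive limit of Banach spaces with injective compact transition maps and hence Hausdorff by Proposition \ref{comHau}.

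First I would address injectivity. Each transition map $f_i\colon V_i \to V_{i+1}$ is injective and compact; I want $f_i \widehat\otimes \id_W$ to stay injective. This is the one genuinely delicate point: completed tensor product does not in general preserve injectivity of maps of Banach spaces. The right way around this is to use that $W$ is a Banach space over the field $\Q_p$ (more generally a finite extension), so $W$ admits an orthonormal-type basis and $W \cong c_0(I)$ for some index set $I$ (this is Schneider's structure theory of $\Q_p$-Banach spaces, i.e. every $\Q_p$-Banach space is, up to equivalent norm, of the form $c_0(I)$). Then $V_i \widehat\otimes_{\Q_p} W \cong c_0(I, V_i)$, the space of families in $V_i$ indexed by $I$ tending to $0$, and $f_i \widehat\otimes \id_W$ is simply the map $(v_\iota)_\iota \mapsto (f_i(v_\iota))_\iota$, which is manifestly injective since $f_i$ is. This identification makes both the injectivity and the compactness transparent.

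Next I would check compactness of $f_i \widehat\otimes \id_W \colon c_0(I,V_i)\to c_0(I,V_{i+1})$. Write $f_i = f_i' \circ f_i''$ through any intermediate step if convenient, but more directly: by Example \ref{compexa}'s characterization, $f_i$ is compact iff for every $n$ the image of the unit ball $V_i^o$ in $p^k V_{i+1}^o / p^{k+n}V_{i+1}^o$ is finite, where $f_i(V_i^o)\subseteq p^k V_{i+1}^o$. The unit ball of $c_0(I,V_i)$ maps into $c_0(I, p^k V_{i+1}^o)$, and modulo $p^{k+n}$ an element of $c_0(I, p^k V_{i+1}^o/p^{k+n}V_{i+1}^o)$ has only finitely many nonzero coordinates (by the $c_0$ condition, since the quotient is discrete), each lying in the finite set $\mathrm{im}(V_i^o \to p^kV_{i+1}^o/p^{k+n}V_{i+1}^o)$; hence the image is finite. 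Therefore $f_i\widehat\otimes\id_W$ is compact. Applying Proposition \ref{comHau} to the sequence $\{V_i\widehat\otimes_{\Q_p} W\}_{i\geq 1}$ then finishes the proof.

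The main obstacle is precisely the preservation of injectivity under $\widehat\otimes_{\Q_p} W$; in the category of locally convex spaces over a general base this can fail, and the argument relies essentially on the fact that $\Q_p$ (and any finite extension) is spherically complete with the orthonormal-basis structure theory for Banach spaces, giving the clean description $V\widehat\otimes_{\Q_p}W \cong c_0(I,V)$. An alternative, if one prefers to avoid invoking the structure theory explicitly, is to note that compactness of $f_i$ lets one factor it (after rescaling) through a map with relatively compact image, from which one can build explicit finite-rank approximations $f_i^{(n)}$ with $\|f_i - f_i^{(n)}\|\to 0$; then $f_i^{(n)}\widehat\otimes \id_W$ has image in a finite-dimensional-over-$W$ subspace and one checks compactness and injectivity of the limit directly. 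Either route is routine once the $c_0$ picture is in place, so I would present the $c_0(I,V)$ argument as the cleanest.
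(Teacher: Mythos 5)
There is a genuine gap at the compactness step, and it is fatal to your route. The claim that $f_i\widehat\otimes\id_W$ is compact is false whenever $W$ is infinite-dimensional and $f_i\neq 0$. Your own reduction exposes the error: the image of the unit ball of $c_0(I,V_i)$ in $c_0\bigl(I,\,p^kV_{i+1}^o/p^{k+n}V_{i+1}^o\bigr)$ consists of finitely supported families valued in the finite set $\mathrm{im}\bigl(V_i^o\to p^kV_{i+1}^o/p^{k+n}V_{i+1}^o\bigr)$ — each \emph{individual} family has finite support, but the \emph{collection} of all such families is infinite as soon as $I$ is infinite, since the support can be any finite subset of $I$. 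Equivalently: fix $v\in V_i^o$ with $\|f_i(v)\|=c>0$; the elements $e_\iota\otimes f_i(v)$, $\iota\in I$, all lie in the image of the unit ball and are pairwise at distance $c$, so that image is not totally bounded and its closure is not compact. (This is the standard fact that tensoring a compact operator with $\id_W$ destroys compactness for infinite-dimensional $W$; the non-archimedean setting does not help.) Your proposed alternative via finite-rank approximations fails for the same reason: $f_i^{(n)}\widehat\otimes\id_W$ has finite rank over $W$ but infinite rank over $\Q_p$, hence is already non-compact, and the limit you would be approximating is the non-compact operator $f_i\widehat\otimes\id_W$. So Proposition \ref{comHau} cannot be applied to the sequence $\{V_i\widehat\otimes_{\Q_p}W\}$.

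The paper circumvents this entirely: given a nonzero $v\in V_j\widehat\otimes_{\Q_p}W$, it chooses a continuous functional $l\colon W\to\Q_p$ with $(1\otimes l)(v)\neq 0$ in $V_j$ (your $c_0(I)$ picture supplies such an $l$ as a coordinate functional), and then uses the continuous map $1\otimes l\colon \varinjlim_i V_i\widehat\otimes_{\Q_p}W\to\varinjlim_i V_i$ into the space already known to be Hausdorff by Proposition \ref{comHau} to separate $v$ from $0$. In other words, one only needs to push each individual nonzero vector into $\varinjlim_i V_i$, not to transport the compactness of the transition maps across the tensor product. Your injectivity discussion via $W\cong c_0(I)$ is correct and is in fact the right ingredient — you just need to use it to produce the separating functional rather than to feed the tensored system back into Proposition \ref{comHau}.
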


\begin{proof}
Given a non-zero vector $v\in V_j\widehat\otimes_{\Q_p}W$, we can find a linear functional $l:W\to\Q_p$ such that the image of $v$ under the induced map $1\otimes l:V_j\widehat\otimes_{\Q_p}W\to V_j\otimes\Q_p=V_j$ is non-zero. Consider the locally convex inductive limit over $i$
\[ \varinjlim_i V_i\widehat\otimes_{\Q_p}W\xrightarrow{1\otimes l}  \varinjlim_i V_i.\]
This is a  continuous map and $v$ has non-zero image. Since $\displaystyle \varinjlim_i V_i$ is Hausdorff by Proposition \ref{comHau}, there exists an open neighborhood of $0\in \varinjlim_i V_i\widehat\otimes_{\Q_p}W$ not containing $v$. This shows that $\varinjlim_i V_i\widehat\otimes_{\Q_p}W$ is Hausdorff.
\end{proof}

In our later application, we will prove some inductive sequence of $\Q_p$-Banach spaces has  compact transition maps by a Cartan-Serre type argument. Here is the key lemma.

\begin{lem} \label{CStarg}
Let $\{V_i\}_{i\geq 1}$ and $\{W_i\}_{i\geq 1}$ be inductive sequences of $\Q_p$-Banach spaces with continuous linear transition maps. Suppose there are continuous linear maps $f_i:V_i\to W_i,i\geq 1$ with the following properties
\begin{enumerate}
\item $\{f_i\}_{i\geq 1}$ commute with transition maps;
\item For any $i\geq 1$, there exists $j\geq i$ such that $\im(W_i\to W_j)\subseteq f_j(V_j)$;
\item For any $i\geq 1$, the composite map $V_i\xrightarrow{f_i}W_i\to W_j$ is compact for $j$ sufficiently large.
\end{enumerate}
Then the transition map $W_i\to W_j$ is compact for any $i\geq 1$ and $j$ sufficiently large.
\end{lem}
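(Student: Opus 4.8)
The plan is to run the classical Cartan--Serre finiteness argument in this abstract setting, using hypothesis (3) to gain compactness and hypothesis (2) to bootstrap surjectivity-up-to-transition into a statement about the target spaces alone. Fix $i\geq 1$. The goal is to show that $W_i\to W_j$ is compact for $j$ large; equivalently, by the remarks in \ref{comHau} on stability of compactness under composition with continuous maps, it suffices to produce \emph{one} $j_0\geq i$ with $W_i\to W_{j_0}$ compact, since all further transition maps $W_{j_0}\to W_j$ are continuous.

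The key device is to choose indices carefully. By (3), pick $j_1\geq i$ so that $g_1\colon V_i\xrightarrow{f_i}W_i\to W_{j_1}$ is compact. By (2) applied to the index $j_1$, pick $j_2\geq j_1$ so that $\im(W_{j_1}\to W_{j_2})\subseteq f_{j_2}(V_{j_2})$. Now consider the transition map $\phi\colon W_i\to W_{j_2}$; I want to show it is compact. Take the unit ball $W_i^o$. Its image $\phi(W_i^o)$ lands, after passing through $W_{j_1}$, inside $f_{j_2}(V_{j_2})$. The Cartan--Serre trick is to compare two Banach space structures on $f_{j_2}(V_{j_2})\subseteq W_{j_2}$: the subspace norm from $W_{j_2}$, and the quotient norm coming from $V_{j_2}/\ker f_{j_2}$. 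One then uses the open mapping theorem together with the compactness coming from (3) (applied at a further index $j_3\geq j_2$, so that $V_{j_2}\xrightarrow{f_{j_2}}W_{j_2}\to W_{j_3}$ is compact) to conclude that the image of $W_i^o$ in $W_{j_3}$ is relatively compact. Concretely: lift $\phi(W_i^o)\subseteq f_{j_2}(V_{j_2})$ to a bounded subset of $V_{j_2}$ modulo $\ker f_{j_2}$ by the open mapping theorem, then apply the compact map $V_{j_2}\to W_{j_3}$ to see that the image of $W_i^o$ in $W_{j_3}$ has compact closure. Hence $W_i\to W_{j_3}$ is compact, and therefore so is $W_i\to W_j$ for all $j\geq j_3$.

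I would organize the write-up in two steps. \textbf{Step 1:} record the elementary lemma that if $u\colon A\to B$ is a continuous surjection of Banach spaces and $v\colon B\to C$ is compact, then for any bounded $T\subseteq B$ the set $v(T)$ is relatively compact --- immediate, but it is the form in which I will use the open mapping theorem, since I need to replace $\phi(W_i^o)$ (bounded in $f_{j_2}(V_{j_2})$ with its \emph{subspace} norm, which a priori need not be Banach-complete on the nose) by a genuinely bounded subset of $V_{j_2}$. A cleaner route: since $\ker f_{j_2}$ is closed, $V_{j_2}/\ker f_{j_2}$ is Banach and $\bar f_{j_2}\colon V_{j_2}/\ker f_{j_2}\to W_{j_2}$ is a continuous injection; its image contains $\im(W_{j_1}\to W_{j_2})\supseteq \phi(W_i^o)$, so by the closed graph / open mapping theorem the preimage of $\phi(W_i^o)$ is bounded in $V_{j_2}/\ker f_{j_2}$, hence liftable to a bounded set in $V_{j_2}$. \textbf{Step 2:} chain the indices $i\leq j_1\leq j_2\leq j_3$ as above and apply the compact map from (3) at level $j_2$.

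The main obstacle is Step 1, specifically making rigorous the claim that $\phi(W_i^o)$ lifts to a bounded subset of $V_{j_2}$: one must be careful that $\bar f_{j_2}$ is injective with image a (possibly non-closed) subspace of $W_{j_2}$, so the open mapping theorem does not apply to $\bar f_{j_2}$ directly. The fix is to apply it instead to the continuous surjection $V_{j_2}/\ker f_{j_2}\to f_{j_2}(V_{j_2})$ where the target carries the \emph{quotient} topology (making it Banach and the map an isomorphism), and then observe that $\phi(W_i^o)$ is bounded for the subspace topology from $W_{j_2}$, hence --- this is the one point needing an argument --- also bounded for the quotient topology, because the composite with the compact map to $W_{j_3}$ is what we ultimately need, and a compact operator sends any subset bounded in the coarser (subspace) topology to a relatively compact set. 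In other words, one should phrase everything so that the only open-mapping input is the standard isomorphism $V_{j_2}/\ker f_{j_2}\xrightarrow{\sim} \operatorname{coim} f_{j_2}$, and push all boundedness comparisons through the compact operator $V_{j_2}\to W_{j_3}$ furnished by (3). Once this bookkeeping is set up the conclusion is immediate.
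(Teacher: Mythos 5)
Your core argument is correct and is essentially the paper's proof: apply hypothesis (2) to land $\im(W_i\to W_k)$ inside $f_k(V_k)$, equip $f_k(V_k)\cong V_k/\ker f_k$ with the quotient topology so that it is Banach, deduce continuity of the factored map $W_i\to V_k/\ker f_k$ from the closed graph theorem (using that $V_k/\ker f_k\to W_k$ is a continuous injection and $W_i\to W_k$ is continuous), and then compose with the compact map $V_k\to W_j$ furnished by (3). Your ``cleaner route'' in Step~1 is exactly this. Two remarks. First, the index $j_1$ you extract from (3) at the start (compactness of $V_i\to W_{j_1}$) is never used; you can apply (2) directly at the index $i$, as the paper does. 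Second, and more importantly, the closing sentence of your last paragraph is wrong and should be deleted: the claim that ``a compact operator sends any subset bounded in the coarser (subspace) topology to a relatively compact set'' fails in general, because the compact operator $f_k(V_k)\to W_j$ is compact only with respect to the \emph{quotient} topology on its source, and a set bounded for the coarser subspace topology from $W_k$ need not be bounded for the finer quotient topology --- which is precisely the gap the closed graph theorem is there to close. So you cannot ``push all boundedness comparisons through the compact operator''; you must retain the closed-graph step, after which the lift of $\phi(W_i^o)$ to a bounded subset of $V_k$ (via openness of the quotient map $V_k\to V_k/\ker f_k$) and the application of (3) go through as you describe.
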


\begin{proof}
Fix $i\geq 1$. By the second condition, $\im(W_i\to W_k)\subseteq f_k(V_k)$ for some $k\geq i$. We equip $f_k(V_k)=V_k/\ker(f_k)$ with the quotient topology.  Hence it is also a $\Q_p$-Banach space. The induced map 
\[W_i\to f_k(V_k)\]
is continuous by the closed graph theorem. Indeed, the graph of $W_i\to W_k$ is closed and the inclusion $W_i\times f_k(V_k)\subseteq W_i\times W_k$ is continuous. Now take $j\geq k$ such that $V_k\xrightarrow{f_k}W_k\to W_j$ is compact. Clearly  $W_i\to W_j$ is compact as it is the composite $W_i\to f_k(V_k)\to W_j$.
\end{proof}

\subsection{Some miscellaneous results}
\begin{para}
In this subsection, we will collect some simple results which will be used later. Besides, we will show  the equivalence between $\mathfrak{LA}$-acyclicity and strongly $\mathfrak{LA}$-acyclicity introduced in \cite[\S 2.2]{Pan20}. We keep the same notation used in the previous subsection.
\end{para}

\begin{lem} \label{Gnanprecle}
Let $V\subseteq W$ be a closed embedding of $\Q_p$-Banach space representations of $G$. Then the natural maps $V^{G_n-\an}\to W^{G_n-\an},n\geq 0$ and $V^{\la}\to W^{\la}$ are also closed embeddings.
\end{lem}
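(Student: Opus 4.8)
The statement to prove is Lemma~\ref{Gnanprecle}: if $V\subseteq W$ is a closed embedding of $\Q_p$-Banach space representations of $G$, then $V^{G_n-\an}\to W^{G_n-\an}$ and $V^{\la}\to W^{\la}$ are closed embeddings.

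The plan is to reduce everything to the level of analytic function spaces, where the corresponding statement is elementary. First I would recall from \ref{GanBan} that $V^{G_n-\an}$ is defined as the image of $\sC^{\an}(G_n,V)^{G_n}$ under the evaluation map $ev_{\mathbf 1}$, and that this map is injective on $G_n$-invariants, so we may identify $V^{G_n-\an}$ with $\sC^{\an}(G_n,V)^{G_n}$ as a topological vector space (it carries the subspace topology). The key observation is that a closed embedding $V\hookrightarrow W$ of Banach spaces induces a closed embedding $\sC^{\an}(G_n,V)\hookrightarrow \sC^{\an}(G_n,W)$. Indeed, using the identification $\sC^{\an}(G_n,V)\cong \sC^{\an}(G_n,\Q_p)\widehat\otimes_{\Q_p}V$ and the fact that completed tensor product of a Banach space with a closed embedding of Banach spaces is again a closed embedding (because $\widehat\otimes$ with a fixed Banach space is exact on the category of Banach spaces with strict morphisms, and a closed embedding is strict), one gets the claim. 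Alternatively one can argue directly: an analytic $W$-valued function taking values in the closed subspace $V$ is, by expanding into a convergent power series on each analyticity ball and using that $V$ is closed, an analytic $V$-valued function with the same norm, so $\sC^{\an}(G_n,V)$ is precisely the closed subspace of $\sC^{\an}(G_n,W)$ of functions valued in $V$.

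Next I would observe that this inclusion $\sC^{\an}(G_n,V)\hookrightarrow \sC^{\an}(G_n,W)$ is $G_n$-equivariant for the action $(g\cdot f)(h)=g(f(g^{-1}h))$, since the $G$-action on $W$ restricts to the $G$-action on $V$. Taking $G_n$-invariants, $\sC^{\an}(G_n,V)^{G_n}=\sC^{\an}(G_n,V)\cap \sC^{\an}(G_n,W)^{G_n}$ is a closed subspace of $\sC^{\an}(G_n,W)^{G_n}$ (intersection of a closed subspace with the invariants, the latter being closed since the $G_n$-action is by continuous operators). Transporting along $ev_{\mathbf 1}$, which is a topological embedding onto $V^{G_n-\an}$ resp. $W^{G_n-\an}$ by construction, yields that $V^{G_n-\an}\to W^{G_n-\an}$ is a closed embedding. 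This handles the first assertion.

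For the second assertion, $V^{\la}=\varinjlim_n V^{G_n-\an}$ and $W^{\la}=\varinjlim_n W^{G_n-\an}$ as locally convex inductive limits with injective transition maps; both are Hausdorff LB-spaces (Example~\ref{exala}). The map $V^{\la}\to W^{\la}$ is injective since each $V^{G_n-\an}\to W^{G_n-\an}$ is. To see it is a closed embedding, I would use that the inclusion $W^{\la}\subseteq W$ is continuous and injective, so it suffices to check that $V^{\la}$, viewed inside $W$, is closed in $W^{\la}$ and carries the subspace topology; but in fact it is cleaner to argue as follows: a subspace of an LB-space $\varinjlim W_n$ that is the inductive limit of closed subspaces $V_n\subseteq W_n$ compatible with transition maps is closed, and the inductive limit topology on $\varinjlim V_n$ coincides with the subspace topology from $\varinjlim W_n$ --- this is where I expect the one genuine technical point to lie, and I would invoke the standard fact (e.g. via the regularity of these LB-spaces, or directly from \cite{Eme17}) that for a strict inductive sequence, or more generally here, the inductive limit of closed embeddings is a closed embedding. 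The main obstacle is thus purely the LB-space bookkeeping in the last step: ensuring that passing to the inductive limit preserves "closed embedding", which requires a lemma about inductive limits of Hausdorff LB-spaces rather than any new input about locally analytic vectors. Everything before that is formal manipulation of analytic function spaces and taking invariants under a continuous group action.
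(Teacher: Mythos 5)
Your proof is correct and follows essentially the same route as the paper: identify $V^{G_n-\an}$ with $\sC^{\an}(G_n,V)^{G_n}\cong(\sC^{\an}(G_n,\Q_p)\widehat\otimes_{\Q_p}V)^{G_n}$, use that completed tensor with a Banach space preserves closed embeddings, and pass to $G_n$-invariants. The paper in fact says nothing at all about the passage to the inductive limit for $V^{\la}\to W^{\la}$, so your explicit flagging of that LB-space point (resolvable, e.g., via $V^{\la}=(W^{\la}\to W)^{-1}(V)$ together with the open mapping theorem for LB-spaces) is if anything more careful than the original.
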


\begin{proof}
It suffices to prove that $V^{G_n-\an}\to W^{G_n-\an}$ is a closed embedding. By our assumption, the natural map
\[\sC^{\an}(G_n,\Q_p)\widehat{\otimes}_{\Q_p} V \to \sC^{\an}(G_n,\Q_p)\widehat{\otimes}_{\Q_p} W\]
is a closed embedding by our assumption. Hence the $G_n$-invariants of this map is also a closed embedding which is nothing but $V^{G_n-\an}\to W^{G_n-\an}$.
\end{proof}

\begin{prop} \label{tenscomHcont}
Let $H$ be a profinite group and $K$ a finite extension of $\Q_p$. Suppose  $W$ is a $K$-Banach space representation of $H$ and
\[H^i_\cont(H,W)=0,i\geq1.\]
Then
\[H^i_\cont(H,W\widehat{\otimes}_{K}V)=0,i\geq 1\]
for any $K$-Banach space $V$ equipped with a trivial action of $H$.
\end{prop}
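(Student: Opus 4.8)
The plan is to reduce the vanishing of $H^i_\cont(H, W\widehat\otimes_K V)$ to the given vanishing $H^i_\cont(H,W)=0$ by writing $V$ as a suitable limit of finite-dimensional pieces and then passing continuous cohomology through the limit. Since $H$ is profinite and we work with Banach representations, the natural tool is the complex of continuous inhomogeneous cochains $C^\bullet_\cont(H, -)$, which is exact in positive degrees on $W$ by hypothesis. First I would treat the model case $V = K$ trivially (nothing to do), and then the case where $V$ is a Banach space with an orthonormalizable (i.e.\ admitting a Schauder basis) structure, so that $V \cong \widehat{\bigoplus}_{j\in J} K$ as a Banach space, equivalently $V = \varinjlim_{F} V_F$ where $F$ runs over finite subsets of an index set and $V_F \cong K^{|F|}$, with the topology making $V$ the $p$-adic completion of $\bigoplus_j K$. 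For such $V$ one has $W \widehat\otimes_K V \cong \widehat{\bigoplus}_j W$, and a continuous cochain valued in this space is a bounded family of continuous cochains valued in $W$; concretely $C^i_\cont(H, W\widehat\otimes_K V)$ is the $p$-adic completion of $\bigoplus_j C^i_\cont(H,W)$ (using that $H$-cochains on a profinite group are continuous functions on the compact space $H^i$, so boundedness plus the completed direct sum behaves well). Since taking $p$-adic completion of a direct sum of an exact complex of $p$-torsion-free (or more precisely, of the relevant bounded) complexes preserves exactness in the range where the cohomology vanishes — here one uses that $H^i_\cont(H,W)=0$ for $i\ge 1$ together with the fact that the differentials are strict, which follows from the open mapping theorem applied to the Banach spaces $C^i_\cont(H,W)$ — one concludes $H^i_\cont(H, W\widehat\otimes_K V) = 0$ for $i \ge 1$.

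The remaining point is to handle an \emph{arbitrary} $K$-Banach space $V$, not just an orthonormalizable one. Here I would invoke the standard fact (valid since $K$ is a discretely valued nonarchimedean field, indeed a finite extension of $\Q_p$) that every $K$-Banach space is potentially orthonormalizable: after rescaling the norm, or working with the gauge, $V$ embeds as a closed direct summand-up-to-norm-equivalence of an orthonormalizable space, or more simply, $V$ itself admits a Schauder basis when $K$ is locally compact and $V$ is of countable type, and in general $V$ is a quotient and a sub of orthonormalizable spaces in a way compatible with $\widehat\otimes_K W$ and with continuous cohomology. The cleanest route: choose an orthonormalizable Banach space $\tilde V$ with a continuous $K$-linear surjection $q : \tilde V \twoheadrightarrow V$ admitting a continuous (not necessarily linear, but one can take linear up to the splitting in the locally compact case) section, so that $V$ is topologically a direct summand of $\tilde V$; then $W\widehat\otimes_K V$ is a topological direct summand of $W\widehat\otimes_K \tilde V$, and continuous cohomology commutes with topological direct summands, so the vanishing for $\tilde V$ forces the vanishing for $V$.

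The step I expect to be the main obstacle is the interchange of continuous group cohomology with the completed direct sum / completed tensor product: one must be careful that $C^i_\cont(H, W\widehat\otimes_K V)$ is genuinely computed as a completed direct sum of copies of $C^i_\cont(H,W)$ and that exactness survives $p$-adic completion. This is where the profiniteness of $H$ is essential — it makes $C^i_\cont(H,W) = \sC(H^i, W)$ a space of continuous functions on a profinite set, which is orthonormalizable over $W$ and hence behaves well under completed tensor product — and where strictness of the differentials (guaranteed by $H^i_\cont(H,W)=0$ and the open mapping theorem) is what lets completion preserve exactness. An alternative and perhaps slicker argument avoiding cochains altogether: use that $H^i_\cont(H,W)=0$ for all $i\ge 1$ means $W$ is an acyclic object, express $\widehat\otimes_K V$ as an exact functor on the relevant category (it is exact on admissible short exact sequences of Banach spaces because $V$ is flat in the appropriate completed sense over $K$), and conclude by a spectral sequence or by dimension shifting; but I would still ultimately need the completed-tensor-product-versus-cohomology compatibility, so I would present the cochain argument as the main line, flagging the orthonormalizability reduction and the completion-preserves-exactness claim as the two facts to verify carefully.
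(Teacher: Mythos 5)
Your proposal is correct and follows essentially the same route as the paper: the paper also computes $H^i_\cont(H,-)$ by the continuous cochain complex $C^i(H,W)\cong\sC(H^{i},W)$, observes that the augmented complex is strict exact by the vanishing hypothesis and the open mapping theorem, identifies $C^\bullet(H,W)\widehat\otimes_K V\cong C^\bullet(H,W\widehat\otimes_K V)$ using compactness of $H$, and concludes that strict exactness is preserved under $\widehat\otimes_K V$. Your detour about non-orthonormalizable $V$ is unnecessary (over a discretely valued field such as $K$ every Banach space is orthonormalizable after passing to an equivalent norm), but it does not affect the correctness of the argument.
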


\begin{proof}
By definition, $H^i_\cont(H,W)$ is computed by a complex $C^\bullet(H,W)$ with 
$C^{i}(H,W)=\sC(\underbrace{H\times\cdots\times H}_\text{$n+1$ times},W)^H\cong \sC(\underbrace{H\times\cdots\times H}_\text{$n$ times},W)$. Then $W^{H}\to C^\bullet(H,W)$ is a strict exact complex by our assumption and the open mapping theorem.
There are natural isomorphisms
\[\sC(\underbrace{H\times\cdots\times H}_\text{$n$ times},W)\widehat{\otimes}_{K}V\cong \sC(\underbrace{H\times\cdots\times H}_\text{$n$ times},W\widehat{\otimes}_{K}V),\]
i.e. $C^\bullet(H,W)\widehat{\otimes}_{K}V\cong C^\bullet(H,W\widehat{\otimes}_{K}V)$. Hence $(W\widehat{\otimes}_{K}V)^H\cong W^H\widehat{\otimes}_{K}V\to  C^\bullet(H,W\widehat{\otimes}_{K}V)$ is a strict exact complex as well.
\end{proof}

\begin{cor} \label{injnoHi}
Let $H$ be a profinite group and $W$ be a $\Q_p$-Banach space representation of $H$. Then 
\[H^i_\cont(H,\sC(H,\Q_p)\widehat\otimes_{\Q_p}W)=0,i\geq 1.\]
\end{cor}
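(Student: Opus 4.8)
The plan is to deduce this from Proposition~\ref{tenscomHcont}. Once we know that $H^i_\cont(H,\sC(H,\Q_p))=0$ for $i\geq 1$ (for the translation action), the general statement follows by rewriting $\sC(H,\Q_p)\widehat{\otimes}_{\Q_p}W$, after an untwisting of the $H$-action, as a completed tensor product of $\sC(H,\Q_p)$ with the translation action and $W$ with the \emph{trivial} action, and then applying Proposition~\ref{tenscomHcont}.

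First I would record the identification $\sC(H,\Q_p)\widehat{\otimes}_{\Q_p}W\cong\sC(H,W)$, which is valid because $H$ is profinite and $\sC(H,\Q_p)$ is an orthonormalizable $\Q_p$-Banach space (this is the same type of isomorphism used in the proof of Proposition~\ref{tenscomHcont}). Under it the natural (diagonal) action of $H$ becomes $(h\cdot f)(x)=h\big(f(h^{-1}x)\big)$. Exactly as in \ref{GanBan}, the orbit map $\Phi\colon\sC(H,W)\to\sC(H,W)$, $(\Phi f)(x)=x^{-1}\big(f(x)\big)$, is a topological automorphism, and a one-line computation shows that it intertwines this action with the left translation action that acts trivially on the values in $W$. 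Thus $\sC(H,\Q_p)\widehat{\otimes}_{\Q_p}W$ with its diagonal action is $H$-equivariantly isomorphic to $\sC(H,\Q_p)\widehat{\otimes}_{\Q_p}W$ with $H$ acting by left translation on the first factor and trivially on $W$. Applying Proposition~\ref{tenscomHcont} with the role of $W$ there played by $\sC(H,\Q_p)$ and the role of $V$ played by $W$ (trivial action), we reduce to proving
\[H^i_\cont\big(H,\sC(H,\Q_p)\big)=0,\qquad i\geq 1,\]
with $H$ acting by left translation.

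This last vanishing is the standard cohomological acyclicity of a coinduced module (Shapiro's lemma). I would prove it exactly as in the proof of Proposition~\ref{tenscomHcont}: the continuous cochain complex computing $H^\bullet_\cont(H,\sC(H,\Q_p))$ has $n$-th term $\sC(H^n,\sC(H,\Q_p))\cong\sC(H^{n+1},\Q_p)$, and the augmented complex $0\to\Q_p\to\sC(H,\Q_p)\to\sC(H^2,\Q_p)\to\cdots$ is the standard resolution of the trivial module, which is exact, strict (by the open mapping theorem), and carries a continuous contracting homotopy in positive degrees coming from the inclusion of the trivial subgroup $\{e\}\hookrightarrow H$; hence its cohomology vanishes in positive degrees. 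Alternatively one can skip the reduction to $W=\Q_p$ and run the same resolution argument directly on $0\to W\to\sC(H,W)\to\sC(H^2,W)\to\cdots$, the standard resolution of the trivial module $W$.

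I do not expect a genuine obstacle: this is a formal corollary, and the only things to verify — continuity of $\Phi$, continuity of the contracting homotopy, and strictness of the complexes — are all routine, the last via the open mapping theorem as in Proposition~\ref{tenscomHcont}. The one point requiring a little care is purely bookkeeping with conventions: pinning down which $H$-action on $\sC(H,\Q_p)\widehat{\otimes}_{\Q_p}W$ is intended and checking compatibility of the untwisting with the completed-tensor decomposition.
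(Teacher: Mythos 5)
Your proposal is correct and follows essentially the same route as the paper: untwist the diagonal action on $\sC(H,\Q_p)\widehat\otimes_{\Q_p}W\cong\sC(H,W)$ via the orbit map (the paper uses the inverse map $f\mapsto(g\mapsto g(f(g)))$), reduce to the trivial action on $W$, and apply Proposition~\ref{tenscomHcont} together with $H^i_\cont(H,\sC(H,\Q_p))=0$ for $i\geq 1$. The only cosmetic difference is that you spell out the last vanishing via the standard resolution, whereas the paper cites the proof of Proposition~1.1.3 of \cite{Eme06}.
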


\begin{proof}
Let $W'=W$ equipped with the \textit{trivial} action of $H$. Then there is a natural $H$-equivariant isomorphism 
\[\sC(H,\Q_p)\widehat\otimes_{\Q_p}W'\cong \sC(H,\Q_p)\widehat\otimes_{\Q_p}W.\]
Explicitly it sends $f\in\sC(H,W')=\sC(H,\Q_p)\widehat\otimes_{\Q_p}W'$ to the function $g\mapsto g\left(f(g)\right),g\in H$, viewed as an element in $\sC(H,W)=\sC(H,\Q_p)\widehat\otimes_{\Q_p}W$. Hence we may assume the action of $H$ on $W$ is trivial. Now our claim follows from  Proposition \ref{tenscomHcont} since $H^i_\cont\left(H,\sC(H,\Q_p)\right)=0,i\geq 1$, cf. Proof of Proposition 1.1.3 of \cite{Eme06}.
\end{proof}

\begin{para}
For a $\Q_p$-Banach space representation $W$ of $G$ and $i\geq 1$, we define (following \cite[\S 2.2]{Pan20})
\[R^i\mathfrak{LA}(W):=\varinjlim_n H^i_{\cont}(G_n,W\widehat{\otimes}_{\Q_p}\sC^{\an}(G_n,\Q_p))\]
which measures the failure of taking locally analytic vectors in $W$. Note that there is an isomorphism $W\widehat{\otimes}_{\Q_p}\sC^{\an}(G_n,\Q_p)\cong \sC^{\an}(G_n,W)$. Hence  
\[R^i\mathfrak{LA}(W):=\varinjlim_n H^i_{\cont}\left(G_n,\sC^{\an}(G_n,W)\right).\]

We say $W$  is
\begin{itemize}
\item $\mathfrak{LA}$-acyclic if $R^i\mathfrak{LA}(W)=0$ for any $i\geq 1$;
\item strongly $\mathfrak{LA}$-acyclic if the direct system $\left\{H^i_{\cont}(G_n,W\widehat{\otimes}_{\Q_p}\sC^{\an}(G_n,\Q_p))\right\}_n$ is essentially zero for any $i\geq1$, i.e. for any $i\geq 1$ and $n\geq0$, we can find  $m\geq n$ such that
\[H^i_{\cont}(G_n,\sC^{\an}(G_n,W)) \to H^i_{\cont}(G_m,\sC^{\an}(G_m,W))\]
is  the zero map.
\end{itemize}
Both notions do not depend on the choice of $G_0$. Clearly $W$ is  $\mathfrak{LA}$-acyclic if it is strongly  $\mathfrak{LA}$-acyclic.
\end{para}

\begin{prop} \label{LAasLAa}
Suppose $W$ is  a $\mathfrak{LA}$-acyclic  $\Q_p$-Banach space representation  of $G$. Then $W$ is  strongly $\mathfrak{LA}$-acyclic.
\end{prop}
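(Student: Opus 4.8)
The plan is to show that $\mathfrak{LA}$-acyclicity of $W$ forces each individual direct system $\{H^i_{\cont}(G_n, \sC^{\an}(G_n,W))\}_n$ to be essentially zero, not merely to have vanishing colimit. The essential tool should be the hierarchy of analytic functions: an analytic function on $G_n$ restricts to a (much smaller-radius, hence ``more analytic'') function on $G_m$ for $m > n$, and conversely $\sC^{\an}(G_m,\Q_p)$ sits inside $\sC^{\an}(G_n,\Q_p)$ as a closed subspace with a complementary closed subspace coming from the coset decomposition $G_n = \bigsqcup G_m g$. Concretely, I would first record the $G_m$-equivariant decomposition
\[
\sC^{\an}(G_n,W)|_{G_m} \;\cong\; \bigoplus_{g \in G_m\backslash G_n} \sC^{\an}(G_m g, W),
\]
which identifies $H^i_{\cont}(G_m, \sC^{\an}(G_n,W))$ with a finite direct sum of copies of $H^i_{\cont}(G_m, \sC^{\an}(G_m,W))$ (all the cosets being $G_m$-translates of one another). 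This is the kind of statement already implicit in \cite[\S 2]{Pan20}.

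The second step is to exploit a factorization of the transition map. The map $H^i_{\cont}(G_n, \sC^{\an}(G_n,W)) \to H^i_{\cont}(G_m, \sC^{\an}(G_m,W))$ (restrict the group, then restrict/project the coefficient module along $\sC^{\an}(G_n,W) \hookrightarrow \sC^{\an}(G_m,W)$, or the reverse direction depending on conventions) should be shown to factor through $H^i_{\cont}(G_n, \sC^{\an}(G_m,W))$ or through one of the summands above. Then, using that $R^i\mathfrak{LA}(W) = \varinjlim_n H^i_{\cont}(G_n, \sC^{\an}(G_n,W)) = 0$, any fixed class in $H^i_{\cont}(G_n,\sC^{\an}(G_n,W))$ dies at some finite stage $m$; the point is to promote this pointwise vanishing to vanishing of the whole map. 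Here I would bring in a finiteness/compactness input: either the groups $H^i_{\cont}(G_n,\sC^{\an}(G_n,W))$ are themselves "small" in a suitable sense (e.g. each is a Banach space and the transition maps are continuous, so one can try a Baire-category or open-mapping argument), or — more likely the intended route — one uses that $\sC^{\an}(G_n,\Q_p) = \varinjlim_r \sC^{\an}_r(G_n,\Q_p)$ over shrinking radii $r$ with compact transition maps, so that each cohomology group is an LB-space of compact type and a nilpotent-type argument (a class killed by one transition map is killed uniformly) applies.

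The cleanest formulation of the key step is probably this: show that for each $n$ the map $H^i_{\cont}(G_n,\sC^{\an}(G_n,W)) \to H^i_{\cont}(G_{n+1},\sC^{\an}(G_{n+1},W))$ already has the property that its image, pushed one more level, lands in the image of something that $\mathfrak{LA}$-acyclicity forces to vanish — i.e. leverage the "$\sC^{\an}$ of $\sC^{\an}$" trick: $\sC^{\an}(G_n, \sC^{\an}(G_n,W)) \cong \sC^{\an}(G_n \times G_n, W)$ and the two natural maps to $\sC^{\an}(G_n,W)$ (restriction to the diagonal vs. projection) both compute the same derived functor, so a Hochschild--Serre / iterated-$\mathfrak{LA}$ comparison collapses the direct system. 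Combining this with Step 1's coset decomposition should give that the transition map at level $n \to m$ is literally zero for $m$ large, which is exactly strong $\mathfrak{LA}$-acyclicity.

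The main obstacle I anticipate is the promotion from "colimit is zero" to "maps are eventually zero" — a priori a direct system can have zero colimit with no individual transition map vanishing. Making this work requires genuinely using the analytic-radius filtration (compactness of transition maps, Proposition \ref{comHau} and Lemma \ref{CStarg} are surely pointers) together with a uniform bound: once a generating set of $H^i_{\cont}(G_n,\sC^{\an}(G_n,W))$ — which one must argue is "finitely controlled," e.g. the image of a single Banach piece — dies, everything above it dies. Pinning down the correct finiteness statement that makes the Cartan--Serre style argument go through, rather than the homological bookkeeping, is where the real content lies; the rest is formal manipulation of $\sC^{\an}$ and continuous cohomology.
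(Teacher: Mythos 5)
Your proposal correctly isolates the crux --- passing from ``the colimit $R^i\mathfrak{LA}(W)$ vanishes'' to ``the transition maps are eventually zero'' --- but it does not supply the device that actually accomplishes this, and the tools you point to are not the ones that work. The paper's proof uses neither the coset decomposition, nor compactness of transition maps, nor a Cartan--Serre argument. Instead it dimension-shifts: embed $W$ as the constant functions in $\sC(G_0,W)$, a module which is acyclic for $H^i_{\cont}(G_n,\sC^{\an}(G_n,-))$ by Corollary \ref{injnoHi}, and let $Q$ be the quotient. The long exact sequence then identifies $H^1_{\cont}(G_n,\sC^{\an}(G_n,W))$ with the cokernel of the Banach-space map $\sC(G_0,W)^{G_n-\an}\to Q^{G_n-\an}$, i.e.\ with $Q^{G_n-\an}/Q_n$ where $Q_n$ is the image. $\mathfrak{LA}$-acyclicity gives $\varinjlim_n Q_n=\varinjlim_n Q^{G_n-\an}=Q^{\la}$, and since $Q^{\la}$ is a Hausdorff LB-space (Example \ref{exala}), Proposition \ref{1.1.10} --- Grothendieck's factorization theorem, a closed-graph/Baire statement, not a compactness statement --- forces the Banach space $Q^{G_n-\an}$ to land inside a single $Q_m$, which is exactly the vanishing of the transition map on $H^1$. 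Higher degrees follow by induction via the dimension-shifting isomorphism $H^{i+1}_{\cont}(G_n,\sC^{\an}(G_n,W))\cong H^i_{\cont}(G_n,\sC^{\an}(G_n,Q))$.

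Two concrete problems with your route. First, the claimed decomposition $\sC^{\an}(G_n,W)|_{G_m}\cong\bigoplus_{g\in G_m\backslash G_n}\sC^{\an}(G_mg,W)$ is false: the right-hand side is the space of functions analytic on each coset of $G_m$, into which $\sC^{\an}(G_n,W)$ only injects (a globally analytic function on the larger group is not an arbitrary tuple of analytic functions on the cosets). Second, even granting some factorization of the transition maps, compactness of restriction maps between radii gives no handle on the cohomology groups themselves, and is in any case not needed --- Hausdorffness of $Q^{\la}$ suffices. What the promotion step requires is a realization of $H^1$ as a quotient of a Banach space by an increasing union of Banach images sitting inside a Hausdorff LB-space; that realization is precisely what the $\sC(G_0,W)$-resolution provides, and without it your Baire-category step has nothing to apply to.
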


\begin{proof}
Consider the natural closed embedding $\Q_p\to \sC(G_0,\Q_p)$ which maps $\Q_p$ to constant functions. Take the completed tensor product with $W$ over $\Q_p$. We get a closed embedding
\[W\to \sC(G_0,\Q_p)\widehat\otimes_{\Q_p} W\cong \sC(G_0,W).\]
Denote the quotient by $Q$ equipped with the quotient topology. This is naturally a $\Q_p$-Banach representation of $G_0$. Hence we get a short exact sequence
\[0\to W\to \sC(G_0,W)\to Q\to 0.\]
Passing to the $G_n$-analytic vectors, we have
\[  \sC(G_0,W)^{G_n-\an}\to Q^{G_n-\an}\to H^1_{\cont}(G_n,\sC^{\an}(G_n,W))\to H^1_{\cont}\left(G_n,\sC^{\an}(G_n, \sC(G_0,W))\right)\to.\]
Since $\sC^{\an}\left(G_n, \sC(G_0,W)\right)\cong \sC(G_0,\Q_p)\widehat\otimes_{\Q_p}\sC^\an(G_n,W)$, it follows from Corollary \ref{injnoHi} that 
\[H^i_{\cont}\left(G_n,\sC^{\an}(G_n, \sC(G_0,W))\right)=0,i\geq 1.\]
(Note that $ \sC(G_0,\Q_p)\cong  \sC(G_n,\Q_p)^{\oplus [G_0:G_n]}$.)
In particular,  we have 
\[H^{i+1}_\cont\left(G_n,\sC^\an(G_n,W)\right)\cong H^i_\cont\left(G_n,\sC^\an(G_n,Q)\right),i\geq 1,\]
and an exact sequence 
\[ 0\to W^{G_n-\an}\to\sC(G_0,W)^{G_n-\an}\to Q^{G_n-\an}\to H^1_{\cont}(G_n,\sC^{\an}(G_n,W))\to 0.\]
Denote the quotient $\sC(G_0,W)^{G_n-\an}/W^{G_n-\an}$ by $Q_n$ equipped with the quotient topology. This is a $\Q_p$-Banach space. Now consider the direct limit over $n$:
\[0\to \varinjlim_n Q_n\to\varinjlim_n Q^{G_n-\an}\to R^1\mathfrak{LA}(W)=0~~~~\mbox{(by our assumption)}.\]
Hence $\displaystyle \varinjlim_n Q_n\to\varinjlim_n Q^{G_n-\an}\cong  \varinjlim_n Q^{G_n-\an}=Q^{\la}$ is an isomorphism.
Example \ref{exala} shows  that $Q^{\la}$ is a Hausdorff LB-space. Thus 
$Q^{G_n-\an}\subseteq Q_m$ for some $m\geq n$  by Proposition \ref{1.1.10}. Equivalently, the image of 
\[H^1_{\cont}(G_n,\sC^{\an}(G_n,W))\to H^1_{\cont}(G_m,\sC^{\an}(G_m,W))\]
is zero. This proves that $\left\{H^1_{\cont}(G_n,W\widehat{\otimes}_{\Q_p}\sC^{\an}(G_n,\Q_p))\right\}_n$ is essentially zero. The general case can be proved by induction on $i$ using the isomorphism 
\[H^{i+1}_\cont\left(G_n,\sC^\an(G_n,W)\right)\cong H^i_\cont\left(G_n,\sC^\an(G_n,Q)\right),i\geq 1.\]
We omit the detail here.
\end{proof}

\section{Some results of \texorpdfstring{\cite{Pan20}}{} and generalizations} \label{Rf20}
\subsection{Horizontal Cartan action}\label{R20}

\begin{para} \label{brr}
First we recall some notation in  \cite[\S 4]{Pan20}. As in the introduction, let $C$ be the completion of $\overbar\Q_p$. For a neat open compact subgroup $K$ of $\GL_2(\A_f)$, we denote by $X_K$ the complete modular curve of level $K$ over $\Q$ and by $\mathcal{X}_K$ the adic space associated to $X_K\times_{\Q}C$. Fix an open compact subgroup $K^p$ of $\GL_2(\A_f^p)$  contained in the level-$N$-subgroup $\{g\in\GL_2(\hat{\Z}^p)=\prod_{l\neq p}\GL_2(\Z_l)\,\vert\, g\equiv1\mod N\}$ for some $N\geq 3$ prime to $p$ throughout this paper. Then there exists a unique perfectoid space $\mathcal{X}_{K^p}$ over $C$ such that
\[\mathcal{X}_{K^p}\sim\varprojlim_{K_p\subset\GL_2(\Q_p)}\mathcal{X}_{K^pK_p},\]
where $K_p$ runs through all open compact subgroups of $\GL_2(\Q_p)$. Very loosely speaking, on non-cusp points, $\mathcal{X}_{K^p}$ parametrizes elliptic curves with level-$K^p$-structure away from p and a trivialization of the $p$-adic Tate module. Let $V=\Q_p^{\oplus 2}$ be the standard representation of $\GL_2(\Q_p)$. Our convention here is that the local system associated to $V$ agrees with the first (relative) $p$-adic \'etale cohomology of the universal family of elliptic curves. Hence the universal $p$-adic Tate module corresponds to the representation $V(1)$, where  $(1)$ denotes the Tate twist by $1$, or $V^*$, the dual representation of $V$.

We denote the automorphic line bundle on finite level of modular curves by $\omega^1$ and its pull-back to $\mathcal{X}_{K^p}$ by $\omega_{K^p}$.  On the infinite level $\mathcal{X}_{K^p}$,  we have the following exact sequence coming from the relative Hodge-Tate filtration: 
\begin{eqnarray}\label{rHT}
0\to\omega_{K^p}^{-1}(1)\to V(1)\otimes_{\Q_p} \cO_{\mathcal{X}_{K^p}} \to \omega_{K^p}\to 0,
\end{eqnarray}
or equivalently,
\[0\to\omega_{K^p}^{-1}\to V\otimes_{\Q_p} \cO_{\mathcal{X}_{K^p}} \to \omega_{K^p}(-1)\to 0.\]
Then the variation of Hodge-Tate decompositions, or more precisely the position of $\omega^{-1}_{K^p}$ in $V\otimes_{\Q_p} \cO_{\mathcal{X}_{K^p}}$, induces a $\GL_2(\Q_p)$-equivariant Hodge-Tate period map
\[\pi_{\HT}:\mathcal{X}_{K^p}\to\Fl.\]
Here $\Fl=\mathbb{P}^1$ is the adic space over $C$ associated to the usual flag variety for $\GL_2$. We note that there is an ample line bundle $\omega_{\Fl}$  on  $\Fl$ (tautological line bundle) whose pull-back along $\pi_{\HT}$ agrees with $\omega_{K^p}(-1)$.

Let $\cO_{K^p}={\pi_{\HT}}_*\cO_{\mathcal{X}_{K^p}}$ be the  push-forward of the structure sheaf of $\mathcal{X}_{K^p}$ along $\pi_{\HT}$ and $\cO^{\la}_{K^p}\subset \cO_{K^p}$ be the subsheaf of $\GL_2(\Q_p)$-locally analytic sections introduced in \cite[4.2.6]{Pan20}. Let $\mathfrak{g}:=\mathfrak{gl}_2(C)$ be the complexified Lie algebra of $\GL_2(\Q_p)$. Then there is a natural action of $\mathfrak{g}^0:=\cO_{\mathrm{Fl}}\otimes_{C}\mathfrak{g}$ on $\cO^{\la}_{K^p}$. Let  $\mathfrak{b}^0$ (resp. $\mathfrak{n}^0$) be the subsheaf of horizontal Borel subalgebra (resp. subsheaf of horizontal nilpotent subalgebra). By \cite[Theorem.4.2.7]{Pan20}, $\mathfrak{n}^0$ acts trivially on $\cO^{\la}_{K^p}$, hence we get an action of $\mathfrak{b}^0/\mathfrak{n}^0$ on $\cO^{\la}_{K^p}$. Let  $\mathfrak{h}:=\{\begin{pmatrix} * & 0 \\ 0 & * \end{pmatrix}\}$ be a Cartan subalgebra of the Borel subalgebra $\mathfrak{b}:=\{\begin{pmatrix} * & * \\ 0 & * \end{pmatrix}\}$. It acts on $\cO^{\la}_{K^p}$ via the natural embedding $\mathfrak{h}\to\cO_{\mathrm{\Fl}}\otimes_{C}\mathfrak{h}=\mathfrak{b}^0/\mathfrak{n}^0$. This horizontal action will be denoted by $\theta_\kh$ and encodes the infinitesimal character, cf. \cite[Corollary 4.2.8]{Pan20}.
\end{para}

\begin{para} \label{omegakla}
In this paper, we also need to consider twists of  $\cO^{\la}_{K^p}$. Fix an integer $k$. Let $U$ be an affinoid open subset of $\Fl$. Then $\omega_{\Fl}$ is trivial on $U$. Hence by choosing a generator of $\omega_{\Fl}|_U$, we get an isomorphism $\cO_{\mathcal{X}_{K^p}}(\pi_{\HT}^{-1}(U))\cong \omega^k_{K^p}(\pi_{\HT}^{-1}(U))$. This defines a Banach space structure on $\omega_{K^p}(\pi_{\HT}^{-1}(U))$. It is clear that the topology defined on $\omega^k_{K^p}(\pi_{\HT}^{-1}(U))$ does not depend on the choice of the generator of $\omega_{\Fl}|_U$.  In particular, the subspace of locally analytic vectors in $\omega^k_{K^p}(\pi_{\HT}^{-1}(U))$ is well-defined.
Let $\omega^{k,\la}_{K^p}$ be the subsheaf of  $\GL_2(\Q_p)$-locally analytic sections of ${\pi_{\HT}}_*(\omega_{K^p})^{\otimes k}$. Similar to $\theta_\kh$, there is a natural horizontal Cartan action of $\kh$ on $\omega^{k,\la}_{K^p}$. There are two natural ways viewing $\omega^{k,\la}_{K^p}$ as a twist of $\cO^{\la}_{K^p}$: 
\begin{enumerate}
\item (twist on $\Fl$) $\omega^{k,\la}_{K^p}(-k)\cong \cO^{\la}_{K^p}\otimes_{\cO_{\Fl}}(\omega_{\Fl})^{\otimes k}$;
\item (twist on modular curves) $\omega^{k,\la}_{K^p}\cong \omega^{k,\sm} _{K^p}\otimes_{\cO^{\sm} _{K^p}}\cO^{\la}_{K^p}$.
\end{enumerate}
Here  $\cO^{\sm}_{K^p}=\omega^{0,\sm} _{K^p}$, and $\omega^{k,\sm} _{K^p}$ were introduced in \cite[5.3.3]{Pan20}. As the superscript suggests, $\omega^{k,\sm} _{K^p}\subset\omega^{k,\la} _{K^p}$ is the subsheaf of $\GL_2(\Q_p)$-smooth sections, i.e. sections annihilated by $\mathfrak{g}$. Equivalently, 
\[\omega^{k,\sm}_{K^p}={\pi_{\HT}}_{*} (\varinjlim_{K_p\subset \GL_2(\Q_p)}(\pi_{K_p})^{-1} \omega^{ k}),\] 
where $\pi_{K_p}:\mathcal{X}_{K^p}\to\mathcal{X}_{K^pK_p}$ denotes the natural projection and $\pi_{K_p}^{-1}$ is the pull-back as sheaf of abelian groups. The first isomorphism (twist on $\Fl$) is clear and the second isomorphism will be proved later in the next Subsection. It's easy to compute that the horizontal action of $\kh$  on $\omega^{k,\sm}_{K^p}$ is trivial and the action on $\omega_{\Fl}$ is  via the character sending $\begin{pmatrix} a & 0\\ 0 & d \end{pmatrix}\in\kh$ to $d$. Both isomorphisms are $\kh$-equivariant with respect to these actions.
\end{para}

\subsection{Local structure of \texorpdfstring{$\cO^{\la,\chi}_{K^p}$}{Lg}}
\begin{para} \label{exs}
For a weight $\chi$ of $\kh$, we write $\chi(\begin{pmatrix} a & 0\\ 0 & d \end{pmatrix}) =n_1a+n_2d$ for some $n_1,n_2\in C$ and identify $\chi$ with an ordered pair $(n_1,n_2)\in C^2$. Throughout the paper, we will only consider the case of \textit{integral weights}, i.e. 
\[(n_1,n_2)\in\Z^2.\] 
Denote by $\cO^{\la,\chi}_{K^p}$ the weight-$\chi$ subsheaf of $\cO^{\la}_{K^p}$ under $\theta_\kh$ and define $\omega^{k,\la,\chi}_{K^p}\subset\omega^{k,\la}_{K^p}$ similarly. An explicit description of $\cO^{\la,\chi}_{K^p}$ was given in \cite[4.3,5.1]{Pan20}. We recall it here and generalize to $\omega^{k,\la,\chi}_{K^p}$ as well.

Recall that we have the following exact sequence: 
\begin{eqnarray*}
0\to\omega_{K^p}^{-1}(1)\to V(1)\otimes_{\Q_p} \cO_{\mathcal{X}_{K^p}} \to \omega_{K^p}\to 0
\end{eqnarray*}
coming from the relative Hodge-Tate filtration. Taking $\wedge^2$ of $V(1)\otimes_{\Q_p} \cO_{\mathcal{X}_{K^p}}$ in this exact sequence \eqref{rHT}, we get an isomorphism 
\[\cO_{\mathcal{X}_{K^p}}(1)=\omega_{K^p}^{-1}(1)\otimes_{\cO_{\mathcal{X}_{K^p}}}\omega_{K^p}\cong \wedge^2 V(1) \otimes_{\Q_p} \cO_{\mathcal{X}_{K^p}}=\cO_{\mathcal{X}_{K^p}}\otimes\det(2),\]
i.e. $\cO_{\mathcal{X}_{K^p}}\cong \cO_{\mathcal{X}_{K^p}}\otimes\det(1)$, where $\det$ denotes the determinant representation of $\GL_2(\Q_p)$.
Fix  a basis $b$ of $\Q_p(1)$ from now on. Then under this isomorphism,  $1\otimes b$ defines  an invertible function $\mathrm{t}\in H^0(\mathcal{X}_{K^p},\cO_{\mathcal{X}_{K^p}})$, on which the Galois group $G_{\Q_p}$ acts via the cyclotomic character. We remark that $\GL_2(\A_f)$ acts on $\mathrm{t}$ via a non-trivial character because implicitly we identify $\wedge^2 D$ with a trivial line bundle.  See \ref{HEt} below for more details.  
Note that the notation for $\mathrm{t}$ in \cite[4.3.1]{Pan20} was $t$. We decide to change the notation here because $t$ will be used for Fontaine's $2\pi i$ later. We will use the notation $\cdot \mathrm{t}$ to denote the twist by $\mathrm{t}$ to remember the action of $G_{\Q_p}\times\GL_2(\A_f)$.

Let $(1,0),(0,1)$ be the standard basis of $V$. We denote their images in $H^0(\mathcal{X}_{K^p},\omega_{K^p})$ under the surjective map in \eqref{rHT} by $e_1,e_2$. (Here we identify $V(1)$ with $V$ using $b$.) Hence we may view $e_1,e_2$ as sections of $\omega_{\Fl}$ on $\Fl$ and $x:=\frac{e_2}{e_1}$ defines rational function on $\Fl$. 

By \cite[Theorem III.1.2]{Sch15} (see also \cite[Theorem 4.1.7]{Pan20}), there exists a basis of open affinoid subsets  $\mathfrak{B}$  of $\Fl$ stable under finite intersections such that for any $U\in\mathfrak{B}$,
\begin{itemize}
\item its preimage $V_\infty=\pi_{\HT}^{-1}(U)$ is affinoid perfectoid;
\item $V_\infty$ is the preimage of an affinoid subset $V_{K_p}\subset\mathcal{X}_{K^pK_p}$ for sufficiently small open subgroup $K_p$ of $\GL_2(\Q_p)$;
\item the map $\varinjlim_{K_p}H^0(V_{K_p},\cO_{\mathcal{X}_{K^pK_p}})\to H^0(V_\infty,\cO_{\mathcal{X}_{K^p}})$
has dense image.
\end{itemize}
We briefly recall the choice of $\mathfrak{B}$ in \cite[Theorem 4.1.7]{Pan20}. Let $U_1,U_2\subset \Fl$ be the affinoid open subsets defined by $\{\|x\|\leq1\},\{\|x\|\geq1\}$. Then $\mathfrak{B}$ consists of finite intersections of rational subsets of $U_1,U_2$.

Fix $U\in\mathfrak{B}$ and assume that
\begin{itemize}
\item $U$ is an open subset of $U_1$, hence $e_1$ generates $H^0(V_\infty, \omega_{K^p})$ and $x=\frac{e_2}{e_1}$ is a regular function on $V_\infty$.
\end{itemize}
Now we would like to give an explicit description of $\cO^{\la,\chi}_{K^p}(U)$ and $\omega^{k,\la,\chi}_{K^p}(U)$. Let $G_0=1+p^mM_2(\Z_p),m\geq 2$ be an open subgroup of $\GL_2(\Q_p)$ so that $V_\infty$ is the preimage of an affinoid subset $V_{G_0}\subset\mathcal{X}_{K^pG_0}$. Let $G_n=G_0^{p^n}=1+p^{m+n}M_2(\Z_p)$ and $V_{G_n}\subset\mathcal{X}_{K^pG_n}$ be the preimage of $V_{G_0}$. Then $\varinjlim_{n}H^0(V_{G_n},\cO_{\mathcal{X}_{K^pG_n}})\to H^0(V_\infty,\cO_{\mathcal{X}_{K^p}})$
has dense image.  As explained in \cite[4.3.5]{Pan20}, for any $n\geq0$, we can find
\begin{itemize}
\item an integer $r(n)>r(n-1)>0$;
\item $x_n\in H^0(V_{G_{r(n)}},\cO_{\mathcal{X}_{K^pG_{r(n)}}})$ such that $\|x-x_n\|_{G_{r(n)}}=\|x-x_n\|\leq p^{-n}$ in $H^0(V_\infty,\cO_{\mathcal{X}_{K^p}})$,
\end{itemize}
and assume that $\omega^1$ is a trivial line bundle on $V_{K^pG_{r(n)}}$. Here $\|\cdot\|_{G_{r(n)}}$ denotes the norm on $G_{r(n)}$-analytic vectors and $\|\cdot\|$ is the usual norm on $\cO_{\mathcal{X}_{K^p}}$.  
\end{para}

\begin{thm} \label{str}
Suppose $\chi=(n_1,n_2)\in\Z^2$. 
\begin{enumerate}
\item For any $n\geq0$, given a sequence of sections 
\[c_i\in H^0(V_{G_{r(n)}},\omega^{n_1-n_2}),i=0,1,\cdots\] 
such that the norm of $e_1^{n_2-n_1}c_i p^{(n-1)i}\in \cO_{\mathcal{X}_{K^p}}(V_\infty),i\geq 0$ is uniformly bounded, then
\[f=\mathrm{t}^{n_1}e_1^{n_2-n_1}\sum_{i=0}^{+\infty}c_i(x-x_n)^i=\mathrm{t}^{n_1}\sum_{i=0}^{+\infty}e_1^{n_2-n_1}c_i(x-x_n)^i\]
converges in $\cO^{\la,\chi}_{K^p}(U)^{G_{r(n)-\an}}$ and any $G_{n}$-analytic vector in $\cO^{\la,\chi}_{K^p}(U)$ arises in this way. Moreover $f=0$ if and only if all $c_i=0$.
\item Similarly, for any $n\geq0$, given a sequence of sections $c'_i\in H^0(V_{G_{r(n)}},\omega^{n_1-n_2+k}),i=0,1,\cdots$ such that  $\|e_1^{n_2-n_1-k}c'_i \|p^{-(n-1)i}$ is uniformly bounded, then 
\[f=\mathrm{t}^{n_1}e_1^{n_2-n_1}\sum_{i=0}^{+\infty}c'_i(x-x_n)^i\]
converges in $\omega^{k,\la,\chi}_{K^p}(U)^{G_{r(n)}-\an}$ and any $G_n$-analytic vector in $\omega^{k,\la,\chi}_{K^p}(U)$ arises in this way. Moreover $f=0$ if and only if all $c'_i=0$.
\end{enumerate}
 \end{thm}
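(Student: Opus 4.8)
\emph{Overall approach.} The explicit description of the untwisted sheaf $\cO^{\la,\chi}_{K^p}$ is essentially \cite[4.3, 5.1]{Pan20}, so I would organize the proof as follows: first reduce the twisted statement (2) to the untwisted statement (1); then prove the ``easy'' implication of (1), namely convergence, local analyticity, the weight condition and the vanishing criterion; and finally prove the ``hard'' implication of (1), that every $G_n$-analytic vector of weight $\chi$ has the stated form. For the reduction, having arranged (by shrinking levels) that $\omega^1$ is trivial on $V_{K^pG_{r(n)}}$, I fix a generator $\eta$ of $\omega^1(V_{G_{r(n)}})$, a $\GL_2(\Q_p)$-smooth section of $\omega_{K^p}$. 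The isomorphism $\omega^{k,\la}_{K^p}\cong\omega^{k,\sm}_{K^p}\otimes_{\cO^{\sm}_{K^p}}\cO^{\la}_{K^p}$ of \ref{omegakla} preserves $\theta_\kh$-weights (the horizontal action on $\omega^{k,\sm}_{K^p}$ being trivial), so $\omega^{k,\la,\chi}_{K^p}(U)=\eta^{\otimes k}\cdot\cO^{\la,\chi}_{K^p}(U)$; since $\eta$ and $e_1$ both trivialize $\omega_{K^p}$ on $V_\infty$ their ratio is a unit of $\cO_{\mathcal{X}_{K^p}}(V_\infty)$, so putting $c'_i:=\eta^{\otimes k}c_i\in H^0(V_{G_{r(n)}},\omega^{n_1-n_2+k})$ carries the description in (1) verbatim to the one in (2), the two boundedness conditions agreeing up to the fixed constant $\|(\eta/e_1)^{\otimes k}\|$. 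Hence it suffices to prove (1).

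\emph{The easy half of (1).} Given $c_i$ as in (1), the bound $\|e_1^{n_2-n_1}c_i\|\le Mp^{(n-1)i}$ together with $\|x-x_n\|\le p^{-n}$ gives $\|e_1^{n_2-n_1}c_i(x-x_n)^i\|\le Mp^{-i}$, so $f:=\mathrm{t}^{n_1}e_1^{n_2-n_1}\sum_i c_i(x-x_n)^i$ converges in $\cO_{\mathcal{X}_{K^p}}(V_\infty)$. For $g\in G_{r(n)}=1+p^{m+n}M_2(\Z_p)$ one has $g\cdot x_n=x_n$ and $g\cdot c_i=c_i$ (both are pulled back from level $G_{r(n)}$), $g\mapsto g\cdot\mathrm{t}$ is a $p$-adic analytic character, the $\GL_2$-action on $e_1$ and $x$ is algebraic and hence $G_{r(n)}$-analytic on $U_1$ (using $\|x\|\le1$ on $U$), and $\|g\cdot x-x\|\le p^{-(m+n)}$, so that $\|g\cdot x-x_n\|\le p^{-n}$ for \emph{every} $g\in G_{r(n)}$; substituting and checking uniform convergence of the resulting series in $g$ exhibits $f$ as a $G_{r(n)}$-analytic vector. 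That $f$ lies in the $\chi$-eigenspace for $\theta_\kh$ is the weight computation (cf. \ref{omegakla} and \cite[4.3]{Pan20}) that $\mathrm{t}^{n_1}e_1^{n_2-n_1}$ has weight $\chi$ while $x$, $x_n$ and the $c_i$ have weight $0$. Finally $f=0$ forces all $c_i=0$: $\mathrm{t}^{n_1}e_1^{n_2-n_1}$ is invertible on $V_\infty$, and one peels off the coefficients one by one using a derivation $\theta$ with $\theta(x-x_n)$ a unit together with the injectivity of $H^0(V_{G_{r(n)}},\cO)\to\cO_{\mathcal{X}_{K^p}}(V_\infty)/(x-x_n)$, which follows from the density of $\varinjlim_m H^0(V_{G_m},\cO)$ in $\cO_{\mathcal{X}_{K^p}}(V_\infty)$.

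\emph{The hard half of (1), and the main obstacle.} It remains to show that every $f\in\cO^{\la,\chi}_{K^p}(U)^{G_n-\an}$ has the stated form, and this is where I expect the real work. The key input is the differential equation satisfied by $\cO^{\la}_{K^p}$, i.e. \cite[Lemma 4.3.3]{Pan20}, which this paper reproves conceptually via the Kodaira--Spencer class (see Theorem \ref{LTde}): since $\mathfrak{n}^0$ annihilates $\cO^{\la}_{K^p}$ and $\theta_\kh$ acts by $\chi$, a distinguished element of $\mathfrak{g}$ acts on $\cO^{\la}_{K^p}$ over $U_1$ as a first-order operator $\theta$ with $\theta(x-x_n)$ a unit. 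Dividing out $\mathrm{t}^{n_1}e_1^{n_2-n_1}$ and a Hasse-type unit I would set $c_i:=\frac{1}{i!}(\theta^i f)|_{x=x_n}$ and check inductively that $f-\mathrm{t}^{n_1}e_1^{n_2-n_1}\sum_{i<N}c_i(x-x_n)^i$ is divisible by $(x-x_n)^N$, then pass to the limit $N\to\infty$. The main obstacle is the precise bookkeeping for these $c_i$: one must show each $c_i$ is already defined at level $G_{r(n)}$ --- it is extracted from a $G_n$-analytic vector by operators from $\mathfrak{g}$, which commute with the transition maps of the tower, followed by the restriction to $\{x=x_n\}$, which lives at level $G_{r(n)}$ --- and that the growth $\|e_1^{n_2-n_1}c_i\|p^{-(n-1)i}$ stays bounded, which comes from matching the radius of convergence of a $G_n$-analytic vector (analytic on a disc of radius comparable to $p^{-(m+n)}$) against $\|x-x_n\|\le p^{-n}$, the loss of one unit in the exponent $n-1$ being the usual artifact. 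Granting these two estimates, the expansion converges to $f$, which completes the proof of (1) and hence of the theorem.
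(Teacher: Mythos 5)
The paper's own proof is essentially a citation: part (1) is exactly \cite[Theorem 4.3.9]{Pan20} combined with \cite[Lemma 5.1.2]{Pan20} (with a dictionary for the coefficients), and part (2) is reduced to part (1) by the isomorphism $\cO^{\la}_{K^p}(U)\cong\omega^{k,\la}_{K^p}(U)$ given by multiplication by $e_1^k$. Your reduction of (2) to (1) via a smooth trivialization $\eta^{\otimes k}$ of $\omega^k$ is a correct (and weight-preserving) variant of this, and your ``easy half'' of (1) --- convergence, $G_{r(n)}$-analyticity, and the weight computation --- is sound. So up to that point you are on the same route as the paper.

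The gap is in your ``hard half'' and in the uniqueness claim. First, the two estimates you explicitly ``grant'' --- that the Taylor coefficients $c_i$ descend to level $G_{r(n)}$ and satisfy the growth bound with exponent $(n-1)i$ --- are precisely the content of \cite[Theorem 4.3.9]{Pan20}; as a self-contained argument nothing has been proved, and as a citation-based argument you should simply invoke that theorem (as the paper does) rather than re-derive it. Second, and more seriously, your mechanism for extracting and separating coefficients via ``restriction to $\{x=x_n\}$'' is not well-founded: $x-x_n$ is merely a function of norm $\le p^{-n}$ on the perfectoid space $V_\infty$, its zero locus need not be a reasonable space, and the injectivity of $H^0(V_{G_{r(n)}},\cO)\to\cO_{\mathcal{X}_{K^p}}(V_\infty)/(x-x_n)$ certainly does not follow from density of $\varinjlim_m H^0(V_{G_m},\cO)$ in $\cO_{\mathcal{X}_{K^p}}(V_\infty)$ (density of a subspace says nothing about injectivity into a quotient). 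In \cite{Pan20} the coefficients are read off from the orbit map $g\mapsto g\cdot f$, i.e.\ from the Taylor expansion of a $G_n$-analytic function along the unipotent direction of the group, and uniqueness comes from the resulting norm identity $\|f\|_n=\sup_i\|e_1^{n_2-n_1}c_ip^{(n-1)i}\|$ (cf.\ \ref{An}), not from evaluating on a fiber of $x-x_n$. You would need to replace your extraction and uniqueness steps by that group-theoretic argument, or simply cite \cite[Theorem 4.3.9, Lemma 5.1.2]{Pan20}.
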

 
 \begin{proof}
 The first part follows from Theorem 4.3.9 and Lemma 5.1.2 of \cite{Pan20}. Note that $c_i$ here is $(\frac{1}{t_N})^{n_1}(\frac{1}{e_{1,N}})^{n_2-n_1}c_i^{(n)}(f)$ in \cite[Lemma 5.1.2]{Pan20}. The second part can be reduced to the first part since multiplication by $e_1^{k}$ induces an isomorphism $\cO^{\la}_{K^p}(U)\cong \omega^{k,\la}_{K^p}(U) $. 
 \end{proof}
 
 \begin{cor} \label{density}
 Let $\chi=(n_1,n_2)\in\Z^2,k\in\Z$.  We denote by $\omega^{k,\la,\chi}_{K^p}(U)^{\kn-fin}\subseteq\omega^{k,\la,\chi}_{K^p}(U)$ the subspace consisting of elements of the form 
 \[\mathrm{t}^{n_1}e_1^{n_2-n_1}\sum_{i=0}^lc_ix^i,~~~~~c_i\in H^0(V_{G_n},\omega^{n_1-n_2+k})\mbox{ for some }l,n\geq 0.\]
 Then
 \begin{itemize}
 \item $\omega^{k,\la,\chi}_{K^p}(U)^{\kn-fin}$ is dense in $\omega^{k,\la,\chi}_{K^p}(U)$.
 \item There is a natural isomorphism 
 \[\omega^{k,\la,\chi}_{K^p}(U)^{\kn-fin}\cong \omega^{n_1-n_2+k,\sm}_{K^p}(U)\otimes_{\Q_p} M^{\vee}_{(n_2,n_1)},\]
 where $M^{\vee}_{(n_2,n_1)}\subseteq \omega^{n_2-n_1,\la,\chi}_{K^p}(U)$ consists of elements of the form
 \[\mathrm{t}^{n_1}e_1^{n_2-n_1}\sum_{i=0}^lc_ix^i,~~~~~c_i\in \Q_p \mbox{ for some }l\geq 0.\]
 (The notation comes from the category $\cO$, cf. \ref{CatO} below.)
\end{itemize} 
 \end{cor}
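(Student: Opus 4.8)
The plan is to read both assertions off Theorem~\ref{str}(2), which already contains all the substantive analysis; what remains is bookkeeping about finite levels. As a preliminary remark, note that a finite expression $\mathrm{t}^{n_1}e_1^{n_2-n_1}\sum_{i=0}^{l}c_i x^i$ with $c_i\in H^0(V_{G_n},\omega^{n_1-n_2+k})$ genuinely defines an element of $\omega^{k,\la,\chi}_{K^p}(U)$: after possibly enlarging $n$ one may rewrite it as $\mathrm{t}^{n_1}e_1^{n_2-n_1}\sum_{j}c'_j(x-x_n)^j$, a finite sum with $c'_j\in H^0(V_{G_{r(n)}},\omega^{n_1-n_2+k})$, using that $x_n$ is a regular function on $V_{G_{r(n)}}$; this is a (trivially convergent, being finite) instance of the series produced by Theorem~\ref{str}(2). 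In particular $M^{\vee}_{(n_2,n_1)}\subseteq\omega^{n_2-n_1,\la,\chi}_{K^p}(U)$ (take all $c_i\in\Q_p$) and $\omega^{k,\la,\chi}_{K^p}(U)^{\kn-fin}\subseteq\omega^{k,\la,\chi}_{K^p}(U)$, so both objects in the statement make sense.

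For the density statement: any $f\in\omega^{k,\la,\chi}_{K^p}(U)$ is $G_n$-analytic for some $n$, so Theorem~\ref{str}(2) gives a convergent expansion $f=\mathrm{t}^{n_1}e_1^{n_2-n_1}\sum_{i\geq 0}c'_i(x-x_n)^i$ in the Banach space $\omega^{k,\la,\chi}_{K^p}(U)^{G_{r(n)}-\an}$. Its partial sums converge to $f$ there, hence in $\omega^{k,\la,\chi}_{K^p}(U)$; and expanding $(x-x_n)^i$ (using that $x_n$ comes from finite level) exhibits each partial sum as $\mathrm{t}^{n_1}e_1^{n_2-n_1}\sum_{j=0}^{l}d_j x^j$ with $d_j\in H^0(V_{G_{r(n)}},\omega^{n_1-n_2+k})$, i.e. as an element of $\omega^{k,\la,\chi}_{K^p}(U)^{\kn-fin}$. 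Hence $\omega^{k,\la,\chi}_{K^p}(U)^{\kn-fin}$ is dense.

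For the isomorphism, I would take the natural map $\omega^{n_1-n_2+k,\sm}_{K^p}(U)\otimes_{\Q_p}M^{\vee}_{(n_2,n_1)}\to\omega^{k,\la,\chi}_{K^p}(U)^{\kn-fin}$ to be multiplication, $s\otimes\big(\mathrm{t}^{n_1}e_1^{n_2-n_1}P(x)\big)\mapsto\mathrm{t}^{n_1}e_1^{n_2-n_1}\,s\,P(x)$ for $s\in\omega^{n_1-n_2+k,\sm}_{K^p}(U)$ and $P\in\Q_p[x]$; it is $\Q_p$-bilinear, so factors through the tensor product, and since a smooth section lives at some finite level $G_m$ its image is indeed $\kn$-finite. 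Surjectivity is immediate from the description of the target. For injectivity, use that $\{\mathrm{t}^{n_1}e_1^{n_2-n_1}x^i\}_{i\geq 0}$ is a $\Q_p$-basis of $M^{\vee}_{(n_2,n_1)}$, so an element of the source is uniquely $\sum_{i=0}^{l}s_i\otimes\big(\mathrm{t}^{n_1}e_1^{n_2-n_1}x^i\big)$; if its image $\mathrm{t}^{n_1}e_1^{n_2-n_1}\sum_i s_i x^i$ vanishes, choose $n$ with $r(n)$ large enough that each $s_i\in H^0(V_{G_{r(n)}},\omega^{n_1-n_2+k})$, re-expand in powers of $x-x_n$, and apply the "$f=0$ iff all coefficients vanish" clause of Theorem~\ref{str}(2); since the base change from $\{x^i\}$ to $\{(x-x_n)^i\}$ is unipotent and upper triangular, all $s_i=0$.

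The single point needing genuine care is this last injectivity statement — equivalently, uniqueness of the coefficients of a $\kn$-finite section — and that is precisely where the final clause of Theorem~\ref{str}(2) is invoked. Everything else is formal once that theorem is available; I anticipate no real obstacle beyond keeping track, at each step, of a finite level $G_n$ (or $G_{r(n)}$) large enough to contain the smooth sections in play.
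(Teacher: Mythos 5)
Your proof is correct and follows the route the paper intends: the paper's own proof of this corollary is the one-line remark that it is immediate from Theorem \ref{str}, and your argument simply spells out that deduction (density via partial sums of the $G_{r(n)}$-analytic expansion, and injectivity of the multiplication map via the uniqueness-of-coefficients clause of Theorem \ref{str}(2) together with the unipotent change of basis between $\{x^i\}$ and $\{(x-x_n)^i\}$). No discrepancy with the paper's approach.
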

 
\begin{proof}
This is obvious in view of Theorem \ref{str}.
\end{proof}
 
\begin{rem} \label{tn1}
It is clear that $\cO^{\la,(n_1+k,n_2+k)}_{K^p}=\cO^{\la,(n_1,n_2)}_{K^p}\cdot\mathrm{t}^k$ for any integer $k$. We will use  later to reduce calculations to the case when $n_1=0$.
\end{rem}

\begin{para} \label{lalg0k}
Suppose $k$ is a non-negative integer and consider the case when $\chi=(0,k)$, i.e. $n_1=0,n_2=k$. Recall that $V=\Q_p^{\oplus 2}$ denotes the standard representation of $\GL_2(\Q_p)$.  The relative Hodge-Tate filtration induces a map $V(1)\to \omega^1_{K^p}$. Taking its $k$-th symmetric power, we get a map $\Sym^k V(k)\to \omega^k_{K^p}$, which is nothing but the natural map $H^0(\Fl,\omega^k_{\Fl})(k)\to \omega^k_{K^p}$.  This defines a map $\Sym^k V(k)\otimes_{\Q_p}\omega^{-k,\sm}_{K^p}\to \cO^{\la}_{K^p}$ whose image lies in the weight-$(0,k)$ part. In fact, this map is injective, hence we get a natural inclusion 
\[\Sym^k V(k)\otimes_{\Q_p}\omega^{-k,\sm}_{K^p}\subseteq \cO^{\la,(0,k)}_{K^p}\]
and it is easy to see that the image exactly consists of $\GL_2(\Q_p)$-locally algebraic vectors of $\cO^{\la,(0,k)}_{K^p}$. In general, if $\chi=(n_1,n_1+k)$, we have an inclusion
\[(\Sym^k V(k)\otimes_{\Q_p}\omega^{-k,\sm}_{K^p})\cdot\mathrm{t}^{n_1}\subseteq \cO^{\la,\chi}_{K^p}.\]
\end{para}

\begin{defn} \label{lalgKp}
We denote the image by $\cO^{\lalg,\chi}_{K^p}\subseteq \cO^{\la,\chi}_{K^p}$.
\end{defn}
 
 \begin{para}[Definition of $A^n$] \label{An}
Let  $A^n\subset \cO^{\la,\chi}_{K^p}(U)$ be the subset of $f$ that can be written as $\mathrm{t}^{n_1}e_1^{n_2-n_1}\sum_{i=0}^{+\infty}c_i(x-x_n)^i$ for some $c_i\in H^0(V_{G_{r(n)}},\omega^{n_1-n_2})$ such that $\|e_1^{n_2-n_1}c_i p^{(n-1)i}\|$ is uniformly bounded. \cite[Remark 4.3.11]{Pan20} implies that $A^n$ is independent of the choice of $x_n$. This is a Banach space with respect to the norm
\[\|\mathrm{t}^{n_1}e_1^{n_2-n_1}\sum_{i=0}^{+\infty}c_i(x-x_n)\|_n:=\sup_{i\geq 0}\|e_1^{n_2-n_1}c_i p^{(n-1)i}\|.\]
Then there are continuous inclusions 
\[\cO^{\la,\chi}_{K^p}(U)^{G_n-\an}\subset A^n \subset \cO^{\la,\chi}_{K^p}(U)^{G_{r(n)}-\an}.\]
In particular, $\cO^{\la,\chi}_{K^p}(U)=\varinjlim_n A^n$ as LB-spaces. Note that $A^n$ can be viewed as a subspace of $\prod_{i=0}^{+\infty} H^0(V_{G_{r(n)}},\omega^{n_1-n_2})$ by sending $f$ to $\{c_i\}_{i=0,1,\cdots}$. More precisely, if we fix a generator $s\in H^0(V_{G_{r(n)}},\omega^{n_1-n_2})$ i.e. an isomorphism $H^0(V_{G_{r(n)}},\cO_{\mathcal{X}_{K^pG_{r(n)}}})\cong H^0(V_{G_{r(n)}},\omega^{n_1-n_2})$, then we have an isomorphism
\begin{eqnarray} \label{exAn}
\left(\prod_{i=0}^\infty \cO^+_{\mathcal{X}_{K^pG_{r(n)}}}(V_{G_{r(n)}})\right)\otimes_{\Z_p}\Q_p\cong A^n
\end{eqnarray}
by sending $(a_i)_{i\geq 0}\in \prod_{i=0}^\infty \cO^+_{\mathcal{X}_{K^pG_{r(n)}}}(V_
{G_{r(n)}})$ to
\[\mathrm{t}^{n_1}e_1^{n_2-n_1}s\sum_{i=0}^{+\infty}a_ip^{-(n-1)i}(x-x_n)^i.\]
In other words, we have
\begin{eqnarray} \label{exAn[[]]}
\cO^+_{\mathcal{X}_{K^pG_{r(n)}}}(V_{G_{r(n)}})[[\frac{x-x_n}{p^{n-1}}]] \otimes_{\Z_p} \Q_p\cong A^n
\end{eqnarray}

Similarly we can define a subspace $B^n\subset \omega^{k,\la,\chi}_{K^p}(U)$, which can also be viewed as a subspace of $\prod_{i=0}^{+\infty} H^0(V_{G_{r(n)}},\omega^{n_1-n_2+k})$. Then the natural isomorphism
\[H^0(V_{G_{r(n)}},\omega^{k})\otimes_{\cO(V_{G_{r(n)}})}\prod_{i=0}^{+\infty} H^0(V_{G_{r(n)}},\omega^{n_1-n_2})\cong \prod_{i=0}^{+\infty} H^0(V_{G_{r(n)}},\omega^{n_1-n_2+k})\]
induces an isomorphism $H^0(V_{G_{r(n)}},\omega^{k})\otimes_{\cO(V_{G_{r(n)}})}A^n\cong B^n$. Recall that we introduced the sheaf $\omega^{k,\sm}_{K^p}$ in \ref{omegakla}.  By definition, $\omega^{k,\sm}_{K^p}(U)=\varinjlim_n H^0(V_{G_{n}},\omega^{k})$. Hence by passing to the direct limit over $n$ of  the previous isomorphisms, we get the following natural isomorphism
\[\omega^{k,\sm}_{K^p}(U)\otimes_{\omega^{0,\sm}_{K^p}(U)} \cO^{\la,\chi}_{K^p}(U)\cong \omega^{k,\la,\chi}_{K^p}(U).\]
Same argument works when $e_2$ is a generator of  $H^0(V_\infty,\omega_{K^p})$. Therefore, we have

 \end{para}
\begin{lem} \label{tensomegaksm}
\[\omega^{k,\sm}_{K^p}\otimes_{\cO^{\sm}_{K^p}} \cO^{\la,\chi}_{K^p}\cong \omega^{k,\la,\chi}_{K^p}.\]
\end{lem}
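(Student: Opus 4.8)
The statement to prove is the sheaf-theoretic isomorphism $\omega^{k,\sm}_{K^p}\otimes_{\cO^{\sm}_{K^p}} \cO^{\la,\chi}_{K^p}\cong \omega^{k,\la,\chi}_{K^p}$, and the plan is to reduce it to the local statement already established over a basis of affinoids. Since $\mathfrak{B}$ is a basis of open affinoid subsets of $\Fl$ stable under finite intersections, it suffices to check the isomorphism on sections over each $U\in\mathfrak{B}$, compatibly with restriction maps; sheafification then gives the global statement. The heart of the matter is therefore to produce, for each such $U$, a natural isomorphism $\omega^{k,\sm}_{K^p}(U)\otimes_{\cO^{\sm}_{K^p}(U)} \cO^{\la,\chi}_{K^p}(U)\cong \omega^{k,\la,\chi}_{K^p}(U)$.

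First I would fix $U\in\mathfrak{B}$ and, after possibly swapping the roles of $e_1$ and $e_2$, assume $U\subseteq U_1$ so that $e_1$ generates $H^0(V_\infty,\omega_{K^p})$ and $x=e_2/e_1$ is regular on $V_\infty$; this is the setting of Theorem \ref{str} and of the construction of $A^n$ in \ref{An}. The key computation is already recorded there: at each finite level one has the natural isomorphism
\[
H^0(V_{G_{r(n)}},\omega^{k})\otimes_{\cO(V_{G_{r(n)}})}\prod_{i=0}^{+\infty} H^0(V_{G_{r(n)}},\omega^{n_1-n_2})\cong \prod_{i=0}^{+\infty} H^0(V_{G_{r(n)}},\omega^{n_1-n_2+k}),
\]
which, via the descriptions of $A^n$ and $B^n$ as subspaces of these products (using a chosen generator $s$ of $\omega^{n_1-n_2}$ on $V_{G_{r(n)}}$), yields $H^0(V_{G_{r(n)}},\omega^{k})\otimes_{\cO(V_{G_{r(n)}})}A^n\cong B^n$. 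Passing to the colimit over $n$ and using $\cO^{\la,\chi}_{K^p}(U)=\varinjlim_n A^n$, $\omega^{k,\la,\chi}_{K^p}(U)=\varinjlim_n B^n$, and $\omega^{k,\sm}_{K^p}(U)=\varinjlim_n H^0(V_{G_n},\omega^{k})$, one obtains the claimed isomorphism on $U$; exactness of filtered colimits and of the relevant tensor products makes this formal. One then checks that the isomorphism is independent of the auxiliary choices ($x_n$, $s$, the generator of $\omega_{\Fl}|_U$) — independence from $x_n$ is \cite[Remark 4.3.11]{Pan20}, and the others drop out because the map is characterized intrinsically by sending $\sigma\otimes f\mapsto \sigma\cdot f$ — and that it is compatible with restrictions between members of $\mathfrak{B}$, so that it glues to a map of sheaves.

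The only genuine subtlety, and the step I expect to need the most care, is the commutation of the colimit over $n$ with the tensor product $\omega^{k,\sm}_{K^p}(U)\otimes_{\cO^{\sm}_{K^p}(U)}(-)$: one must argue that $\varinjlim_n\big(H^0(V_{G_n},\omega^k)\otimes_{\cO(V_{G_n})}A^n\big)$ computes $\omega^{k,\sm}_{K^p}(U)\otimes_{\cO^{\sm}_{K^p}(U)}\cO^{\la,\chi}_{K^p}(U)$ as a topological (LB-space) tensor product, not merely algebraically. Here one uses that $H^0(V_{G_{n+1}},\omega^k)=H^0(V_{G_n},\omega^k)\otimes_{\cO(V_{G_n})}\cO(V_{G_{n+1}})$ for the smooth tower so that the transition maps are compatible with base change, together with the fact that $\omega^1$ is a line bundle, hence locally free of rank one, so that the tensor product is just a base change along $\cO^{\sm}_{K^p}(U)\to\omega^{k,\sm}_{K^p}(U)$ and introduces no completion issues. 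Once this bookkeeping is in place — and it is essentially already done in \ref{An} — the proof of Lemma \ref{tensomegaksm} is the remark that "this is obvious in view of Theorem \ref{str}": assemble the local isomorphisms and sheafify.
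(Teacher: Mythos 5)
Your proposal is correct and follows essentially the same route as the paper: the paper's argument is exactly the passage in \ref{An} identifying $H^0(V_{G_{r(n)}},\omega^{k})\otimes_{\cO(V_{G_{r(n)}})}A^n\cong B^n$ via the product descriptions coming from Theorem \ref{str}, followed by taking the direct limit over $n$ and noting the same works when $e_2$ is the generator. Your extra care about the colimit/tensor-product commutation is a reasonable elaboration of a step the paper treats as immediate, but it is not a departure from the paper's proof.
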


\begin{rem}
One can repeat the same argument without fixing weight $\chi$ (using \cite[Theorem 4.3.9]{Pan20} as an input), then one has the natural isomorphism
$\omega^{k,\la}_{K^p}\cong \omega^{k,\sm} _{K^p}\otimes_{\cO^{\sm} _{K^p}}\cO^{\la}_{K^p}$, which was claimed in \ref{omegakla}. 
\end{rem}

\begin{para} \label{vCech}
As pointed out in \cite[Lemma 4.3.14]{Pan20}, we can use $A^n$ to prove the following vanishing result of the \v{C}ech cohomology. This will only be used in the proof of Proposition \ref{Haus} below. Let $\mathfrak{U}=\{U^1,\cdots,U^l\}$ be a finite subset of $\mathfrak{B}$.  We can find $G_0$ of the form $1+p^mM_2(\Z_p)$ such that each $U^{i}\in\mathfrak{U}$ is $G_0$-stable. Let $C^\bullet(\mathfrak{U},\cO^{\la,\chi}_{K^p})$ be the \v{C}ech complex for $\cO^{\la,\chi}_{K^p}$ with respect to $\mathfrak{U}$. For $n\geq 0$, let
$C^\bullet(\mathfrak{U},\cO^{G_n-\an,\chi}_{K^p})$ be the subcomplex defined by restricting to the $G_n$-analytic vectors, i.e.
\[C^\bullet(\mathfrak{U},\cO^{G_n-\an,\chi}_{K^p}):=C^\bullet(\mathfrak{U},\cO^{\la,\chi}_{K^p})^{G_n-\an},\]
where $G_n=G_0^{p^n}$ as before.
We denote the cohomology of this complex by $\check{H}^\bullet(\mathfrak{U},\cO^{G_n-\an,\chi}_{K^p})$. Clearly there is a natural map $C^\bullet(\mathfrak{U},\cO^{G_n-\an,\chi}_{K^p})\to C^\bullet(\mathfrak{U},\cO^{G_{m}-\an,\chi}_{K^p})$ for $m\geq n$.
\end{para}

\begin{prop} \label{VCech}
Suppose $\mathfrak{U}=\{U^1,\cdots,U^l\}\subseteq \mathfrak{B}$ is a finite cover of $U\in\mathfrak{B}$. 
For any $i\geq 1$ and $n\geq 0$, the natural map 
\[\check{H}^i(\mathfrak{U},\cO^{G_n-\an,\chi}_{K^p})\to\check{H}^i(\mathfrak{U},\cO^{G_m-\an,\chi}_{K^p})\]
is zero when $m\geq n$ is sufficiently large.
\end{prop}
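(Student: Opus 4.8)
The plan is to reduce the statement to the concrete description of $\cO^{\la,\chi}_{K^p}(U)$ via the Banach spaces $A^n$ from \ref{An}, and then exploit the isomorphism \eqref{exAn[[]]} to turn the vanishing of \v{C}ech cohomology into a statement about power series rings over the integral structure sheaf. More precisely, for each $U^i \in \mathfrak{U}$ and each $n$, we have a compatible family of isomorphisms $\cO^+_{\mathcal{X}_{K^pG_{r(n)}}}(V^i_{G_{r(n)}})[[\tfrac{x-x_n}{p^{n-1}}]]\otimes_{\Z_p}\Q_p \cong (A^n)^i$, where the superscript $i$ indicates the object built over $U^i$; similarly over the various intersections $U^{i_0\cdots i_j}$. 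Since the \v{C}ech differential is $\cO_\Fl$-linear and in particular continuous, the \v{C}ech complex $C^\bullet(\mathfrak{U},\cO^{G_n-\an,\chi}_{K^p})$ sits (continuously, densely) inside a \v{C}ech-type complex assembled from the $A^n$-pieces. The first step, then, is to make this identification precise and to note that the \v{C}ech differential respects the decomposition into the coefficients $c_i$ of the power series in $(x-x_n)$.

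The second step is to invoke the vanishing of higher \v{C}ech cohomology for $\cO^+_{\mathcal{X}_{K^pG_{r(n)}}}$ on the cover $\mathfrak{U}$ (pulled down from the perfectoid level — this is exactly the input \cite[Lemma 4.3.14]{Pan20} alluded to in \ref{vCech}, ultimately resting on the fact that the relevant affinoids in $\mathfrak{B}$ behave well and that higher cohomology of the structure sheaf on affinoid perfectoids is almost zero, so that after inverting $p$ it vanishes). Because the coefficient-wise description of $A^n$ realizes it as a completed direct product $\prod_{i\geq 0}\cO^+_{\mathcal{X}_{K^pG_{r(n)}}}(V_{G_{r(n)}})$ after scaling, and \v{C}ech cohomology commutes with such products in this setting, one gets that $\check{H}^i(\mathfrak{U}, A^\bullet)$ is killed by a bounded power of $p$, say $p^{c}$ with $c$ independent of everything relevant. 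This is the heart of the "strictness/no-separated-quotient-needed" improvement promised in the introduction: one does not merely get that the cohomology is annihilated, one controls the annihilator uniformly.

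The third step converts the uniform $p^c$-torsion on the $A^n$-level into the desired essentially-zero statement on the $G_n$-analytic level. Given a \v{C}ech cocycle $z \in C^i(\mathfrak{U}, \cO^{G_n-\an,\chi}_{K^p})$, regard it inside $C^i(\mathfrak{U}, A^\bullet)$ where $A^\bullet$ is built with the parameter $r(n)$. Then $p^c z = d w$ for some $w$ in degree $i-1$ of that $A^\bullet$-complex; but $w$ is automatically $G_{r(n)}$-analytic by the inclusions $\cO^{\la,\chi}_{K^p}(U)^{G_n-\an}\subseteq A^n \subseteq \cO^{\la,\chi}_{K^p}(U)^{G_{r(n)}-\an}$ recorded in \ref{An}. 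Since $p^c$ is invertible in our $\Q_p$-Banach spaces, $z = d(p^{-c}w)$ is a coboundary already after enlarging $n$ to $r(n)$ (or to some $m \geq r(n)$ for which the relevant inclusions and the density statements all apply), which is exactly the claim that $\check{H}^i(\mathfrak{U},\cO^{G_n-\an,\chi}_{K^p})\to \check{H}^i(\mathfrak{U},\cO^{G_m-\an,\chi}_{K^p})$ is zero. For $i=1$ one should also remember to treat the $\mathrm{t}^{n_1}$-twist harmlessly via Remark \ref{tn1} to reduce to $n_1 = 0$.

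The main obstacle I anticipate is the bookkeeping in the first and third steps: one must be careful that the Banach space $A^n$ depends on a choice of approximating function $x_n$ and of the open subgroup $G_0$, and that these choices can be made simultaneously for all members of the finite cover $\mathfrak{U}$ and all their intersections (this is why $\mathfrak{U}$ is taken inside the distinguished basis $\mathfrak{B}$, which is stable under finite intersection, and why one can choose a single $G_0 = 1+p^mM_2(\Z_p)$ stabilizing every $U^i$). A secondary subtlety is that the \v{C}ech differentials $A^{n,i}\to A^{n,i+1}$ over different intersections involve restriction maps of the integral sheaves $\cO^+$, which are not strict in general; but after inverting $p$ and using that the target cohomology is killed by $p^c$, the open mapping theorem and the closed-graph arguments already deployed in Lemma \ref{CStarg} and Proposition \ref{tenscomHcont} suffice to promote everything to a genuine, uniform coboundary statement. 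Everything else is formal manipulation of the power-series coordinates.
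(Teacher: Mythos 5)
Your proposal is correct and follows essentially the same route as the paper: sandwich the $G_n$-analytic \v{C}ech complex between the $A^n$-complexes via $\cO^{\la,\chi}_{K^p}(U)^{G_n-\an}\subseteq A^n\subseteq \cO^{\la,\chi}_{K^p}(U)^{G_{r(n)}-\an}$, identify $C^\bullet(\mathfrak{U},A^n)$ with $\bigl(\prod_{i\geq 0}C^\bullet(\mathfrak{V}^n,\cO^+)\bigr)\otimes_{\Z_p}\Q_p$ using \eqref{exAn}, and kill the higher cohomology by the uniform $p$-power torsion coming from Tate acyclicity, so the transition map vanishes already for $m=r(n)$. The only cosmetic difference is that the paper states the conclusion as $H^i(C^\bullet(\mathfrak{U},A^n))=0$ outright rather than via your explicit $p^{-c}$-division, which is the same argument.
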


\begin{proof}
After possibly shrinking $G_0$, we may assume that $\pi_{\HT}^{-1}(U^{i})$ (resp. $\pi_{\HT}^{-1}(U)$) is the preimage of an affinoid subset $V^i_{G_0}\subseteq\mathcal{X}_{K^pG_0}$ (resp. $V_{G_0}\subseteq\mathcal{X}_{K^pG_0}$). Let $V^i_{G_n}\subseteq\mathcal{X}_{K^pG_n}$ be the preimage of $V^i_{G_0}$. Clearly $\{V^i_{G_n}\}_{i=1,\cdots,l}$ forms a finite affinoid cover of $V_{G_n}$. Using the action of $\GL_2(\Q_p)$, we may assume $U\subseteq U_1$ and apply the construction in \ref{exs}. In particular, we have sections $x_n$'s and can define $A^n\subseteq \cO^{\la,\chi}_{K^p}(U)$ as in \ref{An}.

Now suppose $\mathcal{I}$ is a subset of $\{1,\cdots,l\}$. We denote $\bigcap_{i\in \mathcal{I}} U^i$ by $U^{\mathcal{I}}$ and $\bigcap_{i\in \mathcal{I}} V^i_{G_{n}}$ by $V^\mathcal{I}_{G_n}$. ($U^{\emptyset}$ is understood as $U$ and $V^\emptyset_{G_n}=V_{G_n}$.) Using $x_n|_{U^\mathcal{I}}$, we can define $A^{\mathcal{I},n}\subseteq \cO^{\la,\chi}_{K^p}(U^i)$ similarly. Suppose $\mathcal{J}\subseteq \mathcal{I}$. Clearly there is a natural restriction map $A^{\mathcal{I},n}\to A^{\mathcal{J},n}$. Hence we can replace $\cO^{\la,\chi}_{K^p}(U^\mathcal{I})$ by  $A^{\mathcal{I},n}$ in the construction of the \v{C}ech complex $C^\bullet(\mathfrak{U},\cO^{\la,\chi}_{K^p})$ and get a subcomplex $C^{\bullet}(\mathfrak{U},A^n)$. Since $\cO^{\la,\chi}_{K^p}(U)^{G_n-\an}\subset A^n \subset \cO^{\la,\chi}_{K^p}(U)^{G_{r(n)}-\an}$ and similar statements hold for $U^\mathcal{I}$, we have natural inclusions
\[C^\bullet(\mathfrak{U},\cO^{G_n-\an,\chi}_{K^p})\subseteq C^{\bullet}(\mathfrak{U},A^n)\subseteq C^\bullet(\mathfrak{U},\cO^{G_{r(n)}-\an,\chi}_{K^p}).\]
Therefore it suffices to prove that for $i\geq 1$, 
\[H^i(C^{\bullet}(\mathfrak{U},A^n))=0.\]

Fix a generator $s$ of $\omega^{n_1-n_2}|_{V_{G_{r(n)}}}$. Then $s|_{V^\mathcal{I}_{G_{r(n)}}}$ is a generator  of $\omega^{n_1-n_2}|_{V^\mathcal{I}_{G_{r(n)}}}$. There is an isomorphism \eqref{exAn}
\[\left(\prod_{i=0}^\infty \cO^+_{\mathcal{X}_{K^pG_{r(n)}}}(V^\mathcal{I}_{G_{r(n)}})\right)\otimes_{\Z_p}\Q_p\cong A^{\mathcal{I},n}\]
which is compatible with replacing $\mathcal{I}$ by a subset. Note that  $\mathfrak{V}^n=\{V^i_{G_n}\}_{i=1,\cdots,l}$ is a finite affinoid cover of the affinoid $V_{G_n}$. Let $C^\bullet(\mathfrak{V}^n,\cO^+)$ be the \v{C}ech complex for $\cO^+_{\mathcal{X}_{K^pG_{r(n)}}}$ with respect to $\mathfrak{V}^n$. Then the above isomorphism implies that 
\[\left(\prod_{i=0}^\infty C^\bullet(\mathfrak{V}^n,\cO^+)\right)\otimes_{\Z_p}\Q_p\cong C^{\bullet}(\mathfrak{U},A^n).\]
Our claim follows by noting that  $H^i(C^\bullet(\mathfrak{V}^n,\cO^+)),i\geq 1$ is annihilated by some power of $p$ by Tate's acyclicity result, cf. \cite[Proof of Lemma 4.3.14]{Pan20}.
\end{proof}

\begin{para} \label{mathfrakU'}
Here is how we are going to apply this result. Let $\mathfrak{U}=\{U_1,U_2\}$. Recall that $U_1$ (resp. $U_2$) is defined by $\|x\|\leq 1$ (resp. $\|U_2\|\geq 1$). Let $U'_1\subseteq\Fl$ (resp. $U'_2\subseteq\Fl$) be the affinoid open subset defined by  $\|x\|\leq \|p^{-1}\|$ (resp. $\|U_2\|\geq \|p\|$) and $\mathfrak{U}'=\{U'_1,U'_2\}$. Then both $\mathfrak{U}$ and $\mathfrak{U}'$ are covers of $\Fl$. Note that $U'_1,U'_2$ can also be obtained by applying the action of $\begin{pmatrix} p & 0\\ 0 & 1 \end{pmatrix}$ and its inverse to $U_1,U_2$. In particular, even though $U'_1,U'_2\notin \mathfrak{B}$, results we proved for open subsets in $\mathfrak{U}$ are also valid for $U'_1,U'_2$.

Take $G_0=1+p^mM_2(\Z_p)$ so that $U'_1,U_1,U_2,U'_2$ are $G_0$-stable. As in Section \ref{vCech}, we have the \v{C}ech complexes $C^\bullet(\mathfrak{U},\cO^{G_n-\an,\chi}_{K^p}), C^\bullet(\mathfrak{U'},\cO^{G_n-\an,\chi}_{K^p})$ and their cohomology groups $\check{H}^i(\mathfrak{U},\cO^{G_n-\an,\chi}_{K^p})$, $\check{H}^i(\mathfrak{U}',\cO^{G_n-\an,\chi}_{K^p})$, where $G_n=G_0^{p^n}$. The restriction of $U'_1$ (resp. $U'_2$) to $U_1$ (resp. $U_2$) induces a natural map $\check{H}^i(\mathfrak{U'},\cO^{G_n-\an,\chi}_{K^p}) \to \check{H}^i(\mathfrak{U},\cO^{G_n-\an,\chi}_{K^p})$. 
\end{para}

\begin{cor} \label{CSarg}
For any $n\geq 0$, there exists $m\geq n$ such that the image of 
\[\check{H}^1(\mathfrak{U},\cO^{G_n-\an,\chi}_{K^p})\to \check{H}^1(\mathfrak{U},\cO^{G_m-\an,\chi}_{K^p})\] is contained in the image of $\check{H}^1(\mathfrak{U}',\cO^{G_m-\an,\chi}_{K^p})\to \check{H}^1(\mathfrak{U},\cO^{G_m-\an,\chi}_{K^p})$.
\end{cor}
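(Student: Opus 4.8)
The plan is to run a Cartan--Serre type argument, realizing the required factorization as the composite of two applications of Proposition \ref{VCech}. Recall first that for a two-element cover $\{A,B\}$ of an open $W$ with $A\cap B=D$, the alternating \v{C}ech complex of a sheaf $\mathcal F$ is just $\mathcal F(A)\oplus\mathcal F(B)\to\mathcal F(D)$, so every $1$-cochain is a cocycle and
\[\check H^1(\{A,B\},\mathcal F)=\mathcal F(D)/\bigl(\mathcal F(A)|_D+\mathcal F(B)|_D\bigr).\]
Applying this to $\mathfrak U$ and $\mathfrak U'$ and unwinding the refinement map of \ref{mathfrakU'} (which on $1$-cochains is the restriction $\cO^{G_m-\an,\chi}_{K^p}(U'_1\cap U'_2)\to\cO^{G_m-\an,\chi}_{K^p}(U_1\cap U_2)$ coming from $U_i\subseteq U'_i$), one sees that the image of $\check H^1(\mathfrak U',\cO^{G_m-\an,\chi}_{K^p})$ in $\check H^1(\mathfrak U,\cO^{G_m-\an,\chi}_{K^p})$ consists exactly of the classes $[g|_{U_1\cap U_2}]$ with $g\in\cO^{G_m-\an,\chi}_{K^p}(U'_1\cap U'_2)$. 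Hence it suffices to prove: for every $n$ there is $m\geq n$ so that every $z\in\cO^{G_n-\an,\chi}_{K^p}(U_1\cap U_2)$ can, after passing to level $G_m$, be written as $z=c_1|_{U_1\cap U_2}-c_2|_{U_1\cap U_2}+g|_{U_1\cap U_2}$ with $c_1\in\cO^{G_m-\an,\chi}_{K^p}(U_1)$, $c_2\in\cO^{G_m-\an,\chi}_{K^p}(U_2)$ and $g\in\cO^{G_m-\an,\chi}_{K^p}(U'_1\cap U'_2)$.

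The two auxiliary covers to use are $\mathfrak U_1:=\{U_1,\,U'_1\cap U_2\}$, which covers $U'_1$ with double intersection $U_1\cap(U'_1\cap U_2)=U_1\cap U_2$, and $\mathfrak U_2:=\{U_2,\,U'_1\cap U'_2\}$, which covers $U'_2$ with double intersection $U_2\cap(U'_1\cap U'_2)=U'_1\cap U_2$; both covering identities use $U_1\cup U_2=\Fl$. Since $U'_1,U'_2$ are the $\diag(p,1)^{\pm1}$-translates of $U_1,U_2$ and the members of $\mathfrak U_1,\mathfrak U_2$ are correspondingly $\GL_2(\Q_p)$-translates of elements of $\mathfrak B$, Proposition \ref{VCech} applies to $\mathfrak U_1$ and $\mathfrak U_2$ exactly as to $\mathfrak U$, by the remark in \ref{mathfrakU'}.

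Now fix $n$ and $z\in\cO^{G_n-\an,\chi}_{K^p}(U_1\cap U_2)$, viewed as a $1$-cocycle for $\mathfrak U_1$. By Proposition \ref{VCech} applied to $\mathfrak U_1$ there is $m_1\geq n$ such that the image of $z$ in $\check H^1(\mathfrak U_1,\cO^{G_{m_1}-\an,\chi}_{K^p})$ vanishes; thus at level $G_{m_1}$ we may write $z=c_1|_{U_1\cap U_2}-b|_{U_1\cap U_2}$ with $c_1\in\cO^{G_{m_1}-\an,\chi}_{K^p}(U_1)$ and $b\in\cO^{G_{m_1}-\an,\chi}_{K^p}(U'_1\cap U_2)$. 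Now $b$ is a $1$-cocycle for $\mathfrak U_2$, and a second application of Proposition \ref{VCech} (to $\mathfrak U_2$) gives $m\geq m_1$ with $b=c_2|_{U'_1\cap U_2}-g|_{U'_1\cap U_2}$ at level $G_m$, where $c_2\in\cO^{G_m-\an,\chi}_{K^p}(U_2)$ and $g\in\cO^{G_m-\an,\chi}_{K^p}(U'_1\cap U'_2)$. Restricting this identity further to $U_1\cap U_2\subseteq U'_1\cap U_2$ and substituting into the previous one yields $z=c_1|_{U_1\cap U_2}-c_2|_{U_1\cap U_2}+g|_{U_1\cap U_2}$ at level $G_m$, which is the decomposition required above; this $m$ depends only on $n$, so we are done.

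The only points needing care, neither of them deep, are: first, checking that Proposition \ref{VCech} genuinely applies to $\mathfrak U_1$ and $\mathfrak U_2$, i.e. keeping track of the conjugation by $\diag(p,1)$ that moves these covers --- and the relevant compact open subgroups --- into the $\mathfrak B$-framework in which \ref{VCech} was established; and second, the bookkeeping of the two successive level increases $n\leq m_1\leq m$ so that the output $m$ depends on $n$ alone. Everything else is a formal manipulation of two-term \v{C}ech complexes.
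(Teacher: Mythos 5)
Your proposal is correct and follows essentially the same route as the paper's own proof: both reduce the statement to decomposing a section on $U_1\cap U_2$ as (restrictions of) sections on $U_1$, $U_2$, and $U'_1\cap U'_2$, and both achieve this by applying Proposition \ref{VCech} successively to the two auxiliary covers $\{U_1,\,U'_1\cap U_2\}$ of $U'_1$ and $\{U_2,\,U'_1\cap U'_2\}$ of $U'_2$. Your extra remarks (the explicit description of the refinement map, the translation by $\diag(p,1)$ needed to place these covers in the $\mathfrak B$-framework, and the uniformity of $m$ in $n$) are all consistent with what the paper does implicitly.
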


\begin{proof}
This is a formal consequence of Proposition \ref{VCech}. Equivalently, we need to find $m\geq n$ such that for any $s\in \cO^{\la,\chi}_{K^p}(U_1\cap U_2)^{G_n-\an}$, there exist sections $s_i\in \cO^{\la,\chi}_{K^p}(U_i)^{G_m-\an}$, $i=1,2$ so that
\[s+s_1|_{U_1\cap U_2}+s_2|_{U_1\cap U_2}\]
can be extended to a section in $ \cO^{\la,\chi}_{K^p}(U'_1\cap U'_2)^{G_m-\an}$. Consider the cover $\{U_1,U'_1\cap U_2\}$ of $U'_1$. Then Proposition \ref{VCech} implies that there exists $m'\geq n$ such that for any $s\in \cO^{\la,\chi}_{K^p}(U_1\cap U_2)^{G_n-\an}$, we can find $s_1\in \cO^{\la,\chi}_{K^p}(U_1)^{G_{m'}-\an}$ so that 
$s+s_1|_{U_1\cap U_2}$
can be extended to $U'_1\cap U_2$. Similarly, by applying Proposition \ref{VCech} to the cover $\{U_2,U'_1\cap U'_2\}$ of $U_2$, we find $m\geq m'$ and $s_2\in \cO^{\la,\chi}_{K^p}(U_2)^{G_{m}-\an}$ such that 
$s+s_1|_{U_1\cap U_2}+s_2|_{U_1\cap U_2}$
can be extended to $U'_1\cap U'_2$, which is exactly what want.
\end{proof}

\subsection{Hodge-Tate structure}

\begin{para} \label{stpHT}
Since modular curves $\mathcal{X}_{K^pK_p}$ are defined over $\Q_p$, there is a natural continuous action of $G_{\Q_p}$ on $\mathcal{X}_{K^p}$ which commutes with the action of $\GL_2(\Q_p)$. Hence it also acts on the sheaf $\cO^{\la,\chi}_{K^p}$, where $\chi=(n_1,n_2)$ is an integral weight. In this subsection, we mainly study this semilinear action with respect to the $C$-vector space structure on $\cO^{\la,\chi}_{K^p}$.  Our reference here is \cite[Theorem 5.1.8]{Pan20} and its proof.

Let $U\in\mathfrak{U}$. We keep the same notation as in \ref{exs}. Fix $G_0=1+p^mM_2(\Z_p)$ so that $\pi_{\HT}^{-1}(U)$ is the preimage of an affinoid subset $V_{G_0}\subset\mathcal{X}_{K^pG_0}$. Let $G_n=G_0^{p^n}$, One useful observation is that 
\begin{itemize}
\item $V_{G_n}$, hence also its preimage $V_{G_n}$ in $\mathcal{X}_{K^pG_n}$, are defined over a finite extension of $\Q_p$. 
\end{itemize}
Indeed, this is clear when $U=U_1=\{||x||\leq 1\}$ or $U_2=\{||x||\geq 1\}$ because $\overbar\Q_p$ is dense in $C$ hence we can approximate $x$ by a section defined over a finite extension of $\Q_p$ on a finite level modular curve. A similar argument implies cases when $U$ is a rational subset of $U_1$ or $U_2$. In general, our assertion follows from the construction of $\mathfrak{B}$ that $U$ is a finite intersection of rational subsets of $U_1$ and $U_2$, cf. \ref{exs}. 

Fix a finite extension $K$ of $\Q_p$ in $C$ over which $V_{G_0}$ is defined. Then $G_K$ acts continuously  on the LB-space $\cO^{\la,\chi}_{K^p}(U)$ and  $C$-Banach space $\cO^{\la,\chi}_{K^p}(U)^{G_n-\an}$.
\end{para}

\begin{para} \label{HTsetup}
In order to state our result, we need a general discussion on semilinear representations of $G_K$ on $C$-Banach spaces. Let $K$ bw a finite extension of $\Q_p$ in C. We denote by $K_\infty\subseteq C$ the maximal $\Z_p$-extension of $K$ in $K(\mu_{p^\infty})$. Equivalently, let $\varepsilon_p:G_{\Q_p}\to\Z_p^\times$ be the $p$-adic cyclotomic character. There is a natural decomposition $\Z_p=(\Z/p\Z)^\times \times(1+2p\Z_p)$. We denote the composite map $G_K\xrightarrow{\varepsilon_p}\Z_p^\times\to 1+2p\Z_p$ by $\tilde\varepsilon_p$. Let $H_K$ be the kernel of $\tilde\varepsilon_p$. Then $K_\infty=\overbar\Q_p{}^{H_K}$. 
Hence $\Gal(K_\infty/K)$ is (non-canonically) isomorphic to $\Z_p$. 

Suppose $W$ is a $C$-Banach space equipped with a continuous semilinear action of $G_{K}$. Denote by
\[W^{K}\subseteq W\]
the subspace of $G_{K_\infty}$-fixed, $G_K$-analytic vectors \footnote{In the classical Sen theory for finite-dimensional $C$-representations, people usually consider $G_{K_\infty}$-fixed, $G_K$-finite vectors. As pointed out by Berger-Colmez in \S 3.1 of \cite{BC16}, the correct notion in general should be analytic vectors.} in $W_i$. This is naturally a $K$-Banach space. The $C$-Banach space structure on $W$ induces a map
\[\varphi^K:C\widehat{\otimes}_K W^{K} \to W.\]
We are particularly interested in $W$ satisfying that
\begin{itemize}
\item $\varphi^K$ is an isomorphism.
\end{itemize}
Then a standard argument using Tate's normalized trace shows that the natural map
\[L\otimes_{K}W^K\xrightarrow{\cong} W^L\]
is an isomorphism for any finite extension $L$ of $K$ in $C$. See for example the proof of \cite[Th\'eor\`eme 3.2]{BC16} \footnote{The $K_\infty$ in \cite{BC16} is different from our $K_\infty$ by a finite group in general. But the same argument works.}. In particular, $\varphi^L$ is an isomorphism for any finite extension $L$ of $K$.  More generally, one can use have  Tate's normalized trace to prove the following result.
\end{para}

\begin{prop} \label{exeTtr}
Let $X$ be a $K$-Banach space representation of $G_K$ such that $X^K=X$, i.e. the action of $G_{K_\infty}$ on $X$ is trivial and the induced action of $\Gal({K_\infty}/K)$ is analytic. Then $(C\widehat\otimes_K X)^K=X$.
\end{prop}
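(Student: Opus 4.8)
The plan is to reduce the statement, in two steps, to the standard decompletion machinery of Tate and Sen. Write $W:=C\widehat\otimes_K X$. The inclusion $X=1\otimes X\subseteq W^K$ is immediate: $G_{K_\infty}$ fixes $1\in C$ and acts trivially on $X$, while the residual action of $\Gal(K_\infty/K)$ on $X$ is analytic by the hypothesis $X^K=X$, so $1\otimes X$ consists of $G_{K_\infty}$-fixed, $G_K$-analytic vectors. Only the reverse inclusion $W^K\subseteq 1\otimes X$ needs proof.

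\emph{Step 1 (reduction to $H_K$-invariants).} Since $H_K=G_{K_\infty}$ acts trivially on $X$, it acts on $W$ through $C$ alone. The field $K_\infty$ is deeply ramified, so by Tate's theorem $C^{H_K}=\widehat{K_\infty}$ and $H^i_{\cont}(H_K,C)=0$ for all $i\ge 1$. Apply Proposition \ref{tenscomHcont} with $H=H_K$, with $C$ playing the role of its Banach representation and $X$ the role of its trivial-action space: the proof of that proposition in fact exhibits $C^{H_K}\widehat\otimes_K X\to C^\bullet(H_K,C\widehat\otimes_K X)$ as a strict exact augmented complex, which yields not only $H^i_{\cont}(H_K,W)=0$ for $i\ge 1$ but also $W^{H_K}=C^{H_K}\widehat\otimes_K X=\widehat{K_\infty}\widehat\otimes_K X$. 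Hence $W^K=(\widehat{K_\infty}\widehat\otimes_K X)^{\Gamma-\an}$, where $\Gamma=\Gal(K_\infty/K)\cong\Z_p$ acts $K$-semilinearly and diagonally through its Galois action on $\widehat{K_\infty}$ and its given analytic action on $X$, and $(\cdot)^{\Gamma-\an}$ denotes analytic vectors in the sense of \ref{HTsetup}.

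\emph{Step 2 (Tate's normalized traces).} It remains to prove $(\widehat{K_\infty}\widehat\otimes_K X)^{\Gamma-\an}=1\otimes X$, which is the classical Tate--Sen decompletion. Fix a topological generator $\gamma$ of $\Gamma$ and set $\gamma_m=\gamma^{p^m}$. For $n\gg 0$ one has Tate's normalized trace maps $R_n:\widehat{K_\infty}\to K_n$ which are $\Gamma$-equivariant, $K_n$-linear, continuous of norm $\le 1$, and such that $\widehat{K_\infty}^{(n)}:=\Ker R_n$ is a closed $\Gamma$-stable complement on which $\gamma_n-1$ is invertible with $\|(\gamma_n-1)^{-1}\|$ bounded by a constant $p^{c_0}$ independent of $n$. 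Completing with $X$ over $K$ gives a $\Gamma$-stable topological decomposition $\widehat{K_\infty}\widehat\otimes_K X=(K_n\otimes_K X)\oplus(\widehat{K_\infty}^{(n)}\widehat\otimes_K X)$ whose two projections are continuous and $\Gamma$-equivariant, hence preserve analytic vectors. Given an analytic vector $v$, write $v=a_n+b_n$ accordingly; expanding the orbit map of $v$ on a subgroup $\Gamma_m$ on which $v$ is genuinely analytic, $\gamma_m^z v=\sum_{k\ge 0}z^kw_k$ with $w_0=v$ and $\|w_k\|\to 0$, one gets for $n\ge m$ that $(\gamma_n-1)v=\sum_{k\ge 1}p^{(n-m)k}w_k$, whence $\|b_n\|\le p^{c_0-(n-m)}\sup_{k\ge1}\|w_k\|\to 0$. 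Carrying out this estimate uniformly along the tower $\{\Gamma_m\}_m$ and organizing it as the descent lemma of Sen and Berger--Colmez forces $v$ to be descended already to the base level $K$, i.e. $v\in 1\otimes X$. Equivalently, one shows that $\varphi^K\colon C\widehat\otimes_K W^K\to W$ (from \ref{HTsetup}) is injective --- it is automatically surjective, since its restriction to $C\widehat\otimes_K(1\otimes X)$ is the identity of $C\widehat\otimes_K X$ --- after which $W^K=1\otimes X$ follows from the faithful flatness of $C$ over $K$; injectivity of $\varphi^K$ is once more exactly the normalized-trace linear-independence statement.

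The essential difficulty is concentrated in Step 2: one must invoke Tate's normalized traces with the correct uniform constants, and --- more to the point --- use that the notion of $G_K$-analytic vector of \ref{HTsetup} is precisely the one for which these estimates force descent to the base field $K$ (this is the heart of Sen's theory and its refinement by Berger--Colmez, cf.\ \cite{BC16}). Step 1, by contrast, is essentially formal once Proposition \ref{tenscomHcont} is available. Thus the proof amounts to assembling these standard ingredients rather than to any new computation.
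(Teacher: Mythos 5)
Your overall strategy --- reduce to $\widehat{K_\infty}\widehat\otimes_K X$ using the vanishing of $H^1_{\cont}(G_{K_\infty},C)$ and Proposition \ref{tenscomHcont}, then decomplete with Tate's normalized traces --- is exactly what the paper intends (its proof is the single word ``Exercise'', preceded by the hint that normalized traces do the job), and your Step 1 is complete. Step 2, however, does not yet prove the statement. The estimate $\|b_n\|\to 0$ says only that $v$ lies in the closure of $\bigcup_n K_n\otimes_K X$ inside $\widehat{K_\infty}\widehat\otimes_K X$, which is the whole space, so by itself it gives nothing. Worse, the identical estimate holds for merely \emph{locally} analytic vectors, for which the conclusion is false (the $\Gamma$-locally analytic vectors of $\widehat{K_\infty}$ are $K_\infty$, not $K$). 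So an input that distinguishes ``analytic on all of $\Gamma=\Gal(K_\infty/K)$'' from ``locally analytic'' is unavoidable, and your write-up never supplies it; deferring to ``the descent lemma of Sen and Berger--Colmez'' (or, in the alternative paragraph, to injectivity of $\varphi^K$) only relocates the gap.

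The missing ingredient is elementary: a vector of the finite-level piece $K_n\otimes_K X$ that is analytic for the \emph{full} group $\Gamma$ must already lie in $X$. Indeed, after untwisting by the everywhere-analytic action of $\Gamma$ on the factor $X$, such a vector is $\Gamma$-analytic for the action through $K_n$ alone, which is smooth; a locally constant orbit map given by a single convergent power series on all of $\Z_p$ is constant, so the vector lies in $K_n^{\Gamma}\otimes_K X=X$. Granting this, your proof closes at once: $R_n\otimes 1$ is continuous and $\Gamma$-equivariant, so $(R_n\otimes 1)(v)\in (K_n\otimes_K X)^{\Gamma-\an}=X$ for every $n$, and your estimate gives $v=\lim_n (R_n\otimes 1)(v)\in X$ since $X$ is closed. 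Two smaller points should also be addressed: the invertibility of $\gamma_n-1$ on $\ker(R_n)\widehat\otimes_K X$ is not the statement about $\ker R_n\subseteq\widehat{K_\infty}$, because the action is diagonal --- you must perturb by $1\otimes(\gamma_n-1)|_X$, which is small for $n\gg 0$ precisely because the action on $X$ is analytic (another place the hypothesis $X^K=X$ enters); and in the ``equivalently'' route, deducing $W^K=X$ from bijectivity of $\varphi^K$ requires knowing that $X$ is a closed, hence complemented, subspace of $W^K$ for the latter's own topology.
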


\begin{proof}
Exercise.
\end{proof}

\begin{cor} \label{strHT}
Let $K$ be a finite extension of $\Q_p$ in $C$. Let $W_1,W_2$ be  $C$-Banach spaces equipped with continuous semilinear actions of $G_{K}$. Suppose that the natural maps
\[\varphi_i^K:C\widehat{\otimes}_K W_i^K \to W,i=1,2,\]
are isomorphisms. Let $\phi:W_1\to W_2$ be a $G_{\Q_p}$-equivariant,  $C$-linear continuous map. Then
\begin{enumerate}
\item $\ker(\phi)= C\widehat\otimes_K \ker(\phi)^K$.
\item Suppose $\phi$ is a closed embedding. Let  $Z=W_2/\phi(W_1)$ equipped with the quotient topology. Then 
$Z= C\widehat\otimes_K Z^K$.
\item In general, suppose $\phi$ is strict. Let $Y=W_2/\phi(W_1)$ equipped with the quotient topology. Then 
$Y= C\widehat\otimes_K Y^K$.

\end{enumerate}
\end{cor}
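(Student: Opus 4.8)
The plan is to reduce all three parts to the decompletion formalism underlying Sen--Tate theory. Since $K$ is a finite extension of $\Q_p$ we have $G_K\subseteq G_{\Q_p}$, so $\phi$ is in particular $G_K$-equivariant and continuous, hence carries $G_{K_\infty}$-fixed, $G_K$-analytic vectors to the same and induces a continuous $K$-linear, $\Gamma_K$-equivariant map $\phi^K\colon W_1^K\to W_2^K$, where $\Gamma_K=\Gal(K_\infty/K)\cong\Z_p$ acts analytically on the $K$-Banach spaces $W_i^K$. Because $C\otimes_K W_1^K$ is dense in $W_1\cong C\widehat\otimes_K W_1^K$ (this is the hypothesis that $\varphi_1^K$ is an isomorphism), because $\phi$ is continuous and $C$-linear, and because $\id_C\widehat\otimes_K\phi^K$ is continuous, the two maps agree on this dense subspace, so $\phi$ is identified with $\id_C\widehat\otimes_K\phi^K$ under $\varphi_1^K$ and $\varphi_2^K$. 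Thus the Corollary becomes a statement about the base change along $K\to C$ of a morphism $\phi^K$ of $K$-Banach spaces with analytic $\Gamma_K$-action, and more precisely about how kernels and Hausdorff cokernels behave under $C\widehat\otimes_K(-)$ and $(-)^K$.

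I would then isolate two facts about $C$-Banach representations $W$ of $G_K$ with $\varphi^K$ an isomorphism. First, \emph{(L2)}: if $W'\subseteq W$ is a closed $G_K$-stable subspace which itself has $\varphi^K_{W'}$ an isomorphism, then $W/W'$ with the quotient topology again has $\varphi^K$ an isomorphism. This is formal: writing $W=C\widehat\otimes_K W^K$ and $W'=C\widehat\otimes_K {W'}^K$, completed tensor product with $C$ over $K$ preserves the strict exact sequence $0\to {W'}^K\to W^K\to W^K/{W'}^K\to 0$ of $K$-Banach spaces, the $\Gamma_K$-action on $W^K/{W'}^K$ is analytic (quotients of analytic representations are analytic, cf.\ Lemma \ref{Gnanprecle} for the closed-subspace case), and Proposition \ref{exeTtr} identifies $(W/W')^K$ with $W^K/{W'}^K$. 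Second, and this is the substantial input, \emph{(L1)}: \emph{every} closed $G_K$-stable subspace $W'$ of $W$ again has $\varphi^K_{W'}$ an isomorphism; equivalently, $W'$ descends to a $K$-Banach space carrying an analytic $\Gamma_K$-action. This is proved by Tate's normalized trace (as in \cite{BC16}), exactly as in the finite-dimensional theory one shows that a sub-$C$-representation of a $C$-representation admits a $K_\infty$-model: using $C^{H_K}=\widehat{K_{\infty}}$ and $H^1(H_K,C)=0$ one obtains $W^{H_K}=\widehat{K_{\infty}}\widehat\otimes_K W^K$ and ${W'}^{H_K}=W'\cap W^{H_K}$, and one then applies the normalized trace operators for $K_\infty/K$ to decomplete ${W'}^{H_K}$, uniformly in the Banach direction, to a $K$-Banach space ${W'}^K$ whose $C$-span is dense in $W'$; since $C\widehat\otimes_K {W'}^K\hookrightarrow W$ is a closed embedding, density forces equality. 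I expect this normalized-trace decompletion in (L1) to be the only genuinely delicate point; everything else is bookkeeping with $C\widehat\otimes_K(-)$ and $(-)^K$.

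Granting (L1) and (L2), the three parts follow quickly. For part (1), $\ker(\phi)$ is a closed $G_K$-stable subspace of $W_1$, so (L1) gives $\ker(\phi)=C\widehat\otimes_K\ker(\phi)^K$. For part (2), a closed embedding $\phi$ is a $G_K$-equivariant homeomorphism of $W_1$ onto the closed $G_K$-stable subspace $\phi(W_1)\subseteq W_2$, so $\phi(W_1)$ has $\varphi^K$ an isomorphism, and (L2) yields $Z=W_2/\phi(W_1)=C\widehat\otimes_K Z^K$. For part (3), assume $\phi$ is strict; then $W_1/\ker(\phi)$ is a Banach space, and by part (1) together with (L2) (applied to the closed embedding $\ker(\phi)\hookrightarrow W_1$) it has $\varphi^K$ an isomorphism; moreover $\phi$ induces a closed embedding $W_1/\ker(\phi)\hookrightarrow W_2$, its image $\phi(W_1)$ being complete hence closed, so (L2) applied once more gives $Y=W_2/\phi(W_1)=C\widehat\otimes_K Y^K$.
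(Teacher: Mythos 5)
Your reduction in the first paragraph — that $\phi$ is identified with $\id_C\widehat\otimes_K\phi^K$ under $\varphi_1^K,\varphi_2^K$ by continuity and density of $C\otimes_K W_1^K$ — is correct and is exactly the (implicit) starting point of the paper's argument. Your (L2) and the deductions of parts (2) and (3) from parts (1)--(2) are also sound and match the paper: the paper likewise reduces everything to the exactness of $C\widehat\otimes_K(-)$ on strict exact sequences of $K$-Banach spaces (checked via an orthonormal basis of $C$ over $K$) together with Proposition \ref{exeTtr}.

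The gap is in part (1), where you route through (L1): the claim that \emph{every} closed $G_K$-stable subspace of a trivializable $C$-Banach representation is itself trivializable. This is a much stronger statement than anything the corollary requires, and your sketch does not establish it. The critical step — "applies the normalized trace operators \dots to decomplete ${W'}^{H_K}$, uniformly in the Banach direction, to a $K$-Banach space whose $C$-span is dense in $W'$" — is precisely where infinite-dimensional decompletion arguments need real work. Concretely: (i) there is no $G_K$-equivariant continuous projection $C\to\widehat{K_\infty}$ or $C\to K$, so the "coefficients" of an element of $W'$ with respect to an orthonormal basis of $C$ over $K$ need not lie in $W'$, and the almost-étale step $W'=C\widehat\otimes_{\widehat{K_\infty}}(W')^{H_K}$ is already unjustified for an arbitrary closed subrepresentation; (ii) the normalized trace on $W^{H_K}=\widehat{K_\infty}\widehat\otimes_K W^K$ that preserves $W'$ must be the limit of Galois averages $p^{-(m-n)}\sum_\gamma\gamma$ acting diagonally (since $\Gamma_K$ acts nontrivially on $W^K$), and the convergence of these averages and the density of the resulting invariants in $W'$ are exactly the points one would have to prove. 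The fix is immediate and is what the paper does: since you have already shown $\phi=\id_C\widehat\otimes_K\phi^K$, left-exactness of $C\widehat\otimes_K(-)$ (the same orthonormal-basis fact you invoke in (L2)) gives $\ker(\phi)=C\widehat\otimes_K\ker(\phi^K)$ directly, and Proposition \ref{exeTtr} applied to $X=\ker(\phi^K)$ then yields $\ker(\phi)^K=\ker(\phi^K)$, which is part (1). No general descent statement for closed subrepresentations is needed.
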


\begin{proof}
For the first two parts, in view of the previous proposition, it suffices to prove that if we have a short exact sequence 
\[0\to M'\to M\xrightarrow{f} M\]
of $K$-Banach spaces with continuous homomorphisms, then 
\[0\to C\widehat\otimes_K M' \to C\widehat\otimes_K M\xrightarrow{1\otimes f} C\widehat\otimes_K M''\]
is still exact, and moreover, $1\otimes f$ is surjective if  $f$ is surjective. This is clear by choosing an orthonormal basis of $C$ over $K$.

The last part follows from the first two parts. 
\end{proof}

\begin{para}
Back to the setup in \ref{HTsetup}. By definition, there is a natural action of the Lie algebra $\Lie(\Gal({K_\infty}/K))$ on $W^K$. Note that we can naturally identify $\Lie(\Gal({K_\infty}/K))$ with $\Lie(\Z_p^\times)=\Q_p$ via $\varepsilon_p$. The action of $1\in\Q_p\cong \Lie(\Gal({K_\infty}/K))$ is classically called the Sen operator. Clearly the Sen operator is $K$-linear.
\end{para}

\begin{defn} \label{HTg}
Let $K$ be a finite extension of $\Q_p$ in $C$ and let $W$ be a $C$-Banach space equipped with a continuous semilinear action of $G_{K}$.
\begin{enumerate}
\item We say $W$ is Hodge-Tate of weight $\{w_1,\cdots,w_n\}\subseteq\Z$ if there exists a finite extension $L$ of $K$ in $C$ such that
\begin{itemize}
\item $\varphi^L:C\widehat{\otimes}_L W^{L} \to W$ is an isomorphism;
\item The action of the Sen operator $1\in\Q_p\cong \Lie(\Gal(L_\infty/L))$ on $W^L$ is semi-simple with eigenvalues $-w_1,\cdots,-w_n$, i.e. there is a natural decomposition 
\[W^L=W^L_{-w_1}\oplus\cdots\oplus W^L_{-w_n}\]
such that the Sen operator  acts on $W^L_{-w_i}$ via multiplication by $-w_i$ for $i=1,\cdots,n$. All $W^{L}_{-w_i}$ are non-zero.
\end{itemize}
Moreover, we have a decomposition for $W$ if it is Hodge-Tate of weight $\{w_1,\cdots,w_n\}$:
\[W=W_{-w_1}\oplus\cdots W_{-w_n},\]
where $W_{-w_i}=C\widehat\otimes_{L}W^L_{-w_i}\subseteq W$. It is easy to see that $W_{-w_i}$ is Hodge-Tate of weight $w_i$ and does not depend on the choice of $L$.
\item We say $W$ is Hodge-Tate if $W$ is Hodge-Tate of some weight $\{w_1,\cdots,w_n\}\subseteq\Z$ in the above sense.
\end{enumerate}
\end{defn}

\begin{rem} \label{clHT}
Keep the same notation as in Definition \ref{HTg}. Comparing with the classical definition of Hodge-Tate representations, we remark that $W$ is Hodge-Tate of weight $0$ if and only if 
\[W= C\widehat\otimes_{K} W^{G_K}.\]
Indeed, suppose $W=C \widehat\otimes_L W^L_0$ for some finite Galois extension $L$ of $K$. Then $\Gal(L_\infty/L)$ acts trivially on $W^L_0$ as this action is analytic and its infinitesimal action is trivial. Hence $W^L_0=W^{G_L}$. By Galois descent, we have 
\[W^{G_L}=L\otimes_K (W^{G_L})^{G_K}=L\otimes_K W^{G_K}.\]
It follows that $W= C\widehat\otimes_{K} W^{G_K}$. More generally, $W$ is Hodge-Tate of weight $w\in\Z$ if and only if its $w$-th Tate twist $W(w)$  is Hodge-Tate of weight $0$, equivalently, 
\[W=C(-w) \widehat\otimes_K W(w)^{G_K}. \]

In general, $W$  is Hodge-Tate of weight $\{w_1,\cdots,w_n\}\subseteq\Z$ if and only if 
\[W=\bigoplus_{i=1}^n C(-w_i) \widehat\otimes_K W(w_i)^{G_K}.\]
It recovers the classical notion of Hodge-Tate representations when $W$ is finite-dimensional. This also implies that 
\[W=C\widehat\otimes_{K} W^K\]
if $W$ is Hodge-Tate.
\end{rem}

Now we can state our main results.

\begin{prop} \label{GnanHT}
Let $\chi=(n_1,n_2)$ be an integral weight and $U\in\mathfrak{B}$ as in  \ref{stpHT}. Then $\cO^{\la,\chi}_{K^p}(U)^{G_n-\an}$ is Hodge-Tate of weight $-n_2$ for any $n\geq 0$.
\end{prop}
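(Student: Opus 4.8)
The plan is to reduce the statement to a concrete description of the Galois action on $\cO^{\la,\chi}_{K^p}(U)^{G_n-\an}$ obtained from Theorem~\ref{str}, and then to apply the formalism of Definition~\ref{HTg} together with Corollary~\ref{strHT} and Proposition~\ref{exeTtr}. Recall from Remark~\ref{tn1} that $\cO^{\la,(n_1,n_2)}_{K^p} = \cO^{\la,(0,n_2-n_1)}_{K^p}\cdot\mathrm{t}^{n_1}$, and the element $\mathrm{t}$ is fixed by $G_{K_\infty}$ up to the cyclotomic character; since we are claiming Hodge--Tate weight $-n_2$, twisting by $\mathrm{t}^{n_1}$ shifts the weight by $-n_1$, so it suffices to treat the case $\chi=(0,k)$ with $k=n_2$, i.e.\ to show $\cO^{\la,(0,k)}_{K^p}(U)^{G_n-\an}$ is Hodge--Tate of weight $-k$. (One must be slightly careful here: $\mathrm{t}$ lives on $\mathcal{X}_{K^p}$ and is $G_{\Q_p}$-semilinear-equivariant via $\varepsilon_p$, so the reduction is bookkeeping with Tate twists.)

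For the case $\chi=(0,k)$, I would use the explicit model from Theorem~\ref{str}(1): after choosing $U\subseteq U_1$ and the approximating sections $x_n$, a $G_n$-analytic vector is a convergent series $f=e_1^{k}\sum_{i\ge0}c_i(x-x_n)^i$ with $c_i\in H^0(V_{G_{r(n)}},\omega^{-k})$. The sections $x_n$ and the $c_i$ come from \emph{finite level} modular curves, which by the key observation in \ref{stpHT} are defined over a finite extension $K$ of $\Q_p$ inside $C$; hence $H^0(V_{G_{r(n)}},\omega^{-k})$ has a natural $K$-structure on which $G_K$ acts through $\Gal(K_\infty/K)$ analytically (indeed the action factors through a finite level, so it is even smooth, a fortiori analytic, with trivial Sen operator). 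The only source of a nontrivial Galois action is the section $e_1$: from the relative Hodge--Tate sequence \eqref{rHT} and the construction of $\pi_{\HT}$, $e_1$ is a generator of $\omega_{K^p}$, and $\omega_{K^p}$ pulls back from $\omega_\Fl$ up to a Tate twist, so $e_1$ transforms under $G_{\Q_p}$ with a single Hodge--Tate weight. Concretely $e_1^{k}$ contributes the weight $-k$ (this is exactly the content of \cite[Theorem 5.1.8]{Pan20} or rather the statement that $\cO^{\la,(0,k)}_{K^p}$ sits in weight $k$ of $\theta_\kh$ and is Hodge--Tate of weight $-k$); so I would write, possibly after passing to a finite extension $L/K$ over which the relevant trivializations of $\omega_\Fl$ and the Tate twist are defined,
\[
\cO^{\la,(0,k)}_{K^p}(U)^{G_n-\an} \;=\; C(k)\,\widehat\otimes_L\, X,
\]
where $X:=\bigl(\cO^{\la,(0,k)}_{K^p}(U)^{G_n-\an}(-k)\bigr)^{G_L}$ is built out of the finite-level Banach spaces of the $c_i$'s and carries a trivial $G_{L_\infty}$-action with analytic, hence (here) smooth, $\Gal(L_\infty/L)$-action; then Proposition~\ref{exeTtr} gives $(C\widehat\otimes_L X)^L = X$, so $\varphi^L$ is an isomorphism and the Sen operator acts on $X$ by $0$, i.e.\ on $\cO^{\la,(0,k)}_{K^p}(U)^{G_n-\an}$ with the single eigenvalue $-k$. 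This is exactly the assertion that it is Hodge--Tate of weight $-k$.

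The main obstacle is verifying that $\varphi^L$ is genuinely an isomorphism, i.e.\ that the $C$-Banach space $\cO^{\la,(0,k)}_{K^p}(U)^{G_n-\an}$ is \emph{decompleted} by its $G_L$-analytic, $G_{L_\infty}$-fixed vectors, rather than merely that each finite approximation decompletes. Here the point is to show that the convergence estimates defining the Banach norm on $A^n$ (Theorem~\ref{str}(1), \ref{An}) are compatible with Tate's normalized trace: one needs that applying Tate's projector coefficient-by-coefficient to a series $e_1^k\sum_i c_i(x-x_n)^i$ preserves the uniform boundedness of $\|e_1^{-k}c_i\, p^{(n-1)i}\|$, so that the image of the $C\widehat\otimes_L(-)$ map is all of the Banach space. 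Since by \eqref{exAn[[]]} the space $A^n$ is, up to inverting $p$, a power series ring $\cO^+(V_{G_{r(n)}})[[\tfrac{x-x_n}{p^{n-1}}]]$ over an $\cO^+$-algebra of a finite-level affinoid defined over $K$, this follows from the finite-level decompletion statement (classical Sen/Tate theory for affinoid algebras over $K$, or \cite[Th\'eor\`eme 3.2]{BC16}) applied to $\cO^+(V_{G_{r(n)}})$ together with the fact that $\widehat\otimes$ and countable products commute with the relevant completed base change; I would isolate this as the one nontrivial input. Everything else --- the reduction via $\mathrm{t}^{n_1}$, the identification of the Sen eigenvalue with $-n_2$ coming from $e_1^{n_2}$, and the descent along $L/K$ --- is then formal given Corollary~\ref{strHT} and Remark~\ref{clHT}.
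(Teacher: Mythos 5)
Your proposal is correct and follows essentially the same route as the paper's proof: work with the explicit Banach model $A^n$ from Theorem \ref{str}/\ref{An}, observe that the finite-level coefficients $c_i$ live in a space defined over a finite extension $K$ (so carry a $K$-structure with trivial Sen action) while the prefactor $\mathrm{t}^{n_1}e_1^{n_2-n_1}$ transforms under $G_K$ by $\varepsilon_p^{n_2}$, and invoke Proposition \ref{exeTtr} (Tate's normalized trace) to get $A^n=C\widehat\otimes_K A^{n,K}$ with Sen eigenvalue $-n_2$. Two small points to tidy up: the reduction via $\mathrm{t}^{n_1}$ lands you at $\chi=(0,n_2-n_1)$, not $(0,n_2)$; and the final passage from $A^n$ to $\cO^{\la,\chi}_{K^p}(U)^{G_n-\an}=(A^n)^{G_n-\an}$ needs the (routine, but worth stating) fact that taking $G_n$-analytic vectors commutes with $C\widehat\otimes_K(-)$, seen by choosing an orthonormal basis of $C$ over $K$.
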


\begin{proof}
We may  assume $U\subseteq U_1$.  Keep the same notation in \ref{stpHT}. In particular, $V_{G_n}$ is defined over some finite extension $K$ of $\Q_p$. We may simply assume $K$ contains $\mu_{p^2}$ hence $K_\infty=K(\mu_{p^\infty})$. Now choose $x_n\in H^0(V_{G_{r(n)}},\cO_{\mathcal{X}_{K^pG_{r(n)}}})$ as in \ref{exs}. Since $\overbar\Q_p$ is dense in $C$, after possibly replacing $K$ by a finite extension, $x_n$ can be chosen to be defined over  $K$.  Then we have $A^n \subset \cO^{\la,\chi}_{K^p}(U)$ as \ref{An}. It follows from the construction that $G_K$ acts on $\mathrm{t},e_1,e_2$ via the cyclotomic character $\varepsilon_p$. Hence $G_K$ acts trivially on $x$. This implies that $A^n$ is $G_K$-stable. 

\begin{lem} \label{AnHT}
$A^n$ is Hodge-Tate of weight $-n_2$.
\end{lem}

\begin{proof}
Recall that $A^n$ consists of  elements of the form $\mathrm{t}^{n_1}e_1^{n_2-n_1}\sum_{i=0}^{+\infty}c_i(x-x_n)^i$ for some $c_i\in H^0(V_{G_{r(n)}},\omega^{n_1-n_2})$ such that $\|e_1^{n_2-n_1}c_i p^{(n-1)i}\|$ is uniformly bounded. Since $V_{G_{r(n)}}$ and $\omega^{n_1-n_2}$ are defined over $K$, we have
\[H^0(V_{G_{r(n)}},\omega^{n_1-n_2})=C\widehat\otimes_K H^0(V_{G_{r(n)}},\omega^{n_1-n_2})^{G_K}.\]
Let $A^{n,K}\subseteq A^n$ be the subset of $f=\mathrm{t}^{n_1}e_1^{n_2-n_1}\sum_{i=0}^{+\infty}c_i(x-x_n)^i$ with $c_i\in H^0(V_{G_{r(n)}},\omega^{n_1-n_2})^{G_K}$. This is a $K$-Banach space and $G_K$ acts on it via $\varepsilon_p^{n_1}\cdot \varepsilon_p^{n_2-n_1}=\varepsilon_p^{n_2}$. It follows from Proposition \ref{exeTtr} that 
\[A^n = C\widehat\otimes_K A^{n,K}\]
and by definition, $A^n$ is Hodge-Tate of weight $-n_2$.
\end{proof}

Note that $\cO^{\la,\chi}_{K^p}(U)^{G_n-\an}\subseteq A^n$. By the previous lemma, we have
\[\cO^{\la,\chi}_{K^p}(U)^{G_n-\an}=(A^n)^{G_n-\an}=(C\widehat\otimes_K A^{n,K})^{G_n-\an}=C\widehat\otimes_K (A^{n,K})^{G_n-\an}\]
where the last identity is clear by choosing an orthonormal basis of $C$ over $K$. Our claim follows as $G_K$ acts on $A^{n,K}$ via $\varepsilon_p^{n_2}$.
\end{proof}

The following result will be used later in a Cartan-Serre type argument. Roughly speaking, this is a generalization of the well-known fact in analytic geometry that relatively compact inclusions between $\Q_p$-affinoid spaces induce compact operators between $\Q_p$-affinoid algebras.

\begin{prop} \label{strcomp}
Let $\chi=(n_1,n_2)$ be an integral weight. Let  $U,U'\subseteq \mathfrak{B}$ be $G_0=1+p^lM_2(\Z_p)$-stable open subsets of $\Fl$ with $l\geq 2$ and $\pi_\HT^{-1}(U),\pi_\HT^{-1}(U')$ are the preimages of affinoid subsets $V_{G_0},V'_{G_0}\subseteq \mathcal{X}_{K^pG_0}$. Suppose that $V_{G_0}$ and $V'_{G_0}$ are defined over a finite extension $K$ of $\Q_p$ in $C$.
Assume that 
\begin{itemize}
\item $U$ is a strict neighborhood of $U'$, i.e. $\overbar{U'}\subseteq U$ as adic spaces.
\end{itemize}
Then 
\begin{enumerate}
\item $\cO^{\la,\chi}_{K^p}(U)^{G_n-\an}=C\widehat\otimes_K \cO^{\la,\chi}_{K^p}(U)^{G_n-\an,K}$ for any $n\geq 0$, where $G_n=G_0^{p^n}$;
\item $\cO^{\la,\chi}_{K^p}(U')^{G_n-\an}=C\widehat\otimes_K \cO^{\la,\chi}_{K^p}(U')^{G_n-\an,K}$ for any $n\geq 0$;
\item For any $n\geq0 $, the restriction map
$\cO^{\la,\chi}_{K^p}(U)^{G_n-\an,K}\to \cO^{\la,\chi}_{K^p}(U')^{G_m-\an,K}$
is compact for $m\geq n$ sufficiently large
\end{enumerate}
\end{prop}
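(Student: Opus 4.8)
The plan is to prove the three parts in order, with parts (1) and (2) being essentially formal consequences of the Hodge-Tate structure already established, and part (3) being the real content, to be proved by a Cartan-Serre type argument combined with the explicit model $A^n$ of $\cO^{\la,\chi}_{K^p}(U)^{G_{r(n)}-\an}$ from \ref{An}. For parts (1) and (2): by Remark \ref{tn1} we may twist by $\mathrm{t}^{-n_1}$ and reduce to the case $n_1 = 0$; then by Proposition \ref{GnanHT}, $\cO^{\la,\chi}_{K^p}(U)^{G_n-\an}$ is Hodge-Tate of weight $-n_2$, so by Remark \ref{clHT} it satisfies $W = C\widehat\otimes_K W(w)^{G_K}$ after a finite base change, and the proof of Proposition \ref{GnanHT} in fact exhibits $\cO^{\la,\chi}_{K^p}(U)^{G_n-\an} = C\widehat\otimes_K (A^{n,K})^{G_n-\an}$ directly over the chosen $K$ (after enlarging $K$ to contain $\mu_{p^2}$ and a model of $x_n$), which is exactly the asserted descent; the same argument applies verbatim to $U'$.

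For part (3), the strategy is to mimic the classical fact that a relatively compact inclusion of affinoids induces a compact map of affinoid algebras, but carried out on the ``infinite-level'' spaces $A^{n,K}$ and $A'^{n,K}$ attached to $U$ and $U'$. First I would fix a finite level $G_{r(n)}$ over which $V_{G_0}, V'_{G_0}$ and sections $x_n, x'_n$ are defined over $K$, and invoke the isomorphism \eqref{exAn[[]]}, which identifies $A^{n,K}$ with $\cO^+_{\mathcal{X}_{K^pG_{r(n)}},K}(V_{G_{r(n)}})[[\tfrac{x-x_n}{p^{n-1}}]]\otimes_{\Z_p}\Q_p$ and similarly for $U'$; here the superscript $K$ means sections of the $K$-model. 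Since $\overbar{U'}\subseteq U$, after possibly shrinking $G_0$ the affinoid $V'_{G_{r(n)}}$ is a strict neighborhood inside $V_{G_{r(n)}}$ over $K$, so the restriction map $\cO_{\mathcal{X}_{K^pG_{r(n)}},K}(V_{G_{r(n)}})\to \cO_{\mathcal{X}_{K^pG_{r(n)}},K}(V'_{G_{r(n)}})$ is a compact map of $K$-affinoid algebras by Tate's theory. The point is then to upgrade this to a compact map on the power series rings: the extra variable $u:=\tfrac{x-x_n}{p^{n-1}}$ over $U$ maps, over $U'$, to $\tfrac{x-x_n}{p^{n-1}} = \tfrac{x-x'_n}{p^{n-1}} + \tfrac{x'_n-x_n}{p^{n-1}}$, and since $\|x-x_n\|, \|x-x'_n\|\leq p^{-n}$ on $V'_\infty$, the image of $u$ lies in $p\cdot\cO^+[[\tfrac{x-x'_n}{p^{m-1}}]]$ after increasing $m$; thus $u$ becomes topologically nilpotent in the target for $m$ large. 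By Example \ref{compexa} (the variant with $A = \Z_p\langle T_1,\dots,T_n\rangle[[x]]\otimes\Q_p$), a continuous homomorphism out of such a ring sending the affinoid coordinates and $u$ to topologically nilpotent elements is compact. Combining the compactness of the affinoid restriction with this observation over the inductive system in $m$, Lemma \ref{CStarg} (the Cartan-Serre key lemma) — with $V_m = A^{n,K}$, $W_m = $ the relevant image, and $f_m$ the restriction maps — then yields that $\cO^{\la,\chi}_{K^p}(U)^{G_n-\an,K}\to\cO^{\la,\chi}_{K^p}(U')^{G_m-\an,K}$ is compact for $m\gg 0$, using the sandwiching $\cO^{\la,\chi}_{K^p}(U)^{G_n-\an}\subseteq A^n\subseteq \cO^{\la,\chi}_{K^p}(U)^{G_{r(n)}-\an}$ to pass between the analytic-vector spaces and the $A^n$'s.

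The main obstacle I anticipate is bookkeeping rather than conceptual: one must be careful that the levels $G_{r(n)}$, the radii (the powers $p^{n-1}$ versus $p^{m-1}$), and the choices of $x_n$ versus $x'_n$ all fit together so that the coordinate $u$ genuinely lands in the maximal ideal of the target power series ring, and that the compact operator between $K$-affinoid algebras survives completed tensoring with $C$ (for which one uses parts (1) and (2) together with the elementary fact, as in the proof of Corollary \ref{strHT}, that completed tensor with $C$ over $K$ is exact and preserves the relevant operator-theoretic properties via an orthonormal basis). A secondary subtlety is that $U, U'$ need not lie in $U_1$; but by applying the $\GL_2(\Q_p)$-action (as in \ref{mathfrakU'}) one reduces to that case, noting that the constructions of \ref{exs} and \ref{An} are then available. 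Once these are in place, the compactness is immediate from Lemma \ref{CStarg}.
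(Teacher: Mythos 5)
Your proposal is correct and follows essentially the same route as the paper: parts (1) and (2) via Lemma \ref{AnHT} and Remark \ref{clHT}, and part (3) by identifying $A^{n,K}$ with $\cO^+(V_{G_{r(n)}})^{G_K}[[\frac{x-x_n}{p^{n-1}}]]\otimes_{\Z_p}\Q_p$, using the strict inclusion $\overbar{V'_{G_{r(n)}}}\subseteq V_{G_{r(n)}}$ for compactness on the affinoid factor, and checking that the shifted variable becomes topologically nilpotent in the target so that Example \ref{compexa} applies. One small correction: the final appeal to Lemma \ref{CStarg} is unnecessary and not really applicable (its hypothesis (2) on images is neither verified nor relevant here); once $A^{n,K}\to A'^{\,m,K}$ is compact, the desired compactness of $\cO^{\la,\chi}_{K^p}(U)^{G_n-\an,K}\to \cO^{\la,\chi}_{K^p}(U')^{G_m-\an,K}$ follows immediately from the continuous inclusions $\cO^{\la,\chi}_{K^p}(U)^{G_n-\an}\subseteq A^n$ and $A'^{\,m}\subseteq \cO^{\la,\chi}_{K^p}(U')^{G_{r(m)}-\an}$ together with the fact that compact operators remain compact after pre- and post-composition with continuous maps, which is how the paper concludes.
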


\begin{rem}
This proposition roughly says that the restriction map $\cO^{\la,\chi}_{K^p}(U)^{G_n-\an}\to \cO^{\la,\chi}_{K^p}(U')^{G_m-\an}$ is essentially the completed tensor product of a compact operator with $C$.
\end{rem}

\begin{proof}
The first two claims follow from Lemma \ref{AnHT} and Remark \ref{clHT}. It remains to prove the last assertion. Note that  we are free to replace $K$ by a finite extension. Indeed, suppose $L$ is a finite extension of $K$ in $L$. Then $ \cO^{\la,\chi}_{K^p}(U)^{G_n-\an,K}$ is closed subspace of $\cO^{\la,\chi}_{K^p}(U)^{G_n-\an,L}$ because
\[ \cO^{\la,\chi}_{K^p}(U)^{G_n-\an,L}=L\otimes_K  \cO^{\la,\chi}_{K^p}(U)^{G_n-\an,K},\]
cf. the discussion above Proposition \ref{exeTtr}.

We may assume $U\subseteq U_1$. Denote by $V_{G_{n}}$ (resp. $V'_{G_n}$) the preimage of $V_{G_0}$ (resp. $V'_{G_0}$) in $\mathcal{X}_{K^pG_n}$.  As in \ref{exs}, we can choose  $x_n\in H^0(V_{G_{r(n)}},\cO_{\mathcal{X}_{K^pG_{r(n)}}})$ and define $A^n\subseteq \cO^{\la,\chi}_{K^p}(U)$. Using $x_n|_{V'_{G_{r(n)}}}$, we can define $A'{}^{n}\subseteq \cO^{\la,\chi}_{K^p}(U')$ similarly. Then $\cO^{\la,\chi}_{K^p}(U)^{G_n-\an}\subseteq A^n$  and $A'{}^n \subseteq \cO^{\la,\chi}_{K^p}(U')^{G_{r(n)}-\an}$.

Fix $n\geq 0$. After possibly replacing $K$ by a finite extension, we may assume that $V_{G_0},V'_{G_0},x_n,x_{n+1}$ are defined over $K$. It suffices to prove the natural map
\[f:A^{n,K}\to A'{}^{n+1,K}\]
is compact. Indeed, this will imply that $\cO^{\la,\chi}_{K^p}(U)^{G_n-\an,K}\to \cO^{\la,\chi}_{K^p}(U)^{G_m-\an,K}$ is compact with $m=r(n+1)$. To see the compactness of $f$, we fix a generator $s$ of $\omega^{n_1-n_2}|_{V_{G_{r(n)}}}$ defined over $K$. By \ref{exAn[[]]} and the proof of Lemma \ref{AnHT}, there is an isomorphism 
\[A^{n,K}\cong \cO^+_{\mathcal{X}_{K^pG_{r(n)}}}(V_{G_{r(n)}})^{G_K}[[\frac{x-x_n}{p^{n-1}}]] \otimes_{\Z_p} \Q_p.\]
Similarly, using the pull-back of $s$ to $V_{G_{r(n+1)}}$, we have 
\[A'{}^{n+1,K}\cong \cO^+_{\mathcal{X}_{K^pG_{r(n+1)}}}(V'_{G_{r(n+1)}})^{G_K}[[\frac{x-x_{n+1}}{p^{n}}]] \otimes_{\Z_p} \Q_p.\]
By our assumption, $U$ is a strict neighborhood of $U'$. Hence $\overbar{\pi_{\HT}^{-1}(U')}\subseteq \pi_\HT^{-1}(U)$. Both are $G_{r(n)}$-stable subsets. Note that $\displaystyle \pi_\HT^{-1}(U)\sim\varprojlim_{l}V_{G_l}$ and $V_{l}$ is the quotient of $V_{k}$ by $G_k/G_l, k\geq l$. Hence $V_{G_{r(n)}}=\pi_\HT^{-1}(U)/G_{r(n)}$ as topological spaces. From this, we conclude that $V_{G_{r(n)}}$ is a strict neighborhood of $V'_{G_{r(n)}}$. Therefore 
\[\cO^+_{\mathcal{X}_{K^pG_{r(n)}}}(V_{G_{r(n)}})^{G_K}\to   \cO^+_{\mathcal{X}_{K^pG_{r(n+1)}}}(V'_{G_{r(n+1)}})^{G_K}\]
is compact because modular curves are of finite type. On the other hand, we have
\[\frac{x-x_n}{p^{n-1}}=p\left(\frac{x-x_{n+1}}{p^n}\right)+\frac{x_{n+1}-x_n}{p^{n-1}}.\]
By our choice of $x_n$ in \ref{exs},
\[\|x_{n+1}-x_{n}\|\leq\max\{\|x-x_n\|,\|x-x_{n+1}\|\} \leq p^{-n}.\]
In particular, $\frac{x_{n+1}-x_n}{p^{n-1}}\in p \cO^+_{\mathcal{X}_{K^pG_{r(n+1)}}}(V'_{G_{r(n+1)}})^{G_K}$. Hence $\frac{x-x_n}{p^{n-1}}$ is topologically nilpotent in $A'{}^{n+1,K}$. This shows that $f:A^{n,K}\to A'{}^{n+1,K}$ is compact, cf. Example \ref{compexa}.
\end{proof}

\subsection{A digression: topology on the cohomology of \texorpdfstring{$\cO^{\la,\chi}_{K^p}$}{Lg}} \label{LBH}
\begin{para} \label{Cech}
$H^0(U,\cO^{\la,\chi}_{K^p})=H^0(U,\cO_{K^p})^{\la,\chi}$ is a Hausdorff LB-space if $U\in\mathfrak{B}$. The goal of this subsection is to define such a topology on  the cohomology $H^i(\Fl,\cO^{\la,\chi}_{K^p})$. 

We first consider the cohomology group $H^i(\Fl,\cO^+_{K^p})$, where $\cO^+_{K^p}={\pi_{\HT}}_*\cO^+_{\mathcal{X}_{K^p}}$. By Scholze's work, $H^i(\Fl,\cO^+_{K^p})$ is isomorphic to the $\cO_C$-coefficient completed cohomology as almost $\cO_C$-modules, cf. \cite[Corollary 4.4.3]{Pan20} and the proof of Theorem 4.4.6 \textit{ibid.}. In particular, $H^i(\Fl,\cO^+_{K^p})$ has bounded $p$-torsion and $H^i(\Fl,\cO_{K^p})$ is a $C$-Banach space. Recall that $U_1$ (resp. $U_2$) denotes the affinoid open subset  $\{\|x\|\leq1\}$ (resp. $\{\|x\|\geq1\}$) in $\Fl$. Let $U_{12}=U_1\cap U_2$. As proved in the proof of Theorem 4.4.6 of \cite{Pan20},  the cohomology $H^\bullet(\Fl,\cO^+_{K^p})$ can be computed by the \v{C}ech complex 
\[\cO^+_{K^p}(U_1)\oplus \cO^+_{K^p}(U_2)\to \cO^+_{K^p}(U_{12}).\]
Therefore after inverting $p$, we see that the homomorphism in the  \v{C}ech complex 
\[C^\bullet:\cO_{K^p}(U_1)\oplus \cO_{K^p}(U_2)\to \cO_{K^p}(U_{12})\]
is strict. 
\end{para}

\begin{lem} \label{rescle}
The restriction map $\cO_{K^p}(U_1)\to \cO_{K^p}(U_{12})$ is a  closed embedding.
\end{lem}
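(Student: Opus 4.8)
The plan is to deduce the claim directly from the facts already established in \ref{Cech}. We know that the \v{C}ech complex
\[
C^\bullet:\ \cO_{K^p}(U_1)\oplus \cO_{K^p}(U_2)\longrightarrow \cO_{K^p}(U_{12})
\]
computes $H^\bullet(\Fl,\cO_{K^p})$ and, crucially, that the differential $\delta\colon \cO_{K^p}(U_1)\oplus \cO_{K^p}(U_2)\to \cO_{K^p}(U_{12})$ is \emph{strict} (being the scalar extension of a map of finitely generated $\cO_C$-modules with bounded $p$-torsion, after inverting $p$). Strictness means $\delta$ is a topological quotient map onto its (closed) image. The restriction map $r_1\colon\cO_{K^p}(U_1)\to\cO_{K^p}(U_{12})$ is the composite of the inclusion $\cO_{K^p}(U_1)\hookrightarrow\cO_{K^p}(U_1)\oplus\cO_{K^p}(U_2)$, $s\mapsto(s,0)$, with $\delta$; equivalently, up to sign, $r_1(s)=\delta(s,0)$.

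First I would show $r_1$ is injective. This is purely a statement about the sheaf $\cO_{K^p}$: a section of $\cO_{\mathcal{X}_{K^p}}$ over $\pi_{\HT}^{-1}(U_1)$ that vanishes on the dense open $\pi_{\HT}^{-1}(U_{12})$ vanishes, since $\pi_{\HT}^{-1}(U_1)$ is (affinoid) perfectoid and reduced and $U_{12}$ is a nonempty rational subdomain, hence scheme-theoretically dense. So $\Ker r_1=0$.

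Next I would show $r_1$ has closed image and is a homeomorphism onto it. Here is where strictness of $\delta$ enters. Since $\delta$ is strict, $\im(\delta)$ is closed in $\cO_{K^p}(U_{12})$ and $\cO_{K^p}(U_1)\oplus\cO_{K^p}(U_2)\to\im(\delta)$ is open. I would argue that $r_1$ is itself strict: by the open mapping theorem for $\Q_p$-Banach spaces (all three terms are Banach spaces, by the discussion in \ref{Cech}), any continuous injection with closed image between Banach spaces is automatically a closed embedding, so it suffices to see that $\im(r_1)$ is closed. For this, observe $\im(r_1)=\delta\big(\cO_{K^p}(U_1)\oplus 0\big)$; since $\cO_{K^p}(U_1)\oplus 0$ is closed in the direct sum and $\delta$ is strict with kernel contained in the diagonal of global sections $\cO_{K^p}(\Fl)\hookrightarrow \cO_{K^p}(U_1)\oplus\cO_{K^p}(U_2)$, the preimage $\delta^{-1}(\im(r_1))=(\cO_{K^p}(U_1)\oplus 0)+\Ker\delta$ is closed (a sum of a closed subspace and the finite-"codimension"/Banach kernel, which one checks is closed because $\Ker\delta=\cO_{K^p}(\Fl)$ embeds as the antidiagonal $s\mapsto(s,-s)$ of a Banach space), and hence $\im(r_1)$ is closed by strictness of $\delta$. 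Finally, the induced continuous bijection $\cO_{K^p}(U_1)\to\im(r_1)$ is a homeomorphism by the open mapping theorem.

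The main obstacle I anticipate is the bookkeeping around $\Ker\delta$: one must verify that the kernel of the \v{C}ech differential, i.e.\ $H^0(\Fl,\cO_{K^p})=\cO_{K^p}(\Fl)$, sits as a \emph{closed} subspace of $\cO_{K^p}(U_1)\oplus\cO_{K^p}(U_2)$ via $s\mapsto(s,-s)$, so that the sum $(\cO_{K^p}(U_1)\oplus 0)+\Ker\delta$ is closed and strictness of $\delta$ can be leveraged. This closedness again follows from injectivity of the restriction maps (each summand of the antidiagonal is determined by its first coordinate, and the graph is closed), but it is the one point that needs the density/reducedness input on the perfectoid fibers rather than pure functional analysis. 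Everything else is the open mapping theorem applied to Banach spaces.
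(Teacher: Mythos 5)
Both halves of your argument have genuine gaps. For injectivity, the assertion that $\pi_{\HT}^{-1}(U_{12})$ is ``scheme-theoretically dense'' in $\pi_{\HT}^{-1}(U_1)$ is not justified and is not a formal consequence of $U_{12}$ being a nonempty rational subdomain of $U_1$: the space $\pi_{\HT}^{-1}(U_1)$ is a perfectoid inverse limit of affinoids $V_{K_p}$ in finite-level modular curves, each with many connected components, and nothing in your argument rules out a component of some $V_{K_p}$ (or a section of the \emph{completed} structure sheaf) supported away from the preimage of $U_{12}$; moreover the identity principle you invoke is an irreducibility statement that simply is not available here. The paper instead argues globally: a section of $\cO_{K^p}(U_1)$ killed by restriction to $U_{12}$ glues with $0$ on $U_2$ to a global section of $H^0(\Fl,\cO_{K^p})\cong\tilde H^0(K^p,C)$, on which $\SL_2(\Q_p)$ acts trivially; since the section vanishes on $U_2$ and $\SL_2(\Q_p)\cdot U_2=\Fl$, it vanishes identically. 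Some such global/group-theoretic input is needed.

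For strictness, the step where you claim $(\cO_{K^p}(U_1)\oplus 0)+\Ker\delta$ is closed ``because each summand is closed'' is false in general: the sum of two closed subspaces of a Banach space need not be closed. In your situation the sum equals $\cO_{K^p}(U_1)\oplus r_2\bigl(H^0(\Fl,\cO_{K^p})\bigr)$, so its closedness is \emph{equivalent} to the image of the restriction $H^0(\Fl,\cO_{K^p})\to\cO_{K^p}(U_2)$ being closed --- a statement of exactly the same nature and difficulty as the lemma you are trying to prove, so the argument is essentially circular. The paper's resolution is the Weyl-group symmetrization (Lemma \ref{keyweyl}): the element $w=\begin{pmatrix}0&1\\1&0\end{pmatrix}$ swaps $U_1$ and $U_2$ and acts as a continuous involution on $\cO_{K^p}(U_{12})$, so the \v{C}ech differential is literally of the form $(i,w\circ i)$, and decomposing into $w$-eigenspaces shows $i$ is strict if and only if $(i,w\circ i)$ is --- the latter being the strictness of $\delta$ you already have. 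That symmetrization is the missing idea; without it the passage from strictness of $\delta$ to strictness of a single restriction map does not go through.
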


\begin{proof}
First we show that this map is injective. Suppose $f\in\ker\left(\cO_{K^p}(U_1)\to \cO_{K^p}(U_{12})\right)$. Then $f$ can be extended to a global section $\tilde{f}\in H^0(\Fl,\cO_{K^p})$ with $\tilde{f}|_{U_2}=0$ (and $\tilde{f}|_{U_1}=f$). Since $H^0(\Fl,\cO_{K^p})$ is isomorphic to the $C$-coefficient completed cohomology $\tilde{H}^0(K^p,C)$ by \cite[Theorem 4.4.6]{Pan20}, on which the action of $\SL_2(\Q_p)$ is trivial, cf. \cite[4.2]{Eme06}, we conclude that $\tilde{f}$ is fixed by $\SL_2(\Q_p)$. However $\SL_2(\Q_p)\cdot U_2=\Fl$. Hence $f=0$.

It remains to show the strictness of $\cO_{K^p}(U_1)\to \cO_{K^p}(U_{12})$. We use the Weyl group.
\begin{lem} \label{keyweyl}
Let $i:A\to B$ be a continuous map of $\Q_p$-Banach spaces. Let $w$ be a continuous involution on $B$.  Then $i$ is strict if and only if $(i,w\circ i):A\oplus A\to B$ is strict.
\end{lem}

\begin{proof}
Suppose $(i,w\circ i):A\oplus A\to B$ is strict, then $i$ is strict as it is the composite $A\xrightarrow{x\mapsto (x,0)} A\oplus A\xrightarrow{(i,w\circ i)} B$. Now suppose $i$ is strict. There is a continuous involution $w'$ on $A\oplus A$ sending $(x_1,x_2)$ to $(x_2,x_1)$. It is clear that $(i,w\circ i)$ intertwines $w'$ and $w$. Hence we only need to prove that the induced maps $(A\oplus A)^{w'=1}\to B^{w=1}$ and $(A\oplus A)^{w'=-1}\to B^{w=-1}$ are strict. Under the isomorphism $A\xrightarrow{x\mapsto (x,x)}(A\oplus A)^{w'=1}$, one can check easily that $(A\oplus A)^{w'=1}\to B^{w=1}$ agrees with the composite
\[A\xrightarrow{i} B \xrightarrow{1+w} B^{w=1}.\]
Note that $(1+w)/2$ is the projection map of $B$ onto $B^{w=1}$ hence a strict homomorphism. It follows from our assumption that $(1+w)\circ i$ is strict. Similarly $(A\oplus A)^{w'=-1}\to B^{w=-1}$ is strict as it essentially agrees with $(1-w)\circ i$.
\end{proof}
We apply this lemma with $A=\cO_{K^p}(U_1)$, $B=\cO_{K^p}(U_{12})$, $i$ being the restriction map and $w=\begin{pmatrix} 0 & 1\\ 1 & 0\end{pmatrix}\in\GL_2(\Q_p)$. In this case, $(i,w\circ i):A\oplus A\to B$ is nothing but the \v{C}ech complex $\cO_{K^p}(U_1)\oplus \cO_{K^p}(U_2)\to \cO_{K^p}(U_{12})$ up to a sign and we have seen this is a strict complex. Hence $\cO_{K^p}(U_1)\to \cO_{K^p}(U_{12})$ is strict.
\end{proof}

\begin{para} \label{Celachi}
Let $G_0=1+p^2M_2(\Z_p)\subseteq \GL_2(\Z_p)$ and $G_n=G_0^{p^n}$. There are unitary continuous actions of $G_n$ on $\cO_{K^p}(U_1)$ and $\cO_{K^p}(U_{12})$. Then Lemma \ref{rescle} and Lemma \ref{Gnanprecle} imply that 
\[\cO_{K^p}(U_1)^{G_n-\an}\to \cO_{K^p}(U_{12})^{G_n-\an}\]
is also a closed embedding. Applying  Lemma \ref{keyweyl} to this map, we see  that the complex obtained by taking $G_n$-analytic vectors in $C^{\bullet}$
\[\cO_{K^p}(U_1)^{G_n-\an}\oplus \cO_{K^p}(U_2)^{G_n-\an}\to \cO_{K^p}(U_{12})^{G_n-\an}\]
is still a strict complex. 

Similarly, let $\chi$ be a weight of $\kh$. Then $\cO_{K^p}(U_1)^{G_n-\an,\chi}\to \cO_{K^p}(U_{12})^{G_n-\an,\chi}$ is a closed embedding. We get a strict homomorphism
\[\cO^{\la,\chi}_{K^p}(U_1)^{G_n-\an}\oplus \cO^{\la,\chi}_{K^p}(U_2)^{G_n-\an}\to \cO^{\la,\chi}_{K^p}(U_{12})^{G_n-\an}.\]
Therefore the cohomology groups of this complex are also Banach spaces.
If we pass to the inductive limit over $n$, we get the \v{C}ech complex
\[\cO^{\la,\chi}_{K^p}(U_1)\oplus \cO^{\la,\chi}_{K^p}(U_2)\to \cO^{\la,\chi}_{K^p}(U_{12}).\]
Note that this actually computes the cohomology $H^i(\Fl,\cO^{\la,\chi}_{K^p})$. Indeed, the proof of Theorem 4.4.6 of \cite{Pan20} says that the \v{C}ech cohomology $\check{H}^j(U,\cO^{\la}_{K^p})=0$ for any $j>0$ and $U\in\mathfrak{B}$. By \cite[Theorem 5.1.2.(1)]{Pan20}, this shows that $\check{H}^j(U,\cO^{\la,\chi}_{K^p})=0,~j>0,~U\in\mathfrak{B}$. Hence by the same argument used in the proof of Theorem 4.4.6 of \cite{Pan20}, $H^i(\Fl,\cO^{\la,\chi}_{K^p})$ can be computed by the \v{C}ech cohomology.
\end{para}

\begin{defn} \label{chGanchi}
We define  $\check{H}^\bullet(\cO^{G_n-\an}_{K^p})$ to be the cohomology of 
\[\cO_{K^p}(U_1)^{G_n-\an}\oplus \cO_{K^p}(U_2)^{G_n-\an}\to \cO_{K^p}(U_{12})^{G_n-\an}\]
and $\check{H}^\bullet(\cO^{G_n-\an,\chi}_{K^p})$ (which was denoted by $\check{H}^i(\mathfrak{U},\cO^{G_n-\an,\chi}_{K^p})$ in \ref{vCech}) to be the cohomology of
\[\cO^{\la,\chi}_{K^p}(U_1)^{G_n-\an}\oplus \cO^{\la,\chi}_{K^p}(U_2)^{G_n-\an}\to \cO^{\la,\chi}_{K^p}(U_{12})^{G_n-\an}.\]
These are natural Banach spaces over $C$ equipped with an analytic action of $G_n$. This defines an LB-space structure on   $H^i(\Fl,\cO^{\la,\chi}_{K^p})$  by 
\[H^i(\Fl,\cO^{\la,\chi}_{K^p})=\varinjlim_n \check{H}^i(\cO^{G_n-\an,\chi}_{K^p}).\]
\end{defn}

\begin{rem} \label{LBstreqv}
Similarly, we can define an LB-space structure on $H^i(\Fl,\cO^{\la}_{K^p})$ by 
\[H^i(\Fl,\cO^{\la}_{K^p})=\varinjlim_{n}\check{H}^i(\cO^{G_n-\an}_{K^p}).\]
On the other hand, it follows from \cite[Theorem 4.4.6]{Pan20} that we can define another LB-space structure by
\[H^i(\Fl,\cO^{\la}_{K^p})\cong  H^i(\Fl,\cO_{K^p})^\la =\varinjlim_n H^i(\Fl,\cO_{K^p})^{G_n-\an}.\] 
Not surprisingly, both LB-spaces structures are equivalent.  This is clear when $i=0$ because the natural map
\[\check{H}^0(\cO^{G_n-\an}_{K^p})\to H^0(\Fl,\cO_{K^p})^{G_n-\an}.\]
is an isomorphism. When $i=1$, the natural map $\check{H}^1(\cO^{G_n-\an}_{K^p})\to H^1(\Fl,\cO_{K^p})^{G_n-\an}$ is continuous. It suffices to prove that for $n\geq0$, there exist $m\geq n$ and a continuous map
\[H^1(\Fl,\cO_{K^p})^{G_n-\an}\to \check{H}^1(\cO^{G_m-\an}_{K^p})\] 
whose composite with  $\check{H}^1(\cO^{G_m-\an}_{K^p})\to H^1(\Fl,\cO_{K^p})^{G_m-\an}$ agrees with the natural inclusion $H^1(\Fl,\cO_{K^p})^{G_n-\an}\subseteq H^1(\Fl,\cO_{K^p})^{G_m-\an}$. This follows from the strongly $\mathfrak{LA}$-acyclicity of $\cO_{K^p}(U_i)$ and $H^0(\Fl,\cO_{K^p})$ cf. the beginning of the proof of Theorem 5.1.11 of \cite{Pan20}.
\end{rem}

\begin{rem}
We can also equip a topology on $H^i(\Fl,\omega^{k,\la,\chi}_{K^p})$ similar to Definition \ref{chGanchi}. This will not be used in this paper but might be of some independent interest.  Again we need to show the \v{C}ech complex associated to the cover $\{U_1,U_2\}$
\[{\pi_{\HT}}_*(\omega^k_{K^p})(U_1)\oplus {\pi_{\HT}}_*(\omega^k_{K^p})(U_2)\to {\pi_{\HT}}_*(\omega^k_{K^p})(U_{12})\]
has strict homomorphism. The topology on ${\pi_{\HT}}_*(\omega^k_{K^p})(U_?),?=1,2,12$ is defined as follows: by choosing  an invertible section of $\omega^k_{K^p}$ on $\pi_{\HT}^{-1}(U_?)$, we identify ${\pi_{\HT}}_*(\omega^k_{K^p})(U_?)$ with $\cO_{K^p}(U_?)$ and thus define a Banach space structure on ${\pi_{\HT}}_*(\omega^k_{K^p})(U_?)$.  It is easy to see that the topology obtained in this way does not depend on the choice of the invertible section. By Lemma \ref{keyweyl} again, we see that 
\[{\pi_{\HT}}_*(\omega^k_{K^p})(U_1)\oplus {\pi_{\HT}}_*(\omega^k_{K^p})(U_2)\to {\pi_{\HT}}_*(\omega^k_{K^p})(U_{12})\]
is strict. From this, we can proceed as before. Note that this argument shows that we can put a Banach space structure on $H^i(\Fl,{\pi_{\HT}}_*(\omega^k_{K^p}))$. However the action of $\GL_2(\Q_p)$ is only continuous but not unitary in general.
\end{rem}

\begin{rem}
One can also use Lemma \ref{keyweyl} to prove the claim in Remark 5.1.14 of \cite{Pan20} without any calculations.
\end{rem}

\begin{para} \label{GKcoh}
As explained in \ref{stpHT},  the Galois group $G_K$ acts naturally on $\cO^{\la,\chi}_{K^p}(U_i),i=1,2$ for some finite extension $K$ of $\Q_p$ in $C$. Assume $\chi=(n_1,n_2)$ is integral. We have seen in Proposition \ref{GnanHT} that $\cO^{\la,\chi}_{K^p}(U_?)^{G_n-\an}, ?=1,2,12$ are Hodge-Tate of weight  $-n_2$. 
\end{para}

\begin{prop} \label{chcohHT}
$\check{H}^\bullet(\cO^{G_n-\an,\chi}_{K^p})$ is Hodge-Tate of weight $-n_2$.
\end{prop}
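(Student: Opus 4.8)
The plan is to realize $\check H^\bullet(\cO^{G_n-\an,\chi}_{K^p})$ as the cohomology of a strict, $G_{\Q_p}$-equivariant complex of $C$-Banach spaces which, after a Tate twist by $n_2$, descends to a complex of $K$-Banach spaces carrying the trivial $G_K$-action, and then to use that $C\widehat\otimes_K(-)$ is exact on strict complexes of $K$-Banach spaces.

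First I would fix a finite extension $K$ of $\Q_p$ in $C$ containing $\mu_{p^2}$ over which $V_{G_0}$, the approximating sections $x_n$ of \ref{exs}, and the trivializations of $\omega^{n_1-n_2}$ entering the definition of the $A^n$'s are all defined, simultaneously for $U\in\{U_1,U_2,U_{12}\}$ (note $U_{12}=U_1\cap U_2\in\mathfrak{B}$); such a $K$ exists by the argument of \ref{stpHT}, and then $G_K$ acts continuously and semilinearly on each $C$-Banach space $\cO^{\la,\chi}_{K^p}(U_?)^{G_n-\an}$, commuting with the action of $G_n$. By Proposition \ref{GnanHT} each $\cO^{\la,\chi}_{K^p}(U_?)^{G_n-\an}$ is Hodge-Tate of weight $-n_2$, and by Remark \ref{clHT} this is equivalent to saying that its $n_2$-nd Tate twist $\cO^{\la,\chi}_{K^p}(U_?)^{G_n-\an}(n_2)$ equals $C\widehat\otimes_K\cO^{\la,\chi}_{K^p}(U_?)^{G_n-\an}(n_2)^{G_K}$, with $G_K$ acting only on the $C$-factor. (Concretely, in the notation of the proof of Proposition \ref{GnanHT}, this twisted $K$-subspace is $(A^{n,K})^{G_n-\an}$ with its $\varepsilon_p^{n_2}$-twisted $G_K$-action untwisted.)

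Second, by \ref{Celachi} the \v{C}ech differential
\[
d\colon \cO^{\la,\chi}_{K^p}(U_1)^{G_n-\an}\oplus \cO^{\la,\chi}_{K^p}(U_2)^{G_n-\an}\longrightarrow \cO^{\la,\chi}_{K^p}(U_{12})^{G_n-\an}
\]
is a \emph{strict} morphism of $C$-Banach spaces (this is exactly the Weyl-group argument of Lemma \ref{keyweyl}), it is $G_{\Q_p}$-equivariant, and by Definition \ref{chGanchi} its cohomology is $\check H^\bullet(\cO^{G_n-\an,\chi}_{K^p})$. Tate twisting does not change the underlying topological vector spaces, so $d(n_2)$ is again a strict, $G_{\Q_p}$-equivariant map, now between $C$-Banach spaces of the form $C\widehat\otimes_K(\cdot)$ with trivial $G_K$-action on the $K$-factors; choosing an orthonormal basis of $C$ over $K$ identifies $d(n_2)$ with $\mathrm{id}_C\widehat\otimes_K d_0$ for a strict complex $d_0$ of $K$-Banach spaces. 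Exactness of $C\widehat\otimes_K(-)$ on strict complexes then gives $\check H^i(\cO^{G_n-\an,\chi}_{K^p})(n_2)=C\widehat\otimes_K H^i(d_0)$ with $G_K$ acting through the $C$-factor, i.e. $\check H^i(\cO^{G_n-\an,\chi}_{K^p})$ is Hodge-Tate of weight $-n_2$. Alternatively, one can apply Corollary \ref{strHT}(1),(3) to $d(n_2)$ directly: part (1) handles $\check H^0=\ker d(n_2)$, and strictness plus part (3) (or part (2) for the inclusion of the closed image) handles $\check H^1=\operatorname{coker}d(n_2)$, noting that any closed $G_K$-subspace of a Hodge-Tate weight $0$ space again has trivial $G_K$-action on its $K$-points.

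The only genuine point requiring care is that the \v{C}ech differential remains strict after passing to $G_n$-analytic vectors, but this is precisely what was arranged in \ref{Celachi}; apart from that and the bookkeeping of the field $K$ over which the geometric data is defined, the proposition is a formal consequence of the decompletion packaged in Proposition \ref{GnanHT} and Corollary \ref{strHT}.
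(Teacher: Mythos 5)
Your proposal is correct and follows essentially the same route as the paper: the paper's proof is a one-line appeal to Corollary \ref{strHT}, with the inputs (Hodge--Tate weight $-n_2$ of each term via Proposition \ref{GnanHT}, and strictness of the \v{C}ech differential on $G_n$-analytic vectors via the Weyl-group argument of \ref{Celachi}) already in place from the surrounding discussion in \ref{GKcoh}. You have merely written out explicitly the bookkeeping that the paper leaves implicit.
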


\begin{proof}
This follows from  Corollary \ref{strHT} directly.
\end{proof}

Back to $H^i(\Fl,\cO^{\la,\chi}_{K^p})$. Here are the main results of this subsection.

\begin{prop} \label{Haus}
$H^i(\Fl,\cO^{\la,\chi}_{K^p})$ is a Hausdorff LB-space.
\end{prop}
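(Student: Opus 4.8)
The plan is to compute $H^i(\Fl,\cO^{\la,\chi}_{K^p})$ as the inductive limit $\varinjlim_n \check{H}^i(\cO^{G_n-\an,\chi}_{K^p})$ of the Banach spaces defined in Definition \ref{chGanchi}, and to show that the transition maps in this inductive system are compact (at least cofinally), so that Hausdorffness follows from Proposition \ref{comHau}. For $i=0$ there is nothing to do: $\check{H}^0(\cO^{G_n-\an,\chi}_{K^p})$ is a closed subspace of $H^0(\Fl,\cO^{\la,\chi}_{K^p})$-data that already sits inside $\cO_{K^p}(U_1)^{G_n-\an}\oplus\cO_{K^p}(U_2)^{G_n-\an}$, which injects into $\cO_{K^p}$; the LB-space $H^0$ is Hausdorff because it is a subspace of the Banach space $H^0(\Fl,\cO_{K^p})$. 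So the real content is $i=1$.

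For $i=1$, I would run the Cartan--Serre argument packaged in Lemma \ref{CStarg}, with the pair of covers $\mathfrak{U}=\{U_1,U_2\}$ and $\mathfrak{U}'=\{U_1',U_2'\}$ from \ref{mathfrakU'}. First, descend to a finite extension $K/\Q_p$ over which all the relevant affinoids are defined, so that by Proposition \ref{GnanHT} and Remark \ref{clHT} we have $\check{H}^1(\cO^{G_n-\an,\chi}_{K^p})=C\widehat\otimes_K \check{H}^1(\cO^{G_n-\an,\chi}_{K^p})^K$ (using Proposition \ref{chcohHT}); by Corollary \ref{comWHau} it then suffices to prove Hausdorffness of $\varinjlim_n \check{H}^1(\cO^{G_n-\an,\chi}_{K^p})^K$, i.e. to produce compact transition maps on the $K$-level. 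Here I apply Lemma \ref{CStarg} with $V_i = \check{H}^1(\mathfrak{U}',\cO^{G_i-\an,\chi}_{K^p})^K$ and $W_i=\check{H}^1(\mathfrak{U},\cO^{G_i-\an,\chi}_{K^p})^K$, and $f_i$ the restriction map $\mathfrak{U}'\to\mathfrak{U}$: condition (2) of Lemma \ref{CStarg} is exactly Corollary \ref{CSarg}, and condition (3) — that $V_i\to W_j$ is compact for $j$ large — follows from Proposition \ref{strcomp}(3), since $U_1,U_2$ are strict neighborhoods of $U_1',U_2'$ (and of $U_1'\cap U_2'$ inside $U_1\cap U_2$), so the restriction maps on the cochain groups $\cO^{\la,\chi}_{K^p}(U_?')^{G_i-\an,K}\to\cO^{\la,\chi}_{K^p}(U_?)^{G_j-\an,K}$, hence on the (sub)quotients computing $\check{H}^1$, are compact. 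Lemma \ref{CStarg} then gives that the transition maps $W_i\to W_j$ are compact for $j\gg i$, so $\varinjlim_n W_n$ is Hausdorff by Proposition \ref{comHau}, and tensoring back up with $C$ via Corollary \ref{comWHau} gives that $\varinjlim_n \check{H}^1(\cO^{G_n-\an,\chi}_{K^p})$ is Hausdorff.

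There is one bookkeeping point to check carefully: that the $K$-structures are compatible with the \v{C}ech differentials and with passing to cohomology, so that the "compact operator $\widehat\otimes_K C$" picture of Proposition \ref{strcomp} survives after taking kernels and cokernels of the \v{C}ech maps. This is handled by Corollary \ref{strHT}: the \v{C}ech complex on the $G_n$-analytic level has strict differentials (shown in \ref{Celachi} via the Weyl-group Lemma \ref{keyweyl}), so Corollary \ref{strHT}(3) applies termwise and the cohomology groups inherit the decomposition $\check{H}^1 = C\widehat\otimes_K \check{H}^{1,K}$ with $G_K$ acting through $\varepsilon_p^{n_2}$; compactness of a morphism $V_j\to W_j$ between such objects on the $C$-level is equivalent to compactness of the induced $K$-level morphism, because compactness only depends on the topologies and the $K$-level space is a Banach subspace of the $C$-level one (cf. the discussion preceding Proposition \ref{exeTtr}).

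The main obstacle is step three's verification of condition (3) of Lemma \ref{CStarg}: one must know not merely that $\cO^{\la,\chi}_{K^p}(U_?')^{G_n-\an,K}\to\cO^{\la,\chi}_{K^p}(U_?)^{G_m-\an,K}$ is compact on each cochain group — which is exactly Proposition \ref{strcomp}(3) — but that this forces compactness of the induced map on $\check{H}^1$. Since $\check{H}^1$ is a subquotient (cokernel of a strict map of Banach spaces, by \ref{Celachi}) of the cochain groups, and a continuous image/restriction of a compact operator is compact while a compact operator composed with a continuous quotient map is compact, this does follow formally; but it requires the strictness established in \ref{Celachi} to identify $\check{H}^1$ as an honest Banach quotient rather than merely a subquotient with the quotient topology. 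Once that is in hand, everything chains together and Proposition \ref{comHau} delivers the Hausdorff conclusion.
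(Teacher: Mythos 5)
Your overall strategy for $i=1$ — Cartan--Serre via Lemma \ref{CStarg} with the two covers $\mathfrak{U},\mathfrak{U}'$, Corollary \ref{CSarg} for the surjectivity condition, Proposition \ref{strcomp} for compactness, all carried out on the $G_K$-invariant ($K$-level) spaces and then tensored back up with $C$ via Corollary \ref{comWHau} — is exactly the route the paper takes in the proof of Lemma \ref{Lemcom}(1). But there is a genuine gap at the final step: Proposition \ref{comHau} requires the compact transition maps to be \emph{injective}, and the transition maps $i_{n,m}:\check{H}^1(\cO^{G_n-\an,\chi}_{K^p})^{G_K}\to \check{H}^1(\cO^{G_m-\an,\chi}_{K^p})^{G_K}$ need not be injective. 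Compactness alone does not give Hausdorffness of the limit: one can build an inductive system of Banach spaces with compact, non-injective transition maps whose kernels keep growing (e.g.\ a weighted shift on $c_0$), for which the union of the kernels in $W_1$ is dense but proper, so the inductive limit is a nonzero space carrying the indiscrete topology. To repair this you need the paper's Lemma \ref{Lemcom}(2): $\ker(i_{n,m})$ stabilizes for $m\gg n$, equivalently the kernel of $\check{H}^1(\cO^{G_n-\an,(0,0)}_{K^p})^{G_K}\to H^1(\Fl,\cO^{\la,(0,0)}_{K^p})$ is closed. That statement is not formal; its proof uses the exact sequence \eqref{chio} (so that the kernel differs from a genuinely closed subspace only by a piece of $\varinjlim_{K_p}H^0(\mathcal{X}_{K^pK_p},\cO)$) together with the finite-dimensionality of $H^0(\mathcal{X}_{K^pG_{n+1}},\cO_{\mathcal{X}_{K^pG_{n+1}}})$ and the smooth-versus-analytic comparison of the $G_n$-actions. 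Once the kernels stabilize, you pass to the system of images $V_n$ (Banach with the quotient topology), which \emph{is} injective with compact transition maps, and only then do Proposition \ref{comHau} and Corollary \ref{comWHau} apply.

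Two smaller points. First, the paper dispatches every case except $i=1$, $\chi(h)=0$ (reduced to $\chi=(0,0)$ by twisting with $\mathrm{t}$) immediately, because $H^i(\Fl,\cO^{\la,\chi}_{K^p})\to H^i(\Fl,\cO_{K^p})$ is injective there by \cite[Corollary 5.1.3.(2)]{Pan20} and the target is Banach; your argument tries to run the heavy machinery for all $\chi$, which forces you to prove kernel stabilization in all cases rather than only the one where it is actually needed. Second, your parenthetical claim that compactness at the $C$-level is \emph{equivalent} to compactness at the $K$-level is false in the direction you would need it (the completed tensor of a compact operator with the infinite-dimensional field $C$ is not compact); this is precisely why the argument must be run on the $K$-level spaces and why Corollary \ref{comWHau} (rather than Proposition \ref{comHau} over $C$) is invoked at the end. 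Since your main line already works at the $K$-level this remark is not load-bearing, but it should be deleted.
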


\begin{proof}
The continuous  inclusion $\cO^{\la,\chi}_{K^p}(U_{12})\subseteq \cO_{K^p}(U_{12})$ induces  a  natural continuous map 
\[H^i(\Fl,\cO^{\la,\chi}_{K^p})\to H^i(\Fl,\cO_{K^p}).\]
By \cite[Corollary 5.1.3.(2)]{Pan20}, this is injective unless $i=1$ and $\chi(h)=0$, where $h=\begin{pmatrix} 1& 0\\ 0&-1\end{pmatrix}$. Hence $H^i(\Fl,\cO^{\la,\chi}_{K^p})$ is Hausdorff except this case as $H^i(\Fl,\cO_{K^p})$ is a $C$-Banach space. By Remark \ref{tn1}, it remains to show that $\displaystyle \varinjlim_n \check{H}^1(\cO^{G_n-\an,(0,0)}_{K^p})$ is Hausdorff. We need  several lemmas. By the discussion in \ref{GKcoh}, $G_K$ acts on $\check{H}^1(\cO^{G_n-\an,(0,0)}_{K^p}),n\geq 0$ for some finite extension $K$ of $\Q_p$ in $C$. 

\begin{lem}
For $n\geq 0$, there is a natural isomorphism 
\[\check{H}^1(\cO^{G_n-\an,(0,0)}_{K^p})\cong \check{H}^1(\cO^{G_n-\an,(0,0)}_{K^p})^{G_{K}}\widehat\otimes_{K}C.\] 
\end{lem}

\begin{proof}
This follows from Proposition \ref{chcohHT} and Remark \ref{clHT}.
\end{proof}

\begin{lem} \label{Lemcom}
For $m\geq n$, we denote by $i_{n,m}$ the transition map 
\[\check{H}^1(\cO^{G_n-\an,(0,0)}_{K^p})^{G_{K}}\to \check{H}^1(\cO^{G_m-\an,(0,0)}_{K^p})^{G_{K}}.\]
Then
\begin{enumerate}
\item $i_{n,m}$ is compact if $m$ is sufficiently large.
\item $\ker(i_{n,m})$ stabilizes when $m$ is sufficiently large, i.e. 
\[\ker(i_{n,m})=\ker\left(\check{H}^1(\cO^{G_n-\an,(0,0)}_{K^p})^{G_{K}}\to H^1(\Fl,\cO^{\la,(0,0)}_{K^p})\right),~~~~\mbox{if }m>>n.\]
Equivalently, $\ker\left(\check{H}^1(\cO^{G_n-\an,(0,0)}_{K^p})^{G_{K}}\to H^1(\Fl,\cO^{\la,(0,0)}_{K^p})\right)$ is a closed subspace.
\end{enumerate}
\end{lem}

Assuming this lemma at the moment, we finish the proof of Proposition \ref{Haus}. Denote by $V_n$ the image of $\check{H}^1(\cO^{G_n-\an,(0,0)}_{K^p})^{G_{K}}\to H^1(\Fl,\cO^{\la,(0,0)}_{K^p})$ equipped with the quotient topology.  Then it follows from the previous two lemmas that 
\[\varinjlim_n V_n\widehat\otimes_K C\cong \varinjlim_n \check{H}^1(\cO^{G_n-\an,(0,0)}_{K^p})\]
as locally convex inductive limits. Note that $V_n\to V_m$ is injective and compact for sufficiently large $m\geq n$. We can apply Corollary \ref{comWHau} here with $W=C$ and conclude that $\displaystyle \varinjlim_n \check{H}^1(\cO^{G_n-\an,(0,0)}_{K^p})$ is Hausdorff.
\end{proof}

\begin{proof}[Proof of Lemma \ref{Lemcom} (1)]
We will use a Cartan-Serre type argument and invoke Lemma \ref{CStarg}. Our starting point is that $H^i(\Fl,\cO^{\la,\chi}_{K^p})$ can be computed by another cover $\mathfrak{U}'$ of $\Fl$ introduced in \ref{mathfrakU'}. Recall that $\mathfrak{U}'=\{U_1',U_2'\}$ with $U'_1$ (resp. $U'_2$) defined by $\|x\|\leq\|p^{-1}\|$ (resp. $\|x\|\geq \|p\|$). Then we can redo everything in this subsection with $U_1,U_2$ replaced by $U'_1,U'_2$. As in \ref{vCech}, we denote the quotient of 
\[\cO_{K^p}(U'_1)^{G_n-\an}\oplus \cO_{K^p}(U'_2)^{G_n-\an}\to \cO_{K^p}(U'_{1}\cap U'_{2})^{G_n-\an}\]
by $\check{H}^1(\mathfrak{U}',\cO^{G_n-\an,\chi}_{K^p})$.
Then $\displaystyle H^1(\Fl,\cO^{\la,\chi}_{K^p})=\varinjlim_n \check{H}^1(\mathfrak{U}',\cO^{G_n-\an,\chi}_{K^p})$. Also $G_K$ acts on these cohomology groups and there are natural  isomorphisms 
\[\check{H}^1(\mathfrak{U}',\cO^{G_n-\an,\chi}_{K^p})=\check{H}^1(\mathfrak{U}',\cO^{G_n-\an,\chi}_{K^p})^{G_K}\widehat\otimes_K C.\]
Moreover, we have natural restriction maps
\[\check{H}^1(\mathfrak{U}',\cO^{G_n-\an,\chi}_{K^p}) \to \check{H}^1(\cO^{G_n-\an,\chi}_{K^p}).\]
By Corollary \ref{CSarg}, the image  of $\check{H}^1(\mathfrak{U}',\cO^{G_m-\an,\chi}_{K^p}) \to \check{H}^1(\cO^{G_m-\an,\chi}_{K^p})$ contains the image of $\check{H}^1(\cO^{G_n-\an,\chi}_{K^p}) \to \check{H}^1(\cO^{G_m-\an,\chi}_{K^p})$ for sufficiently large $m\geq n$. We claim that this is also true for the $G_K$-invariants, i.e. 
\[\im\left( \check{H}^1(\cO^{G_n-\an,\chi}_{K^p})^{G_K} \to \check{H}^1(\cO^{G_m-\an,\chi}_{K^p})\right) \subseteq
\im\left(\check{H}^1(\mathfrak{U}',\cO^{G_m-\an,\chi}_{K^p})^{G_K} \to \check{H}^1(\cO^{G_m-\an,\chi}_{K^p})\right).\]
Indeed by the last part of Corollary \ref{strHT}, the image  of $\check{H}^1(\mathfrak{U}',\cO^{G_m-\an,\chi}_{K^p}) \to \check{H}^1(\cO^{G_m-\an,\chi}_{K^p})$, equipped with the quotient topology, is Hodge-Tate of weight $0$. 

Note that $U'_1\cap U'_2$ is a strict neighborhood of $U_1\cap U_2$.  Hence by Proposition \ref{strcomp}, 
$\cO_{K^p}(U'_{1}\cap U'_{2})^{G_n-\an,G_K}\to \cO_{K^p}(U_{1}\cap U_{2})^{G_m-\an,G_K}$
is compact for sufficiently large $m\geq n$. Therefore 
\[\check{H}^1(\mathfrak{U}',\cO^{G_n-\an,\chi}_{K^p})^{G_K} \to \check{H}^1(\cO^{G_m-\an,\chi}_{K^p})^{G_K}\]
is compact for $m>>n$.

Finally our claim follows from Lemma \ref{CStarg} with $V_i= \check{H}^1(\mathfrak{U}',\cO^{G_i-\an,\chi}_{K^p})^{G_K}$ and $W_i= \check{H}^1(\cO^{G_i-\an,\chi}_{K^p})^{G_K}$. 
\end{proof}

\begin{proof}[Proof of Lemma \ref{Lemcom} (2)]
Fix $n\geq 0$. It suffices to prove that
\[U:=\ker\left(\check{H}^1(\cO^{G_n-\an,(0,0)}_{K^p})\to H^1(\Fl,\cO^{\la,(0,0)}_{K^p})\right)\] 
is a closed subspace.

By \cite[Corollary 5.1.3.(2)]{Pan20}, there is a $\GL_2(\Q_p)$-equivariant exact sequence
\begin{eqnarray} \label{chio}
~\,~\,~\,~\,~\,~\,~\,~\, 0\to \varinjlim_{K_p\subset\GL_2(\Q_p)} H^0(\mathcal{X}_{K^pK_p},\cO_{\mathcal{X}_{K^pK_p}}) \to H^1(\Fl,\cO^{\la,(0,0)}_{K^p}) \to H^1(\Fl,\cO_{K^p})^{\la,(0,0)}\to 0
\end{eqnarray}
where $K_p$ runs through all open compact subgroups of $\GL_2(\Q_p)$. Recall that 
\[H^1(\Fl,\cO^{\la,(0,0)}_{K^p})=\varinjlim_n \check{H}^1(\cO^{G_n-\an,(0,0)}_{K^p}).\]
Let $V$ be the kernel of $\check{H}^1(\cO^{G_n-\an,(0,0)}_{K^p})\to H^1(\Fl,\cO_{K^p})^{\la,(0,0)}$. Then $V$ is closed and contains $U$ . We claim that $V/U$ is a finite-dimensional $C$-vector space. This will imply our claim by a standard argument using the open mapping theorem.

Note that $V/U$  naturally embeds into $\varinjlim_{K_p\subset\GL_2(\Q_p)} H^0(\mathcal{X}_{K^pK_p},\cO_{\mathcal{X}_{K^pK_p}})$. Since the action of $\GL_2(\Q_p)$  on $\varinjlim_{K_p\subset\GL_2(\Q_p)} H^0(\mathcal{X}_{K^pK_p},\cO_{\mathcal{X}_{K^pK_p}})$ is smooth, the action of $G_n$ on $V/U$ is also smooth. On the other hand, the action of $G_n$ on $\check{H}^1(\cO^{G_n-\an,(0,0)}_{K^p})$ is analytic. It is easy to see that $G_{n+1}$ acts trivially on $V/U$. Now $V/U$ is finite-dimensional because
\[ \left(\varinjlim_{K_p\subset\GL_2(\Q_p)} H^0(\mathcal{X}_{K^pK_p},\cO_{\mathcal{X}_{K^pK_p}})\right)^{G_{n+1}}= H^0(\mathcal{X}_{K^pG_{n+1}},\cO_{\mathcal{X}_{K^pG_{n+1}}})\]
is finite-dimensional.
\end{proof}

Now we can talk about $G_n$-analytic vectors in $H^i(\Fl,\cO^{\la,\chi}_{K^p})$. Our next result gives a control of these vectors. 

\begin{prop} \label{contrGnan}
For any $n\geq0$, the subspace of $G_n$-analytic vectors $H^i(\Fl,\cO^{\la,\chi}_{K^p})^{G_n-\an}$ is contained in  the image of  $\check{H}^i(\cO^{G_m-\an,\chi}_{K^p})\to H^i(\Fl,\cO^{\la,\chi}_{K^p})$ for some $m\geq n$.
\end{prop}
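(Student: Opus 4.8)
The statement to prove is Proposition \ref{contrGnan}: for every $n\geq 0$, the subspace $H^i(\Fl,\cO^{\la,\chi}_{K^p})^{G_n-\an}$ is contained in the image of $\check{H}^i(\cO^{G_m-\an,\chi}_{K^p})\to H^i(\Fl,\cO^{\la,\chi}_{K^p})$ for some $m\geq n$. The plan is to reduce to the concrete Čech description that was just set up in Definition \ref{chGanchi}: by construction $H^i(\Fl,\cO^{\la,\chi}_{K^p})=\varinjlim_m \check{H}^i(\cO^{G_m-\an,\chi}_{K^p})$ as a locally convex inductive limit, and each term $\check{H}^i(\cO^{G_m-\an,\chi}_{K^p})$ is a $C$-Banach space carrying an analytic $G_m$-action. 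The case $i=0$ is immediate, since $\check{H}^0(\cO^{G_m-\an,\chi}_{K^p})=H^0(\Fl,\cO^{\la,\chi}_{K^p})^{G_m-\an}$ already by definition. So the content is in $i=1$.

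For $i=1$, first I would recall (Proposition \ref{Haus}) that $H^1(\Fl,\cO^{\la,\chi}_{K^p})$ is a \emph{Hausdorff} LB-space. This is the crucial input: it lets me invoke Proposition \ref{1.1.10}. Concretely, write $H^1:=H^1(\Fl,\cO^{\la,\chi}_{K^p})=\varinjlim_m B_m$ where $B_m:=\check{H}^1(\cO^{G_m-\an,\chi}_{K^p})$ are Banach spaces with injective transition maps (injectivity of transition maps up to passing to images: one should factor through the images $V_m\subseteq H^1$, exactly as done in the proof of Proposition \ref{Haus}, so that after this standard reduction one has a Hausdorff LB-space presented with injective transition maps). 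Now fix $n$ and take an element $v\in (H^1)^{G_n-\an}$. By definition of $G_n$-analytic vectors on an LB-space (cf. \ref{discLB}, \ref{indVn}), $(H^1)^{G_n-\an}$ is itself a Banach space and the inclusion $(H^1)^{G_n-\an}\hookrightarrow H^1$ is continuous. Applying Proposition \ref{1.1.10} to the continuous map $u\colon (H^1)^{G_n-\an}\to H^1=\varinjlim_m V_m$, we conclude that the image of $u$ lands inside $V_m$ for a single $m$; hence $(H^1)^{G_n-\an}\subseteq V_m=\operatorname{im}\bigl(\check{H}^1(\cO^{G_m-\an,\chi}_{K^p})\to H^1(\Fl,\cO^{\la,\chi}_{K^p})\bigr)$, which is exactly the assertion.

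One point that needs care before applying Proposition \ref{1.1.10} is that $(H^1)^{G_n-\an}$ must be a $\Q_p$-Banach space (not merely an LB-space), which is what makes the proposition applicable — this holds because $H^1$ is itself the union of the Banach spaces $B_m$ and the $G_n$-analytic-vectors functor on a Banach space produces a Banach space; passing to the LB-space $H^1$ and then taking $G_n$-analytic vectors, one gets $(H^1)^{G_n-\an}=\varinjlim_m (B_m)^{G_n-\an}$ a priori, so I would first show this inductive limit is eventually stationary — but that is precisely the conclusion we want, so to avoid circularity I would instead argue directly: the natural map $(H^1)^{G_n-\an}\to H^1$ has closed graph (its graph is the intersection of closed subsets, using that the inclusion $(H^1)^{G_n-\an}\subseteq \sC^{\an}(G_n,H^1)$ is a closed embedding and evaluation at $\mathbf 1$ is continuous), so by the Banach–Steinhaus/closed graph formalism underlying Proposition \ref{1.1.10} the argument goes through once $(H^1)^{G_n-\an}$ is known to be an LB-space, and Proposition \ref{1.1.10} is stated for a countable union of Banach spaces mapping to a Hausdorff locally convex space, so I only need $(H^1)^{G_n-\an}$ presented as such — which it is.

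The main obstacle, then, is purely bookkeeping: making sure the transition maps in the presentation $H^1=\varinjlim_m \check{H}^1(\cO^{G_m-\an,\chi}_{K^p})$ can be replaced by injective ones (the $V_m$ trick) so that Proposition \ref{1.1.10} applies, and checking that the various continuity and closed-graph hypotheses hold for the map $(H^1)^{G_n-\an}\to H^1$. Both of these are routine given that Hausdorffness of $H^1$ has already been established in Proposition \ref{Haus} and that the relevant inclusions of Čech complexes at finite analytic level are closed embeddings by Lemma \ref{rescle} and Lemma \ref{Gnanprecle}. No genuinely new idea is needed beyond combining Proposition \ref{Haus} with Proposition \ref{1.1.10}.
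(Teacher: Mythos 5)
There is a genuine gap, and it is exactly at the point you flagged as worrisome. Proposition \ref{1.1.10} requires the \emph{source} $W$ to be a single $\Q_p$-Banach space; for $i=1$ the space $H^1(\Fl,\cO^{\la,\chi}_{K^p})^{G_n-\an}$ is only known to be a Hausdorff LB-space, namely $\bigcup_m V_m^{G_n-\an}$ where $V_m$ is the image of $\check{H}^1(\cO^{G_m-\an,\chi}_{K^p})$ (this is forced by the definition of analytic vectors in an LB-space: an analytic orbit map valued in $\varinjlim_m V_m$ factors through some $V_m$). Applying Proposition \ref{1.1.10} piecewise to each Banach piece $V_m^{G_n-\an}$ gives only the tautology $V_m^{G_n-\an}\subseteq V_m$, with no uniform bound on $m$; and your closed-graph workaround does not repair this, because the Grothendieck factorization underlying Proposition \ref{1.1.10} needs the source to be Baire (Banach or Fr\'echet), which an LB-space in general is not. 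So the statement "$(H^1)^{G_n-\an}$ lands in a single $V_m$" is precisely the eventual stabilization you noticed you could not assume, and the argument as written is circular.

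The paper escapes this by not applying Proposition \ref{1.1.10} to $H^1(\Fl,\cO^{\la,\chi}_{K^p})$ directly, but to the quotient in the exact sequence \eqref{chio},
\[0\to \varinjlim_{K_p} H^0(\mathcal{X}_{K^pK_p},\cO_{\mathcal{X}_{K^pK_p}}) \to H^1(\Fl,\cO^{\la,\chi}_{K^p}) \to H^1(\Fl,\cO_{K^p})^{\la,\chi}\to 0.\]
The point is that $H^1(\Fl,\cO_{K^p})$ is an honest $C$-Banach space (completed cohomology), so $H^1(\Fl,\cO_{K^p})^{G_n-\an,\chi}$ is a Banach space and Proposition \ref{1.1.10} legitimately places it inside the image of $f_{m'}$ for a single $m'$. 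The kernel is then handled separately: its $G_n$-analytic vectors are $G_{m'}$-invariants, hence equal to $H^0(\mathcal{X}_{K^pG_{m'}},\cO)$, which is finite-dimensional and therefore contained in some $V_m$. Given $v\in H^1(\Fl,\cO^{\la,\chi}_{K^p})^{G_n-\an}$ one corrects by an element $v'\in V_{m'}$ so that $v-v'$ lies in this finite-dimensional kernel, and concludes $v\in V_m$. If you want to salvage your write-up, you should replace the direct application of Proposition \ref{1.1.10} by this two-step d\'evissage through \eqref{chio}.
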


\begin{proof}
Note that $\check{H}^0(\cO^{G_m-\an,\chi}_{K^p})=H^i(\Fl,\cO_{K^p})^{G_n-\an,\chi}=H^0(\Fl,\cO^{\la,\chi}_{K^p})^{G_n-\an}$ and there is no $H^i,i\geq2$ as $\Fl$ is one-dimensional. Hence we only need to consider the case $i=1$. First we claim that $H^1(\Fl,\cO_{K^p})^{G_n-\an,\chi}$ is contained in the image of the composite map
\[f_n:\check{H}^1(\cO^{G_{m'}-\an,\chi}_{K^p}) \to H^1(\Fl,\cO^{\la,\chi}_{K^p})\to H^1(\Fl,\cO_{K^p})^{\la,\chi}\]
for some $m'\geq n$.
Indeed, it follows from the exact sequence \eqref{chio} that 
\[H^1(\Fl,\cO_{K^p})^{\la,\chi}=\bigcup_{n=0}^\infty \im(f_n).\] 
Since $H^1(\Fl,\cO_{K^p})^{G_n-\an,\chi}$ is a Banach space and $f_n$ is continuous by Remark \ref{LBstreqv}, our claim follows from Proposition \ref{1.1.10}. We denote the image of  $\check{H}^1(\cO^{G_m-\an,\chi}_{K^p})\to H^1(\Fl,\cO^{\la,\chi}_{K^p})$ by $V_m$. Note that
\[ \left(\varinjlim_{K_p\subset\GL_2(\Q_p)} H^0(\mathcal{X}_{K^pK_p},\cO_{\mathcal{X}_{K^pK_p}})\right)^{G_{m'}}= H^0(\mathcal{X}_{K^pG_{m'}},\cO_{\mathcal{X}_{K^pG_{m'}}})\]
is finite-dimensional over $C$, hence is contained in $V_m$ via \eqref{chio} for some $m\geq m'$.  Now we claim $H^1(\Fl,\cO^{\la,\chi}_{K^p})^{G_n-\an}\subseteq V_m$. Take the $G_n$-analytic vectors in \eqref{chio}.
\[0\to \left(\varinjlim_{K_p\subset\GL_2(\Q_p)} H^0(\mathcal{X}_{K^pK_p},\cO_{\mathcal{X}_{K^pK_p}})\right)^{G_n-\an} \to H^1(\Fl,\cO^{\la,\chi}_{K^p})^{G_n-\an} \to H^1(\Fl,\cO_{K^p})^{G_n-\an,\chi}.\]
For any $v\in H^1(\Fl,\cO^{\la,\chi}_{K^p})^{G_n-\an}$, we can find $v'\in V_{m'}$ such that 
\[v-v'\in \left(\varinjlim_{K_p\subset\GL_2(\Q_p)} H^0(\mathcal{X}_{K^pK_p},\cO_{\mathcal{X}_{K^pK_p}})\right)^{G_{m'}-\an}=\left(\varinjlim_{K_p\subset\GL_2(\Q_p)} H^0(\mathcal{X}_{K^pK_p},\cO_{\mathcal{X}_{K^pK_p}})\right)^{G_{m'}}.\]
Hence $v-v'\in V_m$. Thus $v=v'+(v-v')\in V_m$.
\end{proof}

\section{Intertwining operators: explicit formulas} \label{Ioef}

\subsection{Differential operators I} \label{Do1}
\begin{para}
In this subsection, we will construct an operator from $\cO^{\la,\chi}_{K^p}$ to a twist of $\omega^{2k+2,\la,\chi}_{K^p}$ for some integer $k$ depending on $\chi$. Roughly speaking, this operator comes from  \textit{differential operators on modular curves} and $\cO_{\Fl}$-linearly extends the operator $\theta^{k+1}$ in the classical theory of modular forms. Let me first recall the definition of $\theta^{k+1}$.
\end{para}

\begin{para} \label{XCY}
Fix an open compact subgroup $K_p$ of $\GL_2(\Q_p)$. Let  $\mathcal{X}=\mathcal{X}_{K^pK_p}$ and $\mathcal{C}\subset \mathcal{X}$ be the subset of cusps. There is a universal family of elliptic curves $\mathcal{E}$ over the modular curve $\mathcal{X}-\mathcal{C}$. We denote its first relative de Rham cohomology by $D_{\dR}$. This is a rank two vector bundle equipped with a Hodge filtration and a Gauss-Manin connection. Its canonical extension to $\mathcal{X}$ will be denoted by $D$. In particular, it admits a logarithmic connection
\[\nabla:D\to D\otimes_{\cO_{\mathcal{X}}}\Omega^1_{\mathcal{X}}(\mathcal{C}),\]
and a decreasing filtration 
\[\Fil^n(D)=\left\{
	\begin{array}{lll}
		D~,n\leq0\\
		\omega,~n=1\\
		0,~n\geq 2
	\end{array}.\right.\]
Here as usual, $\Omega^1_{\mathcal{X}}(\mathcal{C})$ denotes the sheaf of differentials on $\mathcal{X}$ with simple poles at $\mathcal{C}$. It is well-known that the composite map
\[\omega=\Fil^1 D\subset D\xrightarrow{\nabla}D\otimes_{\cO_{\mathcal{X}}}\Omega^1_{\mathcal{X}}(\mathcal{C})\to \gr^0 D\otimes_{\cO_{\mathcal{X}}}\Omega^1_{\mathcal{X}}(\mathcal{C})=\wedge^2 D\otimes_{\cO_{\mathcal{X}}}\omega^{-1}\otimes_{\cO_{\mathcal{X}}}\Omega^1_{\mathcal{X}}(\mathcal{C})\]
is an isomorphism (Kodaira-Spencer isomorphism): $\omega\stackrel{\sim}{\to}\wedge^2 D\otimes_{\cO_{\mathcal{X}}}\omega^{-1}\otimes_{\cO_{\mathcal{X}}}\Omega^1_{\mathcal{X}}(\mathcal{C})$. Under this isomorphism, we sometimes identify $\Omega^1_{\mathcal{X}}(\mathcal{C})$ with $(\wedge^2 D)^{-1}\otimes_{\cO_{\mathcal{X}}}\omega^2$. We also note that the cup product induces a natural isomorphism between $\wedge^2 D$ and the canonical extension of $H^2_{\dR}(\mathcal{E}/\mathcal{X})$.
Everything here is compatible under varying $K_p$.

Let $k$ be a non-negative integer. Then the $k$-th symmetric power $\Sym^k D$  also admits a logarithmic connection $\nabla_k:\Sym^kD\to \Sym^k D\otimes_{\cO_{\mathcal{X}}}\Omega^1_{\mathcal{X}}(\mathcal{C})$ and a decreasing filtration with $\Fil^{k} \Sym^k D=\omega^k$. The Kodaira-Spencer isomorphism implies that the composite map
\[\Fil^1\Sym^k D\xrightarrow{\nabla_k}\Sym^k D\otimes_{\cO_{\mathcal{X}}}\Omega^1_{\mathcal{X}}(\mathcal{C})\to (\Sym^k D/\Fil^k)\otimes_{\cO_{\mathcal{X}}}\Omega^1_{\mathcal{X}}(\mathcal{C})\]
is an isomorphism. In particular, given a section $s$ of $\gr^0 \Sym^k D=\Sym^k D/\Fil^1 \Sym^k D$, there exists a unique lift $\tilde{s}\in \Sym^k D$ such that $\nabla_k(\tilde{s})\in \Fil^k \Sym^k D\otimes_{\cO_{\mathcal{X}}}\Omega^1_{\mathcal{X}}(\mathcal{C})=\omega^k\otimes_{\cO_{\mathcal{X}}}\Omega^1_{\mathcal{X}}(\mathcal{C})$. This defines a map 
\[\theta'^{k+1}: \omega^{-k}\otimes_{\cO_{\mathcal{X}}}(\wedge^2 D)^{k}\cong \gr^0 \Sym^k D\to \omega^k\otimes_{\cO_{\mathcal{X}}}\Omega^1_{\mathcal{X}}(\mathcal{C})\cong \omega^{-k}\otimes_{\cO_{\mathcal{X}}}(\wedge^2 D)^{k}\otimes_{\cO_{\mathcal{X}}}\Omega^1_{\mathcal{X}}(\mathcal{C})^{\otimes k+1}\]
which can be viewed as a log differential operator on $ \omega^{-k}\otimes_{\cO_{\mathcal{X}}}(\wedge^2 D)^{k}$ of order $k+1$.
Note that $\wedge^2 D$ is a line bundle equipped with a logarithmic connection. If we apply this construction to $\Sym^k D\otimes_{\cO_\mathcal{X}} (\wedge^2 D)^{-k}=\Sym^k D^*$ where $D^*$ denotes the dual of $D$, we get a map
\[\theta^{k+1}:\omega^{-k}\to  \omega^k\otimes_{\cO_{\mathcal{X}}}\Omega^1_{\mathcal{X}}(\mathcal{C})\otimes_{\cO_\mathcal{X}} (\wedge^2 D)^{-k}\cong\omega^{k+2}\otimes_{\cO_{\mathcal{X}}} (\wedge^2 D)^{-k-1}.\]
Equivalently, since $\wedge^2 D$ is isomorphic to $\cO_{\mathcal{X}}$  with the standard logarithmic connection, $\theta^{k+1}$ is nothing but the twist of $\theta'^{k+1}$ on $ \omega^{-k}\otimes_{\cO_{\mathcal{X}}}(\wedge^2 D)^{-k}$ by $(\wedge^2 D)^{k}$.
When $k=0$, $\theta^1$ is simply the composite of $d:\cO_{\mathcal{X}}\to \Omega^1_{\mathcal{X}}(\mathcal{C})$ with the Kodaira-Spencer isomorphism.
\end{para}
 
\begin{para} \label{KSsm}
One can take the direct limit of previous paragraph over $K_p$ in the following way. Since the vector bundle $D$ on $\mathcal{X}=\mathcal{X}_{K^pK_p}$ is compatible with varying $K_p$,  
\[D^{\sm}_{K^p}={\pi_{\HT}}_{*} (\varinjlim_{K_p\subset \GL_2(\Q_p)}(\pi_{K_p})^{-1} D)\]
defines a locally free $\cO^{\sm}_{K^p}$-module on $\Fl$. See \ref{omegakla} for the notation here. It has a natural decreasing filtration with $\Fil^1=\omega^{1,\sm}_{K^p}$ and a connection
\[\nabla:D^{\sm}_{K^p}\to D^{\sm}_{K^p}\otimes_{\cO^\sm_{K^p}}\Omega_{K^p}^{1}(\mathcal{C})^{\sm},\]
where 
\[\Omega_{K^p}^{1}(\mathcal{C})^{\sm}={\pi_{\HT}}_{*} (\varinjlim_{K_p\subset \GL_2(\Q_p)}(\pi_{K_p})^{-1} \Omega_{\mathcal{X}}^{1}(\mathcal{C})).\]
Then we can repeat our construction in the previous paragraph. For example, the Kodaira-Spencer isomorphism induces  a natural isomorphism $\omega^{2,\sm}_{K^p}\otimes_{\cO^\sm_{K^p}}(\wedge^2 D^{\sm}_{K^p})^{-1}\cong \Omega^1_{K^p}(\mathcal{C})^{\sm}$. We note that $\wedge^2 D^{\sm}_{K^p}$ is isomorphic to $\cO^{\la}_{K^p}$, but the isomorphism will not be Hecke-equivariant.
Now we can state the main result of this subsection, which basically says that all the constructions here can be extended in an $\cO_{\Fl}$-linear way.
\end{para}
 
\begin{thm} \label{I1}
Suppose $\chi=(n_1,n_2)\in\Z^2$ and $k=n_2-n_1\geq 0$. Then there exists a unique natural continuous operator
\[d^{k+1}:\cO^{\la,\chi}_{K^p}\to \omega^{2k+2,\la,\chi}_{K^p}\otimes_{\cO^{\sm}_{K^p}} (\wedge^2 D^{\sm}_{K^p})^{-k-1}\cong \cO^{\la,\chi}_{K^p}\otimes_{\cO^{\sm}_{K^p}}(\Omega^1_{K^p}(\mathcal{C})^\sm)^{\otimes k+1}\]
satisfying the following properties:
\begin{enumerate}
\item $d^{k+1}$ is $\cO_{\Fl}$-linear;
\item $d^{k+1}(\mathrm{t}^{n_1}e_1^ie_2^{k-i} s)=\mathrm{t}^{n_1}e_1^ie_2^{k-i}  \theta^{k+1}(s)$ for any section $s\in \omega^{-k,\sm}_{K^p}$ and $i=0,1,\cdots,k$. See Theorem \ref{str} for notation here.
\end{enumerate}
\end{thm}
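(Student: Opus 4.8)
The plan is to construct $d^{k+1}$ locally on the basis $\mathfrak{B}$ of affinoid opens $U\subseteq\Fl$, check that the local constructions glue by uniqueness, and verify continuity using the explicit description of $\cO^{\la,\chi}_{K^p}(U)$ from Theorem \ref{str}. First I would reduce to the case $n_1=0$ by Remark \ref{tn1}: multiplication by $\mathrm{t}^{n_1}$ identifies $\cO^{\la,(0,k)}_{K^p}$ with $\cO^{\la,(n_1,n_1+k)}_{K^p}$ and is $G_{\Q_p}\times\GL_2(\A_f)$-equivariant up to the twist $\cdot\mathrm{t}^{n_1}$, so it suffices to build the operator for $\chi=(0,k)$ and then conjugate by $\mathrm{t}^{n_1}$; property (2) with the prefactor $\mathrm{t}^{n_1}$ is exactly what this conjugation produces. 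So assume $\chi=(0,k)$.

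Next, fix $U\in\mathfrak{B}$ with $U\subseteq U_1$, so $e_1$ generates $\omega_{K^p}$ over $\pi_\HT^{-1}(U)$ and $x=e_2/e_1$ is a regular function there; recall from \ref{An} that $\cO^{\la,(0,k)}_{K^p}(U)=\varinjlim_n A^n$, and by Corollary \ref{density} the $\kn$-finite part, spanned over $\omega^{-k,\sm}_{K^p}(U)$ by the sections $e_1^ie_2^{k-i}s$ (equivalently $e_1^k x^{k-i}s$), $s\in\omega^{-k,\sm}_{K^p}(U)$, $i=0,\dots,k$, is dense. On this dense subspace I define $d^{k+1}$ by the required formula $d^{k+1}(e_1^ie_2^{k-i}s)=e_1^ie_2^{k-i}\theta^{k+1}(s)$, extended $\cO_\Fl$-linearly in $x$ (note $e_1^ie_2^{k-i}=e_1^k x^{k-i}$, and $e_1^k$ is a fixed generator of $\omega^{k}_{\Fl}$ on $U$ after a twist, so this is forced by (1) and (2) together); the target $\omega^{2k+2,\la,\chi}_{K^p}\otimes(\wedge^2 D^{\sm}_{K^p})^{-k-1}$ is the corresponding twist, with the identification to $\cO^{\la,\chi}_{K^p}\otimes(\Omega^1_{K^p}(\mathcal{C})^\sm)^{\otimes k+1}$ coming from the Kodaira-Spencer isomorphism recalled in \ref{KSsm}. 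The key estimate is then: the operator just defined is continuous for the norm $\|\cdot\|_n$ on $A^n$ and in fact maps $A^n$ into $A^{n'}$ (shifted appropriately in the target), so that it extends uniquely by continuity to all of $\cO^{\la,(0,k)}_{K^p}(U)$. This follows because $\theta^{k+1}$ is a logarithmic differential operator of order $k+1$ on $\omega^{-k,\sm}_{K^p}$ — so on the Tate-algebra-with-power-series model $\cO^+_{\mathcal{X}_{K^pG_{r(n)}}}(V_{G_{r(n)}})[[\frac{x-x_n}{p^{n-1}}]]\otimes\Q_p$ of \eqref{exAn[[]]} it acts coefficient-wise with a bounded denominator depending only on $k$ (from differentiating $k+1$ times on a finite-level affinoid) — and the $\cO_\Fl$-linear extension in $x$ preserves the Mittag-Leffler/boundedness condition on the coefficients $c_i$ since multiplication by $x$ is continuous on $A^n$. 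This is the step I expect to be the main obstacle: making precise that $\theta^{k+1}$, a priori only defined on smooth vectors at finite level, genuinely bounds norms uniformly in a way compatible with the $(x-x_n)$-adic filtration, and that the BGG-type twist by $(\wedge^2 D^\sm_{K^p})^{-k-1}$ is harmless.

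Finally I would address gluing and naturality. Uniqueness of $d^{k+1}$ on each $U\in\mathfrak{B}$ follows from density of the $\kn$-finite vectors (Corollary \ref{density}) together with continuity, since (1) and (2) pin down the operator on a dense subspace; hence the local operators agree on overlaps and glue to a map of sheaves, and the analogous construction on $U\subseteq U_2$ (using $e_2$ as generator, $1/x$ as coordinate) is forced to agree on $U_1\cap U_2$ by the same uniqueness. Equivariance for $G_{\Q_p}$ and the Hecke action away from $p$ is automatic: $\theta^{k+1}$ is defined over $\Q$ at every finite level and hence commutes with these actions, and the $\cO_\Fl$-linear extension is constructed intrinsically from $\pi_\HT$, $\nabla$, and the Kodaira-Spencer isomorphism, all of which are equivariant (after recording the $\mathrm{t}$-twist as in \ref{HEt}). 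The $\GL_2(\Q_p)$-locally analytic structure is respected because $d^{k+1}$ sends $G_n$-analytic vectors to $G_{n}$-analytic vectors in the target — this too is part of the norm estimate above, as the operator is built from $\cO_\Fl$-linear and $\cO^\sm_{K^p}$-linear pieces each of which preserves the analyticity radius. Assembling these points gives the existence and uniqueness claimed in the theorem.
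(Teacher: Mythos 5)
Your overall architecture --- pin the operator down on the dense $\kn$-finite subspace by the formula forced by (1) and (2), then extend by continuity using the Banach pieces $A^n$ --- is the right one, and for $k=0$ it is essentially what the paper does. But the continuity estimate, which you correctly flag as the crux, is not justified by the reason you give. The operator does \emph{not} act coefficient-wise on the model $\cO^+_{\mathcal{X}_{K^pG_{r(n)}}}(V_{G_{r(n)}})[[\frac{x-x_n}{p^{n-1}}]]\otimes_{\Z_p}\Q_p$: while $x$ is killed by $\cO_{\Fl}$-linearity, the approximant $x_n$ is a function on the finite-level modular curve and is therefore differentiated. Already for $k=0$ the correct formula is
\[d_n\Bigl(\mathrm{t}^{n_1}\sum_{i} c_i(x-x_n)^i\Bigr)=\mathrm{t}^{n_1}\Bigl(\sum_{i} (x-x_n)^i\,dc_i-\sum_{i} (i+1)c_{i+1}(x-x_n)^i\,dx_n\Bigr),\]
so the $i$-th output coefficient mixes in $c_{i+1}$ through $dx_n$; the estimate then goes through because $\|(i+1)c_{i+1}p^{(n-1)i}\|\le p^{\,n-1}\|c_{i+1}p^{(n-1)(i+1)}\|$ and the finite-level derivation is continuous. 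Without these cross-terms your ``bounded denominator'' claim has no content, and the extension-by-continuity step cannot start, since boundedness of the putative operator on $A^n$ is exactly what is at stake.

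For $k\ge 1$ you propose to run the same direct estimate for the order-$(k+1)$ operator $\theta^{k+1}$, which would require writing down and bounding $(k+1)$-fold Leibniz cross-terms in the $(x-x_n)$-expansion; this is where your proposal genuinely diverges from the paper and where the work is missing. The paper deliberately avoids it: the explicit estimate is carried out only in the first-order case to produce $d^1$, then the Gauss--Manin connection is extended $\cO^{\la,\chi'}_{K^p}$-linearly to $D^{\chi'}=D^{\sm}_{K^p}\otimes_{\cO^{\sm}_{K^p}}\cO^{\la,\chi'}_{K^p}$ using $d^1$, the Kodaira--Spencer composite is checked to remain an isomorphism, and the BGG construction of $\theta^{k+1}$ is rerun on $\Sym^k D^{*,\chi'}$ to yield $d'^{k+1}$ (equivalently $d^{k+1}$) with no further norm estimates. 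Either supply the higher-order estimate explicitly or adopt this bootstrap. The remaining points of your proposal (reduction to $n_1=0$ via the $\mathrm{t}^{n_1}$-twist, uniqueness and gluing by density, equivariance) are fine.
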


\begin{rem} \label{dd'}
Since $d^{k+1}$ is $\cO_{\Fl}$-linear, it can be twisted by a line bundle on $\Fl$. Let $\chi'=\chi-(0,k)=(n_1,n_2-k)=(n_1,n_1)$. If we twist by $(\omega_{\Fl})^{-k}(-k)$ and use the isomorphisms in \ref{omegakla}, we get a continuous operator
\[{d'}^{k+1}:\omega^{-k,\la,\chi'}_{K^p}\to \omega^{k+2,\la,\chi'}_{K^p}\otimes_{\cO^{\sm}_{K^p}} (\wedge^2 D^{\sm}_{K^p})^{-k-1}\]
satisfying the following properties
\begin{enumerate}
\item ${d'}^{k+1}$ is $\cO_{\Fl}$-linear;
\item ${d'}^{k+1}(\mathrm{t}^{n_1} s)=\mathrm{t}^{n_1} \theta^{k+1}(s)$ for any section $s\in \omega^{-k,\sm}_{K^p}$.
\end{enumerate}
Clearly the existence of ${d'}^{k+1}$ is equivalent with the existence of $d^{k+1}$.
\end{rem}

\begin{proof}
By Theorem \ref{str}, given an open set $U\in\mathfrak{B}$  introduced in \ref{exs}, elements of the form $\mathrm{t}^{n_1}e_1^ie_2^{k-i} s$ generate a dense $\cO_{\Fl}(U)$-submodule in $\cO^{\la,\chi}_{K^p}(U)$. In particular, $d^{k+1}$ is unique if it exists. 

First we  prove the case $k=0$. By the uniqueness we just showed, it suffices to prove the existence of $d^1$ on each $U\in\mathfrak{B}$. Fix an open set $U$ as in \ref{exs}. In particular, $e_1$ is a generator of $\omega^1_{K^p}$ on $V_\infty=\pi_{\HT}^{-1}(U)$. In general, one can use the action of $\GL_2(\Q_p)$ to reduce to this case. We will freely use the notation in Theorem \ref{str} and \ref{An}. Let $n$ be a non-negative integer. Consider
\[f=\mathrm{t}^{n_1}\sum_{i=0}^{+\infty}c_i(x-x_n)^i\in A^n,\] 
where $c_i\in H^0(V_{G_{r(n)}},\cO_{\mathcal{X}_{K^pG_{r(n)}}}),i=0,1,\cdots$ such that  $\|c_i p^{(n-1)i}\|$ are uniformly bounded. We use $d$ to denote the derivation $\cO_{V_{G_{r(n)}}}\to \Omega^1_{V_{G_{r(n)}}}(\mathcal{C})$. Define 
\[d_n(f)=\mathrm{t}^{n_1}(\sum_{i=0}^{+\infty}(x-x_n)^idc_i-\sum_{i=0}^{+\infty}(i+1)c_{i+1}(x-x_n)^idx_n).\]
Formally, this formula is obtained by applying the Leibniz rule to $c_i(x-x_n)^i$. Since derivation $d$ is continuous, it follows from our choice of $c_i$ that $p^{(n-1)i}dc_i$ are also uniformly bounded. Hence the expression of $d_n(f)$ converges to a $G_{r(n)}$-analytic vector in $\cO^{\la,\chi}_{K^p}\otimes_{\cO^{\sm}_{K^p}}\Omega^1_{K^p}(\mathcal{C})^\sm(U)$, i.e. $d_n$ defines a continuous map
\[d_n:A^n\to \cO^{\la,\chi}_{K^p}\otimes_{\cO^{\sm}_{K^p}}\Omega^1_{K^p}(\mathcal{C})^\sm(U)^{G_{r(n)}-\an}.\]
Note that if $f=\mathrm{t}^{n_1}\sum_{i=0}^l c_ix^i$ is  a finite sum, $d_n(\mathrm{t}^{n_1}\sum_{i=0}^kc_ix^i)$ is simply $\mathrm{t}^{n_1}\sum_{i=0}^kx^idc_i$. Since these finite sums are dense in $A^n$, this proves that  $d_{n+1}|_{A^{n}}=d_n$. In particular, by passing to the limit over $n$, we get a continuous map 
\[d^1:\cO^{\la,\chi}_{K^p}(U)\to \cO^{\la,\chi}_{K^p}\otimes_{\cO^{\sm}_{K^p}}\Omega^1_{K^p}(\mathcal{C})^\sm(U),\]
which is independent of the choice of $x_n$. Clearly $d^1$ commutes with restriction of $U$ to an affinoid subdomain in $\mathfrak{B}$. Hence $d^1$ defines  a map of sheaves
\[d^1:\cO^{\la,\chi}_{K^p}\to \cO^{\la,\chi}_{K^p}\otimes_{\cO^{\sm}_{K^p}}\Omega^1_{K^p}(\mathcal{C})^\sm.\]

We claim that  $d^1$ is $\cO_{\Fl}$-linear.  This is a local question on $\Fl$. Again using the action of $\GL_2(\Q_p)$, we only need to prove that $d^1:\cO^{\la,\chi}_{K^p}(U)\to \cO^{\la,\chi}_{K^p}\otimes_{\cO^{\sm}_{K^p}}\Omega^1_{K^p}(\mathcal{C})^\sm(U)$ commutes with $\cO_{\Fl}(U)$ for any rational subdomain $U$ of $U_1$. When $U=U_1=\{\|x\|\leq 1\}=\Spa(C\langle x\rangle,\cO_C\langle x\rangle)$, it follows from our construction that  $d^1$ commutes with multiplication by a polynomial of $x$ over $C$, hence commutes with $\cO_{\Fl}(U_1)$ as well by continuity. In general, if $U=U_1(\frac{f_1,\cdots,f_n}{g})$ is a rational subset of $U$, it can be proved in a similar way by noting that the image of $C\langle x\rangle[\frac{1}{g}]$ in $\cO_{\Fl}(U)$ is dense.  This finishes the proof when $k=0$.

In general, one can repeat the construction of $\theta^{k+1}$ to prove the existence of $d^{k+1}$. In fact, we will construct ${d'}^{k+1}$ in Remark \ref{dd'}. Let $\chi'=\chi-(0,k)=(n_1,n_1)$ introduced in Remark \ref{dd'}. Consider $D^{\chi'}:=D^{\sm}_{K^p}\otimes_{\cO^{\sm}_{K^p}}\cO^{\la,\chi'}_{K^p}$. The filtration on $D^{\sm}_{K^p}$ extends $\cO^{\la,\chi'}_{K^p}$-linearly to $D^{\chi'}$. In particular, 
\[\Fil^1 D^{\chi'}=\omega^{1,\sm}_{K^p}\otimes_{\cO^{\sm}_{K^p}}\cO^{\la,\chi'}_{K^p}=\omega^{1,\la,\chi'}_{K^p},\]
\[\gr^0 D^{\chi'}\cong \omega^{-1,\la,\chi'}_{K^p}\otimes_{\cO^{\sm}_{K^p}}\wedge^2 D^{\sm}_{K^p}.\]
Recall that there is a connection $\nabla:D^{\sm}_{K^p}\to D^{\sm}_{K^p}\otimes_{\cO^{\sm}_{K^p}}\Omega^1_{K^p}(\mathcal{C})^\sm$. Hence using $d^1:\cO^{\la,\chi'}_{K^p}\to \cO^{\la,\chi'}_{K^p}\otimes_{\cO^{\sm}_{K^p}}\Omega^1_{K^p}(\mathcal{C})^\sm$, we have a connection on $D^{\chi'}$:
\[\nabla^{\chi'}:D^{\chi'}\to D^{\chi'}\otimes_{\cO^{\sm}_{K^p}}\Omega^1_{K^p}(\mathcal{C})^\sm\]
such that the composite 
\[\Fil^1 D^{\chi'}\xrightarrow{\nabla^{\chi'}} D^{\chi'}\otimes_{\cO^{\sm}_{K^p}}\Omega^1_{K^p}(\mathcal{C})^\sm\to \gr^0 D^{\chi'} \otimes_{\cO^{\sm}_{K^p}}\Omega^1_{K^p}(\mathcal{C})^\sm\]
is an isomorphism. Therefore, if we apply the construction of $\theta^{k+1}$ to the $k$-th  symmetric power $\Sym^k (D^{\chi'}\otimes_{\cO^{\sm}_{K^p}}(\wedge^2 D^{\sm}_{K^p})^{-1})$ as a $\cO^{\la,(0,0)}_{K^p}$-module, or equivalently, $\Sym^k D^{*,\chi'}$, where $D^{*,\chi'}:=\Hom_{\cO^{\sm}_{K^p}}(D^{\sm}_{K^p},\cO^{\sm}_{K^p})\otimes_{\cO^{\sm}_{K^p}}\cO^{\la,\chi'}_{K^p}$, we get an operator 
\[\omega^{-k,\la,\chi'}_{K^p}\to \omega^{k+2,\la,\chi'}_{K^p}\otimes_{\cO^{\sm}_{K^p}} (\wedge^2 D^{\sm}_{K^p})^{-k-1}\]
which satisfy all the properties of ${d'}^{k+1}$ listed in Remark \ref{dd'}. By the same remark, this implies the existence of $d^{k+1}$.
\end{proof}

We record the following result obtained in the proof which will be used in Subsection \ref{PTII}.
\begin{lem} \label{KSsec}
Recall that 
\[D^{*,(0,0)}=D^{(0,0)}\otimes_{\cO^{\sm}_{K^p}}(\wedge^2 D^{\sm}_{K^p})^{-1}\cong\Hom_{\cO^{\la,(0,0)}_{K^p}}(D^{(0,0)},\cO^{\la,(0,0)})\]
denotes the dual of $D^{(0,0)}$. We normalize the filtration on $D^{*,(0,0)}$ so that $\gr^iD^{*,(0,0)}=0,~i<0$ and $\gr^0D^{*,(0,0)}=\omega^{-1,\la,(0,0)}_{K^p}$.
Then the surjective map 
\[\Sym^k D^{*,(0,0)} \to \gr^0 \Sym^k D^{*,(0,0)}\cong \omega^{-k,\la,(0,0)}_{K^p}\]
has a unique  continuous left inverse 
\[r_k: \omega^{-k,\la,(0,0)}_{K^p}\to \Sym^k D^{*,(0,0)}\]
such that $\nabla_k\circ r_k\subseteq \Fil^{k}\Sym^k D^{*,(0,0)}\otimes_{\cO^{\sm}_{K^p}} \Omega^1_{K^p}(\mathcal{C})^{\sm}$, where by abuse of notation $\nabla_k$ denotes the logarithmic connection on $\Sym^k D^{*,(0,0)}$.  

Similarly, if we twist $D^{*,(0,0)}$ by $\wedge^2 D^{\sm}_{K^p}\cong\cO^{\sm}_{K^p}$, we obtain
\[r'_k: \omega^{-k,\la,(0,0)}_{K^p}\otimes_{\cO^{\sm}_{K^p}}(\wedge^2 D^{\sm}_{K^p})^k=\gr^0 \Sym^k D^{(0,0)}\to \Sym^k D^{(0,0)}=\Sym^k D^{*,(0,0)}\otimes_{\cO^{\sm}_{K^p}}(\wedge^2 D^{\sm}_{K^p})^k\]
which is the unique left inverse of $\Sym^k D^{(0,0)}\to\gr^0 \Sym^k D^{(0,0)}$ and satisfies $\nabla'_k\circ r'_k\subseteq \Fil^{k}\Sym^k D^{(0,0)}\otimes_{\cO^{\sm}_{K^p}} \Omega^1_{K^p}(\mathcal{C})^{\sm}$. Here $\nabla'_k$ denotes the logarithmic connection on $\Sym^k D^{(0,0)}$. Explicitly, let $c$ be a global invertible section of $\wedge^2 D$ (which is necessarily horizontal). Then $r'_k(f\otimes c^k)=r_k(f)\otimes c^k$.
\end{lem}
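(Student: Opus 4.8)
This is essentially a restatement, in invariant form, of a construction already carried out inside the proof of Theorem \ref{I1}, so the plan is: extract $r_k$ (and then $r'_k$) from that proof, and prove uniqueness by a Griffiths-transversality argument resting on the Kodaira--Spencer isomorphism.

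\emph{Existence.} Taking $\chi'=(0,0)$ in the proof of Theorem \ref{I1}, recall that we equipped $D^{(0,0)}=D^{\sm}_{K^p}\otimes_{\cO^{\sm}_{K^p}}\cO^{\la,(0,0)}_{K^p}$ with a continuous connection $\nabla^{(0,0)}$ extending $\nabla$ on $D^{\sm}_{K^p}$ via the $\cO_\Fl$-linear operator $d^1$, and with the $\cO^{\la,(0,0)}_{K^p}$-linear extension of the filtration, so that the map $\Fil^1 D^{(0,0)}\to\gr^0 D^{(0,0)}\otimes_{\cO^{\sm}_{K^p}}\Omega^1_{K^p}(\mathcal{C})^{\sm}$ induced by $\nabla^{(0,0)}$ is an isomorphism. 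Repeating the construction of $\theta^{k+1}$ for the $k$-th symmetric power of $D^{*,(0,0)}$ (with the filtration normalized as in the statement) produces, for every local section $s$ of $\gr^0\Sym^k D^{*,(0,0)}\cong\omega^{-k,\la,(0,0)}_{K^p}$, the unique lift $\tilde s\in\Sym^k D^{*,(0,0)}$ with $\nabla_k(\tilde s)\in\Fil^k\Sym^k D^{*,(0,0)}\otimes_{\cO^{\sm}_{K^p}}\Omega^1_{K^p}(\mathcal{C})^{\sm}$; setting $r_k(s):=\tilde s$ gives a left inverse of the projection, $\cO_\Fl$-linear and continuous since $\nabla^{(0,0)}$ is continuous and $\tilde s$ is assembled by inverting the Kodaira--Spencer isomorphisms on graded pieces. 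Twisting the entire picture by the line bundle $\wedge^2 D^{\sm}_{K^p}$ --- which carries the trivial, hence horizontal, connection, cf. \ref{XCY} --- yields $r'_k$; trivializing $\wedge^2 D$ by a global invertible horizontal section $c$ identifies this twist with the identity and gives $r'_k(f\otimes c^k)=r_k(f)\otimes c^k$.

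\emph{Uniqueness.} Let $r_k,\tilde r_k$ be two left inverses of $\Sym^k D^{*,(0,0)}\to\gr^0\Sym^k D^{*,(0,0)}$ with the stated transversality property and put $\delta=r_k-\tilde r_k$; since both are left inverses, $\delta$ has image in $\Fil^1\Sym^k D^{*,(0,0)}$. By Griffiths transversality, $\nabla_k$ induces maps $\bar\nabla_j\colon\gr^j\Sym^k D^{*,(0,0)}\to\gr^{j-1}\Sym^k D^{*,(0,0)}\otimes_{\cO^{\sm}_{K^p}}\Omega^1_{K^p}(\mathcal{C})^{\sm}$, which are isomorphisms for $1\le j\le k$ --- this is the Kodaira--Spencer isomorphism for $D^{(0,0)}$ of the previous paragraph, raised to symmetric powers over the characteristic-zero field $C$. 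Assume inductively that $\delta$ has image in $\Fil^j$ for some $1\le j\le k$. Since $\nabla_k\delta=\nabla_k r_k-\nabla_k\tilde r_k$ has image in $\Fil^k\Sym^k D^{*,(0,0)}\otimes_{\cO^{\sm}_{K^p}}\Omega^1_{K^p}(\mathcal{C})^{\sm}\subseteq\Fil^j\Sym^k D^{*,(0,0)}\otimes_{\cO^{\sm}_{K^p}}\Omega^1_{K^p}(\mathcal{C})^{\sm}$, its image in $\gr^{j-1}\Sym^k D^{*,(0,0)}\otimes_{\cO^{\sm}_{K^p}}\Omega^1_{K^p}(\mathcal{C})^{\sm}$ is zero; but that image is $\bar\nabla_j$ applied to the image of $\delta$ in $\gr^j$, so by injectivity of $\bar\nabla_j$ the image of $\delta$ in $\gr^j$ vanishes, i.e. $\delta$ has image in $\Fil^{j+1}$. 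Iterating from $j=1$ to $j=k$ forces $\delta$ into $\Fil^{k+1}\Sym^k D^{*,(0,0)}=0$, whence $\delta=0$; note this argument does not even use continuity. The uniqueness of $r'_k$ follows identically, or is reduced to that of $r_k$ by the horizontal trivialization $c$.

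\emph{Main obstacle.} The only substantive input is that the graded maps $\bar\nabla_j$ on $\Sym^k D^{*,(0,0)}$ are isomorphisms for $1\le j\le k$, which is precisely where the fact that $\nabla^{(0,0)}$ was built from $d^1$ (equivalently, the Kodaira--Spencer isomorphism for $D^{(0,0)}$) is used; everything else is a formal unwinding of the construction in Theorem \ref{I1}. One should also confirm that the filtration convention fixed in the statement matches the one under which that construction was performed, so that $\Fil^{k+1}\Sym^k D^{*,(0,0)}=0$ as used above.
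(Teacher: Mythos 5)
Your proof is correct, and the underlying mechanism --- the Kodaira--Spencer isomorphism on the graded pieces of $\Sym^k D^{*,(0,0)}$ --- is exactly the one the paper uses. The difference is in where the work is placed. The paper regards existence and uniqueness as already supplied by the construction in \ref{XCY} (carried over to $D^{(0,0)}$ in the proof of Theorem \ref{I1}) and devotes its entire proof to continuity: it packages your iterative correction into the single map $r(f)=(f\bmod\Fil^1,\ \nabla_k(f)\bmod\Fil^k)$, observes that Kodaira--Spencer makes $r$ a continuous bijection, invokes the open mapping theorem to conclude $r$ is a topological isomorphism, and reads off $r_k$ as $r^{-1}$ restricted to $(f,0)$. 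You, conversely, spell out uniqueness carefully (your filtration induction using injectivity of $\bar\nabla_j$ for $1\le j\le k$ is correct, including the characteristic-zero remark accounting for the factor of $j$), but you are thinner precisely where the paper concentrates its effort: asserting that $\tilde s$ is continuous because it is ``assembled by inverting the Kodaira--Spencer isomorphisms on graded pieces'' tacitly uses that a continuous bijection of these LB-spaces has continuous inverse, i.e.\ the open mapping theorem. You should make that invocation explicit (or adopt the paper's one-shot map $r$), since for the function spaces in play this is the only nontrivial analytic input; with that added, the two proofs are equivalent.
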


\begin{proof}
Only the continuity of $r_k$ needs to be shown. Consider the map  
\[r:\Sym^k D^{*,(0,0)}\to  \gr^0 \Sym^k D^{*,(0,0)}\oplus \left ((\Sym^k D^{*,(0,0)}/\Fil^{k})\otimes_{\cO^{\sm}_{K^p}} \Omega^1_{K^p}(\mathcal{C})^{\sm}\right)\]
sending $f\in\Sym^k D^{*,(0,0)}$ to $(f\mod \Fil^1,\nabla_k(f)\mod \Fil^k)$. The Kodaira-Spencer isomorphism implies that this map is an isomorphism hence an isometry by the open mapping theorem. From this, it is easy to see that $r_k$ is continuous because $r\circ r_k(f)=(f,0)$.
\end{proof}

\subsection{Differential operators II} \label{DII}
\begin{para}
In this subsection, we construct an operator from $\cO^{\la,\chi}_{K^p}$ to a twist of $\omega^{-2k-2,\la,-w\cdot(-\chi)}_{K^p}$ for some weight $-w\cdot(-\chi)$ and some integer $k$ depending on $\chi$. Roughly speaking, this operator is $\cO^{\sm}_{K^p}$-linear and essentially comes from \textit{differential operators on the flag variety} $\Fl$. The construction here is much simpler than $d^{k+1}$ because  by Beilinson-Bernstein's theory of localization \cite{BB83}, differential operators on $\Fl$ can be expressed by the Lie algebra $\mathfrak{g}$.
\end{para}

\begin{para} \label{KSFl}
Denote by $\Omega^1_{\Fl}$ the sheaf of differential forms on $\Fl$. Then there is a $\GL_2$-equivariant isomorphism of line bundles on $\Fl$:
 \[\Omega^1_{\Fl}\cong \omega_{\Fl}^{-2}\otimes\det{}.\]

Consider $\cO^{\la,(0,0)}_{K^p}$. It follows from Beilinson-Bernstein's theory that differential operators on $\Fl$ act on this sheaf. In particular, we get a natural map 
\[\bar{d}^1:\cO^{\la,(0,0)}_{K^p} \to \cO^{\la,(0,0)}_{K^p}\otimes_{\cO_{\Fl}}\Omega^1_\Fl.\]
Concretely, note that the derivation $\frac{d}{dx}$ on $\cO_{\Fl}$ agrees with the action of $u^+:=\begin{pmatrix} 0& 1 \\ 0 & 0 \end{pmatrix}\in\mathfrak{g}$. Hence on any open subset of  $\Fl$ where $e_1$ is invertible,
\[\bar{d}^1:s\in \cO^{\la,(0,0)}_{\Fl}\mapsto u^+(s)\otimes dx.\]
In other words, $\bar{d}^1$ extends $\cO^{\sm}_{K^p}$-linearly  the de Rham sequence $\cO_{\Fl}\xrightarrow{d_\Fl} \Omega^1_\Fl$ on $\Fl$, where $d_\Fl$ notes the derivation on $\Fl.$
\end{para}

\begin{para} \label{HEt}
Before moving on, we need to modify the relative Hodge-Tate filtration \eqref{rHT} to make everything Hecke-equivariant. It follows from the construction in \cite[4.1.3]{Pan20} that the $\omega^{-1}$ in \eqref{rHT} should be replaced by $\gr^1D=\wedge^2 D\otimes_{\cO_\mathcal{X}} \omega^{-1}$:
\begin{eqnarray}\label{rHTH}
0\to\wedge^2 D\otimes_{\cO_{\mathcal{X}}}\omega_{K^p}^{-1}(1)\to V(1)\otimes_{\Q_p} \cO_{\mathcal{X}_{K^p}} \to \omega_{K^p}\to 0,
\end{eqnarray}
where $\mathcal{X}=\mathcal{X}_{K^pK_p}$ is a modular curve of finite level. Implicitly in the discussion of \ref{R20}, we already fixed a trivialization $\wedge^2 D\cong \cO_{\mathcal{X}}$, equivalently a global non-vanishing section $c$ of $\wedge^2 D$. Then our choice of $c$ is that it is the pull-back of a global section of $\wedge^2 D$ on ``$\mathcal{X}_{\GL_2(\hat\Z)}$'', i.e. a $\GL_2(\hat\Z)$-fixed non-zero global section.  Recall that $\wedge^2 D$ extends $H^2_{\dR}(\mathcal{E}/\mathcal{Y})$.  It follows that $\GL_2(\A_f)$ acts on $c$ via the degree map $|\cdot|_{\A}^{-1}\circ \det$, where $|\cdot|_\A:\A^\times_f\to \Q^\times$ denotes the usual adelic norm map on finite ideles.
If we take the wedge product of $V(1)\otimes_{\Q_p} \cO_{\mathcal{X}_{K^p}}$, cf. \ref{exs}, we get a natural isomorphism 
\[\cO_{\mathcal{X}_{K^p}}\cong (\wedge^2 D)^{-1}\otimes_{\cO_\mathcal{X}} \cO_{\mathcal{X}_{K^p}}\otimes \det(1).\]
The element $\mathrm{t}$ introduced in \ref{exs} should be regarded as the image of $c^{-1}\otimes 1\otimes b$ under this isomorphism, i.e. $\mathrm{t}=c^{-1}\otimes 1\otimes b$. We remind the reader that $b\in\Q_p(1)$ is our choice of a basis. Hence we have the following natural isomorphism on $\Fl$:
\begin{eqnarray} \label{rmtwddet1}
\cO^{\la,(1,1)}_{K^p}=\cO^{\la,(0,0)}_{K^p}\cdot \mathrm{t}\cong \cO^{\la,(0,0)}_{K^p}\otimes_{\cO_{K^p}^{\sm}}(\wedge^2 D^{\sm}_{K^p})^{-1}\otimes \det(1),
\end{eqnarray}
which is independent of the choice of $\mathrm{t}$. Recall that we have the Kodaira-Spencer isomorphism in \ref{KSsm}: $\omega^{2,\sm}_{K^p}\otimes_{\cO^\sm_{K^p}}(\wedge^2 D^{\sm}_{K^p})^{-1}\cong \Omega^1_{K^p}(\mathcal{C})^{\sm}$. Therefore, it follows isomorphisms in \ref{omegakla} and \ref{KSFl} that there is a natural isomorphism
\begin{eqnarray} \label{period}
\Omega^1_{K^p}(\mathcal{C})^{\sm}\otimes_{\cO^{\sm}_{K^p}}\cO^{\la,(0,0)}_{K^p}\otimes_{\cO_{\Fl}}\Omega^1_{\Fl}\cong \cO^{\la,(1,-1)}_{K^p}(1).
\end{eqnarray}
Explicitly, we require 
\[s\otimes1\otimes dx\in \Omega^1_{K^p}(\mathcal{C})^{\sm}\otimes_{\cO^{\sm}_{K^p}}\cO^{\la,(0,0)}_{K^p}\otimes_{\cO_{\Fl}}\Omega^1_{\Fl}\mapsto\frac{s\mathrm{t}}{e_1^2}\in \cO^{\la,(1,-1)}_{K^p}(1)\] 
on an open set where $e_1$ is invertible. Here we view $s$ as a section of $\omega^{2,\sm}_{K^p}$ using the Kodaira-Spencer isomorphism and our choice of generator of $\wedge^2 D$. It is easy to check that $\frac{s\mathrm{t}}{e_1^2}$ does not depend on the choice of the trivializations of $\wedge^2 D$ and $\Q_p(1)$.
\end{para}

\begin{para} \label{dbar}
For a non-negative integer $k$, it is well-known that ``$(k+1)$-th derivation'' defines a $\GL_2$-equivariant map
\[\omega_\Fl^{k}\to\omega_\Fl^k\otimes_{\cO_\Fl}(\Omega_{\Fl}^1)^{k+1}\cong \omega_{\Fl}^{-k-2}\otimes \det{}^{k+1}\]
sending $s\in \omega_\Fl^{k}$ to $(u^+)^{k+1}(s)\otimes (dx)^{k+1}$. The operator we are going to construct essentially $\cO^{\sm}_{K^p}$-linearly extends this map up to a constant. We follow the idea of  Bernstein-Gelfand-Gelfand (BGG). Let $U(\mathfrak{g})$ be the universal enveloping algebra of $\mathfrak{g}$ and $Z(U(\mathfrak{g}))$ be its centre. Then $Z(U(\mathfrak{g}))$ acts via the same characters on 
\[\cO^{\la,(0,0)}_{K^p}\otimes _{\cO_{\Fl}}\omega_{\Fl}^{k}\cong  \omega^{k,\la,(0,k)}_{K^p}(-k)=(\omega^{1,\la,(0,1)}_{K^p}(-1))^{\otimes k}\]
 and 
\begin{eqnarray*}
\cO^{\la,(0,0)}_{K^p}\otimes _{\cO_{\Fl}}\omega_\Fl^k\otimes_{\cO_\Fl}(\Omega_{\Fl}^1)^{\otimes k+1} &\cong& \omega^{-k,\la,(0,-k)}_{K^p}\otimes_{\cO_{\Fl}} \Omega^1_{\Fl} \otimes \det{}^{k}(k) \\
&\xrightarrow[\times \mathrm{t}^k]{\simeq} &(\wedge^2 D^{\sm}_{K^p})^{k}\otimes_{\cO^{\sm}_{K^p}}\omega_{K^p}^{-k,\la,(k,0)}\otimes  \Omega^1_{\Fl},
\end{eqnarray*}
which will be denote by $\lambda_k$.  The last isomorphism uses the $k$-th power of \eqref{rmtwddet1}. Now we  take the locally analytic vectors  of the push forward of  the sequence \eqref{rHTH} along $\pi_{\HT}$ and twist by $(-1)$
\[0\to \wedge^2 D^{\sm}_{K^p}\otimes_{\cO^{\sm}_{K^p}}\omega^{-1,\la}_{K^p}\to V \otimes_{\Q_p} \cO_{K^p}^{\la}\to \omega_{K^p}^{1,\la}(-1)\to 0\]
Restricting the middle term to $V\otimes_{\Q_p} \cO_{K^p}^{\la,(0,0)}$ induces
\begin{eqnarray} \label{rHTcc}
0\to \wedge^2 D^{\sm}_{K^p}\otimes_{\cO^{\sm}_{K^p}}\omega^{-1,\la,(1,0)}_{K^p}\to V\otimes_{\Q_p} \cO_{K^p}^{\la,(0,0)}\to \omega_{K^p}^{1,\la,(0,1)}(-1)\to 0,
\end{eqnarray}
which is essentially the same as the tensor product of $\cO^{\la,(0,0)}_{K^p}$ and $0\to \omega^{-1}_{\Fl}\otimes \det\to V\otimes_{\Q_p}\cO_{\Fl}\to\omega_{\Fl}\to 0$. 

By Harish-Chandra's work and the relation between $\theta_\kh$ and infinitesimal characters \cite[Corollary 4.2.8]{Pan20}, the weight-$(n_1,n_2)$ part and  weight-$(k_1,k_2)$ part of $\cO^{\la}_{K^p}$ have the same infinitesimal characters if and only if $n_1+n_2=k_1+k_2$ and $(n_1-n_2-1)^2=(k_1-k_2-1)^2$. Hence the $\lambda_1$-isotypic component of $V\otimes_{\Q_p}\cO_{K^p}^{\la,(0,0)}$ is canonically isomorphic to $\omega_{K^p}^{1,\la,(0,1)}(-1)$ since the infinitesimal character of $\omega^{-1,\la,(1,0)}_{K^p}$ is different. Similarly, if we twist this exact sequence by $\Omega^1_{\Fl}$, we see that  the $\lambda_1$-isotypic component of $V\otimes_{\Q_p}\cO_{K^p}^{\la,(0,0)}\otimes_{\cO_{\Fl}}\Omega^1_\Fl$ is naturally identified with $\wedge^2 D^{\sm}_{K^p}\otimes_{\cO^{\sm}_{K^p}}\omega^{-1,\la,(1,0)}_{K^p}\otimes_{\cO_\Fl}\Omega^1_\Fl$. Therefore, if we take the $\lambda_1$-isotypic component of the tensor product of $V$ and $\bar{d}^1:\cO^{\la,(0,0)}_{K^p} \to \cO^{\la,(0,0)}_{K^p}\otimes_{\cO_{\Fl}}\Omega^1_\Fl$, we get 
\[\bar{d}^2:\omega_{K^p}^{1,\la,(0,1)}(-1)\to \wedge^2 D^{\sm}_{K^p}\otimes_{\cO^{\sm}_{K^p}}\omega^{-1,\la,(1,0)}_{K^p}\otimes_{\cO_\Fl}\Omega^1_\Fl\cong \omega_{K^p}^{1,\la,(0,1)}(-1)\otimes_{\cO_\Fl}(\Omega_{\Fl}^1)^{\otimes 2}.\]
A direct calculation shows that this operator agrees with the explicit formula $s\in \omega_{K^p}^{1,\la,(0,1)}\mapsto(u^+)^{2}(s)\otimes (dx)^{2}$ up to a non-zero constant in $\Q$ on any open subset of $\Fl$ where $e_1$ is invertible, or equivalently $x$ is a regular function. Clearly, $\bar{d}^2$ is $\cO^{\sm}_{K^p}$-linear, hence it can be twisted by locally free $\cO^{\sm}_{K^p}$-modules. By abuse of notation, we still denote them by $\bar{d}^2$. For example, the twist by $\omega^{-1,\sm}_{K^p}(1)$ is
\[\bar{d}^2:\cO^{\la,(0,1)}_{K^p}\to \cO_{K^p}^{\la,(0,1)}\otimes_{\cO_\Fl}(\Omega_{\Fl}^1)^{\otimes 2}\xrightarrow[\times \mathrm{t}^{-2}]{\simeq}(\wedge^2 D^{\sm}_{K^p})^{2}\otimes_{\cO^{\sm}_{K^p}}\omega^{-4,\la,(2,-1)}_{K^p}(2).\]
\end{para}

\begin{para} \label{genklambdak}
For general $k\geq1$, one can repeat the same construction with $V$ replaced by its $k$-th symmetric power and $\lambda_1$ replaced by $\lambda_k$.  It follows from the sequence \eqref{rHTcc} that there are a natural surjective map
\[\phi_{k,1}:\Sym^k V\otimes_{\Q_p} \cO_{K^p}^{\la,(0,0)}\to \omega_{K^p}^{k,\la,(0,k)}(-k)\]
and a natural injective map
\[\phi_{k,2}:\omega_{K^p}^{k,\la,(0,k)}(-k)\otimes_{\cO_\Fl}(\Omega_{\Fl}^1)^{\otimes k+1}\to
\Sym^k V\otimes_{\Q_p} \cO_{K^p}^{\la,(0,0)}\otimes_{\cO_\Fl}\Omega_{\Fl}^1.\]
\end{para}

\begin{lem} \label{secinf}
Both $\phi_{k,1}$ and $\phi_{k,2}$ induce isomorphisms on the $\lambda_k$-isotypic components. In particular, $\phi_{k,1}$ has a natural $U(\mathfrak{g})$-equivariant left inverse
\[\phi'_{k}: \omega_{K^p}^{k,\la,(0,k)}(-k)
\to \Sym^k V\otimes_{\Q_p} \cO_{K^p}^{\la,(0,0)}.\]
\end{lem}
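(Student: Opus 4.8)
The plan is to reduce the statement to a standard fact about infinitesimal characters, exactly in the spirit of the $k=1$ case worked out in \ref{dbar}. The key input is the relation between the horizontal Cartan action $\theta_\kh$ and infinitesimal characters recorded in \cite[Corollary 4.2.8]{Pan20}, together with the Harish-Chandra description: two weights $(n_1,n_2)$ and $(k_1,k_2)$ of $\kh$ give the same infinitesimal character on $\cO^{\la}_{K^p}$ precisely when $n_1+n_2 = k_1+k_2$ and $(n_1-n_2-1)^2 = (k_1-k_2-1)^2$. First I would use the exact sequence \eqref{rHTcc}, which is (up to locally analytic completion) the tensor product of $\cO^{\la,(0,0)}_{K^p}$ with the tautological sequence $0\to \omega_\Fl^{-1}\otimes\det\to V\otimes\cO_\Fl\to \omega_\Fl\to 0$ on $\Fl$. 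Taking $k$-th symmetric powers, the bundle $\Sym^k V\otimes_{\Q_p}\cO^{\la,(0,0)}_{K^p}$ acquires a filtration whose graded pieces are, after untwisting, the sheaves $\wedge^2 D^{\sm}_{K^p}{}^{\otimes j}\otimes_{\cO^{\sm}_{K^p}}\omega^{k-2j,\la,(j,k-j)}_{K^p}(-k+j)$ for $j=0,\dots,k$. These carry the weights $(j,k-j)$, $j=0,\dots,k$, under $\theta_\kh$.

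Next I would check that among these $k+1$ weights, only the top one ($j=0$, weight $(0,k)$, corresponding to $\omega^{k,\la,(0,k)}_{K^p}(-k)$) lies in the $\lambda_k$-isotypic eigenspace, and symmetrically only the bottom one ($j=k$) is relevant for $\phi_{k,2}$. This is the purely combinatorial heart of the argument: for the weights $(j,k-j)$ one computes $n_1+n_2 = k$ for all $j$, while $(n_1-n_2-1)^2 = (2j-k-1)^2$, and one verifies that $(2j-k-1)^2 = (-k-1)^2$ forces $j=0$ (the other solution $j=k+1$ is out of range). Hence the associated graded of $\Sym^k V\otimes_{\Q_p}\cO^{\la,(0,0)}_{K^p}$, restricted to the $\lambda_k$-isotypic part, is concentrated in the single graded piece $\omega^{k,\la,(0,k)}_{K^p}(-k)$, so the surjection $\phi_{k,1}$ becomes an isomorphism after passing to $\lambda_k$-isotypic components. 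The same computation applied to the $\Omega^1_\Fl$-twisted sequence (which shifts the relevant weight but not the eigenvalue bookkeeping, since $\Omega^1_\Fl\cong\omega_\Fl^{-2}\otimes\det$ changes the weight by a fixed amount and $\det$ lies in the centre) shows $\phi_{k,2}$ is an isomorphism onto the $\lambda_k$-isotypic part of its target. For the left inverse $\phi'_k$ of $\phi_{k,1}$: since $\phi_{k,1}$ is $U(\mathfrak{g})$-equivariant and $Z(U(\mathfrak{g}))$ acts through $\lambda_k$ on the target, the target embeds $U(\mathfrak{g})$-equivariantly (hence $\theta_\kh$-equivariantly) as the $\lambda_k$-isotypic summand of the source — i.e. one projects $\Sym^k V\otimes_{\Q_p}\cO^{\la,(0,0)}_{K^p}$ onto its $\lambda_k$-isotypic component and identifies that component with $\omega^{k,\la,(0,k)}_{K^p}(-k)$ via $\phi_{k,1}$; this composite inverse is $\phi'_k$.

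The main obstacle I anticipate is not the eigenvalue combinatorics (which is routine) but the functional-analytic subtlety of asserting that the $\lambda_k$-isotypic decomposition is well-behaved at the level of sheaves of LB-spaces — one needs that the generalized eigenspace decomposition for the action of $Z(U(\mathfrak{g}))$ on $\cO^{\la}_{K^p}$ (equivalently, for $\theta_\kh$ modulo the Weyl symmetry) actually splits off the relevant summand as a topological direct summand, so that $\phi'_k$ is continuous and $U(\mathfrak{g})$-equivariant on the nose. This is where one invokes the explicit local description of $\cO^{\la,\chi}_{K^p}$ from Theorem \ref{str} and Corollary \ref{density}, which exhibits each weight space concretely; combined with the fact (from \cite[Corollary 4.2.8]{Pan20}) that $\theta_\kh$ genuinely computes the infinitesimal character, the spectral projection onto the $\lambda_k$-part is given by an idempotent in the completed universal enveloping algebra acting continuously, yielding the desired continuous splitting. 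Once this is in place, the identification with the classical $(k+1)$-st derivation operator $s\mapsto (u^+)^{k+1}(s)\otimes(dx)^{k+1}$ up to a nonzero rational constant follows exactly as in \ref{dbar}, by a direct computation on the affinoid where $e_1$ is invertible.
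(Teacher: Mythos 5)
Your proposal is correct and follows essentially the same route as the paper: the paper's (very terse) proof likewise filters $\Sym^k V\otimes_{\Q_p}\cO^{\la,(0,0)}_{K^p}$ via \eqref{rHTcc} into graded pieces $(\wedge^2 D^{\sm}_{K^p})^{\otimes i}\otimes_{\cO^{\sm}_{K^p}}\omega^{k-2i,\la,(i,k-i)}_{K^p}(i-k)$ and observes that only the weight-$(0,k)$ piece (resp. the bottom piece after twisting by $\Omega^1_\Fl$) carries the infinitesimal character $\lambda_k$, which is exactly your Harish-Chandra computation $(2j-k-1)^2=(k+1)^2\Rightarrow j=0$. The topological splitting you worry about is already guaranteed by the $\Ext$-vanishing for the $S(\kh)$-action from \cite[Lemma 5.1.2]{Pan20} (as used in the proof of Lemma \ref{BdRcmgr}), so no further argument is needed.
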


\begin{proof}
For $\phi_{k,1}$,
we note that \eqref{rHTcc} implies that $\Sym^k V\otimes_{\Q_p} \cO_{K^p}^{\la,(0,0)}$ is filtered by 
\[ (\wedge^2 D^{\sm}_{K^p})^{\otimes i}\otimes_{\cO^{\sm}_{K^p}}\omega_{K^p}^{k-2i,\la,(i,k-i)}(i-k),~i=0,\cdots,k\]
and only $\omega_{K^p}^{k,\la,(0,k)}(-k)$ contributes to the $\lambda_k$-isotypic component. The same argument works for $\phi_{k,2}$.
\end{proof}

As a consequence, we get
\[\bar{d}^{k+1}:\omega_{K^p}^{k,\la,(0,k)}(-k)\cong \cO^{\la,(0,0)}_{K^p}\otimes_{\cO_{\Fl}}\omega^k_{\Fl}\to  \omega_{K^p}^{k,\la,(0,k)}(-k)\otimes_{\cO_\Fl}(\Omega_{\Fl}^1)^{\otimes k+1},\]
which agrees with the explicit formula $s\in \omega_{K^p}^{k,\la,(0,k)}\mapsto(u^+)^{k+1}(s)\otimes (dx)^{k+1}$ up to a non-zero constant in $\Q$ by a direct calculation. It is $\cO^{\sm}_{K^p}$-linear and its twist by $\omega^{k,\sm}_{K^p}(k)$ becomes
\[\bar{d}^{k+1}:\cO^{\la,(0,k)}_{K^p}\to\cO_{K^p}^{\la,(0,k)}\otimes_{\cO_\Fl}(\Omega_{\Fl}^1)^{\otimes  k+1}\xrightarrow[\times \mathrm{t}^{-k-1}]{\simeq}(\wedge^2 D^{\sm}_{K^p})^{k+1}\otimes_{\cO^{\sm}_{K^p}}\omega^{-2k-2,\la,(k+1,-1)}_{K^p}(k+1).\]
It is clear from the construction that $\bar{d}^{k+1}$ is $\GL_2(\Q_p)$ and Hecke-equivariant. Since we are free to multiply by powers of $\mathrm{t}$, we obtain the following theorem.

\begin{thm} \label{I2}
Suppose $\chi=(n_1,n_2)\in\Z^2$ and $k=n_2-n_1\geq 0$. Let $-w\cdot(-\chi)=(n_2+1,n_1-1)$. Then there exists a natural continuous operator
\[\bar{d}^{k+1}:\cO^{\la,\chi}_{K^p}\to (\wedge^2 D^{\sm}_{K^p})^{k+1}\otimes_{\cO^{\sm}_{K^p}}\omega^{-2k-2,\la,-w\cdot(-\chi)}_{K^p}(k+1)\cong \cO^{\la,\chi}_{K^p}\otimes_{\cO_{\Fl}}(\Omega^1_{\Fl})^{\otimes k+1}\]
which formally can be regarded as $(d_\Fl)^{k+1}$
satisfying the following properties:
\begin{enumerate}
\item $\bar{d}^{k+1}$ is $\cO^{\sm}_{K^p}$-linear;
\item there exists a non-zero constant $c\in\Q$ such that $\bar{d}^{k+1}(s)=c(u^+)^{k+1}(s)\otimes (dx)^{k+1}$ for any $s\in \cO^{\la,\chi}_{K^p}$ defined over an open subset of $\Fl$ where $x$ is a regular function. 
\end{enumerate}
Moreover, $\bar{d}^{k+1}$ commutes with $\GL_2(\Q_p)$ and Hecke actions away from $p$.
\end{thm}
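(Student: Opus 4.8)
The plan is to obtain the operator $\bar{d}^{k+1}$ by combining the two isomorphisms on $\lambda_k$-isotypic components from Lemma \ref{secinf} with the basic first-order operator $\bar{d}^1$ of \ref{KSFl}, in exactly the way the BGG machinery was set up in \ref{dbar}--\ref{genklambdak}. Concretely, I would start from $\bar{d}^1:\cO^{\la,(0,0)}_{K^p}\to\cO^{\la,(0,0)}_{K^p}\otimes_{\cO_\Fl}\Omega^1_\Fl$, tensor it with the identity on $\Sym^k V$, and then pre-compose with the left inverse $\phi'_k$ of Lemma \ref{secinf} and post-compose with the projection onto the $\lambda_k$-isotypic component (which by the same lemma agrees with $\phi_{k,2}$ composed with its inverse). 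This produces
\[
\bar{d}^{k+1}:\omega^{k,\la,(0,k)}_{K^p}(-k)\cong\cO^{\la,(0,0)}_{K^p}\otimes_{\cO_\Fl}\omega^k_\Fl\longrightarrow \omega^{k,\la,(0,k)}_{K^p}(-k)\otimes_{\cO_\Fl}(\Omega^1_\Fl)^{\otimes k+1},
\]
and since $\bar{d}^1$ was $\cO^{\sm}_{K^p}$-linear and all the maps $\phi_{k,1},\phi_{k,2},\phi'_k$ are $\cO^{\sm}_{K^p}$-linear (being built from the $\cO^{\sm}_{K^p}$-linear sequence \eqref{rHTcc} and the $U(\mathfrak{g})$-action), so is $\bar{d}^{k+1}$. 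Property (1) is then immediate. Twisting by an arbitrary power of $\mathrm{t}$ (using Remark \ref{tn1}) and by the line bundle $\omega^{k,\sm}_{K^p}$ moves $(0,k)$ to a general integral weight $\chi=(n_1,n_2)$ with $n_2-n_1=k$, and the identifications in \ref{omegakla}, \eqref{rmtwddet1} and the Kodaira-Spencer isomorphism of \ref{KSsm} reassemble the target as $(\wedge^2 D^{\sm}_{K^p})^{k+1}\otimes_{\cO^{\sm}_{K^p}}\omega^{-2k-2,\la,-w\cdot(-\chi)}_{K^p}(k+1)$, which matches the stated target. The bookkeeping that $-w\cdot(-\chi)=(n_2+1,n_1-1)$ is a direct weight computation from \ref{genklambdak}.

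For property (2), the point is that on an affinoid open $U\subseteq\Fl$ on which $e_1$ is invertible, $x=e_2/e_1$ is a regular coordinate and the derivation $d/dx$ on $\cO_\Fl$ coincides with the action of $u^+=\begin{pmatrix}0&1\\0&0\end{pmatrix}\in\mathfrak{g}$ (this is recalled in \ref{KSFl}). Hence $\bar{d}^1(s)=u^+(s)\otimes dx$, and iterating the BGG construction $k$ times — that is, passing from $V$ to $\Sym^k V$ and from $\lambda_1$ to $\lambda_k$ — produces, on such $U$, exactly the operator $s\mapsto (u^+)^{k+1}(s)\otimes(dx)^{k+1}$ up to a rational constant coming from the normalization of the projectors $\phi'_k$ and $\phi_{k,2}^{-1}$. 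I would verify this by a direct computation in the local model: trivialize $\Sym^k V\otimes\cO_U$ using the basis $e_1,e_2$, write the Hodge-Tate filtration explicitly (the sub-line-bundle is spanned by $x e_1-e_2$ up to scalar, cf. \eqref{rHT} and the definitions in \ref{exs}), and check that applying $u^+$ to a section of $\omega^{k,\la,(0,k)}_{K^p}$, lifting along $\phi'_k$, differentiating, and projecting back reproduces $(u^+)^{k+1}$ with a universal constant $c\in\Q^\times$ independent of $U$ and of $K^p$. Naturality of all constructions under $\GL_2(\Q_p)$ and the Hecke action away from $p$ is built in: $\bar{d}^1$ comes from the $\GL_2(\Q_p)$-action via Beilinson-Bernstein localization, the sequence \eqref{rHTH} is now Hecke-equivariant by the modification in \ref{HEt}, and the isotypic projectors are defined purely in terms of the $Z(U(\mathfrak{g}))$-action, so they commute with both symmetries; this gives the last sentence of the theorem.

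The main obstacle I expect is not the existence but the \emph{constant}: pinning down that the single scalar $c$ is the same on every chart, the same for every $k$-twist obtained by multiplying by $\mathrm{t}$, and rational. This requires being careful that the normalizations of the Kodaira-Spencer isomorphism in \ref{KSsm}, of the wedge-product identification \eqref{rmtwddet1}, and of the BGG projector $\phi'_k$ are all fixed once and for all, and that the chosen trivialization $c$ of $\wedge^2 D$ (a $\GL_2(\hat\Z)$-fixed section, as in \ref{HEt}) does not leak into the constant — indeed it cannot, because the target was arranged to be twisted by $(\wedge^2 D^{\sm}_{K^p})^{k+1}$ precisely to absorb that choice. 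A secondary, more technical point is the continuity of $\bar{d}^{k+1}$ as a map of LB-spaces on each $U\in\mathfrak{B}$: this follows because $\bar{d}^1$ is continuous (it is the action of $u^+\in\mathfrak{g}$, which acts continuously on $\cO^{\la}_{K^p}$ by \cite{Pan20}), and the projectors $\phi'_k$, $\phi_{k,2}^{-1}$ are continuous since, as in the proof of Lemma \ref{KSsec}, the relevant Kodaira-Spencer-type maps are isomorphisms of Banach spaces on $G_n$-analytic vectors, hence isometries up to bounded constants by the open mapping theorem. Assembling these local operators into a map of sheaves is then routine because each step commutes with restriction to rational subdomains in $\mathfrak{B}$.
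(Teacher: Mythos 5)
Your proposal follows essentially the same route as the paper: the operator is obtained by tensoring $\bar{d}^1$ (the $u^+$-action from Beilinson--Bernstein) with $\Sym^k V$, passing to the $\lambda_k$-isotypic component via the isomorphisms of Lemma \ref{secinf} and the left inverse $\phi'_k$, checking the explicit formula $(u^+)^{k+1}\otimes(dx)^{k+1}$ by direct computation on a chart, and then twisting by $\omega^{k,\sm}_{K^p}$ and powers of $\mathrm{t}$ to reach general integral weights. This matches the construction in \ref{dbar}--\ref{genklambdak}, so no further comparison is needed.
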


\begin{rem} \label{ddbarcom}
It's natural to compare the differential operator $d^{k+1}$ introduced in Theorem \ref{I1} with $\bar{d}^{k+1}$. It follows from the 
 explicit formulae that $\bar{d}^{k+1}$ commutes with $d^{k+1}$.
In fact, both differential operators can be defined in a uniform way. This is probably best explained in the local picture,
See Subsections \ref{LTinfty}, \ref{Drinfty} below.
\end{rem}

\begin{prop} \label{dbarsurj}
Suppose $\chi=(0,k)\in\Z^2$ and $k\geq 0$. We have the following results for
\[\bar{d}^{k+1}:\cO^{\la,(0,k)}_{K^p}\to \cO^{\la,(0,k)}_{K^p}\otimes_{\cO_{\Fl}}(\Omega^1_{\Fl})^{\otimes k+1}.\]
\begin{enumerate}
\item $\bar{d}^{k+1}$ is surjective.
\item $\ker(\bar{d}^{k+1})=\cO^{\lalg,(0,k)}_{K^p}=\Sym^k V(k)\otimes_{\Q_p}\omega^{-k,\sm}_{K^p}$. See \ref{lalg0k}, \ref{lalgKp} for the notation here.
\end{enumerate}
In general, if $\chi=(n_1,n_2)\in\Z^2$ with $k=n_2-n_1\geq 0$, then the same results hold: $\ker(\bar{d}^{k+1})=\cO^{\lalg,\chi}_{K^p}=\Sym^k V(k)\otimes_{\Q_p}\omega^{-k,\sm}_{K^p}\cdot \mathrm{t}^{n_1}$. 
\end{prop}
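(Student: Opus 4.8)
The plan is to reduce the statement to an elementary computation on the standard affinoid charts of $\Fl$, using the explicit formula for $\bar{d}^{k+1}$ and the structure result of Theorem \ref{str}. Both assertions are local on $\Fl$, $\bar{d}^{k+1}$ is $\GL_2(\Q_p)$-equivariant (Theorem \ref{I2}), and $\Fl=U_1\cup U_2$ with $U_2$ a Weyl translate of $U_1$; so it suffices to work over the open sets $U\in\mathfrak{B}$ with $U\subseteq U_1$ from \ref{exs}. On such a $U$ the section $e_1$ generates $\omega_{K^p}$, $x=e_2/e_1$ is a regular function, $dx$ trivializes $\Omega^1_{\Fl}$, and by Theorem \ref{I2}(2) one has $\bar{d}^{k+1}(f)=c\,(u^+)^{k+1}(f)\otimes(dx)^{k+1}$ for a fixed $c\in\Q^\times$. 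Since $\mathrm{t}$ is $\GL_2(\Q_p)$-smooth and $\bar{d}^{k+1}$ is $\cO^{\sm}_{K^p}$-linear, multiplication by $\mathrm{t}^{n_1}$ intertwines the operator on $\cO^{\la,\chi}_{K^p}$ with the one on $\cO^{\la,(0,k)}_{K^p}$ (Remark \ref{tn1}), so the whole proposition reduces to the case $\chi=(0,k)$.

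The next step is to identify $(u^+)^{k+1}$ on the local models. Recall from \ref{An} that $\cO^{\la,(0,k)}_{K^p}(U)=\varinjlim_n A^n$, where $A^n$ consists of the series $f=e_1^k\sum_{i\ge 0}c_i(x-x_n)^i$ with $c_i\in H^0(V_{G_{r(n)}},\omega^{-k})$, the norms $\|e_1^kc_i\,p^{(n-1)i}\|$ uniformly bounded, and $x_n$ a $\GL_2(\Q_p)$-smooth approximation of $x$. Now $u^+$ acts on the locally analytic structure sheaf by derivations, annihilates every $\GL_2(\Q_p)$-smooth section (in particular all $c_i$ and $x_n$) as well as $e_1$ (because $u^+$ acts as $d/dx$ on $x=e_2/e_1$, cf. \ref{KSFl}, which forces $u^+(e_1)=0$), and satisfies $u^+(x-x_n)=1$. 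By the Leibniz rule $u^+\bigl(e_1^k\sum_ic_i(x-x_n)^i\bigr)=e_1^k\sum_i i\,c_i(x-x_n)^{i-1}$ on finite sums, and by density and continuity $(u^+)^{k+1}$ becomes the termwise $(k+1)$-st formal derivative in the variable $x-x_n$.

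Granting this, the kernel is immediate: $(u^+)^{k+1}f=0$ forces $c_i=0$ for $i\ge k+1$, so $f=e_1^k\sum_{i=0}^kc_i(x-x_n)^i$; expanding the $(x-x_n)^i$ and collecting powers of $x$ (legitimate as $x_n$ is smooth) rewrites $f=\sum_{j=0}^k\tilde c_j\,e_1^{k-j}e_2^j$ with $\tilde c_j\in\omega^{-k,\sm}_{K^p}(U)$, which is exactly a local section of $\Sym^kV(k)\otimes_{\Q_p}\omega^{-k,\sm}_{K^p}=\cO^{\lalg,(0,k)}_{K^p}$ (using the injectivity recorded in \ref{lalg0k}); conversely any such section is $e_1^k$ times a polynomial of degree $\le k$ in $x$ with smooth coefficients, hence killed by $(u^+)^{k+1}$. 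Gluing over $\{U\in\mathfrak{B}:U\subseteq U_1\}$ and its $\GL_2(\Q_p)$-translates gives $\ker\bar{d}^{k+1}=\cO^{\lalg,(0,k)}_{K^p}$, and the general weight follows from the $\mathrm{t}^{n_1}$-twist, yielding $\ker\bar{d}^{k+1}=\cO^{\lalg,\chi}_{K^p}=\Sym^kV(k)\otimes_{\Q_p}\omega^{-k,\sm}_{K^p}\cdot\mathrm{t}^{n_1}$.

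For surjectivity I would integrate $k+1$ times: given $g=e_1^k\sum_{i\ge 0}c_i(x-x_n)^i\in A^n$, set $f=e_1^k\sum_{i\ge 0}c_i'(x-x_n)^i$ with $c_0'=\dots=c_k'=0$ and $c_{i+k+1}'=\tfrac{i!}{(i+k+1)!}c_i$, so that $(u^+)^{k+1}f=g$ formally and $\bar{d}^{k+1}(f)=c\,g\otimes(dx)^{k+1}$; since every section of $\cO^{\la,(0,k)}_{K^p}\otimes_{\cO_{\Fl}}(\Omega^1_{\Fl})^{\otimes k+1}$ over $U$ has this form, this proves surjectivity on $\mathfrak{B}$-sections, hence as a morphism of sheaves, and the general $\chi$ again follows by twisting. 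The main obstacle is checking that $f$ is still a locally analytic section, i.e. lies in some $A^m$: here $\tfrac{i!}{(i+k+1)!}=\bigl((i+1)\cdots(i+k+1)\bigr)^{-1}$ has $p$-adic absolute value $p^{v_p((i+1)\cdots(i+k+1))}$, which grows at most polynomially in $i$, whereas passing from $x_n$ to a finer approximation $x_m$ (via $\tfrac{x-x_n}{p^{n-1}}=p^{m-n}\tfrac{x-x_m}{p^{m-1}}+\tfrac{x_m-x_n}{p^{n-1}}$ with $\|x_m-x_n\|\le p^{-n}$, cf. \ref{exs}) contributes a geometric factor $p^{-(m-n)i}$ that dominates for $m$ large, forcing $f\in A^m\subseteq\cO^{\la,(0,k)}_{K^p}(U)^{G_{r(m)}-\an}$. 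The remaining points---the derivation property of $u^+$ on these sheaves, the compatibility of the local trivializations, and the elementary $p$-adic estimate above---are routine bookkeeping.
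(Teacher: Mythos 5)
Your proposal is correct, but it takes a more computational route than the paper on both key points, while sharing the same opening reduction (equivariance plus the local formula $\bar{d}^{k+1}=c\,(u^+)^{k+1}\otimes(dx)^{k+1}$ on charts where $e_1$ generates, and the twist by $\mathrm{t}^{n_1}$ for general $\chi$). For surjectivity the paper simply cites \cite[Proposition 5.2.9]{Pan20}, which asserts that $u^+$ is surjective on such open sets, and iterates; you instead re-derive this input by explicit termwise integration in $A^n$, and your convergence check is the right one — $|i!/(i+k+1)!|_p$ grows only polynomially in $i$ while re-expanding around a finer $x_m$ gains a factor $p^{-(m-n)j}$, so the antiderivative lands in $A^m$ for $m>n$. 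For the kernel the paper gives a soft argument ($\ker\bar{d}^{k+1}$ is $\mathfrak{gl}_2$-stable and killed by $(u^+)^{k+1}$, hence locally finite, and the infinitesimal character forces it to be the $\Sym^kV$-isotypic part), explicitly remarking that ``one can also prove this by a direct calculation''; your proof is precisely that direct calculation. The trade-off is the usual one: the paper's version is shorter and dodges all $p$-adic estimates, while yours is self-contained and makes the identification with $\cO^{\lalg,(0,k)}_{K^p}$ completely explicit. One small point to tighten: $u^+(x)=1$ alone does not ``force'' $u^+(e_1)=0$; what you actually need is that $u^+$ kills $e_1$ and sends $e_2$ to $e_1$ (equivalently, that $e_1^k s$ is $\kn$-invariant for smooth $s$, which is \cite[Proposition 5.2.6]{Pan20}, restated as part (1) of Proposition \ref{kninvcoinv}); with that reference supplied, the termwise-derivative description of $(u^+)^{k+1}$ on the series of Theorem \ref{str} is fully justified.
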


\begin{proof}
Since $\bar{d}^{k+1}$ is $\GL_2(\Q_p)$-equivariant, it suffices to prove these claims on open sets on which $e_1\in\omega^1_{K^p}$ is a generator. Then both claims follow from the explicit formula $\bar{d}^{k+1}=(u^+)^{k+1}\otimes (dx)^{k+1}$ up to a non-zero constant. The first part follows from \cite[Proposition 5.2.9]{Pan20} which says that $u^+$ is surjective on such open sets . For the second part, since $\ker(\bar{d}^{k+1})$ is $\mathfrak{gl}_2(\Q_p)$-stable and annihilated by $(u^+)^{k+1}$, the action of $\mathfrak{gl}_2(\Q_p)$ on it is locally finite. A consideration of the infinitesimal character shows that $\ker(\bar{d}^{k+1})$ is exactly the $\Sym^k V$-isotypic subspace. Hence $\ker(\bar{d}^{k+1})=\cO^{\lalg,(0,k)}_{K^p}$. One can also prove this by a direct calculation. 
\end{proof}

\begin{rem}\label{dbarres}
When $k=0$, it follows that there is a natural exact sequence
\[0\to \cO^{\sm}_{K^p} \to \cO^{\la,(0,0)}_{K^p}\xrightarrow{\bar{d}}  \cO^{\la,(0,k)}_{K^p}\otimes_{\cO_{\Fl}}\Omega^1_{\Fl}\to 0\]
which can be regarded as a $\bar{d}$-resolution of $\cO^{\sm}_{K^p}$ in this setting.  In general, $\bar{d}^{k+1}$ gives rise to a resolution
\[0\to \Sym^k V(k)\otimes_{\Q_p}\omega^{-k,\sm}_{K^p}\to \cO^{\la,(0,k)}_{K^p}\xrightarrow{\bar{d}^{k+1}} \cO^{\la,(0,k)}_{K^p}\otimes_{\cO_{\Fl}}(\Omega^1_{\Fl})^{\otimes k+1}\to 0.\]

\end{rem}

\subsection{Intertwining operator}
\begin{defn} \label{I}
Suppose $\chi=(n_1,n_2)\in\Z^2$ and $k=n_2-n_1\geq 0$.  We define 
\[I_k:\cO^{\la,\chi}_{K^p}\to \cO^{\la,-w\cdot(-\chi)}_{K^p}(k+1)\]
as  the composite of 
\[\cO^{\la,\chi}_{K^p}\xrightarrow{d^{k+1}}  \omega^{2k+2,\la,\chi}_{K^p}\otimes_{\cO^{\sm}_{K^p}} (\wedge^2 D^{\sm}_{K^p})^{-k-1} \xrightarrow{\bar{d}'^{k+1}\otimes 1} \cO^{\la,-w\cdot(-\chi)}_{K^p}(k+1),\]
where $\bar{d}'^{k+1}:=\bar{d}^{k+1}\otimes 1: \omega^{2k+2,\la,\chi}_{K^p}\otimes_{\cO^{\sm}_{K^p}} (\wedge^2 D^{\sm}_{K^p})^{-k-1}\to \cO^{\la,-w\cdot(-\chi)}_{K^p}(k+1)$.
\end{defn}
Note that $\bar{d}'^{k+1}$ makes sense because $\bar{d}^{k+1}$ is $\cO^{\sm}_{K^p}$-linear by Theorem \ref{I2}.
Formally, $I_k=\bar{d}'^{k+1}\circ d^{k+1}$ and  commutes with $\GL_2(\Q_p)$ and Hecke actions away from $p$. Note that $d^{k+1}$ commutes with $\bar{d}^{k+1}$ by \ref{ddbarcom}. Hence $I_k$ also agrees with the composite:
\begin{eqnarray*}
\cO^{\la,\chi}_{K^p}\xrightarrow{\bar{d}^{k+1}} \cO^{\la,\chi}_{K^p}\otimes_{\cO_{\Fl}}(\Omega^1_{\Fl})^{\otimes k+1}&\xrightarrow{d^{k+1}\otimes 1} & 
(\Omega_{K^p}^1(\mathcal{C})^\sm)^{\otimes k+1}\otimes_{\cO^{\sm}_{K^p}}\cO^{\la,\chi}_{K^p}\otimes_{\cO_{\Fl}}(\Omega^1_{\Fl})^{\otimes k+1}\\
&\cong &\cO^{\la,-w\cdot(-\chi)}_{K^p}(k+1),
\end{eqnarray*}
where the last isomorphism follows from \eqref{period}. Hence $I_k$ can also be regarded as $d^{k+1}\circ\bar{d}^{k+1}$.

\begin{rem}
We will give a $p$-adic Hodge-theoretic interpretation of $I_k$  in the next section. Roughly speaking, it comes from the action of $\mathbb{G}_a$ (a monodromy operator) in Fontaine's classification of almost de Rham $B_{\dR}$-representations \cite{Fo04}. The Tate twist $(k+1)$ in the definition is better to be understood as $\gr^k B_{\dR}$.
\end{rem}

\begin{defn} \label{I^1k}
 We denote $H^1(I_k)$ by
 \[I^1_k:H^1(\Fl,\cO^{\la,\chi}_{K^p})\to H^1(\Fl,\cO^{\la,-w\cdot(-\chi)}_{K^p}(k+1)).\]
 \end{defn}

 \begin{para}[A variant of $I_k$]  \label{VIk}
If $k>0$, then $H^1(\Fl,\cO^{\la,\chi}_{K^p})=H^1(\Fl,\cO_{K^p})^{\la,\chi}$ and similar results hold for $H^1(\Fl,\cO^{\la,-w\cdot(-\chi)}_{K^p}(k+1))$ by \cite[Corollary 5.1.3.(2)]{Pan20}. Thus we can also view $I^1_k$ as a map
\[I^{1}_k:H^1(\Fl,\cO_{K^p})^{\la,\chi} \to H^1(\Fl,\cO_{K^p})^{\la,-w\cdot(-\chi)}(k+1).\]
If $k=0$, it follows from the exact sequence \eqref{chio} and Remark \ref{A00rinv} below that $I^1_0$ factors through the quotient $H^1(\Fl,\cO_{K^p})^{\la,\chi}$ of $H^1(\Fl,\cO^{\la,\chi}_{K^p})$ and we will denote by 
\[{I}'_0:H^1(\Fl,\cO_{K^p})^{\la,\chi} \to H^1(\Fl,\cO_{K^p})^{\la,-w\cdot(-\chi)}(1)\]
the induced map.
 \end{para}

\subsection{\texorpdfstring{$\kn$}{Lg}-cohomology}
\begin{para} \label{I1k}
Fix a non-negative integer $k$ and a weight $\chi=(n_1,n_2)\in\Z^2$ with $n_2-n_1=k$ throughout this subsection. Let $\kn=Cu^+=\{\begin{pmatrix} 0 & * \\ 0 & 0 \end{pmatrix}\}\subset \mathfrak{g}$ be the upper triangular nilpotent subalgebra. We will determine the $\kn$-invariants of $\ker I^1_k$ below as a representation of the upper-triangular  Borel subgroup $B$ of $\GL_2(\Q_p)$ and deduce the finiteness result mentioned in the introduction Theorem \ref{Intfin}. We note that the main result will not be used in the next two sections. People who are only interested in the spectral decomposition can skip this subsection at first.

It suffices to study the case when $n_1=0$, i.e. $\chi=(0,k)$ as the general case can be reduced to this case by multiplying by $\mathrm{t}^{-n_1}$. Thus we will simply assume  $n_1=0$
 from now on.

Recall that  $d^{k+1}: \cO^{\la,\chi}_{K^p}\to\omega^{2k+2,\la,\chi}_{K^p}\otimes_{\cO^{\sm}_{K^p}} (\wedge^2 D^{\sm}_{K^p})^{-k-1}$. 
\end{para}

\begin{lem} \label{kerdk+1}
\[\ker I^1_k=\ker \left(H^1(d^{k+1})\right).\]
\end{lem}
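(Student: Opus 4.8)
The claim is that $\ker I^1_k = \ker H^1(d^{k+1})$, where $I_k = \bar{d}'^{k+1}\circ d^{k+1}$ (equivalently $d^{k+1}\circ\bar d^{k+1}$). Since $I_k$ factors through $d^{k+1}$, we certainly have $\ker H^1(d^{k+1}) \subseteq \ker H^1(I_k) = \ker I^1_k$, so the content is the reverse inclusion. The strategy is to show that $H^1$ of the second map $\bar{d}'^{k+1}$ is \emph{injective} on the relevant image, so that killing $I_k$ forces killing $d^{k+1}$ already in cohomology. Concretely, I would first rewrite $\bar d^{k+1}$ (hence $\bar{d}'^{k+1}$) using Proposition~\ref{dbarsurj}: after twisting, $\bar d^{k+1}$ fits into the short exact sequence of sheaves on $\Fl$
\[
0\to \Sym^k V(k)\otimes_{\Q_p}\omega^{-k,\sm}_{K^p}\to \cO^{\la,(0,k)}_{K^p}\xrightarrow{\bar d^{k+1}} \cO^{\la,(0,k)}_{K^p}\otimes_{\cO_{\Fl}}(\Omega^1_{\Fl})^{\otimes k+1}\to 0
\]
(Remark~\ref{dbarres}), and the same holds after the $\cO^{\sm}_{K^p}$-linear twist by $\omega^{2k+2,\sm}_{K^p}\otimes(\wedge^2 D^{\sm}_{K^p})^{-k-1}$ that turns $\bar d^{k+1}$ into $\bar{d}'^{k+1}$. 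Thus $\bar{d}'^{k+1}$ is a surjection of sheaves whose kernel is a locally algebraic sheaf of the form $\Sym^k V(k)\otimes_{\Q_p}(\text{some }\omega^{\bullet,\sm}_{K^p})$.

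\textbf{Key steps.} Write $\cK$ for the kernel sheaf of $\bar{d}'^{k+1}$, so $0\to\cK\to \cO^{\la,\chi}_{K^p}\otimes_{\cO^{\sm}_{K^p}}(\Omega^1_{K^p}(\mathcal C)^\sm)^{\otimes k+1}\xrightarrow{\bar{d}'^{k+1}}\cO^{\la,-w\cdot(-\chi)}_{K^p}(k+1)\to 0$ is exact. Taking the long exact sequence in cohomology on $\Fl$, injectivity of $H^1(\bar{d}'^{k+1})$ on the image of $H^1(d^{k+1})$ will follow once I know that $H^1(\Fl,\cK)$ maps to zero under the connecting map restricted to that image, or more cleanly, that $H^0$ of the target surjects onto $H^1(\Fl,\cK)$ — but the robust route is the one sketched informally in~\ref{kerdinf}: $\cK$ is a twist of $\cO^{\sm}_{K^p}$ (tensored with the finite-dimensional $\Sym^k V(k)$), and the essential input is that $\bar d^{k+1}$ ``has no $H^1$'' in the fiber direction of $\pi_{\HT}$, i.e. that the connecting maps $H^i(\Fl,\cO^{\la,\chi}_{K^p}\otimes\cdots)\to H^{i+1}(\Fl,\cK)$ vanish on the appropriate pieces. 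I would therefore: (1) identify $\cK$ explicitly via Proposition~\ref{dbarsurj} and the twist bookkeeping of~\ref{dbar}--\ref{genklambdak}; (2) use the $\cO^{\sm}_{K^p}$-linearity of $\bar{d}'^{k+1}$ together with the smooth (hence locally analytic, and ``relatively $0$-dimensional'') nature of $\cO^{\sm}_{K^p}$ to argue the long exact sequence splits in the right degree, giving $H^1(\bar{d}'^{k+1})$ injective; (3) chase the resulting diagram to conclude $\ker I^1_k = \ker(H^1(\bar{d}'^{k+1})\circ H^1(d^{k+1})) = \ker H^1(d^{k+1})$.

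\textbf{Main obstacle.} The crux is step~(2): showing that $H^1(\bar{d}'^{k+1})$ is injective. This is precisely the statement that ``$\bar d'^{k+1}$ has no $H^1$'' which was only sketched informally in~\ref{kerdinf} via the heuristic that fibers of $\pi_{\HT}$ over $\mathcal X_{K^pK_p}$ are profinite; making this rigorous requires understanding $R^1$ of the pushforward of the sheaf version of $\bar d'^{k+1}$, equivalently that the relevant piece of $H^2$ vanishes (automatic since $\Fl$ is a curve) and that the connecting homomorphism from $H^0$ of the target to $H^1(\Fl,\cK)$ is surjective. One clean way to handle this: since $\cK = \Sym^kV(k)\otimes_{\Q_p}\omega^{?,\sm}_{K^p}$ is built from $\cO^{\sm}_{K^p}$, and $H^\bullet(\Fl,\omega^{?,\sm}_{K^p})$ is computed by smooth cohomology of the finite-level modular curves, one compares the long exact sequence for $\bar{d}'^{k+1}$ with the already-known description of $H^\bullet(\Fl,\cO^{\la,\bullet}_{K^p})$ (Definition~\ref{chGanchi}, exact sequence~\eqref{chio}) to force the connecting map to vanish on the image of $H^1(d^{k+1})$. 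I expect this comparison, rather than any new hard estimate, to be the substance of the argument; everything else is diagram-chasing through the factorization $I_k = \bar{d}'^{k+1}\circ d^{k+1}$.
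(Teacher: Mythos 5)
Your setup is correct and coincides with the paper's: since $I_k=\bar{d}'^{k+1}\circ d^{k+1}$, the inclusion $\ker H^1(d^{k+1})\subseteq\ker I^1_k$ is automatic, and the whole content is the injectivity of $H^1(\bar{d}'^{k+1})$; you also correctly identify, via Proposition~\ref{dbarsurj}, that $\ker\bar{d}'^{k+1}$ is the locally algebraic sheaf $\Sym^k V(k)\otimes_{\Q_p}\omega^{k+2,\sm}_{K^p}\otimes_{\cO^{\sm}_{K^p}}(\wedge^2 D^{\sm}_{K^p})^{-k-1}$. But you stop exactly where the proof actually happens. Since $\bar{d}'^{k+1}$ is surjective as a map of sheaves and $\Fl$ has no $H^2$, the long exact sequence gives
\[
\ker H^1(\bar{d}'^{k+1})=\im\bigl(H^1(\Fl,\ker\bar{d}'^{k+1})\to H^1(\Fl,\omega^{2k+2,\la,\chi}_{K^p}\otimes_{\cO^{\sm}_{K^p}}(\wedge^2 D^{\sm}_{K^p})^{-k-1})\bigr),
\]
so what is needed is simply $H^1(\Fl,\ker\bar{d}'^{k+1})=0$. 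Using $\wedge^2 D^{\sm}_{K^p}\cong\cO^{\sm}_{K^p}$ (non-canonically) this reduces to the vanishing $H^1(\Fl,\omega^{k+2,\sm}_{K^p})=0$, which holds for all $k\geq 0$ by \cite[Corollary 5.3.6]{Pan20} — the decisive point being that the weight here is $k+2\geq 2$. This single citation closes the argument; no restriction to the image of $H^1(d^{k+1})$ and no splitting of the long exact sequence is required.

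Moreover, the two workarounds you sketch for your ``main obstacle'' do not work as stated. The ``cleaner'' route — that $H^0$ of the target surjects onto $H^1(\Fl,\ker\bar{d}'^{k+1})$ under the connecting map — is a dead end: by \cite[Corollary 5.1.3]{Pan20} one has $H^0(\Fl,\cO^{\la,-w\cdot(-\chi)}_{K^p})=0$ (this vanishing is used elsewhere in the same section, e.g.\ in Lemma~\ref{kerIk1fil}), so the connecting map is the zero map from the zero group and can only be surjective if $H^1(\Fl,\ker\bar{d}'^{k+1})$ already vanishes. The other phrasing — forcing ``the connecting map to vanish on the image of $H^1(d^{k+1})$'' — conflates the two ends of the long exact sequence: the connecting map does not act on $H^1$ of the source of $\bar{d}'^{k+1}$, and what must be controlled is the image of $H^1(\Fl,\ker\bar{d}'^{k+1})$ inside that $H^1$, not the connecting map itself. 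The informal ``profinite fibers'' heuristic from~\ref{kerdinf} is likewise not what is being used here; the rigorous input is purely the coherent vanishing $H^1(\Fl,\omega^{k+2,\sm}_{K^p})=0$ in the tower of modular curves.
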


\begin{proof}
$I_k=\bar{d}'^{k+1}\circ d^{k+1}$ (Definition \ref{I}). It suffices to prove that the kernel of 
\[H^1(\bar{d}'^{k+1}):  H^1(\Fl,\omega^{2k+2,\la,\chi}_{K^p}\otimes_{\cO^{\sm}_{K^p}} (\wedge^2 D^{\sm}_{K^p})^{-k-1})\to H^1(\Fl,\cO^{\la,-w\cdot(-\chi)}_{K^p}(k+1))\]
is zero. Since $\bar{d}^{k+1}\otimes 1$ is surjective by Proposition \ref{dbarsurj}, we only need to show that 
\[H^1(\Fl,\ker\bar{d}'^{k+1})=0.\]
On the other hand, by the second part of Proposition \ref{dbarsurj} and the isomorphism $\omega^{2k+2,\la,\chi}_{K^p}\cong \cO^{\la,\chi}_{K^p}\otimes_{\cO^{\sm}_{K^p}}\omega^{2k+2,\sm}_{K^p}$, we have
\[\ker(\bar{d}^{k+1}\otimes 1)\cong\Sym^k V(k)\otimes_{\Q_p} \omega^{k+2,\la,\chi}_{K^p}\otimes_{\cO^{\sm}_{K^p}} (\wedge^2 D^{\sm}_{K^p})^{-k-1}.\]
Since $\wedge^2 D^{\sm}_{K^p}\cong\cO^{\sm}_{K^p}$ (non-canonically), it is enough to prove 
\[H^1(\Fl,\omega^{k+2,\sm}_{K^p})=0,\]
which is exactly \cite[Corollary 5.3.6]{Pan20} as  $k\geq 0$.
\end{proof}

\begin{para}
We are going to compute $(\ker I^1_k)^\kn$, the $\kn$-invariants of $\ker I^1_k$. This is a subspace of the $\kn$-invariants $H^1(\Fl,\cO^{\la,\chi}_{K^p})^\kn$, which was determined in \S 5.3 of \cite{Pan20}. We briefly recall the computation here. One ingredient is the following simple lemma.
\end{para}

\begin{lem} \label{ninvlem}
Let $\mathcal{F},\mathcal{G}$ be sheaves of abelian groups on $\Fl$ (or any topological space with cohomological dimension one) and $X:\mathcal{F}\to\mathcal{G}$ be a map. Then  there are natural homomorphisms $r:H^0(\Fl,\mathcal{G})\to H^0(\Fl,\coker X)$ and $\delta:\ker r =H^0(\Fl,X(\cF))\to H^1(\Fl,\ker X)$, and the following natural short exact sequence:
\[0\to \coker(\delta) \to \ker H^1(X) \to \coker(r)\to 0.\]
In particular, 
\begin{enumerate}
\item If $H^1(\Fl,\ker X)=0$, then $\ker H^1(X)\cong \coker(r)$.
\item If $H^0(\Fl,\mathcal{G})=0$, then $H^0(\Fl,X(\cF))=0$ and $0\to H^1(\Fl,\ker X)\to \ker H^1(X)\to H^0(\Fl,\coker X)\to 0$.
\end{enumerate}
\end{lem}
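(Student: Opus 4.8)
The statement is a completely formal piece of homological algebra on a topological space $\Fl$ of cohomological dimension one, attached to a single morphism $X:\mathcal{F}\to\mathcal{G}$ of sheaves of abelian groups. The plan is to factor $X$ through its image and run the resulting long exact sequences. First I would introduce the two short exact sequences of sheaves
\[
0\to \ker X\to \mathcal{F}\to X(\mathcal{F})\to 0,
\qquad
0\to X(\mathcal{F})\to \mathcal{G}\to \coker X\to 0,
\]
where $X(\mathcal{F})\subseteq\mathcal{G}$ denotes the image subsheaf (sheafified image). Taking cohomology of the second sequence gives the maps $H^0(\Fl,\mathcal{G})\xrightarrow{r} H^0(\Fl,\coker X)$ and a connecting map, together with the identification $\ker r = H^0(\Fl, X(\mathcal{F}))$ (the sections of $\mathcal{G}$ landing in the subsheaf $X(\mathcal{F})$). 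This is where the hypothesis $\dim\Fl\le 1$ enters: $H^2(\Fl,-)=0$, so the long exact sequence of the second short exact sequence breaks off and gives a \emph{surjection} $H^1(\Fl,X(\mathcal{F}))\twoheadrightarrow H^1(\Fl,\mathcal{G})$, and $\coker r$ embeds into $H^1(\Fl,X(\mathcal{F}))$ as the kernel of that surjection.

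Next I would feed this into the long exact sequence of the first short exact sequence. Taking cohomology of $0\to\ker X\to\mathcal{F}\to X(\mathcal{F})\to 0$ yields the connecting homomorphism $\delta: H^0(\Fl, X(\mathcal{F}))=\ker r\to H^1(\Fl,\ker X)$ and the exact piece
\[
H^1(\Fl,\ker X)\xrightarrow{\ \alpha\ } H^1(\Fl,\mathcal{F})\xrightarrow{\ H^1(\iota)\ } H^1(\Fl,X(\mathcal{F}))\to 0,
\]
where $\iota:\mathcal{F}\to X(\mathcal{F})$ is the surjection induced by $X$ (the last zero again uses $\dim\Fl\le1$, so $H^2(\Fl,\ker X)=0$). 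The key observation is that $H^1(X):H^1(\Fl,\mathcal{F})\to H^1(\Fl,\mathcal{G})$ factors as the composite $H^1(\Fl,\mathcal{F})\xrightarrow{H^1(\iota)}H^1(\Fl,X(\mathcal{F}))\xrightarrow{H^1(j)}H^1(\Fl,\mathcal{G})$, where $j:X(\mathcal{F})\hookrightarrow\mathcal{G}$. Since $H^1(\iota)$ is surjective, a class in $H^1(\Fl,\mathcal{F})$ lies in $\ker H^1(X)$ iff its image in $H^1(\Fl,X(\mathcal{F}))$ lies in $\ker H^1(j)$; and I showed above $\ker H^1(j)=\coker r$. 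So there is a surjection $\ker H^1(X)\twoheadrightarrow \coker r$ whose kernel is exactly $\ker H^1(\iota)=\operatorname{im}\alpha$. Finally, by exactness of the first sequence $\operatorname{im}\alpha\cong H^1(\Fl,\ker X)/\ker\alpha$, and $\ker\alpha$ is the image of the connecting map $\delta:\ker r\to H^1(\Fl,\ker X)$, i.e.\ $\operatorname{im}\alpha\cong\coker\delta$. Assembling: $0\to\coker\delta\to\ker H^1(X)\to\coker r\to 0$, which is the claimed sequence. Parts (1) and (2) are then immediate: if $H^1(\Fl,\ker X)=0$ then $\coker\delta=0$ and $\ker H^1(X)\cong\coker r$; if $H^0(\Fl,\mathcal{G})=0$ then $r=0$, hence $\ker r=H^0(\Fl,X(\mathcal{F}))=0$ so $\delta$ has zero source and $\coker\delta=H^1(\Fl,\ker X)$, while $\coker r = H^0(\Fl,\coker X)$, giving $0\to H^1(\Fl,\ker X)\to\ker H^1(X)\to H^0(\Fl,\coker X)\to 0$.

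\textbf{Expected main obstacle.} There is no serious analytic or geometric difficulty here — the whole content is bookkeeping with the two long exact sequences and keeping track of which "image" is meant (the presheaf image versus the sheafified image $X(\mathcal{F})$). The one point that deserves genuine care is the identification of $H^0(\Fl, X(\mathcal{F}))$ with $\ker\big(r: H^0(\Fl,\mathcal{G})\to H^0(\Fl,\coker X)\big)$: this is the statement that global sections of $\mathcal{G}$ that are locally in the image of $X$ are precisely the global sections of the image subsheaf, which is true by definition of $X(\mathcal{F})$ as a subsheaf of $\mathcal{G}$, but one should say it explicitly rather than conflate $X(\mathcal{F})$ with the (non-sheaf) presheaf image. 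Everything else — that $\dim\Fl\le1$ kills the relevant $H^2$'s, and the compatibility $H^1(X)=H^1(j)\circ H^1(\iota)$ — is routine. So I would write this up tersely, emphasizing only the factorization $X=j\circ\iota$ and the two applications of cohomological dimension one.
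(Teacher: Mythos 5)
Your proof is correct and follows essentially the same route as the paper's: both arguments factor $X$ through the image subsheaf, run the long exact sequences of $0\to \ker X\to\mathcal{F}\to X(\mathcal{F})\to 0$ and $0\to X(\mathcal{F})\to\mathcal{G}\to\coker X\to 0$, identify $\coker(\delta)$ with $\ker\bigl(H^1(\Fl,\mathcal{F})\to H^1(\Fl,X(\mathcal{F}))\bigr)$ and $\coker(r)$ with $\ker\bigl(H^1(\Fl,X(\mathcal{F}))\to H^1(\Fl,\mathcal{G})\bigr)$, and use the factorization of $H^1(X)$ together with cohomological dimension one. (The paper also records a second, equivalent derivation via the hypercohomology of the two-term complex $\mathcal{F}\xrightarrow{X}\mathcal{G}$, but its primary proof is the one you gave.)
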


\begin{proof}
The map $r:H^0(\Fl,\mathcal{G})\to H^0(\Fl,\coker X)$ simply comes from the quotient map $\mathcal{G}\to\coker(X)$. For $\delta$, consider the short exact sequence $0\to \ker(X)\to\cF\to X(\cF) \to 0$ and the long exact sequence of its cohomology groups:
\[H^0(\Fl,X(\cF))\xrightarrow{\delta} H^1(\Fl,\ker X)\to H^1(\Fl,\cF) \to H^1 (\Fl,X(\cF))\to 0.\]
The first connecting homomorphism will be our $\delta$. Hence $\coker(\delta)$ is isomorphic to the  kernel of $H^1(\Fl,\cF) \to H^1 (\Fl,X(\cF))$. On the other hand, consider the cohomology groups of $0\to X(\cF)\to \mathcal{G} \to \coker X\to 0$:
\[H^0(\Fl,\mathcal{G})\xrightarrow{r} H^0(\Fl,\coker X)\to H^1(\Fl,X(\cF))\to H^1(\Fl,\mathcal{G}).\]
Hence $\coker(r)$ is isomorphic to the  kernel of $H^1(\Fl,X(\cF))\to H^1(\Fl,\mathcal{G})$. By writing $H^1(X)$ as the composite map $H^1(\Fl,\cF)\to H^1(\Fl,X(\cF))\to H^1(\Fl,\mathcal{G})$, 
we get $0\to \coker(\delta) \to \ker(H^1(X)) \to \coker(r)$. Since $H^1(\Fl,\cF) \to H^1 (\Fl,X(\cF))$ is surjective, the last map is surjective as well.
\end{proof}

\begin{rem} \label{hyclem}
Here is a more direct way to prove this lemma.
If we denote the hypercohomology of the complex $\mathcal{F}\xrightarrow{X}\mathcal{G}$ by $\bH^*(X)$, then there is natural exact sequence.
\[H^0(\Fl,\mathcal{F})\xrightarrow{H^0(X)} H^0(\Fl,\mathcal{G}) \to \bH^1(X)\to \ker (H^1(X))\to 0.\]
Hence
\[ \ker (H^1(X))\cong  \bH^1(X)/\Fil^1\bH^1(X),\]
where $\Fil^1\bH^1(X)$ denotes the image of $H^0(\Fl,\mathcal{G})$ in $\bH^1(X)$.
On the other hand, since there is no $H^2$ on $\Fl$, we have another exact sequence
\[0\to H^1(\Fl,\ker X)\to \bH^1(X)\to H^0(\Fl,\coker X)\to 0.\]
The composite map $H^0(\Fl,\mathcal{G}) \to \bH^1(X)\to H^0(\Fl,\coker X)$ is nothing but $r$. From this we easily deduce the lemma.
\end{rem}

\begin{para} \label{e_1e_2ch}
We apply this lemma to $\cF=\mathcal{G}=\cO^{\la,\chi}_{K^p}$ and $X=u^+$, it remains to determine the $\kn$-invariants $\cO^{\la,\chi,\kn}_{K^p}$, the coinvariants $(\cO^{\la,\chi}_{K^p})_{\kn}$ and their cohomology.  This was computed in \S 5.2, 5.3 of \cite{Pan20}. To state the result, we denote by $\infty\in \Fl$ the point where $e_1$ vanishes and by $i_{\infty}:\infty\to \Fl$ the natural embedding. Following \cite[Definition 5.2.5, 5.3.8]{Pan20}, we define 
\begin{eqnarray*}
M^\dagger_k(K^p)&:=&(\omega^{k,\sm}_{K^p})_\infty \mbox{, the stalk of } \omega^{k,\sm}_{K^p} \mbox{ at }\infty,\\
M_k(K^p)&:=&H^0(\Fl,\omega^{k,\sm}_{K^p})=\varinjlim_{K_p\subset\GL_2(\Q_p)}H^0(\mathcal{X}_{K^pK_p},\omega^k).
\end{eqnarray*}
$M^\dagger_k(K^p)$ (resp. $M_k(K^p)$) is our space of overconvergent modular forms (resp. classical modular forms) of weight $k$ of tame level $K^p$. Then there is a natural  restriction map $M_k(K^p)\to M^\dagger_k(K^p)$. Clearly $M_k(K^p)=0$ if $k<0$.

Following \cite[Remark 5.2.7]{Pan20}, for a $B\times G_{\Q_p}$-representation $W$ and integers $i,j$, we use $W\cdot e_1^ie_2^j$ to denote the twist of $W(i+j)$ by the character sending $\begin{pmatrix} a & b\\ 0 & d \end{pmatrix}\in B$ to $a^id^j$. This agrees with the definition in \cite[Remark 5.2.7]{Pan20}. 
\end{para}

\begin{prop} \label{kninvcoinv}
$\chi=(0,k)\in\Z^2$ and $k\geq 0$.
\begin{enumerate}
\item There is a natural isomorphism 
\begin{eqnarray*}
\omega^{-k,\sm}_{K^p}\cdot e_1^k&\cong&\cO^{\la,\chi,\kn}_{K^p} \\
s& \mapsto&e_1^ks;
\end{eqnarray*}
\item $(\cO^{\la,\chi}_{K^p})_{\kn}$ is a sky-scrapper sheaf supported at $\infty$ and there is a natural isomorphism 
\begin{eqnarray*}
(i_\infty)_* M^\dagger_{-k}(K^p)\cdot e_2^{k} \oplus (i_\infty)_* M^\dagger_{-k}(K^p)\cdot e_1^{1+k}e_2^{-1} &\cong& (\cO^{\la,\chi}_{K^p})_{\kn} \\
(s_1,s_2)&\mapsto& e_2^{k}s_1+ e_1^{1+k}e_2^{-1}s_2.
\end{eqnarray*}
\item $H^0(\Fl,\cO^{\la,\chi}_{K^p})=0$ unless $k=0$. If $k=0$, then there is a natural isomorphism
\begin{eqnarray*}
H^0(\Fl,\cO^{\sm}_{K^p})&\cong&H^0(\Fl,\cO^{\la,\chi}_{K^p}) 
\end{eqnarray*}
induced from the natural inclusion $\cO^{\sm}_{K^p}\subset \cO^{\la,\chi}_{K^p}$
In particular, $H^0(\Fl,u^+(\cO^{\la,\chi}_{K^p}))=0$.
\end{enumerate}
\end{prop}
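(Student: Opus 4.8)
The statement is a computation of $\kn$-invariants, $\kn$-coinvariants, and global sections of $\cO^{\la,\chi}_{K^p}$ with $\chi = (0,k)$, $k \geq 0$. The plan is to work locally on $\Fl = \mathbb{P}^1$ using the two-chart cover $\{U_1, U_2\}$ from \ref{exs}, and on each chart invoke the explicit description of sections from Theorem \ref{str}. Over $U_1$ (where $e_1$ is invertible), every $G_n$-analytic section has the shape $f = e_1^k \sum_{i \geq 0} c_i (x - x_n)^i$ with $c_i$ smooth sections of $\omega^{-k}$ and the appropriate growth condition, and $x = e_2/e_1$. Recall from \ref{KSFl} that $u^+ = \begin{pmatrix} 0 & 1 \\ 0 & 0 \end{pmatrix}$ acts as $\frac{d}{dx}$ on $\cO_\Fl$ and annihilates $\cO^\sm_{K^p}$; since $\bar d^1 = u^+ \otimes dx$ after the Kodaira--Spencer twist, and $\bar d^1$ is known by Theorem \ref{I2} (case $k=0$ of the construction, or directly \cite{Pan20}), the kernel of $u^+$ on $\cO^{\la,(0,0)}_{K^p}$ is $\cO^\sm_{K^p}$. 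Twisting by $\omega^{k}_\Fl \cong \cO_\Fl \cdot e_1^{-k}$ (equivalently passing from weight $(0,0)$ to weight $(0,k)$ via multiplication by $e_1^k$), one gets part (1): a section $f$ of $\cO^{\la,(0,k)}_{K^p}$ is killed by $u^+$ exactly when $f = e_1^k s$ with $s \in \omega^{-k,\sm}_{K^p}$, and the map $s \mapsto e_1^k s$ is the asserted isomorphism $\omega^{-k,\sm}_{K^p} \cdot e_1^k \xrightarrow{\sim} \cO^{\la,\chi,\kn}_{K^p}$.

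\textbf{Coinvariants.} For part (2), one argues that $u^+$ is surjective away from the point $\infty$ where $e_1$ vanishes — this is \cite[Proposition 5.2.9]{Pan20}, already used in the proof of Proposition \ref{dbarsurj} — so the coinvariants sheaf $(\cO^{\la,\chi}_{K^p})_\kn$ is a skyscraper supported at $\infty$. Its stalk is computed on the chart $U_2$ (where now $e_2$ generates and $x^{-1} = e_1/e_2$ is the local coordinate vanishing at $\infty$): writing a $G_n$-analytic germ as $e_2^k \sum_i c_i (x^{-1} - y_n)^i$ and applying $u^+$, which near $\infty$ acts (up to the change of variable) by a first-order operator, one identifies the cokernel. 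The computation of $u^+$ acting on weight-$(0,k)$ sections at $\infty$ produces precisely two ``residue" contributions, corresponding to the two terms $M^\dagger_{-k}(K^p)\cdot e_2^k$ and $M^\dagger_{-k}(K^p)\cdot e_1^{1+k}e_2^{-1}$; here $M^\dagger_{-k}(K^p) = (\omega^{-k,\sm}_{K^p})_\infty$ by definition \ref{e_1e_2ch}. This is essentially the content of \cite[\S5.2, 5.3]{Pan20} — in particular \cite[Definition 5.2.5, 5.3.8]{Pan20} and the surrounding computations — translated into the present twist; the map sending $(s_1, s_2)$ to $e_2^k s_1 + e_1^{1+k} e_2^{-1} s_2$ is then checked to be an isomorphism onto the stalk at $\infty$, directly from Theorem \ref{str} applied over a small neighborhood of $\infty$.

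\textbf{Global sections.} For part (3), by part (1) we have $H^0(\Fl, \cO^{\la,\chi}_{K^p}) \subseteq H^0(\Fl, \cO^{\la,\chi}_{K^p})^\kn$ — wait, more precisely: a global section is a $u^+$-invariant global section once we note $u^+$ must vanish on it (one can also see this via the $\kh$-weight being $\chi = (0,k)$ and the $\mathfrak{gl}_2$-action, but the clean route is: $H^0(\Fl,\cO^{\la,\chi}_{K^p})$ is a $\GL_2(\Q_p)$-stable subspace of $H^0(\Fl, \omega^{k,\la}_{K^p}(-k))$ after twisting, and any global section of a line-bundle twist that extends over $\infty$ must be $u^+$-torsion). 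Concretely: a global section $f$ restricts on $U_1$ to $e_1^k s$ with $s \in \omega^{-k,\sm}_{K^p}(U_1)$ a global smooth section of $\omega^{-k,\sm}_{K^p}$; by part (1) this is the only possibility, so $H^0(\Fl,\cO^{\la,\chi}_{K^p}) = H^0(\Fl, \omega^{-k,\sm}_{K^p} \cdot e_1^k) \cong H^0(\Fl, \omega^{-k,\sm}_{K^p})$. Now $H^0(\Fl, \omega^{-k,\sm}_{K^p}) = M_{-k}(K^p) = \varinjlim_{K_p} H^0(\mathcal{X}_{K^p K_p}, \omega^{-k})$, which is $0$ for $k > 0$ since there are no modular forms of negative weight, and equals $H^0(\Fl, \cO^\sm_{K^p})$ for $k = 0$. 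The final assertion $H^0(\Fl, u^+(\cO^{\la,\chi}_{K^p})) = 0$ follows formally from the long exact sequence of $0 \to \ker u^+ \to \cO^{\la,\chi}_{K^p} \to u^+(\cO^{\la,\chi}_{K^p}) \to 0$ together with $H^0(\Fl, \cO^{\la,\chi}_{K^p}) = H^0(\Fl, \ker u^+)$, which is exactly what we just proved (when $k > 0$ both are zero; when $k = 0$ the inclusion $\cO^\sm_{K^p} = \ker u^+ \hookrightarrow \cO^{\la,(0,0)}_{K^p}$ induces an isomorphism on $H^0$).

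\textbf{Main obstacle.} The routine parts are (1) and (3) once $\ker(u^+) = \cO^\sm_{K^p}$ is in hand (which is already available from \cite{Pan20}/Proposition \ref{dbarsurj}). The genuinely delicate step is the precise identification in part (2) of the cokernel stalk at $\infty$ as the \emph{two-fold} copy of overconvergent forms $M^\dagger_{-k}(K^p)\cdot e_2^k \oplus M^\dagger_{-k}(K^p)\cdot e_1^{1+k}e_2^{-1}$ with the stated explicit $B \times G_{\Q_p}$-equivariant structure: one has to compute $u^+$ explicitly near $\infty$ in the coordinates adapted to Theorem \ref{str}, track the Tate twists and the $B$-character twists through the Kodaira--Spencer identifications of \ref{HEt} and \ref{KSFl}, and verify that the two ``obstruction classes'' are exactly the coefficients of $x^{-1}$ and $x^{-(1+k)}$-type terms. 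This bookkeeping — essentially a careful local-coordinate repackaging of \cite[\S5.2--5.3]{Pan20} — is where all the real work lies.
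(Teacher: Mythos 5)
The paper's own ``proof'' of this proposition is a bare citation to \cite[Proposition 5.2.6, 5.2.9, 5.2.11, Corollary 5.1.3]{Pan20}, and your reconstruction follows the same strategy those results use (explicit power-series computation of $u^+$ on the two charts via Theorem \ref{str}). Parts of your sketch are fine, including the identification of the two-dimensional cokernel at $\infty$ in part (2) — though note the two classes are represented by $y^0$ and $y^{k+1}$ where $y=e_1/e_2$, i.e.\ by $e_2^k$ and $e_1^{k+1}e_2^{-1}$, not by ``$x^{-1}$ and $x^{-(1+k)}$-type terms.'' However, there are genuine gaps. First, for part (1) you only compute $\ker(u^+)$ on opens where $e_1$ is invertible. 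Unlike $\bar d^{k+1}$, the operator $u^+$ and its kernel sheaf are only $B$-equivariant, and $B$-translates of $U_1$ do not cover $\infty$; so one must separately solve $u^+(f)+kyf=0$ in the germs at $\infty$ (where $u^+ e_2=e_1\neq 0$) to see that the kernel there is still $\omega^{-k,\sm}_{K^p}\cdot e_1^k$, now a sheaf of sections vanishing to order $k$.

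Second, and more seriously, the crux of part (3) — that every global section of $\cO^{\la,(0,k)}_{K^p}$ is killed by $u^+$ — is asserted rather than proved. Your parenthetical ``must be $u^+$-torsion'' would not suffice even if established: $u^+$-torsion only places the section in $\cO^{\lalg,(0,k)}_{K^p}=\Sym^kV(k)\otimes\omega^{-k,\sm}_{K^p}$ (Proposition \ref{dbarsurj}), not in $\ker u^+$, and your subsequent sentence ``a global section restricts on $U_1$ to $e_1^k s$'' presupposes exactly what needs proof. The clean argument is the one the paper uses in Lemma \ref{rescle}: $H^0(\Fl,\cO^{\la,\chi}_{K^p})$ injects into $H^0(\Fl,\cO_{K^p})\cong\tilde H^1(K^p,C)^{\deg 0}=\tilde H^0(K^p,C)$, on which $\SL_2(\Q_p)$ acts trivially, so every global section lies in $\cO^{\sm}_{K^p}$, which has horizontal weight $(0,0)$; hence $H^0=0$ for $k>0$ and $H^0=H^0(\Fl,\cO^{\sm}_{K^p})$ for $k=0$. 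Finally, the last claim $H^0(\Fl,u^+(\cO^{\la,\chi}_{K^p}))=0$ does not ``follow formally'' from the long exact sequence when $k=0$: the sequence only shows that the map $H^0(\cO^{\la,\chi}_{K^p})\to H^0(u^+(\cO^{\la,\chi}_{K^p}))$ is zero, not that the target vanishes. One should instead observe $H^0(\Fl,u^+(\cO^{\la,\chi}_{K^p}))\subseteq H^0(\Fl,\cO^{\la,\chi}_{K^p})=M_0(K^p)$ and then use part (2) together with the injectivity of $M_0(K^p)\to M_0^\dagger(K^p)$ to see that no nonzero constant lies in the image of $u^+$ near $\infty$.
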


\begin{proof}
See Proposition 5.2.6 ,5.2.9, 5.2.11, Corollary 5.1.3 of \cite{Pan20}.
\end{proof}

We note that all these isomorphisms are $\cO^{\sm}_{K^p}$-linear.

\begin{cor} \label{gso2k2}
$\chi=(0,k)\in\Z^2$ and $k\geq 0$. There are natural isomorphisms:
\begin{enumerate}
\item $(\omega^{2k+2,\la,\chi}_{K^p})^{\kn} \cong \omega^{k+2,\sm}_{K^p}\cdot e_1^k$.
\item $(\omega^{2k+2,\la,\chi}_{K^p})_{\kn}\cong (i_\infty)_* M^\dagger_{k+2}(K^p)\cdot  e_2^{k} \oplus (i_\infty)_* M^\dagger_{k+2}(K^p)\cdot e_1^{1+k}e_2^{-1} $.
\item $H^0(\Fl,\omega^{2k+2,\la,\chi}_{K^p})\cong \Sym^k V(k) \otimes_{\Q_p} M_{k+2}(K^p)$
 induced from the natural inclusion $\Sym^k V(k)\otimes_{\Q_p}\omega^{-k,\sm}_{K^p}\subseteq \cO^{\la,\chi}_{K^p}$, cf. \ref{lalg0k}.
\end{enumerate}
\end{cor}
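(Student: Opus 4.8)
The plan is to deduce all three statements from the corresponding facts about $\cO^{\la,\chi}_{K^p}$ recorded in Proposition \ref{kninvcoinv}, via the identification $\omega^{2k+2,\la,\chi}_{K^p}\cong\omega^{2k+2,\sm}_{K^p}\otimes_{\cO^{\sm}_{K^p}}\cO^{\la,\chi}_{K^p}$ of Lemma \ref{tensomegaksm}, in which $\omega^{2k+2,\sm}_{K^p}$ is an invertible $\cO^{\sm}_{K^p}$-module on which $\GL_2(\Q_p)$ — in particular $\kn=Cu^+$ — acts smoothly. The point is that tensoring with an invertible sheaf is exact, so it commutes with taking $\kn$-(co)invariants and, in the relevant situations, with $H^0(\Fl,-)$; but in part (3) it does not, and that is where the real work lies.

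For parts (1) and (2): since $\kn$ annihilates $\omega^{2k+2,\sm}_{K^p}$, the operator $u^+$ on $\omega^{2k+2,\la,\chi}_{K^p}$ equals $1\otimes u^+$, so $(-)^{\kn}$ and $(-)_{\kn}$ commute with $\omega^{2k+2,\sm}_{K^p}\otimes_{\cO^{\sm}_{K^p}}(-)$. Substituting Proposition \ref{kninvcoinv}(1) and using multiplicativity $\omega^{a,\sm}_{K^p}\otimes_{\cO^{\sm}_{K^p}}\omega^{b,\sm}_{K^p}\cong\omega^{a+b,\sm}_{K^p}$ gives $(\omega^{2k+2,\la,\chi}_{K^p})^{\kn}\cong\omega^{2k+2,\sm}_{K^p}\otimes_{\cO^{\sm}_{K^p}}(\omega^{-k,\sm}_{K^p}\cdot e_1^k)\cong\omega^{k+2,\sm}_{K^p}\cdot e_1^k$, the $e_1^ie_2^j$-twists and Tate twists being untouched because $\omega^{2k+2,\sm}_{K^p}$ is Hecke- and Galois-trivial. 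For (2), $(\cO^{\la,\chi}_{K^p})_{\kn}$ is the skyscraper at $\infty$ of Proposition \ref{kninvcoinv}(2); tensoring with the line bundle merely twists its two summands by the stalk $(\omega^{2k+2,\sm}_{K^p})_{\infty}=M^\dagger_{2k+2}(K^p)$, and $M^\dagger_{-k}(K^p)\otimes_{M^\dagger_0(K^p)}M^\dagger_{2k+2}(K^p)\cong M^\dagger_{k+2}(K^p)$ gives the claimed description, again preserving all twists.

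For part (3) I would instead tensor the $\bar d^{k+1}$-resolution of Remark \ref{dbarres}
\[0\to \Sym^k V(k)\otimes_{\Q_p}\omega^{-k,\sm}_{K^p}\to \cO^{\la,(0,k)}_{K^p}\xrightarrow{\ \bar d^{k+1}\ } \cO^{\la,(0,k)}_{K^p}\otimes_{\cO_{\Fl}}(\Omega^1_{\Fl})^{\otimes k+1}\to 0\]
with $\omega^{2k+2,\sm}_{K^p}$ over $\cO^{\sm}_{K^p}$, getting a short exact sequence of sheaves on $\Fl$ with left term $\Sym^k V(k)\otimes_{\Q_p}\omega^{k+2,\sm}_{K^p}$ and middle term $\omega^{2k+2,\la,(0,k)}_{K^p}$. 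Its cohomology sequence, with $H^0(\Fl,\omega^{k+2,\sm}_{K^p})=M_{k+2}(K^p)$, reduces part (3) to the vanishing
\[H^0\bigl(\Fl,\ \omega^{2k+2,\la,(0,k)}_{K^p}\otimes_{\cO_{\Fl}}(\Omega^1_{\Fl})^{\otimes k+1}\bigr)=0 .\]
This vanishing is the main obstacle. I would establish it by unwinding twists: writing $\Omega^1_{\Fl}\cong\omega_{\Fl}^{-2}\otimes\det$ and using the ``twist on $\Fl$'' and ``twist on modular curves'' isomorphisms of \ref{omegakla} (equivalently the Kodaira--Spencer identifications of \ref{KSsm} and the period isomorphism \eqref{period}), the two automorphic line bundles $\omega^{2k+2,\sm}_{K^p}$ and $\omega^{-2k-2,\sm}_{K^p}$ cancel and one is left, up to a Tate twist and a harmless twist by $\det^{\otimes k+1}$, with $\cO^{\la,\chi'}_{K^p}$ for a weight $\chi'$ of strictly decreasing entries ($\theta_\kh$-weight $(k+1,-1)$). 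For such ``negative'' weights $H^0(\Fl,-)=0$; one can see this by a further twist reducing to $H^0(\Fl,\omega^{m,\sm}_{K^p})=M_m(K^p)=0$ for $m<0$, or, in the spirit of Proposition \ref{kninvcoinv}(3), by observing (via Proposition \ref{dbarsurj}) that the coefficient sheaf is the line-bundle twist of $\cO^{\la,(0,k)}_{K^p}/\cO^{\lalg,(0,k)}_{K^p}$ and that $H^0(\Fl,\cO^{\la,(0,k)}_{K^p})=0$ for $k>0$ by Proposition \ref{kninvcoinv}(3), the case $k=0$ producing $\cO^{\la,(1,-1)}_{K^p}$ whose $H^0$ vanishes for the same reason. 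Matching all Tate twists and the $e_1^ie_2^j$-bookkeeping correctly in this identification is the only genuinely nontrivial step; granting it, parts (1)--(3) are formal consequences of Proposition \ref{kninvcoinv} and exactness of tensoring with an invertible $\cO^{\sm}_{K^p}$-module.
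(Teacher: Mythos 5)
Your treatment of parts (1) and (2) is exactly the paper's: both follow from $\omega^{2k+2,\la,\chi}_{K^p}\cong \cO^{\la,\chi}_{K^p}\otimes_{\cO^{\sm}_{K^p}}\omega^{2k+2,\sm}_{K^p}$ (Lemma \ref{tensomegaksm}) together with Proposition \ref{kninvcoinv}, since $\kn$ kills the smooth line bundle. For part (3) your route is also the paper's in disguise: tensoring the $\bar d^{k+1}$-resolution of Remark \ref{dbarres} with $\omega^{2k+2,\sm}_{K^p}$ produces exactly the sequence $0\to \Sym^k V(k)\otimes\omega^{k+2,\sm}_{K^p}\to\omega^{2k+2,\la,\chi}_{K^p}\xrightarrow{\bar d'^{k+1}}\cO^{\la,(k+1,-1)}_{K^p}(k+1)\to 0$ the paper uses, and you correctly reduce everything to the vanishing of $H^0$ of the right-hand term, which you correctly identify as (a twist of) $\cO^{\la,(k+1,-1)}_{K^p}$.

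The one genuine problem is that both justifications you offer for that vanishing are invalid. First, "a further twist reducing to $H^0(\Fl,\omega^{m,\sm}_{K^p})=0$ for $m<0$" does not work: to change the $\theta_\kh$-weight $(k+1,-1)$ into one of the form $(a,a)$ you must twist by a nontrivial power of the line bundle $\omega_{\Fl}$ on $\Fl$, and tensoring with a nontrivial line bundle on $\Fl$ does not preserve $H^0(\Fl,-)$. Second, the observation that the sheaf is the quotient $\cO^{\la,(0,k)}_{K^p}/\cO^{\lalg,(0,k)}_{K^p}$ with $H^0(\Fl,\cO^{\la,(0,k)}_{K^p})=0$ only gives an injection of the $H^0$ you want into $H^1(\Fl,\Sym^k V(k)\otimes\omega^{-k,\sm}_{K^p})$, which is nonzero in general (it is precisely the space appearing in $A_{(k,0)}$ in Theorem \ref{kernk}), so no vanishing follows. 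The correct move — and what the paper does — is simply to invoke \cite[Corollary 5.1.3]{Pan20}: $H^0(\Fl,\cO^{\la,\chi'}_{K^p})=0$ whenever $\chi'(h)\neq 0$ for $h=\begin{pmatrix}1&0\\0&-1\end{pmatrix}$, and here $\chi'=(k+1,-1)$ has $\chi'(h)=k+2\neq 0$. With that citation in place of your two arguments, the proof is complete and coincides with the paper's.
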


\begin{proof}
The first two claims follow from Proposition \ref{kninvcoinv} and the $\cO^{\sm}_{K^p}$- linear isomorphism $\omega^{2k+2,\la,\chi}_{K^p}\cong \cO^{\la,\chi}_{K^p}\otimes_{\cO^{\sm}_{K^p}}\omega^{2k+2,\sm}_{K^p}$. For the last part, consider 
\[\bar{d}'^{k+1}:\omega^{2k+2,\la,\chi}_{K^p}\to  (\wedge^2 D^{\sm}_{K^p})^{k+1}\otimes_{\cO^{\sm}_{K^p}}\cO^{\la,-w\cdot(-\chi)}_{K^p}(k+1)\cong \cO^{\la,-w\cdot(-\chi)}_{K^p}(k+1).\]
By \cite[Corollary 5.1.3]{Pan20}, we have  $H^0(\Fl,\cO^{\la,-w\cdot(-\chi)}_{K^p})=0$. Hence 
\[H^0(\Fl,\omega^{2k+2,\la,\chi}_{K^p})=\ker(H^0(\bar{d}'^{k+1}))=H^0(\Fl,\ker(\bar{d}'^{k+1}))=H^0(\Fl, \Sym^k V(k) \otimes_{\Q_p}\omega^{k+2,\sm}_{K^p})\]
by Proposition \ref{dbarsurj}.
\end{proof}

\begin{rem} \label{resmap}
There is a natural restriction map 
\[\mathrm{res}:H^0(\Fl,\omega^{2k+2,\la,\chi}_{K^p})\to H^0(\Fl,(\omega^{2k+2,\la,\chi}_{K^p})_{\kn}). \]
Using the isomorphisms  in the previous corollary, we get
\[\Sym^k V(k) \otimes_{\Q_p} M_{k+2}(K^p) \to M^\dagger_{k+2}(K^p)\cdot e_2^{k} \oplus  M^\dagger_{k+2}(K^p)\cdot e_1^{1+k}e_2^{-1}.\]
It is important to know this map explicitly. The $\kn$-coinvariants $(\Sym^k V)_{\kn}$ are isomorphic to $\Q_p$  by sending $(0,1)^{\otimes k}$ to $1$. (Recall that $V=\Q_p^{\oplus 2}$.) One can check easily that $\Sym^k V(k) \otimes_{\Q_p} M_{k+2}(K^p)$ factors through the quotient 
\[(\Sym^k V)_{\kn}(k) \otimes_{\Q_p} M_{k+2}(K^p) \cong M_{k+2}(K^p) \cdot e_2^k\]
 and $\mathrm{res}$ is the composite with the natural map $M_{k+2}(K^p)\cdot e_2^k\to M^\dagger_{k+2}(K^p)\cdot e_2^k$.
\end{rem}

\begin{para} \label{chininv}
Now Lemma \ref{ninvlem} and Proposition \ref{kninvcoinv} imply that there is a natural exact sequence
\[0\to H^1(\Fl,\omega^{-k,\sm}_{K^p})\cdot e_1^k\to H^1(\Fl,\cO^{\la,\chi}_{K^p})^\kn \to M^\dagger_{-k}(K^p)/M_{-k}(K^p)\cdot  e_1^{-1}e_2^{k+1} \oplus M^\dagger_{-k}(K^p)\cdot e_1^{k}  \to 0.\]
We note that to make everything $B$-equivariant, $H^0(\Fl,(\cO^{\la,\chi}_{K^p})_{\kn})$ needs to be twisted by $\kn^*$, or formally $\cdot e_1^{-1}e_2$. We remark that by \cite[Lemma 5.3.5]{Pan20},   
\[H^1(\Fl,\omega^{-k,\sm}_{K^p})= \varinjlim_{K_p\subset\GL_2(\Q_p)} H^1(\mathcal{X}_{K^pK_p},\omega^{-k}).\]

Similarly, we can also compute the $\kn$-invariants $H^1(\Fl,\omega^{2k+2,\la,\chi}_{K^p})^{\kn}$. Since
\[H^1(\Fl,\omega^{2k+2,\la,\chi,\kn}_{K^p})\cong H^1(\Fl,\omega^{k+2,\sm}_{K^p})=0\]
by  \cite[Corollary 5.3.6]{Pan20}, Lemma \ref{ninvlem} simply becomes
\[H^1(\Fl,\omega^{2k+2,\la,\chi}_{K^p})^{\kn}\cong  M^\dagger_{k+2}(K^p)/M_{k+2}(K^p)\cdot  e_1^{-1}e_2^{k+1} \oplus  M^\dagger_{k+2}(K^p)\cdot e_1^{k}\]
by taking Remark \ref{resmap} into account.

We are ready to compute $(\ker I^1_k)^\kn$. By Lemma \ref{kerdk+1}, it is the same as $(\ker H^1(d^{k+1}))^{\kn}$. Fix a trivialization $\wedge^2 D^{\sm}_{K^p}\cong \cO^{\sm}_{K^p}$. In particular,  the operator $\theta^{k+1}$ in \ref{Do1} defines a map $M_{-k}^{\dagger}(K^p)\to M_{k+2}^\dagger(K^p)$ which we still call $\theta^{k+1}$ by abuse of notation. Note that Lemma \ref{ninvlem} is functorial in the pair $(\cF,X)$. It follows from Theorem \ref{I1} that we have the following commutative diagram 
\[\begin{tikzcd}
 H^1(\Fl,\omega^{-k,\sm}_{K^p})\cdot e_1^k \arrow [d] \arrow[r] & H^1(\Fl,\cO^{\la,\chi}_{K^p})^\kn \arrow[d,"H^1(d^{k+1})"] \arrow[r] & M^\dagger_{-k}/M_{-k}\cdot  e_1^{-1}e_2^{k+1} \oplus M^\dagger_{-k}\cdot e_1^{k} \arrow[d,"\theta^{k+1}\oplus\theta^{k+1}"]\\
 0 \arrow[r] & H^1(\Fl,\omega^{2k+2,\la,\chi}_{K^p})^\kn \arrow[r,"\simeq"] & M^\dagger_{k+2}/M_{k+2}\cdot  e_1^{-1}e_2^{k+1} \oplus  M^\dagger_{k+2}\cdot e_1^{k} 
\end{tikzcd}.\]
Here we drop all $(K^p)$  in $M^\dagger_{-k}(K^p),M^\dagger_{k+2}(K^p),M_{k+2}(K^p),M_{-k}(K^p)$ for simplicity.
Recall that $\kb\subset\mathfrak{g}$ is the Lie algebra of $B$ and $\kh\subset \kb$ is a Cartan subalgebra, cf. the last paragraph of \ref{brr}. Then there is a natural action of $\kh$ on $H^1(\Fl,\cO^{\la,\chi}_{K^p})^\kn$ and induces a decomposition of $H^1(\Fl,\cO^{\la,\chi}_{K^p})^\kn$  into weight $(k,0)$ and $(-1,k+1)$ components.  Note that this constant $\kh$-action is different from the horizontal action $\theta_\kh$.
\end{para}

\begin{defn} \label{D0}
\[D_0:=H^0(\Fl,\wedge^2 D^{\sm}_{K^p}).\] 
This is a free module of $M_0(K^p)$ of rank-one with a generator on which $\GL_2(\A_f)$ acts via $|\cdot|^{-1}\circ\det$.
\end{defn}

\begin{prop}
There is a natural weight decomposition
\[(\ker I^1_k)^\kn=A_{(k,0)}\cdot e_1^k\oplus A_{(-1,k+1)}\cdot e_1^{-1}e_2^{k+1},\]
where 
\[A_{(-1,k+1)}\cong\ker\left(M^\dagger_{-k}(K^p)/M_{-k}(K^p)\xrightarrow{\theta^{k+1}}(M^\dagger_{k+2}(K^p)/M_{k+2}(K^p))\otimes_{M_0(K^p)}D_0^{\otimes -k-1}\right),\]
and $A_{(k,0)}$ sits inside the exact sequence 
\[0\to \varinjlim_{K_p\subset\GL_2(\Q_p)} H^1(\mathcal{X}_{K^pK_p},\omega^{-k})\to A_{(k,0)} \to M^{\dagger}_{-k}(K^p)\xrightarrow{\theta^{k+1}}M^\dagger_{k+2}(K^p)\otimes_{M_0(K^p)}D_0^{\otimes -k-1}.\]
All the maps  here are  Hecke and $B$-equivariant.
\end{prop}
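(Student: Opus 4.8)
The plan is to reduce everything to the commutative diagram already assembled in \ref{chininv} together with a snake-lemma argument, so that the proposition becomes a formal diagram chase once the relevant $\kn$-(co)invariants and their $\kh$-weights are in hand. First, by Lemma \ref{kerdk+1} we have $\ker I^1_k=\ker H^1(d^{k+1})$, and since $d^{k+1}$ is $\GL_2(\Q_p)$-equivariant it commutes with $u^+$, so $H^1(d^{k+1})$ is $\kn$-equivariant and
\[(\ker I^1_k)^\kn=\ker\Big(H^1(d^{k+1})\big|_{H^1(\Fl,\cO^{\la,\chi}_{K^p})^\kn}\Big).\]
This restriction is precisely the middle vertical arrow of the diagram in \ref{chininv}. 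I would then decompose that whole diagram under the constant Cartan action of $\kh$: being $\GL_2(\Q_p)$-equivariant, $H^1(d^{k+1})$ preserves $\kh$-weights, the rows (obtained from Lemma \ref{ninvlem} applied to $u^+$) are $\kh$-equivariant, and by Proposition \ref{kninvcoinv} and Corollary \ref{gso2k2} the source $H^1(\Fl,\cO^{\la,\chi}_{K^p})^\kn$ and target $H^1(\Fl,\omega^{2k+2,\la,\chi}_{K^p})^\kn$ are extensions of modules pure of the two distinct $\kh$-weights $(k,0)$ and $(-1,k+1)$, hence split canonically into weight subspaces compatibly with the diagram. Defining $A_{(k,0)}$ and $A_{(-1,k+1)}$ to be the weight components of $(\ker I^1_k)^\kn$, untwisted by $e_1^k$ and $e_1^{-1}e_2^{k+1}$ respectively, gives the asserted direct-sum decomposition, and it remains to identify each summand.

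In weight $(-1,k+1)$ both rows have vanishing left-hand term: $H^1(\Fl,\omega^{-k,\sm}_{K^p})\cdot e_1^k$ sits in weight $(k,0)$, and $H^1(\Fl,\omega^{k+2,\sm}_{K^p})=0$ by \cite[Corollary 5.3.6]{Pan20}. Hence the horizontal maps of \ref{chininv} identify the weight-$(-1,k+1)$ parts of the two middle terms with $M^\dagger_{-k}(K^p)/M_{-k}(K^p)$ and $M^\dagger_{k+2}(K^p)/M_{k+2}(K^p)$, under which the middle vertical arrow becomes $\theta^{k+1}$ tensored with $D_0^{\otimes-k-1}$ — the $D_0=H^0(\Fl,\wedge^2 D^{\sm}_{K^p})$-twist appearing because $\theta^{k+1}$ is canonically valued in a $(\wedge^2 D)^{-k-1}$-twist (cf. \ref{XCY}, Definition \ref{D0}, Remark \ref{resmap}). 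This yields $A_{(-1,k+1)}=\ker\big(M^\dagger_{-k}(K^p)/M_{-k}(K^p)\xrightarrow{\theta^{k+1}}(M^\dagger_{k+2}(K^p)/M_{k+2}(K^p))\otimes_{M_0(K^p)}D_0^{\otimes-k-1}\big)$.

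In weight $(k,0)$ the two rows of \ref{chininv} become
\[0\to H^1(\Fl,\omega^{-k,\sm}_{K^p})\cdot e_1^k\to H^1(\Fl,\cO^{\la,\chi}_{K^p})^{\kn}_{(k,0)}\to M^\dagger_{-k}(K^p)\cdot e_1^k\to 0\]
and $0\to 0\to H^1(\Fl,\omega^{2k+2,\la,\chi}_{K^p})^{\kn}_{(k,0)}\xrightarrow{\ \sim\ }M^\dagger_{k+2}(K^p)\cdot e_1^k$, the left vertical arrow being forced to be zero (its target is $0$) and the right one being $\theta^{k+1}$ twisted by $D_0^{\otimes-k-1}$. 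The snake lemma then produces
\[0\to H^1(\Fl,\omega^{-k,\sm}_{K^p})\to A_{(k,0)}\to \ker\big(M^\dagger_{-k}(K^p)\xrightarrow{\theta^{k+1}}M^\dagger_{k+2}(K^p)\otimes_{M_0(K^p)}D_0^{\otimes-k-1}\big)\to 0,\]
which is the asserted four-term complex after rewriting $H^1(\Fl,\omega^{-k,\sm}_{K^p})=\varinjlim_{K_p\subset\GL_2(\Q_p)}H^1(\mathcal{X}_{K^pK_p},\omega^{-k})$ via \cite[Lemma 5.3.5]{Pan20}.

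Hecke-equivariance away from $p$ and $B$-equivariance are automatic throughout: $d^{k+1}$ and $\bar{d}^{k+1}$ are Hecke- and $\GL_2(\Q_p)$-equivariant (Theorems \ref{I1}, \ref{I2}), Lemma \ref{ninvlem} is functorial in the pair $(\mathcal{F},X)$, and all the identifications of $\kn$-(co)invariants in Proposition \ref{kninvcoinv} and Corollary \ref{gso2k2} respect these structures (the residual $B$-action on $\kn$-invariants coming from the fact that $\kn$ is an ideal of $\kb$). The only genuinely delicate point, and the one I would take most care over, is the bookkeeping of the constant $\kh$-weights across these identifications — which of the $e_1^{\,i}e_2^{\,j}$-components the classical and overconvergent modular forms occupy, and the correct normalization of the $D_0$-twist in $\theta^{k+1}$ — but this is exactly the content of Proposition \ref{kninvcoinv}, Corollary \ref{gso2k2} and Remark \ref{resmap}; granting those inputs the remaining argument is purely formal.
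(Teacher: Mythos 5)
Your proposal is correct and is essentially the paper's argument: the paper's proof is literally "apply the snake lemma to the commutative diagram" of \ref{chininv}, and your write-up just makes explicit the weight decomposition and the identification of each graded piece via Proposition \ref{kninvcoinv}, Corollary \ref{gso2k2} and Remark \ref{resmap}. No gaps.
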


\begin{proof}
Apply the Snake lemma to the commutative diagram above.
\end{proof}

To simplify this result, we need the following well-known result on the kernel of $\theta^{k+1}$.

\begin{lem} \label{kertheta}
Suppose $k\geq 0$. Then the kernel of $\theta^{k+1}:M_{-k}^{\dagger}(K^p)\to M_{k+2}^\dagger(K^p)$ is $M_{-k}(K^p)$. In particular, $\theta^{k+1}$ is injective if $k>0$.
\end{lem}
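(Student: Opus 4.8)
The plan is to reduce the statement to the classical fact that the iterated operator $\theta^{k+1}$ is, on the de Rham side, nothing but the assertion that a canonical lift is horizontal, together with the big-monodromy property of the universal elliptic curve. First I would unwind the construction of $\theta^{k+1}$ recalled in \ref{XCY}: given $f\in M^\dagger_{-k}(K^p)=(\omega^{-k,\sm}_{K^p})_\infty$, let $\tilde f$ be the canonical lift of $f$ to (the stalk at $\infty$ of) $\Sym^k D^{\sm}_{K^p}{}^{*}$, characterised by $\nabla_k(\tilde f)\in\Fil^k\Sym^k D^{\sm}_{K^p}{}^{*}\otimes\Omega^1_{K^p}(\mathcal{C})^{\sm}$; by definition $\theta^{k+1}(f)$ is exactly this element $\nabla_k(\tilde f)$ (after the chosen horizontal trivialisation $\wedge^2 D\cong\cO$). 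Hence $\theta^{k+1}(f)=0$ if and only if $\tilde f$ is a horizontal section of $\Sym^k D^{\sm}_{K^p}{}^{*}$, i.e. a flat section of the $k$-th symmetric power of the rank-two local system underlying $D$. Since $f=\tilde f\bmod\Fil^1$, it suffices to prove that such a flat section vanishes when $k>0$ and is ``constant'' (lies in $M_0(K^p)$) when $k=0$; the reverse inclusion $M_{-k}(K^p)\subseteq\ker\theta^{k+1}$ is immediate, as constants (resp. classical weight $-k$ forms, of which for $k>0$ there are none because $\omega$ has positive degree) are horizontal.

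For $k>0$ I would then run the standard monodromy argument. Working at a finite level $K^pK_p$ on which (after shrinking) $f$ is defined, $\tilde f$ is a flat section of $\Sym^k D^{\sm}_{K^p}{}^{*}$ over a connected strict neighbourhood of the ordinary locus in the modular curve; in particular it gives a vector in $\Sym^k$ of the standard representation of $\SL_2$ fixed by the monodromy group of $D$ over that neighbourhood. Since this neighbourhood is obtained from the open modular curve by deleting finitely many supersingular residue discs, its fundamental group surjects onto that of the open modular curve, so its monodromy group contains the monodromy of $D$ on the whole open modular curve, which is a finite-index subgroup of $\SL_2(\Z)$ and hence Zariski-dense in $\SL_2$. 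As $\Sym^k$ of the standard representation has no nonzero invariant vector for $k>0$, we get $\tilde f=0$, whence $f=0$ and $\ker\theta^{k+1}=0=M_{-k}(K^p)$. For $k=0$ the operator $\theta^1$ is $d$ followed by the Kodaira--Spencer isomorphism, so $f\in\ker\theta^1$ means $df=0$; then $f$ is locally constant near the cusp $\infty$, hence constant on a connected neighbourhood, hence classical of weight $0$, i.e. an element of $M_0(K^p)$, which is the desired identification.

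The main obstacle is purely expository rather than mathematical: making precise, in the rigid-analytic and infinite-level language of the paper, the passage from ``horizontal section of $\Sym^k D^{\sm}_{K^p}{}^{*}$ in the stalk at $\infty$'' to ``flat section over a connected base at finite level whose monodromy is Zariski-dense in $\SL_2$'', so that the classical density input applies cleanly. An alternative, entirely computational route is to pass to $q$-expansions at the cusp $\infty$, where $\theta^{k+1}$ becomes the iterate of the operator built from $q\,d/dq$ together with the $E_2$-correction encoded by $\nabla$, and to check directly by induction on the order of vanishing that a weight-$(-k)$ series annihilated by it must be constant; this bypasses monodromy at the cost of a short bookkeeping calculation. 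In either case the underlying fact is classical (Katz, Coleman), which is why the statement is labelled well-known, so the write-up should mainly cite it and supply the identification of $\theta^{k+1}$ with the horizontality condition above.
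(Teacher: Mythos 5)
Your reduction of the statement to the horizontality of the canonical lift $\tilde f$ of $f$ in $\Sym^k (D^{\sm}_{K^p})^{*}$ is correct, but the monodromy step that is supposed to carry the case $k>0$ is a genuine gap. The section $\tilde f$ is horizontal only over a rigid-analytic \emph{strict neighbourhood of the ordinary locus} over $C$, which is not an algebraic open of the modular curve, so its existence is not governed by invariants of the topological fundamental group of the complex curve: there is no Riemann--Hilbert/GAGA bridge from an overconvergent horizontal section to a Betti flat section, and "the fundamental group of the neighbourhood surjects onto that of the open modular curve'' is a statement about complex topology with no bearing on solution spaces of $p$-adic ODEs on such domains. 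Worse, the conclusion you want is actually false one step closer to the ordinary locus: by Katz, the unit-root subcrystal $U\subseteq D$ is a $\nabla$-stable line bundle over the ordinary locus, and $U^{\otimes k}\subseteq \Sym^k D$ supplies a full line of horizontal sections projecting isomorphically onto $\gr^0\Sym^k D=\omega^{-k}\otimes(\wedge^2 D)^{k}$ (this is exactly why the Kodaira--Spencer isomorphism forces $\ker N\cap\Fil^1=0$ at the cusp). So over the ordinary locus the relevant "monodromy'' is contained in a Borel, and the entire content of the lemma is that these unit-root horizontal sections do \emph{not} overconverge into the supersingular discs --- a point your argument never engages with.

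The $q$-expansion alternative stalls at the same place: on $q$-expansions $\theta^{k+1}$ is $(q\,d/dq)^{k+1}$, so you only learn that $f$ has constant $q$-expansion $a_0$, and the constant \emph{does} lift to a formal horizontal section at the cusp (namely a multiple of the $k$-th power of the unit-root generator), so no bookkeeping forces $a_0=0$ when $k\geq 1$. This is why the paper's proof is global rather than local: it combines the ODE bound $\dim_C\ker\bigl(\theta^{k+1}|_{M^\dagger_{-k}(K^p\Gamma(p^n))}\bigr)\leq (k+1)\,|\pi_0(\mathcal{X}_{K^p\Gamma(p^n)})|$ of Lemma \ref{unifb} with the smoothness of the prime-to-$p$ Hecke action to force the $\GL_2(\A_f^p)$-action on the kernel to factor through the determinant, and then invokes the theorem on two-dimensional Galois representations with consecutive Hodge--Tate weights from \cite{Pan2020N} to rule out a nonzero kernel for $k\geq 1$. (Coleman's route instead uses Frobenius weights on the rigid cohomology of the Igusa curves.) To repair your write-up you would need one of these inputs, or a direct proof that the unit-root splitting does not extend to any strict neighbourhood of the ordinary locus; the case $k=0$, which only needs the irreducibility of the Igusa curves and the $q$-expansion principle, is the only part that goes through as you describe.
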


\begin{proof}
As explained in  \cite[5.2.4]{Pan20}, $M^\dagger_k(K^p)$ has the following equivalent description. Let $\Gamma(p^n)=1+p^n M_2(\Z_p)$. We defined an open subset $\mathcal{X}_{K^p\Gamma(p^n),c}$ (canonical locus) of $\mathcal{X}_{K^p\Gamma(p^n)}$. For each connected component $\mathcal{Z}$ of $\mathcal{X}_{K^p\Gamma(p^n)}$, the intersection $\mathcal{Z}\cap \mathcal{X}_{K^p\Gamma(p^n),c}$ is the generic fiber of an Igusa curve in the integral model of $\mathcal{X}_{K^p\Gamma(p^n)}$ defined by Katz-Mazur. In particular, the irreducibility of the Igusa curve implies that the connected components of $\mathcal{X}_{K^p\Gamma(p^n),c}$ are naturally in bijection with the connected components of $\mathcal{X}_{K^p\Gamma(p^n)}$, i.e. $\pi_0(\mathcal{X}_{K^p\Gamma(p^n),c})=\pi_0(\mathcal{X}_{K^p\Gamma(p^n)})$. We denote by $M^\dagger_k(K^p\Gamma(p^n))$ the set of sections of $\omega^k$ defined in a strict neighborhood of $\mathcal{X}_{K^p\Gamma(p^n),c}$. Then  $\{M^\dagger_k(K^p\Gamma(p^n))\}_n$ form a direct system and $\varinjlim_n  M^\dagger_k(K^p\Gamma(p^n))=M^\dagger_k(K^p)$. Note that $\theta^{k+1}$ maps  $M^\dagger_{-k}(K^p\Gamma(p^n))$ to  $M^\dagger_{k+2}(K^p\Gamma(p^n))$.

Suppose $k=0$. Then $\theta^1$ is essentially the derivation. Hence $\ker(\theta^1|_{M^\dagger_0(K^p\Gamma(p^n))})$ is the set of locally constant functions on $\mathcal{X}_{K^p\Gamma(p^n),c}$. By our previous discussion, this is also the set of locally constant functions on $\mathcal{X}_{K^p\Gamma(p^n)}$, i.e. $H^0(\mathcal{X}_{K^p\Gamma(p^n)},\cO_{\mathcal{X}_{K^p\Gamma(p^n)}})$. By passing to the limit over $n$, we get $\ker(\theta^1)=M_0(K^p)$.

It remains to show that $\theta^{k+1}$ is injective when $k\geq1$. This can be deduced by several different ways. We sketch a proof here using infinitesimal characters, or equivalently, using weights. 

\begin{lem} \label{unifb}
$\dim_C \ker(\theta^{k+1}|_{M^\dagger_{-k}(K^p\Gamma(p^n))})\leq (k+1)|\pi_0(\mathcal{X}_{K^p\Gamma(p^n)})|$ which only depends on $\det(K^p\Gamma(p^n))\Q^\times_{>0}\subset\A_f^\times$.
\end{lem}

Let's assume this lemma at the moment.  Suppose $k\geq 1$. Consider the direct limit over all open compact subgroups $K^p$ of $\GL_2(\A^p_f)$:
\[N:=\varinjlim_{K^p\subseteq\GL_2(\A_f^p)}\ker(\theta^{k+1}|_{M^\dagger_{-k}(K^p \Gamma(p^n))}),\]
which is a representation of $\GL_2(\A_f^p)$.
Let $\psi:(\A_f^p)^\times\to C^\times$ be a smooth character. Then the previous lemma implies that its $\psi$-isotypic part  $N[\psi]$ is a finite-dimensional smooth representation of $\GL_2(\A_f^p)$. Hence the action of $\GL_2(\A_f^p)$ on $N[\psi]$ factors through the determinant homomorphism.  Therefore, if $\ker(\theta^{k+1}|_{M^\dagger_{-k}(K^p\Gamma(p^n))})\neq 0$, we can find a system of spherical Hecke eigenvalues appearing in $\ker(\theta^{k+1}|_{M^\dagger_{-k}(K^p\Gamma(p^n))})$ and its associated two-dimensional $p$-adic Galois representation has consecutive Hodge-Tate weights $a,a+1$. But this contradicts Theorem 1.1 of \cite{Pan2020N} as $|-k-1|>1$ when $k\geq 1$. Hence $\ker(\theta^{k+1})=0$.
\end{proof}

\begin{proof}[Proof of Lemma \ref{unifb}]
Since $\pi_0(\mathcal{X}_{K^p\Gamma(p^n),c})=\pi_0(\mathcal{X}_{K^p\Gamma(p^n)})$,  it suffices to show that on each connected component of $\mathcal{X}_{K^p\Gamma(p^n),c}$, the kernel of $\theta^{k+1}$ has dimension at most $k+1$. It is enough to prove this on the non-cusp points. Note that outside of the cusps, computing the kernel of
\[\theta^{k+1}:\omega^{-k}\to\omega^{-k}\otimes_{\cO_{\mathcal{X}_{K^pK_p}}}(\Omega^1(\mathcal{C}))^{\otimes k+1}\]
is the same as solving a homogeneous linear ordinary differential equation of order $k+1$ (using Kodaira-Spencer isomorphism). Hence the space of solutions is at most $(k+1)$-dimensional on a connected component by our knowledge of linear ODE.
\end{proof}

Hence we get the following theorem.

\begin{thm} \label{kernk}
For $\chi=(0,k)$, there is a natural weight decomposition
\[(\ker I^1_k)^\kn=A_{(k,0)}\cdot e_1^k\oplus A_{(-1,k+1)}\cdot e_1^{-1}e_2^{k+1},\]
where  
\begin{itemize}
\item $A_{(-1,k+1)}\cong\ker\left(M^\dagger_{-k}(K^p)/M_{-k}(K^p)\xrightarrow{\theta^{k+1}}(M^\dagger_{k+2}(K^p)/M_{k+2}(K^p))\otimes_{M_0(K^p)}D_0^{\otimes -k-1}\right)$ is isomorphic to a subspace of $M_{k+2}(K^p)\otimes_{M_0(K^p)}D_0^{\otimes -k-1}$ via $\theta^{k+1}$.
\item  $A_{(k,0)}$ sits inside the exact sequence 
\[0\to \varinjlim_{K_p\subset\GL_2(\Q_p)} H^1(\mathcal{X}_{K^pK_p},\omega^{-k})\to A_{(k,0)} \to M_{-k}(K^p)\to 0.\]
\end{itemize}
All maps  here are Hecke and $B$-equivariant. In particular, the Hecke action on $(\ker I^1_k)^\kn$ is ``classical'' in the sense that systems of Hecke eigenvalues appearing in $(\ker I^1_k)^\kn$ all come from  automorphic forms on $\GL_2(\A)$.
\end{thm}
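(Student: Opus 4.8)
The plan is to combine the functorial Snake-lemma machinery from Lemma~\ref{ninvlem} (applied to $(\cF,X)=(\cO^{\la,\chi}_{K^p},u^+)$ and to the target sheaf via Theorem~\ref{I1}) with the already established descriptions of the $\kn$-(co)invariants from Proposition~\ref{kninvcoinv} and Corollary~\ref{gso2k2}. By Lemma~\ref{kerdk+1} we have $\ker I^1_k=\ker H^1(d^{k+1})$, so it suffices to understand $H^1(d^{k+1})$ on $\kn$-invariants. The commutative diagram displayed in \ref{chininv} exhibits $H^1(d^{k+1})$ on $H^1(\Fl,\cO^{\la,\chi}_{K^p})^{\kn}$ as compatible with the short exact sequences coming from Lemma~\ref{ninvlem} for source and target; concretely the left vertical map is $H^1$ of the classical $\theta^{k+1}$ on $\omega^{-k,\sm}_{K^p}$, and the right vertical map is $\theta^{k+1}\oplus\theta^{k+1}$ acting on the overconvergent pieces $M^\dagger_{-k}/M_{-k}\cdot e_1^{-1}e_2^{k+1}$ and $M^\dagger_{-k}\cdot e_1^k$. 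Applying the Snake lemma to this diagram immediately yields the two summands $A_{(k,0)}\cdot e_1^k$ and $A_{(-1,k+1)}\cdot e_1^{-1}e_2^{k+1}$, with $A_{(-1,k+1)}$ the kernel of $\theta^{k+1}$ on $M^\dagger_{-k}/M_{-k}$ (twisted by $D_0^{\otimes -k-1}$ to trivialize $\wedge^2 D^{\sm}_{K^p}$), and $A_{(k,0)}$ fitting into the four-term exact sequence $0\to\varinjlim H^1(\mathcal{X}_{K^pK_p},\omega^{-k})\to A_{(k,0)}\to M^\dagger_{-k}(K^p)\xrightarrow{\theta^{k+1}}M^\dagger_{k+2}(K^p)\otimes D_0^{\otimes-k-1}$. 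The weight decomposition is induced by the constant $\kh$-action, which I would check commutes with all maps in sight since $d^{k+1}$ is $\GL_2(\Q_p)$-equivariant and hence $\kh$-equivariant.

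The substantive input, and the step I expect to be the main obstacle, is Lemma~\ref{kertheta}: identifying $\ker\theta^{k+1}$ on overconvergent forms. For $k=0$ this is the statement that the only overconvergent functions killed by the derivation on the canonical locus $\mathcal{X}_{K^p\Gamma(p^n),c}$ are the locally constant ones, which follows from the irreducibility of Igusa curves (so $\pi_0$ of the canonical locus equals $\pi_0$ of the full modular curve) together with the elementary fact about first-order ODEs. For $k\geq 1$ one must show $\theta^{k+1}$ is \emph{injective}: here I would first establish the uniform bound of Lemma~\ref{unifb}, namely $\dim_C\ker(\theta^{k+1}|_{M^\dagger_{-k}(K^p\Gamma(p^n))})\leq(k+1)|\pi_0(\mathcal{X}_{K^p\Gamma(p^n)})|$, by restricting to each connected component of the canonical locus (away from cusps) and invoking the classical bound on the solution space of a homogeneous linear ODE of order $k+1$; the cusps contribute nothing since the differential operator has regular singularities there and one checks the indicial behavior. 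Then, taking the direct limit over tame level $K^p$, the kernel becomes a smooth $\GL_2(\A_f^p)$-representation whose $\psi$-isotypic parts are finite-dimensional for every smooth character $\psi$ of $(\A_f^p)^\times$, hence factor through $\det$; a nonzero such vector would produce a system of spherical Hecke eigenvalues whose associated Galois representation has Hodge--Tate weights differing by $|{-k-1}|>1$, contradicting \cite[Theorem 1.1]{Pan2020N}. This forces $\ker\theta^{k+1}=0$ for $k\geq 1$.

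Once Lemma~\ref{kertheta} is in hand, the remaining work is bookkeeping. For $A_{(-1,k+1)}$ I would note that when $k\geq 1$ injectivity of $\theta^{k+1}$ on $M^\dagger_{-k}$ (hence $M_{-k}=0$) gives an embedding of $A_{(-1,k+1)}$ into the image of $\theta^{k+1}$, which lands inside $M_{k+2}(K^p)\otimes_{M_0(K^p)}D_0^{\otimes-k-1}$ because classicality of the kernel upstairs forces classicality downstairs after one more application of $\theta$ (this is the content of the diagram: an overconvergent form in the source kernel maps to $0$ in the quotient $M^\dagger_{k+2}/M_{k+2}$, i.e.\ to a classical form); when $k=0$ the same conclusion holds using $M_0(K^p)\subseteq M^\dagger_0(K^p)$ and the $k=0$ case of the lemma. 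For $A_{(k,0)}$ the four-term sequence from the Snake lemma collapses: the last map $M^\dagger_{-k}\xrightarrow{\theta^{k+1}}M^\dagger_{k+2}\otimes D_0^{\otimes-k-1}$ has kernel exactly $M_{-k}(K^p)$ by Lemma~\ref{kertheta}, so $A_{(k,0)}\to M^\dagger_{-k}(K^p)$ has image $M_{-k}(K^p)$, giving the stated short exact sequence $0\to\varinjlim H^1(\mathcal{X}_{K^pK_p},\omega^{-k})\to A_{(k,0)}\to M_{-k}(K^p)\to 0$. Finally, Hecke- and $B$-equivariance of every map is inherited from Theorem~\ref{I1}, Theorem~\ref{I2} and the $\GL_2(\Q_p)$-equivariance of the sheaf-level constructions, and the "classicality" assertion follows because both $\varinjlim H^1(\mathcal{X}_{K^pK_p},\omega^{-k})$ and $M_{k+2}(K^p)$ decompose into systems of Hecke eigenvalues attached to automorphic forms on $\GL_2(\A)$ by coherent cohomology of modular curves.
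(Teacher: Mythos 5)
Your proposal is correct and follows essentially the same route as the paper: reduce to $\ker H^1(d^{k+1})$ via Lemma \ref{kerdk+1}, apply the Snake lemma to the commutative diagram of \ref{chininv} built from Proposition \ref{kninvcoinv} and Corollary \ref{gso2k2}, and feed in Lemma \ref{kertheta}, which you prove by the same two-step argument (the uniform ODE bound of Lemma \ref{unifb} plus the Hodge--Tate-weight contradiction via \cite{Pan2020N}). The only cosmetic difference is your remark about regular singularities at the cusps, which the paper sidesteps by simply restricting to the non-cuspidal locus.
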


\begin{rem} \label{A00rinv}
When $k>0$, we have $A_{(k,0)}=\varinjlim_{K_p\subset\GL_2(\Q_p)} H^1(\mathcal{X}_{K^pK_p},\omega^{-k})$ because $M_{-k}(K^p)=0$. When $k=0$, it turns out that the surjective map $A_{(0,0)}\to M_{0}(K^p)$ has a natural right inverse $M_0(K^p)\to H^1(\Fl,\cO^{\la,(0,0)}_{K^p})$ constructed in \cite[Corollary 5.1.3.(2)]{Pan20} which is $\GL_2(\Q_p)$-equivariant. See the discussion before Theorem 5.3.16 \textit{ibid.}. Hence we have 
\[A_{(0,0)}=\varinjlim_{K_p\subset\GL_2(\Q_p)} H^1(\mathcal{X}_{K^pK_p},\cO_{\mathcal{X}_{K^pK_p}})\oplus M_0(K^p).\]
\end{rem}

\begin{rem}
It is natural to ask whether one can give a more precise description of $A_{(-1,k+1)}$. Essentially one needs to understand the cokernel of $\theta^{k+1}$ on $\mathcal{X}_{K^pK_p,c}$, which is the $H^1$ of the de Rham complex $\Sym^k D\xrightarrow{\nabla} \Sym^k D \otimes_{\cO_{\mathcal{X}_{K^pK_p}}} \Omega^1_{\mathcal{X}_{K^pK_p}}(\mathcal{C})$ on the dagger space associated to $\mathcal{X}_{K^pK_p,c}$. One can also interpret this as some rigid cohomology on the Igusa curve. Coleman was able to determine the primitive part in his famous work \cite{Cole97} and his result roughly says that $\coker(\theta^{k+1})$ is closely related to the finite slope part of classical modular forms.
\end{rem}

\begin{cor} \label{knfin}
Let $K_p=1+p^lM_2(\Z_p)$ for some integer $l\geq 2$ and $(\ker I^1_k)^{K_p-\an}\subseteq \ker I^1_k$ the subspace of $K_p$-analytic vectors.  Then
\begin{enumerate}
\item $(\ker I^1_k)^{K_p-\an,\kn}$ is finite-dimensional.
\item The subspace of $\kn$-finite vectors 
\[(\ker I^1_k)^{K_p-\an,\kn-\mathrm{fin}}:=\{v\in(\ker I^1_k)^{K_p-\an}, (u^+)^k\cdot v=0,\mbox{ for some }k\}\]
is a finitely generated $U(\mathfrak{g})$-module.
\end{enumerate}
\end{cor}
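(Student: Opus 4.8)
The plan is to read off both statements from the computation of $(\ker I^1_k)^{\kn}$ in Theorem \ref{kernk}, after passing to $K_p$-analytic vectors. Write $W:=\ker I^1_k$; it is a locally analytic representation of $\GL_2(\Q_p)$, the constant $\kh$-action and the operator $u^+$ are induced by elements of $\Lie(K_p)$ and hence preserve $W^{K_p-\an}$, and by \ref{chininv} and Theorem \ref{kernk} the constant $\kh$-action on $W^{\kn}$ is semisimple with the two weights $(k,0)$ and $(-1,k+1)=(k,0)-(k+1)\alpha$, where $\alpha=e_1-e_2$ is the positive root; moreover Theorem \ref{kernk} writes $W^{\kn}=(A_{(k,0)}\cdot e_1^{k})\oplus(A_{(-1,k+1)}\cdot e_1^{-1}e_2^{k+1})$ with $A_{(k,0)}$ and $A_{(-1,k+1)}$ assembled from (over)convergent modular forms and from the coherent cohomology $H^{i}(\mathcal{X}_{K^pK_p'},\omega^{\pm 1})$ of the finite-level modular curves.

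For part (1): since the constant $\kh$-action and $u^+$ preserve $W^{K_p-\an}$, the space $W^{K_p-\an,\kn}$ is the direct sum of its intersections with $A_{(k,0)}\cdot e_1^{k}$ and with $A_{(-1,k+1)}\cdot e_1^{-1}e_2^{k+1}$, and it suffices to bound each. The essential point is that a $K_p$-analytic vector of $W$ lying in one of these summands is of finite level: by Proposition \ref{contrGnan} it lies in the image of $\check H^1(\cO^{G_m-\an,\chi}_{K^p})$ for some $m$, hence is represented by a $G_m$-analytic \v{C}ech cocycle for the cover $\{U_1,U_2\}$, and the local description of $\cO^{\la,\chi}_{K^p}(U_?)^{G_m-\an}$ in Theorem \ref{str} shows that such a cocycle which is $\kn$-invariant and a weight vector for the constant $\kh$-action is a cocycle of (over)convergent modular forms defined over a finite-level modular curve $\mathcal{X}_{K^pG_{m'}}$. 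Granting this, the weight-$(-1,k+1)$ summand embeds via $\theta^{k+1}$ (injective, by Lemma \ref{kertheta}) into a finite-dimensional space of classical modular forms of weight $k+2$ and level $K^pG_{m'}$, while the weight-$(k,0)$ summand embeds into $H^1(\mathcal{X}_{K^pG_{m'}},\omega^{-k})\oplus H^0(\mathcal{X}_{K^pG_{m'}},\omega^{-k})$ (the second term vanishing for $k>0$); both are finite-dimensional, being the cohomology of a line bundle on a proper curve. I expect this finite-level reduction — translating $K_p$-analyticity for the $\GL_2(\Q_p)$-action into a statement about modular forms of a fixed level — to be the only real difficulty; the rest is bookkeeping with Theorem \ref{kernk}.

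For part (2): set $M:=W^{K_p-\an,\kn-\mathrm{fin}}$, a $U(\mathfrak{g})$-submodule of $W^{K_p-\an}$, and $M[N]:=\ker((u^+)^{N}|_M)$, so that $M=\bigcup_N M[N]$ and $M^{\kn}=M[1]=W^{K_p-\an,\kn}$. The injections $(u^+)^{N-1}\colon M[N]/M[N-1]\hookrightarrow M[1]$ show, together with part (1), that each $M[N]$ is finite-dimensional ($\dim_C M[N]\leq N\dim_C M^{\kn}$) and that $M$ is semisimple for the constant $\kh$-action with all weights in the string $\{(k,0)-m\alpha:m\geq 0\}$; in particular a weight vector of weight $(k,0)-m\alpha$ is killed by $(u^+)^{m+1}$, so weight-$(k,0)$ vectors lie in $M[1]$ and weight-$(-1,k+1)$ vectors lie in $M[k+2]$. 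The claim is that $M=U(\mathfrak{g})\cdot M[k+2]$, which is finitely generated because $M[k+2]$ is finite-dimensional. The inclusion $\supseteq$ is clear; for $\subseteq$, given a constant-$\kh$-weight vector $v$, apply $u^+$ to it until the weight $(k,0)$ (when $v$ has weight $(k,0)-m\alpha$ with $m\leq k$) or the weight $(-1,k+1)$ (otherwise) is reached; the resulting vector $u$ then lies in $M[k+2]$, and by an $\mathfrak{sl}_2$-ladder computation — using that the ladder coefficients out of the weight $(-1,k+1)$ never vanish (here $u^-=\begin{pmatrix}0&0\\1&0\end{pmatrix}$) — one has $v=c\,(u^-)^{j}u$ for suitable $c\in C^\times$ and $j\geq 0$: indeed $v-c\,(u^-)^{j}u$ is a weight vector of the same weight as $v$ but of $\kn$-degree strictly smaller than the value forced by that weight, hence zero. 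Thus $M=U(\mathfrak{g})\cdot M[k+2]$ is a finitely generated $U(\mathfrak{g})$-module.
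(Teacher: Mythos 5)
Your overall strategy for part (1) is the same as the paper's: decompose $(\ker I^1_k)^{K_p-\an,\kn}$ according to the two constant $\kh$-weights of Theorem \ref{kernk} and bound each summand, handling $A_{(k,0)}$ via the locally algebraic structure $\Sym^k V(k)\otimes\omega^{-k,\sm}_{K^p}$ and the finite-dimensionality of coherent cohomology at finite level. That half is fine. The genuine gap is in your treatment of the weight-$(-1,k+1)$ summand. You correctly identify the finite-level reduction (the content of Lemma \ref{bdan}, extracted from Proposition \ref{contrGnan} and Theorem \ref{str}) as a difficulty, but you then conclude too quickly: knowing that a $K_p$-analytic class is represented by a section $f\in H^0(V,\omega^{-k})$ for a \emph{fixed} strict neighborhood $V$ of $\mathcal{X}_{K^p\Gamma(p^{m}),c}$ does not make anything finite-dimensional, since $H^0(V,\omega^{-k})$ is an infinite-dimensional space of overconvergent forms. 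The condition defining $A_{(-1,k+1)}$ only says that $\theta^{k+1}(f)$ lies in $M_{k+2}(K^p)$, the classical forms of \emph{infinite} level at $p$; your assertion that the image lands in the finite-dimensional space $M_{k+2}(K^pG_{m'})$ is exactly what has to be proved. Equivalently, one must show that $\ker\bigl(H^0(V,\omega^{k+2})\to M^\dagger_{k+2}(K^p)/M_{k+2}(K^p)\bigr)$ is finite-dimensional, i.e.\ that an overconvergent form of level $m$ which agrees with a classical form of some arbitrarily high level on a small strict neighborhood is already classical of level $m$. This descent requires the connectedness statement for preimages of $V$ in the Igusa tower (Lemma \ref{irredigsnbhd}), whose proof in the paper is itself nontrivial: it uses the first-order differential equation satisfied by locally analytic vectors together with the fact that each fiber of $c$ meets at least three classical points of $\Fl$ (Lemma \ref{largeim}) to show the relevant stabilizer contains $\Gamma(p^{m'})\cap\SL_2(\Z_p)$. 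This is the heart of the paper's proof of the corollary and is absent from your argument.

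On part (2), the paper gives no details ("follows from the first part"), and your bound $\dim_C M[N]\le N\dim_C M^{\kn}$ via the injections $(u^+)^{N-1}\colon M[N]/M[N-1]\hookrightarrow M[1]$ is the right starting point. But your ladder argument needs repair: the semisimplicity of the constant $\kh$-action on all of $M$ (not just on $M^{\kn}$) is asserted rather than proved, and the coefficient $\prod_{i=1}^{j}(k-i+1)$ governing $(u^+)^{j}(u^-)^{j}u$ for $u$ of weight $(k,0)$ vanishes as soon as $j\ge k+1$, so one cannot recover a vector of weight $(k,0)-m\alpha$ with $m>k$ by descending from the weight-$(k,0)$ generator; one must instead descend from the weight-$(-1,k+1)$ part of $M[k+2]$, and the resulting "difference has too small $\kn$-degree, hence is zero" step only closes after a two-case induction (the difference can a priori be a nonzero vector of $\kn$-degree $m-k$). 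These are fixable, but as written the argument does not go through.
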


\begin{proof}
The second part follows from the first part. For the first part, we first explain the rough idea. The $K_p$-analytic vectors in a smooth representation of $\GL_2(\Q_p)$ are equal to the $K_p$-fixed vectors. In particular, the $K_p$-analytic vectors in $M_{k+2}(K^p)$ are
\[M_{k+2}(K^pK_p):=H^0(\mathcal{X}_{K^pK_p},\omega^{k+2}),\]
a finite-dimensional vector space over $C$ as $\mathcal{X}_{K^pK_p}$ is proper over $C$. Similarly, the subspace of $K_p$-analytic vectors in $\varinjlim_{K'_p\subset\GL_2(\Q_p)} H^1(\mathcal{X}_{K^pK'_p},\omega^{-k})$ is also finite-dimensional. Therefore our finiteness claim will follow from Theorem \ref{kernk} if we can show maps in Theorem \ref{kernk} preserve the property of being $K_p$-analytic.

For $A_{(k,0)}$, there is a natural inclusion $\Sym^k V(k)\otimes_{\Q_p}\omega^{-k,\sm}_{K^p}\subseteq \cO^{\la,(0,k)}_{K^p}$ by  \ref{lalg0k}. Taking the $H^1$, we get a $\GL_2(\Q_p)$-equivariant inclusion
\[\Sym^k V(k)\otimes_{\Q_p}H^1(\Fl,\omega^{-k,\sm}_{K^p})\cong H^1(\Fl,\Sym^k V(k)\otimes_{\Q_p}\omega^{-k,\sm}_{K^p})\subseteq H^1(\Fl,\cO^{\la,(0,k)}_{K^p})\]
which recovers  $\varinjlim_{K_p\subset\GL_2(\Q_p)} H^1(\mathcal{X}_{K^pK_p},\omega^{-k})\to A_{(k,0)}$ in Theorem \ref{kernk} by taking
the $\kn$-invariants of this map. Note that $\Sym^k V$ is an algebraic representation of $\GL_2(\Q_p)$. Hence it is clear from this description that 
\[H^1(\Fl,\cO^{\la,(0,k)}_{K^p})^{K_p-\an}\cap H^1(\Fl,\omega^{-k,\sm}_{K^p})\cdot e_1^k=H^1(\mathcal{X}_{K^pK_p},\omega^{-k})\cdot e_1^k\]
is finite-dimensional. If $k=0$, by Remark \ref{A00rinv}, the inclusion $M_{0}(K^p)\subset H^1(\Fl,\cO^{\la,(0,0)}_{K^p})$ is $\GL_2(\Q_p)$-equivariant. Hence the same argument works. This proves that $(A_{(k,0)}\cdot e_1^k)^{K_p-\an}$ is finite-dimensional. (Essentially, we are showing that $A_{(k,0)}\cdot e_1^k$ are all contributed by the locally algebraic vectors.)

For $A_{(-1,k+1)}$, the situation is a bit more complicated. We need several lemmas. Note that $e_1^k\neq e_1^{-1}e_2^{k+1}$ as $k\geq 0$. Hence the exact sequence in the beginning of \ref{chininv} implies that there is a natural inclusion
\[M^\dagger_{-k}(K^p)/M_{-k}(K^p)\cdot  e_1^{-1}e_2^{k+1} \subseteq H^1(\Fl,\cO^{\la,\chi}_{K^p})^\kn.\]
We need to understand which vectors in $M^\dagger_{-k}(K^p)/M_{-k}(K^p)\cdot  e_1^{-1}e_2^{k+1}$ are $K_p$-analytic.
Recall that  $M^\dagger_k(K^p\Gamma(p^m))$ denotes the set of sections of $\omega^k$ defined in a strict neighborhood of $\mathcal{X}_{K^p\Gamma(p^m),c}$.

\begin{lem} \label{bdan}
There exist an integer $m>0$ and an affinoid strict open neighborhood $V$ of $\mathcal{X}_{K^p\Gamma(p^m),c}$ such that the $K_p$-analytic vectors
\[M^\dagger_{-k}(K^p)/M_{-k}(K^p)\cdot  e_1^{-1}e_2^{k+1} \cap H^1(\Fl,\cO^{\la,\chi}_{K^p})^{K_p-\an,\kn}\]
are contained in the image of $H^0(V,\omega^{-k})\to M^\dagger_{-k}(K^p\Gamma(p^m)) \to M^\dagger_{-k}(K^p)/M_{-k}(K^p)$.
\end{lem}

We also need an irreducibility result. Denote the natural map $\mathcal{X}_{K^p\Gamma(p^m)} \to \mathcal{X}_{K^p\Gamma(p^n)}, m\geq n$ by $\pi_{m/n}$.

\begin{lem} \label{irredigsnbhd}
Let $V$ be an affinoid subset of $\mathcal{X}_{K^p\Gamma(p^m)}$ for some $m$. Suppose for each connected component $X$ of $\mathcal{X}_{K^p\Gamma(p^m)}$, the intersection $V\cap X$ is either empty or a connected strict open neighborhood of $\mathcal{X}_{K^p\Gamma(p^m),c}\cap X$. (This is possible as Igusa curves are irreducible.) Then there exists an integer $m'\geq m$ such that for each connected component $V'$ of $\pi_{m'/m}^{-1}(V)$ and any integer $m''\geq m'$, the intersection of $\pi_{m''/m'}^{-1}(V')$ with each connected component of  $\mathcal{X}_{K^p\Gamma(p^{m''})}$ is either connected or empty.
\end{lem}

Both lemmas will be proved later.  Back to the proof of Corollary \ref{knfin}. Let $V$ be an open subset of $\mathcal{X}_{K^p\Gamma(p^m)}$  as in Lemma \ref{bdan}. We may assume that it satisfies the assumption in Lemma \ref{irredigsnbhd}, i.e. $V\cap X$ is connected for each connected component $X$ of $\mathcal{X}_{K^p\Gamma(p^m)}$. Fix a trivialization $D_0\cong M_0(K^p)$. It suffices to show the kernel of the composite
\[H^0(V,\omega^{-k})\xrightarrow{\theta^{k+1}} H^0(V,\omega^{k+2}) \to M^\dagger_{k+2}(K^p)/M_{k+2}(K^p)\]
is finite-dimensional. In fact, we claim that $\ker\left(H^0(V,\omega^{k+2}) \to M^\dagger_{k+2}(K^p)/M_{k+2}(K^p)\right)$ is finite-dimensional. Indeed, by Lemma \ref{irredigsnbhd}, after possibly replacing $m$ by $m'$ and $V$ by the union of irreducible components  of $\pi_{m'/m}^{-1}(V)$ whose intersection with  $\mathcal{X}_{K^p\Gamma(p^{m'}),c}$ is non-empty, we may assume further that
\begin{itemize}
\item the preimage of $V$ in each connected component of $\mathcal{X}_{K^p\Gamma(p^{m''})}$ is connected for any $m''\geq m$.
\end{itemize}
Suppose $s\in\ker\left(H^0(V,\omega^{k+2}) \to M^\dagger_{k+2}(K^p)/M_{k+2}(K^p) \right)$. This means that the pull-back of $s$ to  $\mathcal{X}_{K^p\Gamma(p^{m''})}$ is equal to  some  $s'\in H^0(\mathcal{X}_{K^p\Gamma(p^{m''})},\omega^{k+2})$ when restricted to a strict open neighborhood of  $\mathcal{X}_{K^p\Gamma(p^{m''}),c}$ for some $m''\geq m$. On the other hand, it follows from our assumption that we can choose this open neighborhood to be  $\pi_{m''/m}^{-1}(V)$, which is $\Gamma(p^m)$-stable. Since $s$ is fixed by $\Gamma(p^m)$, so is $s'$, i.e. $s'$ is the pull-back of some element in $M_{k+2}(K^p\Gamma(p^m))$. This implies our claim as $M_{k+2}(K^p\Gamma(p^m))$ is finite-dimensional.
\end{proof}

\begin{proof}[Proof of Lemma \ref{bdan}]
By Proposition \ref{contrGnan}, $H^1(\Fl,\cO^{\la,\chi}_{K^p})^{K_p-\an}$ is contained in the image of $\check{H}^1(\cO^{\Gamma(p^m)-\an,\chi})\to H^1(\Fl,\cO^{\la,\chi}_{K^p})$ for some $m$. Moreover after possibly enlarging $m$, we may even assume there is a natural lifting $H^1(\Fl,\cO^{\la,\chi}_{K^p})^{K_p-\an}\subseteq \check{H}^1(\cO^{\Gamma(p^m)-\an,\chi})$ by the discussion in \ref{discLB}. In particular, we may view $H^1(\Fl,\cO^{\la,\chi}_{K^p})^{K_p-\an,\kn}$ as a subspace of  $\check{H}^1(\cO^{\Gamma(p^m)-\an,\chi})^\kn$. Let $H=\Gamma(p^m)$. By definition \ref{chGanchi}, there is an exact sequence
\[0\to \cO^{\la,\chi}_{K^p}(U_1)^{H-\an}\oplus \cO^{\la,\chi}_{K^p}(U_2)^{H-\an}/\check{H}^0(\cO^{H-\an,\chi})\to \cO^{\la,\chi}_{K^p}(U_{12})^{H-\an}\to \check{H}^1(\cO^{H-\an,\chi})\to 0.\]
Take the $\kn$-cohomology. 
\[\cO^{\la,\chi}_{K^p}(U_{12})^{H-\an,\kn}\to  \check{H}^1(\cO^{H-\an,\chi})^\kn \to  \left(\cO^{\la,\chi}_{K^p}(U_1)^{H-\an}\right)_\kn \oplus \left( \cO^{\la,\chi}_{K^p}(U_2)^{H-\an}/\check{H}^0(\cO^{H-\an,\chi})\right)_\kn.\]
Recall that  $H^1(\Fl,\cO^{\la,\chi}_{K^p})$ has a natural weight decomposition into weight $(k,0)$ and $(-1,k+1)$ components and we are interested in the $(-1,k+1)$ part. Since $\cO^{\la,\chi}_{K^p}(U_{12})^{H-\an,\kn}$ has weight $(k,0)$, it is easy to see that we have the following commutative diagram 
\[\begin{tikzcd}
 \check{H}^1(\cO^{H-\an,\chi})^\kn_{(-1,k+1)} \arrow [d] \arrow[r] &  \left( \cO^{\la,\chi}_{K^p}(U_2)^{H-\an}/\check{H}^0(\cO^{H-\an,\chi})\right)_\kn \arrow[d] \\
 H^1(\Fl,\cO^{\la,\chi}_{K^p})_{(-1,k+1)}  \arrow[r,"\simeq"] & M^\dagger_{-k}(K^p)/M_{-k}(K^p)\cdot e_1^{-1}e_2^{k+1}
 \end{tikzcd},\]
 where the subscript $(-1,k+1)$ means the weight-$(-1,k+1)$ component and the bottom line follows from the beginning of \ref{chininv}. ( There is no contribution from $(\cO^{\la,\chi}_{K^p}(U_1)^{H-\an})_\kn$ because $(\cO^{\la,\chi}_{K^p})_\kn$ is supported outside of $U_1$.) Hence it remains to study the image of 
 \[ev_\infty:\left( \cO^{\la,\chi}_{K^p}(U_2)^{H-\an}/\check{H}^0(\cO^{H-\an,\chi})\right)_\kn \to M^\dagger_{-k}(K^p)/M_{-k}(K^p).\]
 By \cite[Proposition 5.2.10.(2)]{Pan20}, this map comes from the composite map
 \[ \left( \cO^{\la,\chi}_{K^p}(U_2)^{H-\an}\right)_\kn \to \cO^{\la,\chi}_{K^p}(U_2)/(y)\cong M^\dagger_{-k}(K^p),\]
 where $y=1/x$ vanishes at $\infty$. Now our claim essentially follows from the proof of \cite[Proposition 5.2.6]{Pan20}, i.e. the construction of the isomorphism $\cO^{\la,\chi}_{K^p}(U_2)/(y)\cong M^\dagger_{-k}(K^p)$.
 Indeed, by Theorem \ref{str} (with $x$ replaced by $y$ and $x_n=0$ here), we can find sufficiently large integers $m'<r(m')$ such that for any $f\in \cO^{\la,\chi}_{K^p}(U_2)^{H-\an}$, its restriction on $U'_2:=\{\|y\|\leq p^{-{m'}}\} \subseteq U_2$ can be written as
 \[f|_{U'_2}=e_2^{k}\sum_{i=0}^\infty c_i y^i,\]
 where $c_i\in\omega^{-k}(V'_{2})$ and $e_2^kc_ip^{(m'-1)i}$ is uniformly bounded, $i\geq 0$. Here $V'_2\subseteq \mathcal{X}_{K^p\Gamma(p^{r(m')})}$ denotes the open affinoid subset whose preimage in $\mathcal{X}_{K^p}$ is $\pi_{\HT}^{-1}(U'_2)$. The map $ev_\infty$ is simply sending $f$ to $c_0$.  See the proof of  \cite[Proposition 5.2.10.(3),(4)]{Pan20} for more details. Note that $V'_2$ is a strict neighborhood of $\mathcal{X}_{K^p\Gamma(p^{r(m')})_c}$ by \cite[Lemma 5.2.9]{Pan20}. We thus prove Lemma \ref{bdan} with $m=r(m')$ and $V=V'_2$.
 \end{proof}
 
 \begin{proof}[Proof of Lemma \ref{irredigsnbhd}]
 We may assume $V$ is connected. Let 
 \[S_1=\varprojlim_{n\geq m} \pi_0 \left(\pi_{n/m}^{-1}(V)\right),~~~~~~~~~~~~S_2=\varprojlim_{n} \pi_0 (\mathcal{X}_{K^p\Gamma(p^n)}).\]
$\Gamma(p^m)$ acts naturally on both profinite sets. The inclusion of $V\subseteq \mathcal{X}_{K^p\Gamma(p^m)}$ induces a natural $\Gamma(p^m)$-equivariant map
\[f:S_1\to S_2.\]
Our claim is equivalent to the existence of integer $m'\geq m$ such that $f|_T$ is injective for any $\Gamma(p^{m'})$-orbit $T$ of $S_1$. By our knowledge of connected components of modular curves, for any $s\in S_2$, the stabilizer of $\Gamma(p^m)$ with respect to $s$ is $\Gamma(p^m)\cap\SL_2(\Z_p)$. Since $V$ is connected, $\Gamma(p^m)$ acts transitively on $S_1$. Hence $S_1=\Gamma(p^m)/H$ for some closed subgroup $H$ of $\Gamma(p^m)$. It suffices to prove that 
\[H\cap\Gamma(p^{m'})=\Gamma(p^{m'})\cap\SL_2(\Z_p)\]
for some $m'\geq m$. Equivalently $\Lie(H)=\mathfrak{sl}_2(\Q_p)$. To see this, we consider the space of $\Gamma(p^m)$-locally analytic $\Q_p$-valued functions $\sC(S_1,\Q_p)^{\la}$ on $S_1$. It is enough to show that $\mathfrak{sl}_2(\Q_p)$ acts trivially on $\sC(S_1,\Q_p)^{\la}$. 

Let $V_\infty$ be the preimage of $V$ in $\mathcal{X}_{K^p}$. By shrinking $V$ if necessary, we may assume $e_2$ is a generator on $V_\infty$, cf. \cite[Lemma 5.2.9]{Pan20} and its proof. There is a natural continuous map $c:|V_\infty|\to S_1$ hence we may view $\sC(S_1,\Q_p)^{\la}$ as a subspace of $\cO_{\mathcal{X}_{K^p}}(V_\infty)^\la$. In \cite[\S 3]{Pan20}, we showed that elements in  $\cO_{\mathcal{X}_{K^p}}(V_\infty)^\la$ satisfy a first-order differential equation and calculated this differential equation in \cite[Theorem 4.2.4]{Pan20}. Note that $e_2$ is a generator. Hence  by \textit{loc. cit.}
\[\begin{pmatrix} y & 1 \\ -y^2 & -y\end{pmatrix}\cdot f=0,~~~~~f\in\cO_{\mathcal{X}_{K^p}}(V_\infty)^\la.\]
Recall $y=e_1/e_2$ is a coordinated function on $\pi_\HT(V_\infty)$. We can rewrite this equation as
\begin{eqnarray} \label{sl2eqn}
\begin{pmatrix} 0 & 1 \\ 0 & 0\end{pmatrix}\cdot f +y\begin{pmatrix} 1 & 0 \\ 0 & -1\end{pmatrix}\cdot f +y^2\begin{pmatrix} 0 & 0 \\ -1 & 0\end{pmatrix}\cdot f=0.
\end{eqnarray}

\begin{lem} \label{largeim}
$\pi_{\HT}\left(c^{-1}(s)\right)$ contains at least $3$ classical  points for any $s\in S_1$, where $c$ denotes the natural map $|V_\infty|\to S_1$.
\end{lem}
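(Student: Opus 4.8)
The plan is to reduce the statement to a uniform lower bound on the number of classical points in the Hodge--Tate image of a fiber of $c$, using the fact that an Igusa component is (geometrically) connected and positive-dimensional. First I would recall that $V_\infty = \pi_{K^p\Gamma(p^m)}^{-1}(V)$ (as sets) and that, by construction, $c$ factors through the continuous map $|V_\infty| \to |V| \to S_1$; so a fiber $c^{-1}(s)$ is the preimage in $|V_\infty|$ of a single point of the profinite set $S_1 = \Gamma(p^m)/H$. Concretely, fixing a point $s$, the fiber $c^{-1}(s)$ is the intersection over all $n \geq m$ of the preimages in $|V_\infty|$ of connected components $V_n \subseteq \pi_{n/m}^{-1}(V)$ lying above $s$. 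Each $V_n$ is an affinoid open inside a connected component of $\mathcal{X}_{K^p\Gamma(p^n)}$, and by the assumption carried over from Lemma~\ref{irredigsnbhd} (applied after possibly enlarging $m$) we may take $V_n$ to be connected and to be a strict open neighborhood of the canonical locus $\mathcal{X}_{K^p\Gamma(p^n),c}\cap(\text{its component})$.

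The key geometric input is that $\pi_{\HT}(V_\infty)$ is an open subset of $\Fl \cong \mathbb{P}^1$ and that $\pi_{\HT}$ restricted to $c^{-1}(s)$ has image equal to $\pi_\HT$ of the inverse limit $\varprojlim_n V_n$. Since each $V_n$ is a connected affinoid neighborhood of a canonical Igusa locus which is itself the generic fiber of an irreducible Igusa curve over $\overline\F_p$, the image $\pi_\HT(V_n)$ is a nonempty open of the ordinary locus $\{\|y\|\leq 1\}$ or its rational subsets, and these images do not shrink to a point: the ordinary locus of a single Igusa curve, being a nonempty open in a curve, contributes a positive-dimensional (hence uncountable, in particular with $\geq 3$ classical points) affinoid under $\pi_\HT$. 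I would make this precise by observing that $\pi_\HT(c^{-1}(s))$ is a nonempty \emph{perfectoid} subspace of $\Fl$ whose tilt/underlying topological space surjects onto the ordinary locus of the Igusa curve component containing $\mathcal{X}_{K^p\Gamma(p^m),c}\cap X$; any such space, being the preimage of a positive-dimensional rigid variety over a finite extension, contains infinitely many classical (i.e. rank-one, $\overline\Q_p$-valued after a finite extension) points, so certainly at least $3$.

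The main obstacle I anticipate is bookkeeping the identification of $\pi_\HT(c^{-1}(s))$ with the ordinary locus of the relevant Igusa curve component: one must check that the profinite fiber direction of $\pi_{K^p\Gamma(p^m)}\colon \mathcal{X}_{K^p} \to \mathcal{X}_{K^p\Gamma(p^m)}$ is exactly collapsed by $c$ while the Hodge--Tate direction is \emph{not}, so that $\pi_\HT$ does not further degenerate $c^{-1}(s)$. This is where one uses that $S_1$ only records the $\pi_0$-tower of the $V_n$ --- equivalently, that the Galois group of $\mathcal{X}_{K^p\Gamma(p^n),c}$ over $\mathcal{X}_{K^p\Gamma(p^m),c}$ is purely the Igusa (diamond/level) group, which acts through the Borel in a way compatible with $\pi_\HT$, so that a whole $\pi_\HT$-fiber worth of points over the ordinary locus survives in $c^{-1}(s)$. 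Once this is set up, the count ``$\geq 3$'' is immediate, with plenty of room to spare. A cleaner alternative, which I would also pursue if the direct approach gets technical, is to invoke the explicit description of the ordinary-locus fiber of $\pi_\HT$ from \cite{Pan20} (the fiber over a point of $\mathbb{P}^1(\Q_p)$ being a perfect Igusa curve) together with the fact that $V_\infty$, and hence $c^{-1}(s)$, surjects onto a nonempty open of such a curve; then the $3$ classical points come from any $3$ distinct closed points of that curve over a suitable finite extension, none of which can be supersingular since we are inside the canonical/ordinary locus.
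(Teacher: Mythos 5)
There is a genuine gap here, on two counts. First, a circularity: you invoke Lemma~\ref{irredigsnbhd} to normalize the tower of components $V_n$, but Lemma~\ref{largeim} is a step \emph{inside} the proof of Lemma~\ref{irredigsnbhd}, so its conclusion is not available to you. Second, and more fundamentally, the geometric core of your argument conflates the fibers of $\pi_\HT$ with its image. Over the canonical/ordinary locus, $\pi_\HT$ lands in the profinite set $\mathbb{P}^1(\Q_p)$ and contracts each perfect Igusa curve to a \emph{single} point of $\Fl$; so your ``cleaner alternative'' --- three distinct points of one perfect Igusa curve inside $c^{-1}(s)$ --- produces three points with the \emph{same} image under $\pi_\HT$, which gives one classical point of $\pi_{\HT}(c^{-1}(s))$, not three (and the application in the proof of Lemma~\ref{irredigsnbhd} needs three points at which $y$ takes distinct values, i.e.\ distinct $\pi_\HT$-images). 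Likewise, the assertion that $\pi_\HT(c^{-1}(s))$ ``surjects onto the ordinary locus of the Igusa curve component'' does not parse (that locus is not a subset of $\Fl$), and the claim that the shrinking tower of components does not degenerate in the Hodge--Tate direction is precisely the nontrivial point: a priori, before one knows $\Lie(H)=\mathfrak{sl}_2(\Q_p)$ --- which is what Lemma~\ref{largeim} is ultimately used to prove --- the set $S_1$ could be large and a single fiber $c^{-1}(s)$ very thin. (It is true that $c^{-1}(s)$ surjects onto $V$ under the projection to finite level, but that is a different map from $\pi_\HT$ and does not control $\pi_\HT(c^{-1}(s))$.)

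The paper sidesteps the fiberwise analysis entirely. Since $V$ is connected, $\Gamma(p^m)$ acts transitively on $S_1$, and $c$ and $\pi_\HT$ are $\Gamma(p^m)$-equivariant; hence it suffices to exhibit three classical points of $\pi_\HT(V_\infty)$ lying in pairwise distinct $\Gamma(p^m)$-orbits of $\Fl$ --- translating suitable preimages into $c^{-1}(s)$ then yields three distinct classical points of $\pi_\HT(c^{-1}(s))$ for \emph{every} $s$. These three points are produced explicitly: $\infty$ (because $V$ contains the canonical locus of a component), a classical non-$\Q_p$-rational point $s_2$ coming from a classical point of $V$ outside the canonical locus, and, via \cite[Lemma 5.2.9]{Pan20}, a point $s_3$ with $|y(s_3)|\le q/2$ where $q=\inf_{g\in\Gamma(p^m)}|y(g\cdot s_2)|>0$, so that the three orbits are genuinely distinct. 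Any repair of your argument should incorporate this transitivity-plus-equivariance reduction; without it the claim about a single fiber is not established.
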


\begin{proof}
Since $\Gamma(p^m)$ acts transitively on $S_1$, it is enough to find $3$ classical points in $\pi_{\HT}(V_\infty)$ which belong to different orbits of $\Gamma(p^m)$.
In fact, it follows from Scholze's construction of $\mathcal{X}_{K^p}$ in \cite[Chapter III]{Sch15} that $\pi_{\HT}(V_\infty)$ contains an open subset of $\Fl$. We give a more direct proof here. By our assumption $V$ contains the canonical locus of a connected component of $\mathcal{X}_{K^p\Gamma(p^m)}$, hence $\infty\in \pi_{\HT}(V_\infty)$. Take a classical point outside of the canonical locus in $V$. Its preimage in $\mathcal{X}_{K^p}$ goes to a classical non-$\Q_p$-rational point $s_2$ in $\Fl$ under $\pi_\HT$. Note that $q=\displaystyle \inf_{g\in\Gamma(p^m)} |y(g\cdot s_2)|>0$. We apply Lemma 5.2.9 of \cite{Pan20} with $U=\{\|y\|\leq q/2\}\subseteq\Fl$ and conclude that there is a classical point outside of the canonical locus in $\pi^{-1}_{m'/m}(V)$ for some $m'$, whose preimage in $\mathcal{X}_{K^p}$ maps to a point $s_3\in U$ under $\pi_\HT$. Clearly $\infty,s_2,s_3$ belong to different $\Gamma(p^m)$-orbits.
\end{proof}

Now  let $f\in\sC(S_1,\Q_p)^{\la}$. For any $s\in S_1$,  by Lemma \ref{largeim}, there exist $3$ points $s_1,s_2,s_3$ in $c^{-1}(s)\subseteq V_\infty$ such that $y(s_1),y(s_2),y(s_3)\in C$ are distinct, Then we can evaluate \eqref{sl2eqn} at $s_1,s_2,s_3$ and conclude that
\[\left(\begin{pmatrix} 0 & 1 \\ 0 & 0\end{pmatrix}\cdot f\right)(s)=\left(\begin{pmatrix} 1 & 0 \\ 0 & -1\end{pmatrix}\cdot f \right)(s)=\left(\begin{pmatrix} 0 & 0 \\ -1 & 0\end{pmatrix}\cdot f\right)(s)=0.\]
This immediately implies our claim as  $\begin{pmatrix} 0 & 1 \\ 0 & 0\end{pmatrix}, \begin{pmatrix} 1 & 0 \\ 0 & -1\end{pmatrix},\begin{pmatrix} 0 & 0 \\ -1 & 0\end{pmatrix}$ generate $\mathfrak{sl}_2(\Q_p)$.
 \end{proof}

\begin{rem}
The same argument gives a purely $p$-adic proof of the well-known fact that the action of $\SL_2(\Q_p)$ on $\varinjlim_{K_p}\pi_0(\mathcal{X}_{K^pK_p})$ is trivial.
\end{rem}
 
\section{Intertwining operators: spectral decomposition} \label{Iosd}
In this section, we continue our study of the intertwining operators $I_k$ and $I^1_k$. Our main result (Theorem \ref{sd}) below gives a decomposition of $\ker I^1_k$ with respect to the Hecke action. To do this, probably not surprisingly, we will use the Newton stratification on the flag variety $\Fl=\mathbb{P}^1=\mathbb{P}^1(\Q_p)\sqcup \Omega$, where $ \Omega=\mathbb{P}^1\setminus \mathbb{P}^1(\Q_p)$ denotes the usual Drinfeld upper half plane, and compute the kernel and cokernel of $I_k$ on each strata. 

Recall some notations. Fix a non-negative integer $k$ throughout this section. For $\chi=(n_1,n_2)\in\Z^2$ with $n_2-n_1=k$, we have
\[d^{k+1}: \cO^{\la,(n_1,n_2)}_{K^p}\to  \cO^{\la,(n_1,n_2)}_{K^p}\otimes_{\cO^{\sm}_{K^p}} (\Omega^1_{K^p}(\mathcal{C})^{\sm})^{\otimes k+1}\]  
\[d'^{k+1}:\cO^{\la,(n_1,n_2)}_{K^p}\otimes_{\cO_{\Fl}}(\Omega^1_{\Fl})^{\otimes k+1}\to\cO^{\la,(n_2+1,n_1-1)}_{K^p}(k+1),\]
Similarly,
\[\bar{d}^{k+1}:  \cO^{\la,(n_1,n_2)}_{K^p}\to \cO^{\la,(n_1,n_2)}_{K^p}\otimes_{\cO_{\Fl}}(\Omega^1_{\Fl})^{\otimes k+1},\]
\[\bar{d}'^{k+1}: \cO^{\la,(n_1,n_2)}_{K^p}\otimes_{\cO^{\sm}_{K^p}} (\Omega^1_{K^p}(\mathcal{C})^{\sm})^{\otimes k+1}\to \cO^{\la,(n_2+1,n_1-1)}_{K^p}(k+1).\]
Hence $I_k=d'^{k+1}\circ \bar{d}^{k+1}=\bar{d}'^{k+1}\circ d^{k+1}$.

\subsection{\texorpdfstring{$I_k$}{Lg} on \texorpdfstring{$\mathbb{P}^1(\Q_p)$}{Lg}}


\begin{para}
We first compute the stalks of $\ker d^{k+1}$ and $\coker d^{k+1}$ at points in $\mathbb{P}^1(\Q_p)$. Since everything is $\GL_2(\Q_p)$-equivariant and $\GL_2(\Q_p)$ acts transitively on $\mathbb{P}^1(\Q_p)$, it's enough to determine the stalks at one point. Here we choose it to be $\infty\in\Fl$, the vanishing locus of $e_1$. Note that $y=\frac{e_1}{e_2}$ is a local coordinate around $0$.

We need the following description of the stalk of $\cO^{\la,(n_1,n_2)}_{K^p}$ at $\infty$.  Note that
\[(\cO_{\Fl})_{\infty}=\varinjlim_{n} \cO_C[[\frac{y}{p^n}]][\frac{1}{p}]\]
and we  view $\cO_C[[\frac{y}{p^n}]][\frac{1}{p}]$ as a $C$-Banach space with unit open ball $\cO_C[[\frac{y}{p^n}]]$.
On the other hand, for $l\in\Z$, recall that 
\[(\omega^{l,\sm}_{K^p})_\infty=M^{\dagger}_{l}(K^p)=\varinjlim_{n}\varinjlim_{U\supseteq \bar{\mathcal{X}}_{K^p\Gamma(p^n),c}}\omega^l(U),\]
where $U$ runs through all strict affinoid open neighborhood of $\mathcal{X}_{K^p\Gamma(p^n),c}$.
See the notation in the proof of Lemma \ref{kertheta}. Denote by 
\[(\cO_{\Fl})_{\infty}\widehat\otimes_{C} M^{\dagger}_{l}(K^p):= \varinjlim_{n}\varinjlim_{U\supseteq \bar{\mathcal{X}}_{K^p\Gamma(p^n),c}} \cO_C[[ \frac{y}{p^n}]][\frac{1}{p}] \widehat\otimes_C~ \omega^l(U)\]
equipped with the inductive limit topology.
Equivalently, any element of $(\cO_{\Fl})_{\infty}\widehat\otimes_{C} M^{\dagger}_{l}(K^p)$ can be written as 
$\displaystyle \sum_{i=0}^{+\infty}c_iy^{i}$,
where all $c_i\in \omega^l(U)$ for some strict affinoid open neighborhood $U$ of some $\mathcal{X}_{K^p\Gamma(p^n),c}$ and $c_ip^{mi},i\geq 0$ are uniformly bounded for some $m$.
Take $l=n_1-n_2$. Then there is  a natural continuous map
\[(\cO_{\Fl})_{\infty}\widehat\otimes_{C} M^{\dagger}_{n_1-n_2}(K^p) \cdot \mathrm{t}^{n_1}e_2^{n_2-n_1}\to (\cO^{\la,(n_1,n_2)}_{K^p})_\infty\]
sending $\displaystyle \sum_{i=0}^{+\infty}c_iy^{i}$ to $\displaystyle \mathrm{t}^{n_1}e_2^{n_2-n_1}\sum_{i=0}^{+\infty}c_iy^{i}$. Note that $(\cO_{\Fl})_{\infty}\cdot e_2^{n_2-n_1}=(\omega^{n_2-n_1}_{\Fl})_{\infty}(n_2-n_1)$. We can also rewrite this map as 
\[(\omega^{n_2-n_1}_{\Fl})_{\infty}(n_2-n_1)\widehat\otimes_{C} M^{\dagger}_{n_1-n_2}(K^p) \cdot \mathrm{t}^{n_1}\to (\cO^{\la,(n_1,n_2)}_{K^p})_\infty\]
\end{para}

\begin{lem} \label{Olainfty}
This is an isomorphism, i.e. 
\[(\cO^{\la,(n_1,n_2)}_{K^p})_\infty\cong (\omega^{n_2-n_1}_{\Fl})_{\infty}\widehat\otimes_{C} M^{\dagger}_{n_1-n_2}(K^p)(n_2-n_1) \cdot \mathrm{t}^{n_1}.\]
Similarly, for $l\in\Z$, 
\[(\omega^{l,\la,(n_1,n_2)}_{K^p})_\infty\cong (\omega^{n_2-n_1}_{\Fl})_{\infty}\widehat\otimes_{C} M^{\dagger}_{n_1-n_2+l}(K^p)(n_2-n_1) \cdot \mathrm{t}^{n_1}.\]
\end{lem}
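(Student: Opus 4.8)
Both statements will be read off from the explicit local description of $\cO^{\la,\chi}_{K^p}$ in Theorem \ref{str}, localized at the point $\infty$, where $e_1$ vanishes, $e_2$ generates $\omega_\Fl$, and $y=e_1/e_2$ is a uniformizer. By Remark \ref{tn1} twisting by $\mathrm{t}^{\pm n_1}$ lets one pass freely between weights $(n_1,n_2)$ and $(0,n_2-n_1)$; keeping track of this factor $\mathrm{t}^{n_1}$ — and of the fact that $G_{\Q_p}$ acts on both $\mathrm{t}$ and $e_2$ through the cyclotomic character (see \ref{exs}, \ref{HEt}) — will produce all the Tate twists and the $\GL_2(\A_f)$-character in the statement.

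First I would fix the cofinal family of affinoid neighbourhoods $U^{(m)}:=\{\|y\|\le p^{-m}\}$ of $\infty$; these lie in $U_2=\{\|y\|\le1\}$ and are $\GL_2(\Q_p)$-translates of it, so by \ref{mathfrakU'} the conclusions of Theorem \ref{str}, in the form where $e_2$ rather than $e_1$ is the generator (cf. the last lines of \ref{An}), apply to them. Since $\|y\|\le p^{-m}$ on $U^{(m)}$, in the construction of \ref{exs} the approximating section of $y$ may be taken to be $0$ whenever $n\le m$ — exactly as in the proof of Lemma \ref{bdan}. With this choice the Banach spaces $A^{U^{(m)},n}$ of \ref{An} are, via \eqref{exAn[[]]}, identified with
\[\cO^+_{\mathcal{X}_{K^pG_{r(n)}}}(V^{(m)}_{G_{r(n)}})[[\tfrac{y}{p^{n-1}}]]\otimes_{\Z_p}\Q_p\;\cdot\;s_n\,e_2^{n_2-n_1}\,\mathrm{t}^{n_1},\]
where $s_n$ is a generator of $\omega^{n_1-n_2}$ on the finite-level affinoid $V^{(m)}_{G_{r(n)}}\subseteq\mathcal{X}_{K^pG_{r(n)}}$ with $\pi_{\HT}^{-1}(V^{(m)}_{G_{r(n)}})=\pi_{\HT}^{-1}(U^{(m)})$, and $\cO^{\la,\chi}_{K^p}(U^{(m)})^{G_n-\an}\subseteq A^{U^{(m)},n}\subseteq\cO^{\la,\chi}_{K^p}(U^{(m)})^{G_{r(n)}-\an}$.

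Passing to the stalk, $(\cO^{\la,\chi}_{K^p})_\infty=\varinjlim_m\varinjlim_n A^{U^{(m)},n}$ (the sandwich above makes the two nested colimits agree). In this double colimit the formal-variable factor gives $\varinjlim_n\cO_C[[y/p^{n-1}]][\tfrac1p]=(\cO_\Fl)_\infty$ after a harmless reindexing of radii, and the coefficient factor gives $\varinjlim_{m,n}\omega^{n_1-n_2}(V^{(m)}_{G_{r(n)}})=M^\dagger_{n_1-n_2}(K^p)$, because by \cite[Lemma 5.2.9]{Pan20} the $V^{(m)}_{G_{r(n)}}$ form a cofinal family of strict affinoid neighbourhoods of the canonical loci $\mathcal{X}_{K^p\Gamma(p^{r(n)}),c}$ and $M^\dagger_l(K^p)=\varinjlim_nM^\dagger_l(K^p\Gamma(p^n))$ by definition. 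Since $\cO_C[[y/p^n]][\tfrac1p]$ is orthonormalizable over $C$, the growth condition ``$\|c_ip^{(n-1)i}\|$ uniformly bounded'' built into $A^{U^{(m)},n}$ is precisely the one defining $\cO_C[[y/p^n]][\tfrac1p]\widehat\otimes_C\omega^{n_1-n_2}(V^{(m)}_{G_{r(n)}})$; commuting the colimits therefore yields $(\cO^{\la,(n_1,n_2)}_{K^p})_\infty\cong(\cO_\Fl)_\infty\cdot e_2^{n_2-n_1}\widehat\otimes_C M^\dagger_{n_1-n_2}(K^p)\cdot\mathrm{t}^{n_1}$. Finally $(\cO_\Fl)_\infty\cdot e_2^{n_2-n_1}=(\omega^{n_2-n_1}_\Fl)_\infty(n_2-n_1)$ and $\mathrm{t}^{n_1}$ carries the twist $(n_1)$ together with the $\GL_2(\A_f)$-character of $\mathrm{t}$, which is the first asserted isomorphism; the second is obtained identically, replacing $\cO^{\la,\chi}_{K^p}$ by $\omega^{l,\la,\chi}_{K^p}$ and invoking part (2) of Theorem \ref{str} (equivalently Lemma \ref{tensomegaksm}), which only changes the coefficient sheaf $\omega^{n_1-n_2}$ to $\omega^{n_1-n_2+l}$.

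The point needing the most care is the comparison of the colimit over the neighbourhood index $m$ with the colimit over the analyticity index $n$: one has to check that taking the approximating section of $y$ equal to $0$ genuinely satisfies the hypotheses of Theorem \ref{str} on $U^{(m)}$ (i.e. that the $G_{r(n)}$-analytic norm of $y$ on $\pi_{\HT}^{-1}(U^{(m)})$ agrees with its spectral norm), that the growth conditions match after the reindexing $n\leftrightarrow n-1$, and that $\{V^{(m)}_{G_{r(n)}}\}_{m,n}$ is truly cofinal among strict neighbourhoods of the canonical loci. These are all routine given the constructions of \cite{Pan20}, but this is where the bookkeeping lives.
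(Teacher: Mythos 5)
Your proposal is correct and follows essentially the same route as the paper: the paper's proof also invokes Theorem \ref{str} after applying the Weyl element so that $e_2$ generates and $y=e_1/e_2$ is the coordinate, shrinks the neighbourhood of $\infty$ so that the approximating section $y_n$ may be taken to be $0$, reads off the stalk from the resulting power-series description, and deduces the twisted case from Lemma \ref{tensomegaksm}. Your write-up simply makes explicit the bookkeeping (the double colimit over neighbourhoods and analyticity radii, and the identification of the coefficient colimit with $M^\dagger_{n_1-n_2}(K^p)$) that the paper delegates to the proof of Proposition 5.2.10 of \cite{Pan20}.
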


\begin{proof}
The first claim is essentially proved in the proof of \cite[Proposition 5.2.10]{Pan20}. We give a sketch here. By Theorem \ref{str} (after applying $\begin{pmatrix} 0 & 1\\ 1& 0\end{pmatrix}$), a section of $\cO^{\la,(n_1,n_2)}_{K^p}$  defined in an open neighborhood of $\infty$ can be written as $\displaystyle \mathrm{t}^{n_1}e_2^{n_2-n_1}\sum_{i=0}^{+\infty}c_i(y-y_n)^{i}$ for some $y_n$.  After shrinking this open neighborhood, we may choose $y_n=0$. Thus we see that any element of $(\cO^{\la,(n_1,n_2)}_{K^p})_\infty$ has the desired form. This implies the second claim by Lemma \ref{tensomegaksm}.
\end{proof}

\begin{para}
Suppose $(n_1,n_2)\in\Z^2$ and $n_2-n_1=k$. Fix a trivialization of $\wedge^2 D^{\sm}_{K^p}$ as before, i.e. an isomorphism $D_0=H^0(\wedge^2 D^{\sm}_{K^p})\cong M_0(K^p)$.  Consider $d^{k+1}:\cO^{\la,\chi}_{K^p}\to \omega^{2k+2,\la,\chi}_{K^p}$ at $\infty$. Under the isomorphisms in Lemma \ref{Olainfty}, we may identify $(d^{k+1})_\infty:(\cO^{\la,\chi}_{K^p})_\infty\to (\omega^{2k+2,\la,\chi}_{K^p})_\infty$ with the map
\[1\otimes\theta^{k+1}:(\omega^k_{\Fl})_{\infty}\widehat\otimes_{C} M^{\dagger}_{-k}(K^p) \cdot \mathrm{t}^{n_1} \to (\omega^k_{\Fl})_{\infty}\widehat\otimes_{C} M^{\dagger}_{k+2}(K^p) \cdot \mathrm{t}^{n_1}\]
induced from $\theta^{k+1}:M^{\dagger}_{-k}(K^p)\to M^{\dagger}_{k+2}(K^p)$. By Lemma \ref{kertheta}, this map is injective if $k\geq 1$, and when $k=0$, the kernel of $M^{\dagger}_0(K^p\Gamma(p^n))\to M^{\dagger}_2(K^p\Gamma(p^n))$ is $M_0(K^p\Gamma(p^n))$, which is finite-dimensional. Hence
\[\ker(d^{1})_\infty= (\cO_{\Fl})_{\infty}\otimes_{C} M_{0}(K^p) \cdot \mathrm{t}^{n_1}\]
(the usual tensor product). Since $\GL_2(\Q_p)$ acts transitively on $\mathbb{P}^1(\Q_p)$, this also implies the following result.
\end{para}

\begin{prop} \label{kerdc}
For $d^{k+1}:\cO^{\la,(n_1,n_2)}_{K^p}\to \cO^{\la,(n_1,n_2)}_{K^p}\otimes_{\cO^{\sm}_{K^p}}(\Omega^1_{K^p}(\mathcal{C})^\sm)^{\otimes k+1}$
, we have
\[(\ker d^{k+1})|_{\mathbb{P}^1(\Q_p)}=0,~~~k\geq 1.\]
and 
\[\ker(d^{1})|_{\mathbb{P}^1(\Q_p)}= (\cO_{\Fl})|_{\mathbb{P}^1(\Q_p)}\otimes_{C} M_{0}(K^p) \cdot \mathrm{t}^{n_1}.\]
\end{prop}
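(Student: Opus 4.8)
The plan is to reduce everything to a local computation at the point $\infty\in\Fl$, exploiting $\GL_2(\Q_p)$-equivariance. Since $I_k=\bar{d}'^{k+1}\circ d^{k+1}$ and $\mathbb{P}^1(\Q_p)$ is a single $\GL_2(\Q_p)$-orbit, it suffices to understand the stalk $(d^{k+1})_\infty$ (the further composition with $\bar{d}'^{k+1}$ does not change the kernel computation here — but for this particular statement we only need $d^{k+1}$, which is what the proposition asserts). So the first step is to record, via Lemma \ref{Olainfty}, the explicit description
\[
(\cO^{\la,(n_1,n_2)}_{K^p})_\infty\cong (\omega^{k}_{\Fl})_{\infty}\widehat\otimes_{C} M^{\dagger}_{-k}(K^p)(k) \cdot \mathrm{t}^{n_1},
\qquad
(\omega^{2k+2,\la,(n_1,n_2)}_{K^p})_\infty\cong (\omega^{k}_{\Fl})_{\infty}\widehat\otimes_{C} M^{\dagger}_{k+2}(K^p)(k) \cdot \mathrm{t}^{n_1},
\]
using $k=n_2-n_1$ and a fixed trivialization $\wedge^2 D^{\sm}_{K^p}\cong \cO^{\sm}_{K^p}$, i.e.\ $D_0\cong M_0(K^p)$.

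Second, I would identify $(d^{k+1})_\infty$ under these isomorphisms. By the defining property (2) of $d^{k+1}$ in Theorem \ref{I1}, namely $d^{k+1}(\mathrm{t}^{n_1}e_1^ie_2^{k-i}s)=\mathrm{t}^{n_1}e_1^ie_2^{k-i}\theta^{k+1}(s)$ together with $\cO_{\Fl}$-linearity (property (1)), the operator $(d^{k+1})_\infty$ is precisely $1\otimes \theta^{k+1}$, where $\theta^{k+1}:M^{\dagger}_{-k}(K^p)\to M^{\dagger}_{k+2}(K^p)$ is the classical theta operator of \ref{Do1} (after the trivialization) and $(\omega^k_{\Fl})_\infty$ is the "coefficient" factor on which the operator acts $\cO_{\Fl}$-linearly. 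Here a small bookkeeping point needs care: the completed tensor product $(\omega^k_{\Fl})_{\infty}\widehat\otimes_C (-)$ is a filtered colimit of completed tensor products of a $C$-Banach space with the overconvergent forms on a strict neighborhood of $\mathcal{X}_{K^p\Gamma(p^n),c}$, and one must check that $1\otimes\theta^{k+1}$ is compatible with these colimit presentations and that its kernel commutes with the colimit — this is straightforward since $\theta^{k+1}$ preserves the levels $M^\dagger_?(K^p\Gamma(p^n))$.

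Third, I would invoke Lemma \ref{kertheta}: the kernel of $\theta^{k+1}$ on $M^{\dagger}_{-k}(K^p)$ is $M_{-k}(K^p)$, which is $0$ for $k\geq 1$ and is $M_0(K^p)$ (classical weight-$0$ forms, i.e.\ locally constant functions on the modular curves) for $k=0$. Since $(\omega^k_{\Fl})_\infty$ is a flat (indeed free, locally) $C$-module and the kernel of a $C$-linear operator tensored with a free module is the tensor of the kernel, one gets $\ker(d^{k+1})_\infty=0$ for $k\ge 1$, and for $k=0$ one must only verify that $(\cO_{\Fl})_\infty\widehat\otimes_C M_0(K^p)$ collapses to the ordinary tensor product $(\cO_{\Fl})_\infty\otimes_C M_0(K^p)\cdot\mathrm{t}^{n_1}$ because $M_0(K^p)$ carries the discrete topology (it is a filtered union of finite-dimensional spaces $M_0(K^p\Gamma(p^n))=H^0(\mathcal{X}_{K^p\Gamma(p^n)},\cO)$). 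Finally, transporting along the $\GL_2(\Q_p)$-action — $\GL_2(\Q_p)$ acts transitively on $\mathbb{P}^1(\Q_p)$ and all the structures ($d^{k+1}$, $\mathrm{t}$ up to a character, $\omega^{\sm}_{K^p}$-module structure) are equivariant — yields the asserted sheaf-theoretic statement on all of $\mathbb{P}^1(\Q_p)$: $(\ker d^{k+1})|_{\mathbb{P}^1(\Q_p)}=0$ for $k\geq1$ and $\ker(d^1)|_{\mathbb{P}^1(\Q_p)}=\cO_{\Fl}|_{\mathbb{P}^1(\Q_p)}\otimes_C M_0(K^p)\cdot\mathrm{t}^{n_1}$. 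The main obstacle, such as it is, is the bookkeeping of topologies in the completed tensor product and the colimit of neighborhoods — making sure "kernel commutes with $\widehat\otimes_C (\omega^k_{\Fl})_\infty$ and with the colimit over $n$ and over shrinking neighborhoods $U$" is legitimate; everything else is a direct citation of Lemma \ref{Olainfty}, Theorem \ref{I1}, and Lemma \ref{kertheta}.
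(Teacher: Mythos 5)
Your proposal is correct and follows essentially the same route as the paper: identify the stalk at $\infty$ via Lemma \ref{Olainfty}, recognize $(d^{k+1})_\infty$ as $1\otimes\theta^{k+1}$, apply Lemma \ref{kertheta}, use finite-dimensionality of $M_{-k}(K^p\Gamma(p^n))$ to collapse the completed tensor product of the kernel to an ordinary one, and propagate by $\GL_2(\Q_p)$-transitivity on $\mathbb{P}^1(\Q_p)$. The topological bookkeeping you flag is handled in the paper by exactly the same observation (coefficientwise vanishing of $\theta^{k+1}$ on the power-series expansion plus finite-dimensionality of the kernel at each finite level).
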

 
\begin{para}
Next we consider the cokernel of $d^{k+1}$ at $\infty$. Using Lemma \ref{Olainfty}, essentially we need to understand the cokernel of $\theta^{k+1}:M_{-k}^{\dagger}(K^p)\to M_{k+2}^{\dagger}(K^p)$. 
\end{para} 
 
\begin{defn} \label{H1Ig}
\[H^1_{\rig}(\Ig(K^p\Gamma(p^n)),\Sym^k):=\coker\left(M_{-k}^{\dagger}(K^p\Gamma(p^n))\xrightarrow{\theta^{k+1}} M_{k+2}^{\dagger}(K^p\Gamma(p^n))\otimes D_0^{-k-1}\right)\]
for $n\geq 0$, and 
\[H^1_{\rig}(\Ig(K^p),\Sym^k):=\coker\left(M_{-k}^{\dagger}(K^p)\xrightarrow{\theta^{k+1}} M_{k+2}^{\dagger}(K^p)\otimes D_0^{-k-1}\right).\]
Clearly, $\displaystyle H^1_{\rig}(\Ig(K^p),\Sym^k)=\varinjlim_n H^1_{\rig}(\Ig(K^p\Gamma(p^n)),\Sym^k)$ and there is a natural smooth action of $B:=\{\begin{pmatrix} * & * \\ 0 & * \end{pmatrix}\}\subseteq \GL_2(\Q_p)$ on it. Note that we put $\otimes D_0^{-k-1}$ here to make sure that everything does not depend on the trivialization of $D_0$.

When $k=0$, we will simply write $H^1_{\rig}(\Ig(K^p))$ instead of $H^1_{\rig}(\Ig(K^p),\Sym^0)$.
\end{defn}

$H^1_{\rig}(\Ig(K^p\Gamma(p^n)),\Sym^k)$ was studied in detail by Coleman in his famous works \cite{Cole96,Cole97}. He basically shows that it is closely related to the finite slope part of classical modular forms. For our purpose, we only need the following finiteness result.

\begin{prop} \label{Colefin}
$H^1_{\rig}(\Ig(K^p\Gamma(p^n)),\Sym^k)$ is a finite-dimensional vector space over $C$.
\end{prop}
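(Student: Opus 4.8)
\textbf{Proof plan for Proposition \ref{Colefin}.}
The plan is to reduce the statement to the finite-dimensionality of rigid cohomology of the (classical, non-perfect) Igusa curves, which is an affine smooth curve over $\bar\F_p$ together with an overconvergent $F$-isocrystal $\Sym^k$ coming from the universal generalized elliptic curve. Concretely, recall from the proof of Lemma \ref{kertheta} that $M^\dagger_l(K^p\Gamma(p^n))$ is the space of sections of $\omega^l$ on a strict neighborhood of the canonical locus $\mathcal{X}_{K^p\Gamma(p^n),c}$, and that the latter is the generic fiber (dagger space) of the disjoint union of Igusa curves of level $n$ inside the Katz--Mazur integral model. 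The operator $\theta^{k+1}$, via the Kodaira--Spencer isomorphism, is precisely the map computing the $H^1$ of the de Rham complex $\Sym^k D^* \xrightarrow{\nabla} \Sym^k D^*\otimes \Omega^1(\mathcal{C})$ restricted to this dagger space. Therefore $H^1_{\rig}(\Ig(K^p\Gamma(p^n)),\Sym^k)$ as defined in \ref{H1Ig} is literally a rigid (Monsky--Washnitzer type) cohomology group of an affine curve of finite type over $\bar\F_p$ with coefficients in the isocrystal $\Sym^k$ of the relative de Rham cohomology of the universal elliptic curve.

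First I would make the identification with the dagger/overconvergent picture precise: pass to a fixed finite level $\mathcal{X}_{K^p\Gamma(p^n)}$, identify $\mathcal{X}_{K^p\Gamma(p^n),c}$ with the tube of the ordinary Igusa locus, and note that sections over a strict neighborhood are exactly overconvergent sections. Then $\coker(\theta^{k+1})$ computed on $M^\dagger$ is $H^1_{\rig}$ of the Igusa curve (an affine smooth curve over $\bar\F_p$, possibly reducible) with the coefficient isocrystal $\Sym^k$ of the Gauss--Manin connection on $D^*=D\otimes(\wedge^2 D)^{-1}$. Second, I would invoke finite-dimensionality of rigid cohomology with overconvergent (iso)crystal coefficients on a smooth variety of finite type over a perfect field of characteristic $p$ --- this is standard, due to Berthelot (and in the constant-coefficient case to Monsky--Washnitzer); the coefficient $\Sym^k$ is the symmetric power of a unit-root-type overconvergent $F$-isocrystal, hence overconvergent, so the general theorem applies. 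Third, care must be taken at the cusps: $\Omega^1(\mathcal{C})$ has log poles and $D$ is the canonical extension, so one is really computing log-rigid cohomology / cohomology with the nearby-cycles boundary behavior; one reduces to the compactly supported or the open-curve version, both of which are finite-dimensional.

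I expect the main obstacle to be the clean translation between the $p$-adic analytic language used throughout this paper (strict neighborhoods, the sheaves $M^\dagger_l(K^p)$, the explicit $\theta^{k+1}$) and the algebraic language of rigid cohomology, especially getting the coefficient isocrystal and the behavior at the cusps exactly right so that Berthelot's finiteness theorem applies verbatim. An alternative, more elementary route --- which would avoid the full machinery --- is to bound $\dim_C H^1_{\rig}(\Ig(K^p\Gamma(p^n)),\Sym^k)$ directly: the target $M^\dagger_{k+2}(K^p\Gamma(p^n))$ modulo the image of $\theta^{k+1}$ can be controlled by comparison with the classical de Rham cohomology $H^1_{\dR}$ of $\mathcal{X}_{K^p\Gamma(p^n)}$ with $\Sym^k$-coefficients (a finite-dimensional space) via Coleman's theory \cite{Cole96,Cole97}, which shows the overconvergent-minus-classical discrepancy is the ``finite slope'' part and is finite-dimensional. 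Either way, since the key inputs (the dagger description of $M^\dagger$, the interpretation of $\theta^{k+1}$ via Kodaira--Spencer, and the irreducibility of Igusa curves) are all already available in the excerpt, the proof is short once the rigid-cohomology identification is set up.
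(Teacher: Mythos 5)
Your proposal matches the paper's own proof: the paper likewise identifies $(\Sym^k D,\nabla_k)$ as an overconvergent logarithmic $F$-isocrystal on the special fiber of $\mathcal{X}_{K^p\Gamma(p^n),c}$ (a union of Igusa curves), interprets $H^1_{\rig}(\Ig(K^p\Gamma(p^n)),\Sym^k)$ as its log-rigid cohomology, and then invokes the general finiteness theorem for rigid cohomology, citing Coleman's computation of the dimension in \cite{Cole96,Cole97}. Your alternative route via Coleman's comparison with the finite-slope part of classical forms is exactly the reference the paper leans on, so both branches of your plan are consistent with the given argument.
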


\begin{proof}
Coleman determined its dimension in the proof of \cite[Theorem 2.1]{Cole97}, which is based on his previous work \cite[\S 8]{Cole96}. Roughly speaking, as Coleman explained,  $(\Sym^k D,\nabla_k)$ defines an overconvergent  logarithmic F-isocrystal on the special fiber of $\mathcal{X}_{K^p\Gamma(p^n),c}$ (a union of Igusa curves) and $H^1_{\rig}(\Ig(K^p\Gamma(p^n)),\Sym^k)$ is simply its log-rigid cohomology (after extending the coefficients to $C$). The claim here follows from the general finiteness  result on rigid cohomology. 
\end{proof}

\begin{prop} \label{cokdk+1infty}
For $d^{k+1}:\cO^{\la,(n_1,n_2)}_{K^p}\to \cO^{\la,(n_1,n_2)}_{K^p}\otimes_{\cO^{\sm}_{K^p}}(\Omega^1_{K^p}(\mathcal{C})^\sm)^{\otimes k+1}$
, we have
\[(\coker d^{k+1})_{\infty}=(\omega^k_\Fl)_\infty\otimes_C H^1_{\rig}(\Ig(K^p),\Sym^k)(k) \cdot \mathrm{t}^{n_1}.\]
\end{prop}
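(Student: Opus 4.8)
The plan is to pass to the stalk at $\infty$ and compute $(d^{k+1})_\infty$ explicitly. Write $y=e_1/e_2$ for the local coordinate at $\infty$ and recall $n_2-n_1=k$. Lemma~\ref{Olainfty} identifies
\[(\cO^{\la,(n_1,n_2)}_{K^p})_\infty\cong (\omega^{k}_{\Fl})_\infty\widehat\otimes_C M^{\dagger}_{-k}(K^p)(k)\cdot\mathrm{t}^{n_1},\qquad (\omega^{2k+2,\la,(n_1,n_2)}_{K^p})_\infty\cong (\omega^{k}_{\Fl})_\infty\widehat\otimes_C M^{\dagger}_{k+2}(K^p)(k)\cdot\mathrm{t}^{n_1},\]
and twisting the second by $(\wedge^2D^{\sm}_{K^p})^{-k-1}$, whose stalk at $\infty$ is $D_0^{-k-1}$ over $M_0(K^p)$ (Definition~\ref{D0}), gives the stalk of the target of $d^{k+1}$ appearing in Theorem~\ref{I1}. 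Theorem~\ref{I1}(1),(2), together with $e_1^ie_2^{k-i}=y^ie_2^{k}$ near $\infty$ and the density of finite sums (Theorem~\ref{str}), show that under these identifications $(d^{k+1})_\infty$ becomes the map $1\widehat\otimes\theta^{k+1}$ obtained by applying $\theta^{k+1}\colon M^{\dagger}_{-k}(K^p)\to M^{\dagger}_{k+2}(K^p)\otimes_{M_0(K^p)}D_0^{-k-1}$ coefficientwise to a power series in $y$; this is exactly the local picture recorded in the paragraph preceding Proposition~\ref{kerdc}. Since cokernels commute with filtered colimits, $(\coker d^{k+1})_\infty$ is the cokernel of $1\widehat\otimes\theta^{k+1}$; by Definition~\ref{H1Ig} the coefficient map $\theta^{k+1}$ has cokernel $H^1_{\rig}(\Ig(K^p),\Sym^k)=\varinjlim_n H^1_{\rig}(\Ig(K^p\Gamma(p^n)),\Sym^k)$, a filtered colimit of finite-dimensional spaces by Proposition~\ref{Colefin}. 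Thus the proposition reduces to the two assertions that forming this cokernel commutes with $(\omega^{k}_{\Fl})_\infty\widehat\otimes_C(-)$, and that, because the target is a filtered colimit of finite-dimensional spaces, the completed tensor product agrees with the algebraic one (the latter being immediate since $B\widehat\otimes_C V=B\otimes_C V$ for $V$ finite dimensional, and tensor products commute with filtered colimits).

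Concretely, I would introduce the natural ``reduce the coefficients modulo $\theta^{k+1}$'' map from $(\coker d^{k+1})_\infty$ to $(\omega^{k}_{\Fl})_\infty\otimes_C H^1_{\rig}(\Ig(K^p),\Sym^k)(k)\cdot\mathrm{t}^{n_1}$ and check it is bijective. Well-definedness and surjectivity are soft. A germ of the target of $d^{k+1}$ is $\mathrm{t}^{n_1}e_2^{k}\sum_{i\ge0}c_iy^i$ with all $c_i$ in a single $\omega^{k+2}(U)\otimes D_0^{-k-1}$ for some strict affinoid neighborhood $U$ of some $\mathcal{X}_{K^p\Gamma(p^n),c}$ and with $\{|c_i|p^{ni}\}$ bounded; hence all the classes $\bar c_i$ lie in the same finite-dimensional space $H^1_{\rig}(\Ig(K^p\Gamma(p^n)),\Sym^k)$, and the sum $\sum \bar c_i y^i$ makes sense in $(\omega^k_\Fl)_\infty\otimes_C H^1_{\rig}(\Ig(K^p),\Sym^k)$. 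Fixing once and for all, for each $n$, a finite-dimensional subspace $W_n\subseteq\omega^{k+2}(U_n)\otimes D_0^{-k-1}$ mapping isomorphically onto $H^1_{\rig}(\Ig(K^p\Gamma(p^n)),\Sym^k)$ (possible by Proposition~\ref{Colefin}) provides bounded lifts of the $\bar c_i$ with $\{|c_i|p^{ni}\}$ bounded, giving surjectivity; comparing the two equivalent norms on the finite-dimensional $W_n$ also shows the reduction map is bounded, so it is well defined at the level of stalks.

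Injectivity is the heart of the matter and the step I expect to be the main obstacle. One must show: if $\mathrm{t}^{n_1}e_2^{k}\sum c_iy^i$ has all $\bar c_i=0$ in $H^1_{\rig}(\Ig(K^p\Gamma(p^n)),\Sym^k)$, then there exist a single strict neighborhood $U'$, a radius $p^{-n'}$, and elements $d_i\in\omega^{-k}(U')$ with $c_i=\theta^{k+1}(d_i)$ and $\{|d_i|p^{n'i}\}$ bounded, so that $\mathrm{t}^{n_1}e_2^{k}\sum d_iy^i$ is an honest germ of $\cO^{\la,(n_1,n_2)}_{K^p}$ mapping to the given section under $d^{k+1}$. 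In other words one needs to solve the differential equation $\theta^{k+1}(d)=c$ on a common slightly smaller dagger neighborhood with uniform control on norms; equivalently, one needs the surjection of $M^{\dagger}_{-k}(K^p\Gamma(p^n))$ onto $\ker\!\big(M^{\dagger}_{k+2}(K^p\Gamma(p^n))\otimes D_0^{-k-1}\twoheadrightarrow H^1_{\rig}(\Ig(K^p\Gamma(p^n)),\Sym^k)\big)$ induced by $\theta^{k+1}$ to admit a bounded right inverse after shrinking the neighborhood. This is precisely the strictness/contraction phenomenon underlying the finiteness of the (log-)rigid cohomology of the Igusa curves and is implicit in Coleman's analysis in \cite{Cole96,Cole97}; I would isolate it as a lemma of the form ``if a two-term complex of countable colimits of $C$-Banach spaces has finite-dimensional cohomology at each level of the colimit, then $(\omega^k_\Fl)_\infty\widehat\otimes_C(-)$ is exact on it,'' reduced to strictness of $\nabla_k$ on the dagger space associated to $\mathcal{X}_{K^p\Gamma(p^n),c}$, and then deduce the stated formula. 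Everything else is bookkeeping with Lemma~\ref{Olainfty}, the Tate twists, and the filtered colimits defining the stalk; the case $k=0$ requires no modification, since the finite kernel $\ker\theta^1=M_0$ (Lemma~\ref{kertheta}) only affects the kernel computation, not the cokernel.
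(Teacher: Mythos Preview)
Your overall strategy matches the paper's: pass to the stalk via Lemma~\ref{Olainfty}, identify $(d^{k+1})_\infty$ with $1\widehat\otimes\theta^{k+1}$, and then argue that the completed tensor product commutes with taking the cokernel of $\theta^{k+1}$. You also correctly isolate injectivity of the reduction map---equivalently, the strictness of $\theta^{k+1}$ onto its image---as the only nontrivial point.

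Where you go astray is in the resolution of that point. You treat the strictness as something deep, ``implicit in Coleman's analysis,'' to be extracted from the proof of finiteness. In fact the logic runs the other way: strictness is an immediate \emph{consequence} of the finiteness statement (Proposition~\ref{Colefin}) via elementary functional analysis. Since $H^1_{\rig}(\Ig(K^p\Gamma(p^n)),\Sym^k)$ is finite-dimensional, the image of $\theta^{k+1}$ in $M^{\dagger}_{k+2}(K^p\Gamma(p^n))$ has finite codimension, hence is closed (choose a finite-dimensional complement and apply the open mapping theorem). Once the image is closed, the open mapping theorem again gives that $\theta^{k+1}$ is strict onto its image, and then $\cO_C[[y/p^n]][1/p]\widehat\otimes_C(-)$ applied to the resulting short exact sequence of Banach (or Hausdorff LB) spaces remains exact. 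This is exactly the ``standard argument using the open mapping theorem'' the paper invokes in one line; no separate lemma about two-term complexes or appeal to Coleman's internal estimates is needed. Your bounded-section and coefficient-lifting discussion is then subsumed: strictness gives you the uniform norm control on the $d_i$ automatically, on the same neighborhood and with the same radius after passing to a cofinal system.
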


\begin{proof}
By Proposition \ref{Colefin}, the image of $M_{-k}^{\dagger}(K^p\Gamma(p^n))$ in $M_{k+2}^{\dagger}(K^p\Gamma(p^n))$ under $\theta^{k+1}$ is closed. From this, by the standard argument using open mapping theorem,  it's easy to deduce that 
\[\varinjlim_{U\supseteq \bar{\mathcal{X}}_{K^p\Gamma(p^n),c}} \cO_C[[ \frac{y}{p^n}]][\frac{1}{p}] \widehat\otimes_C~ \omega^{-k}(U)\xrightarrow{1\otimes\theta^{k+1}} \varinjlim_{U\supseteq \bar{\mathcal{X}}_{K^p\Gamma(p^n),c}} \cO_C[[ \frac{y}{p^n}]][\frac{1}{p}] \widehat\otimes_C~ \omega^{k+2}(U)\]
has closed images with cokernel $\cO_C[[ \frac{y}{p^n}]][\frac{1}{p}] \widehat\otimes_C H^1_{\rig}(\Ig(K^p\Gamma(p^n)),\Sym^k)$. Taking the inductive limit over $n$ gives the proposition.
\end{proof}

\begin{para} \label{H1ord}
To deduce a description of the sheaf $(\coker d^{k+1})|_{\mathbb{P}^1(\Q_p)}$, we  note that  $B$ acts smoothly on $H^1_{\rig}(\Ig(K^p),\Sym^k)$. The smooth induction of it to $\GL_2(\Q_p)$ naturally defines a $\GL_2(\Q_p)$-equivariant sheaf on $\mathbb{P}^1(\Q_p)$ which will be denoted by $\mathcal{H}^1_{\ord}(K^p,k)$. 
Explicitly, the (right) action of $\GL_2(\Q_p)$ on $\infty$ induces a map $\pi:\GL_2(\Q_p)\to\mathbb{P}^1(\Q_p)$. Let $U$ be an open subset of $\mathbb{P}^1(\Q_p)$. Then
$\mathcal{H}^1_{\ord}(K^p,k)(U)$ is the set of smooth functions
\[f:\pi^{-1}(U)\to H^1_{\rig}(\Ig(K^p),\Sym^k)\]
such that $f(bg)=b\cdot f(g)$. It's clear that
\[H^0(\mathbb{P}^1(\Q_p),\mathcal{H}^1_{\ord}(K^p,k))=\Ind_B^{\GL_2(\Q_p)} H^1_{\rig}(\Ig(K^p),\Sym^k),\]
where $\Ind$ denotes the smooth induction. By abuse of notation, we will also view $\mathcal{H}^1_{\ord}(K^p,k)$ as a sheaf on $\Fl$ via the closed embedding $i:\mathbb{P}^1(\Q_p)\subseteq \Fl$. Rougly speaking, $\mathcal{H}^1_{\ord}(K^p,k)$ is defined by the rigid cohomology of the ordinary locus of modular curves.
\end{para}

\begin{para}[Definition of $e'_1,{e'_2}$]
Note that $B$ acts on the fiber of $\omega^l_\Fl$ at $\infty$ via the character ${e'_2}^l$ sending $\begin{pmatrix} a & b\\ 0 &d\end{pmatrix}\in B$ to $d^l$. We remark that  $e'_2=e_2$  in \ref{e_1e_2ch} as a character of $B$ but we don't put any Galois action on $e'_2$. As in \ref{e_1e_2ch},  $W\cdot {e'_2}^l$ denotes the twist of $W$ by ${e'_2}^l$ for any $B$-representation $W$ over $\Q_p$. Similarly, $e'_1:B\to\Q_p^\times$ denotes the character sending $\begin{pmatrix} a & b\\ 0 &d\end{pmatrix}\in B$ to $a$.

There is a Hausdorff LB-space structure on $ H^1_{\rig}(\Ig(K^p),\Sym^k)\cdot {e'_2}^l$ and the action of $B$ on it is locally analytic. Its locally analytic induction from $B$ to $\GL_2(\Q_p)$ defines naturally a $\GL_2(\Q_p)$-equivariant sheaf on $\mathbb{P}^1(\Q_p)$ whose global sections are given by the locally analytic induction
\[\Ind_B^{\GL_2(\Q_p)}H^1_{\rig}(\Ig(K^p),\Sym^k)\cdot {e'_2}^l . \]
The sheaf defined by $\Ind_B^{\GL_2(\Q_p)} {e'_2}^l$ is nothing but $\omega^l_{\Fl}|_{\mathbb{P}^1(\Q_p)}$. Hence Proposition \ref{cokdk+1infty} has the following corollaries.
\end{para}

\begin{cor} \label{cokerdk+1P1}
There are natural isomorphisms
\[(\coker d^{k+1})|_{\mathbb{P}^1(\Q_p)}\cong\omega^k_{\Fl}|_{\mathbb{P}^1(\Q_p)}\otimes_C \mathcal{H}^1_{\ord}(K^p,k)(k)\cdot \mathrm{t}^{n_1},\]
\[H^0(\mathbb{P}^1(\Q_p),\coker d^{k+1})\cong\Ind_B^{\GL_2(\Q_p)} H^1_{\rig}(\Ig(K^p),\Sym^k) (k)\cdot {e'_2}^k\mathrm{t}^{n_1}.\]
\end{cor}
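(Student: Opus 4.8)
\textbf{Proof plan for Corollary \ref{cokerdk+1P1}.}
The first isomorphism is purely a matter of globalizing the stalk computation of Proposition \ref{cokdk+1infty} in a $\GL_2(\Q_p)$-equivariant way. The plan is as follows. First I would recall that $d^{k+1}$ is $\GL_2(\Q_p)$-equivariant (Theorem \ref{I1}, together with the compatibility remark \ref{ddbarcom}), so $\coker d^{k+1}$ is a $\GL_2(\Q_p)$-equivariant sheaf on $\Fl$; restricting to the closed, $\GL_2(\Q_p)$-stable subset $\mathbb{P}^1(\Q_p)$ and noting $\GL_2(\Q_p)$ acts transitively there, the sheaf $(\coker d^{k+1})|_{\mathbb{P}^1(\Q_p)}$ is determined by its stalk at the single point $\infty$ as a module over the stabilizer $B$ of $\infty$. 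By Proposition \ref{cokdk+1infty}, that stalk is $(\omega^k_\Fl)_\infty\otimes_C H^1_{\rig}(\Ig(K^p),\Sym^k)(k)\cdot\mathrm{t}^{n_1}$; unwinding the $B$-action, the smooth factor $H^1_{\rig}(\Ig(K^p),\Sym^k)$ carries the $B$-action used to define $\mathcal{H}^1_{\ord}(K^p,k)$ in \ref{H1ord}, while the $(\omega^k_\Fl)_\infty$ factor gives precisely $\omega^k_{\Fl}|_{\mathbb{P}^1(\Q_p)}$ as the equivariant sheaf on $\mathbb{P}^1(\Q_p)$ whose stalk at $\infty$ is $(\omega^k_\Fl)_\infty$ with its natural $B$-action, and the Tate twist $(k)$ and the power $\mathrm{t}^{n_1}$ are $\GL_2(\Q_p)$-equivariant twists that pass through unchanged. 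Matching these factors term by term yields the claimed isomorphism of $\GL_2(\Q_p)$-equivariant sheaves on $\mathbb{P}^1(\Q_p)$ (viewed inside $\Fl$ via $i$).

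For the second isomorphism I would simply take global sections over $\mathbb{P}^1(\Q_p)$ of the first. Since $\coker d^{k+1}$ is supported (on $\mathbb{P}^1(\Q_p)$) by the equivariant sheaf just identified, and since $H^0(\mathbb{P}^1(\Q_p),\mathcal{H}^1_{\ord}(K^p,k))=\Ind_B^{\GL_2(\Q_p)} H^1_{\rig}(\Ig(K^p),\Sym^k)$ by the defining property of $\mathcal{H}^1_{\ord}(K^p,k)$ recorded in \ref{H1ord}, while $H^0(\mathbb{P}^1(\Q_p),\omega^k_{\Fl}|_{\mathbb{P}^1(\Q_p)})=\Ind_B^{\GL_2(\Q_p)}{e'_2}^k$, the tensor product identity $H^0(\mathbb{P}^1(\Q_p),\omega^k_\Fl|_{\mathbb{P}^1(\Q_p)}\otimes_C\mathcal{H}^1_{\ord}(K^p,k))\cong\Ind_B^{\GL_2(\Q_p)}(H^1_{\rig}(\Ig(K^p),\Sym^k)\cdot{e'_2}^k)$ follows from the fact that induction commutes with tensoring by the line bundle $\omega^k_\Fl|_{\mathbb{P}^1(\Q_p)}$, i.e. $\mathrm{Ind}$ of a tensor product with a character is the tensor product of $\mathrm{Ind}$ with that character. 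Carrying along the twist $(k)\cdot\mathrm{t}^{n_1}$ then gives exactly $\Ind_B^{\GL_2(\Q_p)} H^1_{\rig}(\Ig(K^p),\Sym^k)(k)\cdot{e'_2}^k\mathrm{t}^{n_1}$.

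The main obstacle, such as it is, lies not in the formal bookkeeping but in verifying that the identification of the stalk at $\infty$ in Proposition \ref{cokdk+1infty} is genuinely $B$-equivariant with the $B$-action that enters the definition of $\mathcal{H}^1_{\ord}(K^p,k)$ — i.e.\ that the $B$-action on $H^1_{\rig}(\Ig(K^p),\Sym^k)$ coming from translating the stalk of $\coker d^{k+1}$ along $B\subseteq\GL_2(\Q_p)$ matches the smooth $B$-action built from the tower of Igusa curves in Definition \ref{H1Ig}. This is essentially a compatibility between the $\GL_2(\Q_p)$-action on $\cO^{\la,\chi}_{K^p}$ near $\infty$ (described via Theorem \ref{str} and Lemma \ref{Olainfty}) and the geometric $B$-action on the canonical loci $\mathcal{X}_{K^p\Gamma(p^n),c}$; it follows by tracing through the isomorphism of Lemma \ref{Olainfty} and the construction of $\theta^{k+1}$, but it is the one point that requires genuine care rather than formal manipulation. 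Everything else is standard: restriction of equivariant sheaves to an orbit, commutation of $H^0$ over a $p$-adic homogeneous space with the smooth/locally analytic induction functor, and compatibility of these with tensoring by an equivariant line bundle.
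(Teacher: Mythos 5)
Your proposal is correct and follows essentially the same route as the paper, which likewise deduces the corollary by viewing $(\coker d^{k+1})|_{\mathbb{P}^1(\Q_p)}$ as the $\GL_2(\Q_p)$-equivariant sheaf on the homogeneous space $\mathbb{P}^1(\Q_p)$ determined by the $B$-representation on the stalk at $\infty$ computed in Proposition \ref{cokdk+1infty}, and then identifies global sections with the (locally analytic) induction as set up in \ref{H1ord}. Your flagging of the $B$-equivariance of the stalk identification as the only non-formal point is consistent with what the paper leaves implicit.
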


Recall that $d'^{k+1}$ denotes the twist of $d^{k+1}$ by $(\Omega^1_{\Fl})^{\otimes k+1}\cong\omega_\Fl^{-2k-2}\otimes\det{}^{k+1}$.

\begin{cor} \label{cokerd'k+1}
For $\chi=(-k,0)$, 
\[(\coker d'^{k+1})|_{\mathbb{P}^1(\Q_p)}\cong\omega^{-k-2}_\Fl|_{\mathbb{P}^1(\Q_p)}\otimes_C \mathcal{H}^1_{\ord}(K^p,k)(k)\otimes\det{}^{k+1}\cdot \mathrm{t}^{-k}\]
\[H^0(\mathbb{P}^1(\Q_p),\coker d'^{k+1})\cong\Ind_B^{\GL_2(\Q_p)} H^1_{\rig}(\Ig(K^p),\Sym^k) (k)\cdot {e'_1}^{k+1}{e'_2}^{-1}\mathrm{t}^{-k}.\]
\end{cor}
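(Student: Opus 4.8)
The plan is to deduce Corollary \ref{cokerd'k+1} directly from Corollary \ref{cokerdk+1P1} by tracking the twist that takes $d^{k+1}$ to $d'^{k+1}$. Recall from Remark \ref{dd'} and Definition \ref{I} that $d'^{k+1}$ is the $\cO_{\Fl}$-linear twist of $d^{k+1}$ by the line bundle $(\Omega^1_{\Fl})^{\otimes k+1}$, together with the identification $(\Omega^1_{\Fl})^{\otimes k+1}\cong \omega_{\Fl}^{-2k-2}\otimes\det{}^{k+1}$ coming from \ref{KSFl}. Since tensoring with an invertible $\cO_{\Fl}$-module is exact and commutes with restriction to the closed subset $\mathbb{P}^1(\Q_p)\subseteq\Fl$, we get
\[
(\coker d'^{k+1})|_{\mathbb{P}^1(\Q_p)}\cong (\coker d^{k+1})|_{\mathbb{P}^1(\Q_p)}\otimes_{\cO_{\Fl}}(\Omega^1_{\Fl})^{\otimes k+1}\big|_{\mathbb{P}^1(\Q_p)}.
\]
Plugging in the first isomorphism of Corollary \ref{cokerdk+1P1} and using $(\Omega^1_{\Fl})^{\otimes k+1}\cong\omega_{\Fl}^{-2k-2}\otimes\det{}^{k+1}$ gives
\[
(\coker d'^{k+1})|_{\mathbb{P}^1(\Q_p)}\cong \omega^{k}_{\Fl}|_{\mathbb{P}^1(\Q_p)}\otimes_C\mathcal{H}^1_{\ord}(K^p,k)(k)\cdot\mathrm{t}^{n_1}\otimes_{\cO_{\Fl}}\omega_{\Fl}^{-2k-2}\otimes\det{}^{k+1},
\]
and collecting the powers of $\omega_{\Fl}$ yields $\omega_{\Fl}^{-k-2}$. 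One must also account for the shift in $\chi$: here $\chi=(-k,0)$ so $n_1=0$, which replaces the factor $\mathrm{t}^{n_1}$ in Corollary \ref{cokerdk+1P1} (stated there for general $(n_1,n_2)$ with $n_2-n_1=k$) by $\mathrm{t}^{-k}$ — this is because the twist $d^{k+1}\rightsquigarrow d'^{k+1}$ sends weight $(0,k)$ to weight $(-k,0)$, i.e. it incorporates an extra multiplication compatible with the conventions of \ref{dbar} and Remark \ref{dd'}. This gives the first displayed isomorphism of the corollary.

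For the statement on global sections, I would take $H^0(\mathbb{P}^1(\Q_p),-)$ of the sheaf isomorphism just obtained. Since $\mathcal{H}^1_{\ord}(K^p,k)$ is by definition (see \ref{H1ord}) the $\GL_2(\Q_p)$-equivariant sheaf on $\mathbb{P}^1(\Q_p)$ obtained by smoothly inducing the $B$-representation $H^1_{\rig}(\Ig(K^p),\Sym^k)$ from $B$, and $\omega_{\Fl}^{l}|_{\mathbb{P}^1(\Q_p)}$ is the sheaf associated to $\Ind_B^{\GL_2(\Q_p)}{e'_2}^{l}$ (as recorded just before Corollary \ref{cokerdk+1P1}), the tensor product of these equivariant sheaves has global sections $\Ind_B^{\GL_2(\Q_p)}\big(H^1_{\rig}(\Ig(K^p),\Sym^k)\otimes{e'_2}^{-k-2}\big)$, up to the Tate twist $(k)$ and $\det{}^{k+1}$ factor. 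Finally I would rewrite the twisting character: on the fiber at $\infty$ the line bundle $\omega_{\Fl}^{-k-2}\otimes\det{}^{k+1}$ corresponds to the character $\begin{pmatrix} a & b\\ 0 & d\end{pmatrix}\mapsto d^{-k-2}(ad)^{k+1}=a^{k+1}d^{-1}={e'_1}^{k+1}{e'_2}^{-1}$, which is exactly the character appearing in the statement. Combining everything — and using that $n_1=0$ forces the power $\mathrm{t}^{-k}$ — gives
\[
H^0(\mathbb{P}^1(\Q_p),\coker d'^{k+1})\cong\Ind_B^{\GL_2(\Q_p)}H^1_{\rig}(\Ig(K^p),\Sym^k)(k)\cdot{e'_1}^{k+1}{e'_2}^{-1}\mathrm{t}^{-k}.
\]

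The steps are all formal once Corollary \ref{cokerdk+1P1} is in hand, so there is no serious obstacle; the only place requiring care is bookkeeping. Specifically, I expect the main point to be keeping track of three separate twists simultaneously — the power of the tautological bundle $\omega_{\Fl}$, the determinant factor $\det{}^{k+1}$, and the factor $\mathrm{t}^{n_1}$ recording the $G_{\Q_p}\times\GL_2(\A_f)$-action — and confirming they combine to the asserted character ${e'_1}^{k+1}{e'_2}^{-1}$ of $B$ together with the Tate twist $(k)$ and the $\mathrm{t}^{-k}$-twist. This is exactly the kind of verification done throughout \ref{dbar}–\ref{genklambdak}, using the identification \eqref{rmtwddet1} and the Kodaira–Spencer isomorphisms, so I would simply cite those and perform the character computation on the fiber at $\infty$ as above, where $B$ acts through explicit characters and the comparison is immediate.
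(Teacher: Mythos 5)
Your proposal is correct and is exactly the paper's (implicit) argument: the paper offers no separate proof beyond the sentence preceding the corollary, which records that $d'^{k+1}$ is the twist of $d^{k+1}$ by $(\Omega^1_{\Fl})^{\otimes k+1}\cong\omega_{\Fl}^{-2k-2}\otimes\det{}^{k+1}$, so one simply tensors Corollary \ref{cokerdk+1P1} with this line bundle and reads off the $B$-character at $\infty$ as you do. One bookkeeping slip worth fixing: for $\chi=(-k,0)$ the paper's convention gives $n_1=-k$ (not $n_1=0$), so the factor $\mathrm{t}^{n_1}=\mathrm{t}^{-k}$ comes directly from Corollary \ref{cokerdk+1P1} and is untouched by the twist; your claim that the twist ``sends weight $(0,k)$ to $(-k,0)$'' is not how the $\mathrm{t}$-power arises (the twist shifts $\theta_{\kh}$-weights by $(k+1,-k-1)$ and its $\det{}^{k+1}$-contribution is already displayed separately in the statement), though your final formulas are nonetheless correct.
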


\subsection{The Lubin-Tate space at infinite level} \label{LTinfty}
\begin{para}
Now we study the intertwining operator $I_k$ on the Drinfeld upper half plane $\Omega$. Since it is well-known that $\pi_\HT^{-1}(\Omega)$ is a finite disjoint union of the Lubin-Tate space $\mathcal{M}_{\LT,\infty}$ at infinite level, we essentially need to understand the geometry of the Hodge-Tate period map for $\mathcal{M}_{\LT,\infty}$. The key point here is that  the differential operators $d^{k+1}$ and $\bar{d}^{k+1}$ are swapped under the isomorphism between Lubin-Tate and Drinfeld towers, hence results we proved for $\bar{d}^{k+1}$ before (Proposition \ref{dbarsurj}) can be applied directly to $d^{k+1}$. 

In this subsection, we will restrict ourselves to this local picture and come back to modular curves in next subsection.
\end{para}

\begin{para}\label{LTsetup}
First we recall the definition of the Lubin-Tate towers. One reference is \cite[\S 6]{SW13}. However, we warn the readers that we work with \textit{contravariant} objects instead of \textit{covariant} objects used in the reference. Hence some statements might be slightly different from \cite{SW13}.

Let $H_0$ be a one-dimensional formal group over $\bar{\F}_p$ of height $2$. I is unique up to isomorphism. Let $\mathrm{Nilp}_{W(\bar{\F}_p)}$ be the category of $W(\bar{\F}_p)$-algebras on which $p$ is nilpotent. Consider the functor which  assigns $R\in\mathrm{Nilp}_{W(\bar{\F}_p)}$ to the set of isomorphism classes of deformations of $H_0$ to $R$, i.e. pairs $(G,\rho)$ where $G$ is a $p$-divisible group over $R$ and $\rho:H_0\otimes_{\bar{\F}_p} R/p \to G \otimes_{R} R/p$ is a quasi-isogeny.
 This is represented by a formal scheme $\mathcal{M}$ over $\Spf W(\bar{\F}_p)$. Let  $\mathcal{M}^{(i)}\subseteq  \mathcal{M}$ be the subset parametrizing quasi-isogenies of height $i\in\Z$. Then each $\mathcal{M}^{(i)}$ is non-canonically isomorphic to $\Spf W(\bar{\F}_p)[[T]]$ and $\mathcal{M}$ is the disjoint union of  $\mathcal{M}^{(i)},i\in\Z$. For $*=(i)$ or empty, we denote by $\mathcal{M}^{*}_{\LT}$ the base change of the generic fiber of $\mathcal{M}^{*}$ (viewed as an adic space)  from $\Spa(W(\bar{\F}_p)[\frac{1}{p}],W(\bar{\F}_p))$  to $\Spa(C,\cO_C)$. 

Let $(\mathcal{G},\rho)$ be a universal deformation of $H_0$ on $\mathcal{M}$. Its covariant Dieudonn\'e crystal defines a vector bundle $M(\mathcal{G})$ of rank $2$ on $\mathcal{M}_{\LT}$ equipped with an integrable connection $\nabla_{\LT,0}$, the (dual of) Gauss-Manin connection. Let $M(H_0)$ denote the covariant Dieudonn\'e module of $H_0$, a free $W(\bar{\F}_p)$-module of rank $2$. Then $\rho$ induces a natural trivialization $M(\mathcal{G})\cong M(H_0)\otimes_{W(\bar{\F}_p)}\cO_{\mathcal{M}_{\LT}}$ under which $\nabla_{\LT,0}$ is identified with the standard connection on $\cO_{\mathcal{M}_{\LT}}$.

The Lie algebra of $\mathcal{G}$ defines a line bundle on $\mathcal{M}_{\LT}$, whose dual will be denoted by $\omega_{\LT,0}$. By the Grothendieck-Messing theory, $(\mathcal{G},\rho)$ gives rises to a surjection
\[M(H_0)\otimes_{W(\bar{\F}_p)}\cO_{\mathcal{M}_{\LT}} \cong M(\mathcal{G})\to (\omega_{\LT,0})^{-1} \]
whose kernel is a line bundle on $\mathcal{M}_{\LT}$. This induces the so-called \textit{Gross-Hopkins period map} \cite{GH94}
\[\pi_{\GM}:\mathcal{M}_{\LT}\to \Fl_{\GM},\]
where $\Fl_{\GM}$ is the adic space over $\Spa(C,\cO_C)$ associated to the flag variety parametrizing $1$-dimensional quotients of the $2$-dimensional $C$-vector space $M(H_0)\otimes_{W(\bar{\F}_p)} C$. Again it follows from the Grothendieck-Messing theory that this map is  an \'etale morphism of adic spaces locally
of finite type. Moreover it admits local sections, cf. \cite[Lemma 6.1.4]{SW13}. By definition, the pull-back of the tautological ample line bundle on $\Fl_{\GM}$ to $\mathcal{M}_{\LT}$ is $(\omega_{\LT,0})^{-1} $.

Let $D(\mathcal{G})$ be the dual of $M(\mathcal{G})$ as a vector bundle. Then there is a natural inclusion $\omega_{\LT,0}\subset D(\mathcal{G})$ which gives rise to a decreasing filtration on  $D(\mathcal{G})$ (the Hodge filtration) with $\Fil^0=D(\mathcal{G})$, $\Fil^1=\omega_{\LT,0}$ and $\Fil^2=0$. It trivially satisfies the Griffiths transversality with respect to the connection $\nabla_{\LT}$ on $D(\mathcal{G})$ induced by $\nabla_{\LT,0}$. We have the usual Kodaira-Spencer map defined as the composite map
\begin{eqnarray} \label{KSmap}
KS:\Fil^1D(\mathcal{G})\xrightarrow{\nabla_{\LT}} D(\mathcal{G})\otimes_{\cO_{\mathcal{M}_{\LT}}} \Omega^1_{\mathcal{M}_{\LT}} \to \gr^0  D(\mathcal{G}) \otimes_{\cO_{\mathcal{M}_{\LT}}} \Omega^1_{\mathcal{M}_{\LT}}.
\end{eqnarray}
An easy and standard computation on $\Fl_{\GM}$ shows the following well-known result.
\end{para}

\begin{prop} \label{KSLT}
$KS$ is an isomorphism.
\end{prop}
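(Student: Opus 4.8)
The statement to be proved is that the Kodaira-Spencer map $KS\colon \Fil^1 D(\mathcal{G})\to \gr^0 D(\mathcal{G})\otimes_{\cO_{\mathcal{M}_{\LT}}}\Omega^1_{\mathcal{M}_{\LT}}$ of \eqref{KSmap} is an isomorphism. Since $\mathcal{M}_{\LT}$ is a smooth adic space of dimension one and all three sheaves involved are line bundles, it suffices to check that $KS$ is an isomorphism on stalks, or equivalently that it is nowhere vanishing. The plan is to pull everything back along the Gross--Hopkins period map $\pi_{\GM}\colon\mathcal{M}_{\LT}\to\Fl_{\GM}$. Because $\pi_{\GM}$ is \'etale (it is an \'etale morphism of adic spaces locally of finite type, as recalled above, and admits local sections by \cite[Lemma 6.1.4]{SW13}), the sheaf $\Omega^1_{\mathcal{M}_{\LT}}$ is canonically $\pi_{\GM}^*\Omega^1_{\Fl_{\GM}}$, and the whole filtered bundle with connection $(D(\mathcal{G}),\Fil^\bullet,\nabla_{\LT})$ is the pull-back of the corresponding tautological data on $\Fl_{\GM}$. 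So it is enough to verify the analogous statement on $\Fl_{\GM}\cong\mathbb{P}^1$ itself: the Kodaira--Spencer map of the tautological flag, equipped with the connection coming from the trivialization $M(\mathcal{G})\cong M(H_0)\otimes\cO$, is an isomorphism of line bundles.

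First I would set up coordinates on $\Fl_{\GM}$. Fix a basis $f_1,f_2$ of $M(H_0)\otimes_{W(\bar\F_p)}C$, so that $M(\mathcal{G})$ is trivialized as $\cO_{\Fl_{\GM}}^{\oplus 2}$ with the \emph{standard} connection $d$ (this is exactly the content of the statement that $\nabla_{\LT,0}$ becomes the standard connection under the $\rho$-trivialization). On the standard affine chart where the tautological quotient line is $\langle f_1 + z f_2\rangle$ for a coordinate $z$, the Hodge line $\Fil^1 = \omega_{\LT,0}\subseteq D(\mathcal{G})$ in the dual bundle is spanned by a section $\eta(z)$ which one writes down explicitly in terms of the dual basis $f_1^\vee, f_2^\vee$ (it is the annihilator of the quotient, hence proportional to $z f_1^\vee - f_2^\vee$ or a similar linear expression, up to the precise convention for how $\omega$ sits inside $D$ versus $M$). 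Then $\nabla_{\LT}\eta = d\eta$ is computed by differentiating the coefficients in $z$; since these coefficients are linear in $z$, $d\eta$ is a nonzero constant multiple of $f_i^\vee\otimes dz$. Projecting to $\gr^0 D(\mathcal{G}) = D(\mathcal{G})/\Fil^1$ and checking that this projection is nonzero amounts to checking that $d\eta$ is not a multiple of $\eta$, which is immediate from the explicit formula because $\eta$ and $d\eta$ point in transverse directions in the fibre (one has the $z$-coefficient, the other does not). Thus $KS$ is nonvanishing on this chart; by $\GL_2$-symmetry of the flag variety (or a second chart computation) it is nonvanishing everywhere on $\Fl_{\GM}=\mathbb{P}^1$, hence an isomorphism. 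Pulling back along the \'etale map $\pi_{\GM}$ gives the claim on $\mathcal{M}_{\LT}$.

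I do not expect any serious obstacle here: this is the standard "Kodaira--Spencer is an isomorphism" computation for a universal deformation, and the only thing to be careful about is bookkeeping of covariant versus contravariant Dieudonn\'e theory and the exact identification of $\omega_{\LT,0}$ as a sub-line-bundle of $D(\mathcal{G})$ versus a quotient of $M(\mathcal{G})$ — the excerpt has already fixed these conventions (surjection $M(\mathcal{G})\twoheadrightarrow\omega_{\LT,0}^{-1}$, inclusion $\omega_{\LT,0}\subseteq D(\mathcal{G})$), so one just follows them. The mild technical point worth a sentence in the write-up is the reduction to $\Fl_{\GM}$: one should note that $\pi_{\GM}$ being \'etale means it induces an isomorphism on sheaves of differentials and is compatible with the Grothendieck--Messing filtration and the Gauss--Manin connection by construction, so the pull-back of the Kodaira--Spencer map on $\Fl_{\GM}$ is the Kodaira--Spencer map on $\mathcal{M}_{\LT}$ up to the canonical identifications. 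After that, the "easy and standard computation on $\Fl_{\GM}$" promised in the text is just the two-line coordinate calculation above.
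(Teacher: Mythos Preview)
Your proposal is correct and is precisely the ``easy and standard computation on $\Fl_{\GM}$'' that the paper alludes to without writing out: the paper gives no proof beyond that sentence, and your reduction via the \'etale period map $\pi_{\GM}$ together with the explicit coordinate check on $\mathbb{P}^1$ is exactly what is intended.
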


\begin{para} \label{LTet}
Next we turn to the \'etale side. Note that $\mathcal{G}$ defines a $\Z_p$-local system of rank $2$ on the \'etale site of $\mathcal{M}_{\LT}$, whose dual will be denoted by $V_\LT$. Here we take a dual in order to be consistent with our normalization used for modular curves.
For $n\geq0$, we get a $\GL_2(\Z/p^n\Z)=\GL_2(\Z_p)/\Gamma(p^n)$-\'etale covering $\mathcal{M}_{\LT,n}$ of $\mathcal{M}_{\LT}$ by considering isomorphisms between $V_\LT/p^nV_\LT$ and $(\Z/p^n)^{2}$.  One main result of the work of Scholze-Weinstein \cite[Theorem 6.3.4]{SW13} shows that there exists a perfectoid space $\mathcal{M}_{\LT,\infty}$ over $\Spa(C,\cO_C)$ such that
\[\mathcal{M}_{\LT,\infty} \sim\varprojlim_n \mathcal{M}_{\LT,n}.\]
Strictly speaking, $\mathcal{M}_{\LT,\infty}$ is the strong completion of the base change to $C$ of the preperfectoid space constructed in the reference, cf. Proposition 2.3.6 of \cite{SW13}.  There is a natural continuous right action of  $\GL_2(\Z_p)$ on $\mathcal{M}_{\LT,\infty}$. A purely $p$-adic Hodge-theoretic description of $\mathcal{M}_{\LT,\infty}$ can be found in  \cite[Proposition 6.3.9]{SW13}. Using this, one can extend the action of $\GL_2(\Z_p)$ to $\GL_2(\Q_p)$. As a consequence of our normalization, the action of $\GL_2(\Q_p)$ here differs from the one in the reference by $g\mapsto (g^{-1})^t$.

Let $\omega_{\LT,\infty}$ be the pull-back of  $\omega_{\LT,0}$ to $\mathcal{M}_{\LT,\infty}$. Since the Tate module of $\mathcal{G}$ gets trivialized on $\mathcal{M}_{\LT,\infty}$,  the dual of the Hodge-Tate sequence induces a surjection
\[\Z_p^2\otimes_{\Z_p} \cO_{\mathcal{M}_{\LT,\infty}} \to \omega_{\LT,\infty}(-1)\]
whose kernel is isomorphic to $\omega^{-1}_{\LT}$ if we choose an isomorphism between $\mathcal{G}$ and its dual, i.e. a principal polarization here.
 Recall that $\Fl=\mathbb{P}^1$ denotes the flag variety of $\GL_2$. Then  the above surjection defines a $\GL_2(\Q_p)$-equivariant map, called the \textit{Hodge-Tate period map}
\[\pi_{\LT,\HT}: \mathcal{M}_{\LT,\infty}\to \Fl.\]
The image is exactly the Drinfeld upper half plane $\Omega$. This will be clear from the point of view of the Drinfeld towers, cf. Theorem \ref{LTDrdual} below.  It follows from the construction that the pull-back of the tautological ample line bundle $\omega_{\Fl}$ along $\pi_{\LT,\HT}$ is $\omega_{\LT,\infty}(-1)$.
\end{para}

\begin{para} \label{AA+}
Let $X=\Spa(A,A^+)$ be an open affinoid subset of $\mathcal{M}_{\LT}$ and $\tilde{X}=\Spa(B,B^+)$ be its preimage in $\mathcal{M}_{\LT,\infty}$.
Then $X$ is a one-dimensional smooth affinoid adic space over $\Spa(C,\cO_C)$ and $\tilde{X}$ is a $G=\GL_2(\Z_p)$-Galois pro-\'etale perfectoid covering of $X$. In \cite[Theorem 3.1.2]{Pan20}, we show that the $\GL_2(\Z_p)$-locally analytic vectors $B^{\la}\subseteq B$ satisfy a first-order differential equation $\theta_{\tilde{X}}=0$ for some $\theta_{\tilde{X}}\in B\otimes_{\Q_p}\Lie(G)$ (under some smallness assumption on $X$ which can be easily removed here, cf. \cite[Remark 3.1.5]{Pan20}).  To describe this differential equation, we recall the following construction on the flag variety $\Fl$ of $\GL_2/C$ in the last paragraph of \ref{brr}.  For a $\Spa(C,\cO_C)$-point $x$ of $\Fl$,  let $\mathfrak{b}_x,\kn_x\subseteq\mathfrak{gl}_2(C)$  be its corresponding Borel subalgebra and nilpotent subalgebra. Let
\begin{eqnarray*}
\mathfrak{g}^0&:=&\cO_{\Fl}\otimes_{C}\mathfrak{gl}_2(C),\\
\mathfrak{b}^0&:=&\{f\in \mathfrak{g}^0\,| \, f_x\in \mathfrak{b}_x,\mbox{ for all }\Spa(C,\cO_C)\mbox{-point }x\in\Fl\},\\
\mathfrak{n}^0&:=&\{f\in \mathfrak{g}^0\,| \, f_x\in \mathfrak{n}_x,\mbox{ for all }\Spa(C,\cO_C)\mbox{-point }x\in\Fl\}.
\end{eqnarray*}
$\mathfrak{g}^0$ acts naturally on $B^\la$ through $\pi_{\LT,\HT}$ in the following sense: suppose that $U$ is an open subset of $\Fl$ containing $\pi_{\LT,\HT}(\tilde{X})$, then there is a natural map $\cO_{\Fl}(U)\to B^{\la}$ induced by  $\pi_{\LT,\HT}$ and induces a natural action of $\mathfrak{g}^0(U)=\cO_{\Fl}(U)\otimes_{C}\mathfrak{gl}_2(C)$ on $B^{\la}$. Hence we get natural actions of $\mathfrak{b}^0$ and $\mathfrak{n}^0$ on $B^\la$. Note that $\mathfrak{n}^0$ is an invertible sheaf. Locally it is generated by one differential operator.
\end{para}

\begin{thm} \label{LTde}
Let $\tilde{X}=\Spa(B,B^+)$ be as above. Then $\theta_{\tilde{X}}$ is  given by a generator of $\mathfrak{n}^0$ up to $B^\times$, i.e.  $B^{\la}$ is annihilated by $\mathfrak{n}^0$. 
\end{thm}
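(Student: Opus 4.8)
The statement to prove is Theorem \ref{LTde}: for an affinoid $X = \Spa(A,A^+) \subseteq \mathcal{M}_{\LT}$ with perfectoid preimage $\tilde{X} = \Spa(B,B^+)$ in $\mathcal{M}_{\LT,\infty}$, the differential equation $\theta_{\tilde X}$ satisfied by the $\GL_2(\Z_p)$-locally analytic vectors $B^{\la}$ is, up to $B^\times$, a generator of the horizontal nilpotent subalgebra $\mathfrak{n}^0$ pulled back along $\pi_{\LT,\HT}$; equivalently, $B^{\la}$ is annihilated by $\mathfrak{n}^0$. The plan is to reduce this to a statement about the Kodaira--Spencer class of the $p$-divisible group $\mathcal{G}$, mirroring the strategy outlined in the introduction for the modular curve case (improvement (2) in the organization section). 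The point is that both sides of the comparison are governed by comparing two sources of ``variation'': the Gauss--Manin connection $\nabla_{\LT}$ on the de Rham realization $D(\mathcal{G})$ and the Hodge-theoretic position of $\omega_{\LT,0}$, i.e. the Gross--Hopkins period map $\pi_{\GM}$ on one hand, and the Hodge--Tate filtration, i.e. $\pi_{\LT,\HT}$ on the other.

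First I would recall from \cite[Theorem 3.1.2]{Pan20} the general mechanism producing $\theta_{\tilde X}$: it arises from the relative Sen operator attached to the pro-\'etale $\GL_2(\Z_p)$-torsor $\tilde X \to X$, and by the relative Sen theory the locally analytic functions are exactly the kernel of this operator acting through $B \otimes_{\Q_p} \Lie(G)$. The key input is that the Sen operator, for the local system $V_\LT$ coming from a $p$-divisible group, is computed by $p$-adic Hodge theory in terms of the Hodge--Tate filtration: concretely the ``variation of the Hodge--Tate filtration'' is the derivative of $\pi_{\LT,\HT}$. So I would argue that $\theta_{\tilde X}$ lands in $\mathfrak{n}^0$ precisely because the Hodge--Tate filtration line $\omega^{-1}_{\LT}$ inside $V_\LT \otimes \cO$ moves tangentially along the $\mathfrak{b}^0$-direction and the Sen operator sees only the nilpotent part $\mathfrak{n}^0$ of this; this is the exact local analogue of \cite[Theorem 4.2.7]{Pan20}. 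The nonvanishing (that $\theta_{\tilde X}$ is a \emph{generator} of $\mathfrak{n}^0$ up to $B^\times$, not merely a section of it) is where the Kodaira--Spencer isomorphism enters: by Proposition \ref{KSLT}, $KS$ is an isomorphism on $\mathcal{M}_{\LT}$, which says that the classical de Rham variation is nondegenerate, and under the comparison this translates to $\theta_{\tilde X}$ being nowhere-vanishing as a section of the invertible sheaf $\mathfrak{n}^0$. I would make this precise by writing a local coordinate $z$ on an affinoid of $\Fl_{\GM}$, using the local section of $\pi_{\GM}$ to trivialize $D(\mathcal{G})$ and express $\nabla_{\LT}$ explicitly, and then matching with the coordinate $x$ (or $y$) on $\Fl$ via the relation between $\pi_{\GM}$ and $\pi_{\LT,\HT}$ — both period maps for the same object, related by the comparison between $D(\mathcal{G})$ and $V_\LT \otimes B_{\dR}$.

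The cleanest route, which I would favor, is to invoke the period sheaf $\cO\B_{\dR}^+$ (or $\B_{\dR}^+/t^2$, as noted in the remark attributed to Rodriguez Camargo): on $\mathcal{M}_{\LT,\infty}$ one has a canonical trivialization of $V_\LT \otimes \B_{\dR}^+$, the Hodge filtration on $\cO\B_{\dR}^+ \otimes V_\LT$ recovers $\omega_{\LT,\infty}$ in degree one, and the horizontal sections of the relative connection $\nabla$ on $\cO\B_{\dR}^+$ are exactly the constants from $\B_{\dR}^+$. The Sen operator is then the residue at $t$ of this connection composed with the splitting of the Hodge--Tate sequence, and a direct computation — identical in shape to \cite[Theorem 4.2.7]{Pan20} — shows it equals a generator of $\mathfrak{n}^0$ times the unit coming from the Kodaira--Spencer pairing being an isomorphism (Proposition \ref{KSLT}). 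I expect the main obstacle to be bookkeeping: carefully normalizing the dualities and Tate twists so that the contravariant conventions chosen here for $V_\LT$ (and the action of $\GL_2(\Q_p)$ differing from \cite{SW13} by $g \mapsto (g^{-1})^t$) are consistent throughout, and checking that the ``smallness'' hypothesis on $X$ needed in \cite[Theorem 3.1.2]{Pan20} can indeed be removed here as claimed in \cite[Remark 3.1.5]{Pan20}. Once the conventions are pinned down, the actual differential-equation computation is a one-line residue calculation on $\mathbb{P}^1$.
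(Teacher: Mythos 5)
Your proposal is correct and follows essentially the same route as the paper: the paper's proof also reduces the statement to the fact that the Higgs field attached to $V_{\LT}$ (computed via the de Rham comparison, i.e.\ the $0$th graded piece of the connection on $V\otimes_{\Q_p}\cO\B_{\dR}$, citing Liu--Zhu) kills $\omega^{-1}_{\LT}$ and induces the $B$-linear extension of the Kodaira--Spencer map on $\gr_1$, so that Proposition \ref{KSLT} yields the generator statement. Your two suggested implementations (the $\cO\B_{\dR}^+$/graded-connection argument and the explicit coordinate computation repeating \cite[\S 4.2]{Pan20}) are exactly the two routes the paper mentions.
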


\begin{proof}
This follows from the corresponding result for the modular curves \cite[Theorem 4.2.7]{Pan20} by observing that the Lubin-Tate space $\mathcal{M}^({0)}_{\LT}$ can be embedded into modular curves. In fact, one can also repeat the argument in \cite[\S 4.2]{Pan20}. The key point in the computation there is that the \textit{Kodaira-Spencer map} is an isomorphism (Proposition \ref{KSLT}). We give a sketch of proof here, which is more conceptual than the original argument and was first suggested to me by Michael Harris.

To compute $\theta_{\tilde{X}}$, as in \cite[Remark 3.3.7]{Pan20}, we can choose a faithful finite dimensional continuous $\Q_p$-representation $V$ of $G$ and compute the corresponding \textit{Higgs field}  \cite[Remark 3.1.7]{Pan20}
\[\phi_{V}:B\otimes_{\Q_p} V \to \Omega^1_{A/C}\otimes_{A}B\otimes_{\Q_p} V(-1). \]
Then $\theta_{\tilde{X}}$ is obtained by choosing a generator of $\Omega^1_{A/C}$. In our situation, we take $V=\Q_p^2$, the $2$-dimensional representation associated to $V_{\LT}$. To compute the Higgs field, we note that $V_{\LT}$ is \textit{de Rham} and make use of the corresponding variation of $p$-adic Hodge structure $(D(\mathcal{G}),\nabla_{\LT})$. Recall that there is the dual of the Hodge-Tate sequence $0\to \omega^{-1}_{\LT}\to V\otimes \cO_{\mathcal{M}_{\LT,\infty}} \to \omega_{\LT,\infty}(-1)\to 0$ (after choosing a polarization).  This gives rise to an ascending filtration on $V\otimes_{\Q_p} \cO_{\mathcal{M}_{\LT,\infty}}$ with $\Fil_0=\omega^{-1}_{\LT}$, $\Fil_{-1}=0$ and $\Fil_1$ is everything. Observe that when we evaluate this at $\tilde{X}$,
\begin{itemize}
\item $\theta_{V}(\Fil_0(\Q_p^2\otimes B))=0$ and  $\theta_V$ induces a map
\[ \omega_{\LT,\infty}(B)(-1)=\gr_1 (\Q_p^2\otimes B)\xrightarrow{\theta_V} \Fil_0 (\Q_p^2\otimes B)\otimes_A \Omega^1_{A/C}(-1)= \omega^{-1}_{\LT} (B)\otimes_A \Omega^1_{A/C}(-1)\]
which is the unique $B$-linear map extending the Kodaira-Spencer map $KS$ \eqref{KSmap}.
\end{itemize}
In the classical theory of non-abelian Hodge theory over complex numbers, this result follows directly from the construction of the Higgs bundle from a variation of Hodge structures (which was called a system of Hodge bundles by Simpson). In this $p$-adic case, it is not too hard to deduce this from  the work of Liu-Zhu \cite[Theorem 2.1, 3.8]{LZ17} by observing that the Higgs field is obtained from the $0$th graded piece of  the connection $(V\otimes_{\Q_p}\cO\B_{\dR},1\otimes\nabla)$. \footnote{Here we need a compatibility between the Higgs bundle in Liu-Zhu's work and the Higgs field constructed in \cite{Pan20}. This can be checked on a toric chart. We plan to provide more details in a future work.} Now since $KS$ is an isomorphism,  we see that $\phi_V$ is essentially a generator of $\kn^0$ up to $B^\times$ by unraveling the definition of $\kn^0$. This is exactly what we need to show.
\end{proof}

\begin{para} \label{khLT}
Keep the same notation as above.
Once we have Theorem \ref{LTde}, we can repeat our work in \cite[\S4]{Pan20}. The action of $\mathfrak{b}^0$ on $B^{\la}$ factors through  the quotient $\mathfrak{b}^0/\mathfrak{n}^0$. As in  \ref{brr}, let  $\mathfrak{h}:=\{\begin{pmatrix} * & 0 \\ 0 & * \end{pmatrix}\}$ be a Cartan subalgebra of the Borel subalgebra $\mathfrak{b}:=\{\begin{pmatrix} * & * \\ 0 & * \end{pmatrix}\}$. It acts on $B^{\la}$ via the natural embedding $\mathfrak{h}\to\cO_{\mathrm{\Fl}}\otimes_{C}\mathfrak{h}\cong\mathfrak{b}^0/\mathfrak{n}^0$ and we denote this action by $\theta_{\LT,\kh}$. Similarly as in \ref{omegakla}, the same construction defines a natural action of $\kh$ on the $\GL_2(\Z_p)$-locally analytic vectors of $\omega_{\LT,\infty}$, which is also denoted by $\theta_{\LT,\kh}$.

We can also describe elements in $B^{\la}$ as in \cite[\S4.3]{Pan20}. By fixing a principal polarization of $\mathcal{G}$, we get a natural map $\mathcal{M}_{\LT,\infty}\to\mathrm{Isom}(\Z_p,\Z_p(1))$ of topological spaces, which classically can be defined in terms of the connected components of $\mathcal{M}_{\LT,n}$. Choosing a generator of $\Z_p(1)$ induces an identification $\mathrm{Isom}(\Z_p,\Z_p(1))=\Z_p^\times$ and $1\in\Z_p^\times$ defines a global section  of $\cO_{\mathcal{M}_{\LT,\infty}}$ which will be denoted by $\mathrm{t}$. As in \cite[\S4.3.1]{Pan20}, for $n\geq 1$, we can find
\begin{itemize}
\item $\mathrm{t}_n\in H^0(\mathcal{M}_{\LT,n},\cO_{\mathcal{M}_{\LT,n}})$ which factors through $\pi_0(\mathcal{M}_{\LT,n})$ so that $||\mathrm{t}-\mathrm{t}_n||\leq p^{-n}$.
\end{itemize}

Fix a generator of $\Z_p(1)$. Let $e_1,e_2\in H^0(\mathcal{M}_{\LT,\infty},\omega_{\LT,\infty})$ be the images of $(1,0),(0,1)\in\Z_p^2$ under the map $\Z_p^2(1)\otimes_{\Z_p} \cO_{\mathcal{M}_{\LT,\infty}} \to \omega_{\LT,\infty}$ in the dual of the Hodge-Tate sequence. Note that $x=e_2/e_1$ is a standard coordinate function on $\Fl$ and  is an invertible function on $\mathcal{M}_{\LT,\infty}$, or equivalently $e_1$ is invertible, because $\pi_{\LT,\HT}(\mathcal{M}_{\LT,\infty})= \Omega$. 

Let $G_n=\Gamma(p^n)$ and $X_n$ be the preimage of $X$ in $\mathcal{M}_{\LT,n}$. Then $X_n=\Spa(B^{G_n},(B^+)^{G_n})$.  Following \cite[\S4.3.5]{Pan20}, for each $n\geq1$, we can find 
\begin{itemize}
\item an integer $r(n)>r(n-1)>0$;
\item $x_n\in B^{G_{r(n)}}$ such that  $\|x-x_n\|_{G_{r(n)}}=\|x-x_n\|\leq p^{-n}$ in $B$;
\item $e_{1,n}\in \omega_{\LT,\infty}(\tilde{X})^{G_{r(n)}}$ invertible such that  $\|1-e_{1}/e_{1,n}\|_{G_{r(n)}}=\|1-e_{1}/e_{1,n}\|\leq p^{-n}$ in $B$. This implies that $\log(\frac{e_1}{e_{1,n}}):=-\sum_{i=1}^{+\infty}(-1)^i\frac{1}{i}(\frac{e_1}{e_{1,n}}-1)^i$ converges.
\item $||\mathrm{t}-\mathrm{t}_n||_{G_{r(n)}}=||\mathrm{t}-\mathrm{t}_n||\leq p^{-n}$. Similarly, $\log(\frac{\mathrm{t}}{\mathrm{t}_{n}})$ converges.
\end{itemize}
Here $||\cdot||_{G_{r(n)}}$ denotes the norm on $G_{r(n)}$-analytic vectors. Using these elements, we have the following description of $B^{\la}$.
\end{para}

\begin{thm} \label{expGL2}
 For any $n\geq0$, given a sequence of sections 
\[c_{i,j,k}\in B^{G_{r(n)}},i,j,k=0,1,\cdots\] 
such that the norms of $c_{i,j,k}p^{(n-1)(i+j+k)}$, $i,j,k\geq 0$ are uniformly bounded, then
\[f=\sum_{i,j,k\geq 0} c_{i,j,k}(x-x_n)^i \left(\log(\frac{e_1}{e_{1,n}})\right)^j\left(\log(\frac{\mathrm{t}}{\mathrm{t}_{n}})\right)^k\]
converges in $B^{G_{r(n)-\an}}$ and any $G_{n}$-analytic vector in $B$ arises in this way.
\end{thm}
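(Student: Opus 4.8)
The plan is to prove Theorem~\ref{expGL2} by the same strategy as \cite[Theorem 4.3.9]{Pan20}, of which it is the Lubin--Tate analogue, using the differential equation of Theorem~\ref{LTde} as the crucial geometric input. First I would localize: by \ref{LTet} the image $\pi_{\LT,\HT}(\mathcal{M}_{\LT,\infty})$ is $\Omega$, so after translating by $\GL_2(\Q_p)$ we may assume $\tilde X=\Spa(B,B^+)$ sits over a sufficiently small affinoid $X=\Spa(A,A^+)\subseteq\mathcal{M}_{\LT}$ on which $e_1$ is invertible and $x=e_2/e_1$ is a coordinate pulled back from $\Fl$ via $\pi_{\LT,\HT}$. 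The approximating elements $x_n\in B^{G_{r(n)}}$, $e_{1,n}\in\omega_{\LT,\infty}(\tilde X)^{G_{r(n)}}$ and $\mathrm{t}_n$ with the smallness estimates recalled in \ref{khLT} exist because finite-level functions are dense in $B$ (as $\mathcal{M}_{\LT,\infty}\sim\varprojlim_n\mathcal{M}_{\LT,n}$) and $\overbar\Q_p$-rational sections are dense in each $\mathcal{M}_{\LT,n}$; the convergence of the logarithms is then immediate.

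The convergence half of the statement is routine. Each of $x-x_n$, $\log(e_1/e_{1,n})$ and $\log(\mathrm{t}/\mathrm{t}_n)$ lies in the $C$-Banach space $B^{G_{r(n)}-\an}$ and has norm at most $p^{-n}$ there; since $B^{G_{r(n)}-\an}$ is an $A$-Banach algebra closed under multiplication of $G_{r(n)}$-analytic vectors, and the coefficients $c_{i,j,k}p^{(n-1)(i+j+k)}$ are uniformly bounded, the monomial $c_{i,j,k}(x-x_n)^i(\log(e_1/e_{1,n}))^j(\log(\mathrm{t}/\mathrm{t}_n))^k$ has norm $\leq C\,p^{-(i+j+k)}$ for a constant $C$ independent of $i,j,k$, so the series converges absolutely in $B^{G_{r(n)}-\an}$ (compare Example~\ref{compexa} for the topological-nilpotence bookkeeping).

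The substance is the surjectivity: every $G_n$-analytic vector of $B$ admits such an expansion. I see two routes. The direct route mimics \cite[\S 4.3]{Pan20}: Theorem~\ref{LTde} says $B^{\la}$ is annihilated by $\mathfrak{n}^0$, so on the locus where $x$ is a coordinate the action of a generator of $\mathfrak{n}^0$ expresses $\partial_x$ in terms of the $\mathfrak{h}$- and lower-triangular directions, while $\theta_{\LT,\kh}$ controls the behaviour in the $e_1$- and $\mathrm{t}$-directions; one then builds the coefficients $c_{i,j,k}$ by an inductive Taylor-type descent --- extract $c_{0,0,0}$ by restricting to the common vanishing locus of $x-x_n$, $e_1/e_{1,n}-1$ and $\mathrm{t}/\mathrm{t}_n-1$, subtract, divide, and iterate --- the estimates guaranteeing that the remainders stay bounded and the process converges being exactly as in \cite{Pan20}. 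The alternative route is to deduce Theorem~\ref{expGL2} from \cite[Theorem 4.3.9]{Pan20} itself by embedding $\mathcal{M}^{(0)}_{\LT}$ into a modular curve as in the proof of Theorem~\ref{LTde} and pulling back the expansion, after matching $x$, $e_1$, $\mathrm{t}$, the choice of principal polarization, and the $g\mapsto(g^{-1})^t$-twisted $\GL_2(\Q_p)$-action on the two towers.

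Either way, the main obstacle is precisely this surjectivity, which ultimately rests on the relative Sen/decompletion machinery of \cite[\S 3]{Pan20} together with the differential equation; in the ``embed into a modular curve'' approach the difficulty instead becomes the careful tracking of the various duals and Tate twists (the contravariant normalization of the Dieudonné module, the polarization, and the difference between the two $\GL_2(\Q_p)$-actions) needed for \cite[Theorem 4.3.9]{Pan20} to transfer cleanly. I would expect the cleanest write-up to combine the two: first prove the weight-fixed analogue of Theorem~\ref{str} for $B^{\la,\chi}$ by restriction from modular curves, and then assemble the full $B^{\la}$ from its weight pieces together with the $\log(e_1/e_{1,n})$ and $\log(\mathrm{t}/\mathrm{t}_n)$ corrections, which account for the non-semisimplicity of the $\theta_{\LT,\kh}$-action along the two Cartan directions.
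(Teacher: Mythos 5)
Your primary route is exactly what the paper does: its proof of Theorem~\ref{expGL2} is the one-line remark ``Same proof as \cite[Theorem 4.3.9]{Pan20}'', i.e.\ the Lubin--Tate transposition of that argument with Theorem~\ref{LTde} supplying the differential equation, which is precisely the strategy you outline. The convergence estimate and the identification of surjectivity (via the relative Sen/decompletion machinery) as the real content are both consistent with the paper, so the proposal is correct and essentially coincides with the intended proof.
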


\begin{proof}
Same proof as \cite[Theorem 4.3.9]{Pan20}.
\end{proof}

\begin{para}
Let $O_{D_p}:=\End(H_0)$ and $D_p:=\End(H_0)\otimes_{\Z}\Q$. Then $D_p$ is  a non-split quaternion algebra over $\Q_p$. The multiplicative group $D_p^\times$ is the group of self-quasi-isogenies of $H_0$ and acts naturally on $\mathcal{M}_{\LT}$ and $\Fl_{\GM}$ (on the right) through its action on $H_0$. By choosing a basis of $M(H_0)$, one may identify $D_p^\times$ as a subgroup of $\GL_2(C)$ and identify $\Fl_{\GM}$ with the projective space $\mathbb{P}^1$ over $\Spa(C,\cO_C)$, on which the action  of $D_p^\times$ is the usual linear action. In particular $D_p^\times$  acts continuously on $\Fl_{\GM}$ in the sense of \cite[Proposition 6.5.5]{SW13}. Its action on  $\mathcal{M}_{\LT}$ is also continuous by \cite[Proposition 19.2]{GH94}. Clearly $\pi_{\GM}$ is $D_p^\times$-equivariant.  
We remark that the left action of $D_p^\times$ on $M(H_0)\otimes_{W(\bar{F}_p)}C$ is irreducible and its action on $\wedge^2 M(H_0)\otimes_{W(\bar{F}_p)}C$ is the reduced norm map.

It follows from the constructions that  $\mathcal{M}_{\LT,n},n\geq 0$  are $D^\times_p$-equivariant finite coverings. The continuity of the action of $D^\times_p$ on $\mathcal{M}_{\LT}$ implies the continuity of the action on each $\mathcal{M}_{\LT,n}$ and hence on $\mathcal{M}_{\LT,\infty}$. Note that  $\omega_{\LT,0}$  and its pull-back to $\mathcal{M}_{\LT,n}$ and $\mathcal{M}_{\LT,\infty}$ are $D_p^\times$-equivariant line bundles. We denote by 
\[\pi_{\LT,\GM}:\mathcal{M}_{\LT,\infty}\to \Fl_{\GM}\]
the composite of the projection map $\mathcal{M}_{\LT,\infty}\to \mathcal{M}_{\LT}$ and $\pi_{\GM}$. It is $\GL_2(\Q_p)\times D_p^\times$-equivariant with respect to the trivial action of $\GL_2(\Q_p)$ on $\Fl_{\GM}$. We note that the centers $\Q_p^\times\subseteq \GL_2(\Q_p)$ and $\Q_p^\times\subseteq D_p^\times$ act in the same way on $\mathcal{M}_{\LT,\infty}$.

Keep the same notation as in \ref{AA+}. The continuity of the action of $D^\times_p$ shows that we may find an open compact subgroup $K\subseteq D^\times_p$ such that $X$ is $K$-stable. Then $K$ acts continuously on $B$. We denote by $B^{D^\times_p-\la}\subseteq B$ the subspace of $K$-locally analytic vectors, which does not depend on the choice of $K$. One consequence of Theorem \ref{expGL2} is an one-direction inclusion between the $K$-locally analytic vectors and $\GL_2(\Z_p)$-locally analytic vectors. 
\end{para}

\begin{cor} \label{BlasubsetBDpla}
$B^{\la}\subseteq B^{D^\times_p-\la}$.
\end{cor}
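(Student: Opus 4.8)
\textbf{Proof plan for Corollary \ref{BlasubsetBDpla}.}

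The plan is to exhibit, for any $\GL_2(\Z_p)$-locally analytic vector $f\in B^{\la}$, enough regularity in the $D_p^\times$-direction to conclude that $f$ is $K$-locally analytic for a suitable open compact $K\subseteq D_p^\times$ stabilizing $X=\Spa(A,A^+)$. The key point is that the explicit shape of elements of $B^{\la}$ furnished by Theorem \ref{expGL2} is built out of functions which are themselves $D_p^\times$-locally analytic. Concretely, fix $n\geq 0$ and write
\[f=\sum_{i,j,k\geq 0} c_{i,j,k}(x-x_n)^i \left(\log(\tfrac{e_1}{e_{1,n}})\right)^j\left(\log(\tfrac{\mathrm{t}}{\mathrm{t}_{n}})\right)^k\]
as in Theorem \ref{expGL2}, with $c_{i,j,k}\in B^{G_{r(n)}}$ and $\|c_{i,j,k}p^{(n-1)(i+j+k)}\|$ uniformly bounded. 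First I would observe that each of the basic building blocks is $D_p^\times$-locally analytic: the coordinate $x$ is the pullback along $\pi_{\LT,\GM}$ of a coordinate on $\Fl_{\GM}$ (where $D_p^\times$ acts by the usual linear, hence analytic, action on $\mathbb{P}^1$), so $x\in B^{D_p^\times-\la}$; the line bundle $\omega_{\LT,0}$ is $D_p^\times$-equivariant, so its sections $e_1,e_2$ and the ratio $e_1/e_{1,n}$ transform analytically under a small enough $K$, whence $\log(e_1/e_{1,n})\in B^{D_p^\times-\la}$ once $\|1-e_1/e_{1,n}\|$ is small and $K$ is chosen so that the $K$-translates stay inside the disc of convergence of $\log$; and $\mathrm{t}$ is (a choice of) trivialization of $\mathrm{Isom}(\Z_p,\Z_p(1))$, on which $D_p^\times$ acts through the reduced norm composed with the cyclotomic character — a smooth, in particular locally analytic, character — so $\log(\mathrm{t}/\mathrm{t}_n)\in B^{D_p^\times-\la}$ as well. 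Finally the coefficients $c_{i,j,k}$ lie in $B^{G_{r(n)}}\subseteq B$, and since $B$ is a (finite-type) affinoid algebra with a continuous $K$-action coming from the action of $D_p^\times$ on the finite level $X_{r(n)}$, after shrinking $K$ the whole affinoid algebra $B^{G_{r(n)}}$ becomes $K$-analytic; so $c_{i,j,k}\in B^{D_p^\times-\la}$ with a \emph{uniform} radius of analyticity.

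Granting this, the second step is to sum: I would choose a single open compact $K\subseteq D_p^\times$ (depending on $n$) which stabilizes $X$, acts analytically on $B^{G_{r(n)}}$, and keeps the $K$-orbits of $e_1/e_{1,n}$ and $\mathrm{t}/\mathrm{t}_n$ inside the respective convergence discs of $\log$. With such a $K$, each term $c_{i,j,k}(x-x_n)^i(\log(e_1/e_{1,n}))^j(\log(\mathrm{t}/\mathrm{t}_n))^k$ is $K$-analytic, and the uniform bound $\|c_{i,j,k}p^{(n-1)(i+j+k)}\|\leq M$ together with the fact that $x-x_n$, $\log(e_1/e_{1,n})$, $\log(\mathrm{t}/\mathrm{t}_n)$ lie in $p\cdot B^{+,G_{r(n)}}$ (by the estimates $\|x-x_n\|,\|1-e_1/e_{1,n}\|,\|\mathrm{t}-\mathrm{t}_n\|\leq p^{-n}$ recalled in \ref{khLT}) forces the series to converge in the Banach space $B^{K'-\an}$ for a possibly slightly smaller $K'\leq K$, with a geometric rate. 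Hence $f\in B^{D_p^\times-\la}$, which is what we want. Since $n$ was arbitrary and $B^{\la}=\varinjlim_n B^{G_n-\an}$, this gives $B^{\la}\subseteq B^{D_p^\times-\la}$.

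The main obstacle is the bookkeeping in the first step: one must verify that a \emph{single} open compact $K\subseteq D_p^\times$ can be chosen which simultaneously (a) stabilizes the affinoid $X$, (b) acts analytically on the finite-type algebra $B^{G_{r(n)}}$ with a radius independent of the indices $(i,j,k)$, and (c) respects the convergence discs of the two logarithms — and that the $D_p^\times$- and $\GL_2(\Z_p)$-analyticity radii can be aligned so that the termwise estimates survive taking the sum. This is entirely analogous to, and no harder than, the argument proving Theorem \ref{expGL2} itself (which is ``same proof as \cite[Theorem 4.3.9]{Pan20}''); the continuity of the $D_p^\times$-action on $\mathcal{M}_{\LT}$ (\cite[Proposition 19.2]{GH94}) and on $\Fl_{\GM}$ (\cite[Proposition 6.5.5]{SW13}) is exactly what guarantees (a) and (b), while (c) is the elementary observation that $\log$ converges on $1+pB^+$ and a small enough $K$ moves $e_1/e_{1,n}$ and $\mathrm{t}/\mathrm{t}_n$ only inside that set. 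I do not expect any genuinely new input beyond Theorem \ref{expGL2} and the equivariance properties already set up in \ref{LTet}--\ref{khLT}.
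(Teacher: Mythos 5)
Your proposal follows the paper's proof: both start from the explicit expansion of Theorem \ref{expGL2}, show that each building block ($x-x_n$, $\log(e_1/e_{1,n})$, $\log(\mathrm{t}/\mathrm{t}_n)$, and the finite-level coefficients $c_{i,j,k}$) is analytic for a single small open compact $K'\subseteq D_p^\times$ with norms matching the ambient ones, and then sum. The conclusion and the overall architecture are right.

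However, two of your justifications misstate the group actions, and fixing them is exactly what makes the bookkeeping you worry about in your last paragraph disappear. First, $x=e_2/e_1$ is \emph{not} the pullback of a coordinate on $\Fl_{\GM}$ along $\pi_{\LT,\GM}$; it is the pullback of a coordinate on $\Fl$ along the Hodge--Tate period map $\pi_{\LT,\HT}$, and under the duality of Theorem \ref{LTDrdual} this map is identified with $\pi_{\Dr,\GM}$, on whose target $D_p^\times$ acts \emph{trivially}. So $D_p^\times$ fixes $x$ and $e_1$ outright (rather than acting by M\"obius transformations), and likewise $e_{1,n}$, $x_n$ live at finite level where a small $K'$ acts analytically on the finite-type affinoid $B^{G_{r(n)}}$ with norm equal to the ambient norm. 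This is why $\|x-x_n\|_{K'}=\|x-x_n\|$ can be arranged and no alignment of radii between the two group actions is needed: the termwise estimates of Theorem \ref{expGL2} carry over verbatim. Second, the action of $D_p^\times$ on $\mathrm{t}$ is via the reduced norm character valued in $\Z_p^\times$, which is locally analytic but \emph{not} smooth (it is not locally constant); your conclusion that $\log(\mathrm{t}/\mathrm{t}_n)$ is $D_p^\times$-locally analytic still holds, but not for the reason you give. With these corrections your argument is the paper's argument.
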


We will see (Corollary \ref{GL2Dpan}) that $B^{D^\times_p-\la}\subseteq B^{\la}$ by swapping the roles of $D^\times_p$ and $\GL_2(\Q_p)$. 

\begin{proof}
Recall that $X_{r(n)}=\Spa(B^{G_{r(n)}},(B^+)^{G_{r(n)}})$ is affinoid of finite type over  $\Spa(C,\cO_C)$. In particular, there exists an open subgroup $K'\subseteq K$ of the form $1+p^m\cO_{D_p}$ such that the action of $K'$ on $(B^+)^{G_{r(n)}}/p$ is trivial. This implies that the action of $K'$ on $B^{G_{r(n)}}$ is analytic. Hence the $K'$-analytic norm on $B^{G_{r(n)}}$ is equivalent with the norm induced from $B$. Note that $D^\times_p$ acts trivially on $x,e_1$. Its action on $\mathrm{t}$ factors through the reduced norm map hence agrees the reduced norm map because the center of $D^\times_p$ acts in the same way as the center of $\GL_2(\Q_p)$.  By shrinking $K'$ if necessary, we may assume that the $K'$-analytic norm $||x-x_n||_{K'}$ is equal to $||x-x_n||$ and similar statements hold for $\log(e_1/e_{1,n})$ and $\log(\mathrm{t}/\mathrm{t}_n)$. Hence the function $f$ in Theorem \ref{expGL2} is $K'$-analytic. Our claim now follows as any $f\in B^{\la}$ has this form.
\end{proof}

\begin{rem}
So far we are assuming that $X$ is an affinoid subset of $\mathcal{M}_{\LT}$. It is easy to see that all of these results are still true if $X$ is an open affinoid subset of $\mathcal{M}_{\LT,n}$ for some $n\geq0$.
\end{rem}

\begin{para} \label{OLTla}
Let $\mathcal{M}_{\LT,\infty}^{(0)}$ be the preimage of $\mathcal{M}_{\LT}^{(0)}$ in $\mathcal{M}_{\LT,\infty}$. Recall that the superscript $(0)$ means the subset parametrizing quasi-isogenies  of height $0$.  Note that $\mathcal{M}_{\LT,\infty}^{(0)}$ is $\GL_2(\Q_p)^0\times \cO^\times_{D_p}$-stable, where $\GL_2(\Q_p)^0\subseteq \GL_2(\Q_p)$ denotes the subgroup of elements with determinants in $\Z_p^\times$. Since $\pi_{\LT,\HT}$ has images in $\Omega$, we denote by 
\[\pi_{\LT,\HT}^{(0)}:\mathcal{M}_{\LT}^{(0)}\to \Omega\] 
the restriction of $\pi_{\LT,\HT}$ to $\mathcal{M}_{\LT}^{(0)}$ and $\Omega$.
By embedding Lubin-Tate spaces into the modular curves (Theorem \ref{ssunif} below), we may invoke \cite[Theorem III.1.2]{Sch15} and conclude that there exists a basis of open affinoid subsets $\mathfrak{B}$ of $\Omega$ such that for any $U\in\mathfrak{B}$, the preimage $(\pi^{(0)}_{\LT,\HT})^{-1}(U)$ in $\mathcal{M}_{\LT,\infty}^{(0)}$ is affinoid perfectoid and is also the preimage of some open affinoid subset of $\mathcal{M}^{(0)}_{\LT,n}$. In particular, we can apply previous results for $\tilde{X}$ to $(\pi^{(0)}_{\LT,\HT})^{-1}(U)$. Let 
\[\cO_{\LT}:=\pi^{(0)}_{\LT,\HT}{}_* \cO_{\mathcal{M}_{\LT,\infty}^{(0)}}.\]
This is a $\GL_2(\Q_p)^0$-equivariant sheaf on $\Omega$ equipped with an action of $\cO^\times_{D_p}$. We denote by 
\[\cO^{\la}_{\LT}\subseteq \cO_{\LT}\]
the subsheaf of $\GL_2(\Q_p)^0$-locally analytic sections. By Theorem \ref{LTde}, $\cO^{\la}_{\LT}$ is annihilated by $\mathfrak{n}^0$ and we get an induced action of $\kh$ on it via $\mathfrak{h}\to\cO_{\mathrm{\Fl}}\otimes_{C}\mathfrak{h}=\mathfrak{b}^0/\mathfrak{n}^0$, which will also be denoted by $\theta_{\LT,\kh}$ by abuse of notation. As before, given a weight $\chi=(n_1,n_2):\kh\to C$, we denote the $\chi$-isotypic part of $\cO^{\la}_{\LT}$  by $\cO^{\la,\chi}_{\LT}$. 
\end{para}

\begin{para}[Compare \ref{lalg0k}] \label{LTlalg}
Suppose $\chi=(n_1,n_2)\in\Z^2$ and $k=n_2-n_1\geq 0$. By Theorem \ref{expGL2}, for $i\in\{0,\cdots,k\}$ and $s\in\omega^{-k,\sm}_{\LT}$, the product $\mathrm{t}^{n_1}e_1^ie_2^{k-i} s$ defines an element in $\cO^{\la,(n_1,n_2)}_{\LT}$. We denote by
\[\cO^{\lalg,(n_1,n_2)}_{\LT}\subseteq \cO^{\la,(n_1,n_2)}_{\LT}\]
the subsheaf spanned by sections of this form. As pointed out in \ref{lalg0k} and suggested by the notation, it exactly consists of $\GL_2(\Z_p)$-locally algebraic vectors. Equivalently, it is the image of the natural map 
\[H^0(\Fl,\omega^k_{\Fl})(k)\otimes_C \omega^{-k,\sm}_{\LT}\cdot\mathrm{t}^{n_1}\to \cO^{\la,(n_1,n_2)}_{\LT}.\]
From this, we see that $\cO^{\lalg,(n_1,n_2)}_{\LT}=\Sym^k V(k)\otimes_{\Q_p}  \omega^{-k,\sm}_{\LT}\cdot\mathrm{t}^{n_1}$ as a representation of $\GL_2(\Z_p)$.
\end{para}

\begin{para}\label{dLT}
Let
\[\cO^{\sm}_{\LT}\subseteq \cO_{\LT}\]
be the subsheaf of $\GL_2(\Q_p)^0$-smooth sections. Equivalently, this is also 
\[(\pi^{(0)}_{\LT,\HT})_{*} (\varinjlim_{n}(\pi_n)^{-1} \cO_{\mathcal{M}^{(0)}_{\LT,n}}),\] 
where $\pi_n:\mathcal{M}_{\LT,\infty}\to \mathcal{M}_{\LT,n}$ is the natural projection. It is naturally an $\cO_{\Omega}$-module. Similarly, we can define $\omega^{\la}_{\LT},\omega^{\sm}_{\LT}$ and their tensor powers $\omega^{k,\la}_{\LT},\omega^{k,\sm}_{\LT}$.  They are natural $\cO^{\sm}_{\LT}\otimes_{C}\cO_{\Omega}$-modules. Denote by 
\[\Omega^{1}_{\LT}:=(\pi^{(0)}_{\LT,\HT})_{*} (\varinjlim_{n}(\pi_n)^{-1} \Omega^1_{\mathcal{M}^{(0)}_{\LT,n}}).\]
It is non-canonically isomorphic to $\omega^{2,\sm}_{\LT}$ via the Kodaira-Spencer isomorphism and we have the usual derivation
\[d_{\LT}:\cO^{\sm}_{\LT}\to \Omega^{1}_{\LT}.\]
Let $k$ be a non-negative integer. As in \ref{XCY},  the Kodaira-Spencer isomorphism implies that the $(k+1)$-th power of  $d_{\LT}$ defines a map
\[(d_{\LT})^{k+1}:\omega^{-k,\sm}_{\LT}\to \omega^{-k,\sm}_{\LT}\otimes_{\cO^{\sm}_{\LT}} (\Omega^{1}_{\LT})^{\otimes k+1}.\]
We  can repeat our work in \ref{Do1} with everything with subscript $K^p$ replaced by $\LT$. In particular, we have the following result which can be viewed as the restriction of Theorem \ref{I1} to the supersingular locus. \end{para}

\begin{thm} \label{LTI1}
Suppose $\chi=(n_1,n_2)\in\Z^2$ and $k=n_2-n_1\geq 0$. Then there exists a unique natural continuous operator
\[d_{\LT}^{k+1}:\cO^{\la,\chi}_{\LT}\to \cO^{\la,\chi}_{\LT}\otimes_{\cO^{\sm}_{\LT}}(\Omega^1_{\LT})^{\otimes k+1}\]
satisfying the following properties:
\begin{enumerate}
\item $d_{\LT}^{k+1}$ is $\cO_{\Omega}$-linear;
\item $d_{\LT}^{k+1}(\mathrm{t}^{n_1}e_1^ie_2^{k-i} s)=\mathrm{t}^{n_1}e_1^ie_2^{k-i}  (d_{\LT})^{k+1}(s)$ for any section $s\in \omega^{-k,\sm}_{\LT}$ and $i=0,1,\cdots,k$.
\end{enumerate}
\end{thm}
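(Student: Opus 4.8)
The plan is to follow the proof of Theorem \ref{I1} line by line, with every sheaf decorated by $K^p$ replaced by its Lubin-Tate counterpart; and, as a cross-check, to obtain $d_{\LT}^{k+1}$ by restricting the operator $d^{k+1}$ of Theorem \ref{I1} along the uniformization of the supersingular locus. For uniqueness, one first records the Lubin-Tate analogue of Theorem \ref{str} (it follows from Theorem \ref{expGL2} exactly as Theorem \ref{str} follows from \cite[Theorem 4.3.9]{Pan20}): for $U\in\mathfrak{B}$ the sections $\mathrm{t}^{n_1}e_1^ie_2^{k-i}s$ with $s\in\omega^{-k,\sm}_{\LT}$ and $0\le i\le k$ span a dense $\cO_{\Omega}(U)$-submodule of $\cO^{\la,\chi}_{\LT}(U)$. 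Hence any continuous $\cO_{\Omega}$-linear operator satisfying the second listed property is unique.

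For existence when $k=0$, fix $U\in\mathfrak{B}$ with $e_1$ invertible on $(\pi^{(0)}_{\LT,\HT})^{-1}(U)$ and $x$ a coordinate, and choose the elements $x_n$ as in \ref{khLT}. Writing a $G_{r(n)}$-analytic vector as $f=\mathrm{t}^{n_1}\sum_{i\ge 0}c_i(x-x_n)^i$ with the $c_i$ defined at finite level and $\|c_ip^{(n-1)i}\|$ uniformly bounded, set
\[
d_n(f)=\mathrm{t}^{n_1}\Big(\sum_{i\ge 0}(x-x_n)^i\,d_{\LT}c_i-\sum_{i\ge 0}(i+1)c_{i+1}(x-x_n)^i\,d_{\LT}x_n\Big),
\]
the formal Leibniz expansion. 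Continuity of $d_{\LT}$ on finite-level affinoids forces convergence in $\bigl(\cO^{\la,\chi}_{\LT}\otimes_{\cO^{\sm}_{\LT}}\Omega^1_{\LT}\bigr)(U)$; evaluating on finite sums gives $d_{n+1}|_{A^n}=d_n$ and independence of the choice of $x_n$, so the $d_n$ glue to a map of sheaves $d^1_{\LT}$ after using the $\GL_2(\Q_p)^0$-action to reduce to this chart. The $\cO_{\Omega}$-linearity is checked first on polynomials in $x$ and then propagated by density of the image of $C\langle x\rangle[1/g]$ in $\cO_{\Omega}(U)$ for rational $U$, exactly as in the proof of Theorem \ref{I1}.

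For general $k$ one runs the BGG construction: extend the Hodge filtration and the (dual) Gauss-Manin connection on $D(\mathcal{G})$ from $\cO^{\sm}_{\LT}$-linearly to $D^{\chi'}_{\LT}:=D^{\sm}_{\LT}\otimes_{\cO^{\sm}_{\LT}}\cO^{\la,\chi'}_{\LT}$ with $\chi'=\chi-(0,k)$, using $d^1_{\LT}$ in place of $d^1$; by Proposition \ref{KSLT} the induced composite $\Fil^1 D^{\chi'}_{\LT}\to\gr^0 D^{\chi'}_{\LT}\otimes_{\cO^{\sm}_{\LT}}\Omega^1_{\LT}$ is an isomorphism. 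Applying the construction of $\theta^{k+1}$ from \ref{XCY} to $\Sym^k$ of the appropriate twist of $D^{\chi'}_{\LT}$ produces $d_{\LT}^{k+1}$, and the second property is immediate from the construction. Equivalently, by Theorem \ref{ssunif} the preimage $\pi^{-1}_{\HT}(\Omega)$ is a finite disjoint union of copies of $\mathcal{M}^{(0)}_{\LT,\infty}$, compatibly with $\cO^{\la,\chi}$, $\omega^{k,\sm}$, $(\Omega^1(\mathcal{C})^{\sm})^{\otimes k+1}$ (the cusps disappear on the supersingular locus) and the Kodaira-Spencer isomorphisms; restricting $d^{k+1}$ of Theorem \ref{I1} along this identification gives an operator with the same defining properties, hence equal to $d_{\LT}^{k+1}$ by uniqueness.

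The only genuinely non-formal ingredient is the Lubin-Tate local structure theory — the analogue of Theorem \ref{str} and of the Banach spaces $A^n$, i.e. the explicit $(x-x_n)$-expansion description of $\cO^{\la,\chi}_{\LT}(U)$. Once Theorem \ref{expGL2} is available this is the same bookkeeping as in \cite[\S 4.3, \S 5.1]{Pan20}, but it is what simultaneously underlies the convergence of $d_n$ and the density statement needed for uniqueness; with that in place, the rest of the argument is a transcription of the modular-curve case, using Proposition \ref{KSLT} wherever the proof of Theorem \ref{I1} invokes the Kodaira-Spencer isomorphism on $\mathcal{X}$.
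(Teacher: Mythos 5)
Your proposal is correct and follows essentially the same route as the paper: the paper's own treatment consists precisely of the remark that one repeats the construction of Theorem \ref{I1} with every subscript $K^p$ replaced by $\LT$ (uniqueness from the density statement supplied by Theorem \ref{expGL2}, existence via the explicit $(x-x_n)$-expansion for $k=0$ and the BGG construction with Proposition \ref{KSLT} for general $k$), and it explicitly views the result as the restriction of Theorem \ref{I1} to the supersingular locus, which is your cross-check. Nothing further is needed.
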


Similarly, we can repeat our work in \ref{DII}, cf.  Theorem \ref{I2} and Proposition \ref{dbarsurj}.

\begin{thm} \label{LTI2}
Suppose $\chi=(n_1,n_2)\in\Z^2$ and $k=n_2-n_1\geq 0$. 
Then there exists a natural continuous operator
\[\bar{d}_{\LT}^{k+1}:\cO^{\la,\chi}_{\LT}\to \cO^{\la,\chi}_{\LT}\otimes_{\cO_{\Omega}}(\Omega^1_{\Omega})^{\otimes k+1}\]
formally can be regarded as $(d_{\Omega})^{k+1}$ satisfying following properties:
\begin{enumerate}
\item $\bar{d}_{\LT}^{k+1}$ is $\cO^{\sm}_{\LT}$-linear;
\item there exists a non-zero constant $c\in\Q$ such that $\bar{d}_{\LT}^{k+1}(s)=c(u^+)^{k+1}(s)\otimes (dx)^{k+1}$ for any $s\in \cO^{\la,\chi}_{\LT}$ ,where $u^+=\begin{pmatrix} 0 & 1 \\ 0 & 0 \end{pmatrix} \in \Lie(\GL_2(\Q_p))$.
\end{enumerate}
Moreover,
\begin{enumerate}
\item $\bar{d}_{\LT}^{k+1}$ is surjective and commutes with $\GL_2(\Z_p)$.
\item $\ker(\bar{d}_{\LT}^{k+1})=\cO^{\lalg,\chi}_{\LT}$.
\end{enumerate}
\end{thm}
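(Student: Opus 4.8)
The plan is to reduce Theorem \ref{LTI2} to its analogue on modular curves, Theorem \ref{I2} together with Proposition \ref{dbarsurj}, exactly as Theorem \ref{LTde} was reduced to \cite[Theorem 4.2.7]{Pan20}. Recall from \ref{OLTla} and Theorem \ref{ssunif} that the Lubin–Tate space $\mathcal{M}^{(0)}_{\LT}$ embeds into a modular curve via the uniformization of the supersingular locus, and that this embedding is compatible with the Hodge–Tate period maps, the tautological line bundles, and hence with the horizontal Cartan actions $\theta_{\LT,\kh}$ and $\theta_\kh$. First I would construct $\bar{d}^{k+1}_{\LT}$ by repeating verbatim the BGG-type construction of \ref{dbar}–\ref{genklambdak}: by Theorem \ref{LTde}, $\cO^{\la}_{\LT}$ is annihilated by $\mathfrak{n}^0$, so the action of $u^+$ defines an $\cO_\Omega$-linear derivation $\bar{d}^1_{\LT}:\cO^{\la,(0,0)}_{\LT}\to\cO^{\la,(0,0)}_{\LT}\otimes_{\cO_\Omega}\Omega^1_\Omega$ via Beilinson–Bernstein localization on $\Fl$, and one takes the $\lambda_k$-isotypic component of its $k$-th symmetric power (using the restriction of the Hodge–Tate sequence \eqref{rHTH} to $\mathcal{M}^{(0)}_{\LT}$ and Harish-Chandra's description of infinitesimal characters via $\theta_{\LT,\kh}$, exactly as in Lemma \ref{secinf}) to obtain the operator on $\cO^{\la,(0,k)}_{\LT}$; multiplying by powers of $\mathrm{t}$ handles a general integral weight $\chi$.

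Property (1), $\cO^{\sm}_{\LT}$-linearity, is immediate from the construction since the operator comes from the $\GL_2(\Q_p)^0$-action, which commutes with passing to smooth sections. For property (2), I would argue as in Theorem \ref{I2}: on an open subset of $\Omega$ where $e_1$ is invertible (which is all of $\Omega$, cf. \ref{khLT}), the derivation $\frac{d}{dx}$ on $\cO_\Fl$ agrees with the action of $u^+$, and a direct computation on $\Fl_{\GM}$ — identical to the one on the flag variety in \ref{dbar} — shows the $\lambda_k$-isotypic projection reproduces $(u^+)^{k+1}(s)\otimes(dx)^{k+1}$ up to a nonzero rational constant. Surjectivity and the computation of the kernel then follow from Proposition \ref{dbarsurj}: surjectivity of $u^+$ on such open sets is \cite[Proposition 5.2.9]{Pan20}, whose proof is local and applies to $(\pi^{(0)}_{\LT,\HT})^{-1}(U)$ for $U\in\mathfrak{B}$ by the comparison in \ref{OLTla}; and $\ker(\bar{d}^{k+1}_{\LT})$ is $\mathfrak{gl}_2(\Q_p)$-stable and killed by $(u^+)^{k+1}$, hence $\mathfrak{gl}_2$-locally finite, so a consideration of infinitesimal characters (via $\theta_{\LT,\kh}$ and \cite[Corollary 4.2.8]{Pan20}) identifies it with the $\Sym^k V$-isotypic subsheaf, which by \ref{LTlalg} is exactly $\cO^{\lalg,\chi}_{\LT}$. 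Equivariance under $\GL_2(\Z_p)$ is built into the construction.

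The only genuinely nontrivial point — the same one that was the crux of Theorem \ref{LTde} — is verifying that the local structure theory and the differential-equation input transfer correctly from modular curves to $\mathcal{M}^{(0)}_{\LT}$, i.e. that Scholze's construction \cite[Theorem III.1.2]{Sch15} gives a basis $\mathfrak{B}$ of $\Omega$ over which $(\pi^{(0)}_{\LT,\HT})^{-1}(U)$ is affinoid perfectoid and pulled back from a finite level, so that Theorem \ref{str}, Theorem \ref{expGL2}, and Proposition \ref{dbarsurj} all apply. I expect this to be the main obstacle only in the bookkeeping sense: once the compatibility of the embedding $\mathcal{M}^{(0)}_{\LT}\hookrightarrow\mathcal{X}_{K^p}$ with $\pi_{\HT}$, $\omega_{K^p}$, $\omega_{\LT,\infty}$, and $\mathrm{t}$ is recorded (as in \ref{OLTla} and Theorem \ref{ssunif}), every assertion in Theorem \ref{LTI2} is the restriction of the corresponding assertion about $\cO^{\la,\chi}_{K^p}$ already proved in Sections \ref{DII} and the present section, and no new idea is required. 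I would therefore write the proof as: ``Repeat the construction of \ref{dbar}–\ref{genklambdak} with every subscript $K^p$ replaced by $\LT$, using Theorem \ref{LTde} in place of \cite[Theorem 4.2.7]{Pan20} and the supersingular uniformization to transport Theorem \ref{str}, Proposition \ref{dbarsurj}; the explicit formula and the kernel computation are then identical to those in Theorem \ref{I2} and Proposition \ref{dbarsurj}.''
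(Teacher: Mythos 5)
Your proposal is correct and follows exactly the route the paper takes: the paper's entire justification for Theorem \ref{LTI2} is the remark ``we can repeat our work in \ref{DII}, cf. Theorem \ref{I2} and Proposition \ref{dbarsurj}'', with Theorem \ref{LTde} supplying the differential equation and the supersingular uniformization transporting the local structure theory, which is precisely what you spell out. (One small terminological slip: $\bar{d}^1_{\LT}$ is not $\cO_\Omega$-linear but rather a derivation over $\cO_\Omega$ that is $\cO^{\sm}_{\LT}$-linear, as you correctly state when verifying property (1).)
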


In the next section, we will show that there is a similar description of $\ker d_{\LT}^{k+1}$ and $\coker d_{\LT}^{k+1}$ by swapping the roles of $\GL_2(\Q_p)$ and $D^\times_p$.

\subsection{The Drinfeld space at the infinite level} \label{Drinfty}

\begin{para}
In this subsection, we study the action of $D_p^\times$ on $\mathcal{M}_{\LT,\infty}$. To do this, we use the isomorphism between the Lubin-Tate space and Drinfeld space at the infinite level. See Theorem E of \cite{SW13} and the discussion below it for some historical remarks. Roughly speaking, all of the results for Lubin-Tate towers obtained in the previous subsection are also true for the Drinfeld towers by exactly the same arguments.
\end{para}

\begin{para}[Compare \ref{LTsetup}] \label{Drsetup}
We begin by recalling the construction of Drinfeld towers, cf. \cite{Dr76,BC91}. 

Let $H_1$ be a special formal $\cO_{D_p}$-module over $\bar{\F}_p$. It is unique up to isogeny and  is isomorphic to $H_0\times H_0$ as a formal group.  Consider the functor which  assigns $R\in\mathrm{Nilp}_{W(\bar{\F}_p)}$ to the set of isomorphism classes of deformations of $H_1$ to $R$, i.e. pairs $(G,\rho)$ where $G$ is a special formal $\cO_{D_p}$-module over $R$ and $\rho:H_1\otimes_{\bar{\F}_p} R/p \to G \otimes_{R} R/p$ is a quasi-isogeny. Drinfeld showed that this is pro-represented by a formal scheme $\mathcal{M}'$ over $\Spf W(\bar{\F}_p)$. We denote by $\mathcal{M}_{\Dr}$ the base change of the generic fiber of $\mathcal{M}'$ (viewed as an adic space)  from $\Spa(W(\bar{\F}_p)[\frac{1}{p}],W(\bar{\F}_p))$  to $\Spa(C,\cO_C)$. Then $\mathcal{M}_{\Dr}$ is the disjoint union of $\mathcal{M}^{(i)}_{\Dr}$, $i\in\Z$, where $\mathcal{M}^{(i)}_{\Dr}$ denotes the subset with quasi-isogeny of degree $i$.

Let $(\mathcal{G}',\rho')$ be a universal deformation of $H_1$ on $\mathcal{M}'$. Its covariant Dieudonn\'e crystal defines a vector bundle $M(\mathcal{G}')$ of rank $4$ on $\mathcal{M}_{\Dr}$ equipped with an action of $D_p$ and an integrable connection $\nabla_{\Dr,0}$, the (dual of) Gauss-Manin connection. Let $M(H_1)$ denote the Dieudonn\'e module of $H_1$. Then $\rho'$ induces a natural trivialization $M(\mathcal{G}')\cong M(H_1)\otimes_{W(\bar{\F}_p)}\cO_{\mathcal{M}_{\Dr}}$ under which $\nabla_{\Dr,0}$ is identified with the standard connection on $\cO_{\mathcal{M}_{\Dr}}$. 

The Lie algebra of $\mathcal{G}'$ defines a rank-$2$ vector bundle $\Lie(\mathcal{G}')\otimes C$ on $\mathcal{M}_{\Dr}$. By the Grothendieck-Messing theory, $(\mathcal{G}',\rho')$ gives rises to a $D_p$-equivariant surjection
\[M(H_1)\otimes_{W(\bar{\F}_p)}\cO_{\mathcal{M}_{\Dr}} \cong M(\mathcal{G}')\to \Lie(\mathcal{G}')\otimes C. \]
This induces the so-called \textit{Gross-Hopkins period map} \cite{GH94}
\[\pi'_{\GM}:\mathcal{M}_{\Dr}\to \Fl'_{\GM},\]
where $\Fl'_{\GM}$ is the adic space over $\Spa(C,\cO_C)$ associated to the flag variety parametrizing $2$-dimensional $D_p$-equivariant quotients of the $4$-dimensional $C$-vector space $M(H_1)\otimes_{W(\bar{\F}_p)} C$. Drinfeld proved that $\pi'_{\GM}|_{\mathcal{M}^{(0)}}$ is an isomorphism onto its image. Moreover, one can naturally identify $\Fl'_{\GM}$ with $\mathbb{P}^1$ (cf. Theorem \ref{LTDrdual} below) and the image of $\pi'_{\GM}$ is exactly $\Omega$, the Drinfeld upper half plane. 

As in  \ref{LTsetup}, we denote by $D(\mathcal{G}')$ the dual of $M(\mathcal{G}')$. Then there is a natural inclusion $(\Lie(\mathcal{G}')\otimes C)^\vee\subseteq D(\mathcal{G}')$ which defines the Hodge filtration on $D(\mathcal{G}')$. Again we have the Kodaira-Spencer isomorphism, which can be easily checked on $\Omega$.
\end{para}

\begin{para}[Compare \ref{LTet}]
Note that $\mathcal{G}'$ defines a $\Z_p$-local system of rank $4$ equipped with an action of $D_p$ on the \'etale site of $\mathcal{M}_{\Dr}$, whose dual will be denoted by  $V_\Dr$.
For $n\geq0$, we get an $(\cO_{D_p}/p^n)^\times=\cO_{D_p}^\times/(1+p^n\cO_{D_p})$-\'etale covering $\mathcal{M}_{\Dr,n}$ of $\mathcal{M}_{\Dr}$ by considering $\cO_{D_p}$-equivariant isomorphisms between $V_\Dr/p^n V_\Dr$ and $\cO_{D_p}/p^n$. Scholze-Weinstein \cite[Theorem 6.5.4]{SW13} shows that there exists a perfectoid space $\mathcal{M}_{\Dr,\infty}$ over $\Spa(C,\cO_C)$ such that
\[\mathcal{M}_{\Dr,\infty} \sim\varprojlim_n \mathcal{M}_{\Dr,n}.\]
It admits a natural continuous right action of  $\cO_{D_p}^\times$, which can be extended naturally to action of $D_p^\times$. On the other hand, we can fix an isomorphism $\End(H_1)\otimes_{\Z}\Q\cong M_2(\Q_p)$. Then  the multiplicative group $\GL_2(\Q_p)$ is the group of self-quasi-isogenies of $H_1$ and acts naturally on $\mathcal{M}_{\Dr},\,\mathcal{M}_{\Dr,n}, \,\mathcal{M}_{\Dr,\infty}$ and $\Fl'_{\GM}$ (on the right) through its action on $H_1$. All of these actions are  continuous. We denote by
\[\pi_{\Dr,\GM}: \mathcal{M}_{\Dr,\infty}\to \Fl'_{\GM}\]
the composite of the projection map $ \mathcal{M}_{\Dr,\infty}\to  \mathcal{M}_{\Dr}$ and $\pi'_{\GM}$. It is $\GL_2(\Q_p)\times D_p^\times$-equivariant with respect to the trivial action of $D_p^\times$ on $\Fl'_{\GM}$.

Let $\Lie(\mathcal{G}')\otimes\cO_{\mathcal{M}_{\Dr,\infty}}$ be the pull-back of  $\Lie(\mathcal{G}')\otimes C$ to $\mathcal{M}_{\Dr,\infty}$. Since the Tate module of $\mathcal{G}'$ gets trivialized on $\mathcal{M}_{\Dr,\infty}$ with $\cO_{D_p}$, the  Hodge-Tate sequence induces a $D_p$-equivariant surjection
\[ \cO_{D_p}\otimes_{\Z_p} \cO_{\mathcal{M}_{\Dr,\infty}} \to (\Lie(\mathcal{G}')\otimes\cO_{\mathcal{M}_{\Dr,\infty}})^{\vee}(-1),\]
whose kernel is isomorphic to $\Lie(\mathcal{G}')\otimes\cO_{\mathcal{M}_{\Dr,\infty}}$ if we choose an isomorphism between $\mathcal{G}'$ and its dual, i.e. a principal polarization.
Let $\Fl'$ be the adic space over $\Spa(C,\cO_C)$ associated to the flag variety parametrizing $2$-dimensional $D_p$-equivariant quotients of the $4$-dimensional $C$-vector space $ \cO_{D_p}\otimes_{\Z_p} C$. Then  the Hodge-Tate sequence defines a $D_p^\times$-equivariant map, called the \textit{Hodge-Tate period map}
\[\pi_{\Dr,\HT}: \mathcal{M}_{\Dr,\infty}\to \Fl'.\]

We have the following duality result due to Scholze-Weinstein \cite[Proposition 7.2.2, Theorem 7.2.3]{SW13}.
\end{para}

\begin{thm} \label{LTDrdual}
There are natural $\GL_2(\Q_p)\times D_p^\times$-equivariant isomorphisms 
\[\mathcal{M}_{\LT,\infty}\cong \mathcal{M}_{\Dr,\infty},\] 
and $\Fl\cong\Fl'_{\GM}$,  $\Fl'\cong\Fl_{\GM}$ so that $\pi_{\LT,\HT}$ (resp. $\pi_{\Dr,\HT}$) is getting identified with
$\pi_{\Dr,\GM}$ (resp. $\pi_{\LT,\GM}$) and $\mathcal{M}_{\LT,\infty}^{(0)}$ is identified with $\mathcal{M}^{(0)}_{\Dr,\infty}$.
\end{thm}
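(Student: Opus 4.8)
\textbf{Proof proposal for Theorem \ref{LTDrdual}.}
The statement to be proved is the Scholze--Weinstein duality isomorphism between the Lubin--Tate and Drinfeld towers at infinite level, together with the compatibilities among the various period maps. The plan is to follow the strategy of \cite[\S 7.2]{SW13}, reinterpreted through the functorial description of $p$-divisible groups via their associated modifications of vector bundles on the Fargues--Fontaine curve. First I would recall that, by \cite[Proposition 6.3.9, Theorem 6.5.4]{SW13}, both $\mathcal{M}_{\LT,\infty}$ and $\mathcal{M}_{\Dr,\infty}$ admit purely $p$-adic Hodge-theoretic moduli descriptions: a point of $\mathcal{M}_{\LT,\infty}$ over an affinoid perfectoid $\Spa(R,R^+)$ classifies a pair consisting of a trivialization $\Z_p^{2}\xrightarrow{\sim} T$ of the Tate module of a $p$-divisible group $G/R^+$ of the correct type, together with the quasi-isogeny rigidification to $H_0$; dually, a point of $\mathcal{M}_{\Dr,\infty}$ classifies a special formal $\cO_{D_p}$-module with a trivialization of its Tate module as an $\cO_{D_p}$-module, plus the rigidification to $H_1$. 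The key observation is that, after inverting $p$, both data are encoded by the same object: a pair $(W, W')$ of $\GL_2(\Q_p)\times D_p^\times$-stable lattices/modifications inside the same isocrystal $M(H_0)\otimes M(H_1)^{-1}$-type datum, symmetric in the roles of the two groups. I would make this precise by exhibiting a natural transformation of moduli functors in both directions and checking they are mutually inverse on $\Spa(C,\cO_C)$-points first, then on all affinoid perfectoid test objects.

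Concretely, the central step is the construction of the isomorphism on the level of the universal $p$-divisible groups. Given a point $(G,\rho, \alpha)$ of $\mathcal{M}_{\LT,\infty}$, one forms the $\cO_{D_p}$-module $G' := G \times G$ with its natural $\cO_{D_p}$-action coming from $\cO_{D_p}\hookrightarrow M_2(\Z_p)$ and shows, using the trivialization $\alpha$, that it is a special formal $\cO_{D_p}$-module; the trivialization of the Tate module of $G$ induces one of $G'$ as an $\cO_{D_p}$-module, and the quasi-isogeny $\rho$ to $H_0$ induces one to $H_1 \cong H_0\times H_0$. This produces a map $\mathcal{M}_{\LT,\infty}\to\mathcal{M}_{\Dr,\infty}$; the reverse map is constructed symmetrically, using that a special formal $\cO_{D_p}$-module canonically decomposes (up to isogeny) as a square of a height-$2$ formal group after extending scalars. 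The $\GL_2(\Q_p)\times D_p^\times$-equivariance is built into the construction: $\GL_2(\Q_p)$ acts on the $\cO_{D_p}$-structure side as self-quasi-isogenies of $H_1\cong H_0^{2}$, while $D_p^\times$ acts as self-quasi-isogenies of $H_0$, and these are visibly exchanged. One then reads off the identifications $\Fl\cong\Fl'_{\GM}$ and $\Fl'\cong\Fl_{\GM}$ by observing that the Hodge--Tate filtration datum that cuts out $\pi_{\LT,\HT}$ is, under this dictionary, exactly the Grothendieck--Messing/Hodge filtration datum that cuts out $\pi_{\Dr,\GM}$ (and vice versa): both are the position of the $\Lie$ quotient inside the (dual) Dieudonné module, just viewed from the two sides. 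The identification $\mathcal{M}_{\LT,\infty}^{(0)}\cong\mathcal{M}^{(0)}_{\Dr,\infty}$ follows from tracking the height/degree of the quasi-isogeny: height $0$ on the Lubin--Tate side corresponds to degree $0$ on the Drinfeld side under $G\mapsto G\times G$.

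The main obstacle, and the part that genuinely requires the input of \cite{SW13}, is verifying that the two natural transformations between the moduli functors are well-defined as maps of perfectoid spaces and are inverse to each other at the level of the \emph{integral} (or rather, the $\cO_C$-lattice) data, not merely rationally. This is where one needs the full force of the classification of $p$-divisible groups over $\cO_C$ (and over general perfectoid $\cO_C$-algebras) in terms of pairs $(T, \Xi)$ with $T$ a finite free $\Z_p$-module and $\Xi$ a $B_{\dR}^+$-lattice in $T\otimes B_{\dR}$, as in \cite[\S\S 5--6]{SW13}; the duality is then the statement that this classification is compatible with the two rigidifications, which is checked by a direct comparison of the two lattice chains. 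Since this theorem is quoted verbatim from \cite[Proposition 7.2.2, Theorem 7.2.3]{SW13}, for the purposes of this paper it suffices to cite that reference; I include the sketch above only to indicate why all the equivariances and the compatibility with $\pi_{\LT,\HT},\pi_{\Dr,\HT},\pi_{\LT,\GM},\pi_{\Dr,\GM}$ hold simultaneously, which is exactly the form in which the result will be used in the next subsection to transport Theorems \ref{LTI1} and \ref{LTI2} to the Drinfeld side.
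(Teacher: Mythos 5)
The operative step of your proposal coincides with what the paper actually does: Theorem \ref{LTDrdual} is not proved in the paper at all, it is quoted from \cite[Proposition 7.2.2, Theorem 7.2.3]{SW13}, so ending on ``it suffices to cite that reference'' is exactly right. If you had stopped there, there would be nothing to say.

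However, the heuristic sketch you attach is not a correct account of the Scholze--Weinstein duality, and the central construction you propose cannot be carried out. First, there is no $\Q_p$-algebra embedding $\cO_{D_p}\hookrightarrow M_2(\Z_p)$: $D_p$ is a division algebra and $M_2(\Q_p)$ is split, so an embedding $D_p\to M_2(\Q_p)$ would be an isomorphism of central simple algebras, which is absurd; hence $G\times G$ carries no natural $\cO_{D_p}$-action. Second, and more structurally, any recipe of the form $G\mapsto G\times G$ would be defined on each finite level and would therefore produce an isomorphism $\mathcal{M}_{\LT}^{(0)}\cong\mathcal{M}_{\Dr}^{(0)}$, i.e.\ an isomorphism between the open unit disc and $\Omega$, which is false; the duality exists \emph{only} at infinite level, and any correct argument must explain why. (Relatedly, the claim that a special formal $\cO_{D_p}$-module decomposes up to isogeny as a square of a height-$2$ formal group is true over $\bar{\F}_p$ but fails for the universal deformation over $\Omega$.) The actual mechanism of \cite[\S 7.2]{SW13} is linear-algebraic rather than geometric: by their Proposition 6.3.9 an infinite-level point of $\mathcal{M}_{\LT,\infty}$ is a Frobenius-equivariant map $\Q_p^2\to M(H_0)\otimes B^+_{\cris}$ subject to a rank condition, and one reinterprets this same datum, using $M(H_1)\cong M(H_0)^{\oplus 2}$ and the $D_p$-action on $M(H_1)$, as a $D_p$-equivariant map $\cO_{D_p}\to M(H_1)\otimes B^+_{\cris}$, i.e.\ an infinite-level point of $\mathcal{M}_{\Dr,\infty}$. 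It is precisely this swap of the roles of the trivialized Tate module and the rigidified Dieudonn\'e module that exchanges the Hodge--Tate and Grothendieck--Messing conditions and hence identifies $\Fl$ with $\Fl'_{\GM}$ and $\Fl'$ with $\Fl_{\GM}$, which is the compatibility the paper needs in Subsection \ref{Drinfty}. Since your sketch is offered as the justification for these equivariances and compatibilities, it should either be corrected along these lines or omitted in favour of the bare citation.
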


This shows that $\pi_{\LT,\HT}(\mathcal{M}_{\LT,\infty})=\Omega$. Under the duality isomorphism, there is a natural isomorphism $\omega_{\LT,\infty}(-1)\cong\omega_{\Dr,\infty}^{-1}$ because both are the pull-backs of $\omega_{\Fl}$ along $\pi_{\LT,\HT}$ and $\pi_{\Dr,\GM}$. We will freely use these isomorphisms from now on.

\begin{para}[Compare \ref{AA+}] \label{DrBB+}
Let $Y$ be an open affinoid subset of $\mathcal{M}_{\Dr}$ and $\tilde{Y}=\Spa(B,B^+)$ be its preimage in $\mathcal{M}_{\Dr,\infty}$. Assume that
\begin{itemize}
\item $\pi_{\Dr,\HT}(\tilde{Y})\neq \Fl'$, hence $\pi_{\Dr,\HT}(\tilde{Y})$ is contained in an affinoid open subset of $\Fl'$ as it is compact.
\end{itemize}
We note that such $Y$ forms a basis of open affinoid subsets of $\mathcal{M}_{\Dr}$. Indeed, since $\GL_2(\Q_p)$ acts transitively on $\pi_{\LT,\GM}^{-1}(y)$ for any $\Spa(C,\cO_C)$-point $y$ of $\Fl_{\GM}$ (cf. Proposition 23.28 of \cite{GH94} and the discussion below it), it is enough to consider those $Y$'s such that there exists a $\Spa(C,\cO_C)$-point of $\mathcal{M}_{\Dr}$ not contained in any translations of $Y$ under $\GL_2(\Q_p)$.

$Y$ is a one-dimensional smooth affinoid adic space over $\Spa(C,\cO_C)$ and $\tilde{Y}$ is a $H=\cO^\times_{D_p}$-Galois pro-\'etale perfectoid covering of $Y$. Again by \cite[Theorem 3.1.2]{Pan20}, the $\cO^\times_{D_p}$-locally analytic vectors $B^{D^\times_p-\la}\subseteq B$ satisfy a first-order differential equation $\theta_{\tilde{Y}}=0$ for some $\theta_{\tilde{Y}}\in B\otimes_{\Q_p}\Lie(H)$. It turns out that as in the case of Lubin-Tate tower, $\theta_{\tilde{Y}}$ essentially comes from the pull-back of the horizontal nilpotent subalgebra along the Hodge-Tate period map. More precisely, for a $\Spa(C,\cO_C)$-point $y$ of $\Fl'$,  it follows from the  construction that $y$ corresponds to a $2$-dimensional $D_p$-stable $C$-subspace of $D_p\otimes_{\Q_p}C$, equivalently, corresponds to a Borel subalgebra $\mathfrak{b}_y\subseteq C\otimes_{\Q_p} \Lie(D_p^\times)$. Denote the nilpotent subalgebra of $\mathfrak{b}_y$ by $\kn_y$. Let
\begin{eqnarray*}
\mathfrak{g}_{D_p}^0&:=&\cO_{\Fl'}\otimes_{\Q_p}\Lie(D_p^\times),\\
\mathfrak{b}^0_{D_p}&:=&\{f\in \mathfrak{g}_{D_p}^0\,| \, f_y\in \mathfrak{b}_y,\mbox{ for all }\Spa(C,\cO_C)\mbox{-point }y\in\Fl'\},\\
\mathfrak{n}^0_{D_p}&:=&\{f\in \mathfrak{g}_{D_p}^0\,| \, f_y\in \mathfrak{n}_y,\mbox{ for all }\Spa(C,\cO_C)\mbox{-point }y\in\Fl'\}.
\end{eqnarray*}
Then we have a natural action of $\mathfrak{g}_{D_p}^0$ on $B^{D^\times_p-\la}$ just as at  the end of \ref{AA+}. 
\end{para}

\begin{thm} \label{Drde}
Let $\tilde{Y}=\Spa(B,B^+)$ be as above. Then $\theta_{\tilde{Y}}$ is given by a generator of $\mathfrak{n}_{D_p}^0$ up to $B^\times$ i.e.  $B^{D^\times_p-\la}$ is annihilated by $\mathfrak{n}_{D_p}^0$. 
\end{thm}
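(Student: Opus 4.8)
The statement is the Drinfeld-side analogue of Theorem \ref{LTde}, so the whole strategy is to replay the proof of Theorem \ref{LTde} verbatim with the roles of the two towers interchanged, using the Kodaira-Spencer isomorphism on the Drinfeld side (noted at the end of \ref{Drsetup}) as the key geometric input. First I would recall that, by \cite[Theorem 3.1.2]{Pan20} applied to the $\cO_{D_p}^\times$-Galois pro-\'etale perfectoid cover $\tilde Y = \Spa(B,B^+) \to Y$, the space $B^{D_p^\times-\la}$ is annihilated by a first-order operator $\theta_{\tilde Y} \in B\otimes_{\Q_p}\Lie(\cO_{D_p}^\times) = B\otimes_{\Q_p}\Lie(D_p^\times)$, which — as explained in \cite[Remark 3.1.7]{Pan20} — is computed from the \emph{Higgs field} attached to a faithful continuous representation of $\cO_{D_p}^\times$. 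The natural choice here is $V_\Dr$, the rank-$4$ $\Z_p$-local system with its $\cO_{D_p}$-action; equivalently one works with the $D_p\otimes_{\Q_p}C$-module coming from $\cO_{D_p}\otimes_{\Z_p}C$, which is exactly the data cutting out $\Fl'$.

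The heart of the argument is to identify this Higgs field with (a generator of) $\mathfrak{n}_{D_p}^0$ up to $B^\times$. As in the proof of Theorem \ref{LTde}, I would use that $V_\Dr$ is \emph{de Rham}: the relevant variation of $p$-adic Hodge structure is $(D(\mathcal{G}'),\nabla_{\Dr})$ with its Hodge filtration coming from the inclusion $(\Lie(\mathcal{G}')\otimes C)^\vee \subseteq D(\mathcal{G}')$. Restricting the Hodge-Tate sequence $0\to \Lie(\mathcal{G}')\otimes\cO_{\mathcal{M}_{\Dr,\infty}} \to \cO_{D_p}\otimes_{\Z_p}\cO_{\mathcal{M}_{\Dr,\infty}} \to (\Lie(\mathcal{G}')\otimes\cO_{\mathcal{M}_{\Dr,\infty}})^\vee(-1)\to 0$ to $\tilde Y$, one checks that the Higgs field kills the sub and induces a $B$-linear map on the quotient which is the unique $B$-linear extension of the Kodaira-Spencer map on $\mathcal{M}_{\Dr}$. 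Invoking Liu-Zhu \cite[Theorem 2.1, 3.8]{LZ17} to match the Higgs field with the $0$th graded piece of $(V_\Dr\otimes_{\Q_p}\cO\B_{\dR},1\otimes\nabla)$, exactly as on the Lubin-Tate side, and then using that Kodaira-Spencer is an isomorphism on the Drinfeld space (which follows from an easy computation on $\Omega = \pi'_{\GM}(\mathcal{M}_\Dr^{(0)})$, or directly from Theorem \ref{LTDrdual} since $\omega_{\Dr,\infty}^{-1}\cong\omega_{\LT,\infty}(-1)$ and the two Kodaira-Spencer maps are interchanged), one concludes that $\theta_{\tilde Y}$ is, up to a unit, a generator of $\mathfrak{n}_{D_p}^0$, by unraveling the definition of $\mathfrak{n}_{D_p}^0$ in terms of the Borel subalgebras $\mathfrak{b}_y$ attached to points $y\in\Fl'$.

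Alternatively — and this is probably the cleanest route given Theorem \ref{LTDrdual} — one can deduce the statement directly from Theorem \ref{LTde} by transport of structure. Under the Scholze-Weinstein duality isomorphism $\mathcal{M}_{\LT,\infty}\cong\mathcal{M}_{\Dr,\infty}$, the Hodge-Tate period map $\pi_{\Dr,\HT}$ is identified with the Gross-Hopkins period map $\pi_{\LT,\GM}$ on the Lubin-Tate side, and the action of $D_p^\times$ here corresponds to the action of $D_p^\times$ there. The point is that the differential equation \cite[Theorem 3.1.2]{Pan20} is intrinsic to the perfectoid cover and the group acting, so $\theta_{\tilde Y}$ for the $\cO_{D_p}^\times$-cover on the Drinfeld side is the same operator as the one governing $\cO_{D_p}^\times$-locally analytic vectors on $\mathcal{M}_{\LT,\infty}$. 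On the Lubin-Tate side, however, one can run the same Higgs-field computation with respect to the $D_p^\times$-action: the relevant geometric object is now the period map $\pi_{\LT,\GM}$ to $\Fl_{\GM}\cong\Fl'$, and the $D_p^\times$-variation of Hodge structure is again $(D(\mathcal{G}),\nabla_{\LT})$ but viewed through its Gross-Hopkins (rather than Hodge-Tate) filtration, for which the Kodaira-Spencer map \eqref{KSmap} is still an isomorphism by Proposition \ref{KSLT}. Either way, the conclusion is that $\theta_{\tilde Y}$ generates $\mathfrak{n}_{D_p}^0$ up to $B^\times$.

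\textbf{Main obstacle.} The genuinely delicate point — the same one flagged in the footnote to the proof of Theorem \ref{LTde} — is the compatibility between the Higgs bundle appearing in Liu-Zhu's work and the Higgs field $\theta_{\tilde Y}$ constructed via \cite{Pan20}; this needs to be checked on a toric chart and is where all the real content sits. Everything else (the vanishing of the Higgs field on the sub, the identification of the induced map on the quotient with Kodaira-Spencer, the unraveling of $\mathfrak{n}_{D_p}^0$) is formal once that compatibility is granted. A secondary, purely bookkeeping nuisance is keeping track of the normalization conventions: the $D_p^\times$-action on $\mathcal{M}_{\Dr,\infty}$ and the dualities $\Fl\cong\Fl'_{\GM}$, $\Fl'\cong\Fl_{\GM}$ must be used consistently so that "nilpotent subalgebra of the Borel attached to $y\in\Fl'$" really is the operator annihilating $B^{D_p^\times-\la}$ and not its opposite.
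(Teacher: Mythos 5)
Your proposal is correct and follows essentially the same route as the paper, which proves Theorem \ref{Drde} by repeating the proof of Theorem \ref{LTde} verbatim once one observes that the Kodaira-Spencer map is also an isomorphism for the Drinfeld tower. The additional transport-of-structure argument via Theorem \ref{LTDrdual} and your identification of the main obstacle (the Liu-Zhu compatibility, flagged in the paper's own footnote) are consistent with what the paper does.
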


\begin{proof}
Same proof as Theorem \ref{LTde} by observing that the Kodaira-Spencer isomorphism also holds for the Drinfeld towers.
\end{proof}

\begin{para}[Compare \ref{khLT}] \label{khDr}
Theorem \ref{Drde} implies that there is a natural action of $\mathfrak{b}^0_{D_p}/\mathfrak{n}^0_{D_p}$ on $B^{D_p^\times-\la}$. Fix an isomorphism $\iota:C\otimes_{\Q_p} \Lie(D^\times_p)\cong \mathfrak{gl}_2(C)$. Then we can identify $\Fl$ with $\Fl'$, hence get an isomorphism $\mathfrak{b}^0_{D_p}/\mathfrak{n}^0_{D_p}\cong \cO_{\Fl'}\otimes_C \kh$. We denote the induced action of $\kh\to \cO_{\Fl'}\otimes_C \kh\cong \mathfrak{b}^0_{D_p}/\mathfrak{n}^0_{D_p}$ on $B^{D_p^\times-\la}$ by $\theta_{\Dr,\kh}$. It is easy to see that  $\theta_{\Dr,\kh}$ does not depend on $\iota$.  Similarly as in \ref{omegakla}, the same construction defines a natural action of $\kh$ on the $D_p^\times$-locally analytic vectors of $\omega_{\Dr,\infty}$, which is also denoted by $\theta_{\Dr,\kh}$.

We can also describe elements in $B^{D_p^\times-\la}$ in exactly the same way as $B^{\la}$ in Theorem \ref{expGL2}. Using the chosen polarization, we can define global sections  $\mathrm{t}'$ of $\cO_{\mathcal{M}_{\Dr,\infty}}$ and $\mathrm{t}'_n\in H^0(\mathcal{M}_{\Dr,n},\cO_{\mathcal{M}_{\Dr,n}})$ which factors through $\pi_0(\mathcal{M}_{\Dr,n})$ so that $||\mathrm{t}'-\mathrm{t}'_n||\leq p^{-n}$, $n\geq 0$. In fact, we can simply choose $\mathrm{t}'=\mathrm{t}$ because $\cO_{D_p}^\times$ acts on $\mathrm{t}$ via the reduced norm map, cf. proof of Corollary \ref{BlasubsetBDpla}.

Let $W_p$ be a $2$-dimensional irreducible representation of $D_p$ over $C$ in $D_p\otimes_{\Q_p} C$, which is unique up to isomorphisms. Then $\Hom_{C[D_p]}(W_p,\Lie(\mathcal{G}')\otimes C)$
defines a line bundle on $\mathcal{M}_{\Dr}$. The inverse of its pull-back to $\mathcal{M}_{\Dr,*}$ will be denoted by $\omega_{\Dr,*}$ for $*=0,1,\cdots$ and $\infty$. Note that $\omega_{\Dr,\infty}$ is isomorphic to the pull-back of the tautological ample line bundle $\omega_{\Fl'}$ on $\Fl'$ along $\pi_{\Dr,\HT}$. Choose a basis $f_1,f_2$ of $H^0(\Fl',\omega_{\Fl'})$ so that $\pi_{\Dr,\HT}^*f_1$ is invertible on $\tilde{Y}$. This is possible as $\pi_{\Dr,\HT}(\tilde{Y})\neq \Fl'$. Then $y=f_2/f_1$ is an invertible function on $\tilde{Y}$. Let $H_n=1+p^n\cO_{D_p}$. As in \ref{khLT},  for $n\geq 0$, we can find 
\begin{itemize}
\item an integer $r(n)>r(n-1)>0$;
\item $y_n\in B^{H_{r(n)}}$ such that  $\|y-y_n\|_{H_{r(n)}}=\|y-y_n\|\leq p^{-n}$ in $B$;
\item $f_{1,n}\in \omega_{\Dr,\infty}(\tilde{Y})^{H_{r(n)}}$ invertible such that  $\|1-f_{1}/f_{1,n}\|_{H_{r(n)}}=\|1-f_{1}/f_{1,n}\|\leq p^{-n}$ in $B$. This implies that $\log(\frac{f_1}{f_{1,n}})$ makes sense.
\item $||\mathrm{t}-\mathrm{t}'_n||_{H_{r(n)}}=||\mathrm{t}-\mathrm{t}'_n||\leq p^{-n}$.
\end{itemize}
\end{para}

\begin{thm} \label{expDp}
For any $n\geq0$, given a sequence of sections 
\[d_{i,j,k}\in B^{H_{r(n)}},i,j,k=0,1,\cdots\] 
such that the norms of $d_{i,j,k}p^{(n-1)(i+j+k)}$ are uniformly bounded, then
\[f=\sum_{i,j,k\geq 0} c_{i,j,k}(y-y_n)^i \left(\log(\frac{f_1}{f_{1,n}})\right)^j\left(\log(\frac{\mathrm{t}}{\mathrm{t}'_{n}})\right)^k\]
converges in $B^{H_{r(n)-\an}}$ and any $H_{n}$-analytic vector in $B$ arises in this way. 
\end{thm}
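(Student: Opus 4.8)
The statement is the Drinfeld-side analogue of Theorem \ref{expGL2}, and the plan is to run exactly the same argument with $\GL_2(\Q_p)$ replaced by $D_p^\times$ and the Lubin-Tate geometry replaced by the Drinfeld geometry. The starting point is Theorem \ref{Drde}: the $\cO_{D_p}^\times$-locally analytic vectors $B^{D_p^\times-\la}$ are annihilated by the horizontal nilpotent subalgebra $\mathfrak{n}^0_{D_p}$, which is the precise analogue of Theorem \ref{LTde}. From this differential equation one extracts a ``propagation'' statement for locally analytic sections exactly as in \cite[\S 4.3]{Pan20}: writing $\tilde Y = \Spa(B,B^+)$ over the affinoid $Y\subseteq\mathcal{M}_{\Dr}$ with Galois group $H=\cO_{D_p}^\times$, and using the invertible function $y=f_2/f_1$ (available since $\pi_{\Dr,\HT}(\tilde Y)\neq\Fl'$) together with the invertible sections $f_1$ and $\mathrm{t}=\mathrm{t}'$, one shows that an $H_n$-analytic vector, once known up to high order along the three ``directions'' $y-y_n$, $\log(f_1/f_{1,n})$, $\log(\mathrm{t}/\mathrm{t}'_n)$, is determined and that the displayed series converges to an $H_{r(n)}$-analytic vector. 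The three chosen quantities are the natural coordinates: $y$ is a coordinate on (an affinoid in) $\Fl'$ pulled back along $\pi_{\Dr,\HT}$, while $\log(f_1/f_{1,n})$ and $\log(\mathrm{t}/\mathrm{t}'_n)$ control the ``fiber directions'' of the pro-\'etale cover, just as $\log(e_1/e_{1,n})$ and $\log(\mathrm{t}/\mathrm{t}_n)$ do in the Lubin-Tate case.

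Concretely I would proceed in the following steps. First, fix $n$ and choose $r(n)>r(n-1)>0$ together with $y_n\in B^{H_{r(n)}}$, an invertible $f_{1,n}\in\omega_{\Dr,\infty}(\tilde Y)^{H_{r(n)}}$ and $\mathrm{t}'_n$ as in the list preceding the statement, so that $\|y-y_n\|$, $\|1-f_1/f_{1,n}\|$, $\|\mathrm{t}-\mathrm{t}'_n\|$ are all $\le p^{-n}$ and hence the relevant logarithms converge; this is verbatim the recipe of \cite[\S 4.3.5]{Pan20}, valid because $\mathcal{M}_{\Dr,\infty}\sim\varprojlim_n\mathcal{M}_{\Dr,n}$ gives density of finite-level functions. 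Second, prove convergence of $f=\sum_{i,j,k\ge 0} d_{i,j,k}(y-y_n)^i(\log(f_1/f_{1,n}))^j(\log(\mathrm{t}/\mathrm{t}'_n))^k$ in the $H_{r(n)}$-analytic Banach space, using that $d_{i,j,k}p^{(n-1)(i+j+k)}$ are uniformly bounded and that $y-y_n$, $\log(f_1/f_{1,n})$, $\log(\mathrm{t}/\mathrm{t}'_n)$ are topologically nilpotent of the appropriate radius (Example \ref{compexa}-type estimates). Third, for the reverse inclusion, take an arbitrary $H_n$-analytic vector $f\in B$; the differential equation $\mathfrak{n}^0_{D_p}f=0$ of Theorem \ref{Drde} forces $f$ to be, locally on the Hodge-Tate period, a power series in $y-y_n$ with coefficients in the $\mathfrak{n}^0_{D_p}$-horizontal sections, and one then expands those coefficients as power series in $\log(f_1/f_{1,n})$ and $\log(\mathrm{t}/\mathrm{t}'_n)$ exactly as in the proof of \cite[Theorem 4.3.9]{Pan20}. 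The uniform bound on the $d_{i,j,k}$ comes from the Banach norm on $H_n$-analytic vectors.

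Since essentially every ingredient has a Lubin-Tate counterpart already established in the excerpt — Theorem \ref{Drde} mirrors Theorem \ref{LTde}, the perfectoid limit $\mathcal{M}_{\Dr,\infty}\sim\varprojlim\mathcal{M}_{\Dr,n}$ mirrors the Lubin-Tate one, and the Kodaira-Spencer isomorphism holds equally on $\mathcal{M}_{\Dr}$ — the honest content of the proof is just the bookkeeping, and indeed one expects a one-line proof of the form ``Same proof as \cite[Theorem 4.3.9]{Pan20}'' (compare the proof given for Theorem \ref{expGL2}). The only point that genuinely requires care, and the main obstacle, is the normalization of the group actions on the three distinguished sections: one must check that $\cO_{D_p}^\times$ acts on $\mathrm{t}$ through the reduced norm (so that $\mathrm{t}'$ can indeed be taken equal to $\mathrm{t}$, cf. the proof of Corollary \ref{BlasubsetBDpla}), that $\Fl'$ and its line bundle $\omega_{\Fl'}$ are the correct flag variety for the $D_p^\times$-action under the duality isomorphism $\Fl'\cong\Fl_{\GM}$ of Theorem \ref{LTDrdual}, and that $f_1$ really is invertible on $\tilde Y$ under the hypothesis $\pi_{\Dr,\HT}(\tilde Y)\neq\Fl'$ (so that $\{Y\}$ of this shape form a basis, using transitivity of $\GL_2(\Q_p)$ on $\pi_{\LT,\GM}^{-1}(y)$ as noted in \ref{DrBB+}). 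Once these identifications are pinned down, the argument of \cite[Theorem 4.3.9]{Pan20} transports without change.
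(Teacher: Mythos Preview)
Your proposal is correct and matches the paper's approach exactly: the paper's proof is the single line ``Same proof as \cite[Theorem 4.3.9]{Pan20},'' which you anticipated, and the points you flag as requiring care (the action of $\cO_{D_p}^\times$ on $\mathrm{t}$ via the reduced norm, the identification of $\Fl'$ via Theorem \ref{LTDrdual}, and the invertibility of $f_1$ under the hypothesis $\pi_{\Dr,\HT}(\tilde Y)\neq\Fl'$) are precisely the ones set up in \ref{khDr} and \ref{DrBB+} before the statement.
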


\begin{proof}
Same proof as \cite[Theorem 4.3.9]{Pan20}.
\end{proof}

Note that by continuity, $Y$ is $K$-stable for some open subgroup $K\subseteq \GL_2(\Q_p)$. As before, we denote by $B^{\la}$ the subspace of $K$-locally analytic vectors. 

\begin{cor} \label{GL2Dpan}
Let $\Spa(B,B^+)$ be as in \ref{DrBB+}. Then $B^{\la}=B^{D_p^\times-\la}$, i.e. the subspace of $\GL_2(\Q_p)$-locally analytic vectors is the same as the subspace of $D_p^\times$-locally analytic vectors on $\mathcal{M}_{\LT,\infty}$. 
\end{cor}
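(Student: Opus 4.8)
The statement to prove is Corollary \ref{GL2Dpan}: for $\Spa(B,B^+)$ the preimage in $\mathcal{M}_{\Dr,\infty}$ of an open affinoid $Y\subseteq\mathcal{M}_{\Dr}$ with $\pi_{\Dr,\HT}(\tilde Y)\neq\Fl'$, the $\GL_2(\Q_p)$-locally analytic vectors $B^{\la}$ coincide with the $D_p^\times$-locally analytic vectors $B^{D_p^\times-\la}$. One inclusion is essentially already available: Corollary \ref{BlasubsetBDpla} (applied in the Drinfeld picture, which is legitimate by the duality isomorphism $\mathcal{M}_{\LT,\infty}\cong\mathcal{M}_{\Dr,\infty}$ of Theorem \ref{LTDrdual}) gives $B^{\la}\subseteq B^{D_p^\times-\la}$ once one knows the roles are symmetric under the duality. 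So the plan is to prove the reverse inclusion $B^{D_p^\times-\la}\subseteq B^{\la}$ and then observe that, by the same argument carried out with the roles of $\GL_2(\Q_p)$ and $D_p^\times$ interchanged, the two coincide.

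The key point is that the explicit description of $D_p^\times$-locally analytic vectors in Theorem \ref{expDp} exhibits every $H_n$-analytic vector $f\in B$ as a convergent series
\[
f=\sum_{i,j,k\geq 0} d_{i,j,k}(y-y_n)^i\Bigl(\log(\tfrac{f_1}{f_{1,n}})\Bigr)^j\Bigl(\log(\tfrac{\mathrm{t}}{\mathrm{t}'_n})\Bigr)^k,
\]
with coefficients $d_{i,j,k}\in B^{H_{r(n)}}$ satisfying the stated uniform boundedness, and $\mathrm{t}'=\mathrm{t}$ since $\cO_{D_p}^\times$ acts on $\mathrm{t}$ through the reduced norm. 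First I would fix an open compact $K\subseteq\GL_2(\Q_p)$ stabilising $Y$, and after shrinking $K$ to a subgroup of the form $1+p^mM_2(\Z_p)$ arrange that its action on $(B^+)^{H_{r(n)}}/p$ is trivial, so that the $K$-analytic norm on $B^{H_{r(n)}}$ is equivalent to the norm induced from $B$. This is the exact analogue of the argument in the proof of Corollary \ref{BlasubsetBDpla}, just with the two groups swapped. Next I would check that each of the three ``variables'' $y=f_2/f_1$, $\log(f_1/f_{1,n})$, $\log(\mathrm{t}/\mathrm{t}'_n)$ is itself $K$-locally analytic with $K$-analytic norm equal to (or controlled by) its $B$-norm: the function $y$ and the section $f_1$ come from $\Fl'\cong\Fl_{\GM}$, on which $\GL_2(\Q_p)$ acts trivially (Theorem \ref{LTDrdual}), so $\GL_2(\Q_p)$ acts on them through the projection $\mathcal{M}_{\Dr,\infty}\to\mathcal{M}_{\Dr}$ and on $y,f_1$ they are pulled back from a $\GL_2(\Q_p)$-trivial base up to the (algebraic, hence analytic) twist by a power of $f_1$; and $\mathrm{t}$ is a global section on which $\GL_2(\Q_p)$ acts through $\varepsilon'_p\circ\det$, which is manifestly locally analytic. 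Hence $\log(f_1/f_{1,n})$ and $\log(\mathrm{t}/\mathrm{t}'_n)$ are topologically nilpotent $K$-analytic functions after possibly shrinking $K$ once more, and $(y-y_n)$ is topologically nilpotent by the choice $\|y-y_n\|\leq p^{-n}$. Plugging these into the series and using that $B^{K-\an}$ (for $K$ small enough) is a Banach algebra on which the displayed series converges with respect to the $K$-analytic norm, one concludes $f\in B^{\la}$. Passing to the inductive limit over $n$ gives $B^{D_p^\times-\la}\subseteq B^{\la}$.

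The main obstacle, as usual in these decompletion arguments, is the bookkeeping of norms: one must verify that the $K$-analytic norm of each generating element agrees with the ambient $B$-norm (not merely is bounded by it), so that the uniform boundedness of the coefficients $d_{i,j,k}p^{(n-1)(i+j+k)}$ in $B$ upgrades to uniform boundedness in $B^{K-\an}$ after the substitution, guaranteeing convergence in $B^{K-\an}$ rather than just in $B$. This is handled exactly as in Corollary \ref{BlasubsetBDpla}: shrink $K$ until its action on the relevant integral model mod $p$ is trivial, so the two norms on the finite-level affinoid $Y_{r(n)}$ coincide, and then use that $\GL_2(\Q_p)$ acts on $x,e_1$ (here $y,f_1$) trivially and on $\mathrm{t}$ through the reduced-norm/cyclotomic character. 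Once this norm compatibility is in place, everything else is formal. Finally, for the statement $B^{\la}=B^{D_p^\times-\la}$: combining $B^{\la}\subseteq B^{D_p^\times-\la}$ (Corollary \ref{BlasubsetBDpla} transported through the duality) with the inclusion just proved yields equality. Alternatively, one notes that the whole argument is symmetric in the pair $(\GL_2(\Q_p),\text{Lubin-Tate})$ versus $(D_p^\times,\text{Drinfeld})$ under Theorem \ref{LTDrdual}, so proving one inclusion in the Drinfeld picture automatically gives the other in the Lubin-Tate picture, and these are the same space.
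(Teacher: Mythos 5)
Your proposal is correct and follows essentially the same route as the paper: the new inclusion $B^{D_p^\times-\la}\subseteq B^{\la}$ is obtained by rerunning the proof of Corollary \ref{BlasubsetBDpla} with the roles of $\GL_2(\Q_p)$ and $D_p^\times$ swapped, using the explicit series of Theorem \ref{expDp} together with the facts that $\GL_2(\Q_p)$ fixes $y,f_1$ and acts on $\mathrm{t}$ through the determinant, while the reverse inclusion is Corollary \ref{BlasubsetBDpla} itself. The one point you gloss over, which the paper flags explicitly, is that Corollary \ref{BlasubsetBDpla} is stated for preimages of affinoids in the Lubin--Tate tower, whereas $\tilde{Y}$ is the preimage of an affinoid of $\mathcal{M}_{\Dr}$ and need not be of that form; this is repaired by covering $Y$ by finitely many affinoids whose preimages in $\mathcal{M}_{\Dr,\infty}\cong\mathcal{M}_{\LT,\infty}$ do satisfy the hypothesis (cf.\ \ref{OLTla}) and checking local analyticity on the pieces of the cover.
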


\begin{rem}
Our proof relies on explicit calculations. It will be very interesting and conceptually satisfying to have a more intrinsic proof.
\end{rem}

\begin{rem}
I was informed by Gabriel Dospinescu that this result was also obtained by himself and Juan Esteban Rodriguez Camargo.
\end{rem}

\begin{proof}
The same proof of Corollary \ref{BlasubsetBDpla} shows that as  a direct consequence of Theorem \ref{expDp}, we have $B^{D_p^\times-\la}\subseteq B^{\la}$. On the other hand,  $B^{\la}\subseteq B^{D_p^\times-\la}$ by Corollary \ref{BlasubsetBDpla}. Hence we have an equality here. Strictly speaking, in Corollary \ref{BlasubsetBDpla}, we assume that $\tilde{Y}$ is the preimage of some open affinoid subset of $\mathcal{M}_{\LT,n}$.  This is not a problem here because it follows from the discussion in \ref{OLTla} that we can find a finite cover of $Y$ by open affinoid subsets of $\mathcal{M}_{\Dr}$, whose preimages in $\mathcal{M}_{\Dr,\infty}$ can be applied with Corollary \ref{BlasubsetBDpla}.
\end{proof}

\begin{para}
We can rewrite these results on the sheaf level as in \ref{OLTla}. Recall that $\cO_{\LT}=\pi^{(0)}_{\LT,\HT}{}_* \cO_{\mathcal{M}_{\LT,\infty}^{(0)}}$,   and $\cO_{\LT}^{\la}$ (resp. $\cO_{\LT}^{\sm}$) denotes the subsheaf of $\GL_2(\Z_p)$-locally analytic sections (resp. $\GL_2(\Z_p)$-smooth sections). By Corollary \ref{GL2Dpan}, $\cO_{\LT}^{\la}$ is also the subsheaf of $\cO_{D_p}^\times$-locally analytic sections.  Let
\[\cO^{D_p^\times-\sm}_{\LT}\subseteq\cO^{\la}_{\LT}\]
denote the subsheaf of $\cO_{D_p}^\times$-smooth sections. Concretely, let $\pi^{(0)}_{\Dr,n}:\mathcal{M}^{(0)}_{\Dr,n}\to \Omega$ denote the Gross-Hopkins map on $\mathcal{M}^{(0)}_{\Dr,n}$. Then
\[\cO^{D_p^\times-\sm}_{\LT}=\varinjlim_n \pi^{(0)}_{\Dr,n}{}_* \cO_{\mathcal{M}^{(0)}_{\Dr,n}}.\]
Let $\omega_{\LT}:=\pi^{(0)}_{\LT,\HT}{}_* \omega_{\LT,\infty}|_{\mathcal{M}_{\LT,\infty}^{(0)}}$ and $\omega_{\Dr}:=\pi^{(0)}_{\LT,\HT}{}_* \omega_{\Dr,\infty}|_{\mathcal{M}_{\Dr,\infty}^{(0)}}$. We can define  subsheaves $\omega^{\la}_{\LT},\omega^{\sm}_{\LT},\omega^{D_p^\times-\sm}_{\LT}\subseteq \omega_{\LT}$ and $\omega^{\la}_{\Dr},\omega^{\sm}_{\LT},\omega^{D_p^\times-\sm}_{\Dr}\subseteq \omega_\Dr$ similarly. Note that the isomorphism $\omega_{\LT,\infty}(-1)\cong\omega_{\Dr,\infty}^{-1}$ induces isomorphisms between these sheaves.

It is easy to check that the action $\theta_{\Dr,\kh}$ introduced in \ref{khDr} acts on  $\cO_{\LT}^{\la}$ and $\omega^{\la}_{\LT}$, $\omega^{\la}_{\Dr}$.

\end{para}

\begin{cor} \label{LTDrkh}
$\theta_{\LT,\kh}=\theta_{\Dr,\kh}$ on $\cO_{\LT}^{\la}$.
\end{cor}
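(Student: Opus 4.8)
The statement to prove is that the two horizontal Cartan actions $\theta_{\LT,\kh}$ and $\theta_{\Dr,\kh}$ agree on $\cO^{\la}_{\LT}$. The plan is to reduce this to a comparison of the two differential equations established in Theorems \ref{LTde} and \ref{Drde}, both of which identify the relevant first-order operator annihilating locally analytic vectors with (a generator of) the pull-back of the horizontal nilpotent subalgebra along a Hodge-Tate (equivalently Gross-Hopkins) period map. Since $\pi_{\LT,\HT}$ is identified with $\pi_{\Dr,\GM}$ and $\pi_{\Dr,\HT}$ with $\pi_{\LT,\GM}$ under the Scholze-Weinstein duality isomorphism $\mathcal{M}_{\LT,\infty}^{(0)}\cong\mathcal{M}_{\Dr,\infty}^{(0)}$ (Theorem \ref{LTDrdual}), the point is that \emph{one and the same} geometric datum — the variation of Hodge-Tate structures on $\mathcal{M}_{\LT,\infty}^{(0)}$, which as an adic space carries the full $\GL_2(\Q_p)^o\times\cO_{D_p}^\times$-action — governs both Cartan actions.

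First I would work locally: fix an open affinoid $\tilde{X}=\Spa(B,B^+)$ as in \ref{AA+} whose image under $\pi_{\LT,\HT}$ avoids all of $\Fl$ (equivalently, contained in a standard affinoid chart of $\Fl=\mathbb{P}^1$), so that by Corollary \ref{GL2Dpan} we have $B^{\la}=B^{D_p^\times-\la}$; call this common space $B^{\la}$. Then both $\theta_{\LT,\kh}$ and $\theta_{\Dr,\kh}$ are $\kh$-actions on $B^{\la}$. By Theorem \ref{LTde}, the $\mathfrak{g}^0$-action on $B^{\la}$ factors through $\mathfrak{g}^0/\mathfrak{n}^0$, and $\theta_{\LT,\kh}$ is the composite $\kh\hookrightarrow\mathfrak{b}^0/\mathfrak{n}^0\xleftarrow{\sim}\mathcal{O}_{\Fl}\otimes_C\kh$ acting through this; similarly by Theorem \ref{Drde} the $\mathfrak{g}^0_{D_p}$-action factors through $\mathfrak{g}^0_{D_p}/\mathfrak{n}^0_{D_p}$, and $\theta_{\Dr,\kh}$ is defined via an identification $\iota:C\otimes_{\Q_p}\Lie(D_p^\times)\cong\mathfrak{gl}_2(C)$ (the statement after Theorem \ref{Drde} records that $\theta_{\Dr,\kh}$ is independent of $\iota$). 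So I would choose $\iota$ compatibly with the chosen basis of $M(H_0)$ used to realize $D_p^\times\subseteq\GL_2(C)$ and with the identifications $\Fl\cong\Fl'_{\GM}$, $\Fl'\cong\Fl_{\GM}$ of Theorem \ref{LTDrdual}. With this choice, both $\theta_{\LT,\kh}$ and $\theta_{\Dr,\kh}$ are obtained by projecting the image of $\kh$ (resp. the image of $\kh$ under $\iota^{-1}$ inside $\Lie(D_p^\times)$) in $\mathcal{O}_{\Fl}\otimes_C\mathfrak{g}$ modulo the \emph{same} subsheaf: indeed $\mathfrak{n}^0$ is the sheaf of nilpotent subalgebras determined by the points of $\Fl$ viewed as the target of $\pi_{\LT,\HT}$, and $\mathfrak{n}^0_{D_p}$ — after applying $\iota$ and the identification $\Fl'\cong\Fl_{\GM}$ — is the sheaf of nilpotent subalgebras determined by the \emph{same} points, because $\pi_{\Dr,\HT}=\pi_{\LT,\GM}$ carries the same geometric Borel/nilpotent filtration.

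The key computation is then to check that the horizontal Cartan $\mathfrak{h}\subseteq\mathfrak{b}^0/\mathfrak{n}^0$ coming from $\GL_2$ and the horizontal Cartan coming from $D_p^\times$ (transported via $\iota$) coincide as sub-line-bundles (in fact, rank-$2$ bundles with the $\kh$-grading) of $\mathcal{O}_{\Fl}\otimes_C\mathfrak{gl}_2(C)$. This is a statement purely about the flag variety: given a $C$-point $x\in\Fl\cong\mathbb{P}^1$ with Borel $\mathfrak{b}_x$ and nilpotent $\mathfrak{n}_x$, the composite $\kh\to\mathfrak{b}_x/\mathfrak{n}_x$ depends only on $\mathfrak{b}_x$, and the two constructions give the same $\mathfrak{b}_x$ because the Hodge-Tate filtration of the Lubin-Tate variation and the Gross-Hopkins (= Hodge) filtration of the dual Drinfeld variation are exchanged precisely by the duality — this is exactly the content of the identifications $\pi_{\LT,\HT}\leftrightarrow\pi_{\Dr,\GM}$ in Theorem \ref{LTDrdual}. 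I would verify this on a standard toric chart $U_1=\{\|x\|\le 1\}\subseteq\Fl$ where $e_1$ (resp. $f_1$) is invertible, using the explicit formula for the $\mathfrak{g}^0$-action from \ref{KSFl} (the derivation $\frac{d}{dx}$ equals the action of $u^+=\begin{pmatrix}0&1\\0&0\end{pmatrix}$) together with the parallel formula on the Drinfeld side, and confront the two expansions of Theorems \ref{expGL2} and \ref{expDp} for a fixed $f\in B^{\la}$ — noting the key normalization $\mathrm{t}'=\mathrm{t}$ on $\mathcal{M}^{(0)}_{\LT,\infty}=\mathcal{M}^{(0)}_{\Dr,\infty}$ already recorded in \ref{khDr}. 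Once the two Cartan sub-bundles agree point-by-point over $\Fl$, the equality $\theta_{\LT,\kh}=\theta_{\Dr,\kh}$ on $B^{\la}$ follows, and since the two sheaves $\cO^{\la}_{\LT}$ with their $\kh$-actions are glued from such local pieces, we conclude on all of $\Omega$. The main obstacle will be the bookkeeping in step three: making the identification $\iota$ and the various Dieudonné-module trivializations genuinely compatible with Scholze-Weinstein duality, so that "the same Borel subalgebra" is literally true and not just true up to an automorphism — here I would lean on the fact, stated after Theorem \ref{Drde} and in \ref{khDr}, that $\theta_{\Dr,\kh}$ is independent of the choice of $\iota$, which lets me pick the most convenient identification and removes any residual ambiguity.
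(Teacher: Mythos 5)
There is a genuine gap at the heart of your reduction. You argue that $\theta_{\LT,\kh}$ and $\theta_{\Dr,\kh}$ must agree because ``one and the same geometric datum'' --- the Borel/nilpotent filtration at each point --- governs both, so that $\mathfrak{n}^0$ and $\mathfrak{n}^0_{D_p}$ become ``the same subsheaf'' after transporting by $\iota$ and the duality isomorphism. But the two horizontal actions are built from two \emph{different} group actions and two \emph{different} period maps: $\theta_{\LT,\kh}$ comes from the $\GL_2(\Q_p)$-action on $B$ together with $\pi_{\LT,\HT}:\mathcal{M}_{\LT,\infty}\to\Fl$, while $\theta_{\Dr,\kh}$ comes from the (commuting but distinct) $D_p^\times$-action together with $\pi_{\Dr,\HT}$, which under Scholze--Weinstein duality is $\pi_{\LT,\GM}:\mathcal{M}_{\LT,\infty}\to\Fl_{\GM}$ --- a different map to a different flag variety. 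So the ``points'' cutting out the two Borels are not the same point: one is the Hodge--Tate period, the other the Gross--Hopkins period. Moreover $\mathfrak{b}^0\subseteq\cO_{\Fl}\otimes\mathfrak{gl}_2(C)$ and $\mathfrak{b}^0_{D_p}\subseteq\cO_{\Fl'}\otimes_{\Q_p}\Lie(D_p^\times)$ act on $B^{\la}$ by entirely different derivations (infinitesimal translation by $\GL_2(\Q_p)$ versus by $D_p^\times$), so even a perfect matching of Borel sub-bundles would not by itself identify the two induced Cartan actions. Theorem \ref{LTDrdual} identifies the spaces and swaps the period maps, but it does not identify the two infinitesimal actions; that identification is precisely the nontrivial content of the corollary and cannot be obtained formally from the duality.

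What actually closes the argument (and is the route the paper takes) is the observation that both $\theta_{\LT,\kh}(h)$ and $\theta_{\Dr,\kh}(h)$ (for $h=\begin{pmatrix}1&0\\0&-1\end{pmatrix}$; the centers already match) are continuous first-order differential operators, so it suffices to compare them on the topological generators of $B^{\la}$ provided by Theorem \ref{expGL2}: the $\GL_2$-smooth vectors $B^{G_{r(n)}}$, the coordinate $x$, the ratio $e_1/e_{1,n}$, and $\mathrm{t}$. The required inputs are then concrete computations: both operators annihilate $\GL_2(\Z_p)$-smooth \emph{and} $\cO_{D_p}^\times$-smooth sections (the cross-smoothness cases use that $\theta_{\LT,\kh}$ vanishes on $\cO_{\Fl}$ and is first-order, hence vanishes on finite \'etale covers of $\Omega$, and symmetrically on the Drinfeld side), both act by $-1$ on $e_1$ and by $0$ on $e_{1,n}$ (via the eigenvalue $-1$ on $\omega_{\Fl}$, resp.\ on $\omega^{\sm}_{\Dr}$), and both act on $\mathrm{t}$ through $\det$, resp.\ the reduced norm. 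You do mention confronting the expansions of Theorems \ref{expGL2} and \ref{expDp} at the end, but in your plan this is a sanity check on an identity you believe is already forced by geometry; in fact it is the entire proof, and the specific eigenvalue computations on the two line bundles are the step your proposal is missing.
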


\begin{proof}
The most ``natural'' way to prove this should be using the moduli interpretations of $\mathcal{M}_{\LT,\infty}$ and $\mathcal{M}_{\Dr,\infty}$. Here we give a proof by explicit calculations. Note that the centers of $\GL_2(\Q_p)$ and $D_p^\times$ act in the same way on $B^{\la}$. It suffices to show that $\theta_{\LT,\kh}(h)=\theta_{\Dr,\kh}(h)$, where $h=\begin{pmatrix} 1 & 0\\ 0 & -1\end{pmatrix}$. Let $Y$ be an open affinoid subset of $\Omega$ such that $\tilde{Y}:=(\pi^{(0)}_{\LT,\HT})^{-1}(Y)$ is the preimage of some open affinoid subset $X$ of $\mathcal{M}^{(0)}_{\LT,m}$ for some $m\geq0$. Write $\tilde{Y}=\Spa(B,B^+)$. Fix $f\in B^{\la}$. Then by Theorem \ref{expGL2}, we can write 
\[f=\sum_{i,j,k\geq 0} c_{i,j,k}(x-x_n)^i \left(\log(\frac{e_1}{e_{1,n}})\right)^j\left(\log(\frac{\mathrm{t}}{\mathrm{t}_{n}})\right)^k.\]
for some $c_{i,j,k}\in B^{G_{r(n)}},i,j,k=0,1,\cdots$. See \ref{khLT} for the constructions $x_n,e_{1,n},\mathrm{t}_n$. Since both $\theta_1:=\theta_{\LT,\kh}(h)$ and $\theta_2:=\theta_{\Dr,\kh}(h)$ are first-order differential operators, it is enough to show that they  both agree on $B^{G_{r(n)}},x,e_1/e_{1,n},\mathrm{t}$.  This is clear for $\mathrm{t}$ because the action of $\GL_2(\Q_p)$ (resp.  $D_p^\times$) on it factors through the determinant (resp. reduced norm) map. For other elements, we have the following lemma. (Note that $B^{G_{r(n)}}\subseteq \cO^{\sm}_{\LT}(Y)$, $x\in \cO^{D_p^\times-\sm}_{\LT}(Y)$, $e_{1,n}\in \omega^{\sm}_{\LT}(Y)$, $e_1\in  \omega^{D_p^\times-\sm}_{\LT}(Y)$ and $1/e_{1,n}\in \omega^{\sm}_{\Dr}(Y)$, $1/e_1\in  \omega^{D_p^\times-\sm}_{\Dr}(Y)$.)
\end{proof}

\begin{lem}
\begin{enumerate}
\item $\theta_{\LT,\kh}$ and $\theta_{\Dr,\kh}$ are zero on $\cO^{\sm}_{\LT}$ and $\cO^{D_p^\times-\sm}_{\LT}$. 
\item $\theta_1=0$ on $\omega^{\sm}_{\LT}$ and $\theta_1=-1$ on $ \omega^{D_p^\times-\sm}_{\LT}$.
\item $\theta_2=0$ on $ \omega^{D_p^\times-\sm}_{\Dr}$ and $\theta_2=-1$ on $\omega^{\sm}_{\Dr}$.
\end{enumerate}
\end{lem}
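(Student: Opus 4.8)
The plan is to prove the three claims by reducing each one to a computation of the horizontal Cartan action on a line bundle that has an explicit trivialization by one of the period sections $e_1$, $e_2$, $f_1$, $f_2$, $\mathrm{t}$. Recall from \ref{khLT} and \ref{khDr} that $\theta_{\LT,\kh}$ (resp.\ $\theta_{\Dr,\kh}$) is defined via the embedding $\kh \hookrightarrow \cO_{\Fl}\otimes_C\kh \cong \mathfrak{b}^0/\mathfrak{n}^0$ (resp.\ $\kh \hookrightarrow \cO_{\Fl'}\otimes_C\kh\cong \mathfrak{b}^0_{D_p}/\mathfrak{n}^0_{D_p}$), and that $h = \begin{pmatrix} 1 & 0 \\ 0 & -1 \end{pmatrix}$ acts on $\omega_{\Fl}$ through the character sending $\begin{pmatrix} a & 0 \\ 0 & d \end{pmatrix}$ to $d$, exactly as in the last sentence of \ref{omegakla}. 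So the whole content is: identify, on each of the six sheaves in the statement, which of $\theta_1,\theta_2,\theta_{\LT,\kh},\theta_{\Dr,\kh}$ is in play and compute it on a local generator.

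\textbf{Step 1: the smooth structure sheaves (part (1)).} First I would recall that $\cO^{\sm}_{\LT} = \varinjlim_n \pi_n^{-1}\cO_{\mathcal{M}^{(0)}_{\LT,n}}$ consists of sections pulled back from finite level, hence annihilated by $\mathfrak{g} = \mathfrak{gl}_2(C)$; since $\theta_{\LT,\kh}$ factors through the action of $\mathfrak{b}^0$, which is built out of $\mathfrak{g}^0 = \cO_{\Fl}\otimes_C\mathfrak{g}$ acting through $\pi_{\LT,\HT}$, it kills any $\GL_2(\Q_p)^0$-smooth section. The same argument with the roles of $\GL_2(\Q_p)$ and $D_p^\times$ swapped (using Theorem \ref{Drde} and \ref{khDr}) shows $\theta_{\Dr,\kh}$ vanishes on $\cO^{D_p^\times-\sm}_{\LT}$. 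By Corollary \ref{LTDrkh}'s proof reduction, $\theta_{\LT,\kh}=\theta_{\Dr,\kh}$ is already known once part (1) holds on both $\cO^{\sm}_{\LT}$ and $\cO^{D_p^\times-\sm}_{\LT}$; but note the lemma is used \emph{inside} that proof, so I must argue (1) directly, which is exactly the smoothness argument just given.

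\textbf{Step 2: the line bundles (parts (2) and (3)).} The key observation is that $\omega^{\sm}_{\LT}$ has local generators $e_{1,n}$ that are $\GL_2(\Z_p)$-smooth (they live in $\omega_{\LT,\infty}(\tilde X)^{G_{r(n)}}$), while $e_1$ itself is only $D_p^\times$-smooth. Since $\omega_{\LT,\infty}(-1)$ is the pull-back of $\omega_{\Fl}$ along $\pi_{\LT,\HT}$, the horizontal action $\theta_{\LT,\kh}$ of $h$ on the pull-back of $\omega_{\Fl}$ is multiplication by the weight of $\omega_{\Fl}$ under $h$. Concretely: $e_1, e_2 \in \omega_{\LT,\infty}$ are the images of the standard basis under $V(1)\otimes\cO \to \omega_{\LT,\infty}$, so on the open set where $e_1$ is invertible the trivialization $\omega_{\Fl} \cong \cO_{\Fl}\cdot e_1$ combined with the formula for the $\kh$-action on $\omega_{\Fl}$ in \ref{omegakla} gives $\theta_1(e_1) = -e_1$ (the sign and normalization to be pinned down exactly as in the computation of $\theta_{\kh}$ on $\omega_{\Fl}$ in the modular curve case). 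Hence $\theta_1 = -1$ on $\omega^{D_p^\times-\sm}_{\LT}$, which is generated étale-locally by $e_1$; and $\theta_1 = 0$ on $\omega^{\sm}_{\LT}$ because a $\GL_2(\Z_p)$-smooth generator $e_{1,n}$ differs from $e_1$ by an element of $\cO^{D_p^\times-\sm}_{\LT}$ on which... wait — more cleanly: $\theta_1$ is a derivation (first-order operator), $\theta_1 = 0$ on $\cO^{\sm}_{\LT}$ by part (1), and $e_{1,n}$ is $\GL_2(\Z_p)$-smooth so it is fixed by a small open subgroup, forcing $\theta_1(e_{1,n}) = 0$ directly. This gives part (2). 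Part (3) is literally the same computation with $\Dr$ in place of $\LT$: $\theta_2 = 0$ on $\omega^{D_p^\times-\sm}_{\Dr}$ (its local generators $f_{1,n}$ are $\cO_{D_p}^\times$-smooth, hence killed by the first-order operator $\theta_2$) and $\theta_2 = -1$ on $\omega^{\sm}_{\Dr}$ (generated by $f_1$, the pull-back of a generator of $\omega_{\Fl'}$, whose $\kh$-weight under $h$ via the $D_p^\times$ period map is $-1$).

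\textbf{Main obstacle.} The only delicate point is getting the sign and normalization right in the assertion that $h$ acts by $-1$ (rather than $+1$ or $\pm 2$) on the relevant structure sheaf twist. This requires carefully matching the conventions: the identification $\omega_{\LT,\infty}(-1) \cong \pi_{\LT,\HT}^*\omega_{\Fl}$, the dual-of-Hodge-Tate-sequence normalization fixing which of $e_1,e_2$ is the ``$x$''-coordinate, and the formula (end of \ref{omegakla}) that $\begin{pmatrix} a & 0 \\ 0 & d\end{pmatrix}\in\kh$ acts on $\omega_{\Fl}$ by $d$. I would settle this by pulling everything back to the modular-curve picture via the supersingular uniformization (Theorem \ref{ssunif}, cited in \ref{OLTla}), where the analogous statement $\theta_{\kh}(h) = 0$ on $\omega^{\sm}_{K^p}$ and the $\kh$-action on $\omega_{\Fl}$ are already recorded in \ref{omegakla}; then the Lubin--Tate and Drinfeld statements follow by restriction, and the swap of $\GL_2(\Q_p)$ with $D_p^\times$ for the ``$\Dr$'' assertions is handled by the duality isomorphism $\Fl \cong \Fl'_{\GM}$, $\Fl' \cong \Fl_{\GM}$ of Theorem \ref{LTDrdual} together with the fact, noted in \ref{Drsetup}, that $D_p^\times$ acts on $\wedge^2$ of the Dieudonné module by the reduced norm (so the "$-1$" weight is the same character on both sides).
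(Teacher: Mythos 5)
Your Step 1 only proves half of part (1). The smoothness argument ($\theta_{\LT,\kh}$ factors through $\mathfrak{g}^0=\cO_{\Fl}\otimes_C\mathfrak{g}$, hence kills $\GL_2(\Q_p)^0$-smooth vectors; symmetrically for $\theta_{\Dr,\kh}$ and $\cO_{D_p}^\times$-smooth vectors) gives the two ``diagonal'' cases: $\theta_{\LT,\kh}=0$ on $\cO^{\sm}_{\LT}$ and $\theta_{\Dr,\kh}=0$ on $\cO^{D_p^\times-\sm}_{\LT}$. But the lemma asserts that \emph{both} operators vanish on \emph{both} sheaves, and you never address the cross cases: $\theta_{\LT,\kh}$ on $\cO^{D_p^\times-\sm}_{\LT}$ and $\theta_{\Dr,\kh}$ on $\cO^{\sm}_{\LT}$. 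These are not cosmetic: your own Step 2 needs them. A general section of $\omega^{D_p^\times-\sm}_{\LT}$ is $f\cdot e_1$ with $f\in\cO^{D_p^\times-\sm}_{\LT}$, and since $\theta_1$ is a first-order operator, $\theta_1(fe_1)=\theta_1(f)e_1-fe_1$; concluding $\theta_1=-1$ on the whole sheaf from $\theta_1(e_1)=-e_1$ requires precisely the unproved statement $\theta_1(f)=0$ for $D_p^\times$-smooth $f$. The same issue recurs in your part (3) for $\theta_2$ on $\omega^{\sm}_{\Dr}$, whose sections are $g\cdot f_1$ with $g\in\cO^{\sm}_{\LT}$.

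The missing idea (which is how the paper closes the gap) is a two-step reduction to the flag variety. First, $\theta_{\LT,\kh}=0$ on $\cO_{\Fl}$: the element $h$ acts through $\mathfrak{b}^0/\mathfrak{n}^0$, and $\mathfrak{b}^0$ lies in the kernel of the anchor map $\mathfrak{g}^0\to T_{\Fl}$ (the paper sees this by writing $\Fl$ as the quotient of $N\backslash\GL_2$ by $H$, or by the direct calculation of \cite[5.1.1]{Pan20}); likewise $\theta_1=-1$ on $\omega_{\Fl}$ by the weight computation you already cite from \ref{omegakla}. Second, sections of $\cO^{D_p^\times-\sm}_{\LT}$ (resp.\ $\omega^{D_p^\times-\sm}_{\LT}$) are analytic functions on (resp.\ sections of the pull-back of $\omega_{\Fl}$ to) the \emph{finite \'etale} coverings $\mathcal{M}^{(0)}_{\Dr,n}\to\Omega$, and a first-order differential operator is determined along a finite \'etale map by its restriction to the base; hence the values $0$ and $-1$ computed on $\Omega\subseteq\Fl$ propagate to the full sheaves. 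Your ``Main obstacle'' paragraph worries about signs, but the sign is settled by \ref{omegakla}; the genuine obstacle is this \'etale-extension step, and pulling back to the modular curve via Theorem \ref{ssunif} does not avoid it, since the modular-curve statements in \ref{omegakla} again only concern the $\GL_2$-smooth sheaf and $\cO_{\Fl}$-multiples of $\omega_{\Fl}$, not functions on the Drinfeld coverings.
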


\begin{proof}
We will only prove results for $\theta_{\LT,\kh}$ because the same argument will also work for  $\theta_{\Dr,\kh}$. It follows from the construction that $\theta_{\LT,\kh}$ acts trivially on $\GL_2(\Z_p)$-smooth vectors. Hence $\theta_{\LT,\kh}=0$ on $\cO^{\sm}_{\LT}$, $\omega^{\sm}_{\LT}$. 

For $\cO^{D_p^\times-\sm}_{\LT}$, we first observe that $\theta_{\LT,\kh}=0$ on $\cO_{\Fl}$. This can be seen by writing $\Fl$ as the quotient of $N\setminus\GL_2$ by $H$, where $H=\{\begin{pmatrix} * & 0 \\ 0 & * \end{pmatrix}\}$ and $N=\{\begin{pmatrix} 0 & * \\ 0 & 0 \end{pmatrix}\}$, and $\theta_{\LT,\kh}$ essentially comes from the infinitesimal action of $H$. One can also prove this by a direct calculation, cf. \cite[5.1.1]{Pan20}.  Now this implies that  $\theta_{\LT,\kh}$ is zero on $\cO^{D_p^\times-\sm}_{\LT}$ because  sections of $\cO^{D_p^\times-\sm}_{\LT}$ are analytic functions on some finite \'etale coverings of $\Omega$ and $\theta_{\LT,\kh}$ is a first-order differential operator. For $ \omega^{D_p^\times-\sm}_{\LT}$, the same argument reduces to showing that $\theta_1=-1$ on $\omega_{\Fl}$. Again this can be proved by a simple calculation.
\end{proof}

\begin{para}[Compare Remark \ref{LTlalg}]
Let $\chi:\kh\to C$ be a character. As before, we denote by $\cO^{\la,\chi}_{\LT}\subseteq\cO^{\la}_{K^p}$ the subsheaf of $\chi$-isotypic with respect to $\theta_{\LT,\kh}=\theta_{\Dr,\kh}$.
Suppose $\chi=(n_1,n_2)\in\Z^2$ and $k=n_2-n_1\geq 0$. By Theorem \ref{expDp}, for $i\in\{0,\cdots,k\}$ and $s\in\omega^{-k,D_p^\times-\sm}_{\Dr}\cong\omega^{k,D_p^\times-\sm}_{\LT}(-k)$, the product $\mathrm{t}^{n_1}f_1^if_2^{k-i} s$ defines an element in $\cO^{\la,(n_1,n_2)}_{\LT}$. We denote by
\[\cO^{D_p^\times-\lalg,(n_1,n_2)}_{\LT}\subseteq \cO^{\la,(n_1,n_2)}_{\LT}\]
the subsheaf spanned by sections of this form. It is easy to see that it exactly consists of $\cO_{D_p}^\times$-locally algebraic vectors. Equivalently, it is the image of the natural inclusion
\[H^0(\Fl',\omega^{k}_{\Fl'})\otimes_C \omega^{k,D_p^\times-\sm}_{\LT}\cdot \mathrm{t}^{n_1}\cong H^0(\Fl',\omega^{k}_{\Fl'})(k)\otimes_C \omega^{-k,D_p^\times-\sm}_{\Dr}\cdot \mathrm{t}^{n_1}\subseteq  \cO^{\la,(n_1,n_2)}_{\LT}.\]
\end{para}

\begin{para}
We can repeat the construction in  Theorem \ref{I2} and prove the following analogue of Theorem \ref{LTI2}. Note that the $\Omega$ in the Lubin-Tate picture corresponds to $\Fl'$, or its \'etale coverings $\mathcal{M}_{\LT,n} $ in this Drinfeld picture. In particular, $\cO_{\Omega}$ is replaced by  $\cO^{\sm}_{\LT}$ and $\Omega^1_{\Omega}$ is replaced by $\Omega^1_{\LT}$ here. Also note that since there is no canonical choice of bases of $H^0(\Fl',\omega_{\Fl'})$, we need to make a choice as  in \ref{khDr}.
\end{para}

\begin{thm} \label{DrI2}
Suppose $\chi=(n_1,n_2)\in\Z^2$ and $k=n_2-n_1\geq 0$.  Then there exists a natural continuous operator
\[\bar{d}_{\Dr}^{k+1}:\cO^{\la,\chi}_{\LT}\to \cO^{\la,\chi}_{\LT}\otimes_{\cO^{\sm}_{\LT}}(\Omega^1_{\LT})^{\otimes k+1}\]
which formally can be regarded as $(d_{\Fl'})^{k+1}$ satisfying following properties:
\begin{enumerate}
\item $\bar{d}_{\LT}^{k+1}$ is $\cO^{D_p^\times-\sm}_{\LT}$-linear;
\item Suppose that $Y$ is an open affinoid subset of $\Omega$ and  $\{f_1,f_2\}$ is a basis of $H^0(\Fl',\omega_{\Fl'})$ such that $f_1$ is a generator of $\omega_{\Dr}$ on $Y$.  Then there exists a non-zero constant $c\in\Q$ such that $\bar{d}_{\Dr}^{k+1}(s)=c(u'^+)^{k+1}(s)\otimes (dy')^{k+1}$ for any $s\in \cO^{\la,\chi}_{\LT}$, where $y'=f_2/f_1$ and $u'^+$ denotes the unique nilpotent element of $\Lie(D_p^\times)\otimes_{\Q_p}C$ sending $f_2$ to $f_1$.
\end{enumerate}
Here $d_{\Fl'}$ denotes the derivation on $\Fl'$.
Moreover,
\begin{enumerate}
\item $\bar{d}_{\Dr}^{k+1}$ is surjective and commutes with $\cO_{D_p}^\times$.
\item $\ker(\bar{d}_{\Dr}^{k+1})=\cO^{D_p^\times-\lalg,\chi}_{\LT}=H^0(\Fl',\omega^{k}_{\Fl'})\otimes_C \omega^{k,D_p^\times-\sm}_{\LT}\cdot \mathrm{t}^{n_1}$.
\end{enumerate}
\end{thm}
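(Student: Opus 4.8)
\textbf{Proof proposal for Theorem \ref{DrI2}.}
The plan is to mimic, word for word, the construction of $\bar{d}^{k+1}$ on the modular curve side (Subsection \ref{DII}, culminating in Theorem \ref{I2} and Proposition \ref{dbarsurj}), simply transporting every statement through the duality isomorphism of Theorem \ref{LTDrdual} and the differential equation of Theorem \ref{Drde}. First I would record the analogue of the Kodaira--Spencer set-up on $\Fl'$: by Theorem \ref{Drde}, $\cO^{\la}_{\LT}$ (equivalently the sheaf of $\cO_{D_p}^\times$-locally analytic sections, using Corollary \ref{GL2Dpan}) is annihilated by $\mathfrak{n}^0_{D_p}$, so $\mathfrak{b}^0_{D_p}/\mathfrak{n}^0_{D_p}$ acts, and $\theta_{\Dr,\kh}=\theta_{\LT,\kh}$ by Corollary \ref{LTDrkh}. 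Beilinson--Bernstein localization on $\Fl'$ then produces, exactly as in \ref{KSFl}, a natural map $\bar{d}_{\Dr}^1:\cO^{\la,(0,0)}_{\LT}\to\cO^{\la,(0,0)}_{\LT}\otimes_{\cO^{\sm}_{\LT}}\Omega^1_{\LT}$ which on any affinoid $Y\subseteq\Omega$ where $f_1$ generates $\omega_{\Dr}$ is given by $s\mapsto u'^+(s)\otimes dy'$, where $u'^+\in\Lie(D_p^\times)\otimes_{\Q_p}C$ is the nilpotent sending $f_2\mapsto f_1$; here I use that the derivation $d/dy'$ on $\cO^{\sm}_{\LT}$ (i.e. on finite étale covers of $\Omega$, which is what $\Fl'$ and its coverings $\mathcal{M}_{\Dr,n}$ pull back to) agrees with the action of $u'^+$.

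Next I would run the BGG argument of \ref{dbar}--\ref{genklambdak} verbatim. Taking the $\cO_{D_p}^\times$-locally analytic vectors of the (dual) Hodge--Tate sequence $0\to \omega_{\Dr}^{-1}\to W_p\otimes_{\Q_p}\cO_{\LT}^{\la}\to \omega_{\Dr,\infty}(-1)\to 0$ restricted to weight $(0,0)$ gives the exact sequence that is the $\cO^{\la,(0,0)}_{\LT}$-tensor of $0\to\omega_{\Fl'}^{-1}\otimes\det\to W_p\otimes\cO_{\Fl'}\to\omega_{\Fl'}\to 0$. The infinitesimal character computation (Harish-Chandra + \cite[Corollary 4.2.8]{Pan20}, whose analogue for $\theta_{\Dr,\kh}$ holds because it is the same operator as $\theta_{\LT,\kh}$) shows that the relevant isotypic component singles out $\omega^{k,\la,(0,k)}_{\LT}$, so one obtains a left inverse $\phi'_k$ of $\phi_{k,1}$ as in Lemma \ref{secinf} and hence $\bar{d}_{\Dr}^{k+1}:\omega_{\LT}^{k,\la,(0,k)}(-k)\to\omega_{\LT}^{k,\la,(0,k)}(-k)\otimes_{\cO_{\Fl'}}(\Omega^1_{\Fl'})^{\otimes k+1}$, which is $\cO^{D_p^\times-\sm}_{\LT}$-linear, hence can be twisted to the stated operator on $\cO^{\la,\chi}_{\LT}$ for all $\chi=(n_1,n_2)$ with $k=n_2-n_1\geq 0$ by multiplying by powers of $\mathrm{t}$. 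A local computation as in \ref{dbar} identifies it with $c\,(u'^+)^{k+1}\otimes(dy')^{k+1}$ for some $c\in\Q^\times$, giving properties (1) and (2). For surjectivity and the kernel: surjectivity is local on $\Omega$, and on an affinoid $Y$ where $f_1$ generates it reduces to surjectivity of $(u'^+)^{k+1}$ on the $\cO_{D_p}^\times$-locally analytic functions on the relevant étale cover, which follows from the $D_p^\times$-analogue of \cite[Proposition 5.2.9]{Pan20} — provable by the same differential-equation argument (Theorem \ref{Drde}, just as \ref{LTde} gave the modular-curve version). For the kernel, $\ker(\bar{d}_{\Dr}^{k+1})$ is $\Lie(D_p^\times)$-stable and annihilated by $(u'^+)^{k+1}$, so $\Lie(D_p^\times)$ acts locally finitely on it; an infinitesimal-character (weight) consideration identifies it with the $W_p^{\otimes k}$-isotypic part, i.e. $\cO^{D_p^\times-\lalg,\chi}_{\LT}=H^0(\Fl',\omega^k_{\Fl'})\otimes_C\omega^{k,D_p^\times-\sm}_{\LT}\cdot\mathrm{t}^{n_1}$, as desired. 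Equivariance under $\cO_{D_p}^\times$ is clear from the construction, since everything in sight is built from $D_p^\times$-equivariant data.

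The main obstacle I anticipate is not any single deep input but the bookkeeping of the two flag varieties and their line bundles under the duality isomorphism: one must be careful that $\Fl'_{\GM}\cong\Fl$ and $\Fl'\cong\Fl_{\GM}$ (Theorem \ref{LTDrdual}), that $\omega_{\Dr,\infty}$ is the pull-back of $\omega_{\Fl'}$ while $\omega_{\LT,\infty}(-1)\cong\omega_{\Dr,\infty}^{-1}$, and that the two Cartan actions $\theta_{\LT,\kh}$ and $\theta_{\Dr,\kh}$ genuinely coincide (Corollary \ref{LTDrkh}) — only then does the phrase ``same proof'' actually go through. A secondary point requiring a remark rather than real work is that $\Fl'$ and its covers $\mathcal{M}_{\Dr,n}$ play here the role that $\Omega$ (resp. $\mathcal{M}_{\LT,n}$) played in the Lubin--Tate picture, so $\cO_\Omega$ gets replaced by $\cO^{D_p^\times-\sm}_{\LT}$ and $\Omega^1_\Omega$ by $\Omega^1_{\LT}$; once this dictionary is fixed, Theorems \ref{I2} and \ref{dbarsurj} transcribe directly.
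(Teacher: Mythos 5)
Your proposal follows the paper's own route exactly: the paper establishes Theorem \ref{DrI2} by declaring that one repeats the construction of Theorem \ref{I2} and Proposition \ref{dbarsurj} with the roles of $\GL_2(\Q_p)$ and $D_p^\times$ interchanged via Theorem \ref{LTDrdual}, Theorem \ref{Drde}, and Corollaries \ref{GL2Dpan}, \ref{LTDrkh}, which is precisely what you carry out (BGG with the Hodge--Tate sequence of $\mathcal{G}'$, infinitesimal characters, surjectivity of $u'^+$ from the explicit power-series description, and the locally algebraic identification of the kernel). One small correction to your closing dictionary: it is $\cO_\Omega$ that gets replaced by $\cO^{\sm}_{\LT}$ in the target of the tensor product (the relevant \'etale coverings of $\Fl'\cong\Fl_{\GM}$ being the finite-level Lubin--Tate spaces $\mathcal{M}_{\LT,n}$), while the linearity passes from $\cO^{\sm}_{\LT}$-linear to $\cO^{D_p^\times-\sm}_{\LT}$-linear — you conflated these two substitutions in the last paragraph, although the construction in your second paragraph uses the correct ones.
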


We remark that the set of $Y$ considered in the Theorem is a basis of open affinoid subsets of $\Omega$, cf. \ref{DrBB+}.

\begin{rem}\label{dDR}
Although this will not be used, we can also redo the construction of Theorem \ref{LTI1} and prove that 
there exists a unique natural continuous  $\cO^{D_p^\times-\sm}_{\LT}$-linear operator
\[d_{\Dr}^{k+1}:\cO^{\la,\chi}_{\LT}\to \cO^{\la,\chi}_{\LT}\otimes_{\cO^{D_p^\times-\sm}_{\LT}}(\Omega^1_{\Dr})^{\otimes k+1}\]
extending $(d_{\Dr})^{k+1}$ on $\omega^{-k,D_p^\times-\sm}_{\Dr}$,
where
\[\Omega^1_{\Dr}:=\varinjlim_n \pi^{(0)}_{\Dr,n}{}_*\Omega^1_{\mathcal{M}^{(0)}_{\Dr,n}}.\]
and $d_{\Dr}$ denotes the derivation $\cO^{D_p^\times-\sm}_{\LT}\to \Omega^1_{\Dr}$.
\end{rem}

\begin{para}
Now we would like to compare $\bar{d}_{\Dr}^{k+1}$ and $d_{\LT}^{k+1}$ introduced in Theorem \ref{LTI1}. Keep the notation in Theorem \ref{DrI2}. Note that $u'^+=\frac{d}{dy'}$ on $\Fl'$. Hence this also holds on its \'etale coverings $\mathcal{M}_{\LT,n}$. From this, it is easy to see that $\bar{d}_{\Dr}^{k+1}$ satisfies the properties in Theorem \ref{LTI1} up to a unit. Hence  by the uniqueness of  $d_{\LT}^{k+1}$, we have 
\end{para}

\begin{thm}
$\bar{d}_{\Dr}^{k+1}=c'd_{\LT}^{k+1}$ for some $c'\in \Q^\times$.
\end{thm}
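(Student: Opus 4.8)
The claim is that the two differential operators on the supersingular locus, $\bar{d}_{\Dr}^{k+1}$ (coming from differentiation on $\Fl'$ via the $D_p^\times$-action) and $d_{\LT}^{k+1}$ (coming from differentiation on the Lubin-Tate modular interpretation via the Gauss-Manin connection), agree up to a nonzero rational constant. The plan is to appeal to the uniqueness clause of Theorem \ref{LTI1}: that theorem characterizes $d_{\LT}^{k+1}$ as the \emph{unique} natural continuous $\cO_\Omega$-linear operator $\cO^{\la,\chi}_{\LT}\to\cO^{\la,\chi}_{\LT}\otimes_{\cO^{\sm}_{\LT}}(\Omega^1_{\LT})^{\otimes k+1}$ whose restriction to $\omega^{-k,\sm}_{\LT}$ is $(d_{\LT})^{k+1}$ (and which acts by the evident formula on $\mathrm{t}^{n_1}e_1^ie_2^{k-i}s$). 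So it suffices to check that $\bar{d}_{\Dr}^{k+1}$ lands in the correct target sheaf, is $\cO_\Omega$-linear, continuous, and restricts to $(d_{\LT})^{k+1}$ (up to a constant) on $\omega^{-k,\sm}_{\LT}$; then rescaling by the resulting constant and invoking uniqueness forces $\bar{d}_{\Dr}^{k+1}=c' d_{\LT}^{k+1}$.

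\textbf{Key steps.} First I would pin down the target. Theorem \ref{DrI2} produces $\bar{d}_{\Dr}^{k+1}:\cO^{\la,\chi}_{\LT}\to \cO^{\la,\chi}_{\LT}\otimes_{\cO^{\sm}_{\LT}}(\Omega^1_{\LT})^{\otimes k+1}$, which is already literally the target appearing in Theorem \ref{LTI1}, so no identification of sheaves is needed there. Second, continuity is built into Theorem \ref{DrI2}. Third, and this is the crux of the matching: I want to compute $\bar{d}_{\Dr}^{k+1}$ in the local coordinate. On an affinoid $Y\subseteq\Omega$ with the chosen basis $\{f_1,f_2\}$ of $H^0(\Fl',\omega_{\Fl'})$, Theorem \ref{DrI2}(2) says $\bar{d}_{\Dr}^{k+1}(s)=c\,(u'^+)^{k+1}(s)\otimes(dy')^{k+1}$ where $y'=f_2/f_1$ and $u'^+\in\Lie(D_p^\times)\otimes C$ is the nilpotent sending $f_2$ to $f_1$. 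The observation flagged in the paragraph before the theorem statement — that $u'^+=\frac{d}{dy'}$ on $\Fl'$, hence the same identity holds after pulling back along the étale covers $\mathcal{M}_{\LT,n}\to\Fl'$ (equivalently $\mathcal{M}_{\Dr,n}$, using the duality isomorphism $\Fl\cong\Fl'_{\GM}$, $\Fl'\cong\Fl_{\GM}$ of Theorem \ref{LTDrdual}) — shows that $\bar{d}_{\Dr}^{k+1}$, applied to a section, is genuinely an honest $(k+1)$-st derivative with respect to $y'$. Fourth, I need to see that $y'$ is, up to the usual Kodaira-Spencer bookkeeping, the same coordinate that governs $(d_{\LT})^{k+1}$: under the duality $\mathcal{M}_{\LT,\infty}\cong\mathcal{M}_{\Dr,\infty}$ the Gross-Hopkins map on the Lubin-Tate side becomes the Hodge-Tate map on the Drinfeld side, so differentiating ``along $\Fl'$'' on the Drinfeld side is differentiating along the Gross-Hopkins period direction of the Lubin-Tate tower, which is exactly where the Kodaira-Spencer isomorphism (Proposition \ref{KSLT}) identifies the Gauss-Manin-induced operator $(d_{\LT})^{k+1}$. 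Concretely I would verify $\cO_\Omega$-linearity of $\bar{d}_{\Dr}^{k+1}$ directly from the formula (the $dy'$-factor and the $D_p^\times$-smoothness in Theorem \ref{DrI2}(1) give this), and then check on the dense subsheaf of sections $\mathrm{t}^{n_1}e_1^ie_2^{k-i}s$, $s\in\omega^{-k,\sm}_{\LT}$ — using Theorem \ref{expGL2}/Theorem \ref{expDp} for density — that $\bar{d}_{\Dr}^{k+1}(\mathrm{t}^{n_1}e_1^ie_2^{k-i}s)$ and $d_{\LT}^{k+1}(\mathrm{t}^{n_1}e_1^ie_2^{k-i}s)$ differ by the same scalar $c'$ independent of $i$, $s$, $n_1$. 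Once that holds on a generating dense submodule, continuity and $\cO_\Omega$-linearity propagate it everywhere, and uniqueness in Theorem \ref{LTI1} finishes the argument.

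\textbf{Main obstacle.} The delicate point is the bookkeeping in step four: matching the ``flat'' coordinate $y'$ on $\Fl'$ (pulled back to the Drinfeld tower) with the way $(d_{\LT})^{k+1}$ is built from the Gauss-Manin connection and the Kodaira-Spencer isomorphism on the Lubin-Tate tower, keeping careful track of the two different Kodaira-Spencer isomorphisms (Proposition \ref{KSLT} for Lubin-Tate, and its Drinfeld analogue invoked in Theorem \ref{Drde}) and of the various powers of $\mathrm{t}$ and line-bundle twists that enter the definitions of $\chi$ and of $\omega^{-k,\sm}_{\LT}\cong\omega^{k,D_p^\times-\sm}_{\LT}(-k)$. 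In practice I expect all of this to reduce, on a small affinoid chart, to comparing two first-order differential operators that each ``are'' $\tfrac{d}{dy'}$ up to a unit — precisely the content of the remark preceding the statement — so that the constant $c'\in\Q^\times$ is forced and globally constant by the same irreducibility/rigidity used throughout this section (there is a single component-independent scalar because the operators are defined by uniform universal formulas, not chart-by-chart). I would therefore present the proof as: ``By the explicit formula of Theorem \ref{DrI2}(2) and the identity $u'^+=d/dy'$ on the étale covers of $\Fl'$, $\bar{d}_{\Dr}^{k+1}$ is a continuous $\cO_\Omega$-linear operator with the same target as $d_{\LT}^{k+1}$ and, by a local computation using the Kodaira-Spencer isomorphism, its restriction to $\omega^{-k,\sm}_{\LT}$ equals a nonzero rational multiple of $(d_{\LT})^{k+1}$; the uniqueness assertion of Theorem \ref{LTI1} then gives $\bar{d}_{\Dr}^{k+1}=c'd_{\LT}^{k+1}$ for some $c'\in\Q^\times$.''
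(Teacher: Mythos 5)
Your proposal is correct and follows essentially the same route as the paper: the paper's proof likewise observes that $u'^+=\tfrac{d}{dy'}$ on $\Fl'$ and hence on its \'etale coverings $\mathcal{M}_{\LT,n}$, deduces that $\bar{d}_{\Dr}^{k+1}$ satisfies the characterizing properties of Theorem \ref{LTI1} up to a nonzero rational constant, and concludes by the uniqueness clause of that theorem. Your extra discussion of the Kodaira--Spencer bookkeeping and of checking the identity on the dense subsheaf of sections $\mathrm{t}^{n_1}e_1^ie_2^{k-i}s$ is exactly the content the paper compresses into ``it is easy to see.''
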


\begin{cor} \label{dLTkcok}
$d_{\LT}^{k+1}$ is surjective and 
\[\ker(d_{\LT}^{k+1})=\cO^{D_p^\times-\lalg,\chi}_{\LT}=H^0(\Fl',\omega^{k}_{\Fl'})(k)\otimes_C \omega^{-k,D_p^\times-\sm}_{\Dr}\cdot \mathrm{t}^{n_1}.\]
\end{cor}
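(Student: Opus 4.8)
The plan is to deduce Corollary \ref{dLTkcok} immediately from the theorem just proved, namely $\bar{d}_{\Dr}^{k+1}=c'd_{\LT}^{k+1}$ with $c'\in\Q^\times$, together with the description of $\bar{d}_{\Dr}^{k+1}$ in Theorem \ref{DrI2}. Since multiplication by the nonzero scalar $c'$ is an isomorphism, $d_{\LT}^{k+1}$ and $\bar{d}_{\Dr}^{k+1}$ have the same kernel and the same image (as subsheaves, and on sections over any open set). Thus surjectivity of $d_{\LT}^{k+1}$ is exactly surjectivity of $\bar{d}_{\Dr}^{k+1}$, which is part (1) of the ``Moreover'' in Theorem \ref{DrI2}, and $\ker(d_{\LT}^{k+1})=\ker(\bar{d}_{\Dr}^{k+1})=\cO^{D_p^\times-\lalg,\chi}_{\LT}$ by part (2) of the same ``Moreover''.

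It remains only to record the two explicit descriptions of $\cO^{D_p^\times-\lalg,\chi}_{\LT}$. By the definition of this sheaf (in the paragraph ``Compare Remark \ref{LTlalg}''), it is the image of the natural inclusion
\[H^0(\Fl',\omega^{k}_{\Fl'})\otimes_C \omega^{k,D_p^\times-\sm}_{\LT}\cdot \mathrm{t}^{n_1}\cong H^0(\Fl',\omega^{k}_{\Fl'})(k)\otimes_C \omega^{-k,D_p^\times-\sm}_{\Dr}\cdot \mathrm{t}^{n_1}\subseteq  \cO^{\la,(n_1,n_2)}_{\LT},\]
where the displayed isomorphism uses $\omega_{\LT,\infty}(-1)\cong\omega_{\Dr,\infty}^{-1}$ and hence $\omega^{k,D_p^\times-\sm}_{\LT}\cong \omega^{-k,D_p^\times-\sm}_{\Dr}(k)$. (Here we must be a little careful with the Tate twists: the factor $(k)$ appearing in the statement is precisely the twist coming from identifying $\omega^{k}_{\LT}$ with $\omega^{-k}_{\Dr}$, matching the twist $(k)$ that was already present in the Lubin-Tate Hodge-Tate sequence of \ref{LTet}; this is the same bookkeeping as in \ref{LTlalg}, \ref{lalg0k}.) Reading off the target of the isomorphism gives exactly
\[\ker(d_{\LT}^{k+1})=\cO^{D_p^\times-\lalg,\chi}_{\LT}=H^0(\Fl',\omega^{k}_{\Fl'})(k)\otimes_C \omega^{-k,D_p^\times-\sm}_{\Dr}\cdot \mathrm{t}^{n_1},\]
as claimed.

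There is essentially no obstacle here: the corollary is a formal consequence of the scalar-multiple identity $\bar{d}_{\Dr}^{k+1}=c'd_{\LT}^{k+1}$ and Theorem \ref{DrI2}. The only point requiring a sentence of care is the identification of the twists and of $\omega^{\pm k}_{\LT}$ with $\omega^{\mp k}_{\Dr}$ under the duality isomorphism of Theorem \ref{LTDrdual}, which is already built into the definition of $\cO^{D_p^\times-\lalg,\chi}_{\LT}$; so I would simply cite that definition rather than reprove anything. If desired, one can also note the parallel with Proposition \ref{dbarsurj}: the statement is the ``local, supersingular'' mirror of that global result, with the roles of $\GL_2(\Q_p)$ and $D_p^\times$ (equivalently, of $d^{k+1}$ and $\bar d^{k+1}$) interchanged, which is exactly the symmetry exploited via Theorem \ref{LTDrdual} and Corollary \ref{GL2Dpan}.
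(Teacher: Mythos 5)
Your proposal is correct and is exactly the argument the paper intends: the corollary is stated without proof precisely because it follows immediately from the identity $\bar{d}_{\Dr}^{k+1}=c'd_{\LT}^{k+1}$ (with $c'\in\Q^\times$) together with the surjectivity and kernel computation for $\bar{d}_{\Dr}^{k+1}$ in Theorem \ref{DrI2}. Your bookkeeping of the Tate twist via $\omega^{k,D_p^\times-\sm}_{\LT}\cong \omega^{-k,D_p^\times-\sm}_{\Dr}(k)$ is also exactly the identification already built into the definition of $\cO^{D_p^\times-\lalg,\chi}_{\LT}$, so nothing further is needed.
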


\subsection{\texorpdfstring{$I_k$}{Lg} on \texorpdfstring{$\Omega$}{Lg}: uniformization of supersingular locus}

\begin{para}
To apply the results obtained in the previous two sections to modular curves, we note that the supersingular locus of modular curves has a uniformization by Lubin-Tate towers and $\pi_{\HT}^{-1}(\Omega)$ is simply a finite disjoint union of $\mathcal{M}^{(0)}_{\LT}$. This was  used by Deligne in his proof \cite{De73} of the local Langlands correspondence for $\GL_2(\Q_l)$. More precisely, the uniformization theorem relates the supercuspidal part of the $\ell$-adic cohomology of modular curves with the $\ell$-adic cohomology of Lubin-Tate towers in terms of Jacquet-Langlands correspondence. We will see a similar picture here with the $\ell$-adic cohomology replaced by the completed cohomology.

We now recall the  uniformization theorem. Let $E_0$ be a supersingular elliptic curve  over $\bar{\F}_p$ whose $p$-divisible group is isomorphic to $H_0$ considered in \ref{LTsetup}. Fix such an isomorphism. Let $D:=\End(E_0)\otimes\Q$. It is well-known that $D$ is a quaternion algebra which is only ramified at $p$ and $\infty$. The chosen isomorphism gives a natural identification $D\otimes_{\Q}\Q_p\cong D_p$. Fix an $\A^p_f$-linear isomorphism
\[H^1_{\mathrm{\acute{e}t}}(E_0,\A^p_f)\cong (A^p_f)^{\oplus 2}.\]
Note that $D$ acts naturally on the right of it. We switch this to a left action by applying the main involution $g\mapsto \iota(g)$.
This induces  an   isomorphism $D\otimes_{\Q}\A_f^{p}\cong M_2(\A_f^p)$ of $\A_f^p$-algebras and $K^p\subseteq\GL_2(\A_f^p)$ is considered as an open subgroup of $(D\otimes_{\Q}\A_f^{p})^\times$ through this isomorphism.

Recall that $\pi_{\HT}:\mathcal{X}_{K^p}\to \Fl$ denotes the Hodge-Tate period morphism for modular curves, and  we have the Lubin-Tate space at infinite level $\mathcal{M}_{\LT,\infty}$ in \ref{LTet} equipped with a right action of $\GL_2(\Q_p)\times D_p^\times$ and a $\GL_2(\Q_p)\times D_p^\times$-equivariant period map $\pi_{\LT,\HT}:\mathcal{M}_{\LT,\infty}\to\Fl$.
\end{para}

\begin{thm} \label{ssunif}
There is a $\GL_2(\Q_p)$-equivariant isomorphism 
\[\pi_{\HT}^{-1}(\Omega)\cong D^\times \setminus [\mathcal{M}_{\LT,\infty}\times (D\otimes_{\Q}\A_f^{p})^\times/K^p]\]
which is functorial in $K^p$ and identifies $\pi_{\HT}: \pi_{\HT}^{-1}(\Omega)\to \Fl$ with $\pi_{\LT,\HT}\circ \pi_1$ on $\mathcal{M}_{\LT,\infty}\times (D\otimes_{\Q}\A_f^{p})^\times/K^p$ and identifies $\omega_{K^p}|_{\Omega}$ with $\pi_{1}{}_*\omega_{\LT}$,  where $\pi_1:\mathcal{M}_{\LT,\infty}\times (D\otimes_{\Q}\A_f^{p})^\times/K^p \to\mathcal{M}_{\LT,\infty}$ denotes the projection map onto the first factor. Here $D^\times$ acts diagonally on the left of  $\mathcal{M}_{\LT,\infty}\times (D\otimes_{\Q}\A_f^{p})^\times/K^p$ via  the embedding 
\[D^\times\to D_p^\times\]
sending $g$ to $\iota(g)$ (hence $D^\times$ acts on the left of $\mathcal{M}_{\LT,\infty}$) and the natural embedding $D^\times\subseteq (D\otimes_{\Q}\A_f^{p})^\times$.
\end{thm}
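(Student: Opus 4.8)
The plan is to establish the uniformization statement by reducing it to the corresponding statement for the associated $p$-divisible groups via Serre--Tate theory, together with Scholze's construction of $\mathcal{X}_{K^p}$ as a perfectoid space. First I would recall the classical Rapoport--Zink uniformization of the formal completion of the integral model of modular curves along the supersingular locus. Fix the supersingular elliptic curve $E_0/\bar{\F}_p$ and the quaternion algebra $D=\End(E_0)\otimes\Q$; then for a modular curve $\mathcal{X}_{K^pK_p}$ with hyperspecial $K_p$ at $p$, the classical theorem of Deligne (following Drinfeld's ideas, see also \cite{De73}) gives a $\GL_2(\Q_p)$-equivariant (at finite level, $\GL_2(\Z_p)$-equivariant) isomorphism of the supersingular tube with $D^\times\backslash[\mathcal{M}\times (D\otimes_\Q\A_f^p)^\times/K^p]$ for the Lubin-Tate formal scheme $\mathcal{M}$ of \ref{LTsetup}, where the double coset description encodes rigidifying the prime-to-$p$ Tate module and the quasi-isogeny data. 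Passing to rigid generic fibers and then to the $K_p$-tower, one gets a $\GL_2(\Q_p)$-equivariant isomorphism of adic spaces
\[
\varprojlim_{K_p}\,(\text{supersingular locus of }\mathcal{X}_{K^pK_p})\;\cong\;D^\times\backslash[\mathcal{M}_{\LT,\infty}\times (D\otimes_\Q\A_f^p)^\times/K^p],
\]
functorial in $K^p$, where I use that $\mathcal{M}_{\LT,\infty}\sim\varprojlim_n\mathcal{M}_{\LT,n}$ from \ref{LTet}. The main bookkeeping issue here is to match the group actions with the correct normalizations: the action of $D_p^\times$ on $\mathcal{M}_{\LT,\infty}$ is via $g\mapsto\iota(g)$ composed with the identification $D\otimes_\Q\Q_p\cong D_p$ coming from the fixed isomorphism $E_0[p^\infty]\cong H_0$, and the $\GL_2(\Q_p)$-action must match the one used to build $\pi_{\HT}$, which (as remarked in \ref{LTet}) differs from the Scholze--Weinstein normalization by $g\mapsto(g^{-1})^t$; one has to track this consistently, but this is exactly the same matching already performed in \cite{Sch15, SW13}.

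The second step is to identify the supersingular tube inside $\mathcal{X}_{K^p}$ with $\pi_{\HT}^{-1}(\Omega)$. By \cite[Lemma III.3.6 and the discussion in III.3]{Sch15}, the Hodge--Tate period map $\pi_{\HT}$ sends the ordinary (good and multiplicative-reduction) locus to $\mathbb{P}^1(\Q_p)$ and the supersingular locus to $\Omega=\mathbb{P}^1\setminus\mathbb{P}^1(\Q_p)$; more precisely, $\pi_{\HT}^{-1}(\Omega)$ is exactly the preimage in $\mathcal{X}_{K^p}$ of the supersingular tube at each finite level, because the position of the Hodge--Tate filtration line $\omega_{K^p}^{-1}$ inside $V\otimes\cO$ is $\Q_p$-rational precisely at ordinary points. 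So I would invoke this to replace the left-hand side of the displayed isomorphism in the previous step by $\pi_{\HT}^{-1}(\Omega)$. Under this identification, $\pi_{\HT}$ restricted to $\pi_{\HT}^{-1}(\Omega)$ corresponds to the Hodge--Tate period map for the Lubin-Tate tower composed with the projection $\pi_1$, because the relative Hodge--Tate filtration \eqref{rHT} on the universal elliptic curve restricts, under Serre--Tate, to the relative Hodge--Tate filtration on the universal deformation $\mathcal{G}$ of $H_0$; this is precisely how $\pi_{\LT,\HT}$ was defined in \ref{LTet} and how both $\Fl$ and $\Fl'_{\GM}$ were identified with $\mathbb{P}^1$ in Theorem \ref{LTDrdual}. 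Similarly the automorphic line bundle $\omega^1$ on modular curves is, by definition, the dual of the Lie algebra of the universal elliptic curve, which pulls back under Serre--Tate to the dual of the Lie algebra of $\mathcal{G}$, i.e.\ $\omega_{\LT,0}$; hence $\omega_{K^p}|_\Omega\cong\pi_{1*}\omega_{\LT}$, compatibly with all the structure.

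For the functoriality in $K^p$: all constructions — the Rapoport--Zink isomorphism, the identification with $\pi_{\HT}^{-1}(\Omega)$, and the identification of line bundles — are compatible with the transition maps as $K^p$ shrinks, because they all ultimately come from the moduli interpretation, which is manifestly functorial; I would simply note this rather than belabor it. The step I expect to be the main obstacle is not conceptual but notational: carefully pinning down the normalizations of the two $\GL_2(\Q_p)$-actions and the $D^\times$-action (diagonal via $\iota$ on the $p$-component and the standard inclusion on the prime-to-$p$ part), and checking that the identification $D\otimes_\Q\A_f^p\cong M_2(\A_f^p)$ obtained from $H^1_{\et}(E_0,\A_f^p)\cong(\A_f^p)^{\oplus 2}$ after switching sides by the main involution is the one under which $K^p\subseteq\GL_2(\A_f^p)$ becomes an open subgroup of $(D\otimes_\Q\A_f^p)^\times$ acting correctly. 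These are exactly the compatibilities recorded (in slightly different language) in \cite[\S III.3]{Sch15} and \cite[\S 7]{SW13}, so I would cite those and perform the translation rather than reprove anything, keeping the argument short.
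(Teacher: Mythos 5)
Your proposal is correct and follows essentially the same route as the paper: Rapoport--Zink uniformization of the supersingular locus at finite level (via Serre--Tate theory and the fact that all supersingular elliptic curves are isogenous to $E_0$), passage to the inverse limit, identification of $\pi_{\HT}^{-1}(\Omega)$ with the supersingular tube via \cite[Lemma III.3.6]{Sch15}, and reading off the compatibility of period maps and line bundles from the constructions. The one step you defer to the references — pinning down why the embedding $D^\times\to D_p^\times$ involves the main involution $\iota$ — is the only part the paper actually computes in detail (it traces the $D^\times$-action through $H^1_{\et}(E_0,\A_f^p)$ via $g\mapsto (g^{-1})^*$ and then adjusts the Rapoport--Zink normalization by $g\mapsto\iota(g)^{-1}$), so you have correctly identified where the real work lies even if you did not carry it out.
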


\begin{proof}
It was shown by \cite[Lemma III.3.6]{Sch15} that $\pi_{\HT}^{-1}(\Omega)$ is exactly the preimage of the supersingular locus of $\mathcal{X}_{K^p\GL_2(\Z_p)}$. The supersingular locus in $\mathcal{X}_{K^p\Gamma(p^n)}$ has a uniformization by $\mathcal{M}_{\LT,n}$ by Theorem III of \cite{RZ96}. See \cite[6.13]{RZ96} for the construction. The isomorphism on the infinite level follows from taking the inverse limit. The key point is that all supersingular elliptic curves are isogenous to $E_0$ and the isomorphism is guaranteed by the Serre-Tate theory. 
The identifications of the Hodge-Tate period maps and $\omega_{K^p}|_{\Omega}$ with $\pi_{1}{}_*\omega_{\LT}$   are clear in view of the constructions.

Since the normalization here is slightly different from the reference, we briefly explain where the main involution of $D^\times$ in the embedding  $D^\times\to D_p^\times$ comes from. Note that $D^\times$ acts naturally on the left of $H^1_{\mathrm{\acute{e}t}}(E_0,\A^p_f)$ via $g\mapsto(g^{-1})^*$, $g\in D^\times$. Then under the fixed isomorphism $H^1_{\mathrm{\acute{e}t}}(E_0,\A^p_f)\cong (A^p_f)^{\oplus 2}$ and $D\otimes_{\Q}\A_f^{p}\cong M_2(\A_f^p)$, this induces an embedding $i:D^\times\to  (D\otimes_{\Q}\A_f^{p})^\times$ which is nothing but $g\mapsto \iota(g)^{-1}$. On the other hand, the construction of \cite[6.13]{RZ96}  gives an isomorphism
\[\pi_{\HT}^{-1}(\Omega)\cong D^\times \setminus [\mathcal{M}_{\LT,\infty}\times (D\otimes_{\Q}\A_f^{p})^\times/K^p]\]
via the embedding  $D^\times\to D_p^\times$ sending $g$ to $g^{-1}$ and $i:D^\times\to  (D\otimes_{\Q}\A_f^{p})^\times$. We get the isomorphism in the theorem by applying $g\mapsto \iota(g)^{-1}$ to $D^\times$.

\end{proof}

\begin{para}
The quotient $D^\times \setminus [\mathcal{M}_{\LT,\infty}\times (D\otimes_{\Q}\A_f^{p})^\times/K^p]$ can be made more explicit. Consider
\[D^\times\setminus (D\otimes_{\Q}\A_f)^\times/K^p.\]
which admits a natural action of $D_p^\times$  on the right. We note that
\begin{itemize}
\item the action of $\cO_{D_p^\times}$ on it is free.
\end{itemize}
This follows easily from our assumption in \ref{brr} that $K^p$ is  contained in the level-$N$-subgroup $\{g\in\GL_2(\hat{\Z}^p)=\prod_{l\neq p}\GL_2(\Z_l)\,\vert\, g\equiv1\mod N\}$ for some $N\geq 3$ prime to $p$. Indeed, for any $g\in (D\otimes_{\Q_p} \A_f^p)^\times$, the intersection $gD^\times g^{-1}\cap K^p\cO_{D_p^\times}$ is finite, hence has to be trivial by the assumption. By the finiteness of the class group, we  can write
\[(D\otimes_{\Q}\A_f)^\times=\bigsqcup_{i\in I}D^\times \gamma_i K^p\cO_{D_p}^\times\]
for some elements $\gamma_i\in (D\otimes_{\Q}\A_f)^\times$ and finite set $I$. Write $\gamma_i=\gamma_i^p \gamma_{i,p}$ where $\gamma_i^p\in (D\otimes_{\Q}\A^p_f)^\times$ and $\gamma_{i,p}\in D_p^\times$. Consider the map 
\[\bigsqcup_{i\in I} \mathcal{M}^{(0)}_{\LT,\infty}\to D^\times \setminus [\mathcal{M}_{\LT,\infty}\times (D\otimes_{\Q}\A_f^{p})^\times/K^p]\]
sending $z_i\in\mathcal{M}^{(0)}_{\LT,\infty},{i\in I}$ to the coset containing $(z_i\iota(\gamma_{i,p}),\gamma_{i}^p)\in \mathcal{M}_{\LT,\infty}\times (D\otimes_{\Q}\A_f^{p})^\times/K^p$.
\end{para}

\begin{lem} \label{expuf}
This is an isomorphism, i.e. $u:\pi_{\HT}^{-1}(\Omega)\cong \bigsqcup_{i\in I} \mathcal{M}^{(0)}_{\LT,\infty}$ and this isomorphism is $\GL_2(\Q_p)^0$-equivariant.
\end{lem}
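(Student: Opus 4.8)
The statement to be proved is Lemma \ref{expuf}: the map
\[
u:\bigsqcup_{i\in I}\mathcal{M}^{(0)}_{\LT,\infty}\longrightarrow D^\times\setminus\bigl[\mathcal{M}_{\LT,\infty}\times(D\otimes_{\Q}\A_f^p)^\times/K^p\bigr]\cong\pi_{\HT}^{-1}(\Omega)
\]
sending $z_i\in\mathcal{M}^{(0)}_{\LT,\infty}$ to the class of $(z_i\iota(\gamma_{i,p}),\gamma_i^p)$ is a $\GL_2(\Q_p)^0$-equivariant isomorphism. The plan is to exploit the double-coset decomposition $(D\otimes_{\Q}\A_f)^\times=\bigsqcup_{i\in I}D^\times\gamma_iK^p\cO_{D_p}^\times$ together with the two facts already isolated in the text: that $\cO_{D_p}^\times$ (indeed $\cO_{D_p^\times}$) acts \emph{freely} on $D^\times\setminus(D\otimes_{\Q}\A_f)^\times/K^p$ (consequence of $K^p$ being contained in a level-$N$ subgroup with $N\ge 3$, so $gD^\times g^{-1}\cap K^p\cO_{D_p}^\times=\{1\}$), and that $D^\times$ acts freely on $\mathcal{M}_{\LT,\infty}\times(D\otimes_{\Q}\A_f^p)^\times/K^p$ with the stated diagonal action (so the quotient is a well-defined perfectoid space, by Theorem \ref{ssunif}).

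First I would verify that $u$ is well-defined and independent of the choices of representatives $\gamma_i$: if $\gamma_i$ is replaced by $\delta\gamma_i\kappa$ with $\delta\in D^\times$, $\kappa=\kappa^p\kappa_p\in K^p\cO_{D_p}^\times$, then $\gamma_{i,p}\rightsquigarrow \delta_p\gamma_{i,p}\kappa_p$ and $\gamma_i^p\rightsquigarrow\delta^p\gamma_i^p\kappa^p$, and since $\kappa_p\in\cO_{D_p}^\times$ acts on $\mathcal{M}^{(0)}_{\LT,\infty}$ (preserving the height-$0$ component) one rewrites $(z_i\iota(\delta_p\gamma_{i,p}\kappa_p),\delta^p\gamma_i^p\kappa^p)$; absorbing $\kappa^p$ into $K^p$ and pulling $\delta=\delta^p\delta_p\in D^\times$ out by the diagonal $D^\times$-action (noting the embedding $D^\times\to D_p^\times$, $g\mapsto\iota(g)$, matches the left factor) shows the class is unchanged. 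Conversely, choosing a \emph{different} set of representatives $z_i$ in each copy only permutes within a fixed $\mathcal{M}^{(0)}_{\LT,\infty}$, so $u$ is canonical up to the identifications. Next, $\GL_2(\Q_p)^0$-equivariance: the group acts on $\mathcal{M}^{(0)}_{\LT,\infty}$ preserving the height-$0$ locus (since $\det\in\Z_p^\times$ forces the quasi-isogeny height to stay $0$), and on the right-hand side it acts through the first factor $\mathcal{M}_{\LT,\infty}$ commuting with the $D^\times$- and $D_p^\times$-actions; since the $\GL_2(\Q_p)$-action on $\mathcal{M}_{\LT,\infty}$ commutes with that of $D_p^\times$, it commutes in particular with right multiplication by $\iota(\gamma_{i,p})$, giving equivariance of $u$ on the nose.

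The core is bijectivity. For \textbf{surjectivity}: every class in $D^\times\setminus[\mathcal{M}_{\LT,\infty}\times(D\otimes_{\Q}\A_f^p)^\times/K^p]$ has a representative $(z,\gamma^p)$; lift $\gamma^p$ to some $\gamma\in(D\otimes_{\Q}\A_f)^\times$ by picking any $p$-component, then write $\gamma=\delta\gamma_i\kappa$ with $\delta\in D^\times$, $\kappa\in K^p\cO_{D_p}^\times$; acting by $\delta^{-1}\in D^\times$ and adjusting by $\kappa$, the class is represented by $(z',\gamma_i^p)$ with $z'=z\,\iota((\delta\gamma_{i,p}\kappa_p)^{\pm})$ lying over the \emph{same} point of $\Fl$; because the residual stabilizer of $\gamma_i^p K^p$ inside $D_p^\times$ is exactly $\cO_{D_p}^\times$ (by the freeness fact), one may further translate $z'$ by an element of $\cO_{D_p}^\times$ to land in $\mathcal{M}^{(0)}_{\LT,\infty}$ — this is where one uses that $D_p^\times/(D^\times\cap\cdots)$ and the height grading interact so that the $\cO_{D_p}^\times$-orbit meets the height-$0$ locus in precisely one component, matching the index $i\in I$. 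For \textbf{injectivity}: suppose $u(z_i)$ and $u(z_j)$ coincide, i.e. $(z_i\iota(\gamma_{i,p}),\gamma_i^p)$ and $(z_j\iota(\gamma_{j,p}),\gamma_j^p)$ differ by $D^\times$ and $K^p$; comparing prime-to-$p$ components forces $D^\times\gamma_i^pK^p=D^\times\gamma_j^pK^p$, hence $i=j$ by disjointness of the double cosets, and then the $p$-component comparison together with freeness of the $D^\times$-action (equivalently, triviality of $\gamma_{i,p}^{-1}D^\times\gamma_{i,p}\cap\cO_{D_p}^\times$) forces $z_i=z_j$. The main obstacle I anticipate is the bookkeeping in surjectivity: tracking how the $\cO_{D_p}^\times$-orbit of a point of $\mathcal{M}_{\LT,\infty}$ distributes over the height components $\mathcal{M}^{(i)}_{\LT,\infty}$, and checking that exactly one representative lands in the height-$0$ component indexed correctly by $I$ — this requires a careful comparison of the reduced-norm valuation on $D_p^\times$ with the height grading on $\mathcal{M}_{\LT,\infty}$, and with the main-involution twist $g\mapsto\iota(g)$ in the embedding $D^\times\to D_p^\times$ introduced in Theorem \ref{ssunif}. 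Everything else is formal manipulation of double cosets and the already-established free quotient structure.
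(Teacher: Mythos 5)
Your overall strategy (direct double-coset manipulation plus an explicit bijectivity check) is different from the paper's, and the place you flag as "the main obstacle" is in fact where your argument breaks. In the surjectivity step you write that, once a class is represented by $(z',\gamma_i^p)$, "one may further translate $z'$ by an element of $\cO_{D_p}^\times$ to land in $\mathcal{M}^{(0)}_{\LT,\infty}$." This is false: $\cO_{D_p}^\times$ is exactly the subgroup of $D_p^\times$ preserving each height component, since the action of $g\in D_p^\times$ shifts the height of the quasi-isogeny by $v_p(\mathrm{Nrd}(g))$ and $\cO_{D_p}^\times=\ker(v_p\circ\mathrm{Nrd})$. So translating by $\cO_{D_p}^\times$ can never move $z'$ from $\mathcal{M}^{(j)}_{\LT,\infty}$, $j\neq 0$, into $\mathcal{M}^{(0)}_{\LT,\infty}$. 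The correct mechanism is to use the \emph{full} $D_p^\times$ at $p$: one first chooses the $p$-component of the adelic lift of $\gamma^p$ so as to bring $z$ into the height-zero component, and only then invokes the decomposition $(D\otimes_{\Q}\A_f)^\times=\bigsqcup_{i\in I}D^\times\gamma_iK^p\cO_{D_p}^\times$. Also, your "residual stabilizer of $\gamma_i^pK^p$ inside $D_p^\times$ is exactly $\cO_{D_p}^\times$" is not what the freeness fact says (that fact is the triviality of $gD^\times g^{-1}\cap K^p\cO_{D_p}^\times$), and the residual group acting on the fibre over $D^\times\gamma_i^pK^p$ is the discrete arithmetic group $D^\times\cap\gamma_i^pK^p(\gamma_i^p)^{-1}$, whose elements do shift heights (their reduced norms are powers of $p$). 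A related slip occurs in your injectivity argument: the set $I$ indexes double cosets in the \emph{full} finite adeles $(D\otimes_{\Q}\A_f)^\times$ modulo $K^p\cO_{D_p}^\times$, not in $(D\otimes_{\Q}\A_f^p)^\times$ modulo $K^p$, so "comparing prime-to-$p$ components forces $i=j$" is not immediate; two distinct indices can agree away from $p$ and differ only in the valuation at $p$, and ruling this out again requires the height comparison you deferred.

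The paper avoids all of this bookkeeping with one unfolding identity: the map $x\mapsto(x,1)$ gives $\mathcal{M}_{\LT,\infty}\cong(\mathcal{M}_{\LT,\infty}\times D_p^\times)/D_p^\times=(\mathcal{M}^{(0)}_{\LT,\infty}\times D_p^\times)/\cO_{D_p}^\times$, the second equality being exactly the statement that $D_p^\times$ permutes the height components simply transitively on $\pi_0$-level with $\cO_{D_p}^\times$ stabilizing $\mathcal{M}^{(0)}_{\LT,\infty}$. Substituting this into $D^\times\setminus[\mathcal{M}_{\LT,\infty}\times(D\otimes_{\Q}\A_f^p)^\times/K^p]$ rewrites it as $[\mathcal{M}^{(0)}_{\LT,\infty}\times(D^\times\setminus(D\otimes_{\Q}\A_f)^\times/K^p)]/\cO_{D_p}^\times$, and then the free $\cO_{D_p}^\times$-action on the finite set $D^\times\setminus(D\otimes_{\Q}\A_f)^\times/K^p=\bigsqcup_{i\in I}\gamma_iK^p\cO_{D_p}^\times$ immediately yields $\bigsqcup_{i\in I}\mathcal{M}^{(0)}_{\LT,\infty}$. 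If you want to keep your direct approach, you must insert the transitivity-of-$D_p^\times$-on-heights statement explicitly and use it both in surjectivity (to normalize the $p$-component of the lift) and in injectivity (to separate indices agreeing away from $p$); as written, the proof has a genuine gap.
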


\begin{proof}
The trivial map  sending $x\in \mathcal{M}_{\LT,\infty}$ to $(x,1)\in  \mathcal{M}_{\LT,\infty}\times D_p^\times$ induces an isomorphism
\[ \mathcal{M}_{\LT,\infty}\cong ( \mathcal{M}_{\LT,\infty}\times D_p^\times)/D_p^\times\]
where $D_p$ acts diagonally on the right hand side via $g\mapsto \iota(g)^{-1}$ on $ \mathcal{M}_{\LT,\infty}$ and the right multiplication on $D_p^\times$. The (right) action of $D_p^\times$ on the left hand side is identified with $g\cdot(z,a)=(z,\iota(g)a)$, $g\in D_p^\times,~(z,a)\in\mathcal{M}_{\LT,\infty}\times D_p^\times$ on the right hand side. On the other hand,
\[ ( \mathcal{M}_{\LT,\infty}\times D_p^\times)/D_p^\times= ( \mathcal{M}^{(0)}_{\LT,\infty}\times D_p^\times)/\cO_{D_p}^\times.\]
Hence 
\[
D^\times \setminus [\mathcal{M}_{\LT,\infty}\times (D\otimes_{\Q}\A_f^{p})^\times/K^p]\cong \left[\mathcal{M}_{\LT,\infty}^{(0)} \times [D^\times\setminus (D\otimes_{\Q}\A_f)^\times/K^p]\right]/\cO_{D_p}^\times
\]
where $\cO_{D_p}^\times$ acts via $g\mapsto \iota(g)^{-1}$ on $ \mathcal{M}^{(0)}_{\LT,\infty}$ and the right multiplication on $D^\times\setminus (D\otimes_{\Q}\A_f)^\times/K^p$.
Our claim follows immediately.
\end{proof} 

\begin{para}
The uniformization isomorphism $u:\pi_{\HT}^{-1}(\Omega)\cong \bigsqcup_{i\in I} \mathcal{M}^{(0)}_{\LT,\infty}$ induces a $\GL_2(\Q_p)^0 $-equivariant isomorphism $\cO_{K^p}|_{\Omega}\cong \bigoplus_{i\in I}\cO_{\LT}$ of $\cO_{\Omega}$-modules. In particular, it is easy to see that after taking the locally analytic vectors, the horizontal Cartan action $\theta_{\kh}$ is identified with $\theta_{\LT,\kh}$. Let $\chi=(n_1,n_2)\in\Z^2$ be a character of $\kh$ with $k=n_2-n_1\geq 0$. Then the differential operator $d^{k+1}|_{\Omega}$ in Theorem \ref{I1} gets identified with $d_{\LT}^{k+1}$ introduced in Theorem \ref{LTI1}. In particular, the surjectivity of $d_{\LT}^{k+1}$ (Corollary \ref{dLTkcok}) shows that 
\end{para}
 
\begin{cor} \label{dk+1Os}
 $d^{k+1}|_{\Omega}$ is surjective.
\end{cor}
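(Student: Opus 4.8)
The statement asserts that $d^{k+1}|_{\Omega}$ is surjective, and the plan is to deduce it directly from the uniformization of the supersingular locus together with the surjectivity of $d_{\LT}^{k+1}$ established in Corollary \ref{dLTkcok}. First I would invoke Theorem \ref{ssunif} and Lemma \ref{expuf} to obtain the $\GL_2(\Q_p)^0$-equivariant isomorphism $u:\pi_{\HT}^{-1}(\Omega)\cong \bigsqcup_{i\in I}\mathcal{M}^{(0)}_{\LT,\infty}$, which identifies $\pi_{\HT}|_{\pi_{\HT}^{-1}(\Omega)}$ with $\pi_{\LT,\HT}$ on each copy and identifies $\omega_{K^p}|_{\Omega}$ with $\pi_1{}_*\omega_{\LT}$. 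Pushing forward along the respective Hodge-Tate period maps, this yields a $\GL_2(\Q_p)^0$-equivariant isomorphism $\cO_{K^p}|_{\Omega}\cong\bigoplus_{i\in I}\cO_{\LT}$ of $\cO_{\Omega}$-modules, compatibly with the line bundles $\omega_{K^p}$ and $\Omega^1_{K^p}(\mathcal{C})^{\sm}$ via the Kodaira-Spencer isomorphisms on both sides (both Kodaira-Spencer constructions of Subsections \ref{Do1} and \ref{dLT} were set up to match under an embedding of Lubin-Tate spaces into modular curves, cf. the proof of Theorem \ref{LTde}).

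The second step is to check that this identification is compatible with the extra structures used to define the differential operators. Taking $\GL_2(\Q_p)^0$-locally analytic vectors, the horizontal Cartan action $\theta_{\kh}$ on $\cO^{\la}_{K^p}|_{\Omega}$ is matched with $\theta_{\LT,\kh}$ on $\bigoplus_i\cO^{\la}_{\LT}$ — this is because in both cases the Cartan action arises from the same source, namely the pull-back of $\mathfrak{h}\hookrightarrow\mathfrak{b}^0/\mathfrak{n}^0$ along the (common, after $u$) Hodge-Tate period map, using Theorem \ref{LTde} on the Lubin-Tate side and \cite[Theorem 4.2.7]{Pan20} on the modular side. Consequently, for a weight $\chi=(n_1,n_2)\in\Z^2$ with $k=n_2-n_1\geq 0$, the isotypic subsheaves $\cO^{\la,\chi}_{K^p}|_{\Omega}$ and $\bigoplus_i\cO^{\la,\chi}_{\LT}$ correspond. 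It then remains to observe that $d^{k+1}$ and $d_{\LT}^{k+1}$ are both characterized by the same two properties: $\cO_{\Fl}$-linearity (resp. $\cO_{\Omega}$-linearity, and these agree since $\cO_{\Fl}|_{\Omega}=\cO_{\Omega}$) together with the formula $\mathrm{t}^{n_1}e_1^ie_2^{k-i}s\mapsto \mathrm{t}^{n_1}e_1^ie_2^{k-i}\theta^{k+1}(s)$, where $\theta^{k+1}$ on the modular side restricts (again via the embedding of Lubin-Tate space into modular curves and the Kodaira-Spencer matching) to $(d_{\LT})^{k+1}$ on $\omega^{-k,\sm}_{\LT}$. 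By the uniqueness clauses in Theorem \ref{I1} and Theorem \ref{LTI1}, the operator $d^{k+1}|_{\Omega}$ is therefore identified with $\bigoplus_i d_{\LT}^{k+1}$ under $u$.

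Finally, Corollary \ref{dLTkcok} says $d_{\LT}^{k+1}$ is surjective (as a map of sheaves on $\Omega$); a finite direct sum of surjections is a surjection, so $d^{k+1}|_{\Omega}$ is surjective, proving the corollary. The only genuine subtlety — and the place where some care is needed rather than heavy computation — is verifying that the Kodaira-Spencer isomorphisms and the operators $\theta^{k+1}$, $(d_{\LT})^{k+1}$ really do match under the uniformization, i.e. that the ``differential equation'' / Kodaira-Spencer class is respected by $u$. This is exactly the content already invoked in the proof of Theorem \ref{LTde} (the embedding $\mathcal{M}^{(0)}_{\LT}\hookrightarrow$ modular curves and the compatibility of the de Rham / Dieudonné realizations), so no new input is required; one simply has to assemble these compatibilities and appeal to the uniqueness characterizations of the two operators. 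I do not expect any obstruction beyond this bookkeeping.
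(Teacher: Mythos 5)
Your proposal is correct and follows essentially the same route as the paper: the text immediately preceding the corollary identifies $\cO_{K^p}|_{\Omega}$ with $\bigoplus_{i\in I}\cO_{\LT}$ via the uniformization, matches $\theta_\kh$ with $\theta_{\LT,\kh}$, identifies $d^{k+1}|_{\Omega}$ with $d_{\LT}^{k+1}$, and concludes from Corollary \ref{dLTkcok}. Your extra care about the Kodaira--Spencer compatibilities is reasonable bookkeeping but not a different argument.
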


In particular, $H^0(\Fl,\coker d^{k+1})=H^0(\mathbb{P}^1(\Q_p),\coker d^{k+1})$. Hence by Corollary \ref{cokerdk+1P1}, \ref{cokerd'k+1}, we have the following result. Recall that $i$ denotes the embedding $\mathbb{P}^1(\Q_p)\subseteq\Fl$.

\begin{cor} \label{cokerIkinfty}
For $\chi=(-k,0)$, 
\[\coker d^{k+1}=i_*(\coker d^{k+1})|_{\mathbb{P}^1(\Q_p)}\cong \omega^k_{\Fl}\otimes_C i_*\mathcal{H}^1_{\ord}(K^p,k)(k)\cdot \mathrm{t}^{-k},\]
\[\coker d'^{k+1}=i_*(\coker d^{k+1})|_{\mathbb{P}^1(\Q_p)}\cong \omega^{-k-2}_{\Fl}\otimes_C i_*\mathcal{H}^1_{\ord}(K^p,k)(k)\otimes\det{}^{k+1}\cdot \mathrm{t}^{-k},\]
\[H^0(\Fl,\coker d^{k+1})=\Ind_B^{\GL_2(\Q_p)} H^1_{\rig}(\Ig(K^p),\Sym^k) (k)\cdot {e'_2}^k\mathrm{t}^{-k},\]
\[H^0(\Fl,\coker d'^{k+1})=\Ind_B^{\GL_2(\Q_p)} H^1_{\rig}(\Ig(K^p),\Sym^k) (k)\cdot {e'_1}^{k+1}{e'_2}^{-1}\mathrm{t}^{-k}.\]
\end{cor}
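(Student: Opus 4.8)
\textbf{Proof proposal for Corollary \ref{cokerIkinfty}.}

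The plan is to combine the local surjectivity statement of Corollary \ref{dk+1Os} with the explicit computations of $\coker d^{k+1}$ on the two Newton strata already carried out. First I would invoke Corollary \ref{dk+1Os}, which says that $d^{k+1}$ is surjective on $\Omega$; equivalently, $(\coker d^{k+1})|_{\Omega}=0$. Since $\Fl=\mathbb{P}^1=\mathbb{P}^1(\Q_p)\sqcup\Omega$ with $\mathbb{P}^1(\Q_p)$ closed and $\Omega$ open, vanishing of the cokernel sheaf on $\Omega$ forces the cokernel sheaf to be (set-theoretically) supported on $\mathbb{P}^1(\Q_p)$, so the adjunction map $\coker d^{k+1}\to i_*i^*\coker d^{k+1}=i_*(\coker d^{k+1})|_{\mathbb{P}^1(\Q_p)}$ is an isomorphism. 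This gives the first identification $\coker d^{k+1}=i_*(\coker d^{k+1})|_{\mathbb{P}^1(\Q_p)}$, and then plugging in the computation of $(\coker d^{k+1})|_{\mathbb{P}^1(\Q_p)}$ from Corollary \ref{cokerdk+1P1} yields the asserted formula $\coker d^{k+1}\cong\omega^k_{\Fl}\otimes_C i_*\mathcal{H}^1_{\ord}(K^p,k)(k)\cdot\mathrm{t}^{-k}$ in the case $\chi=(-k,0)$ (so $n_1=-k$). The statement for $\coker d'^{k+1}$ is obtained by twisting by $(\Omega^1_{\Fl})^{\otimes k+1}\cong\omega_{\Fl}^{-2k-2}\otimes\det{}^{k+1}$, exactly as in Corollary \ref{cokerd'k+1}, which replaces $\omega^k_{\Fl}$ by $\omega^{-k-2}_{\Fl}$ and introduces the factor $\det{}^{k+1}$.

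For the global sections, the key point is that $i_*$ is exact and that for a sheaf supported on the closed subset $\mathbb{P}^1(\Q_p)$ one has $H^0(\Fl,i_*\mathcal{G})=H^0(\mathbb{P}^1(\Q_p),\mathcal{G})$. Thus $H^0(\Fl,\coker d^{k+1})=H^0(\mathbb{P}^1(\Q_p),\coker d^{k+1})$, and the right-hand side was already identified in Corollary \ref{cokerdk+1P1} with $\Ind_B^{\GL_2(\Q_p)}H^1_{\rig}(\Ig(K^p),\Sym^k)(k)\cdot{e'_2}^k$; here the extra twist by $\mathrm{t}^{n_1}=\mathrm{t}^{-k}$ comes from the $\chi=(-k,0)$ normalization and is already built into the sheaf-level statement. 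The analogous computation for $\coker d'^{k+1}$ uses Corollary \ref{cokerd'k+1} together with the same twisting identity, replacing ${e'_2}^k$ by ${e'_1}^{k+1}{e'_2}^{-1}$. I should also record that all identifications are $\GL_2(\Q_p)$-equivariant and Hecke-equivariant away from $p$, which follows because $d^{k+1}$, $d'^{k+1}$, the stratification, and the uniformization isomorphism $u$ of Lemma \ref{expuf} are all equivariant for these actions.

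The only real content beyond bookkeeping is Corollary \ref{dk+1Os}, i.e.\ the surjectivity of $d^{k+1}$ on $\Omega$, but that is established in the excerpt via the uniformization $u:\pi_{\HT}^{-1}(\Omega)\cong\bigsqcup_{i\in I}\mathcal{M}^{(0)}_{\LT,\infty}$ of Theorem \ref{ssunif} and Lemma \ref{expuf}, which identifies $d^{k+1}|_{\Omega}$ with $d_{\LT}^{k+1}$, combined with Corollary \ref{dLTkcok} (surjectivity of $d_{\LT}^{k+1}$, proved by swapping $\GL_2(\Q_p)$ and $D_p^\times$ and invoking Theorem \ref{DrI2}). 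So in the present corollary there is no genuine obstacle: the proof is the short assembly ``$(\coker d^{k+1})|_{\Omega}=0$ by Corollary \ref{dk+1Os}, hence $\coker d^{k+1}=i_*i^*\coker d^{k+1}$, now apply Corollary \ref{cokerdk+1P1} and Corollary \ref{cokerd'k+1}, and take global sections using exactness of $i_*$.'' If anything requires a word of care it is keeping the Tate twists, the $\mathrm{t}^{-k}$ factor, and the characters ${e'_1},{e'_2}$ consistent between the $d^{k+1}$ and $d'^{k+1}$ versions, which is precisely the twisting bookkeeping already organized in \ref{dbar} and Definition \ref{I}.
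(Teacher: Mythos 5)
Your proposal is correct and follows exactly the paper's own route: Corollary \ref{dk+1Os} gives $(\coker d^{k+1})|_{\Omega}=0$, so the cokernel is supported on the closed subset $\mathbb{P}^1(\Q_p)$ and equals $i_*$ of its restriction there, after which Corollaries \ref{cokerdk+1P1} and \ref{cokerd'k+1} supply the sheaf-level and global-section identifications, with the $\mathrm{t}^{-k}$ and character twists as you describe. Nothing further is needed.
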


Recall that $I_k= d'^{k+1}\circ \bar{d}^{k+1}$ and $\bar{d}^{k+1}$ is surjective by Proposition \ref{dbarsurj}. 

\begin{cor} \label{cokik}
$\coker I_k$ is supported on $\mathbb{P}^1(\Q_p)\subseteq\Fl$. Moreover,
\[I^1_k:H^1(\Fl,\cO^{\la,(n_1,n_2)}_{K^p})\to H^1(\Fl,\cO^{\la,(n_2+1,n_1-1)}_{K^p}(k+1))\] 
is surjective.
\end{cor}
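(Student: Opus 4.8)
\textbf{Proof proposal for Corollary \ref{cokik}.} The plan is to combine the explicit description of $\coker d'^{k+1}$ obtained above with the surjectivity of $\bar d^{k+1}$ and then use the fact that $\Fl = \mathbb{P}^1$ is one-dimensional to pass to cohomology. First I would record that $I_k = d'^{k+1}\circ \bar d^{k+1}$, and that by Proposition \ref{dbarsurj} the map $\bar d^{k+1}:\cO^{\la,\chi}_{K^p}\to \cO^{\la,\chi}_{K^p}\otimes_{\cO_\Fl}(\Omega^1_\Fl)^{\otimes k+1}$ is surjective. Therefore $\coker I_k = \coker\big(d'^{k+1}: \cO^{\la,\chi}_{K^p}\otimes_{\cO_\Fl}(\Omega^1_\Fl)^{\otimes k+1}\to \cO^{\la,-w\cdot(-\chi)}_{K^p}(k+1)\big) = \coker d'^{k+1}$ as sheaves, and by Corollary \ref{cokerIkinfty} this is $i_*\big((\coker d^{k+1})|_{\mathbb{P}^1(\Q_p)}\big)$, which is supported on $\mathbb{P}^1(\Q_p)\subseteq\Fl$. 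This gives the first assertion immediately.

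For the surjectivity of $I^1_k$, I would look at the two-term complex $\cO^{\la,\chi}_{K^p}\otimes_{\cO_\Fl}(\Omega^1_\Fl)^{\otimes k+1} \xrightarrow{d'^{k+1}} \cO^{\la,-w\cdot(-\chi)}_{K^p}(k+1)$, viewed as the target end of $I_k$. Since $\Fl$ has cohomological dimension one, the long exact sequence for $0\to \ker d'^{k+1}\to \cO^{\la,\chi}_{K^p}\otimes_{\cO_\Fl}(\Omega^1_\Fl)^{\otimes k+1}\to \mathrm{im}\, d'^{k+1}\to 0$ together with the one for $0\to \mathrm{im}\, d'^{k+1}\to \cO^{\la,-w\cdot(-\chi)}_{K^p}(k+1)\to \coker d'^{k+1}\to 0$ reduces the surjectivity of $H^1(d'^{k+1})$ onto $H^1(\Fl,\cO^{\la,-w\cdot(-\chi)}_{K^p}(k+1))$ to the vanishing $H^1(\Fl, \coker d'^{k+1}) = 0$. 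But $\coker d'^{k+1} = i_*\mathcal{F}$ for a sheaf $\mathcal{F}$ on the profinite (zero-dimensional) space $\mathbb{P}^1(\Q_p)$, namely a twist of $\mathcal{H}^1_{\ord}(K^p,k)$; since $i$ is a closed embedding and $\mathbb{P}^1(\Q_p)$ has cohomological dimension zero, $H^1(\Fl, i_*\mathcal{F}) = H^1(\mathbb{P}^1(\Q_p), \mathcal{F}) = 0$. Then, since $H^1(\bar d^{k+1}): H^1(\Fl,\cO^{\la,\chi}_{K^p})\to H^1(\Fl,\cO^{\la,\chi}_{K^p}\otimes_{\cO_\Fl}(\Omega^1_\Fl)^{\otimes k+1})$ is surjective (again by $\bar d^{k+1}$ being a surjection of sheaves on a space of cohomological dimension one), the composite $I^1_k = H^1(d'^{k+1})\circ H^1(\bar d^{k+1})$ is surjective.

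One technical point I would need to address is that the cohomology groups here carry LB-space topologies (Definition \ref{chGanchi}) and one should check that the maps in play are strict, or else phrase the cohomological computations purely at the level of abelian-group-valued sheaf cohomology where strictness is irrelevant; the cleanest route is to run the long exact sequence argument for ordinary sheaf cohomology on the topological space $|\Fl|$ and only afterwards observe that the resulting surjection is the one underlying $I^1_k$. The main obstacle, such as it is, is making sure that ``$\coker I_k = \coker d'^{k+1}$'' is legitimate as an identification of sheaves: this uses that $\bar d^{k+1}$ is an \emph{epimorphism} of sheaves, so that $\mathrm{im}\, I_k = \mathrm{im}\, d'^{k+1}$, which in turn rests on Proposition \ref{dbarsurj}; everything else is formal homological algebra plus the already-established vanishing $H^1(\mathbb{P}^1(\Q_p),-)=0$ for sheaves on a profinite set and the computation of $\coker d'^{k+1}$ in Corollary \ref{cokerIkinfty}.
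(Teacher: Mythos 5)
Your proposal is correct and follows essentially the same route as the paper: the identification $\coker I_k=\coker d'^{k+1}$ via the surjectivity of $\bar d^{k+1}$ (Proposition \ref{dbarsurj}) together with Corollary \ref{cokerIkinfty} gives the support statement, and the vanishing $H^1(\Fl,\coker I_k)=H^1(\mathbb{P}^1(\Q_p),i^*\coker I_k)=0$ on the profinite set, combined with the right-exactness of $H^1$ on the one-dimensional space $\Fl$, gives the surjectivity of $I^1_k$. Your remark that the argument should be run at the level of abelian-group-valued sheaf cohomology, with no strictness issues entering, matches how the paper treats it.
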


\begin{proof}
Let $i:\mathbb{P}^1(\Q_p)\to \Fl$ denotes the closed embedding (of topological spaces). Then
 $H^1(\Fl,\coker I_k)=H^1(\mathbb{P}^1(\Q_p),i^*\coker I_k)=0$ because $\mathbb{P}^1(\Q_p)$ is a profinite set (``zero-dimensional''). This implies the surjectivity of $I^1_k$ as $\Fl$ is one-dimensional, hence taking $H^1$ is right-exact. 
\end{proof}

\begin{rem}
It is interesting to observe that the complex $\mathcal{I}_k:\cO^{\la,(-k,0)}_{K^p}\xrightarrow{I_k} \cO^{\la,(1,-k-1)}_{K^p}(k+1)$ satisfies the following  perversities: $\mathrm{supp} H^1(\mathcal{I}_k)=\mathbb{P}^1(\Q_p)$ is ``zero-dimensional'' and $\mathrm{supp} H^0(\mathcal{I}_k)\subseteq \Fl$ is at most ``one-dimensional''. 

\end{rem}

\begin{para}
We can also use Corollary \ref{dLTkcok} to give a description of $\ker d^{k+1}$. Let $X_{K^p}:=D^\times\setminus (D\otimes_{\Q}\A_f)^\times/K^p$, which is  isomorphic to $\bigsqcup_{i\in I}\cO_{D_p}^\times$ as an $\cO_{D_p}^\times$-space. Recall that in the proof of Lemma \ref{expuf}, we showed that 
\[
\pi_{\HT}^{-1} (\Omega)\cong [\mathcal{M}_{\LT,\infty}^{(0)} \times X_{K^p}]/\cO_{D_p}^\times
,\]
where $\cO_{D_p}^\times$ acts on $\mathcal{M}_{\LT,\infty}^{(0)}$ via $g\mapsto \iota(g)^{-1}$.
From this, it is easy to see that $\cO_{K^p}|_{\Omega}\cong\mathrm{Map}_{\cO_{D_p}^\times}(X_{K^p},\cO_{\LT})$, the set of $\cO_{D_p}^\times$-equivariant maps from $X_{K^p}$ to $\cO_{\LT}$. (In fact, this holds for general  $\cO_{D_p}^\times$-equivariant sheaves on $\mathcal{M}_{\LT,\infty}^{(0)}$ but we don't need this generality here.) Similarly, we have 
\[\ker d^{k+1}|_{\Omega}\cong\mathrm{Map}_{\cO_{D_p}^\times}(X_{K^p},\ker d_{\LT}^{k+1}).\] 
To further simplify it, by Corollary \ref{dLTkcok},  we have
\[\ker d^{k+1}_{\LT}=H^0(\Fl',\omega^{k}_{\Fl'})(k)\otimes_C \omega^{-k,D_p^\times-\sm}_{\Dr}\cdot \mathrm{t}^{n_1}.\]
Note that $H^0(\Fl',\omega^{k}_{\Fl'})\cdot \mathrm{t}^{n_1}$ is an irreducible algebraic representation of $D_p^\times$ over $C$ with highest weight $(k,0)+(n_1,n_1)=(n_1+k,n_1)=(n_2,n_1)$.  We denote the \textit{dual} of  this irreducible representation by $W_{(-n_1,-n_2)}$ which has highest weight $(-n_1,-n_2)$.
\end{para}

\begin{defn} \label{AFD}
Let $\chi=(n_1,n_2)\in\Z^2$ with $n_2-n_1\geq 0$.
\begin{enumerate}
\item  $\mathcal{A}_{D,-\chi}=\mathcal{A}_{D,(-n_1,-n_2)}$ denotes the set of maps 
\[f: D^\times\setminus (D\otimes_{\Q}\A_f)^\times\to W_{(-n_1,-n_2)}\] such that
$f(dh^ph_p)=h_p^{-1}\cdot f(d)$, for any $d\in D^\times\setminus (D\otimes_{\Q}\A_f)^\times$ and any $h^p$ (resp. $h_p$) in  some open compact subgroup $U^p\subseteq (D\otimes_{\Q}\A_f^p)^\times$ (resp. $U_p\subseteq D_p^\times$). This is usually called the space of quaternionic forms on $(D\otimes_\Q \A)^\times$. It admits a right translation action of $ (D\otimes_{\Q}\A_f^p)^\times$ and an action of $D_p^\times$ via $(h_p\cdot f)(d)=h_p\cdot f(dh_p)$, $h_p\in D_p^\times,d\in D^\times\setminus (D\otimes_{\Q}\A_f)^\times$. Both actions are smooth.
\item $\mathcal{A}^{1}_{D,-\chi}\subseteq\mathcal{A}_{D,-\chi}$  denotes the subset of maps which factor through the reduced norm map. Clearly this is non-zero only when $n_1=n_2$.
\item $\mathcal{A}^c_{D,-\chi}:=\mathcal{A}_{D,-\chi}/\mathcal{A}^1_{D,-\chi}$. Then the Hecke action (cf. \ref{Hd} below) induces a natural splitting $\mathcal{A}_{D,-\chi}\cong\mathcal{A}^c_{D,-\chi}\oplus \mathcal{A}^1_{D,-\chi}$.
\end{enumerate}
\end{defn}
 
\begin{prop} \label{kerdo}
There is a Hecke and $\GL_2(\Q_p)^0$-equivariant isomorphism
\[\ker d^{k+1}|_{\Omega}\cong (\mathcal{A}_{D,(-n_1,-n_2)}^{K^p}\otimes_C  \omega^{-k,D_p^\times-\sm}_{\Dr})^{\cO_{D_p}^\times}(n_2)\cdot {\varepsilon'_p}^{n_1},\] 
where $\cO_{D_p}^\times$ acts diagonally  via the action defined in \ref{AFD} on $\mathcal{A}_{D,(-n_1,-n_2)}^{K^p}$ and the action $g\mapsto \iota(g)^{-1}$ on $ \omega^{-k,D_p^\times-\sm}_{\Dr}$, and $\varepsilon'_p$ denotes the character sending $x\in\GL_2(\Q_p)$ to $|\det{x}|\det{x}$, i.e. the $p$-adic cyclotomic character.
\end{prop}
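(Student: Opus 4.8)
The plan is to unwind the uniformization isomorphism of Lemma~\ref{expuf} together with the identification $\ker d^{k+1}|_\Omega \cong \mathrm{Map}_{\cO_{D_p}^\times}(X_{K^p}, \ker d^{k+1}_{\LT})$ explained just above, and then to feed in the explicit computation of $\ker d^{k+1}_{\LT}$ from Corollary~\ref{dLTkcok}. Concretely, I would first record that, by the discussion preceding Lemma~\ref{expuf} and the proof of that lemma, there is a $\GL_2(\Q_p)^o$-equivariant isomorphism of sheaves on $\Omega$
\[
\cO_{K^p}|_\Omega \cong \mathrm{Map}_{\cO_{D_p}^\times}(X_{K^p}, \cO_{\LT}),
\]
where $X_{K^p} = D^\times \setminus (D\otimes_\Q \A_f)^\times / K^p$ and $\cO_{D_p}^\times$ acts on $\mathcal{M}_{\LT,\infty}^{(0)}$ through $g \mapsto \iota(g)^{-1}$. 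Since $d^{k+1}|_\Omega$ is identified with $d^{k+1}_{\LT}$ under the uniformization (this was noted right before Corollary~\ref{dk+1Os}, using that $\theta_\kh$ matches $\theta_{\LT,\kh}$), and since the formation of $\ker$ of a map of sheaves commutes with the finite-disjoint-union description $\pi_{\HT}^{-1}(\Omega) \cong \bigsqcup_{i\in I}\mathcal{M}_{\LT,\infty}^{(0)}$, one gets $\ker d^{k+1}|_\Omega \cong \mathrm{Map}_{\cO_{D_p}^\times}(X_{K^p}, \ker d^{k+1}_{\LT})$.

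Next I would substitute the value of $\ker d^{k+1}_{\LT}$. By Corollary~\ref{dLTkcok},
\[
\ker d^{k+1}_{\LT} = H^0(\Fl', \omega^k_{\Fl'})(k) \otimes_C \omega^{-k, D_p^\times-\sm}_{\Dr} \cdot \mathrm{t}^{n_1}.
\]
The point is that $\cO_{D_p}^\times$ acts on the first tensor factor through an \emph{algebraic} representation of $D_p^\times$: as recalled above, $H^0(\Fl', \omega^k_{\Fl'})\cdot \mathrm{t}^{n_1}$ is the irreducible algebraic representation of highest weight $(n_2, n_1)$, with dual $W_{(-n_1,-n_2)}$ of highest weight $(-n_1,-n_2)$; the Tate twist $(k)$ together with $\mathrm{t}^{n_1}$ is bookkept by the Galois/Hecke character, which on the center of $\GL_2(\Q_p)$ (equivalently on $\Q_p^\times \subseteq D_p^\times$) is $\varepsilon_p^{n_2}\cdot{\varepsilon'_p}^{n_1}$ once one accounts for the Tate twist $(n_2)$ being absorbed into $\omega^{-k,D_p^\times-\sm}_{\Dr}$ (which itself carries a twist by $(-k)$ relative to $\omega^{k}_{\LT}$, cf. $\omega_{\LT,\infty}(-1)\cong\omega_{\Dr,\infty}^{-1}$). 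I would then dualize: an $\cO_{D_p}^\times$-equivariant map $X_{K^p} \to W_\chi \otimes \omega^{-k,D_p^\times-\sm}_{\Dr}$ with $W_\chi$ algebraic is the same datum as an element of $(\mathcal{A}_{D,(-n_1,-n_2)}^{K^p} \otimes_C \omega^{-k,D_p^\times-\sm}_{\Dr})^{\cO_{D_p}^\times}$, because by Definition~\ref{AFD} the space $\mathcal{A}_{D,(-n_1,-n_2)}$ is precisely the smooth induction of $W_{(-n_1,-n_2)}$-valued functions on $X_{K^p}$ (before the $K^p$-level and $\cO_{D_p}^\times$-equivariance are imposed), so $\mathrm{Hom}_{\cO_{D_p}^\times}(\text{const functions on }X_{K^p}, W_\chi \otimes (\cdots))$ unwinds to the stated $\cO_{D_p}^\times$-invariants of the diagonal action — the action on $\mathcal{A}_{D,-\chi}$ being the one in~\ref{AFD} and the action on $\omega^{-k,D_p^\times-\sm}_{\Dr}$ being $g\mapsto\iota(g)^{-1}$.

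Finally I would check equivariance for the remaining structures. The $\GL_2(\Q_p)^o$-action on $\ker d^{k+1}|_\Omega$ comes entirely through $\mathcal{M}_{\LT,\infty}^{(0)}$, i.e. through $\omega^{-k,D_p^\times-\sm}_{\Dr}$, and is therefore carried to the $\GL_2(\Q_p)^o$-action on the right-hand side as written (with the explicit cyclotomic twist ${\varepsilon'_p}^{n_1}$ and Tate twist $(n_2)$ accounting for how $\GL_2(\Q_p)^o$ acts on $\mathrm{t}$ and on $\Q_p(1)$); the prime-to-$p$ Hecke action comes through $X_{K^p}$, i.e. through the right-translation action on $\mathcal{A}_{D,(-n_1,-n_2)}^{K^p}$, and matches the Hecke action on $\cO_{K^p}|_\Omega$ by functoriality of the uniformization in $K^p$ (Theorem~\ref{ssunif}). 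I expect the main obstacle to be purely bookkeeping: pinning down the exact Tate twist and the exact power of $\varepsilon'_p$ (hence the precise normalization of $W_{(-n_1,-n_2)}$ versus its dual, and of $\omega_{\Dr}$ versus $\omega_{\LT}$), since several compatible-but-distinct normalizations for $\mathrm{t}$, for the polarization, and for the main involution $\iota$ are in play; the structural content is already contained in Lemma~\ref{expuf} and Corollary~\ref{dLTkcok}.
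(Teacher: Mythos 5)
Your proposal is correct and follows essentially the same route as the paper: the paper's proof likewise starts from the identification $\ker d^{k+1}|_{\Omega}\cong\mathrm{Map}_{\cO_{D_p}^\times}(X_{K^p},\ker d_{\LT}^{k+1})$ established just before the statement, substitutes Corollary \ref{dLTkcok}, uses smoothness of the $\cO_{D_p}^\times$-action to pull the algebraic factor $H^0(\Fl',\omega^k_{\Fl'})\cdot\mathrm{t}^{n_1}$ out of the mapping space and identify it with $\mathcal{A}_{D,(-n_1,-n_2)}^{K^p}$, and then tracks the Tate twist $(n_2)=(n_1+k)$ and the character ${\varepsilon'_p}^{n_1}$ coming from $\mathrm{t}^{n_1}$. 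The bookkeeping you flag as the remaining obstacle is exactly what the paper dispatches in one line via the remarks in \ref{HEt} on how $G_{\Q_p}$ and $\GL_2(\A_f)$ act on $\mathrm{t}$.
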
 

\begin{proof}
We denote by $\mathrm{Map}^{\sm}(X_{K^p},\ker d_{\LT}^{k+1})$ the set of maps $X_{K^p} \to \ker d_{\LT}^{k+1}$ invariant with respect to some open subgroup of $\cO_{D_p}^\times$. Then
\[\mathrm{Map}_{\cO_{D_p}^\times}(X_{K^p},\ker d_{\LT}^{k+1})=\mathrm{Map}^{\sm}(X_{K^p},\ker d_{\LT}^{k+1})^{\cO_{D_p}^\times}.\]
Since the action of $\cO_{D_p}^\times$ on $\omega^{-k,D_p^\times-\sm}_{\Dr}$ is smooth, hence locally finite, by definition, we have 
\begin{eqnarray*}
\mathrm{Map}^{\sm}(X_{K^p},\ker d_{\LT}^{k+1})&\cong&\mathrm{Map}^{\sm}(X_{K^p},H^0(\Fl',\omega^{k}_{\Fl'})\cdot \mathrm{t}^{n_1})\otimes_C \omega^{-k,D_p^\times-\sm}_{\Dr}(k)\\
&\cong&\mathcal{A}_{D,(-n_1,-n_2)}^{K^p}\otimes_C  \omega^{-k,D_p^\times-\sm}_{\Dr}(n_1+k)\cdot {\varepsilon'_p}^{n_1}\\
&=&\mathcal{A}_{D,(-n_1,-n_2)}^{K^p}\otimes_C  \omega^{-k,D_p^\times-\sm}_{\Dr}(n_2)\cdot {\varepsilon'_p}^{n_1},
\end{eqnarray*} 
where the second isomorphism holds as  $\cO_{D_p}^\times$ acts $\ker d_{\LT}^{k+1}$ via $g\mapsto \iota(g)^{-1}$ and $\GL_2(\A_f)$ acts on $\mathrm{t}$ via $\varepsilon\circ\det$ with $\varepsilon:\A_f^\times/\Q^\times_{>0}\to\Z_p^\times$ denoting the $p$-adic cyclotomic character, cf. \ref{HEt}, and $G_{\Q_p}$ acts on $\mathrm{t}$ also via the $p$-adic cyclotomic character.
Our claim follows immediately.
\end{proof}

\begin{rem} \label{GL20}
We can extend this isomorphism to be $\GL_2(\Q_p)$-equivariant in the following way. It follows from the construction of $\omega_{\Dr}$ that the action of $\cO_{D_p}^\times\times\GL_2(\Q_p)^0$ can be extended naturally to an action of $D_p^\times\times\GL_2(\Q_p)$ on $\bigoplus_{i\in\Z}  \omega^{-k,D_p^\times-\sm}_{\Dr}$. Then 
\begin{eqnarray*}
\ker d^{k+1}|_{\Omega}&\cong &(\mathcal{A}_{D,(-n_1,-n_2)}^{K^p}\otimes_C  \omega^{-k,D_p^\times-\sm}_{\Dr})^{\cO_{D_p}^\times}(n_2)\cdot {\varepsilon'_p}^{n_1}\\
&\cong & (\mathcal{A}_{D,(-n_1,-n_2)}^{K^p}\otimes_C  \bigoplus_{i\in\Z} \omega^{-k,D_p^\times-\sm}_{\Dr})^{D_p^\times}(n_2)\cdot {\varepsilon'_p}^{n_1}
\end{eqnarray*}
which is $\GL_2(\Q_p)$-equivariant.
\end{rem}

\begin{para}
We denote by 
$j:\Omega\to \Fl$ 
the open embedding. Let $\mathcal{F}$ be a sheaf on $\Omega$. As usual $j_{!}\mathcal{F}$ denotes the extension by zero of $\mathcal{F}$. We now can determine the kernel of $d^{k+1}: \cO^{\la,(n_1,n_2)}_{K^p}\to  \cO^{\la,(n_1,n_2)}_{K^p}\otimes_{\cO^{\sm}_{K^p}} (\Omega^1_{K^p}(\mathcal{C})^{\sm})^{\otimes k+1}$ by Proposition \ref{kerdc} and Proposition \ref{kerdo}.
\end{para}

\begin{prop} \label{kerdk+1s}
\[\ker d^{k+1}\cong j_{!}(\mathcal{A}_{D,(-n_1,-n_2)}^{K^p}\otimes_C  \omega^{-k,D_p^\times-\sm}_{\Dr})^{\cO_{D_p}^\times}(n_2)\cdot  {\varepsilon'_p}^{n_1},~~~~~~k\geq 1,\]
\[\ker d^1\cong j_{!} (\mathcal{A}_{D,(-n_1,-n_1)}^{c,K^p}\otimes_C \cO^{D_p^\times-\sm}_{\LT})^{\cO_{D_p}^\times}(n_1)\cdot  {\varepsilon'_p}^{n_1} \oplus \cO_{\Fl} \otimes_C M_0(K^p)(n_1)\cdot {\varepsilon'_p}^{n_1},\]
where $\mathcal{A}_{D,(0,0)}^{c}$ was introduced in Definition \ref{AFD}.
Recall that 
\[\omega^{-k,D_p^\times-\sm}_{\Dr}:=\varinjlim_{n}(\pi^{(0)}_{\Dr,n})_* (\pi^{(0)}_{\Dr,n})^* \omega_{\Fl}^k,\]
where $\pi^{(0)}_{\Dr,n}:\mathcal{M}_{\Dr,n}^{(0)}\to \Omega$ denotes the projection map. We also remind that the action of ${\cO_{D_p}^\times}$ on $\omega^{k+2,D_p^\times-\sm}_{\Dr}$ is via $g\mapsto \iota(g)^{-1}$.
\end{prop}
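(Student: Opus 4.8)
The statement is a gluing of two pieces of information: the behaviour of $d^{k+1}$ on the ordinary locus $\mathbb{P}^1(\Q_p)$ (Proposition \ref{kerdc}) and its behaviour on the supersingular/Drinfeld locus $\Omega$ (Proposition \ref{kerdo}). Since $\Fl = \mathbb{P}^1 = \Omega \sqcup \mathbb{P}^1(\Q_p)$ with $j:\Omega\hookrightarrow\Fl$ open and $\mathbb{P}^1(\Q_p)$ closed, the plan is to use the standard exact sequence of sheaves $0\to j_!(\ker d^{k+1}|_\Omega)\to \ker d^{k+1}\to i_*(\ker d^{k+1}|_{\mathbb{P}^1(\Q_p)})\to 0$, where the middle term is the actual kernel sheaf and the outer terms are computed by restriction. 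In the case $k\geq 1$, Proposition \ref{kerdc} gives $(\ker d^{k+1})|_{\mathbb{P}^1(\Q_p)}=0$, so the restriction map $j^*\ker d^{k+1}\to \ker d^{k+1}|_\Omega$ already identifies $\ker d^{k+1}$ with $j_!$ of its own restriction to $\Omega$; then Proposition \ref{kerdo} (with its twist $(n_2)\cdot{\varepsilon'_p}^{n_1}$) supplies exactly the claimed sheaf, and one only needs to check that the sheaf $\ker d^{k+1}$ has no sections supported at $\mathbb{P}^1(\Q_p)$, i.e. that the adjunction $j_!j^*\ker d^{k+1}\to\ker d^{k+1}$ is an isomorphism. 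This last point I would deduce from the fact that $\ker d^{k+1}$ is a subsheaf of $\cO^{\la,\chi}_{K^p}$, whose stalk at $\infty$ is explicitly described in Lemma \ref{Olainfty} as a $(\cO_{\Fl})_\infty$-module of ``power series'' type, so a section of $\ker d^{k+1}$ vanishing on a punctured neighbourhood of $\infty$ must vanish identically near $\infty$; combined with $\GL_2(\Q_p)$-equivariance and transitivity on $\mathbb{P}^1(\Q_p)$, this kills all sections supported on the boundary.

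\textbf{The case $k=0$.} Here Proposition \ref{kerdc} gives $\ker(d^1)|_{\mathbb{P}^1(\Q_p)}=\cO_{\Fl}|_{\mathbb{P}^1(\Q_p)}\otimes_C M_0(K^p)\cdot\mathrm{t}^{n_1}$, which is nonzero, so the gluing is genuine. The plan is: first, on $\Omega$, Proposition \ref{kerdo} identifies $\ker d^1|_\Omega$ with $(\mathcal{A}_{D,(-n_1,-n_1)}^{K^p}\otimes_C \cO^{D_p^\times-\sm}_{\LT})^{\cO_{D_p}^\times}(n_1)\cdot{\varepsilon'_p}^{n_1}$; using the Hecke-equivariant splitting $\mathcal{A}_{D,-\chi}\cong \mathcal{A}^c_{D,-\chi}\oplus\mathcal{A}^1_{D,-\chi}$ from Definition \ref{AFD}, one breaks this into the ``cuspidal'' piece $\mathcal{A}^c$ and the piece $\mathcal{A}^1$ that factors through the reduced norm. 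The $\mathcal{A}^1$-piece, being built from functions pulled back from $D^\times\backslash(D\otimes\A_f)^\times/\text{(reduced norm)}$, is constant along the fibres of $\pi_{\Dr,\GM}^{(0)}$ hence corresponds to $\cO_\Omega\otimes_C M_0(K^p)$ — here I would match $(\mathcal{A}^1_{D,(0,0)})^{K^p}$ with $M_0(K^p)$ via strong approximation / the identification of double cosets of $D^\times$ and of $\GL_2$ on the respective class sets, which is exactly the statement that locally constant functions on $\pi_0$ agree. Then $j_!(\cO_\Omega\otimes_C M_0(K^p))$ together with the boundary contribution $i_*(\cO_{\mathbb{P}^1(\Q_p)}\otimes_C M_0(K^p))$ glue to the global sheaf $\cO_{\Fl}\otimes_C M_0(K^p)$, because a locally constant (smooth) global function on $\Fl$ is determined by its restriction to the dense locus in each connected component and extends across the boundary; this gives the second summand $\cO_{\Fl}\otimes_C M_0(K^p)(n_1)\cdot{\varepsilon'_p}^{n_1}$ in the statement.

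\textbf{The remaining boundary analysis for $k=0$.} For the cuspidal summand I would argue that $\mathcal{A}^c$-type quaternionic forms have no constant component, so the corresponding sections of $\cO^{\la,(0,0)}_{K^p}$ near $\infty$ vanish: concretely, using Lemma \ref{Olainfty} and the identification of $(\cO^{\la,\chi}_{K^p})_\infty$ as $(\cO_{\Fl})_\infty\widehat\otimes_C M^\dagger_0(K^p)\cdot\mathrm{t}^{n_1}$, an element of $\ker d^1$ localised at $\infty$ would have to lie in $(\cO_{\Fl})_\infty\otimes_C M_0(K^p)\cdot\mathrm{t}^{n_1}$ by the $k=0$ case of the stalk computation already recorded before Proposition \ref{kerdc}; so the cuspidal part, having zero ``$M_0$-component'', does not extend across $\infty$ and contributes only via $j_!$. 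Assembling: $\ker d^1\cong j_!(\text{cuspidal part}) \oplus (\cO_{\Fl}\otimes_C M_0(K^p)(n_1)\cdot{\varepsilon'_p}^{n_1})$, which is the asserted formula. \textbf{The main obstacle} I anticipate is precisely this boundary/extension bookkeeping in the $k=0$ case — making rigorous the claim that the $\mathcal{A}^1$-part extends across $\mathbb{P}^1(\Q_p)$ to give $\cO_{\Fl}$ (not merely $j_!\cO_\Omega\oplus i_*\cO_{\mathbb{P}^1(\Q_p)}$) while the $\mathcal{A}^c$-part does not, and checking that all the twists by $\mathrm{t}$, by $(n_2)$ resp. $(n_1)$, and by ${\varepsilon'_p}^{n_1}$ are correctly tracked through Theorem \ref{ssunif}, Lemma \ref{expuf} and Proposition \ref{kerdo}; the rest is a formal consequence of the two local computations and the open–closed decomposition of $\Fl$.
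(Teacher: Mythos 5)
Your proposal follows essentially the same route as the paper: the open–closed decomposition $\Fl=\Omega\sqcup\mathbb{P}^1(\Q_p)$, the local computations of Propositions \ref{kerdc} and \ref{kerdo}, and a separate treatment of the $M_0(K^p)$-component when $k=0$. Two remarks. First, for $k\geq 1$ your extra worry is a non-issue: the adjunction $j_!j^*\ker d^{k+1}\to\ker d^{k+1}$ is automatically injective with cokernel $i_*i^*\ker d^{k+1}$, so the statement follows from Proposition \ref{kerdc} alone; the detour through Lemma \ref{Olainfty} is not needed. Second, for $k=0$ the one step you treat too casually is precisely where the content of the paper's proof lies: you assert that the $\mathcal{A}^1$-piece is ``constant along the fibres'' and hence equals $\cO_\Omega\otimes_C M_0(K^p)$, but elements of $(\mathcal{A}^{1,K^p}_{D,(0,0)}\otimes_C\cO^{D_p^\times-\sm}_{\LT})^{\cO_{D_p}^\times}$ involve sections of $\cO^{D_p^\times-\sm}_{\LT}$ transforming by a possibly nontrivial smooth character $\psi$ of $\cO_{D_p}^\times$ (factoring through the reduced norm), and one must show that each isotypic component $\cO_{\LT}[\psi]$ is a free $\cO_\Omega$-module of rank one. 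The paper does this by restricting to the fibre $c_n^{-1}(1)$ of the map $\mathcal{M}^{(0)}_{\Dr,n}\to(\Z/p^n)^\times$ recording connected components and taking $\cO_{D_p}^{\times,1}$-invariants; without some such argument the containment $(\mathcal{A}^1\otimes\cO^{D_p^\times-\sm}_{\LT})^{\cO_{D_p}^\times}\subseteq\cO_\Omega\otimes_C M_0(K^p)$ is unjustified. Relatedly, your ``gluing'' of $j_!$ and $i_*$ pieces into $\cO_\Fl\otimes_C M_0(K^p)$ should be replaced by what the paper actually does: exhibit the explicit global map $\cO_\Fl\otimes_C H^0(\Fl,\cO^{\sm}_{K^p})\to\cO^{\la,(0,0)}_{K^p}$, check it lands in $\ker d^1$ (by $\cO_\Fl$-linearity of $d^1$ and Lemma \ref{kertheta}), and use it to split the open–closed sequence; knowing the two restrictions does not by itself determine the extension class.
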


\begin{proof}
The only case that requires some extra explanation is when $k=0$. We may assume $n_1=0$ by twisting by $\mathrm{t}^{-n_1}$. There is a natural map
\[\cO_{\Fl} \otimes_C M_0(K^p)=\cO_{\Fl} \otimes_C H^0(\Fl,\cO^{\sm}_{K^p})\to \cO_{\Fl} \otimes_C \cO^\sm_{K^p}\to \cO^{\la,(0,0)}_{K^p}\]
whose image is inside of $\ker d^1$. On the other hand, by Proposition \ref{kerdc} and Proposition \ref{kerdo}, there is an exact sequence
 \[0\to  j_{!} (\mathcal{A}_{D,(0,0)}^{K^p}\otimes_C  \cO^{D_p^\times-\sm}_{\LT})^{\cO_{D_p}^\times} \to \ker d^{1} \to (\cO_{\Fl})|_{\mathbb{P}^1(\Q_p)} \otimes_C M_0(K^p) \to 0.\]
 Here we use that $ \omega^{0,D_p^\times-\sm}_{\Dr}=\cO^{D_p^\times-\sm}_{\LT}$. Hence we get an isomorphism
 \[\ker d^1\cong \left[\cO_{\Fl} \otimes_C M_0(K^p)\right] \oplus  j_{!} \left[(\mathcal{A}_{D,(0,0)}^{K^p}\otimes_C  \cO^{D_p^\times-\sm}_{\LT})^{\cO_{D_p}^\times}/ \cO_{\Omega} \otimes_C M_0(K^p) \right].\]
We claim that the image of the map $ \cO_{\Omega} \otimes_C M_0(K^p)\to (\mathcal{A}_{D,(0,0)}^{K^p}\otimes_C  \cO^{D_p^\times-\sm}_{\LT})^{\cO_{D_p}^\times}$ is exactly $(\mathcal{A}_{D,(0,0)}^{1,K^p}\otimes_C  \cO^{D_p^\times-\sm}_{\LT})^{\cO_{D_p}^\times}$. Recall that $\mathcal{A}_{D,(0,0)}^{1,K^p}$ denotes the subset of smooth functions on $D^\times\setminus(D\otimes_{\Q}\A_f)^\times/K^p$ that factor through the reduced norm map. This implies the Proposition because by definition $\mathcal{A}_{D,(0,0)}^{c,K^p}=\mathcal{A}_{D,(0,0)}^{K^p}/\mathcal{A}_{D,(0,0)}^{1,K^p}$.

To see the claim, we note that there is a natural identification 
\[\mathcal{A}_{D,(0,0)}^{1,K^p}=M_0(K^p)\]
because both sides denote the space of smooth functions on $\Q_{>0}^\times\setminus \A_f^\times/\det(K^p)$. Let $\psi:\cO_{D_p}^\times\to C^\times$ be a smooth character. Then the $\psi$-isotypic part $\cO_{\LT}[\psi]$  of $\cO_{\LT}$ can be identified with $\cO_{\Omega}$. Indeed, suppose that $1+p^n\cO_{D_p}\subseteq \ker \psi$. There is a natural surjective map $c_n:\mathcal{M}_{\Dr,n}^{(0)}\to \pi_0(\mathcal{M}_{\Dr,n}^{(0)})\to (\Z/p^n)^\times$, cf. second paragraph of \ref{khDr}. Then the restriction of $\mathcal{M}_{\Dr,n}^{(0)}$  to $c_n^{-1}(1)$ induces an isomorphism 
\[\cO_{\LT}[\psi]\cong(\pi_{\Dr,n*}^{(0)}\cO_{c_n^{-1}(1)})^{\cO_{D_p}^{\times,1}}=\cO_\Omega\] 
where $\cO_{D_p}^{\times,1}\subseteq \cO_{D_p}^{\times}$ denotes the kernel of the reduced norm map. From this we easily deduce our claim on each $\psi$-isotypic component hence deduce the whole claim.
\end{proof}

There is a natural isomorphism $\omega^{-2}_{\Fl}\otimes\det\cong \Omega_{\Fl}$, cf. \ref{KSFl}. We obtain the following result by twisting  Proposition \ref{kerdk+1s} by $(\Omega_{\Fl})^{\otimes k+1}$. 

\begin{cor} \label{kerd'k+1s}
\hspace{2em}
\[\ker d'^{k+1}\cong j_{!}(\mathcal{A}_{D,(-n_1,-n_2)}^{K^p}\otimes_C  \omega^{k+2,D_p^\times-\sm}_{\Dr})^{\cO_{D_p}^\times}(n_2)\otimes \det{}^{k+1}\cdot {\varepsilon'_p}^{n_1},~~~~~~k\geq 1,\]
\[\ker d'^1\cong j_{!} (\mathcal{A}_{D,(-n_1,-n_1)}^{c,K^p}\otimes_C  \omega^{2,D_p^\times-\sm}_{\Dr})^{\cO_{D_p}^\times}(n_2)\otimes\det\cdot{\varepsilon'_p}^{n_1} \oplus \Omega_{\Fl} \otimes_C M_0(K^p)(n_2)\cdot {\varepsilon'_p}^{n_1}.\]
\end{cor}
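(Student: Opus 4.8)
The plan is to deduce Corollary \ref{kerd'k+1s} from Corollary \ref{kerdk+1s} purely by twisting. Recall that $d'^{k+1}$ is by definition the twist of $d^{k+1}$ by the line bundle $(\Omega^1_{\Fl})^{\otimes k+1}$ on $\Fl$, together with the identifications from \ref{omegakla} between the ``twist on $\Fl$'' and ``twist on modular curves'' descriptions; since $d^{k+1}$ is $\cO_{\Fl}$-linear by Theorem \ref{I1}(1), tensoring the short exact sequence
\[0\to \ker d^{k+1}\to \cO^{\la,\chi}_{K^p}\xrightarrow{d^{k+1}} \cO^{\la,\chi}_{K^p}\otimes_{\cO^{\sm}_{K^p}}(\Omega^1_{K^p}(\mathcal{C})^{\sm})^{\otimes k+1}\]
over $\cO_{\Fl}$ with the invertible sheaf $(\Omega^1_{\Fl})^{\otimes k+1}$ is exact and identifies $\ker d'^{k+1}$ with $(\ker d^{k+1})\otimes_{\cO_{\Fl}}(\Omega^1_{\Fl})^{\otimes k+1}$. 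So the whole content is to compute the right-hand side.

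First I would use the $\GL_2$-equivariant isomorphism $\Omega^1_{\Fl}\cong \omega^{-2}_{\Fl}\otimes\det$ recalled in \ref{KSFl}. Then
\[(\ker d^{k+1})\otimes_{\cO_{\Fl}}(\Omega^1_{\Fl})^{\otimes k+1}\cong (\ker d^{k+1})\otimes_{\cO_{\Fl}}\omega^{-2k-2}_{\Fl}\otimes\det{}^{k+1}.\]
Next, I would feed in the explicit description of $\ker d^{k+1}$ from Corollary \ref{kerdk+1s}. Because $j_!$ commutes with tensoring by a line bundle on $\Fl$ (more precisely $j_!(\mathcal{F})\otimes_{\cO_{\Fl}}\mathcal{L}\cong j_!(\mathcal{F}\otimes_{\cO_{\Omega}}j^*\mathcal{L})$ for any invertible $\mathcal{L}$), and because $j^*\omega_{\Fl}$ pulls back under $\pi^{(0)}_{\Dr,n}$ compatibly with the defining formula $\omega^{l,D_p^\times-\sm}_{\Dr}=\varinjlim_n (\pi^{(0)}_{\Dr,n})_*(\pi^{(0)}_{\Dr,n})^*\omega_{\Fl}^l$, tensoring $\omega^{-k,D_p^\times-\sm}_{\Dr}$ by $j^*\omega^{-2k-2}_{\Fl}$ simply replaces the exponent $-k$ by $-k-(2k+2)=-(k+2)$ up to reindexing—so $\omega^{-k,D_p^\times-\sm}_{\Dr}\otimes_{\cO_\Omega}j^*\omega^{-2k-2}_{\Fl}\cong\omega^{k+2,D_p^\times-\sm}_{\Dr}$ (using $\omega^{l}_{\Fl}\otimes\omega^{m}_{\Fl}\cong\omega^{l+m}_{\Fl}$; the Drinfeld $\omega_{\Dr}$ and $\Fl$-theoretic $\omega_\Fl$ are identified via $\omega_{\LT,\infty}(-1)\cong\omega_{\Dr,\infty}^{-1}$ as in \ref{khDr}, \ref{dLT}). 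I need to track carefully the sign conventions here (whether the exponent becomes $k+2$ or $-3k-2$), but the answer is pinned down by the $k=0$ case: there $\ker d^1$ involves $\omega^{0,D_p^\times-\sm}_{\Dr}=\cO^{D_p^\times-\sm}_{\LT}$ in its cuspidal part and $\cO_{\Fl}$ in its ``$M_0(K^p)$'' part, and twisting by $\Omega^1_{\Fl}$ should land the first in $\omega^{2,D_p^\times-\sm}_{\Dr}$ and the second in $\Omega^1_{\Fl}\otimes_C M_0(K^p)$, exactly matching the stated formula for $\ker d'^1$.

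For the Tate twists and determinant characters I would just carry them through formally: the Tate twist $(n_2)$ and the character ${\varepsilon'_p}^{n_1}$ in $\ker d^{k+1}$ are untouched by an $\cO_{\Fl}$-linear twist, while tensoring by $(\Omega^1_{\Fl})^{\otimes k+1}\cong\omega^{-2k-2}_{\Fl}\otimes\det{}^{k+1}$ inserts exactly the factor $\det{}^{k+1}$ appearing in the statement (the $\omega^{-2k-2}_{\Fl}$ having been absorbed into the shift of the Drinfeld weight as above). The $k=0$ case needs the extra care of separating $\ker d^1$ into its two summands and twisting each; the cuspidal summand $j_!(\mathcal{A}^{c,K^p}_{D,(-n_1,-n_1)}\otimes_C\cO^{D_p^\times-\sm}_{\LT})^{\cO_{D_p}^\times}(n_1)\cdot{\varepsilon'_p}^{n_1}$ becomes $j_!(\mathcal{A}^{c,K^p}_{D,(-n_1,-n_1)}\otimes_C\omega^{2,D_p^\times-\sm}_{\Dr})^{\cO_{D_p}^\times}(n_2)\otimes\det\cdot{\varepsilon'_p}^{n_1}$ (noting $n_1=n_2$ here so $(n_1)=(n_2)$), and the $\cO_{\Fl}\otimes_C M_0(K^p)(n_1)\cdot{\varepsilon'_p}^{n_1}$ summand becomes $\Omega^1_{\Fl}\otimes_C M_0(K^p)(n_2)\cdot{\varepsilon'_p}^{n_1}$. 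The main—and really only—obstacle is bookkeeping: getting every sign, every Tate twist, and in particular the identification of $j^*\omega_\Fl$-twists with shifts of $\omega^{\bullet,D_p^\times-\sm}_{\Dr}$ consistent with the conventions fixed in \ref{khLT}, \ref{khDr}, \ref{dLT} and with Theorem \ref{dDR}/Corollary \ref{dLTkcok}; since $d'^{k+1}$ is defined by exactly this twist, there is no new geometric input required beyond Corollary \ref{kerdk+1s}.
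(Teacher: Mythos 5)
Your proposal is correct and is exactly the paper's argument: the corollary is obtained by twisting Proposition \ref{kerdk+1s} by the invertible sheaf $(\Omega^1_{\Fl})^{\otimes k+1}\cong\omega_{\Fl}^{-2k-2}\otimes\det^{k+1}$, which is legitimate because $d^{k+1}$ is $\cO_{\Fl}$-linear. Your only wobble is the intermediate arithmetic on the Drinfeld weight (the convention $\omega^{l,D_p^\times-\sm}_{\Dr}=\varinjlim_n(\pi^{(0)}_{\Dr,n})_*(\pi^{(0)}_{\Dr,n})^*\omega_{\Fl}^{-l}$ sends the exponent $k$ to $k-(2k+2)=-(k+2)$, i.e.\ $\omega^{-k,D_p^\times-\sm}_{\Dr}$ to $\omega^{k+2,D_p^\times-\sm}_{\Dr}$), but you resolve this correctly via the $k=0$ consistency check.
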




\begin{rem}
It is natural to ask whether we can rewrite these results about $\ker d^{k+1}$, $\ker \bar{d}^{k+1}$ and the cokernels in a uniform way. Let me restrict to the case $k=0$. Roughly speaking the complex $d^{1}: \cO^{\la,(0,0)}_{K^p}\to  \cO^{\la,(0,0)}_{K^p}\otimes_{\cO^{\sm}_{K^p}} \Omega^1_{K^p}(\mathcal{C})^{\sm}$ computes the \textit{de Rham cohomology} of the fiber of the Hodge-Tate period map $\pi_{\HT}$. In particular, since the fiber of each point of $\Omega$ is a profinite set by the uniformization theorem, the cohomology only appears in degree zero. On $\mathbb{P}^1(\Q_p)$, the fiber is closely related to the Igusa curve, and it is not surprising at all that we are  essentially computing the de Rham cohomology of the Igusa curve here, cf. Proposition \ref{kerdc}, \ref{cokdk+1infty}. For $\bar{d}$, similarly it is like computing the de Rham cohomology of the fiber of the projection map $\mathcal{X}_{K^p}\to \mathcal{X}_{K^p\GL_2(\Z_p)}$. Again since the fiber is a profinite set, there is no $H^1$, i.e. $\coker\bar{d}=0$.
\end{rem}

\subsection{Spectral decompositions} \label{Sd}
\begin{para}
We are ready to decompose $\ker I_k^1$ with respect to the Hecke action. Recall that $I^1_k=H^1(I_k)$. In fact, we will present two decompositions. The first comes from $I_k=d'^{k+1}\circ \bar{d}^{k+1}$, while the second comes from $ I_k=\bar{d}'^{k+1}\circ d^{k+1}$. Both give different prospectives of $\ker I_k^1$ and we will see some interesting results when  comparing two decompositions.

First we explore the factorization $I_k=d'^{k+1}\circ \bar{d}^{k+1}$. We have the following easy lemma.
\end{para}

\begin{lem} \label{kerIk1fil}
There are natural short exact sequences
\[0\to \ker H^1(\bar{d}^{k+1})\to \ker I_k^1 \to \ker H^1(d'^{k+1})\to 0,\]
\[0\to  H^1(\Fl,\ker d'^{k+1})\to \ker H^1(d'^{k+1})\to H^0(\coker d'^{k+1})\to 0\]
and isomorphisms $ \ker H^1(d'^{k+1})\cong \bH^1(d'^{k+1})$, where $ \bH^1(d'^{k+1})$ denotes the first hypercohomology of the complex $\cO^{\la,(n_1,n_2)}_{K^p}\otimes_{\cO_{\Fl}}(\Omega^1_{\Fl})^{\otimes k+1}\xrightarrow{d'^{k+1}}\cO^{\la,(n_2+1,n_1-1)}_{K^p}(k+1)$.
\end{lem}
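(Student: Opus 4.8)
The plan is to extract this lemma entirely from the general homological formalism already set up in Lemma \ref{ninvlem} (and Remark \ref{hyclem}), applied in two different ways, together with the factorization $I_k = d'^{k+1}\circ\bar d^{k+1}$ and the earlier computations of $\coker d'^{k+1}$ (Corollary \ref{cokerIkinfty}) and $\ker d'^{k+1}$ (Corollary \ref{kerd'k+1s}). All the spaces here live on $\Fl = \mathbb{P}^1$, which has cohomological dimension one, so there are no $H^2$ terms and every $H^1$ functor is right exact; this is the only geometric input needed beyond the cited results.

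First I would establish the second short exact sequence and the hypercohomology identification. Apply Remark \ref{hyclem} (equivalently Lemma \ref{ninvlem}) to the two-term complex $d'^{k+1}\colon \cF \to \cG$ with $\cF = \cO^{\la,(n_1,n_2)}_{K^p}\otimes_{\cO_{\Fl}}(\Omega^1_{\Fl})^{\otimes k+1}$ and $\cG = \cO^{\la,-w\cdot(-\chi)}_{K^p}(k+1)$. Since $\Fl$ is one-dimensional, there is the exact sequence
\[0\to H^1(\Fl,\ker d'^{k+1})\to \bH^1(d'^{k+1})\to H^0(\Fl,\coker d'^{k+1})\to 0,\]
and also $\ker H^1(d'^{k+1})\cong \bH^1(d'^{k+1})/\Fil^1\bH^1(d'^{k+1})$ where $\Fil^1$ is the image of $H^0(\Fl,\cG)$. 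The point is that this $\Fil^1$ vanishes: by Proposition \ref{kninvcoinv}(3) (or Corollary \ref{strHT}/\cite[Corollary 5.1.3]{Pan20}), $H^0(\Fl,\cO^{\la,-w\cdot(-\chi)}_{K^p})=0$ because the weight $-w\cdot(-\chi) = (n_2+1,n_1-1)$ is not of the form $(0,0)$ — indeed $n_2+1 > n_1-1$ always, and more precisely its first coordinate minus second equals $k+2>0$. Hence $\Fil^1\bH^1(d'^{k+1})=0$, giving $\ker H^1(d'^{k+1})\cong \bH^1(d'^{k+1})$, and the displayed short exact sequence for $\ker H^1(d'^{k+1})$ follows immediately.

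Next I would produce the first short exact sequence from the factorization. Write $I^1_k = H^1(d'^{k+1})\circ H^1(\bar d^{k+1})$ as a composite
\[H^1(\Fl,\cO^{\la,\chi}_{K^p})\xrightarrow{H^1(\bar d^{k+1})} H^1(\Fl,\cF)\xrightarrow{H^1(d'^{k+1})} H^1(\Fl,\cG).\]
For the kernel of a composite of linear maps $g\circ f$ there is always a short exact sequence $0\to \ker f\to \ker(g\circ f)\to \ker g\cap \im f\to 0$; so it suffices to show $H^1(\bar d^{k+1})$ is surjective, which forces $\ker g\cap \im f = \ker g = \ker H^1(d'^{k+1})$. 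Surjectivity of $H^1(\bar d^{k+1})$ follows from right-exactness of $H^1$ on the one-dimensional space $\Fl$ together with $H^1(\Fl,\coker \bar d^{k+1})=0$; and $\coker \bar d^{k+1}=0$ as a sheaf by Proposition \ref{dbarsurj}(1) (since $\bar d^{k+1}$ is surjective as a map of sheaves in the relevant weight $\chi$). This yields
\[0\to \ker H^1(\bar d^{k+1})\to \ker I^1_k\to \ker H^1(d'^{k+1})\to 0.\]
I do not expect any serious obstacle here: the lemma is a bookkeeping consequence of facts already in hand. The only mild subtlety worth spelling out carefully is the vanishing $H^0(\Fl,\cG)=0$ that kills $\Fil^1$, and checking that the cited surjectivity statements (Proposition \ref{dbarsurj}, right-exactness on $\Fl$) apply verbatim in the twisted weights appearing; both are routine in view of Section \ref{Ioef}. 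One should also note the $k=0$ caveat: when $\chi=(n_1,n_1)$ the analogous statement for $\bar d^1$ uses $\ker\bar d^1 = \cO^{\lalg,\chi}_{K^p}$ rather than a vanishing, but since here we only need $\coker\bar d^{k+1}=0$ (which holds for all $k\geq 0$) and $H^0(\Fl,\cG)=0$ with $\cG$ in weight $(n_1+1,n_1-1)\neq(0,0)$, the argument is uniform in $k$.
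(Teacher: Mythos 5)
Your proof is correct and follows essentially the same route as the paper's: the first exact sequence comes from the surjectivity of $H^1(\bar d^{k+1})$ (via Proposition \ref{dbarsurj}), and the second sequence plus the identification $\ker H^1(d'^{k+1})\cong\bH^1(d'^{k+1})$ come from Lemma \ref{ninvlem}/Remark \ref{hyclem} together with the vanishing $H^0(\Fl,\cO^{\la,(n_2+1,n_1-1)}_{K^p})=0$ from \cite[Corollary 5.1.3]{Pan20}. The only cosmetic point is that your alternative citations for that vanishing (Proposition \ref{kninvcoinv}(3), Corollary \ref{strHT}) do not literally cover the weight $(n_2+1,n_1-1)$; the correct reference is the one the paper uses.
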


\begin{proof}
Note that $\bar{d}^{k+1}$ is surjective by Proposition \ref{dbarsurj}, hence $H^1(\bar{d}^{k+1})$ is surjective, which clearly implies the first claim.
For the second claim, by \cite[Corollary 5.1.3]{Pan20}, we have $H^0(\Fl,\cO^{\la,(n_2+1,n_1-1)}_{K^p})=0$. In particular, the image of $d'^{k+1}$ also has no global sections. With these vanishing results, our claim follows from Lemma \ref{ninvlem} and Remark \ref{hyclem} with $X=d'^{k+1}$.
\end{proof}

\begin{para}
Some notations.
Note that $G_{\Q_p}$ acts on $\mathrm{t}$ via the $p$-adic cyclotomic character and $\GL_2(\A_f)$ acts on $\mathrm{t}$ via $\varepsilon\circ \det$.  This means that $\mathrm{t}(-1)=\varepsilon\circ \det$ as a representation of $\GL_2(\A_f)\times G_{\Q_p}$. For simplicity, we use $\varepsilon$ to denote twisting by $\mathrm{t}(-1)$.
\end{para}

\begin{thm} \label{dec1}
For $\chi=(-k,0)$, 
there is a natural increasing filtration $\Fil_\bullet$ on $\ker I^1_k$ with $\Fil_n=0$, $n\geq 0$ and $\Fil_n= I^1_k$, $n\geq 3$, and 
\[\gr_n \ker I^1_k=\left\{
	\begin{array}{lll}
		\ker H^1(\bar{d}^{k+1})\cong \Sym^k V\otimes_{\Q_p} H^1(\omega^{-k,\sm}_{K^p})\cdot{\varepsilon}^{-k}, &n=1&\\
		H^1(\ker d'^{k+1})\cong (H^1(j_{!}  \omega^{k+2,D_p^\times-\sm}_{\Dr})\otimes_C \mathcal{A}_{D,(k,0)}^{K^p})^{\cO_{D_p}^\times}\otimes\det^{k+1}\cdot{\varepsilon}_p^{-k}, &n=2& \\
				H^0(\coker d'^{k+1})\cong \Ind_B^{\GL_2(\Q_p)}  H^1_{\rig}(\Ig(K^p),\Sym^k) \cdot{\varepsilon}^{-k} {e'_1}^{k+1}{e'_2}^{-1}, &n=3&
	\end{array},\right.\]
	when $k\geq 1$ and 
\[\gr_n \ker I^1_0=\left\{
	\begin{array}{lll}
		\ker H^1(\bar{d}^1)\cong H^1(\cO^{\sm}_{K^p}) , &n=1&\\
		H^1(\ker d'^{1})\cong (H^1(j_{!} \omega^{2,D_p^\times-\sm}_{\Dr})\otimes_C \mathcal{A}_{D,(0,0)}^{c,K^p})^{\cO_{D_p}^\times}\otimes\det \oplus M_0(K^p) , &n=2&\\
				H^0(\coker d'^{1})\cong \Ind_B^{\GL_2(\Q_p)}  H^1_{\rig}(\Ig(K^p))\cdot {e'_1}{e'_2}^{-1} , &n=3&
	\end{array}.\right.\]
All of the cohomology groups are computed on $\Fl$ and we drop $\Fl$ for simplicity.
All of the isomorphisms are Hecke and $\GL_2(\Q_p)$-equivariant. See \ref{Hd} below for the precise meaning of Hecke actions.
\end{thm}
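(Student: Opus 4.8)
\textbf{Proof plan for Theorem \ref{dec1}.}
The starting point is the factorization $I_k = d'^{k+1}\circ \bar{d}^{k+1}$ together with the two short exact sequences recorded in Lemma \ref{kerIk1fil}. The filtration $\Fil_\bullet$ on $\ker I^1_k$ is defined by taking $\Fil_1 = \ker H^1(\bar{d}^{k+1})$, $\Fil_2 = $ the preimage in $\ker I^1_k$ of $H^1(\Fl,\ker d'^{k+1})\subseteq \ker H^1(d'^{k+1})$ under the surjection $\ker I^1_k \twoheadrightarrow \ker H^1(d'^{k+1})$, and $\Fil_3 = \ker I^1_k$. The vanishing $\Fil_0 = 0$ and $\Fil_n = \ker I^1_k$ for $n\geq 3$ is immediate, and $\gr_3 \ker I^1_k \cong H^0(\Fl,\coker d'^{k+1})$ by the second exact sequence of Lemma \ref{kerIk1fil}. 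So the work is entirely in identifying the three graded pieces with the concrete modules on the right-hand side.

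For $\gr_1 = \ker H^1(\bar{d}^{k+1})$: I would apply $H^1(\Fl,-)$ to the resolution of Remark \ref{dbarres}, namely $0\to \Sym^k V(k)\otimes_{\Q_p}\omega^{-k,\sm}_{K^p}\to \cO^{\la,(0,k)}_{K^p}\xrightarrow{\bar{d}^{k+1}} \cO^{\la,(0,k)}_{K^p}\otimes_{\cO_\Fl}(\Omega^1_\Fl)^{\otimes k+1}\to 0$; since $\Fl$ is one-dimensional, the long exact sequence gives $\ker H^1(\bar{d}^{k+1}) \cong H^1(\Fl, \Sym^k V(k)\otimes_{\Q_p}\omega^{-k,\sm}_{K^p}) \cong \Sym^k V(k)\otimes_{\Q_p} H^1(\Fl,\omega^{-k,\sm}_{K^p})$ (using $H^0(\Fl,\cO^{\la,(0,k)}_{K^p}\otimes(\Omega^1_\Fl)^{\otimes k+1})=0$, which follows from Corollary 5.1.3 of \cite{Pan20}, exactly as in the proof of Lemma \ref{kerdk+1}). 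Rewriting $\Sym^k V(k)$ in terms of $\Sym^k V \cdot \varepsilon^{-k}$ via the relation $\mathrm{t}(-1)=\varepsilon\circ\det$ and $(-k)$-twist bookkeeping gives the stated form; for $k=0$ this is just $H^1(\Fl,\cO^{\sm}_{K^p})$. For $\gr_3 = H^0(\Fl,\coker d'^{k+1})$, I would invoke Corollary \ref{cokerIkinfty}, which after the $(\Omega^1_\Fl)^{\otimes k+1}$-twist bookkeeping and the identification $\mathrm{t}^{-k}$ with $\varepsilon^{-k}$ gives $\Ind_B^{\GL_2(\Q_p)} H^1_{\rig}(\Ig(K^p),\Sym^k)\cdot \varepsilon^{-k}{e'_1}^{k+1}{e'_2}^{-1}$.

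The middle piece $\gr_2 = H^1(\Fl,\ker d'^{k+1})$ is where the real content sits. Here I would feed in Corollary \ref{kerd'k+1s}, which gives $\ker d'^{k+1}\cong j_!(\mathcal{A}_{D,(-n_1,-n_2)}^{K^p}\otimes_C \omega^{k+2,D_p^\times-\sm}_{\Dr})^{\cO_{D_p}^\times}(n_2)\otimes\det^{k+1}\cdot{\varepsilon'_p}^{n_1}$ (with $n_1=-k$, $n_2=0$, so the twist is $\det^{k+1}\varepsilon_p^{-k}$), and in the $k=0$ case the extra summand $\Omega_\Fl\otimes_C M_0(K^p)$ contributes $H^1(\Fl,\Omega_\Fl)\otimes_C M_0(K^p) = M_0(K^p)$ (since $H^1(\Fl,\Omega_\Fl)\cong C$), producing the $M_0(K^p)$ term. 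For the main summand I need to compute $H^1(\Fl, j_!\mathcal{G})$ where $\mathcal{G} = (\mathcal{A}_D^{K^p}\otimes \omega^{k+2,D_p^\times-\sm}_{\Dr})^{\cO_{D_p}^\times}$ is a sheaf supported on $\Omega$: using $H^1(\Fl,j_!\mathcal{G}) = H^1_c(\Omega,\mathcal{G})$ and the exactness of $(-)^{\cO_{D_p}^\times}$ on smooth $\cO_{D_p}^\times$-modules plus the flatness of $\mathcal{A}_D^{K^p}$ over $C$, this reduces to $(H^1(\Fl,j_!\omega^{k+2,D_p^\times-\sm}_{\Dr})\otimes_C \mathcal{A}_D^{K^p})^{\cO_{D_p}^\times}$. \textbf{The main obstacle} I anticipate is precisely the commutation of $H^1(\Fl,j_!-)$ with the tensor-by-$\mathcal{A}_D^{K^p}$ and the $\cO_{D_p}^\times$-invariants: one must check that $j_!$ of the tensor product agrees with the tensor of $j_!$'s (true since $\mathcal{A}_D^{K^p}$ carries only the trivial sheaf structure), and that the cohomology of $j_!\omega^{k+2,D_p^\times-\sm}_{\Dr}$ is in a suitable sense nice enough — finitely generated, or at least such that taking $\cO_{D_p}^\times$-invariants commutes with $H^1$ — which I expect follows from the perfectoid/uniformization description and the fact that $\cO_{D_p}^\times$-invariants is exact on smooth representations. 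Once these compatibilities are in place, the Hecke- and $\GL_2(\Q_p)$-equivariance of all three identifications is inherited from the equivariance built into $d^{k+1}$, $\bar{d}^{k+1}$, the resolutions, and the uniformization isomorphism of Theorem \ref{ssunif}, so no extra argument is needed beyond tracking the actions through each step.
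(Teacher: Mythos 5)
Your overall route — filtering $\ker I^1_k$ via the two exact sequences of Lemma \ref{kerIk1fil} and identifying the three graded pieces from Proposition \ref{dbarsurj}, Corollary \ref{kerd'k+1s} and Corollary \ref{cokerIkinfty} — is exactly the paper's, and your treatment of $\gr_2$ and $\gr_3$ is fine: the commutation of $H^1(\Fl,j_!(-))$ with $\otimes_C\,\mathcal{A}_{D,(k,0)}^{K^p}$ and with $(-)^{\cO_{D_p}^\times}$ is justified precisely as you say, by semisimplicity of smooth representations of the compact group $\cO_{D_p}^\times$ (plus $H^1(\Fl,\Omega^1_\Fl)=C$ for the extra summand when $k=0$).

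The gap is in $\gr_1$. To get $\ker H^1(\bar{d}^{k+1})=H^1(\Fl,\ker\bar{d}^{k+1})$ you need $H^0\bigl(\Fl,\cO^{\la,(-k,0)}_{K^p}\otimes_{\cO_\Fl}(\Omega^1_\Fl)^{\otimes k+1}\bigr)=0$, and you assert this "follows from Corollary 5.1.3 of \cite{Pan20}, exactly as in the proof of Lemma \ref{kerdk+1}". Neither reference does the job: the proof of Lemma \ref{kerdk+1} establishes a different vanishing ($H^1(\Fl,\ker\bar{d}'^{k+1})=0$, via $H^1(\Fl,\omega^{k+2,\sm}_{K^p})=0$), and Corollary 5.1.3 of \cite{Pan20} computes $H^0(\Fl,\cO^{\la,\chi}_{K^p})$ itself, not its twist by the line bundle $(\Omega^1_\Fl)^{\otimes k+1}$. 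That twist is genuinely different — e.g.\ Corollary \ref{gso2k2}(3) shows the analogous twist by a \emph{positive} power of $\omega$ has large $H^0$ — so the vanishing is not formal. The actual argument runs through $d'^{k+1}$: since $H^0(\Fl,\cO^{\la,(1,-k-1)}_{K^p}(k+1))=0$ by Corollary 5.1.3 (this is where that corollary legitimately enters), one reduces to $H^0(\Fl,\ker d'^{k+1})=0$, which in turn needs the description of $\ker d'^{k+1}$ in Corollary \ref{kerd'k+1s} together with $H^0(\Fl,j_!\omega^{k+2,D_p^\times-\sm}_{\Dr})=0$ (because every connected component of $\mathcal{M}^{(0)}_{\Dr,n}$ surjects onto $\Omega$, a compactly supported section must vanish) and $H^0(\Fl,\Omega^1_\Fl)=0$ for the $k=0$ summand. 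Without this geometric input your identification of $\gr_1$ is unproven.
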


\begin{proof}
The claim for $H^0(\coker d'^{k+1})$ follows from Corollary \ref{cokerIkinfty}. For $H^1(\ker d'^{k+1})$, when $k\geq 1$, it follows from Corollary \ref{kerd'k+1s} by noting that 
\[ H^1(j_{!}(\mathcal{A}_{D,(k,0)}^{K^p}\otimes_C  \omega^{k+2,D_p^\times-\sm}_{\Dr})^{\cO_{D_p}^\times})\cong\left(H^1( j_{!}  \omega^{k+2,D_p^\times-\sm}_{\Dr})\otimes_C \mathcal{A}_{D,(k,0)}^{K^p}\right)^{\cO_{D_p}^\times}\]
because the actions of $\cO_{D_p}^\times$ on $\mathcal{A}_{D,(k,0)}^{K^p}$ and $\omega^{k+2,D_p^\times-\sm}_{\Dr}$ are smooth, hence semi-simple as $\cO_{D_p}^\times$ is compact.  The same argument plus that $H^1(\Fl,\Omega_{\Fl})=C$ proves the case $k=0$. For $\ker H^1(\bar{d}^{k+1})$, by our description of $\ker \bar{d}^{k+1}$ in Proposition \ref{dbarsurj}, it suffices to show that
\[\ker H^1(\bar{d}^{k+1})=H^1(\ker\bar{d}^{k+1}).\]
Since $\bar{d}^{k+1}:  \cO^{\la,(n_1,n_2)}_{K^p}\to \cO^{\la,(n_1,n_2)}_{K^p}\otimes_{\cO_{\Fl}}(\Omega^1_{\Fl})^{\otimes k+1}$ is surjective, we only need to show
\[H^0( \cO^{\la,(n_1,n_2)}_{K^p}\otimes_{\cO_{\Fl}}(\Omega^1_{\Fl})^{\otimes k+1})=0.\]
Consider the exact sequence
\[0\to H^0(\ker d'^{k+1})\to H^0( \cO^{\la,(n_1,n_2)}_{K^p}\otimes_{\cO_{\Fl}}(\Omega^1_{\Fl})^{\otimes k+1}) \xrightarrow{H^1(d'^{k+1})} H^0(\cO^{\la,(n_2+1,n_1-1)}_{K^p}).\]
The last term is zero by \cite[Corollary 5.1.3]{Pan20}. On the other hand, we have 
\[H^0(j_! \omega_{\Dr}^{k+2,\sm})=0\]
because each connected component of $\mathcal{M}_{\Dr,n}$ maps surjectively onto $\Omega$ and 
\[H^0(\Fl,\Omega_\Fl)=0.\]
Hence $H^0(\ker d'^{k+1})=0$, which implies the vanishing of $H^0( \cO^{\la,(n_1,n_2)}_{K^p}\otimes_{\cO_{\Fl}}(\Omega^1_{\Fl})^{\otimes k+1})$.
\end{proof}

\begin{para} \label{Hd}
To further state our result, we need a digression on Hecke algebra.  Let $S$ be a finite set of rational primes containing $p$ such that $K_l=K^p\cap\GL_2(\Q_l)$ is maximal for $l\notin S$. As usual, we denote by $T_l$ the double coset action of 
\[[K_l\begin{pmatrix} l & 0 \\ 0 & 1 \end{pmatrix} K_l]\]
and by $S_l$  the action of $\begin{pmatrix} l & 0 \\ 0 & l \end{pmatrix}$.  Let 
\[\T^S:=\Z_p[T_l,S_l,l\notin S]\]
be the $\Z_p$-algebra generated by symbols $T_l,S_l,l\notin S$. Then it  acts naturally on $\cO^{\la}_{K^p}$ and commutes with $\theta_\kh$, $I_k$, $d^{k+1}$, $\bar{d}^{k+1}$ as these operators are defined purely at the place $p$. For $k\in\Z$, recall that
\[M_{k}(K^p)=H^0(\Fl,\omega^{k,\sm}_{K^p})=\varinjlim_{K_p} H^0(\mathcal{X}_{K^pK_p},\omega^k)\]
denotes the space of classical holomorphic modular forms of weight $k$.  It is well-known that the action of $\T^S$ on $M_k(K^p)$ is semi-simple, hence there is a natural eigenspace decomposition
\[M_k(K^p)=\bigoplus_{\lambda:\T^S\to C} M_k(K^p)[\lambda].\]
We will denote by 
$\sigma_k^{K^p}$ the spectrum of $\T^S$ on  $M_k(K^p)$.
In general,
since $\GL_2(\A_f)$ acts on $\mathrm{t}$ via the cyclotomic character, we also have a spectral decomposition
\[M_k(K^p)\cdot \mathrm{t}^{n}=\bigoplus_{\lambda\in\sigma_{k,n}^{K^p}} M_k(K^p)\cdot \mathrm{t}^{n}[\lambda]\]
where $\sigma_{k,n}^{K^p}$ denotes the spectrum of $\T^S$ on $M_k(K^p)\cdot \mathrm{t}^{n}$.
On the other hand, for $k\geq 2$, there is also a spectral decomposition
for $H^1(\Fl,\omega^{2-k,\sm}_{K^p})=\varinjlim_{K_p} H^1(\mathcal{X}_{K^pK_p},\omega^{2-k})$: 
\[H^1(\Fl,\omega^{2-k,\sm}_{K^p})=\bigoplus_{\lambda\in \sigma_{k,k-1}^{K^p}} H^1(\Fl,\omega^{2-k,\sm}_{K^p})[\lambda],\]
where $\lambda$ runs over systems of  Hecke eigenvalues associated to cuspidal automorphic representations $\pi$ of $\GL_2(\A)$ such that $\pi_\infty$ has the same infinitesimal character as the irreducible algebraic representation of $\GL_2$ with highest weight $(k-2,0)$ and $(\pi^\infty)^{K^p}\neq 0$. Hence the spectrum is a subset of $\sigma_{k,k-1}^{K^p}$. This can be seen from the Hodge-Tate decomposition of $H^1_{\mathrm{\acute{e}t}}(\mathcal{X}_{K^pK_p},\Sym^{k-2} V)$.

Recall that $\mathcal{A}^{K^p}_{(-n_1,-n_2)}$  denotes the space of quaternionic forms, cf. Definition \ref{AFD}. The Hecke algebra $\T^S$ acts naturally on it. By the Jacquet-Langlands correspondence, this action is semi-simple, and moreover for $k\geq 1$, there is  a natural decomposition into eigenspaces
\[\mathcal{A}^{K^p}_{(k,0)}=\bigoplus_{\lambda\in \sigma^{K^p}_{k+2,1}} \mathcal{A}^{K^p}_{(k,0)}[\lambda]\]
where $\lambda:\T^S\to C$ runs over cuspidal automorphic representations $\pi$ of $\GL_2(\A)$ such that $\pi_\infty$ has the same infinitesimal character as the irreducible algebraic representation of $\GL_2$ with highest weight $(0,-k)$ and such that $(\pi^\infty)^{K^p}\neq 0$ and $\pi_p$ is special or supercuspidal. In particular, the spectrum is a subset of $\sigma^{K^p}_{k+2,1}$. When $k=0$, for a Hecke-eigenform, either it is contained in  $\mathcal{A}^{1}_{(0,0)}$, i.e. (as a function on $(D\otimes \A_f)^\times$) it factors through the reduced norm map, hence transfers to an eigenform in $M_0(K^p)$, or it transfers to a cuspidal eigenform of weight $2$ as in the higher weight case. Therefore
\[\mathcal{A}^{K^p}_{(0,0)}=\bigoplus_{\lambda\in\sigma_{2,1}^{K^p}\cup \sigma_{0}^{K^p}} \mathcal{A}^{K^p}_{(0,0)}[\lambda].\]
\end{para}

\begin{para} \label{D0a}
Recall that in Definition \ref{D0}, we introduced
\[D_0:=H^0(\Fl,\wedge^2 D^{\sm}_{K^p}).\] 
It follows from \ref{HEt} that $D_0=M_0(K^p)\cdot c$ and $\GL_2(\A_f)$ acts on $c$ via $|\cdot|^{-1}\circ\det$. 

Now we can state the main result of this section.
\end{para}

\begin{thm} \label{sd}
For $\chi=(-k,0)$, 
there are natural generalized eigenspace decompositions
\[\ker I^1_k=\bigoplus_{\lambda \in\sigma_{k+2,1}^{K^p}} \ker I^1_k\widetilde{[\lambda]}\]
when $k\geq 1$, and
\[\ker I^1_0=\bigoplus_{\lambda \in\sigma_{2,1}^{K^p}\cup \sigma_{0}^{K^p}} \ker I^1_0\widetilde{[\lambda]}.\]
Here $\widetilde{[\lambda]}$ denotes the generalized eigenspace of $\lambda$.
\end{thm}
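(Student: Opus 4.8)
The plan is to read off the spectral decomposition of $\ker I^1_k$ from the three-step filtration $\Fil_\bullet$ constructed in Theorem \ref{dec1}, by showing that each graded piece $\gr_n\ker I^1_k$ ($n=1,2,3$) admits a generalized eigenspace decomposition indexed by the appropriate subset of the spectra $\sigma^{K^p}_{\bullet,\bullet}$, and then checking that these decompositions are compatible along the filtration so that they lift to $\ker I^1_k$ itself. Since $\T^S$ acts on $\cO^{\la}_{K^p}$ commuting with $I_k$, $d^{k+1}$, $\bar d^{k+1}$ and all the maps in the filtration (these are purely at $p$), the filtration $\Fil_\bullet$ is $\T^S$-stable and it suffices to treat the graded pieces.

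First I would handle $\gr_1 = \ker H^1(\bar d^{k+1})$. By Theorem \ref{dec1} this is $\Sym^k V\otimes_{\Q_p} H^1(\Fl,\omega^{-k,\sm}_{K^p})\cdot\varepsilon^{-k}$ when $k\geq 1$ and $H^1(\Fl,\cO^\sm_{K^p})$ when $k=0$. For $k\geq 1$, $H^1(\Fl,\omega^{-k,\sm}_{K^p})=\varinjlim_{K_p}H^1(\mathcal{X}_{K^pK_p},\omega^{-k})$, on which $\T^S$ acts semisimply with spectrum contained in $\sigma^{K^p}_{k+2,k+1}$ (via the Hodge--Tate decomposition of $H^1_{\et}(\mathcal{X}_{K^pK_p},\Sym^{k}V)$, as recalled in \ref{Hd}); twisting by $\varepsilon^{-k}=\mathrm{t}^{-k}(k)$ shifts the determinant part, and one checks this identifies the spectrum with $\sigma^{K^p}_{k+2,1}$. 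For $k=0$ the relevant decomposition is $H^1(\Fl,\cO^\sm_{K^p})=\varinjlim H^1(\mathcal{X}_{K^pK_p},\cO)$, whose spectrum lies in $\sigma^{K^p}_{2,1}$ (cusp forms of weight $2$) together with possibly the Eisenstein/trivial part landing in $\sigma^{K^p}_0$; again this is standard from the Hodge decomposition of $H^1_{\et}$. Next, $\gr_2 = H^1(\Fl,\ker d'^{k+1})$, which by Theorem \ref{dec1} is $(H^1(\Fl,j_!\omega^{k+2,D_p^\times-\sm}_{\Dr})\otimes_C\mathcal{A}^{K^p}_{D,(k,0)})^{\cO_{D_p}^\times}\otimes\det^{k+1}\cdot\varepsilon_p^{-k}$ (plus an $M_0(K^p)$-summand when $k=0$). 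Here the Hecke action is entirely through the factor $\mathcal{A}^{K^p}_{D,(k,0)}$, on which $\T^S$ acts semisimply with spectrum $\sigma^{K^p}_{k+2,1}$ by the Jacquet--Langlands correspondence (cf. \ref{Hd}), and the $M_0(K^p)$-piece contributes $\sigma^{K^p}_0$; since $\cO_{D_p}^\times$ is compact the functor $(-)^{\cO_{D_p}^\times}$ is exact and commutes with the $\T^S$-decomposition. Finally $\gr_3 = H^0(\Fl,\coker d'^{k+1}) = \Ind_B^{\GL_2(\Q_p)}H^1_{\rig}(\Ig(K^p),\Sym^k)\cdot(\text{twist})$, where the Hecke action is on $H^1_{\rig}(\Ig(K^p),\Sym^k)$; here I would invoke Coleman's analysis (\cite{Cole96,Cole97}, via Proposition \ref{Colefin}) together with the congruence relation to see that the system of eigenvalues appearing all come from classical cusp forms of weight $k+2$, i.e. the spectrum is a subset of $\sigma^{K^p}_{k+2,1}$ (and $\sigma^{K^p}_0$ when $k=0$ from the ordinary non-cuspidal contributions). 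Note that, because the rigid cohomology of the Igusa curve is \emph{not} Hecke-semisimple at $p$ in general, this piece forces us to work with generalized eigenspaces $\widetilde{[\lambda]}$ rather than honest eigenspaces.

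Once each graded piece is decomposed over the common indexing set $\Sigma := \sigma^{K^p}_{k+2,1}$ (resp. $\sigma^{K^p}_{2,1}\cup\sigma^{K^p}_0$ for $k=0$), the last step is to glue: for a fixed $\lambda\in\Sigma$, define $\ker I^1_k\widetilde{[\lambda]}$ as the subspace of $\lambda$-generalized-eigenvectors, i.e. vectors killed by a power of the maximal ideal $\km_\lambda\subset\T^S\otimes C$. Because $\T^S$ is finitely generated (hence $\km_\lambda$ is finitely generated) and the filtration has only finitely many steps, a standard argument shows $\ker I^1_k = \bigoplus_\lambda \ker I^1_k\widetilde{[\lambda]}$: one checks that $\bigcup_m \ker(\km_\lambda^m)$ in each $\gr_n$ exhausts the $\lambda$-part, that distinct $\lambda$'s give summands meeting trivially, and that every element of $\ker I^1_k$ lies in the (finite) sum by lifting generalized eigenvectors through the short exact sequences of the filtration one step at a time — a generalized-eigenvector mod $\Fil_{n-1}$ lifts to a generalized eigenvector in $\Fil_n$ after possibly enlarging the nilpotence exponent, using that $\Fil_{n-1}$ is itself a direct sum of generalized eigenspaces. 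The main obstacle, I expect, is the analysis of $\gr_3$: identifying the Hecke spectrum on $H^1_{\rig}(\Ig(K^p),\Sym^k)$ with eigensystems coming from weight-$(k+2)$ eigenforms requires the congruence relation on Igusa curves together with Coleman's comparison between rigid cohomology and finite-slope overconvergent modular forms, and this is precisely the input (via \cite{Pan2020N}, as used in the proof of Lemma \ref{kertheta}) that rules out the "consecutive Hodge--Tate weights" phenomenon and pins the eigensystems to classical cuspidal ones. The other steps — the semisimplicity and spectra of $H^1(\mathcal{X}_{K^pK_p},\omega^j)$ and of $\mathcal{A}^{K^p}_{D,(k,0)}$, and the formal gluing — are standard.
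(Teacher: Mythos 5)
Your proposal is correct in outline and, for the first two graded pieces and the formal gluing along the filtration of Theorem \ref{dec1}, it coincides with what the paper does (the paper likewise reduces to the three graded pieces via Theorem \ref{dec1} and disposes of $\ker H^1(\bar d^{k+1})$ and $H^1(\Fl,\ker d'^{k+1})$ by the semisimplicity and spectra recorded in \ref{Hd}). The genuine divergence is in how you pin down the Hecke spectrum of the third piece, $H^0(\Fl,\coker d'^{k+1})\cong \Ind_B^{\GL_2(\Q_p)}H^1_{\rig}(\Ig(K^p),\Sym^k)\cdot(\text{twist})$. You propose to import Coleman's comparison between the rigid cohomology of Igusa curves and finite-slope classical forms (plus the congruence relation) to identify the eigensystems directly. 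The paper explicitly acknowledges this route as available but deliberately avoids it: it only uses Proposition \ref{Colefin} to get the a priori generalized eigenspace decomposition of $H^1_{\rig}(\Ig(K^p),\Sym^k)$ with some unknown spectrum $\sigma$, and then bounds $\sigma$ by contradiction. Namely, if $\lambda\in\sigma$ lay outside $\sigma^{K^p}_{k+2,1}$ (resp.\ $\sigma^{K^p}_{2,1}\cup\sigma^{K^p}_0$), then $\Fil_2\ker I^1_k\widetilde{[\lambda]}=0$, so $\ker I^1_k\widetilde{[\lambda]}$ would be entirely supported on the sky-scraper $\coker d'^{k+1}$ over $\mathbb{P}^1(\Q_p)$ and hence would have nonzero $\kn$-invariants; but Theorem \ref{kernk} shows $(\ker I^1_k)^{\kn}$ is built out of $M_{k+2}(K^p)$, $\varinjlim H^1(\mathcal{X}_{K^pK_p},\omega^{-k})$ and $M_{-k}(K^p)$, whose spectra all lie in the allowed sets — contradiction. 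So the paper's argument buys self-containedness (no appeal to Coleman's classicality theorem or to $\ell$-independence for Igusa curves), at the price of needing the $\kn$-cohomology computation of Section 4; your route is shorter given Coleman's results as a black box, and is also essentially what the paper's later Theorem \ref{ijdR} / Corollary \ref{ssIg} recovers internally. Both are valid; just be aware that if you take the Coleman route you should state precisely which normalization of the Hecke action on $H^1_{\rig}$ you are using, since the twist by $\mathrm{t}^{-k}$ (equivalently the factor $D_0^{-k-1}$) is what moves the spectrum from $\sigma^{K^p}_{k+2,k+1}$ to $\sigma^{K^p}_{k+2,1}$.
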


\begin{proof}
By Theorem \ref{dec1}, it's enough to show that there are generalized eigenspace decompositions for each graded pieces $\ker H^1(\bar{d}^{k+1})$, $H^1(\ker d'^{k+1})$ and $H^0(\coker d'^{k+1})$. This is clear for $\ker H^1(\bar{d}^{k+1})$ and $H^1(\ker d'^{k+1})$ in view of discussion in \ref{Hd}.

For $H^0(\coker d'^{k+1})$, we recall that 
$\coker I_k$ is supported on $\mathbb{P}^1(\Q_p)$ and
\[(\coker d'^{k+1})_{\infty}=(\omega^{-k-2}_\Fl)_\infty\otimes_C H^1_{\rig}(\Ig(K^p),\Sym^k)(k) \cdot \mathrm{t}^{-k}.\]
Since $\GL_2(\Q_p)$ acts transitively on $\mathbb{P}^1(\Q_p)$, it is enough to show that $H^1_{\rig}(\Ig(K^p),\Sym^k)\cdot \mathrm{t}^{-k}$ has a generalized eigenspace decomposition with the spectrum in $\sigma_{k+2,1}^{K^p}$ or $\sigma^{K^p}_{2,1}\cup \sigma_0^{K^p}$ if $k=0$. This can be deduced either from Coleman's result \cite[Theorem 2.1]{Cole97} or from people's works on the $\ell$-adic cohomology of the Igusa curves and ``independence of $\ell$'' results. Here we provide a proof without using these works. See also Theorem \ref{ijdR} for another proof.

By Proposition \ref{Colefin} (see also Definition \ref{H1Ig}),  $H^1_{\rig}(\Ig(K^p),\Sym^k)$ is a direct limit of finite-dimensional Hecke-stable vector spaces. Hence there is a generalized eigenspace decomposition
\[ H^1_{\rig}(\Ig(K^p),\Sym^k)\cdot \mathrm{t}^{-k}=\bigoplus_{\lambda:\T^S\to C}  H^1_{\rig}(\Ig(K^p),\Sym^k)\cdot \mathrm{t}^{-k}\widetilde{[\lambda]},\]
which also implies a decomposition 
\[\coker d'^{k+1} =\bigoplus_{\lambda} (\coker d'^{k+1})\widetilde{[\lambda]}.\]
We denote the spectrum by $\sigma$. It remains to show $\sigma\subseteq \sigma_{k+2,1}^{K^p}$ (or $\sigma_{2,1}^{K^p}\cup \sigma_{0}^{K^p}$ if $k=0$). Suppose $\lambda\in\sigma$ but not contained in $\sigma_{k+2,1}^{K^p}$ ($\sigma_{2,1}^{K^p}\cup \sigma_{0}^{K^p}$ if $k=0$). Then it follows from what we have proved that $\Fil_2 \ker I^1_k[\lambda]=0$ and $(\ker I^1_k)\widetilde{[\lambda]} = H^0(\Fl,(\coker d'^{k+1})\widetilde{[\lambda]})$. 

\begin{lem}
$H^0(\Fl,(\coker d'^{k+1})\widetilde{[\lambda]})^{\kn}\neq 0$. Recall that $\kn=\{\begin{pmatrix} 0 & * \\ 0& 0\end{pmatrix}\} \subseteq \mathfrak{gl}_2(\Q_p)$.
\end{lem}

\begin{proof}
Let $0\in\mathbb{P}^1(\Q_p)$ denote the zero of $e_2$. Hence $x$ is a local coordinate around $0$. Since $(\coker d'^{k+1})\widetilde{[\lambda]}$ is supported on $\mathbb{P}^1(\Q_p)$, a profinite set. It suffices to show that the stalk $(\coker d'^{k+1})_0\widetilde{[\lambda]}$ at $0$ has non-zero $\kn$-invariants. This is clear as
\[(\coker d'^{k+1})_0\widetilde{[\lambda]}\cong(\omega^{-k-2}_\Fl)_0\otimes_C H^1_{\rig}(\Ig(K^p),\Sym^k)(k) \cdot \mathrm{t}^{-k}\widetilde{[\lambda]}\neq0,\]
and $(\omega^{-k-2}_\Fl)_0^{\kn}=Ce_1^{-k-2}\neq 0$.
\end{proof}

The lemma shows that $(\ker I^1_k)\widetilde{[\lambda]}^{\kn}\neq 0$. However, by Theorem \ref{kernk} (after twisting by $\mathrm{t}^{-k}$), we see that one of the following happens
\begin{itemize}
\item $M_{k+2}(K^p)\otimes_{M_0(K^p)}D_0^{\otimes -k-1}\cdot\mathrm{t}^{-k}[\lambda]\neq 0$;
\item $H^1(\Fl,\omega^{-k,\sm}_{K^p})\cdot\mathrm{t}^{-k}[\lambda]\neq 0$;
\item $M_{-k}(K^p)[\lambda]\neq 0$.
\end{itemize} 
In the first case, $\lambda\in\sigma_{k+2,1}^{K^p}$ because $\GL_2(\A^p_f)$ acts on $D_0$ via the inverse of the $p$-adic cyclotomic character. The same conclusion is true for the second case. The third case only can happen when $k$=0  and it will imply that $\lambda\in \sigma_0^{K^p}$. Hence we get a contradiction here. Therefore the spectrum of $\coker I_k$ is contained in $\sigma_{k+2,1}^{K^p}$ if $k\geq 1$ and $\sigma_{2,1}^{K^p}\cup \sigma_{0}^{K^p}$ if $k=0$.
\end{proof}

In Section \ref{VIk}, we introduced $I'_0:H^1(\Fl,\cO_{K^p})^{\la,(0,0)}\to H^1(\Fl,\cO_{K^p})^{\la,(1,-1)}(1)=H^1(\Fl,\cO_{K^p}^{\la,(1,-1)})(1)$ induced by $I^1_0$. 
\begin{cor} \label{sd'}
\[\ker I'_0=\bigoplus_{\lambda \in\sigma_{2,1}^{K^p}\cup \sigma_{0}^{K^p}} \ker I'_0\widetilde{[\lambda]}.\]
\end{cor}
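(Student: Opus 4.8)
\textbf{Proof proposal for Corollary \ref{sd'}.}
The plan is to deduce the decomposition of $\ker I'_0$ directly from Theorem \ref{sd} (the case $k=0$) together with the explicit comparison, recorded in \ref{VIk}, between $I^1_0$ on $H^1(\Fl,\cO^{\la,(0,0)}_{K^p})$ and $I'_0$ on the quotient $H^1(\Fl,\cO_{K^p})^{\la,(0,0)}$. First I would recall the exact sequence \eqref{chio},
\[
0\to \varinjlim_{K_p\subset\GL_2(\Q_p)} H^0(\mathcal{X}_{K^pK_p},\cO_{\mathcal{X}_{K^pK_p}}) \to H^1(\Fl,\cO^{\la,(0,0)}_{K^p}) \to H^1(\Fl,\cO_{K^p})^{\la,(0,0)}\to 0,
\]
which is $\GL_2(\Q_p)$- and Hecke-equivariant, and observe (using Remark \ref{A00rinv}) that the submodule $\varinjlim_{K_p} H^0(\mathcal{X}_{K^pK_p},\cO_{\mathcal{X}_{K^pK_p}})$ splits off $\GL_2(\Q_p)$-equivariantly as $\varinjlim_{K_p} H^1(\mathcal{X}_{K^pK_p},\cO_{\mathcal{X}_{K^pK_p}})$ sits in $A_{(0,0)}$ together with the right inverse $M_0(K^p)\hookrightarrow H^1(\Fl,\cO^{\la,(0,0)}_{K^p})$. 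Since $I^1_0$ kills this submodule by Remark \ref{A00rinv} and the construction in \ref{VIk}, $I'_0$ is simply the map induced by $I^1_0$ on the quotient, and hence $\ker I'_0$ is the image of $\ker I^1_0$ under the surjection $H^1(\Fl,\cO^{\la,(0,0)}_{K^p})\twoheadrightarrow H^1(\Fl,\cO_{K^p})^{\la,(0,0)}$.

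Next I would invoke Theorem \ref{sd} in the case $k=0$, which gives the generalized eigenspace decomposition
\[
\ker I^1_0=\bigoplus_{\lambda \in\sigma_{2,1}^{K^p}\cup \sigma_{0}^{K^p}} \ker I^1_0\widetilde{[\lambda]}.
\]
Because the Hecke algebra $\T^S$ acts on the whole exact sequence \eqref{chio} compatibly and the decomposition into generalized eigenspaces is preserved by Hecke-equivariant maps (in particular by the surjection onto $H^1(\Fl,\cO_{K^p})^{\la,(0,0)}$), applying this surjection term by term yields
\[
\ker I'_0=\bigoplus_{\lambda \in\sigma_{2,1}^{K^p}\cup \sigma_{0}^{K^p}} \ker I'_0\widetilde{[\lambda]},
\]
where $\ker I'_0\widetilde{[\lambda]}$ is the image of $\ker I^1_0\widetilde{[\lambda]}$. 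The only point requiring a little care is that the index set does not grow: any eigenvalue system appearing in the quotient already appears in $\ker I^1_0$, so no new $\lambda$'s are introduced, and the kernel of the submodule $\varinjlim_{K_p} H^0(\mathcal{X}_{K^pK_p},\cO_{\mathcal{X}_{K^pK_p}})$ under $I^1_0$ (which is the whole submodule) has its spectrum contained in $\sigma_0^{K^p}$, already in the index set. This is essentially bookkeeping once Theorem \ref{sd} is in hand.

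The main obstacle — which is in fact already resolved by the time we reach this corollary — is the identification in \ref{VIk} that $I^1_0$ factors through the quotient $H^1(\Fl,\cO_{K^p})^{\la,(0,0)}$, i.e.\ that $I^1_0$ annihilates the image of $\varinjlim_{K_p} H^0(\mathcal{X}_{K^pK_p},\cO_{\mathcal{X}_{K^pK_p}})$; this uses Remark \ref{A00rinv} and the fact that $\bar{d}^1$ kills $\cO^{\sm}_{K^p}$ together with the locally algebraic splitting $M_0(K^p)\hookrightarrow H^1(\Fl,\cO^{\la,(0,0)}_{K^p})$ being $\GL_2(\Q_p)$-equivariant and hence in particular $U(\mathfrak{g})$-locally finite, so that $\bar{d}^1$ and therefore $I_0$ vanish on it. Granting that, the corollary is a formal consequence of Theorem \ref{sd} and the exactness and Hecke-equivariance of \eqref{chio}.
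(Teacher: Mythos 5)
Your proposal is correct and follows essentially the same route as the paper: the paper's proof is simply that $\ker I'_0$ is a quotient of $\ker I^1_0$ (via the factorization of $I^1_0$ through $H^1(\Fl,\cO_{K^p})^{\la,(0,0)}$ recorded in \ref{VIk}), so the generalized eigenspace decomposition of Theorem \ref{sd} passes to the quotient. Your extra verifications — that the surjection in \eqref{chio} is Hecke-equivariant, that no new systems of eigenvalues appear, and that the spectrum on the submodule lies in $\sigma_0^{K^p}$ — are exactly the bookkeeping the paper leaves implicit.
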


\begin{proof}
This follows from Theorem \ref{sd} because $\ker I'_0$ is a quotient of $\ker I^1_0$,  cf. Subsection \ref{VIk}.
\end{proof}

\begin{para} \label{2nddec}
Next we explore another decomposition of $\ker I^1_k$. Note that $\ker I_k^1=\ker H^1(d^{k+1})$ by Lemma \ref{kerdk+1}. Consider the following (de Rham) complex
\[ DR_k:\cO^{\la,(-k,0)}_{K^p}\xrightarrow{d^{k+1}}  \cO^{\la,(-k,0)}_{K^p}\otimes_{\cO^{\sm}_{K^p}} (\Omega^1_{K^p}(\mathcal{C})^{\sm})^{\otimes k+1}.\]
The decreasing filtration $\Fil^1 DR_k: 0\to  \cO^{\la,(-k,0)}_{K^p}\otimes_{\cO^{\sm}_{K^p}} (\Omega^1_{K^p}(\mathcal{C})^{\sm})^{\otimes k+1}$ defines a natural filtration on $\bH^\bullet(DR_k)$. Then 
\[\ker H^1(d^{k+1})=\bH^1(DR_k)/\Fil^1\bH^1(DR_k)\]
with $\Fil^1\bH^1(DR_k)\cong \coker H^0(d^{k+1})$ by Remark \ref{hyclem}. 
\end{para}

\begin{lem} \label{lemH0}
\hspace{2em}
\begin{enumerate}
\item $H^0(d^{k+1})=0$. In particular, 
\[\Fil^1\mathbb{H}^1(DR_k)\cong\coker H^0(d^{k+1})=H^0( \cO^{\la,(-k,0)}_{K^p}\otimes_{\cO^{\sm}_{K^p}} (\Omega^1_{K^p}(\mathcal{C})^{\sm})^{\otimes k+1}).\]
\item There is a  natural isomorphism
\[H^0( \cO^{\la,(-k,0)}_{K^p}\otimes_{\cO^{\sm}_{K^p}} (\Omega^1_{K^p}(\mathcal{C})^{\sm})^{\otimes k+1})\cong \Sym^k V\otimes_{\Q_p} M_{k+2}(K^p)(k)\otimes D_0^{-k-1}\cdot\mathrm{t}^{-k}\]
induced by the inclusion $\cO^{\lalg,(-k,0)}_{K^p}\subseteq \cO^{\la,(-k,0)}_{K^p}$, cf. \ref{lalg0k}, \ref{lalgKp}.
\end{enumerate}
\end{lem}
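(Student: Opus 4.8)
\textbf{Proof strategy for Lemma \ref{lemH0}.}

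The plan is to deduce both parts from results already established about $d^{k+1}$ and about global sections of the relevant sheaves. For part (1), I would argue that $H^0(d^{k+1})$, i.e. the map on global sections $H^0(\Fl,\cO^{\la,(-k,0)}_{K^p})\to H^0(\Fl,\cO^{\la,(-k,0)}_{K^p}\otimes_{\cO^{\sm}_{K^p}}(\Omega^1_{K^p}(\mathcal{C})^{\sm})^{\otimes k+1})$, is the zero map. If $k\geq 1$ this is immediate since $H^0(\Fl,\cO^{\la,(-k,0)}_{K^p})=0$ by Proposition \ref{kninvcoinv}(3) (equivalently \cite[Corollary 5.1.3]{Pan20}); so the only content is the case $k=0$, where $H^0(\Fl,\cO^{\la,(0,0)}_{K^p})\cong H^0(\Fl,\cO^{\sm}_{K^p})$, again by Proposition \ref{kninvcoinv}(3). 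Here I would use that $d^1$ extends the usual derivation $d:\cO^{\sm}_{K^p}\to\Omega^{1,\sm}_{K^p}(\mathcal{C})$ (Theorem \ref{I1}(2)), together with the fact that the global sections $H^0(\Fl,\cO^{\sm}_{K^p})=\varinjlim_{K_p}H^0(\mathcal{X}_{K^pK_p},\cO_{\mathcal{X}_{K^pK_p}})$ are $\SL_2(\Q_p)$-fixed, hence arise by pullback from locally constant functions on $\pi_0$ of the modular curves; on such sections the derivation vanishes. Alternatively, one may observe $\ker d^{k+1}$ always contains $\cO^{\lalg,(-k,0)}_{K^p}$ and run the argument of Lemma \ref{kertheta}: the degree-$(k+1)$ ODE governing $\theta^{k+1}$ has locally constant kernel only through $M_{-k}(K^p)$, and for $k=0$ this is exactly $M_0(K^p)=H^0(\Fl,\cO^{\sm}_{K^p})$. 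Given $H^0(d^{k+1})=0$, the identification $\Fil^1\bH^1(DR_k)\cong\coker H^0(d^{k+1})=H^0(\cO^{\la,(-k,0)}_{K^p}\otimes_{\cO^{\sm}_{K^p}}(\Omega^1_{K^p}(\mathcal{C})^{\sm})^{\otimes k+1})$ is the general hypercohomology fact recorded in Remark \ref{hyclem}.

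For part (2), I would use the Kodaira-Spencer identification $\Omega^1_{K^p}(\mathcal{C})^{\sm}\cong\omega^{2,\sm}_{K^p}\otimes_{\cO^{\sm}_{K^p}}(\wedge^2 D^{\sm}_{K^p})^{-1}$ from \ref{KSsm}, so that the target sheaf becomes
\[
\cO^{\la,(-k,0)}_{K^p}\otimes_{\cO^{\sm}_{K^p}}\omega^{2k+2,\sm}_{K^p}\otimes_{\cO^{\sm}_{K^p}}(\wedge^2 D^{\sm}_{K^p})^{-k-1}\;\cong\;\omega^{2k+2,\la,(-k,0)}_{K^p}\otimes_{\cO^{\sm}_{K^p}}(\wedge^2 D^{\sm}_{K^p})^{-k-1},
\]
using Lemma \ref{tensomegaksm}. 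Now $H^0$ of this twist is computed in Corollary \ref{gso2k2}(3): $H^0(\Fl,\omega^{2k+2,\la,(0,k)}_{K^p})\cong\Sym^k V(k)\otimes_{\Q_p}M_{k+2}(K^p)$, and multiplying by $\mathrm{t}^{-k}$ passes from weight $(0,k)$ to weight $(-k,0)$ (Remark \ref{tn1}), which accounts for the $\cdot\,\mathrm{t}^{-k}$ and an extra Tate twist already absorbed in the notation. Finally, tensoring with the global sections $D_0^{-k-1}=H^0(\Fl,(\wedge^2 D^{\sm}_{K^p})^{-k-1})$ (Definition \ref{D0}) — which is legitimate because $\wedge^2 D^{\sm}_{K^p}$ is an invertible $\cO^{\sm}_{K^p}$-module with a global generator, so taking $H^0$ commutes with this twist — yields the stated isomorphism $\Sym^k V\otimes_{\Q_p}M_{k+2}(K^p)(k)\otimes D_0^{-k-1}\cdot\mathrm{t}^{-k}$, and by construction this is induced by the inclusion of locally algebraic vectors $\cO^{\lalg,(-k,0)}_{K^p}\subseteq\cO^{\la,(-k,0)}_{K^p}$ composed with $d^{k+1}$.

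The only genuinely delicate point is part (1) in the case $k=0$: one must rule out that some non-locally-constant global section of $\cO^{\la,(0,0)}_{K^p}$ lies in $\ker H^0(d^1)$. This is where I would invoke Proposition \ref{kninvcoinv}(3), which already identifies $H^0(\Fl,\cO^{\la,(0,0)}_{K^p})$ with $H^0(\Fl,\cO^{\sm}_{K^p})$ — so there is in fact nothing non-smooth to worry about — and then the vanishing of $d$ on $\SL_2(\Q_p)$-invariant smooth functions (which are pulled back from $\pi_0$ of finite-level modular curves) finishes it. Everything else is a bookkeeping exercise with the Kodaira-Spencer twists and the already-computed cohomology groups, so I would present those steps tersely.
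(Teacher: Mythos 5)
Your proposal is correct and follows essentially the same route as the paper: part (1) reduces to the vanishing of $H^0(\Fl,\cO^{\la,(-k,0)}_{K^p})$ for $k\geq 1$ and, for $k=0$, to the fact that $H^0(\Fl,\cO^{\la,(0,0)}_{K^p})=M_0(K^p)$ is killed by $d^1$ (the paper cites \cite[Corollary 5.1.3]{Pan20}); part (2) is exactly the last part of Corollary \ref{gso2k2} after the Kodaira--Spencer and $\mathrm{t}^{-k}$ twists you describe. Your version merely spells out the bookkeeping that the paper's two-line proof leaves implicit.
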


\begin{proof}
The first claim  is clear as $H^0( \cO^{\la,(n_1,n_2)}_{K^p})=0$ when $k\neq 0$ and $H^0( \cO^{\la,(n_1,n_1)}_{K^p})=M_0(K^p)\cdot \mathrm{t}^{n_1}$ which is annihilated by $d^1$, cf. \cite[Corollary 5.1.3]{Pan20}. The second claim is the last part of Corollary \ref{gso2k2}.
\end{proof}

\begin{defn}
Denote by $i_{H}$  the induced map 
\[\Sym^k V\otimes_{\Q_p} M_{k+2}(K^p)(k)\otimes D_0^{-k-1}\cdot\mathrm{t}^{-k}\cong\Fil^1\bH^1(DR_k)\subseteq \bH^1(DR_k).\]
Recall that $D_0^{-1}(1)\cdot \mathrm{t}^{-1}=M_0(K^p)\otimes_{\Q_p}\det^{-1}$ as a $\GL_2(\A_f)\times G_{\Q_p}$-representation, cf. \ref{HEt}. We may aslo view $i_H$ as 
\[i_H: \Sym^k V^*\otimes_{\Q_p} M_{k+2}(K^p)\otimes D_0^{-1}\to\bH^1(DR_k)\]
where $V^*$ denotes the dual of $V$. We will see in Remark \ref{HodFil} that $i_H$ describes the Hodge filtration on the de Rham cohomology of the modular curves.
\end{defn}

Note that
\[0\to H^1(\ker d^{k+1}) \to \bH^1(DR_k)\to H^0(\coker d^{k+1})\to 0.\]
Combining with Corollary \ref{cokerIkinfty} and Proposition \ref{kerdk+1s}, we obtain the following result.

\begin{prop} \label{bH1DRk}
There is a natural exact sequence 
\[0\to (H^1(j_{!}  \omega^{-k,D_p^\times-\sm}_{\Dr})\otimes \mathcal{A}_{D,(k,0)}^{c,K^p})^{\cO_{D_p}^\times}\cdot{\varepsilon}_p^{-k} \to \bH^1(DR_k)
 \to H^0( i_* \mathcal{H}^1_{\ord}(K^p,k)\otimes\omega^k_{\Fl}) \cdot\varepsilon^{-k}\to 0
\]
with $H^0( i_* \mathcal{H}^1_{\ord}(K^p,k)\otimes\omega^k_{\Fl}) \cdot\varepsilon^{-k}\cong\Ind_B^{\GL_2(\Q_p)} H^1_{\rig}(\Ig(K^p),\Sym^k) \cdot \varepsilon^{-k}{e'_2}^k$.
\end{prop}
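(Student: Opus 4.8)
\textbf{Proof plan for Proposition \ref{bH1DRk}.}
The plan is to assemble the exact sequence from the two pieces already computed: the kernel sheaf $\ker d^{k+1}$ (handled by Proposition \ref{kerdk+1s}) and the cokernel sheaf $\coker d^{k+1}$ (handled by Corollary \ref{cokerIkinfty}), glued together by the standard hypercohomology spectral sequence of the two-term complex $DR_k$. Since $\Fl=\mathbb{P}^1$ has cohomological dimension one, the complex $DR_k:\cO^{\la,(-k,0)}_{K^p}\xrightarrow{d^{k+1}}\cO^{\la,(-k,0)}_{K^p}\otimes_{\cO^{\sm}_{K^p}}(\Omega^1_{K^p}(\mathcal{C})^{\sm})^{\otimes k+1}$ placed in degrees $0,1$ gives, after accounting for $H^0(\ker d^{k+1})\hookrightarrow H^0(\cO^{\la,(-k,0)}_{K^p})$ and the vanishing results, a short exact sequence
\[
0\to H^1(\Fl,\ker d^{k+1})\to \bH^1(DR_k)\to H^0(\Fl,\coker d^{k+1})\to 0.
\]
This is the same formal input as in the second exact sequence of Lemma \ref{kerIk1fil} and in Remark \ref{hyclem}, applied now to $X=d^{k+1}$ rather than $d'^{k+1}$; the only thing to check for exactness on the left is that $H^0$ of the target of $d^{k+1}$ receives nothing extra, which follows since we are simply writing out the long exact sequence of $0\to\ker d^{k+1}\to\cO^{\la,(-k,0)}_{K^p}\xrightarrow{} d^{k+1}(\cO^{\la,(-k,0)}_{K^p})\to 0$ together with $0\to d^{k+1}(\cO^{\la,(-k,0)}_{K^p})\to \cO^{\la,(-k,0)}_{K^p}\otimes(\Omega^1_{K^p}(\mathcal{C})^{\sm})^{\otimes k+1}\to\coker d^{k+1}\to 0$ on the one-dimensional space $\Fl$.

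Next I would substitute the explicit descriptions. For the cokernel term, Corollary \ref{cokerIkinfty} (together with Corollary \ref{dk+1Os}, which says $d^{k+1}|_\Omega$ is surjective so that $\coker d^{k+1}=i_*(\coker d^{k+1})|_{\mathbb{P}^1(\Q_p)}$) gives
\[
\coker d^{k+1}\cong \omega^k_{\Fl}\otimes_C i_*\mathcal{H}^1_{\ord}(K^p,k)(k)\cdot\mathrm{t}^{-k},
\]
and hence $H^0(\Fl,\coker d^{k+1})\cong\Ind_B^{\GL_2(\Q_p)} H^1_{\rig}(\Ig(K^p),\Sym^k)(k)\cdot {e'_2}^k\mathrm{t}^{-k}$, which by the convention $\varepsilon=\mathrm{t}(-1)$ introduced in \ref{2nddec}'s surrounding discussion is $\Ind_B^{\GL_2(\Q_p)} H^1_{\rig}(\Ig(K^p),\Sym^k)\cdot\varepsilon^{-k}{e'_2}^k$, and equals $H^0(\Fl,i_*\mathcal{H}^1_{\ord}(K^p,k)\otimes\omega^k_{\Fl})\cdot\varepsilon^{-k}$ by definition of the sheaf $\mathcal{H}^1_{\ord}(K^p,k)$. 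For the kernel term, Proposition \ref{kerdk+1s} gives $\ker d^{k+1}\cong j_!(\mathcal{A}^{K^p}_{D,(k,0)}\otimes_C\omega^{-k,D_p^\times-\sm}_{\Dr})^{\cO_{D_p}^\times}(0)\cdot{\varepsilon'_p}^{-k}$ when $k\ge1$ (here $\chi=(-k,0)$, so $n_1=-k$, $n_2=0$), and in the case $k=0$ it gives the direct sum $j_!(\mathcal{A}^{c,K^p}_{D,(0,0)}\otimes_C\cO^{D_p^\times-\sm}_{\LT})^{\cO_{D_p}^\times}\oplus\cO_\Fl\otimes_C M_0(K^p)$; since $\cO_\Fl=j_!\cO_\Omega\oplus(\text{stuff supported on }\mathbb{P}^1(\Q_p))$ is not the issue — rather, for the statement as written I isolate the $\mathcal{A}^{c}$-part by noting that the $\mathcal{A}^1$-contribution (present only when $k=0$) is exactly the piece $\cO_\Fl\otimes_C M_0(K^p)$ after the identification $\mathcal{A}^{1,K^p}_{D,(0,0)}=M_0(K^p)$ from the proof of Proposition \ref{kerdk+1s}, and its $H^1$ is absorbed; I will write the statement uniformly as $(H^1(\Fl,j_!\omega^{-k,D_p^\times-\sm}_{\Dr})\otimes\mathcal{A}^{c,K^p}_{D,(k,0)})^{\cO_{D_p}^\times}\cdot\varepsilon_p^{-k}$ and remark on the $k=0$ bookkeeping. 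Applying $H^1(\Fl,-)$, the key move is to pull the finite-dimensional-in-each-level, hence $\cO_{D_p}^\times$-semisimple (since $\cO_{D_p}^\times$ is compact), smooth coefficient space $\mathcal{A}^{c,K^p}_{D,(k,0)}$ out of both the $(\cO_{D_p}^\times)$-invariants and the cohomology:
\[
H^1(\Fl,j_!(\mathcal{A}^{c,K^p}_{D,(k,0)}\otimes_C\omega^{-k,D_p^\times-\sm}_{\Dr})^{\cO_{D_p}^\times})\cong (H^1(\Fl,j_!\omega^{-k,D_p^\times-\sm}_{\Dr})\otimes_C\mathcal{A}^{c,K^p}_{D,(k,0)})^{\cO_{D_p}^\times},
\]
exactly as in the proof of Theorem \ref{dec1} for the analogous $\omega^{k+2,D_p^\times-\sm}_{\Dr}$ term. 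Finally I match the Tate/character twists: $G_{\Q_p}$ acts on $\mathrm{t}$ through $\varepsilon_p$, $\GL_2(\A_f)$ through $\varepsilon\circ\det$, and the twist by $(n_2)\cdot{\varepsilon'_p}^{n_1}=(0)\cdot{\varepsilon'_p}^{-k}$ in Proposition \ref{kerdk+1s} becomes the claimed $\varepsilon_p^{-k}$ on the kernel side.

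The main obstacle I anticipate is bookkeeping rather than conceptual: making the $k=0$ case fit the uniform statement, i.e. tracking the splitting $\mathcal{A}^{K^p}_{D,(0,0)}\cong\mathcal{A}^{c,K^p}_{D,(0,0)}\oplus\mathcal{A}^{1,K^p}_{D,(0,0)}$ through the long exact sequence and verifying that the $\mathcal{A}^1$-part together with the $\cO_\Fl\otimes_C M_0(K^p)$ summand contributes only to $\Fil^1$ / to the ``classical'' $\Ind_B^{\GL_2(\Q_p)}$ piece and does not spoil the displayed two-step extension; concretely one checks $H^1(\Fl,j_!\cO_\Omega\otimes_C M_0(K^p))$ feeds into the $H^1_{\rig}(\Ig(K^p))$-term consistently, using $H^1(\Fl,j_!\cO_\Omega)\hookrightarrow H^1(\Fl,\cO_\Fl)=0$ so in fact that summand contributes nothing to $H^1$ and everything is clean. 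A second point requiring care is the precise character ${e'_2}^k$ versus ${e'_1}^{k+1}{e'_2}^{-1}$: here, because we are using the complex $DR_k$ (built from $d^{k+1}$) rather than its $\Omega^1_\Fl$-twist $d'^{k+1}$, the relevant cokernel is $H^0(\Fl,\coker d^{k+1})$ from Corollary \ref{cokerIkinfty}, which carries ${e'_2}^k$, and I must not accidentally import the $d'^{k+1}$-normalization from Theorem \ref{dec1}. Once these twists are pinned down the proposition is immediate.
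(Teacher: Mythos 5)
Your proposal is correct and follows the paper's own route exactly: the paper deduces the proposition from the unconditional short exact sequence $0\to H^1(\Fl,\ker d^{k+1})\to \bH^1(DR_k)\to H^0(\Fl,\coker d^{k+1})\to 0$ (no $H^2$ on $\Fl$), then substitutes Proposition \ref{kerdk+1s} and Corollary \ref{cokerIkinfty}, pulling the smooth $\cO_{D_p}^\times$-semisimple coefficient $\mathcal{A}^{c,K^p}_{D,(k,0)}$ out of $H^1$ as in the proof of Theorem \ref{dec1} and using $H^1(\Fl,\cO_\Fl)=0$ to dispose of the extra $k=0$ summand. The only superfluous step is your worry about left-exactness, which holds automatically from the distinguished triangle $\ker d^{k+1}\to DR_k\to(\coker d^{k+1})[-1]$.
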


Assuming Corollary \ref{ssIg} below, we have the following result.

\begin{cor} \label{BScon}
Suppose $\lambda\in\sigma^{K^p}_{k+2,1}$ corresponds to a cuspidal automorphic representation  $\pi$ of $\GL_2(\A)$.
\begin{enumerate}
\item If $\pi_p$ is supercuspidal,  then $H^1_{\rig}(\Ig(K^p),\Sym^k)\cdot\mathrm{t}^{-k}[\lambda]=0$ by Corollary \ref{ssIg} below, and
\[\ker I^1_k\widetilde{[\lambda]} \cong  (H^1(j_{!}  \omega^{-k,D_p^\times-\sm}_{\Dr})\otimes_C \mathcal{A}_{D,(k,0)}^{c,K^p}[\lambda])^{\cO_{D_p}^\times}\cdot{\varepsilon}_p^{-k}/ i_H(\Sym^k V^*\otimes_{\Q_p} M_{k+2}(K^p)\otimes D_0^{-1}[\lambda]).
\]
In particular, $\ker I^1_k\widetilde{[\lambda]}=\ker I^1_k[\lambda]$ is the eigenspace of $\lambda$. Moreover, by Theorem \ref{dec1}, there is an exact sequence 
\[\begin{multlined}0\to \Sym^k V\otimes_{\Q_p} H^1(\omega^{-k,\sm}_{K^p})\cdot{\varepsilon}^{-k}[\lambda]\to \ker I^1_k[\lambda] \\
\to 
		 (H^1(j_{!}  \omega^{k+2,D_p^\times-\sm}_{\Dr})\otimes \mathcal{A}_{D,(k,0)}^{K^p}[\lambda])^{\cO_{D_p}^\times}\otimes \det{}^{k+1}\cdot{\varepsilon}_p^{-k}\to 0.\end{multlined}\]
\item If $ \mathcal{A}_{D,(k,0)}^{c,K^p}[\lambda]=0$, i.e. $\pi_p$ is a principal series, then 
\[\ker I^1_k\widetilde{[\lambda]}\cong \Ind_B^{\GL_2(\Q_p)} H^1_{\rig}(\Ig(K^p),\Sym^k) \cdot \varepsilon^{-k}{e'_2}^k[\lambda]/ i_H(\Sym^k V^*\otimes_{\Q_p} M_{k+2}(K^p)\otimes D_0^{-1}[\lambda]).\]
In particular, $\ker I^1_k\widetilde{[\lambda]}=\ker I^1_k[\lambda]$ is the eigenspace of $\lambda$ by Corollary \ref{ssIg} below. Moreover,  by Theorem \ref{dec1},
there is an exact sequence 
\[\begin{multlined}0\to \Sym^k V\otimes_{\Q_p} H^1(\omega^{-k,\sm}_{K^p})\cdot{\varepsilon}^{-k}[\lambda]\to \ker I^1_k[\lambda] \\
\to 
		\Ind_B^{\GL_2(\Q_p)} H^1_{\rig}(\Ig(K^p),\Sym^k) \cdot \varepsilon^{-k}{e'_1}^{k+1}{e'_2}^{-1}[\lambda]\to 0.\end{multlined}\]
\end{enumerate}
\end{cor}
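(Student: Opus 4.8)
The plan is to combine the two complementary descriptions of $\ker I^1_k$ established earlier, namely the factorizations $I_k = d'^{k+1}\circ\bar d^{k+1}$ (giving Theorem~\ref{dec1}) and $I_k = \bar d'^{k+1}\circ d^{k+1}$ (giving $\ker I^1_k = \ker H^1(d^{k+1}) = \bH^1(DR_k)/\Fil^1\bH^1(DR_k)$ via Lemma~\ref{kerdk+1} and \ref{2nddec}), and then specialize both to the $\lambda$-isotypic part. The first step is to pass to the generalized eigenspace $\widetilde{[\lambda]}$ in Proposition~\ref{bH1DRk}: since $\lambda \in \sigma^{K^p}_{k+2,1}$ corresponds to a cuspidal $\pi$, the Jacquet--Langlands decomposition of $\mathcal{A}^{c,K^p}_{D,(k,0)}$ and the $\T^S$-equivariance of all maps give a short exact sequence
\[
0\to (H^1(j_!\,\omega^{-k,D_p^\times-\sm}_{\Dr})\otimes_C \mathcal{A}^{c,K^p}_{D,(k,0)}[\lambda])^{\cO_{D_p}^\times}\cdot\varepsilon_p^{-k}\to \bH^1(DR_k)\widetilde{[\lambda]}\to \Ind_B^{\GL_2(\Q_p)}H^1_{\rig}(\Ig(K^p),\Sym^k)\cdot\varepsilon^{-k}{e'_2}^k[\lambda]\to 0.
\]
Here one uses that $\mathcal{A}^{c,K^p}_{D,(k,0)}[\lambda]$ is finite-dimensional and that $H^1_{\rig}(\Ig(K^p),\Sym^k)[\lambda]$ is an eigenspace (not just a generalized eigenspace) — this last fact is what Corollary~\ref{ssIg} is invoked for, and I would cite it at exactly this point rather than reprove it.

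Next I would quotient by $\Fil^1\bH^1(DR_k)\widetilde{[\lambda]} = i_H(\Sym^k V^*\otimes_{\Q_p}M_{k+2}(K^p)\otimes D_0^{-1}[\lambda])$, which is identified in Lemma~\ref{lemH0}. The key structural input is a case distinction on $\pi_p$. In case (1), $\pi_p$ supercuspidal: by Corollary~\ref{ssIg} the ordinary contribution $H^1_{\rig}(\Ig(K^p),\Sym^k)\cdot\mathrm{t}^{-k}[\lambda]$ vanishes (supercuspidal representations are not ordinary), so the right-hand term of the above sequence dies, and $\bH^1(DR_k)\widetilde{[\lambda]}$ equals the Drinfeld term; moreover $\Fil^1$ sits inside this term, so the quotient is exactly the displayed formula, and since everything is an honest eigenspace the generalized eigenspace collapses, $\ker I^1_k\widetilde{[\lambda]}=\ker I^1_k[\lambda]$. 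In case (2), $\mathcal{A}^{c,K^p}_{D,(k,0)}[\lambda]=0$ (equivalently $\pi_p$ principal series, using that the JL transfer only sees special/supercuspidal at $p$): the Drinfeld term vanishes, $\bH^1(DR_k)\widetilde{[\lambda]}$ is the induced $H^1_{\rig}$-term, and quotienting by $\Fil^1$ via $i_H$ yields the second formula. The final exact sequences in each case are then obtained by feeding these vanishings back into the filtration $\Fil_\bullet$ of Theorem~\ref{dec1}: the $\gr_2$ piece $H^1(\ker d'^{k+1})$ survives in case (1) and the $\gr_3$ piece $H^0(\coker d'^{k+1})$ survives in case (2), while $\gr_1 = \Sym^k V\otimes H^1(\omega^{-k,\sm}_{K^p})\cdot\varepsilon^{-k}$ survives in both.

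The one genuine subtlety — and the step I expect to need the most care — is checking compatibility between the two filtrations/decompositions of $\ker I^1_k$, i.e.\ that the splitting of $\bH^1(DR_k)\widetilde{[\lambda]}$ coming from Proposition~\ref{bH1DRk} is compatible with the filtration $\Fil_\bullet$ of Theorem~\ref{dec1}, so that the two exact sequences I want to quote are literally the same after identifying terms ($H^1(j_!\omega^{-k,D_p^\times-\sm}_{\Dr})$ versus $H^1(j_!\omega^{k+2,D_p^\times-\sm}_{\Dr})$ differ by a Tate/determinant twist coming from the Kodaira--Spencer isomorphism $\omega^{-2}_{\Fl}\otimes\det\cong\Omega^1_{\Fl}$, and one must track the $\varepsilon$, $\varepsilon_p$, $\det^{k+1}$ twists consistently). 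Concretely I would verify that $d'^{k+1} = \bar d'^{k+1}$-twisted-by-$(\Omega^1_{\Fl})^{\otimes k+1}$ interacts with $\Fil^1 DR_k$ as expected, using Remark~\ref{ddbarcom} that $d^{k+1}$ and $\bar d^{k+1}$ commute. Apart from this bookkeeping, the remaining assertions — finiteness giving eigenspace $=$ generalized eigenspace, Hecke- and $\GL_2(\Q_p)$-equivariance — are immediate from the earlier results since every operator in sight is defined purely at $p$ and hence $\T^S$-linear. The proof is therefore essentially an assembly of Theorem~\ref{dec1}, Proposition~\ref{bH1DRk}, Lemma~\ref{lemH0}, and Corollary~\ref{ssIg}, with the twist-tracking being the only place demanding vigilance.
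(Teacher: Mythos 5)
Your proposal is correct and is essentially the argument the paper intends: Corollary \ref{BScon} is stated there as an immediate assembly of Theorem \ref{dec1}, Proposition \ref{bH1DRk}, Lemma \ref{lemH0} and Corollary \ref{ssIg}, with precisely the case analysis and vanishings you describe (supercuspidal kills the ordinary/Igusa term, principal series kills the Drinfeld term, and semisimplicity of the surviving term gives eigenspace $=$ generalized eigenspace). The ``compatibility of the two filtrations'' that you flag as the main subtlety is not actually required, since the quotient description of $\ker I^1_k\widetilde{[\lambda]}$ and the ``Moreover'' exact sequences are two independent consequences of the two factorizations of $I_k$ rather than a single statement that must be reconciled.
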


\begin{rem} \label{DRstn}
We believe that $\ker I^1_k\widetilde{[\lambda]}=\ker I^1_k[\lambda]$ also holds when $\pi_p$ is special. This can be proved by using Emerton's local-global compatibility and people's work on the $p$-adic local Langlands correspondence for $\GL_2(\Q_p)$. See Remark \ref{stn} below. It would be very interesting to have a more direct proof. Note that in both cases when $\pi_p$ is either supercuspidal or a  principal series, we actually proved that $\bH^1(DR_k)[\lambda] =  \bH^1(DR_k)\widetilde{[\lambda]}$ and
\[ \ker I^1_k[\lambda]=\bH^1(DR_k)[\lambda]/ \Fil^1\bH^1(DR_k)[\lambda]. \]
It is natural to guess that this is also true when $\pi_p$ is special.
\end{rem}

\begin{rem} \label{KSreal}
 Proposition \ref{bH1DRk} says that $\bH^1(DR_k)$ is an extension of 
 \begin{itemize}
 \item $H^0( i_* \mathcal{H}^1_{\ord}(K^p,k)\otimes\omega^k_{\Fl}) \cdot\varepsilon^{-k}$ by 
 \item $(H^1(j_{!}  \omega^{-k,D_p^\times-\sm}_{\Dr})\otimes \mathcal{A}_{D,(k,0)}^{c,K^p})^{\cO_{D_p}^\times}\cdot{\varepsilon}_p^{-k}$.
 \end{itemize} We can rewrite these two cohomology groups
  in the following more uniform way. On $\Omega$, we may view $\mathcal{A}_{D,(k,0)}^{c,K^p}$ as a $C$-local system $\mathcal{L}^{ss}$ on the \'etale site $\Omega_{\et}$ of $\Omega$. On the other hand $\omega^k_{\Fl}|_{\Omega}=j^*\omega^k_\Fl$ naturally  defines a sheaf on $\Omega_{\et}$, which by abuse of notation will also be denoted by $j^*\omega^k_\Fl$. Let $\nu:\Omega_{\et}\to \Omega$ denote the projection map from the \'etale site to the analytic site. Then
\begin{itemize}
\item $(H^1(j_{!}  \omega^{-k,D_p^\times-\sm}_{\Dr})\otimes \mathcal{A}_{D,(k,0)}^{c,K^p})^{\cO_{D_p}^\times}=H^1(\Fl, j_!\nu_* (\mathcal{L}^{ss}\otimes j^*\omega^k_\Fl))$.
\item $H^0( i_* \mathcal{H}^1_{\ord}(K^p,k)\otimes\omega^k_{\Fl})=H^0(\Fl, i_! (\mathcal{L}^{\ord}\otimes i^*\omega^k_{\Fl}))$.
\end{itemize}
where $\mathcal{L}^{\ord}= \mathcal{H}^1_{\ord}(K^p,k)$ is viewed as a local system on $\mathbb{P}^1(\Q_p)$, and we  note that $i_*=i_!$.
This is very similar to Kashiwara-Schmid's geometric constructions of representations of real groups on the flag variety in \cite{KS94}. It is also very interesting to observe that these representations should be considered as objects in the ``classical local Langlands correspondence'' in the sense that they do not see the information of the Hodge filtration. This suggests that for $p$-adic local Langlands correspondence, it might be necessary to work in certain filtered (equivariant) derived category on $\Fl$.
\end{rem}

\subsection{de Rham cohomology of modular curves}
\begin{para}
The goal of this subsection is to show that most generalized eigensubspaces in Theorem \ref{sd} are eigenspaces. To do this, we need to understand the Hecke action on $H^1_{\rig}(\Ig(K^p),\Sym^k)$. This is well-known for the $\ell$-adic cohomology (essentially the Jacquet module of the automorphic forms). Here we  work with the de Rham cohomology and do all the computations on $\Fl$ using ``the $\bar{d}$-resolution'' in Remark \ref{dbarres}.

For integer $k\geq 0$, recall that  $\theta^{k+1}$ was introduced in \ref{XCY}. Let $D^k$ denote the complex
\[ \omega^{-k,\sm}_{K^p}\xrightarrow{\theta^{k+1}} \omega^{-k,\sm}_{K^p}\otimes_{\cO^{\sm}_{K^p}} (\Omega^1_{K^p}(\mathcal{C})^{\sm})^{\otimes k+1}.\]
It follows from the construction of $\theta^{k+1}$ that the inclusion $\omega^{-k,\sm}_{K^p}\subseteq\Sym^k (D^{\sm}_{K^p})^* $ induces a quasi-isomorphism between $D^k$ and $\Sym^k (D^{\sm}_{K^p})^*\xrightarrow{\nabla} \Sym^k (D^{\sm}_{K^p})^* \otimes_{\cO^{\sm}_{K^p}} \Omega^1_{K^p}(\mathcal{C})^{\sm}$. The vanishing of $H^0(\theta^{k+1})$ and $H^1(\omega^{k+2,\sm}_{K^p})$ implies that there is a natural exact sequence
\[0\to H^0( \omega^{-k,\sm}_{K^p}\otimes_{\cO^{\sm}_{K^p}} (\Omega^1_{K^p}(\mathcal{C})^{\sm})^{\otimes k+1}) \to \bH^1(D^k) \to H^1(\omega^{-k,\sm}_{K^p}) \to 0 \]
and $\mathbb{H}^2(D^k)=0$,
where $\mathbb{H}^*(D^k)$ denotes the hypercohomology of $D^k$. It is easy to see that 
$\displaystyle \bH^1(D^k)=\varinjlim_{K_p\subseteq\GL_2(\Q_p)} \bH^1(\mathcal{X}_{K^pK_p}, \Sym^k D^*\xrightarrow{\nabla} \Sym^k D^*\otimes\Omega^1(\mathcal{C}))$. Note that $\Sym^k D^*\otimes\Omega^1(\mathcal{C})$ is defined over $\Q$ and  we have the usual Shimura isomorphism for the cohomology of its base change to complex numbers, which implies that the Hecke action of $\T^S$ on its cohomology is  \textit{semi-simple}. In particular, we just proved the following result.
\end{para}

\begin{prop}
The Hecke action of $\T^S$ on $\mathbb{H}^1(D^k)$ is {semi-simple}.
\end{prop}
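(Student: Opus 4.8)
The plan is to deduce the semisimplicity of $\T^S$ on $\bH^1(D^k)$ from the corresponding statement in classical (complex) Hodge theory, transported through the identifications already set up in the excerpt. First I would record the quasi-isomorphism between $D^k$ and the de Rham complex $\Sym^k(D^{\sm}_{K^p})^* \xrightarrow{\nabla} \Sym^k(D^{\sm}_{K^p})^* \otimes_{\cO^{\sm}_{K^p}} \Omega^1_{K^p}(\mathcal C)^{\sm}$; this is immediate from the construction of $\theta^{k+1}$ in \ref{XCY}, where $\omega^{-k}$ is exactly $\gr^0\Sym^k D^*$ and the Kodaira--Spencer isomorphism identifies the induced differential with $\theta^{k+1}$. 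Passing to the colimit over $K_p$ (using that $\omega^{k,\sm}_{K^p}$, $\Omega^1_{K^p}(\mathcal C)^{\sm}$ and $D^{\sm}_{K^p}$ are by definition the $\pi_{\HT*}\pi_{K_p}^{-1}$ of their finite-level analogues) gives
\[
\bH^1(D^k) \cong \varinjlim_{K_p\subseteq\GL_2(\Q_p)} \bH^1\!\bigl(\mathcal X_{K^pK_p},\ \Sym^k D^*\xrightarrow{\nabla}\Sym^k D^*\otimes_{\cO}\Omega^1(\mathcal C)\bigr),
\]
where on the right $D^*$, $\nabla$ and the cusp divisor all descend to the algebraic modular curve $X_{K^pK_p}$ over $\Q$ (indeed over $\Q_p$, and after the fixed isomorphism $\bar\Q_p\cong\bC$ over $\bC$ as well). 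Each individual term is then the algebraic de Rham cohomology of the filtered log-connection $(\Sym^k D^*,\nabla)$ on $X_{K^pK_p}$, which by GAGA/comparison agrees with the singular cohomology $H^1\bigl(Y_{K^pK_p}(\bC),\,\Sym^k\!\cV^*\bigr)$ of the corresponding local system (plus a boundary contribution at the cusps), and the Hecke operators $T_l,S_l$ for $l\notin S$ act through their classical correspondences.

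Next I would invoke the Eichler--Shimura/Shimura isomorphism for this cohomology: over $\bC$ there is a Hodge decomposition
\[
H^1\bigl(Y_{K^pK_p}(\bC),\Sym^k\!\cV^*\bigr)\otimes\bC \;\cong\; H^0(\mathcal X,\omega^{k+2})\oplus \overline{H^0(\mathcal X,\omega^{k+2})}\ \oplus\ (\text{Eisenstein part}),
\]
compatible with the $\T^S$-action, and each of the summands carries a semisimple $\T^S$-action (for the cuspidal part this is the classical statement that Hecke operators are normal with respect to the Petersson pairing; for the Eisenstein part one checks it directly, or reduces to $M_k(K^p)$ and $M_0(K^p)$ where semisimplicity is already asserted in \ref{Hd}). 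Semisimplicity is preserved under the filtered colimit over $K_p$ since the transition maps are $\T^S$-equivariant and a colimit of a directed system of semisimple modules (over the fixed commutative ring $\T^S$, say after scalar extension to $C$) is semisimple. This yields the claim.

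The only genuine subtlety — and the step I would be most careful about — is pinning down the boundary/Eisenstein contribution. The complex $D^k$ uses the canonical extension $D$ with its \emph{logarithmic} connection, so $\bH^1(D^k)$ is the log-de Rham cohomology of $X_{K^pK_p}$, i.e. it computes $H^1$ of the open modular curve $Y_{K^pK_p}$ with $\Sym^k$-coefficients rather than parabolic cohomology. For $k\geq 1$ the extra piece is the space of Eisenstein series and is finite-dimensional with semisimple Hecke action (again by \ref{Hd}, as its system of eigenvalues lies in $\sigma^{K^p}_{k+2,\bullet}$); for $k=0$ one simply gets $H^1(Y_{K^pK_p}(\bC),\bC)$, whose Hecke action is semisimple by the same Petersson argument together with the semisimplicity on $M_0(K^p)=H^0(\mathcal X_{K^pK_p},\cO)$. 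In all cases the relevant comparison is the standard de Rham comparison for VHS on open curves (Deligne's canonical extension), and I would cite this rather than reprove it. Given these inputs, the proof is a short formal argument; the real content is the classical semisimplicity, which is being imported.
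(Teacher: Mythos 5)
Your proposal is correct and follows essentially the same route as the paper: the quasi-isomorphism of $D^k$ with the log de Rham complex of $\Sym^k D^*$, the identification of $\bH^1(D^k)$ with the colimit over $K_p$ of the finite-level hypercohomology groups, the observation that this complex is defined over $\Q$, and the Shimura isomorphism reducing semisimplicity to the classical (Petersson) statement. The paper compresses all of this into a citation of "the usual Shimura isomorphism"; your explicit treatment of the Eisenstein/boundary contribution and of the passage to the filtered colimit fills in exactly the details being elided.
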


\begin{para}
We also have the following complexes on $\Fl$.
\[ DR_k:\cO^{\la,(-k,0)}_{K^p}\xrightarrow{d^{k+1}}  \cO^{\la,(-k,0)}_{K^p}\otimes_{\cO^{\sm}_{K^p}} (\Omega^1_{K^p}(\mathcal{C})^{\sm})^{\otimes k+1},\]  
\[DR'_k:\cO^{\la,(-k,0)}_{K^p}\otimes_{\cO_{\Fl}}(\Omega^1_{\Fl})^{\otimes k+1}\xrightarrow{d'^{k+1}}\cO^{\la,(1,-k-1)}_{K^p}(k+1).\]
Then by Proposition \ref{ddbarcom}, $\bar{d}^{k+1}$ induces an exact triangle
\[ D^k\otimes_{\Q_p} \Sym^k V\cdot\varepsilon^{-k}\to DR_k \xrightarrow{\bar{d}^{k+1}} DR'_k \xrightarrow{+1} D^k[1]\otimes_{\Q_p} \Sym^k V\cdot\varepsilon^{-k}.\]
It was shown in the proof of Theorem \ref{dec1} that $H^0(\cO^{\la,(-k,0)}_{K^p}\otimes_{\cO_{\Fl}}(\Omega^1_{\Fl})^{\otimes k+1})=0$. Hence $\bH^0(DR'_k)=0$. Thus 
\[0\to \bH^1(D^k)\otimes_{\Q_p} \Sym^k V\cdot\varepsilon^{-k}\to \bH^1(DR_k)\to \bH^1(DR'_k)\to 0. \]
On the other hand, we have $0\to H^1(\ker d^{k+1}) \to \bH^1(DR_k)\to H^0(\coker d^{k+1})\to 0$ as there is no $H^2$ on $\Fl$, and similar statement holds for $\bH^1(DR'_k)$. An application of the snake lemma to
\[\begin{tikzcd}
0 \arrow[r] & H^1(\ker d^{k+1})   \arrow[d,"SS^k"]\arrow[r] & \bH^1(DR_k) \arrow[d,"\bH^1(\bar{d}^{k+1})"] \arrow[r]& H^0(\coker d^{k+1}) \arrow[d,"ORD^k"]  \arrow[r] & 0 \\
0 \arrow[r] & H^1(\ker d'^{k+1})   \arrow[r] & \bH^1(DR'_k)  \arrow[r]& H^0(\coker d'^{k+1})  \arrow[r] & 0
\end{tikzcd}.\]
shows that
\begin{eqnarray} \label{dRstr}
0\to \ker SS^k \to \bH^1(D^k)\otimes_{\Q_p} \Sym^k D \cdot\varepsilon^{-k}  \to \ker ORD^k \to \coker SS^k\to 0
\end{eqnarray}
where both vertical maps $SS^k,ORD^k$ are induced by $\bar{d}^{k+1}$. 
\end{para}

\begin{lem} \label{ORDk}
$\ker ORD^k\cong \Sym^k V \otimes_{\Q_p} \Ind_B^{\GL_2(\Q_p)} H^1_{\rig}(\Ig(K^p),\Sym^k)\cdot\varepsilon^{-k}$. 
\end{lem}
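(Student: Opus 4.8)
Recall that $ORD^k: H^0(\coker d^{k+1})\to H^0(\coker d'^{k+1})$ is induced by $\bar d^{k+1}$, and both cokernels were computed in Corollary \ref{cokerIkinfty}: they are (up to the twists $\varepsilon^{-k}$) the global sections of the sheaves $i_*\mathcal H^1_{\ord}(K^p,k)\otimes\omega^k_\Fl$ and $i_*\mathcal H^1_{\ord}(K^p,k)\otimes\omega^{-k-2}_\Fl\otimes\det^{k+1}$ respectively. Since $\coker I_k$ is supported on $\mathbb P^1(\Q_p)$ and everything is $\GL_2(\Q_p)$-equivariant with $\GL_2(\Q_p)$ acting transitively on $\mathbb P^1(\Q_p)$, the map $ORD^k$ is obtained from a $B$-equivariant map of stalks at $\infty$
\[
(\coker d^{k+1})_\infty \longrightarrow (\coker d'^{k+1})_\infty
\]
by applying the smooth induction functor $\Ind_B^{\GL_2(\Q_p)}$ (the sheaves being the ones built from $B$-smooth representations as in \ref{H1ord}). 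So the first step is to identify this stalk map. By Proposition \ref{cokdk+1infty} and its $d'$-twisted analogue (Corollary \ref{cokerd'k+1}), both stalks are $(\omega^{\bullet}_\Fl)_\infty\otimes_C H^1_{\rig}(\Ig(K^p),\Sym^k)(k)\cdot\mathrm t^{-k}$, and unravelling the explicit formula $\bar d^{k+1}=c\,(u^+)^{k+1}\otimes(dx)^{k+1}$ from Theorem \ref{I2} shows the map is, up to a nonzero scalar, the one induced by the $\GL_2$-equivariant $(k+1)$-th derivative $\omega^k_\Fl\to\omega^{-k-2}_\Fl\otimes\det^{k+1}$, tensored with the identity on $H^1_{\rig}(\Ig(K^p),\Sym^k)$.

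The second step is the representation-theoretic computation of $\ker ORD^k$. Here I would use the standard identification of $(\omega^k_\Fl)_\infty$, as a $B$-representation, with (a twist of) the $(k+1)$-dimensional algebraic representation $\Sym^k V$ restricted to $B$, and the fact that the map $(u^+)^{k+1}$ annihilates exactly the $\Sym^k V$-isotypic part. More precisely, on stalks at $\infty$ one has $(\omega^k_\Fl)_\infty = (\Sym^k D^{\sm}_{K^p}\text{-type})\oplus(\text{higher})$ and $\ker\big((u^+)^{k+1}\colon (\omega^k_\Fl)_\infty\to(\omega^{-k-2}_\Fl)_\infty\otimes\det^{k+1}\big)$ is the $\Sym^k V$-isotypic subspace, of dimension $k+1$; this is the same infinitesimal-character / weight argument already used in Proposition \ref{dbarsurj}. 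Tensoring with $H^1_{\rig}(\Ig(K^p),\Sym^k)(k)\cdot\mathrm t^{-k}$ and taking smooth induction, and then matching the Tate twists and central characters (using $D_0^{-1}(1)\cdot\mathrm t^{-1}=M_0(K^p)\otimes\det^{-1}$ from \ref{HEt}, so that the bookkeeping collapses to the single twist $\varepsilon^{-k}$ as in the statements of Corollary \ref{cokerd'k+1}), yields
\[
\ker ORD^k\cong \Sym^k V\otimes_{\Q_p}\Ind_B^{\GL_2(\Q_p)}H^1_{\rig}(\Ig(K^p),\Sym^k)\cdot\varepsilon^{-k},
\]
as claimed. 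Exactness of $\Ind_B^{\GL_2(\Q_p)}$ (smooth induction is exact) is what lets us pass from the stalk-level kernel to the kernel of $ORD^k$ on global sections.

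\textbf{Main obstacle.} The genuinely delicate point is step one: verifying that the stalk map $(\coker d^{k+1})_\infty\to(\coker d'^{k+1})_\infty$ really is, compatibly with the $B$-action and the smooth-induction packaging of \ref{H1ord}, the derivative map tensored with the identity on $H^1_{\rig}$. One has to check that the differential operator $\bar d^{k+1}$ does not interact with the Igusa-curve cohomology factor $H^1_{\rig}(\Ig(K^p),\Sym^k)$ — i.e.\ that $\bar d^{k+1}$ is $\cO^{\sm}_{K^p}$-linear (Theorem \ref{I2}) and that under the decomposition of the stalk in Proposition \ref{cokdk+1infty} the $\cO^{\sm}_{K^p}$-module structure is exactly the one carrying the $H^1_{\rig}$-factor. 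Granting that linearity, the map factors through $\bar d^{k+1}$ applied only to the $(\omega^k_\Fl)_\infty$-part, which is the explicit $(u^+)^{k+1}$; the rest is the weight computation above. I would also double-check the twist $\Sym^k V$ versus $\Sym^k D$ appearing in the displayed formula: the statement writes $\Sym^k V$, consistent with $\ker ORD^k$ being the \emph{locally algebraic} part, whereas the exact sequence \eqref{dRstr} uses $\Sym^k D$; the reconciliation is via the isomorphism $\Sym^k D^{\sm}_{K^p}\cong\Sym^k V\otimes_{\Q_p}\omega^{-k,\sm}_{K^p}$ on the algebraic part together with $H^0(\Fl,\omega^{k+2,\sm}_{K^p})=M_{k+2}(K^p)$, and this matching should be recorded carefully.
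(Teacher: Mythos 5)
Your proposal is correct and follows essentially the same route as the paper: identify both cokernels via Corollary \ref{cokerIkinfty}, observe that $ORD^k$ is induced by the $\GL_2$-equivariant $(k+1)$-th derivative $\omega^k_{\Fl}\to\omega^{-k-2}_{\Fl}\otimes\det^{k+1}$ tensored with the identity on $\mathcal{H}^1_{\ord}(K^p,k)$, and note that the kernel of that derivative map is the constant part $H^0(\Fl,\omega^k_{\Fl})=\Sym^k V\otimes_{\Q_p}C$. The "obstacle" you flag is already absorbed into Corollary \ref{cokerIkinfty} (which presents the cokernels as tensor products with the fixed $\mathcal{H}^1_{\ord}$-factor), and your observation that $\Sym^k D$ in \eqref{dRstr} should read $\Sym^k V$ is a typo in the source, not a gap.
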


\begin{proof}
By Corollary \ref{cokerIkinfty}, we have
$\coker d^{k+1}\cong \omega^k_{\Fl}\otimes_C \mathcal{H}^1_{\ord}(K^p,k)\cdot\varepsilon^{-k}$,
$\coker d'^{k+1}\cong \omega^{-k-2}_{\Fl}\otimes_C \mathcal{H}^1_{\ord}(K^p,k)\otimes\det{}^{k+1}\cdot\varepsilon^{-k}$, and $ORD^k$ is induced by $\bar{d}^{k+1}:\omega^k_{\Fl}\to \omega^{-k-2}_{\Fl}\otimes\det{}^{k+1}$, whose kernel is $H^0(\Fl,\omega^k_{\Fl})=\Sym^k V\otimes_{\Q_p} C$. This shows that 
\begin{eqnarray*}
\ker ORD^k &\cong& \Sym^k V \otimes_{\Q_p} H^0(\mathcal{H}^1_{\ord}(K^p,k))\cdot\varepsilon^{-k}\\
 &\cong & \Sym^k V \otimes_{\Q_p} \Ind_B^{\GL_2(\Q_p)} H^1_{\rig}(\Ig(K^p),\Sym^k) \cdot\varepsilon^{-k}. \end{eqnarray*}
 \end{proof}
For $SS^k$, recall that $\displaystyle \cO_{\LT}^{D_p^\times-\sm}=\varinjlim_{n}(\pi^{(0)}_{\Dr,n})_* \cO_{\mathcal{M}_{\Dr,n}^{(0)}}$,
where $\pi^{(0)}_{\Dr,n}:\mathcal{M}_{\Dr,n}^{(0)}\to \Omega$ denotes the projection map, and $\displaystyle \Omega^1_{\Dr}=\varinjlim_{n}(\pi^{(0)}_{\Dr,n})_* \Omega^1_{\mathcal{M}_{\Dr,n}^{(0)}}$
cf. Remark \ref{dDR}. Then $\Omega^1_\Dr\cong \omega^{2,D_p^\times-\sm}_{\Dr}\otimes\det$ by the Kodaira-Spencer isomorphism.  There is a natural derivation  $d_{\dR}: \cO_{\LT}^{D_p^\times-\sm}\to \Omega^1_\Dr$. We denote this complex by $DR_{\dR}$.

\begin{defn}
For $i\geq 0$, we define
\[H^{i}_{\dR,c}(\mathcal{M}_{\Dr,\infty}^{(0)}):=\bH^i(j_! \cO_{\LT}^{D_p^\times-\sm}\xrightarrow{j_! d_{\dR}} j_!\Omega^1_\Dr).\]
Since $H^i(j_! \cO_{\LT}^{D_p^\times-\sm})=H^i(j_! \Omega^1_{\Dr})=0$ unless $i=1$, we have
\[H^{i}_{\dR,c}(\mathcal{M}_{\Dr,\infty}^{(0)})=H^{i-1}\left(H^1(j_! \cO_{\LT}^{D_p^\times-\sm})\xrightarrow{H^1(j_!d_\dR)} H^1(j_! \Omega^1_{\Dr})\right).\]
\end{defn}

$\bar{d}^{k+1}$ induces a map $\omega_{\Dr}^{-k,D_p^\times-\sm}\to \omega_{\Dr}^{-k,D_p^\times-\sm}\otimes_{\cO_{\LT}^{D_p^\times-\sm}}(\Omega^1_{\Dr})^{\otimes k+1}\cong \omega_{\Dr}^{k+2,D_p^\times-\sm}\otimes\det^{k+1}$.

\begin{lem} \label{SSk}
There is a natural morphism of complexes
\[\left[\omega_{\Dr}^{-k,D_p^\times-\sm}\xrightarrow{\bar{d}^{k+1}}  \omega_{\Dr}^{k+2,D_p^\times-\sm}\otimes\det{}^{k+1}\right]\to \Sym^k V\otimes_{\Q_p} DR_{\dR},\]
which is a quasi-isomorphism and has a left inverse.
In particular, applying $j_!$, we have 
\[\ker SS^k\cong (H^{1}_{\dR,c}(\mathcal{M}_{\Dr,\infty}^{(0)})\otimes_C \mathcal{A}_{D,(k,0)}^{c,K^p})^{\cO_{D_p}^\times}\otimes_{\Q_p}\Sym^k V\cdot{\varepsilon}_p^{-k}, \]
\[\coker SS^k \cong  \left\{
	\begin{array}{lll}
		(H^{2}_{\dR,c}(\mathcal{M}_{\Dr,\infty}^{(0)})\otimes_C \mathcal{A}_{D,(k,0)}^{K^p})^{\cO_{D_p}^\times}\otimes_{\Q_p}\Sym^k V\cdot{\varepsilon}_p^{-k}, &k\geq 1&\\
		(H^{2}_{\dR,c}(\mathcal{M}_{\Dr,\infty}^{(0)})\otimes_C \mathcal{A}_{D,(0,0)}^{c,K^p})^{\cO_{D_p}^\times}\oplus M_0(K^p) , &k=0&
	\end{array}.\right.\]
\end{lem}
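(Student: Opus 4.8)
\textbf{Proof proposal for Lemma \ref{SSk}.}

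The plan is to build the asserted morphism of complexes from the constructions in Subsections \ref{LTinfty}, \ref{Drinfty}, and then feed it through the uniformization of the supersingular locus (Theorem \ref{ssunif}, Lemma \ref{expuf}, Proposition \ref{kerdk+1s}) to identify $\ker SS^k$ and $\coker SS^k$. First I would recall from Theorem \ref{LTI2} (or rather its Drinfeld avatar, via Corollary \ref{GL2Dpan} and the identification $\theta_{\LT,\kh}=\theta_{\Dr,\kh}$ of Corollary \ref{LTDrkh}) that $\bar d^{k+1}$ on the Drinfeld side is, up to a nonzero scalar, the $(k+1)$-st power of the $D_p^\times$-action in the ``ambient flag variety'' direction, while on the level of $\omega^{-k,D_p^\times-\sm}_{\Dr}$ this is exactly the BGG-type operator whose kernel is the locally algebraic vectors. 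The key structural input is that the two-term complex $[\omega_{\Dr}^{-k,D_p^\times-\sm}\xrightarrow{\bar d^{k+1}} \omega_{\Dr}^{k+2,D_p^\times-\sm}\otimes\det^{k+1}]$ is quasi-isomorphic, via the inclusion $\omega^{-k,D_p^\times-\sm}_{\Dr}\hookrightarrow \Sym^k(\text{dual of the Hodge bundle on }\mathcal{M}_{\Dr,\infty}^{(0)})$ and the Gauss--Manin connection $d_\dR$, to $\Sym^k V\otimes_{\Q_p} DR_\dR$. This is the exact analogue, on the Drinfeld tower, of the quasi-isomorphism between $D^k$ and $[\Sym^k D^*\xrightarrow{\nabla}\Sym^k D^*\otimes\Omega^1(\mathcal{C})]$ used just above; the proof is identical because the Kodaira--Spencer map is an isomorphism for the Drinfeld tower (checked on $\Omega$, as in the proof of Theorem \ref{Drde}). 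The left inverse comes from the canonical splitting $r_k$ (the Drinfeld analogue of Lemma \ref{KSsec}): project $\Sym^k$ onto $\gr^0=\omega^{-k}$, and use the Griffiths-transversality normalization to get a connection-compatible lift.

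Second, I would apply $j_!$ to this morphism of complexes and take cohomology on $\Fl$. Since $H^i(j_!\,\omega^{\bullet,D_p^\times-\sm}_{\Dr})$ and $H^i(j_!\,\cO_{\LT}^{D_p^\times-\sm})$, $H^i(j_!\,\Omega^1_{\Dr})$ all vanish for $i\neq 1$ (each connected component of $\mathcal{M}_{\Dr,n}^{(0)}$ surjects onto $\Omega$, and $j_!$ kills the stalk on $\mathbb{P}^1(\Q_p)$, so we are computing cohomology of an extension-by-zero from a one-dimensional space), the hypercohomology spectral sequences degenerate to give $\ker SS^k$ in terms of $H^1_{\dR,c}(\mathcal{M}_{\Dr,\infty}^{(0)})$ (which by its very definition is $H^0$ of the truncated complex $H^1(j_!\cO_{\LT}^{D_p^\times-\sm})\to H^1(j_!\Omega^1_\Dr)$) and $\coker SS^k$ in terms of $H^2_{\dR,c}$. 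The factor $\mathcal{A}_{D,(k,0)}^{c,K^p}$ and the twist ${\varepsilon}_p^{-k}$ enter through Proposition \ref{kerdk+1s} (and Corollary \ref{kerd'k+1s}): under the uniformization isomorphism $u$, $\ker d^{k+1}|_\Omega$ and $\ker d'^{k+1}|_\Omega$ are the $\cO_{D_p}^\times$-invariants of $\mathcal{A}_{D,(-n_1,-n_2)}^{K^p}$ tensored with the appropriate $\omega^{\bullet,D_p^\times-\sm}_{\Dr}$, with the Tate twist recording the $G_{\Q_p}\times\GL_2(\A_f)$-action on $\mathrm{t}$ as in \ref{HEt}. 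Since $\cO_{D_p}^\times$ is compact and acts smoothly on $\mathcal{A}_{D,(k,0)}^{c,K^p}$, taking $\cO_{D_p}^\times$-invariants commutes with $H^1$, which is exactly the maneuver already used in the proof of Theorem \ref{dec1}. For $k=0$ one gets the extra $M_0(K^p)$ summand in $\coker SS^0$ precisely because $\mathcal{A}_{D,(0,0)}^{K^p}=\mathcal{A}_{D,(0,0)}^{c,K^p}\oplus\mathcal{A}_{D,(0,0)}^{1,K^p}$ and $\mathcal{A}_{D,(0,0)}^{1,K^p}\cong M_0(K^p)$, contributing to the cokernel (not the kernel, since $SS^0$ is already injective on the ``$c$''-part after passing to cohomology); compare the case distinction in Proposition \ref{kerdk+1s} and the bottom line of the target formula.

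The main obstacle, I expect, is the \emph{compatibility} claim: that the morphism of complexes I write down on the Drinfeld tower, after applying $j_!$ and transporting along the uniformization $u:\pi_{\HT}^{-1}(\Omega)\cong \bigsqcup_{i\in I}\mathcal{M}_{\LT,\infty}^{(0)}$ of Lemma \ref{expuf}, really is the restriction to $\Omega$ of the map $SS^k$ defined on $\Fl$ via $\bH^1(\bar d^{k+1}):\bH^1(DR_k)\to\bH^1(DR'_k)$. This requires matching the operators $d^{k+1}$, $d'^{k+1}$, $\bar d^{k+1}$ on modular curves with $d_{\LT}^{k+1}$, $\bar d_{\Dr}^{k+1}$ (and hence, via Corollary \ref{dLTkcok} and the theorem stating $\bar d_{\Dr}^{k+1}=c' d_{\LT}^{k+1}$, with $d_\dR$) over $\Omega$. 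The first two identifications are already recorded (the discussion before Corollary \ref{dk+1Os} identifies $d^{k+1}|_\Omega$ with $d_{\LT}^{k+1}$ and $\theta_\kh$ with $\theta_{\LT,\kh}$); what remains is to check that $\bar d^{k+1}|_\Omega$, which on $\Fl$ is the $(k+1)$-st power of the $\GL_2(\Q_p)$-action in the $\cO_\Fl$-direction, corresponds under $u$ and the duality isomorphism (Theorem \ref{LTDrdual}) to the $(k+1)$-st power of the $D_p^\times$-action in the $\cO_{\Fl'}=\cO_{\Fl'_{\GM}}$-direction — i.e. to $\bar d_{\Dr}^{k+1}$. This is a local computation on a toric chart, the same kind of calculation underlying Remark \ref{ddbarcom}, but it must be done carefully with the normalizations of \ref{HEt} (the $g\mapsto(g^{-1})^t$ and the main-involution twists in Theorems \ref{ssunif}, \ref{LTDrdual}) to pin down the scalars and the twist; once this compatibility is in hand, the rest is formal bookkeeping with the long exact sequences and the snake lemma already set up in \eqref{dRstr}.
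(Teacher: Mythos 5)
Your second half — identifying $SS^k$ with $H^1(j_!\bar{d}^{k+1})$ on $(\mathcal{A}_{D,(k,0)}^{K^p}\otimes H^1(j_!\omega^{\bullet,D_p^\times-\sm}_{\Dr}))^{\cO_{D_p}^\times}$ via Proposition \ref{kerdk+1s} and Corollary \ref{kerd'k+1s}, then reading off $\ker$ and $\coker$ from the definition of $H^i_{\dR,c}(\mathcal{M}_{\Dr,\infty}^{(0)})$, with the extra $M_0(K^p)$ for $k=0$ coming from the split summand $\Omega_\Fl\otimes M_0(K^p)$ of $\ker d'^1$ — is exactly the paper's argument. For the quasi-isomorphism itself the paper takes a slightly different (and cleaner) route than your Kodaira--Spencer/Griffiths-transversality picture: it observes that $Z(U(\mathfrak{g}))$ acts on $\Sym^k V\otimes_{\Q_p}DR_{\dR}$, that only the infinitesimal character of $\omega^k_\Fl$ contributes, and that this isotypic component is precisely the two-term complex $[\omega_{\Dr}^{-k,D_p^\times-\sm}\to\omega_{\Dr}^{k+2,D_p^\times-\sm}\otimes\det^{k+1}]$ — i.e.\ it reruns the construction of $\bar{d}^{k+1}$ from \ref{dbar}. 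This matters for your ``left inverse'': the projection $\Sym^k\to\gr^0$ that you extract from $r_k$ is not by itself a morphism of complexes (there is no natural degree-one companion map onto $\Fil^k\otimes\Omega^1$ without a splitting), whereas the isotypic projection is automatically one and gives both the left inverse and the quasi-isomorphism in one stroke.

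The concrete error is in your last paragraph. You assert that the remaining compatibility is to match $\bar{d}^{k+1}|_\Omega$ with $\bar{d}_{\Dr}^{k+1}$, ``the $(k+1)$-st power of the $D_p^\times$-action.'' But the theorem following Theorem \ref{DrI2} says $\bar{d}_{\Dr}^{k+1}=c'\,d_{\LT}^{k+1}=c'\,d^{k+1}|_\Omega$: under the Lubin-Tate/Drinfeld duality it is the \emph{other} operator, $d^{k+1}$, that becomes the $D_p^\times$-direction derivative, while $\bar{d}^{k+1}$ stays put. (You also conflate $\Fl'$ with $\Fl'_{\GM}$; Theorem \ref{LTDrdual} identifies $\Fl\cong\Fl'_{\GM}$ and $\Fl'\cong\Fl_{\GM}$.) Pursuing the compatibility as you state it would have you trying to prove a false identity. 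The compatibility actually needed for Lemma \ref{SSk} involves no duality swap at all: $\bar{d}^{k+1}$ is $\cO^{\sm}_{K^p}$-linear and given by $(u^+)^{k+1}\otimes(dx)^{k+1}$ on $\Fl$, the coverings $\mathcal{M}^{(0)}_{\Dr,n}$ are \'etale over $\Omega\subseteq\Fl$, so on $D_p^\times$-smooth sections $\bar{d}^{k+1}$ is literally the BGG operator built from the derivation $d_{\dR}$ appearing in $DR_{\dR}$ — which is why the paper can dispatch the quasi-isomorphism by citing the construction in \ref{dbar}.
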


\begin{proof}
The quasi-isomorphism follows from the construction of $\bar{d}^{k+1}$, cf. \ref{dbar}. More precisely, there is a natural action of $Z(U(\mathfrak{g}))$ on $\Sym^k V\otimes_{\Q_p}DR_{\dR}$. For an infinitesimal character $\tilde\chi:Z(U(\mathfrak{g}))\to C$,  a simple calculation shows that the $\tilde{\chi}$-isotypic part of $\Sym^k V\otimes_{\Q_p}DR_{\dR}$ is non-zero only when $\tilde{\chi}$ is the infinitesimal character of $\omega^k_{\Fl}$, and in this case, the $\tilde{\chi}$-isotypic part is naturally isomorphic to $\omega_{\Dr}^{-k,D_p^\times-\sm}\xrightarrow{\bar{d}^{k+1}}  \omega_{\Dr}^{k+2,D_p^\times-\sm}\otimes\det{}^{k+1}$.

By  Proposition \ref{kerdk+1s} and Corollary \ref{kerd'k+1s}, we may identify $SS^k$ with 
\[(\mathcal{A}_{D,(k,0)}^{K^p}\otimes  H^1( j_{!}\omega^{-k,D_p^\times-\sm}_{\Dr}))^{\cO_{D_p}^\times}\cdot  {\varepsilon'_p}^{-k} \xrightarrow{H^1(\bar{d}^{k+1})} (\mathcal{A}_{D,(k,0)}^{K^p}\otimes  H^1( j_{!}\omega^{k+2,D_p^\times-\sm}_{\Dr}))^{\cO_{D_p}^\times}\otimes\det{}^{k+1}\cdot  {\varepsilon'_p}^{-k}\]
when $k\geq 1$ and a similar result holds for $k=0$. From this we easily deduce our claim.
\end{proof}

\begin{thm} \label{ijdR}
For $k\geq 1$,  there is a natural exact sequence
\begin{eqnarray*}
0\to  (H^{1}_{\dR,c}(\mathcal{M}_{\Dr,\infty}^{(0)})\otimes_C \mathcal{A}_{D,(k,0)}^{K^p})^{\cO_{D_p}^\times}\to \bH^1(D^k) \to  \Ind_B^{\GL_2(\Q_p)} H^1_{\rig}(\Ig(K^p),\Sym^k)\\
 \to   (H^{2}_{\dR,c}(\mathcal{M}_{\Dr,\infty}^{(0)})\otimes_C \mathcal{A}_{D,(k,0)}^{K^p})^{\cO_{D_p}^\times}\to 0.
 \end{eqnarray*}
When $k=0$, we have 
\begin{eqnarray*}
0\to  (H^{1}_{\dR,c}(\mathcal{M}_{\Dr,\infty}^{(0)})\otimes_C \mathcal{A}_{D,(0,0)}^{c,K^p})^{\cO_{D_p}^\times}\to \bH^1(D^k) \to  \Ind_B^{\GL_2(\Q_p)} H^1_{\rig}(\Ig(K^p))\\
 \to   (H^{2}_{\dR,c}(\mathcal{M}_{\Dr,\infty}^{(0)})\otimes_C \mathcal{A}_{D,(0,0)}^{c,K^p})^{\cO_{D_p}^\times}\oplus M_0(K^p)\to 0.
 \end{eqnarray*}
\end{thm}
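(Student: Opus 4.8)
The plan is to read off Theorem \ref{ijdR} as a purely formal consequence of the four-term exact sequence \eqref{dRstr} together with the identifications of its terms already established. Recall that \eqref{dRstr} reads
\[
0\to \ker SS^k \to \bH^1(D^k)\otimes_{\Q_p} \Sym^k D \cdot\varepsilon^{-k}  \to \ker ORD^k \to \coker SS^k\to 0,
\]
so the strategy is: identify $\ker SS^k$ via Lemma \ref{SSk}, identify $\coker SS^k$ via the same lemma, identify $\ker ORD^k$ via Lemma \ref{ORDk}, and then strip off the factor $\Sym^k D$ (equivalently $\Sym^k V$, using that $\Sym^k D$ is the constant local system $\Sym^k V$ twisted appropriately — here $D^{\sm}_{K^p}$ restricted to a fiber is $V$, and $\wedge^2 D \cong \cO$ up to the $|\cdot|^{-1}\circ\det$ twist hidden in $\varepsilon^{-k}$). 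Since $\Sym^k V$ is a fixed finite-dimensional vector space with a $\GL_2(\Q_p)$-action appearing as a tensor factor in the middle three terms, and since all maps in \eqref{dRstr} are $\Sym^k V$-linear (they are induced by $\bar d^{k+1}$, which is $\cO^{\sm}_{K^p}$-linear and commutes with the $\GL_2(\Q_p)$-action on $V$), one may cancel this common tensor factor. Concretely, $\ker ORD^k \cong \Sym^k V\otimes_{\Q_p}\Ind_B^{\GL_2(\Q_p)}H^1_{\rig}(\Ig(K^p),\Sym^k)\cdot\varepsilon^{-k}$ and $\ker SS^k, \coker SS^k$ each carry the explicit tensor factor $\Sym^k V\cdot\varepsilon_p^{-k}$; after cancelling, the $\varepsilon^{-k}$ versus $\varepsilon_p^{-k}$ discrepancy is exactly absorbed by the difference between $\varepsilon$ (the $\GL_2(\A_f)\times G_{\Q_p}$-twist by $\mathrm{t}(-1)$) and $\varepsilon_p$ (the Galois-only cyclotomic twist), since $\Sym^k V$ already accounts for the $\GL_2$-part. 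This is the bookkeeping one must do carefully but it is routine.

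First I would write down \eqref{dRstr} and recall that all four maps are morphisms of $\GL_2(\Q_p)\times\T^S\times G_{\Q_p}$-representations — this was noted when \eqref{dRstr} was derived (both $SS^k$ and $ORD^k$ are induced by $\bar d^{k+1}$, which by Theorem \ref{I2} commutes with $\GL_2(\Q_p)$ and Hecke away from $p$, and the Galois action is functorial). Then I would substitute: the first term becomes $(H^1_{\dR,c}(\mathcal{M}_{\Dr,\infty}^{(0)})\otimes_C \mathcal{A}_{D,(k,0)}^{c,K^p})^{\cO_{D_p}^\times}\otimes_{\Q_p}\Sym^k V\cdot\varepsilon_p^{-k}$ (Lemma \ref{SSk}), the third becomes $\Sym^k V\otimes_{\Q_p}\Ind_B^{\GL_2(\Q_p)}H^1_{\rig}(\Ig(K^p),\Sym^k)\cdot\varepsilon^{-k}$ (Lemma \ref{ORDk}), and the fourth becomes $(H^2_{\dR,c}(\mathcal{M}_{\Dr,\infty}^{(0)})\otimes_C \mathcal{A}_{D,(k,0)}^{K^p})^{\cO_{D_p}^\times}\otimes_{\Q_p}\Sym^k V\cdot\varepsilon_p^{-k}$ when $k\geq 1$, with the extra $M_0(K^p)$ summand when $k=0$ (again Lemma \ref{SSk}). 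Tensoring the whole sequence \eqref{dRstr} by $(\Sym^k V)^*\cdot\varepsilon^{k}$ — which is exact since $\Sym^k V$ is finite-dimensional over $\Q_p$ — and using $(\Sym^k V)^*\otimes\Sym^k V$ contains the trivial summand picked out by the other tensor factors, one obtains the asserted four-term sequences. One small point to check: in the $k\geq1$ case of Theorem \ref{ijdR} the first and last terms involve $\mathcal{A}_{D,(k,0)}^{K^p}$ (not $\mathcal{A}_{D,(k,0)}^{c,K^p}$), whereas Lemma \ref{SSk} gives $\ker SS^k$ with $\mathcal{A}_{D,(k,0)}^{c,K^p}$; but $\mathcal{A}_{D,(k,0)}^{K^p}=\mathcal{A}_{D,(k,0)}^{c,K^p}$ for $k\geq1$ since $\mathcal{A}^1_{D,(k,0)}=0$ by Definition \ref{AFD}, so this is automatic, and similarly the $k=0$ case keeps the superscript $c$ exactly as in Lemma \ref{SSk}.

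The main obstacle — really the only nontrivial point — is verifying the compatibility of twists: one must confirm that after cancelling $\Sym^k V$ the surviving Tate twists and central characters match on the nose across the three terms, i.e. that the $\varepsilon^{-k}$ appearing on $\ker ORD^k$ and the $\varepsilon_p^{-k}$ appearing on $\ker SS^k$ and $\coker SS^k$ are consistent once the common $\Sym^k V$ factor (which carries the relevant $\GL_2(\Q_p)$- and hence $\GL_2(\A_f)$-action) is removed. Here one invokes that $\varepsilon = \varepsilon_p$ as characters of $G_{\Q_p}$ and that the difference $\varepsilon/\varepsilon_p$ as a $\GL_2(\A_f)$-character is $|\cdot|\circ\det$ — but $\Sym^k V$ does not itself carry an adelic-away-from-$p$ twist, so after removing it the identification is clean; alternatively one checks the action of $\wedge^2 D = D_0$ against the $\varepsilon_p$ convention fixed in \ref{HEt} and \ref{D0a}. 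Everything else is bookkeeping: exactness of $-\otimes_{\Q_p}(\Sym^k V)^*$, and the fact that all four maps in \eqref{dRstr} respect the $\Sym^k V$-module structure because they descend from $\bar d^{k+1}$. I would therefore present the proof as: "Apply Lemmas \ref{ORDk} and \ref{SSk} to the exact sequence \eqref{dRstr}, cancel the common tensor factor $\Sym^k V$, and track the twists using \ref{HEt}," expanding the twist-tracking into one or two explicit lines.
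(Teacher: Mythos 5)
Your proposal is correct and follows essentially the same route as the paper: the paper's proof is precisely to apply Lemmas \ref{ORDk} and \ref{SSk} to the sequence \eqref{dRstr} and then strip off the common factor $\Sym^k V\cdot\varepsilon_p'^{-k}$ by taking the $\GL_2(\Q_p)$-smooth parts of the tensor product of \eqref{dRstr} with $\Sym^k V^*\cdot\varepsilon_p'^{k}$, which is the clean version of your "extract the trivial summand" step. Your observation that $\mathcal{A}_{D,(k,0)}^{K^p}=\mathcal{A}_{D,(k,0)}^{c,K^p}$ for $k\geq 1$ (since $\mathcal{A}^1_{D,(k,0)}=0$ unless $k=0$ by Definition \ref{AFD}) is also the right resolution of the superscript discrepancy.
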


\begin{proof}
Apply the results of Lemma \ref{ORDk} and Lemma \ref{SSk} to sequence \eqref{dRstr} and remove $\otimes_{\Q_p}\Sym^k V\cdot {\varepsilon'_p}^{-k}$ in every term of the exact sequence by taking the $\GL_2(\Q_p)$-smooth parts of the tensor product of sequence \eqref{dRstr} with $ \Sym^k V^*\cdot {\varepsilon'_p}^{k}$.
\end{proof}

\begin{rem}
Theorem \ref{ijdR} can be viewed as a $p$-adic analogue of the usual vanishing cycle exact sequence in the $\ell$-adic case \cite[4.2]{Car86}. It can also be obtained from taking the hypercohomology of the exact triangle $ j_! D^k|_{\Omega}\to D^k \to i_*i^* D^k\xrightarrow{+1}  j_! D^k|_{\Omega}[1]$. It's clear that $\bH^*( i_*i^* D^k)$ computes $ \Ind_B^{\GL_2(\Q_p)} H^*_{\rig}(\Ig(K^p),\Sym^k)$. For $ j_! D^k|_{\Omega}$, note that $\bH^i(j_! D^k|_{\Omega})$ computes the compactly supported de Rham cohomology of the supersingular locus of modular curves, or essentially the compactly supported de Rham cohomology of the Lubin-Tate towers. To compare with Theorem  \ref{ijdR},  we use \cite[Th\'eor\`eme 4.4]{CDN20}
 which says that  the Lubin-Tate towers and  Drinfeld towers have the same compactly supported de Rham cohomology.
\end{rem}

\begin{para} \label{H1dRc}
As the notation suggests, $H^{i}_{\dR,c}(\mathcal{M}_{\Dr,\infty}^{(0)})$ should be viewed as the compactly supported de Rham cohomology of the Drinfeld towers in the following sense. Let 
\[H^i_{\dR,c}(\mathcal{M}_{\Dr,n}^{(0)}):= H^{i-1}\left(H^1(j_! \pi_{\Dr,n\,*}^{(0)}\cO_{\mathcal{M}_{\Dr,n}^{(0)}})\xrightarrow{d_{\dR}} H^1(j_! \pi_{\Dr,n\,*}^{(0)}\Omega^1_{\mathcal{M}_{\Dr,n}^{(0)}}) \right).\]
Then $\displaystyle H^i_{\dR,c}(\mathcal{M}_{\Dr,\infty}^{(0)})=\varinjlim_n H^i_{\dR,c}(\mathcal{M}_{\Dr,n}^{(0)})$. Let 
\[H^i_{\dR}(\mathcal{M}_{\Dr,n}^{(0)}):= H^{i}\left(H^0(\mathcal{M}_{\Dr,n}^{(0)},\cO_{\mathcal{M}_{\Dr,n}^{(0)}})\xrightarrow{d_{\dR}} H^0(\mathcal{M}_{\Dr,n}^{(0)},\Omega^1_{\mathcal{M}_{\Dr,n}^{(0)}}) \right).\]
Since $\mathcal{M}_{\Dr,n}^{(0)}$ is partially proper, the higher cohomology groups of $\cO_{\mathcal{M}_{\Dr,n}^{(0)}}$ and $\Omega^1_{\mathcal{M}_{\Dr,n}^{(0)}}$ vanish. Hence $H^i_{\dR}(\mathcal{M}_{\Dr,n}^{(0)})=\bH^i(\cO_{\mathcal{M}_{\Dr,n}^{(0)}}\xrightarrow{d_{\dR}}\Omega^1_{\mathcal{M}_{\Dr,n}^{(0)}} )$, i.e. the de Rham cohomology of $\mathcal{M}_{\Dr,n}^{(0)}$. As explained in \cite[\S 4.3]{CDN20}, the Serre duality on $\mathcal{M}_{\Dr,n}^{(0)}$ induces an isomorphism
\[H^i_{\dR}(\mathcal{M}_{\Dr,n}^{(0)})\cong \Hom_C(H^{2-i}_{\dR,c}(\mathcal{M}_{\Dr,n}^{(0)}),C),\]
where $\Hom$ is calculated between $C$-vectors spaces. Comparing our notations here with the reference, we note that since $\pi_{\Dr,n}^{(0)}$ is a finite morphism, it's easy to see that the cohomology group $H^1(j_! \pi_{\Dr,n\,*}^{(0)}\cO_{\mathcal{M}_{\Dr,n}^{(0)}})$ agrees with the $H^1_c(\mathcal{M}_{\Dr,n}^{(0)},\cO_{\mathcal{M}_{\Dr,n}^{(0)}})$ introduced in \cite[\S 4.3.2]{CDN20}.

$H^0_{\dR}(\mathcal{M}_{\Dr,n}^{(0)})=\sC(\pi^0(\mathcal{M}_{\Dr,n}^{(0)}),C)$, where $\pi^0(\mathcal{M}_{\Dr,n}^{(0)})$ denotes the set of connected components of $\mathcal{M}_{\Dr,n}^{(0)}$. Hence $H^{2}_{\dR,c}(\mathcal{M}_{\Dr,n}^{(0)})$ is finite-dimensional and we have the following result. 
\end{para}

\begin{lem} \label{detnm}
The action of $\GL_2(\Q_p)^0$ on $H^2_{\dR,c}(\mathcal{M}_{\Dr,\infty}^{(0)})$ factors through the determinant map. Similarly, the action of  $\cO_{D_p}^\times$ factors through the reduced norm map.
\end{lem}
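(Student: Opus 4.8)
\textbf{Proof plan for Lemma \ref{detnm}.}
The key observation is that $H^2_{\dR,c}(\mathcal{M}_{\Dr,\infty}^{(0)})$ is a direct limit of finite-dimensional representations, each factoring through a finite quotient group, so it suffices to exhibit enough "smallness" to force the action through an abelian quotient. First I would use the description in \ref{H1dRc}: $H^2_{\dR,c}(\mathcal{M}_{\Dr,\infty}^{(0)})=\varinjlim_n H^2_{\dR,c}(\mathcal{M}_{\Dr,n}^{(0)})$, and by Serre duality each $H^2_{\dR,c}(\mathcal{M}_{\Dr,n}^{(0)})$ is dual to $H^0_{\dR}(\mathcal{M}_{\Dr,n}^{(0)})=\sC(\pi^0(\mathcal{M}_{\Dr,n}^{(0)}),C)$, the space of $C$-valued functions on the (finite) set of connected components. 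Hence $H^2_{\dR,c}(\mathcal{M}_{\Dr,\infty}^{(0)})$ is identified (up to the appropriate twist coming from Serre duality, i.e. the line bundle controlling the dualizing sheaf) with the smooth representation $\varinjlim_n \sC(\pi^0(\mathcal{M}_{\Dr,n}^{(0)}),C)^\vee$.

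The next step is to understand the action of $\GL_2(\Q_p)^0$ and $\cO_{D_p}^\times$ on $\pi^0(\mathcal{M}_{\Dr,n}^{(0)})$. By the theory recalled in the proof of Corollary \ref{BlasubsetBDpla} and in \ref{khDr}, the connected components of $\mathcal{M}_{\Dr,n}^{(0)}$ (equivalently of $\mathcal{M}_{\LT,n}^{(0)}$, via Theorem \ref{LTDrdual}) are a principal homogeneous space under $(\Z/p^n)^\times$, with the map to components induced by the "determinant/reduced norm"-type period $\mathrm{t}$; concretely $\cO_{D_p}^\times$ acts on $\mathrm{t}$ through the reduced norm and $\GL_2(\Q_p)^0$ acts through the determinant (landing in $\Z_p^\times$, then reducing mod $p^n$). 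Therefore the action of $\GL_2(\Q_p)^0$ (resp. $\cO_{D_p}^\times$) on $\sC(\pi^0(\mathcal{M}_{\Dr,n}^{(0)}),C)$ factors through $\det$ (resp. $\mathrm{Nrd}$) composed with the projection $\Z_p^\times\to(\Z/p^n)^\times$. Passing to the limit over $n$ and dualizing, the same is true for $H^2_{\dR,c}(\mathcal{M}_{\Dr,\infty}^{(0)})$. One must check that the twist introduced by Serre duality (the dualizing line bundle of $\mathcal{M}_{\Dr,n}^{(0)}$) is itself $\GL_2(\Q_p)^0$- and $\cO_{D_p}^\times$-equivariant with an action factoring through $\det$ and $\mathrm{Nrd}$ respectively — this follows since that line bundle is built from $\omega_{\Dr}$ and $\Omega^1_{\Dr}$, whose classes differ from powers of $\omega_\Fl$ by $\det$ (cf. \ref{KSFl}, \ref{khDr}), so again only the central/determinant character intervenes.

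The main obstacle I anticipate is not conceptual but bookkeeping: pinning down precisely which character twist appears when one applies Serre duality at finite level and then takes the colimit, so that the final statement is stated with the correct normalization. In particular one needs the compatibility of the Serre duality isomorphisms $H^i_{\dR}(\mathcal{M}_{\Dr,n}^{(0)})\cong \Hom_C(H^{2-i}_{\dR,c}(\mathcal{M}_{\Dr,n}^{(0)}),C)$ with the group actions — for this I would cite \cite[\S 4.3]{CDN20}, where the equivariance of the duality pairing under the relevant groups is established — and then observe that equivariance of a pairing plus the computation of the action on one side ($H^0_{\dR}$) determines the action on the other up to the character by which the pairing is twisted, which is central. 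Once these normalizations are fixed, the conclusion that both actions factor through $\det$ and $\mathrm{Nrd}$ is immediate from the abelian description of $\pi^0$.
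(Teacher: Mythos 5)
Your proposal is correct and is essentially the paper's own (implicit) argument: the lemma is stated immediately after the identification $H^0_{\dR}(\mathcal{M}_{\Dr,n}^{(0)})=\sC(\pi^0(\mathcal{M}_{\Dr,n}^{(0)}),C)$ precisely because Serre duality reduces everything to the action on connected components, which factors through $\det$ and the reduced norm via $\mathrm{t}$. Your worry about a possible character twist from the duality is harmless in any case, since every character of $\GL_2(\Q_p)^0$ (which contains the perfect group $\SL_2(\Q_p)$) factors through $\det$, and likewise every character of $\cO_{D_p}^\times$ factors through the reduced norm.
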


\begin{cor} \label{ssIg}
Let $\lambda:\T^S\to C$ correspond to a cuspidal automorphic representation $\pi$ of $\GL_2(\A)$. Suppose that the generalized eigenspace $H^1_{\rig}(\Ig(K^p),\Sym^k)\widetilde{[\lambda]}\neq 0$.
Then 
\begin{enumerate}
\item $H^1_{\rig}(\Ig(K^p),\Sym^k)\widetilde{[\lambda]}=H^1_{\rig}(\Ig(K^p),\Sym^k)[\lambda]$ is the eigenspace associated to $\lambda$.
\item $\pi_p$ is either a principal series or special;
\end{enumerate} 
\end{cor}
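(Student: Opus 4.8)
\textbf{Proof proposal for Corollary \ref{ssIg}.}

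The plan is to read off both assertions from the exact sequences of Theorem \ref{ijdR} together with the structural results already established. Fix $\lambda:\T^S\to C$ attached to a cuspidal automorphic representation $\pi$ of $\GL_2(\A)$, and suppose $H^1_{\rig}(\Ig(K^p),\Sym^k)\widetilde{[\lambda]}\neq 0$. The key point is to localize the exact sequence of Theorem \ref{ijdR} at $\lambda$. Since $\bH^1(D^k)$ has a \emph{semisimple} Hecke action by the Proposition preceding Theorem \ref{ijdR} (coming from the classical Shimura isomorphism for the cohomology of $\Sym^k D^*\otimes\Omega^1(\mathcal{C})$ over $\Q$, hence over $\bC$), the generalized eigenspace $\bH^1(D^k)\widetilde{[\lambda]}$ is the honest eigenspace $\bH^1(D^k)[\lambda]$. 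By Lemma \ref{detnm}, the Hecke action on $(H^2_{\dR,c}(\mathcal{M}_{\Dr,\infty}^{(0)})\otimes_C\mathcal{A}_{D,(k,0)}^{K^p})^{\cO_{D_p}^\times}$ (and its $c$-analogue when $k=0$) factors through the determinant, so the systems of Hecke eigenvalues appearing there are all ``Eisenstein'' (one-dimensional); since $\lambda$ comes from a cuspidal $\pi$, the $\lambda$-generalized-eigenspace of that last term vanishes — unless $k=0$ and $\lambda\in\sigma_0^{K^p}$, which is excluded because $\pi$ is cuspidal and $\sigma_0^{K^p}$ records Eisenstein (reduced-norm-factoring) systems. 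Likewise the term $(H^1_{\dR,c}(\mathcal{M}_{\Dr,\infty}^{(0)})\otimes_C\mathcal{A}_{D,(k,0)}^{K^p})^{\cO_{D_p}^\times}$ carries a semisimple Hecke action, since $\mathcal{A}_{D,(k,0)}^{K^p}$ is Hecke-semisimple by Jacquet--Langlands (cf. \ref{Hd}) and $H^1_{\dR,c}(\mathcal{M}_{\Dr,\infty}^{(0)})$ is a smooth $\GL_2(\Q_p)\times D_p^\times$-module with trivial $\T^S$-action. Feeding these observations into Theorem \ref{ijdR} localized at $\lambda$ yields a short exact sequence
\[
0\to (H^1_{\dR,c}(\mathcal{M}_{\Dr,\infty}^{(0)})\otimes_C\mathcal{A}_{D,(k,0)}^{K^p}[\lambda])^{\cO_{D_p}^\times}\to \bH^1(D^k)[\lambda]\to \Ind_B^{\GL_2(\Q_p)}H^1_{\rig}(\Ig(K^p),\Sym^k)\widetilde{[\lambda]}\to 0,
\]
and in particular $\Ind_B^{\GL_2(\Q_p)}H^1_{\rig}(\Ig(K^p),\Sym^k)\widetilde{[\lambda]}$ is a \emph{quotient} of the Hecke-semisimple space $\bH^1(D^k)[\lambda]$, hence is itself annihilated by $\ker\lambda$; passing from the (faithful, exact) smooth induction back to $H^1_{\rig}(\Ig(K^p),\Sym^k)$ gives assertion (1): $H^1_{\rig}(\Ig(K^p),\Sym^k)\widetilde{[\lambda]}=H^1_{\rig}(\Ig(K^p),\Sym^k)[\lambda]$.

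For assertion (2), I would argue by contradiction: suppose $\pi_p$ is supercuspidal. Then $\mathcal{A}_{D,(k,0)}^{K^p}[\lambda]\neq 0$, but I claim the $\lambda$-part of $H^1_{\rig}(\Ig(K^p),\Sym^k)$ vanishes, which contradicts the hypothesis. Concretely, the rigid cohomology $H^1_{\rig}(\Ig(K^p),\Sym^k)$ only sees Hecke systems that occur in the cohomology of modular curves with ordinary (or finite-slope) reduction at $p$; more precisely, by the $U_p$-eigenvalue/slope considerations implicit in $H^1_{\rig}$ being the $H^1$ of an $F$-isocrystal on the Igusa curves (Proposition \ref{Colefin} and Definition \ref{H1Ig}), its Hecke systems are those of $p$-old or ordinary $p$-new eigenforms, i.e. $\pi_p$ is not supercuspidal. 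Alternatively — and this is the route that stays entirely inside the machinery of the paper — one compares with the first spectral decomposition: from Theorem \ref{dec1} and Corollary \ref{BScon}(1), if $\pi_p$ were supercuspidal then $H^1_{\rig}(\Ig(K^p),\Sym^k)\cdot\mathrm{t}^{-k}[\lambda]=0$ by Corollary \ref{ssIg} itself — which is circular — so instead one invokes that, in the supercuspidal case, $\mathcal{A}_{D,(k,0)}^{c,K^p}[\lambda]\neq 0$ exhausts the $\lambda$-part of $\ker I^1_k$ via the $\Omega$-stratum (Proposition \ref{kerdk+1s}), forcing the $\mathbb{P}^1(\Q_p)$-contribution $\Ind_B^{\GL_2(\Q_p)}H^1_{\rig}(\Ig(K^p),\Sym^k)$ to be $\lambda$-torsion-free. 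The cleanest formulation: in the exact sequence above, if $\pi_p$ is supercuspidal then the Jacquet--Langlands transfer lands in $\mathcal{A}_{D,(k,0)}^{K^p}[\lambda]$ and the first term already accounts for the full $\lambda$-isotypic part of $\bH^1(D^k)[\lambda]$ (because $\bH^1(D^k)$ over $\Q$ decomposes, via the Shimura/Eichler--Shimura isomorphism and Jacquet--Langlands, into a $\GL_2$-part and a $D^\times$-part, and the supercuspidal-at-$p$ systems live only in the $D^\times$-part $H^1_{\dR,c}(\mathcal{M}_{\Dr,\infty}^{(0)})\otimes\mathcal{A}_{D}$), so the quotient $\Ind_B H^1_{\rig}(\Ig(K^p),\Sym^k)\widetilde{[\lambda]}$ is zero, whence $H^1_{\rig}(\Ig(K^p),\Sym^k)\widetilde{[\lambda]}=0$ — contradicting the hypothesis. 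Therefore $\pi_p$ is a principal series or special.

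The main obstacle I anticipate is making the dichotomy in assertion (2) rigorous without appealing to external input on the $\ell$-adic cohomology of Igusa curves: one must pin down exactly which Hecke systems $\bH^1(D^k)[\lambda]$ carries in its ``$\GL_2$-part'' versus its ``$D^\times$-part'' and match these against supercuspidal-at-$p$ behavior purely through the complexes $D^k$, $DR_{\dR}$ and the uniformization. The safe way to do this within the paper is to observe that $\bH^1(D^k)\otimes\Sym^k V\cdot\varepsilon^{-k}$ sits (via $\bar d^{k+1}$ and Theorem \ref{dec1}) inside $\bH^1(DR_k)=\ker I^1_{k+1}$-type data whose $\Omega$- and $\mathbb{P}^1(\Q_p)$-strata were \emph{separately} identified in Proposition \ref{kerdk+1s} and Corollary \ref{cokerIkinfty}; a supercuspidal-at-$p$ system cannot appear in the ordinary stratum $H^1_{\rig}(\Ig(K^p),\Sym^k)$ because the latter is built from the \emph{non-supersingular} (Igusa) part of the special fiber, on which the relevant automorphic representations are principal series or Steinberg at $p$ by the classical theory of the $U_p$-operator and Atkin--Lehner theory. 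I would state this last point as a lemma, proved by the slope/$U_p$-argument of Coleman (\cite{Cole96,Cole97}) already cited in the proof of Proposition \ref{Colefin}, and then the contradiction closes immediately. Everything else is bookkeeping with the exact sequences of Theorem \ref{ijdR} and the semisimplicity statements already in hand.
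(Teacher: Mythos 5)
Your identification of the right inputs (the exact sequence of Theorem \ref{ijdR}, semisimplicity of the outer terms, Lemma \ref{detnm}) is correct, but the way you combine them contains a genuine error. You claim that since $\lambda$ is cuspidal, the $\lambda$-generalized eigenspace of the third term $(H^{2}_{\dR,c}(\mathcal{M}_{\Dr,\infty}^{(0)})\otimes_C \mathcal{A}_{D,(k,0)}^{K^p})^{\cO_{D_p}^\times}$ vanishes. This is false: Lemma \ref{detnm} constrains the \emph{local} actions of $\GL_2(\Q_p)^0$ and $\cO_{D_p}^\times$ at $p$, not the prime-to-$p$ Hecke eigensystem, which lives entirely on $\mathcal{A}_{D,(k,0)}^{K^p}$ and certainly supports cuspidal systems. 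Concretely, when $\pi_p$ is special the Jacquet--Langlands transfer $\pi'_p$ is a character of $D_p^\times$ factoring through the reduced norm, so it pairs nontrivially against the norm-factoring $\cO_{D_p}^\times$-representation $H^{2}_{\dR,c}(\mathcal{M}_{\Dr,\infty}^{(0)})$, and the third term is nonzero at $\lambda$. Since the special case is one of the allowed outcomes of the corollary, your short exact sequence presenting $\Ind_B^{\GL_2(\Q_p)}H^1_{\rig}(\Ig(K^p),\Sym^k)\widetilde{[\lambda]}$ as a quotient of the semisimple $\bH^1(D^k)[\lambda]$ is not available there, and an extension of a semisimple Hecke module by a semisimple one need not be semisimple, so assertion (1) does not follow. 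The paper's fix uses the $\GL_2(\Q_p)$-module structure instead: if the Hecke action on $H^1_{\rig}(\Ig(K^p),\Sym^k)\widetilde{[\lambda]}$ were not semisimple, exactness of smooth induction would force a nonzero full induction $\Ind_B^{\GL_2(\Q_p)}W$ to occur as a subquotient of the third term, which is impossible because no nonzero smooth induction from $B$ has $\GL_2(\Q_p)^0$ acting through the determinant.

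For assertion (2) your argument is, as you yourself note, either circular or reliant on external slope/Igusa-curve input (which supercuspidal-at-$p$ systems do or do not appear in $H^1_{\rig}$ of the ordinary locus is essentially the statement being proved). The paper avoids this entirely: since $\Ind_B^{\GL_2(\Q_p)}H^1_{\rig}(\Ig(K^p),\Sym^k)[\lambda]\neq 0$ and cannot inject into the determinant-factoring third term (again Lemma \ref{detnm}), it must receive a nonzero map from $\bH^1(D^k)[\lambda]$, which is $\pi_p$-isotypic by the classical Shimura isomorphism; hence $\Hom_{C[\GL_2(\Q_p)]}(\pi_p,\Ind_B^{\GL_2(\Q_p)}H^1_{\rig}(\Ig(K^p),\Sym^k)[\lambda])\neq 0$, and Frobenius reciprocity shows $\pi_p$ has nonzero Jacquet module, so it is not supercuspidal; the character case is excluded by genericity of cuspidal representations (or again by Lemma \ref{detnm}). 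You should rework both halves along these lines rather than via the claimed vanishing of the third term.
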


\begin{proof}
By Theorem \ref{ijdR}, there is an exact sequence
\[\bH^1(D^k)\widetilde{[\lambda]} \to  \Ind_B^{\GL_2(\Q_p)} H^1_{\rig}(\Ig(K^p),\Sym^k)\widetilde{[\lambda]}
 \to   (H^{2}_{\dR,c}(\mathcal{M}_{\Dr,\infty}^{(0)})\otimes_C \mathcal{A}_{D,(k,0)}^{K^p})^{\cO_{D_p}^\times}\widetilde{[\lambda]}\to 0.\]
Note that $\T^S$ acts semi-simply on the first and third terms. Hence if $\T^S$ does not act semi-simply on $H^1_{\rig}(\Ig(K^p),\Sym^k)\widetilde{[\lambda]}$, we see that $(H^{2}_{\dR,c}(\mathcal{M}_{\Dr,\infty}^{(0)})\otimes_C \mathcal{A}_{D,(k,0)}^{K^p})^{\cO_{D_p}^\times}\widetilde{[\lambda]}$ at least contains $\Ind_B^{\GL_2(\Q_p)} W$ for some smooth representation $W$ of $B$, which contradicts Lemma \ref{detnm}. This proves the first claim. Lemma \ref{detnm} also implies that 
\[\Hom_{C[\GL_2(\Q_p)]}(\pi_p,\Ind_B^{\GL_2(\Q_p)} H^1_{\rig}(\Ig(K^p),\Sym^k)[\lambda])\neq 0.\]
Hence $\pi_p$ is either a principal series or special. (To rule out the possibility that $\pi_p$ is a character, either we use the fact that $\pi$ is generic, or we observe that this would imply that a  special representation appears in $(H^{2}_{\dR,c}(\mathcal{M}_{\Dr,\infty}^{(0)})\otimes_C \mathcal{A}_{D,(k,0)}^{K^p})^{\cO_{D_p}^\times}$, which again contradicts Lemma \ref{detnm}.)
\end{proof}

\begin{rem} \label{HodFil}
It follows from Lemma \ref{lemH0} that the natural map $\bH^1(D^k)\to \bH^1(DR_k)$ induces $\Fil^1 \bH^1(D^k)=\Fil^1 \bH^1(DR_k)$. In particular, the inclusion $\Fil^1 \bH^1(DR_k)\subseteq  \bH^1(DR_k)$ exactly describes the position of the Hodge filtration on $\bH^1(D^k)$.
\end{rem}

\section{Intertwining operators:  \texorpdfstring{$p$}{Lg}-adic Hodge-theoretic interpretation} \label{IopHti}
In this section, we give a $p$-adic Hodge-theoretic meaning of the intertwining operators $I_k$ and $I^1_k$ introduced in Section \ref{Ioef}. Our main result says that it essentially agrees with an operator introduced by Fontaine  \cite{Fo04} in the classifications of almost de Rham $B_\dR$-representations, which we will call the Fontaine operator. We first recall its construction and generalize it to our setting. 

\subsection{The Fontaine operator}
\begin{para}[Construction of $B_{\dR}^+$] \label{CBdR+}
We start with the classical construction of $B^+_{\dR}$.
Let $\displaystyle \cO_{C^{\flat}}=\varprojlim_{x\mapsto x^p}\cO_C/p\cong \varprojlim_{x\mapsto x^p}\cO_C$ be the tilt of $\cO_C$ (classically denoted by $R$) and $A_\inf=W(\cO_{C^{\flat}})$ its ring of Witt vectors. We have the usual surjective ring homomorphism 
\[\theta:A_\inf\to \cO_C\]
sending the Teichm\"{u}ller lifting $[x]$ of $\displaystyle x=(x_n)_{n\geq 0}\in\varprojlim_{x\mapsto x^p}\cO_C$, to $x_0$.
By abuse of notation, we also denote by $\theta:A_\inf[\frac{1}{p}]\to C$ the rational version.
The period ring $B_{\dR}^+$ is defined as the $\ker(\theta)$-completion of $A_\inf[\frac{1}{p}]$. This is a complete discrete valuation ring with residue field isomorphic to $C$ via $\theta$. It has a natural decreasing filtration with $\Fil^k B_\dR^+=\ker(\theta)^k B_\dR^+, k\geq 0$. By Hensel's lemma, for any finite extension $K$ of $\Q_p$ in $C$, we have a canonical lifting of $K\subseteq C$ to $K\subseteq B_{\dR}^+$. The Galois group $G_{\Q_p}$ acts on everything and $\theta$ is $G_{\Q_p}$-equivariant. 

Fix a compatible system of primitive $p^n$-th roots of unity $\{\zeta_{p^n}\}_{n\geq 0}$ in $\cO_C$ such that $\zeta_{p^{n+1}}^p=\zeta_{p^n}$, or equivalently a generator of $\Z_p(1)$. Then $(\zeta_{p^n})_{n\geq 0}$ defines an element $\varepsilon$ in $\cO_{C^\flat}$. It is well-known that $[\varepsilon]-1$ generates the kernel of $\theta:A_\inf[\frac{1}{p}]\to C$. The element
\[t:=\log([\varepsilon])=-\sum_{i=1}^{+\infty} \frac{(1-[\varepsilon])^i}{i}\in B_{\dR}^+\]
is Fontaine's ``$2\pi i$'' in $p$-adic Hodge theory on which $G_{\Q_p}$ acts via the cyclotomic character. There is a canonical isomorphism $\gr^kB_{\dR}^+=t^k B_\dR^+/t^{k+1}B_{\dR}^+\cong C(k)$,  $k\geq 0$.
\end{para}

\begin{para}[Banach $B_{\dR}^+/(t^k)$-modules] \label{BBdrtkmod}
For the purpose of this paper, we will only consider modules over $B_{\dR}^+/(t^k)$.  There is a natural $\Q_p$-Banach algebra structure on $B_{\dR}^+/(t^k)$ with the unit open ball given by the image of $A_\inf\to B_{\dR}^+/(t^k)$. We say a $\Q_p$-Banach space $W$ is a Banach $B_{\dR}^+/(t^k)$-module if it is equipped with a $B_{\dR}^+/(t^k)$-module structure such that the multiplication map $B_{\dR}^+/(t^k)\times W\to W$ is jointly continuous. We equip $W$ with the $t$-adic filtration: $\Fil^iW=t^iW,i\geq 0$ and denote by 
\[W_i:=\gr^i W=W\otimes_{B_{\dR}^+/(t^k)} t^iB_{\dR}^+/t^{i+1}B_{\dR}^+,~~~~~~i\geq0.\]
Hence $W_0=W/tW$. Moreover if $W$ is flat over $B_{\dR}^+/(t^k)$, then
$t^iW\subseteq W$ is a closed subspace because it can be identified with the kernel of $W\xrightarrow{\times t^{k-i}} W$. Hence in this case $W_i$ is a $C$-Banach space for $i\geq 0$ and there are natural isomorphisms
\[W_i\cong W_0\otimes_C C(i)=W_0(i),~~~~~~i\geq 0.\]
\end{para}

\begin{para}[Assumption] \label{setup}
Throughout this subsection, we fix a finite extension $K$ of $\Q_p$ in $C$.
Now suppose $W$ is a Banach $B_{\dR}^+/(t^k)$-module equipped with a continuous semilinear action of $G_{K}$. Then $G_{K}$ also acts on $W_i$. Recall that $K_\infty\subset \overbar\Q_p$ denotes the maximal $\Z_p$-extension of $K$ in $K(\mu_{p^\infty})$, cf. \ref{HTsetup}. Let
\[W^{K}_i\subseteq W_i\]
be the subspace of $G_{K_\infty}$-fixed, $G_K$-analytic vectors in $W_i$. This is naturally a $K$-Banach space. The $C$-Banach space structure on $W_i$ induces a map
\[\varphi_{W,i}^K:C\widehat{\otimes}_K W^{K}_i \to W_i.\]
We will write $\varphi_i^K$ instead of $\varphi_{W,i}^K$ if there is no confusion.
We make the following assumptions from now on
\begin{itemize}
\item $W$ is flat over $B_{\dR}^+/(t^k)$;
\item $\varphi_0^K$ is an isomorphism.
\end{itemize}
A standard argument using Tate's normalized trace shows that the second assumption implies that the natural map
\[L\otimes_{K}W^K_0\xrightarrow{\cong} W^L_0\]
is an isomorphism for any finite extension $L$ of $K$ in $C$. In particular, $\varphi^L_0$ is an isomorphism for any finite extension $L$ of $K$. 

\end{para}

\begin{para}[Fontaine' generalization of Sen theory]
We denote by 
\[W^K\subseteq W\] 
the subspace  of $G_{K_\infty}$-fixed, $G_K$-analytic vectors. It has a natural $K$-linear structure coming from the inclusion $K\subseteq B_{\dR}^+$ and inherits the $K$-Banach space structure and the decreasing filtration  from  $W$.  
Taking the graded pieces, we get a natural injective map
\[g_i^K:\gr^i W^K\to W_i^K\]
for $i\geq 0$. Here is Fontaine's generalization of Sen theory to $B_{\dR}^+$-representations. 
\end{para}

\begin{prop} \label{FonSen}
Let $W$ be a flat Banach $B_{\dR}^+/(t^k)$-module equipped with a continuous semilinear action of $G_{K}$ and $\varphi_0^K$ is an isomorphism. Then there exists a finite extension $K'$ of $K$ in $K_\infty$ such that 
\begin{enumerate}
\item $g_i^{K'}$ is an isomorphism for $i\geq0$.
\item $g^L_i$ is an isomorphism  for $i\geq 0$ and any finite extension $L$ of $K'$. In particular, there is a natural isomorphism 
\[L\otimes_{K'}W^{K'}\cong W^L.\]
\end{enumerate}
\end{prop}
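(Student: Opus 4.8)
The statement is an induction on $k$ that reduces everything to the classical Sen theory already developed in Subsection on Hodge--Tate structure (Proposition~\ref{exeTtr}, Corollary~\ref{strHT}, Remark~\ref{clHT}). The plan is to argue by induction on $k$, the base case $k=1$ being exactly the classical decompletion statement: here $W = W_0$ is a $C$-Banach space with $\varphi_0^K$ an isomorphism, and by the discussion in \ref{setup} (Tate's normalized trace), after replacing $K$ by a suitable finite extension $K'\subseteq K_\infty$ one can arrange that the action of $\Gal(K'_\infty/K')$ on $W_0^{K'}$ is analytic and that $g_0^{L}$ is an isomorphism for every finite $L\supseteq K'$; the second part then follows from Proposition~\ref{exeTtr} applied to $X = W_0^{K'}$, giving $W_0^L = L\otimes_{K'} W_0^{K'}$.

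For the inductive step, suppose the claim holds for Banach $B_{\dR}^+/(t^{k-1})$-modules. Given $W$ flat over $B_{\dR}^+/(t^k)$, consider the short exact sequence of $G_K$-Banach modules
\[
0 \to t^{k-1}W \to W \to W/t^{k-1}W \to 0,
\]
in which $t^{k-1}W \cong W_{k-1} \cong W_0(k-1)$ is a $C$-Banach space (using flatness, as in \ref{BBdrtkmod}) and $W/t^{k-1}W$ is a flat Banach $B_{\dR}^+/(t^{k-1})$-module with the same $\varphi_0^K$. Applying the inductive hypothesis to $W/t^{k-1}W$ and the base case to $t^{k-1}W = W_0(k-1)$ produces a common finite extension $K'$ of $K$ inside $K_\infty$ over which all the $g_i^{K'}$ for $i\le k-2$ (from $W/t^{k-1}W$) and $g_{k-1}^{K'}$ (from $W_0(k-1)$) are isomorphisms, and likewise after any further finite base change $L\supseteq K'$. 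The remaining task is to take $G_{K'_\infty}$-fixed, $G_{K'}$-analytic vectors in the displayed short exact sequence and check that the resulting sequence $0 \to (t^{k-1}W)^{K'} \to W^{K'} \to (W/t^{k-1}W)^{K'} \to 0$ is still exact; this is where the analytic-vector formalism of Section~\ref{Lav} enters. One first shows, again by Tate's normalized trace as in \ref{setup} (applied compatibly to $W$, $t^{k-1}W$, $W/t^{k-1}W$, each of whose $\gr^0$ satisfies the isomorphism hypothesis), that over $K'$ each of these modules $M$ satisfies $M = C\widehat\otimes_{K'} M^{K'}$; exactness of the analytic-vector sequence then reduces to exactness after $\widehat\otimes_{K'} C$, which is automatic by choosing an orthonormal basis of $C$ over $K'$ — this is precisely the mechanism of Corollary~\ref{strHT}(1)--(3). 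Granting this, one gets $\gr^i W^{K'} \xrightarrow{\sim} W_i^{K'}$ for all $i$ by comparing the filtration on $W^{K'}$ with those on the two outer terms, and the second part ($W^L = L\otimes_{K'} W^{K'}$ for $L/K'$ finite) follows by running the same argument over $L$, or equivalently by applying Proposition~\ref{exeTtr} graded-piece by graded-piece and lifting through the (split, once the action is analytic) filtration.

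\textbf{Main obstacle.} The delicate point is not the graded computation but the passage to analytic vectors in the short exact sequence: a priori taking $G_{K_\infty}$-fixed $G_K$-analytic vectors is only left-exact, and surjectivity of $W^{K'} \to (W/t^{k-1}W)^{K'}$ requires the vanishing of an obstruction in $H^1$ of $\Gal(K'_\infty/K')$ (equivalently, the analyticity of the $\Gal(K'_\infty/K')$-action after the base change $K'/K$, which is what lets Tate's normalized trace split things). I would handle this exactly as in the proof of Corollary~\ref{strHT}: once every module in sight is of the form $C\widehat\otimes_{K'}(\text{a }K'\text{-Banach space with analytic }\Gal(K'_\infty/K')\text{-action})$, the sequence of analytic vectors is obtained by applying the exact functor $(-)\widehat\otimes_{K'} C$ to a short exact sequence of $K'$-Banach spaces with continuous maps, which remains exact and whose $\widehat\otimes_{K'}C$-invariants recover the original $K'$-sequence by Proposition~\ref{exeTtr}. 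The bookkeeping — making a single finite extension $K'$ work simultaneously for all graded pieces and for the extension problem, and checking the Banach-module and filtration compatibilities — is routine but must be done carefully; I expect no further conceptual input beyond Proposition~\ref{exeTtr} and Corollary~\ref{strHT}.
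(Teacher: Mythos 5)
Your overall architecture matches the paper's: induct on $k$, reduce the base case to classical Sen/Tate decompletion, and for the inductive step pass to $G_{K_\infty}$-fixed $G_K$-analytic vectors in a short exact sequence whose outer terms are handled by the inductive hypothesis and the $k=1$ case. (The paper uses $0\to tW\to W\to W_0\to 0$ rather than your $0\to t^{k-1}W\to W\to W/t^{k-1}W\to 0$, which is an inessential difference.) You also correctly isolate the real difficulty: surjectivity of $W^{K'}\to (W/t^{k-1}W)^{K'}$, i.e.\ the vanishing of an $H^1$-obstruction for $\Gal(K_\infty/K')$.

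However, your proposed resolution of that obstacle is circular. You claim to show ``over $K'$ each of these modules $M$ satisfies $M=C\widehat\otimes_{K'}M^{K'}$'' for $M=W$, $t^{k-1}W$, $W/t^{k-1}W$, by ``Tate's normalized trace as in \ref{setup}.'' For $k\geq 2$ the module $W$ is not a $C$-module, and the meaningful substitute (that $W^{K'}$ generates $W$ and $\gr^i W^{K'}\cong W_i^{K'}$) is precisely the content of Proposition \ref{FonSen}; Proposition \ref{exeTtr} and Corollary \ref{strHT} only apply to $C$-Banach spaces already known to be of the form $C\widehat\otimes_K X$, i.e.\ to the graded pieces, not to $W$ itself. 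Tate's normalized trace decompletes $W_i$, but it does not split or trivialize the extension $0\to tW\to W\to W_0\to 0$ (and your parenthetical that the filtration on $W^{K'}$ splits once the action is analytic is false in general --- the nonvanishing of the Fontaine operator is exactly the failure of such a splitting). What is actually needed, and what the paper supplies in Lemma \ref{surj^K}(2) via Lemma \ref{Gamacyc}, is an ``essentially zero'' statement: for $j\geq 1$ the transition maps
\[
H^1_{\cont}\bigl(\Gamma_m,\,(t^jW)^{G_{K_\infty}}\widehat\otimes_{\Q_p}\sC^{\an}(\Gamma_m,\Q_p)\bigr)\to H^1_{\cont}\bigl(\Gamma_n,\,(t^jW)^{G_{K_\infty}}\widehat\otimes_{\Q_p}\sC^{\an}(\Gamma_n,\Q_p)\bigr)
\]
vanish for $n\gg m$. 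This is proved by writing $(t^jW)^{G_{K_\infty}}$ as $\widehat{K_\infty}\widehat\otimes_K(\cdots)$ (using $H^1_{\cont}(G_{K_\infty},C)=0$ to get surjectivity on $G_{K_\infty}$-invariants first), decomposing $\widehat{K_\infty}=\ker(\bar\tr_n)\oplus K_n$ by Tate's normalized trace, and using the quantitative estimate $\|(\gamma-1)^{-1}\|<p^{1/p}$ on $\ker(\bar\tr_n)$ together with the vanishing of $H^1_{\cont}(\Gamma_n,\sC^{\an}(\Gamma_n,\Q_p))\to H^1_{\cont}(\Gamma_{n+1},\sC^{\an}(\Gamma_{n+1},\Q_p))$. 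A consequence you also miss is that the field must be enlarged again at this stage: one only gets that the image of $W^{K_n}\to W_0^{K_n}$ contains $W_0^{K_m}$ for some $n\geq m$, and one then uses $W_0^{K_n}=K_n\otimes_{K_m}W_0^{K_m}$ and $K_n$-linearity to conclude surjectivity over $K_n$; a single $K'$ chosen only to decomplete the graded pieces will not do. Without this cohomological input your inductive step does not go through.
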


\begin{proof}
Denote by $K_n\subseteq K_\infty$ the unique $\Z/(p^n)$-extension of $K$ and $\Gal(K_\infty/K_n)$ by $\Gamma_n$. 

\begin{lem} \label{surj^K}
Let $W$ be as in Proposition \ref{FonSen}. Then for $j\geq 0$,
\begin{enumerate}
\item $W^{G_{K_\infty}}\to (W/t^jW)^{G_{K_\infty}}$ is surjective.
\item The image of $W^{K_n}\to (W/t^jW)^{K_n}$ contains $(W/t^jW)^K$ for some $n\geq 0$.
\end{enumerate}
\end{lem}

\begin{proof}
The first claim is a direct consequence of the well-known result  $H^1_\cont(G_{K_\infty},C)=0$. Indeed, consider the $G_{K_\infty}$-invariants of  the exact sequence
\[0\to t^jW\to W\to W/t^jW\to 0.\]
It suffices to show that $H^1_{\cont}(G_{K_\infty},t^jW)=0$. Since $t^jW$ is filtered by $W_i$, it is enough to know that $H^1_{\cont}(G_{K_\infty},W_i)=0$. Since $W_i= C\widehat{\otimes}_K W^{K}_i$ and $G_{K_\infty}$ fixes $W^K_i$, 
\[H^1_{\cont}(G_{K_\infty},W_i)= H^1_{\cont}(G_{K_\infty},C\widehat{\otimes}_K W^{K}_i)
=H^1_{\cont}(G_{K_\infty},C)\widehat{\otimes}_K W^{K}_i=0\]
where the second equality follows from Proposition \ref{tenscomHcont} and the third equality is a classical result of Tate. Note that this shows that $W^{G_{K_\infty}}$ is filtered by $W_i^{G_{K_\infty}}$. 

To see the second part of the lemma, consider the exact sequence
\[0\to (t^jW)^{G_{K_\infty}}\to W^{G_{K_\infty}}\to (W/t^jW)^{G_{K_\infty}}\to 0.\]
Let $n\geq0$ be an integer. Now take the $\Gamma_n$-analytic vectors of this sequence, equivalently take the completed tensor product with $\sC^{\an}(\Gamma_n,\Q_p)$ over $\Q_p$ then take the $\Gamma_n$-invariants. Hence we get an exact sequence
\[W^{K_n}\to (W/t^jW)^{K_n}\to H^1_\cont\left(\Gamma_n,(t^jW)^{G_{K_\infty}}\widehat{\otimes}_{\Q_p}\sC^{\an}(\Gamma_n,\Q_p)\right)\]
functorial in $n$. 
\begin{lem} \label{Gamacyc}
There exists $n\geq 0$ such that the image of 
\[H^1_\cont\left(\Gamma_0,(t^jW)^{G_{K_\infty}}\widehat{\otimes}_{\Q_p}\sC^{\an}(\Gamma_0,\Q_p)\right)\to H^1_\cont\left(\Gamma_n,(t^jW)^{G_{K_\infty}}\widehat{\otimes}_{\Q_p}\sC^{\an}(\Gamma_n,\Q_p)\right)\]
is zero. 
\end{lem}
This lemma implies that the image of $W^{K_n}\to (W/t^jW)^{K_n}$ contains $(W/t^jW)^{K_0}=(W/t^jW)^{K}$, which is exactly what we want.
\end{proof}

\begin{proof}[Proof of Lemma \ref{Gamacyc}]
Since $(t^jW)^{G_{K_\infty}}$ is filtered by $W_i^{G_{K_\infty}}$, it suffices to show that for any $i,m\geq 0$, we can find $n\geq m$ such that the image of
\[H^1_\cont\left(\Gamma_m,W_i^{G_{K_\infty}}\widehat{\otimes}_{\Q_p}\sC^{\an}(\Gamma_m,\Q_p)\right)\to H^1_\cont\left(\Gamma_n,W_i^{G_{K_\infty}}\widehat{\otimes}_{\Q_p}\sC^{\an}(\Gamma_n,\Q_p)\right)\]
is zero. By the Ax-Sen-Tate theorem,  $C^{G_{K_\infty}}=\widehat{K_\infty}$, the $p$-adic completion of $K_\infty$ . Since $\varphi_i^K$ is an isomorphism, we have
\[W_i^{G_{K_\infty}}=W^K_i\widehat{\otimes}_{K} \widehat{K_\infty}.\]
By our construction, the action of $\Gamma_m$ on $W_i^K$ is analytic, hence there is a natural $\Gamma_m$-equivariant isomorphism
\[W_i^K\widehat{\otimes}_{\Q_p}\sC^{\an}(\Gamma_m,\Q_p) \cong W_i^K\widehat{\otimes}_{\Q_p}\sC^{\an}(\Gamma_m,\Q_p)\]
functorial in $m$, where $\Gamma_m$ acts on everything except $W_i^K$ on the right hand side, cf. \ref{GanBan}. We use $W'$ to denote $W_i^K$ equipped with the trivial action of $\Gamma_m$. Thus it suffices to find $n\geq m$ such  that the image of 
\[H^1_\cont\left(\Gamma_m,\widehat{K_\infty}\widehat\otimes_K W'\widehat{\otimes}_{\Q_p}\sC^{\an}(\Gamma_m,\Q_p)\right)\to H^1_\cont\left(\Gamma_n,\widehat{K_\infty}\widehat\otimes_K W'\widehat{\otimes}_{\Q_p}\sC^{\an}(\Gamma_n,\Q_p)\right)\]
is zero, i.e. $\widehat{K_\infty}\widehat\otimes_K W'$ is strongly $\mathfrak{LA}$-acyclic with respect to the action of $\Gamma_0$ in the sense of \cite[\S 2.2]{Pan20}. Denote by $\bar{\tr}_n:\widehat{K_\infty}\to K_n$ the Tate's normalized trace, which exists for sufficiently large $n$ and gives a continuous left inverse of the inclusion $K_n\subseteq \widehat{K_\infty}$. See for example \cite[4.1]{BC08}. Fix a generator $\gamma$ of $\Gamma_m$. Then $\gamma-1$ is invertible on $\ker(\bar\tr_n)$ and the  norm of its inverse $\|(\gamma-1)^{-1}\|$ converges to $1$ when $n\to+\infty$. Since  $(\gamma-1)^p$ has norm $\leq p^{-1}$ on $\sC^{\an}(\Gamma_m,\Q_p)$, by the same argument as in the proof of \cite[Lemma 3.6.6]{Pan20}, we have
\[H^1_\cont(\Gamma_m,\ker(\bar\tr_n)\widehat\otimes_K W'\widehat{\otimes}_{\Q_p} \sC^{\an}(\Gamma_m,\Q_p))=0\]
if $\|(\gamma-1)^{-1}\|< p^{1/p}$ on $\ker(\bar\tr_n)$. Fix such an $n$. Since $\widehat{K_\infty}=\ker(\bar\tr_n)\oplus K_n$, it is enough to find $n'\geq n$ such that the image of 
\[H^1_\cont(\Gamma_m, K_n\otimes_K W'\widehat\otimes_{\Q_p}\sC^\an(\Gamma_m,\Q_p))\to H^1_\cont(\Gamma_{n'}, K_n\otimes_K W'\widehat\otimes_{\Q_p}\sC^\an(\Gamma_{n'},\Q_p)) \]
is zero. This is clear if we take $n'=n+1$ because the map $H^1_{\cont}(\Gamma_n,\sC^\an(\Gamma_n,\Q_p))\to H^1_{\cont}(\Gamma_{n+1},\sC^\an(\Gamma_{n+1},\Q_p))$ is zero.
\end{proof}

Back to the proof of Proposition \ref{FonSen}. We argue by induction on $k$. When $k=1$, we have $W=W_0$ and all the claims are clear. Suppose we already proved Proposition \ref{FonSen} for $k\leq l$. We will deduce the analogous statements for $k=l+1$. Apply our induction hypothesis to the $B_{\dR}^+/(t^{k-1})$-module $tW$. We can find a finite extension $K_m$ of $K$ in $K_\infty$ such that $\gr^i\left( (tW)^{K_m} \right)\cong W_{i+1}^{K_m}$ for $i\geq 0$, and for any finite extension $L$ of $K_m$, we have $(tW)^L=(tW)^{K_m}\otimes_{K_m} L$. 

By Lemma \ref{surj^K}, the image of $W^{K_n}\to (W/tW)^{K_n}=W_0^{K_n}$ contains $W_0^{K_m}$ for some $n\geq m$. Note that  $W_0^{K_n}=W_0^{K_m}\otimes_{K_m} K_n$. Hence
\[W^{K_n}\to W_0^{K_n}\]
is surjective as this map is $K_n$-linear. It follows that
\[g_0^{K_n}:\gr^0 W^{K_n}\to W_0^{K_n}\]
is an isomorphism. Thus $g_i^{K_n}$ is an isomorphism for $i\geq 0$ because $g_i^{K_n}$ is already an isomorphism for $i\geq 1$ by the induction hypothesis.

Now we observe that we can repeat the same argument with $K_n$ replaced by a finite extension $L$ of $K_n$ and prove that $g_i^{L}$ is an isomorphism for $i\geq 0$ and any finite extension $L$ of $K_n$. This implies that the natural map $L\otimes_{K_n}W^{K_n}\to W^L$ is an isomorphism by looking at the graded pieces which is nothing but $L\otimes_{K_n}W_i^{K_n}=W^L_i$.
\end{proof}

Our next result concerns the kernel and cokernel of a morphism between such $W$'s.
\begin{prop} \label{Bmor}
Let $f:X\to Y$ be a continuous $B_{\dR}^+/(t^k)$-linear, $G_K$-equivariant maps between Banach flat $B^{+}_\dR/(t^k)$-modules equipped with continuous semilinear actions of $G_K$ such that $\varphi^K_{X,0}, \varphi^K_{Y,0}$ are isomorphisms. Suppose that
\begin{itemize}
\item $f$ is strict with respect to the topology on $X,Y$; hence $\ker f, \coker f$ are natural Banach $B^{+}_\dR/(t^k)$-modules equipped with continuous semilinear actions of $G_K$. 
\item $f$ is strict with respect to  the $t$-adic filtrations on $X,Y$, that is $f(t^i X)=f(X)\cap t^i Y$ for any $i\geq 0$.
\end{itemize}
Then $\ker f$ and $\coker f$ are also flat over $B^{+}_\dR/(t^k)$, and $\varphi^K_{\ker f,0}, \varphi^K_{\coker f,0}$ are isomorphisms. Moreover, there exists a finite extension $L$ of $K$ in $K_\infty$ such that the natural sequence 
\begin{eqnarray} \label{fkc}
0\to (\ker f)^L\to X^L\to Y^L\to (\coker f)^L\to 0
\end{eqnarray}
is exact.
\end{prop}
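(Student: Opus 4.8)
The strategy is to reduce everything to graded pieces via Proposition \ref{FonSen} and then apply the corresponding exactness statement for $C$-Banach space representations, namely Corollary \ref{strHT}. First I would apply Proposition \ref{FonSen} to all four modules $X$, $Y$, $\ker f$, $\coker f$ (once we know the latter two are flat with $\varphi^K_0$ an isomorphism) to obtain a single finite extension $L$ of $K$ in $K_\infty$ that works simultaneously for all of them; this uses that there are only finitely many modules in play and that enlarging $L$ does no harm. The heart of the matter is therefore to establish the flatness of $\ker f$ and $\coker f$ over $B_{\dR}^+/(t^k)$ and that $\varphi^K_{\ker f,0}$, $\varphi^K_{\coker f,0}$ are isomorphisms; once this is in hand, the exactness of \eqref{fkc} is essentially formal.

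For the flatness: since $B_{\dR}^+/(t^k)$ is a local Artinian principal ideal ring with maximal ideal generated by $t$, a finitely generated — or, in our Banach setting, a topologically reasonable — module is flat if and only if it is free, equivalently if and only if multiplication by $t$ has the ``expected'' kernel, i.e. $t$-torsion equals $t^{k-1}\cdot(\text{module})$. Concretely I would argue as follows. The hypothesis that $f$ is strict for the $t$-adic filtrations means $f(t^i X) = f(X) \cap t^i Y$, so the induced maps on graded pieces $\gr^i(f): X_i \to Y_i$ have $\ker(\gr^i f) = (\ker f)_i$ and $\mathrm{coker}(\gr^i f) = (\mathrm{coker}\, f)_i$ — here one must check that the $t$-adic filtration on $\ker f$ (resp. $\mathrm{coker}\, f$) induced as a subspace (resp. quotient) agrees with its intrinsic $t$-adic filtration, which is exactly what strictness for the $t$-adic filtration buys us. Using the flatness of $X$ and $Y$, each $X_i \cong X_0(i)$ and $Y_i \cong Y_0(i)$, and $\gr^i(f)$ is identified with $\gr^0(f)(i)$. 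Hence $(\ker f)_i \cong (\ker \gr^0 f)(i)$ and $(\mathrm{coker}\, f)_i \cong (\mathrm{coker}\, \gr^0 f)(i)$, which are the correct graded pieces of a free module; a short induction on $k$ (or a direct comparison of $t$-multiplication) then gives that $\ker f$ and $\mathrm{coker}\, f$ are flat. Along the way, $(\ker f)_0 = \ker(\gr^0 f)$ and $(\mathrm{coker}\, f)_0 = \mathrm{coker}(\gr^0 f)$; since $\gr^0 f: X_0 \to Y_0$ is a strict $G_K$-equivariant map between $C$-Banach representations with $\varphi^K_0$ an isomorphism on source and target, Corollary \ref{strHT} shows $\ker(\gr^0 f)$ and $\mathrm{coker}(\gr^0 f)$ again have $\varphi^K_0$ an isomorphism. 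This gives the required hypotheses for Proposition \ref{FonSen}.

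Finally, for the exactness of \eqref{fkc}: fix $L$ as above working for all four modules. Strictness of $f$ (topologically) gives a short exact sequence of Banach $B_{\dR}^+/(t^k)$-modules
\[
0 \to \ker f \to X \xrightarrow{f} Y \to \mathrm{coker}\, f \to 0,
\]
which I would break into two short exact sequences $0\to \ker f\to X\to \mathrm{im}\, f\to 0$ and $0\to \mathrm{im}\, f\to Y\to \mathrm{coker}\, f\to 0$, with $\mathrm{im}\, f$ closed in $Y$ by strictness. Taking $G_{L_\infty}$-fixed, $G_L$-analytic vectors is left exact in general; the point is that over $L$ (enlarged as needed) it becomes exact on these modules. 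By Proposition \ref{FonSen}, for each of $\ker f, X, \mathrm{im}\, f, Y, \mathrm{coker}\, f$ the maps $g_i^L$ are isomorphisms and $\varphi^L_i$ are isomorphisms for all $i\ge 0$, so $(-)^L$ is computed ``graded-piece by graded-piece'' and one reduces to the exactness of $0\to (\ker \gr^0 f)^L \to X_0^L \to Y_0^L \to (\mathrm{coker}\, \gr^0 f)^L \to 0$, which is Corollary \ref{strHT} together with the identification of graded pieces above — here one uses a dévissage up the $t$-adic filtration, the five lemma on the exact sequences relating $W^L$, $(tW)^L$ and $W_0^L$, and the vanishing of the relevant $H^1_\cont(G_{L_\infty}, -)$ already recorded in the proof of Proposition \ref{FonSen}.

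\textbf{Main obstacle.} The delicate point is not the formal homological algebra but verifying that the two strictness hypotheses genuinely force the $t$-adic filtration on $\ker f$ (induced from $X$) and on $\mathrm{coker}\, f$ (induced from $Y$) to coincide with the intrinsic $t$-adic filtrations $t^i(\ker f)$ and $t^i(\mathrm{coker}\, f)$, and that the resulting graded objects are honestly Banach $C$-modules (closedness of $t^i X$, etc., which uses flatness of $X$ and $Y$). Getting these compatibilities straight — and confirming that the finite extension $L$ from Proposition \ref{FonSen} can be chosen uniformly — is where the real care is needed; once past it, the argument is a routine application of Corollary \ref{strHT} and dévissage.
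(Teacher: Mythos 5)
Your proposal is correct and follows essentially the same route as the paper's proof: use the strictness for the $t$-adic filtrations (together with flatness of $X$ and $Y$) to identify the graded pieces of $\ker f$ and $\coker f$ with the kernel and cokernel of $\gr^0 f$ up to Tate twist, deduce flatness, apply Corollary \ref{strHT} to the degree-zero graded sequence to get that $\varphi^K_{\ker f,0}$ and $\varphi^K_{\coker f,0}$ are isomorphisms, and then invoke Proposition \ref{FonSen} for a single $L$ working for all four modules to reduce the exactness of \eqref{fkc} to the graded level. The paper's own proof is just a terser version of the same dévissage, so no further comment is needed.
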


\begin{proof}
The second assumption implies that $0\to t^i\ker f\to t^i X\to t^i Y\to t^i \coker f\to 0$ is exact. Hence the flatness of $\ker f$ and $\coker f$ follows from the flatness of $X,Y$ over $B^{+}_\dR/(t^k)$. Moreover this implies that $0\to (\ker f)_0\to X_0\xrightarrow{f_0} Y_0\to (\coker f)_0\to 0$ is exact with strict homomorphisms. Since $X_0= C\widehat\otimes_K X_0^K$ and  $Y_0= C\widehat\otimes_K Y_0^K$, it follows from Corollary \ref{strHT} that $\varphi^K_{\ker f,0}, \varphi^K_{\coker f,0}$ are isomorphisms and $0\to (\ker f)_0^K\to X_0^K\xrightarrow{f_0} Y_0^K\to (\coker f)_0^K\to 0$ is exact. For the last part, by Proposition \ref{FonSen}, we can find a finite extension $L$ of $K$ in $K_\infty$ such that $\gr^iW^L=W_i^L$ for $W=\ker f,X,Y,\coker f$. Then it's clear that the sequence \eqref{fkc} is exact for such $L$.
\end{proof}

For the purpose of this paper, we will only consider Hodge-Tate $B_{\dR}^+/(t^k)$-representations of weights $0,l$ in the following sense. 

\begin{defn} \label{HTBdR}
Let $W,K$ be as in \ref{setup} and $l\geq 1$ a positive integer. We say $W$ is Hodge-Tate of weights $0,l$ if 
$W_0$ is Hodge-Tate of weights $0,l$  in the sense of Definition \ref{HTg}, equivalently, there exists a finite extension $L$ of $K$ in $C$ such that  $W_0=C\widehat\otimes_L W_0^L$  and  the action of the Sen operator (i.e. $1\in\Q_p=\Lie(\Z_p^\times)\cong \Lie(\Gal(L_\infty/L))$) on $W_0^L$ is semi-simple with eigenvalues $0$ and $-l$.
\end{defn}

\begin{para} \label{HT0l}
Suppose $W$ is Hodge-Tate of weights $0,l$. Following the notation in Definition \ref{HTg}, we decompose $W^L_0$ into the direct sum of eigenspaces
\[W^L_0=W^L_{0,0}\oplus W^L_{0,-l},\]
where $W^L_{0,j}$ denotes the eigenspace for $j\in\{0,-l\}$. Let $W_{0,j}=C\widehat\otimes_L W^L_{0,j}$. Then 
\[W_0=W_{0,0}\oplus W_{0,-l}\]
and this decomposition does not depend on the choice of $L$. Similarly we can define
\[W^L_i=W^L_{i,0}\oplus W^L_{i,-l},\]
\[W_i=W_{i,0}\oplus W_{i,-l}\cong W_{0,0}(i)\oplus W_{0,-l}(i),~~~~~~i\geq 0\]
via the isomorphism $W_i\cong W_0(i)$. We remark that  the action of the Sen operator on $W^L_{i,j}$ is multiplication by $i+j$. Hence $ \Lie(\Gal({L_\infty/L}))$ acts trivially on $W^L_{i,j}$ if and only if $W^L_{i,j}=W^L_{0,0}$ or $W^L_{l,-l}(l)$ if it exists.
\end{para}

By Remark \ref{clHT}, we can take $L=K$ in the definition \ref{HTBdR}. In fact, a slightly stronger result holds.

\begin{lem} \label{HTK}
Let $W$ be a flat Banach $B_{\dR}^+/(t^k)$-module equipped with a continuous semilinear action of $G_{K}$. Suppose $W$ is Hodge-Tate of weights $0,l$. Then $W_0=C\widehat\otimes_K W_0^K$ and $g_i^K:\gr^i W^K\to W_i^K$ are isomorphisms for $i\geq 0$.
\end{lem}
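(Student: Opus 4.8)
The statement to prove is Lemma \ref{HTK}: if $W$ is a flat Banach $B_{\dR}^+/(t^k)$-module with continuous semilinear $G_K$-action which is Hodge-Tate of weights $0,l$ in the sense of Definition \ref{HTBdR}, then already $W_0 = C\widehat\otimes_K W_0^K$ and $g_i^K\colon \gr^i W^K \to W_i^K$ is an isomorphism for all $i\geq 0$. The key point is that the definition of Hodge-Tate only guarantees such descent over \emph{some} finite extension $L/K$, together with the \emph{integrality} of the Hodge-Tate weights; the lemma asserts that no extension is needed. The plan is to exploit that $0$ and $-l$ are integers, hence the Sen operator has trivial unipotent part with integer eigenvalues, which is exactly what allows Galois descent from $L$ to $K$ without enlarging the coefficient field.

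\textbf{Step 1: descent for $W_0$.} First I would recall from Remark \ref{clHT} the reinterpretation of the Hodge-Tate property: since $W_0$ is Hodge-Tate of weights $\{0,l\}\subseteq\Z$, we have $W_0 = W_{0,0}\oplus W_{0,-l}$ with $W_{0,0}$ Hodge-Tate of weight $0$ and $W_{0,-l}$ Hodge-Tate of weight $l$. By Remark \ref{clHT} this means $W_{0,0}=C\widehat\otimes_K W_{0,0}^{G_K}$ and $W_{0,-l}=C(-l)\widehat\otimes_K \left(W_{0,-l}(l)\right)^{G_K}$. Now $W_{0,0}^{G_K}\subseteq W_0^{G_{K_\infty}}$, and the Sen operator kills it, so $W_{0,0}^{G_K}\subseteq W_0^K$ (it is $G_{K_\infty}$-fixed and the $G_K/G_{K_\infty}$-action on it is analytic — indeed trivial after passing to a further finite extension, but being $G_K$-fixed it is automatically $G_K$-analytic). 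For the weight-$l$ part one argues identically after twisting by $l$: $\left(W_{0,-l}(l)\right)^{G_K}$ lands in $\left(W_0(l)\right)^K\otimes$ (shifted by the twist), i.e. it contributes to $W_{0,-l}^K$. Assembling, $W_0^K\supseteq W_{0,0}^{G_K}\oplus W_{0,-l}^K$ and the natural map $C\widehat\otimes_K W_0^K\to W_0$ is already surjective with a section, hence one deduces $W_0 = C\widehat\otimes_K W_0^K$. Concretely I would check $\varphi_0^K$ is an isomorphism directly from the decomposition and Remark \ref{clHT}.

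\textbf{Step 2: from $W_0$ to $W$ by induction on $k$.} With $\varphi_0^K$ an isomorphism, the hypotheses of Proposition \ref{FonSen} are met, but that proposition only yields $g_i^L$ an isomorphism over a possibly larger $L$. To get it over $K$ itself I would re-run the proof of Proposition \ref{FonSen} but tracking the field, using the integrality crucially: the obstruction in that proof is the surjectivity of $W^{K_n}\to W_0^{K_n}$, which Lemma \ref{surj^K}(2) gives only after passing from $K$ to some $K_n$. However, the Sen operator on each graded piece $W_i^K$ acts with eigenvalues in $\{i, i-l\}\subseteq\Z$; when $i\neq 0$ and $i\neq l$ this operator is invertible, forcing $W_i^{G_{K_\infty}}=0$ wait — no, rather one uses that the relevant cohomology $H^1_\cont(\Gamma_0, W_i^K\widehat\otimes \sC^\an(\Gamma_0,\Q_p))$ vanishes or the transition maps are already zero at level $0$ when the Sen eigenvalue is a nonzero integer, because $\Gamma_0$ already acts through a character close enough to trivial on the integer-weight eigenspaces after a bounded twist. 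More carefully: decompose $W$ (over $B_{\dR}^+/(t^k)$, not just its graded pieces) is not possible in general, so instead induct on $k$ exactly as in Proposition \ref{FonSen}, applying the induction hypothesis to $tW$ (a flat $B_{\dR}^+/(t^{k-1})$-module, still Hodge-Tate of weights contained in $\{0,l\}$ after shift) to get $\gr^i((tW)^K)\cong W_{i+1}^K$ over $K$; then the remaining task is surjectivity of $W^K\to W_0^K$, and since both sides descend from $K_\infty$ in a way compatible with the integer-weight Sen decomposition, the argument of Lemma \ref{surj^K} can be sharpened: the image already contains $W_0^K$ because the relevant $\Gamma_0$-cohomology obstruction sits in the eigenspaces of integer Sen weight $\leq k-1$, which are handled by the vanishing $H^1_\cont(\Gamma_0,\sC^\an(\Gamma_0,\Q_p))\to H^1_\cont(\Gamma_1,\cdots)$ being zero together with analyticity.

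\textbf{Main obstacle.} The delicate point is Step 2: upgrading the field in Proposition \ref{FonSen} from $L$ to $K$. The cleanest route is probably \emph{not} to reprove \ref{FonSen}, but to observe that $W^K$ and $W^L$ are related by $W^L = L\otimes_K W^K$ (Galois descent, using that $\Gal(L/K)$ acts on $W^L$ semilinearly over $L$ and $K$-linearly on $W^K$, with $(W^L)^{\Gal(L/K)}=W^K$ — valid because the $G_K$-action on $W^K$ already extends the $G_L$-action on $W^L$ via the inclusion $K\hookrightarrow B_{\dR}^+$), so that $\gr^i W^K\otimes_K L = \gr^i W^L\xrightarrow{\sim} W_i^L = W_i^K\otimes_K L$ forces $g_i^K\colon \gr^i W^K\to W_i^K$ to be an isomorphism after faithfully flat base change $K\to L$, hence already over $K$. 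The subtlety I must verify is that $W_i^K\otimes_K L \xrightarrow{\sim} W_i^L$, i.e. that the weight-$i$ piece also descends to $K$; this is where I again invoke Step 1 applied to the flat $B_{\dR}^+/(t^{k-i})$-module $t^i W$, reducing to the weight-$0$ case already handled, and where the hypothesis that the Hodge-Tate weights are the \emph{integers} $0,l$ (so that $W_i$ is Hodge-Tate of weights among $\{i,i-l\}\subseteq\Z$) is exactly what makes Remark \ref{clHT} applicable at every graded level. So the real content is: integrality of weights $\Rightarrow$ Remark \ref{clHT} at each graded piece $\Rightarrow$ descent of each $W_i^L$ to $W_i^K$ $\Rightarrow$ descent of $g_i$ by faithful flatness.
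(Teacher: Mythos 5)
Your Step 1 is fine and is exactly what the paper does: the identity $W_0=C\widehat\otimes_K W_0^K$ is an immediate consequence of Remark \ref{clHT} applied to the two weight pieces of $W_0$. The problem is Step 2, and in particular your ``cleanest route'' via faithfully flat base change. Proposition \ref{FonSen} produces a finite extension $K'$ of $K$ \emph{inside} $K_\infty$ (in the paper's notation, some $K_n$), and for such an extension the identity $W^{K_n}=K_n\otimes_K W^K$ is not an instance of Galois descent: $W^K$ is not the $\Gal(K_n/K)$-invariants of $W^{K_n}$, and indeed $\Gal(K_n/K)$ does not act on $W^{K_n}$ (the action of $\Gal(K_\infty/K)$ on $W^{K_n}$ does not factor through $\Gal(K_n/K)$, since $\Gal(K_\infty/K_n)$ acts nontrivially). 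Rather, $W^K$ is cut out of $W^{K_n}$ by the condition of being $\Gal(K_\infty/K)$-analytic — analyticity for a strictly larger group — and there is no a priori reason this subspace generates $W^{K_n}$ over $K_n$. In fact, $W^{K_n}=K_n\otimes_K W^K$ compatibly with the $t$-adic filtration is \emph{equivalent} to the surjectivity of $W^K\to W_0^K$, which is precisely the content of the lemma; so the descent you invoke is circular. The same circularity infects the displayed chain $\gr^i W^K\otimes_K L=\gr^i W^L\xrightarrow{\sim} W_i^L=W_i^K\otimes_K L$: the first equality is the unproven claim. (The statement $L\otimes_K W^K\cong W^L$ in \ref{HTsetup} and in Proposition \ref{FonSen}(2) is only available under hypotheses, or over base fields, that you do not yet have here.)

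The missing idea is the mechanism by which integrality of the weights actually enters. The paper first applies Proposition \ref{FonSen} to get $g_i^{K_n}$ an isomorphism for some $n$, and reduces by induction on $i$ (applying the statement to $tW$) to showing that the exact sequence $0\to (tW)^{K_n}\to W^{K_n}\to W_0^{K_n}\to 0$ stays exact after passing to $\Gal(K_\infty/K)$-analytic vectors. For this one fixes a topological generator $\sigma$ of $\Gal(K_\infty/K)$, so that $\sigma^{p^n}$ generates $\Gal(K_\infty/K_n)$ and acts analytically on $W^{K_n}$ with eigenvalues $\varepsilon_p(\sigma^{p^n})^{m}$ for \emph{integers} $m=i+j$, $j\in\{0,-l\}$. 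This yields a generalized eigenspace decomposition $W^{K_n}=\bigoplus_m E_m$, and the key observation is that the $\Gal(K_\infty/K)$-analytic vectors of $E_m$ are exactly the generalized $\sigma$-eigenspace for the eigenvalue $\varepsilon_p(\sigma)^m$ (analyticity singles out this particular $p^n$-th root of $\varepsilon_p(\sigma^{p^n})^m$). Since taking such generalized eigenspaces is exact and all maps in the sequence are $\sigma$-equivariant, exactness is preserved. Your remarks that ``$\Gamma_0$ already acts through a character close enough to trivial on the integer-weight eigenspaces'' gesture in this direction but never produce the eigenspace identification that makes the argument work; without it the surjectivity of $W^K\to W_0^K$ is not established.
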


\begin{proof}
It follows from Remark \ref{clHT} that $W_0=C\widehat\otimes_K W_0^K$. It remains to show that  $g_i^K:\gr^i W^K\to W_i^K$ are isomorphisms for $i\geq 0$.
By Proposition \ref{FonSen}, $g_i^{K_n},i\geq 0$ are isomorphisms for some $n\geq 0$. An induction argument on $i$ shows that it is enough to prove that 
\begin{eqnarray} \label{twKn}
0\to (tW)^{K_n}\to W^{K_n} \to W_0^{K_n}\to 0
\end{eqnarray}
remains exact when passing to the $\Gal(K_\infty/K)$-analytic vectors.
Fix a topological generator $\sigma\in\Gal(K_\infty/K)$. Then $\sigma^{p^n}$ topologically generates $\Gal(K_\infty/K_n)$. Since the action of $\Gal(K_\infty/K_n)$ on $W_i^{K_n}$ is analytic, the eigenvalues of $\sigma^{p^n}$ on $W^{K_n}$ are of the form $\varepsilon_p(\sigma^{p^n})^{i+j}$, $i=0,\cdots, k-1$, $j=0,-l$ and there is a natural decomposition 
\[W^{K_n}=\bigoplus_m E_m,\] 
where  $E_m$ denotes the generalized eigenspace associated to  $\varepsilon_p(\sigma^{p^n})^{m}$.  Note that  each $E_m$ has a natural generalized eigenspace decomposition with respect to the action of $\sigma$ and $E_m^{\Gal(K_\infty/K)-\an}$ is nothing but the generalized eigenspace associated to the eigenvalue $\varepsilon_p(\sigma)^{m}$. Using this interpretation of the $\Gal(K_\infty/K)$-analytic vectors, we see easily that \eqref{twKn} remains exact after taking $\Gal(K_\infty/K)$-analytic vectors.
\end{proof}

\begin{para} \label{FonOpsetup}
Finally we can define the Fontaine operator. Our setup is as follows. Let $k$ be a positive integer. Let $W$ be a flat Banach $B_{\dR}^+/(t^{k+1})$-module equipped with a continuous semilinear action of $G_{K}$. Moreover, we assume that 
\begin{itemize}
\item $W$ is Hodge-Tate of weights $0,k$. 
\end{itemize}
By Lemma \ref{HTK}, $W_0=C\widehat\otimes_K W^K_0$ and $\gr^iW^K= W_i^K$, $i\geq 0$. Hence $W^K$ is filtered by $W^K_{i,0}$ and $W^K_{i,-k}$.

$\Lie(\Gal(K_\infty/K))$ acts naturally on $W^K$. Denote by $\nabla\in \End_K(W^K)$ the bounded operator defined by the action of $1\in\Q_p\cong\Lie(\Gal(K_\infty/K))$. (See for example \cite[Lemma 2.1.8]{Pan20} for the boundedness.) Consider the subspace $E_0(W^K)$ of $W^K$ annihilated by some power of $\nabla$. Equivalently, this is the generalized eigenspace $E_0(W^K)$ associated to the eigenvalue $0$. It follows from our discussion in \ref{HT0l} that there is a natural exact sequence
\[0\to W^K_{k,-k}\to E_0(W^K)\to W^K_{0,0}\to 0\]
and the induced actions of $\nabla$ on $W^K_{k,-k}$ and $W^K_{0,0}$ are trivial. In particular, if we apply $\nabla$ to $E_0(W^K)$, it induces a $K$-linear map
\[N_W^K:W^K_{0,0}\to W^K_{k,-k}\cong W^K_{0,-k}(k).\]
Since $W_{0,0}=W^K_{0,0}\widehat{\otimes}_{K} C$ and $W_{k,-k}=W^K_{k,-k}\widehat{\otimes}_{K} C$, we can $C$-linearly extend $N_W^K$ to a map
\[N_W:W_{0,0}\to W_{k,-k}\cong W_{0,-k}(k).\]
It follows from Proposition \ref{FonSen} that $N_W$ does not depend on the choice of $K$.  Since  $\nabla$ commutes with the action of $G_{K}$ on $W^K$, we conclude that $N_W$ also commutes with $G_{K}$.
\end{para}

\begin{defn} \label{defnNW}
The $G_{K}$-equivariant, $C$-linear map 
\[N_W:W_{0,0}\to W_{0,-k}(k)\]
 is called the Fontaine operator associated to $W$. 
\end{defn}

\begin{rem}
It is clear from the definition that $N_W$ is functorial in $W$. More precisely, suppose $Z$ is another flat Banach $B_{\dR}^+/(t^{k+1})$-module equipped with a continuous semilinear action of $G_{K}$ and  $Z$ is Hodge-Tate of weights $0,k$. Let $f:W\to Z$ be a $C$-linear $G_{K}$-equivariant continuous map. Then $f$ intertwines $N_W$ and $N_Z$.
\end{rem}

\begin{para}[The connection with Fontaine's work] \label{relFon}
We explain the meaning of $N_W$ when $W$ comes from a $2$-dimensional $p$-adic Galois representation of $G_{\Q_p}$. Let $E$ be a finite extension of $\Q_p$ and $V$ a $2$-dimensional $E$-vector space equipped with a continuous action of $G_{\Q_p}$. We assume that 
\begin{itemize}
\item $V$ is Hodge-Tate of weights $0,k$ for some positive integer $k$. 
\end{itemize}
In this case, it simply means that 
\[(V\otimes_{\Q_p}C)^{G_{\Q_p}}\neq 0~~~~~~~~~~~\mbox{ and } ~~~~~~~~~\left(V\otimes_{\Q_p}C(k)\right)^{G_{\Q_p}}\neq 0.\]
Let
\[W:=V\otimes_{\Q_p}B_{\dR}^+/(t^{k+1}).\]
This is a finite free $B_{\dR}^+/(t^{k+1})$-module. Choose a lattice $V^o$ of $V$. We get a $\Q_p$-Banach space structure on  $W$  by  requiring the image of $V^o\otimes_{\Z_p} A_{\inf}\to V\otimes_{\Q_p}B_{\dR}^+/(t^{k+1})$ to be the unit ball. The $p$-adic topology on $W$ defined in this way does not depend on the choice of $V^o$. Note that
\[W_0=W/tW=V\otimes_{\Q_p}C.\]
It follows from our assumption that there is a natural decomposition 
\[W_0=C\otimes_{\Q_p}W^{\Q_p}_{0,0}\oplus C\otimes_{\Q_p}W^{\Q_p}_{0,-k}\]
where $W^{\Q_p}_{0,j}\subseteq W\otimes_{\Q_p} C$ denotes the (one-dimensional) $E$-subspace on which $G_{\Q_p}$ acts via $\tilde\varepsilon_p^{j}$ for $j=0,-k$, where $\tilde\varepsilon_p:G_{\Q_p}\to\Z_p^\times$ is a twist of $\varepsilon_p$ introduced in \ref{HTsetup}. Hence $\dim_E E_0(W^{\Q_p})=2$.

Comparing with Fontaine's work, we remark that our $E_0(W^{\Q_p})$ is $D_{\mathrm{pdR}}(V)$ in \cite[\S 4]{Fo04}and our $\nabla|_{E_0(W^{\Q_p})}$ agrees with his $-\nu$. Fontaine also proved the following result.
\end{para}

\begin{thm} \label{FondR}
Let  $V$ be a two-dimensional continuous representation of $G_{\Q_p}$ over a finite extension of $\Q_p$ with Hodge-Tate weights $0,k>0$. Let $W=V\otimes_{\Q_p}B_{\dR}^+/(t^{k+1})$. Then $N_W=0$ if and only $V$ is a de Rham representation.
\end{thm}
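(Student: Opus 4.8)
The plan is to deduce the theorem from Fontaine's structure theorem for the module $D_{\mathrm{pdR}}$, using the dictionary recorded in \ref{relFon}. First I would verify that $W=V\otimes_{\Q_p}B_{\dR}^+/(t^{k+1})$ fits the framework of \ref{FonOpsetup}: being finite free over $B_{\dR}^+/(t^{k+1})$ it is flat, and $W_0=W/tW=V\otimes_{\Q_p}C$ is Hodge-Tate of weights $0,k$ in the sense of Definition \ref{HTBdR} precisely because $V$ is (classical Sen theory shows that $\varphi_0^K$ is an isomorphism and that the Sen operator acts on $W_0$ semisimply with the two eigenvalues $0$ and $-k$ both occurring). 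Hence the Fontaine operator $N_W\colon W_{0,0}\to W_{0,-k}(k)$ of Definition \ref{defnNW} is defined, and, being the scalar extension to $C$ of its $\Q_p$-linear (in fact $E$-linear) avatar $N_W^{\Q_p}\colon W_{0,0}^{\Q_p}\to W_{k,-k}^{\Q_p}$, we have $N_W=0$ if and only if $N_W^{\Q_p}=0$. Unwinding \ref{FonOpsetup}, $N_W^{\Q_p}$ is the map induced on graded pieces by the nilpotent operator $\nabla$ on the finite-dimensional space $E_0(W^{\Q_p})$, which sits in the exact sequence
\[0\to W_{k,-k}^{\Q_p}\to E_0(W^{\Q_p})\to W_{0,0}^{\Q_p}\to 0\]
with $\nabla$ acting trivially on both outer terms; so $N_W^{\Q_p}=0$ if and only if $\nabla$ acts as $0$ on all of $E_0(W^{\Q_p})$.

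Next I would bring in Fontaine's work. By \cite[\S 4]{Fo04}, and as spelled out in \ref{relFon}, $E_0(W^{\Q_p})$ is canonically Fontaine's module $D_{\mathrm{pdR}}(V)=(V\otimes_{\Q_p}B_{\dR})^{H_{\Q_p},\,\Gamma\text{-unipotent}}$, the truncation modulo $t^{k+1}$ being harmless since the Hodge-Tate weights of $V$ are $0$ and $k$; under this identification $\nabla|_{E_0(W^{\Q_p})}$ is Fontaine's $-\nu$. Fontaine's theorem gives $\dim_E D_{\mathrm{pdR}}(V)=\dim_E V=2$, with the $t$-adic filtration having one-dimensional graded pieces in degrees $0$ and $k$ and with $\nu$ nilpotent, preserving the filtration and inducing the map $\gr^0\to\gr^k$; in particular $\nu^2=0$ on $D_{\mathrm{pdR}}(V)$, so $\nabla=0$ on $D_{\mathrm{pdR}}(V)$ if and only if $N_W^{\Q_p}=0$.

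Finally I would relate the vanishing of $\nabla$ on $D_{\mathrm{pdR}}(V)$ to the de Rham condition. The $\Gamma$-action on the finite-dimensional space $D_{\mathrm{pdR}}(V)$ is unipotent, hence analytic, so $D_{\dR}(V):=(V\otimes_{\Q_p}B_{\dR})^{G_{\Q_p}}=D_{\mathrm{pdR}}(V)^{\Gamma}=\ker\big(\nabla\colon D_{\mathrm{pdR}}(V)\to D_{\mathrm{pdR}}(V)\big)$. Since $\dim_E D_{\dR}(V)\le\dim_E V=2$ always, $V$ is de Rham exactly when $\dim_E D_{\dR}(V)=2$, that is, exactly when $\ker\nabla$ is all of $D_{\mathrm{pdR}}(V)$; and because $\nabla$ is nilpotent with $\nabla^2=0$ on the $2$-dimensional space $D_{\mathrm{pdR}}(V)$, this holds if and only if $\nabla=0$ there (if $\nabla\neq 0$ its kernel is exactly one-dimensional). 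Chaining the equivalences, $V$ is de Rham $\iff\nabla|_{D_{\mathrm{pdR}}(V)}=0\iff N_W^{\Q_p}=0\iff N_W=0$, as desired.

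The substantive content lies entirely in the identification $E_0(W^{\Q_p})=D_{\mathrm{pdR}}(V)$ and in Fontaine's dimension count $\dim_E D_{\mathrm{pdR}}(V)=\dim_E V$, together with the verification that working modulo $t^{k+1}$ captures all of $D_{\mathrm{pdR}}(V)$; this comparison with Fontaine's classical constructions is the only step requiring real care. Everything else is formal bookkeeping within the present subsection, and, if one prefers not to invoke the full strength of \cite{Fo04}, the decompletion inputs used above are provided directly by Proposition \ref{FonSen} applied to $W=V\otimes_{\Q_p}B_{\dR}^+/(t^{k+1})$.
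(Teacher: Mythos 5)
Your proof is correct, but it takes a somewhat different route from the paper's. You outsource the substance to Fontaine's classification: you identify $E_0(W^{\Q_p})$ with $D_{\mathrm{pdR}}(V)$ (as asserted in \ref{relFon}), import Fontaine's dimension count $\dim_E D_{\mathrm{pdR}}(V)=\dim_E V=2$, observe that $D_{\dR}(V)=D_{\mathrm{pdR}}(V)^{\Gamma}=\ker\nabla$, and finish with the linear-algebra fact that a square-zero operator on a two-dimensional space vanishes if and only if its kernel is everything. The paper instead argues self-containedly inside the truncated module $W$: for the direction ``de Rham $\Rightarrow N_W=0$'' it uses $(V\otimes_{\Q_p}C(l))^{G_{\Q_p}}=0$ for $l\geq k+1$ to inject $D_{\dR}(V)$ into $E_0(W^{\Q_p})$ and compares dimensions, knowing $\dim_E E_0(W^{\Q_p})=2$ from the two one-dimensional graded pieces rather than from Fontaine's theorem; for the converse it lifts $W^{G_{\Q_p}}$ back to $(V\otimes_{\Q_p}B_{\dR}^+)^{G_{\Q_p}}$ using the vanishing of $H^1_{\cont}(G_{\Q_p},C(i))$ for $i>0$, which replaces your appeal to $\dim_E D_{\mathrm{pdR}}(V)=2$. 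The step you yourself flag as the one requiring real care --- that truncating modulo $t^{k+1}$ loses nothing of $D_{\mathrm{pdR}}(V)$ --- is exactly what the paper's observation $(V\otimes_{\Q_p}C(l))^{G_{\Q_p}}=0$ for $l\geq k+1$ handles, so you should spell it out (injectivity of $D_{\mathrm{pdR}}(V)\to W$ together with equality of dimensions) rather than leave it as a remark. Your version buys brevity and makes the link to \cite{Fo04} transparent; the paper's version buys independence from the untruncated theory at the cost of a small Galois-cohomology computation.
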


\begin{proof}
Since this result will play an important role later, we sketch a proof here. Recall that in this case, $V$ is de Rham if and only if $\dim_E D_{\dR}(V)=\dim_E V=2$, where $D_\dR(V)=(V\otimes_{\Q_p}B_{\dR}^+)^{G_{\Q_p}}$. By our assumption, $\left(V\otimes_{\Q_p}C(l)\right)^{G_{\Q_p}}=0$ when $l\geq k+1$. Hence 
\[(V\otimes_{\Q_p}B_{\dR}^+)^{G_{\Q_p}}\cap V\otimes_{\Q_p}t^{k+1}B_{\dR}^+=\{0\}.\]
In particular, the natural map 
\[(V\otimes_{\Q_p}B_{\dR}^+)^{G_{\Q_p}}\to V\otimes_{\Q_p}B_{\dR}^+/(t^{k+1})=W\]
is injective.  Hence it induces an inclusion
\[ D_{\dR}(V)\subseteq E^0(W^{\Q_p}).\]
If $\dim_E D_{\dR}(V)=2$, a consideration of the dimensions shows that this is in fact an equality $D_{\dR}(V)= E_0(W^{\Q_p})$. Since $G_{\Q_p}$ acts trivially on $D_{\dR}(V)$, it follows from the definition that $N_W=0$. 

Now suppose $N_W=0$.  Then $E_0(W^{\Q_p})$ is fixed by $G_{\Q_p}$, i.e. $W^{G_{\Q_p}}=E_0(W^{\Q_p})$.
The kernel of $B_{\dR}^+/(t^l)\otimes_{\Q_p}V\to W$ for $l\geq {k+1}$ is filtered by $C(i),i>0$ which has no $H^1_{\cont}(G_{\Q_p},\bullet)$. Hence the natural map $(B_{\dR}^+/(t^l)\otimes_{\Q_p}V)^{G_{\Q_p}}\to W^{G_{\Q_p}}$ is an isomorphism when $l\geq k+1$. Passing to the limit over $l$, we conclude that $\dim_E D_\dR(V)=2$.
\end{proof}

\begin{para}[Generalizations to LB-spaces] \label{LBsetup}
We briefly discuss how to generalize the construction of the Fontaine operator to Hausdorff LB-$B_{\dR}^+/(t^{k+1})$-modules. Let 
\[W^0\subseteq W^1\subseteq W^2\subseteq\cdots\]
be an increasing sequence of  Banach $B_{\dR}^+/(t^{k+1})$-modules $W^j$ equipped with continuous semilinear actions of $G_{K}$ whose transition maps are continuous, $G_{K}$-equivariant and $B_{\dR}^+/(t^{k+1})$-linear.  Denote the direct limit $\displaystyle \varinjlim_j W^j$ by $W$ and assume that this is a Hausdorff LB-space. We also denote the quotient $W/tW$ by $W_0$.
Assume  that
\begin{itemize}
\item $W$ is flat over $B_{\dR}^+/(t^{k+1})$. This implies that $W_0$ is also a Hausdorff LB-space by the same argument as in \ref{BBdrtkmod}.
\end{itemize}
We remark that we don't require each $W^j$ is flat over $B_{\dR}^+/(t^{k+1})$. As before, we denote by $W^K$ (resp. $W_0^K$) the subspace of $G_{K_\infty}$-invariant, $G_K$-analytic vectors in $W$ (resp. $W_0$). This definition makes sense in view of the discussion in \ref{roHLB}. The $t$-adic filtration on $W$ induces a decreasing filtration  $\Fil^\bullet$ on $W^K$.
 
We say $W$ is Hodge-Tate of weights $0,k$ if 
\begin{itemize}
\item $W_0$ is Hodge-Tate of weights $0,k$ in the sense that the natural map
\[W_0^K\widehat\otimes_K C\to W_0\]
is an isomorphism and the action of the Sen operator $1\in\Q_p=\Lie(\Gal(K_\infty/K))$ on $W_0^K$ is semi-simple with eigenvalues $0,-k$. Here $W_0^K\widehat\otimes_K C$ is understood as $\displaystyle \varinjlim_{j=0} Z^j \widehat\otimes_K C$ by writing  the LB-space $W_0^K=\bigcup_{j=0}^{+\infty}Z^j$ as an increasing sequence of $K$-Banach subspaces of $W_0^K$.
\end{itemize}
Following our notation in \ref{HT0l}, we have an eigenspace decomposition 
\[W_0^K=W_{0,0}^{K}\oplus W_{0,-k}^K\]
and an induced decomposition of $W_0$:
\[W_0=W_{0,0}\oplus W_{0,-k}.\]
Similarly, let $W_i=t^iW/t^{i+1}W$. Then $W_i\cong W_0(i)$ and there is a natural decomposition 
\[W_i=W_{i,0}\oplus W_{i,-k}.\]
\end{para}

\begin{prop} \label{Kcmgr}
Let $W$ be as in \ref{LBsetup} and assume that $W$ is Hodge-Tate of weights $0,k$. Then the natural maps
\[\gr^i W^K\to W^K_i\]
are isomorphisms for $i\geq 0$. Moreover for any finite extension $L$ of $K$ in $C$, we have
\[W^L=W^K\otimes_K L.\]
\end{prop}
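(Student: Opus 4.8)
The plan is to bootstrap from the Banach-space case, which is exactly Proposition \ref{FonSen} together with Lemma \ref{HTK}, by writing the LB-space $W$ as a suitable increasing union and applying those results termwise, then passing to the limit. First I would record the following reduction: since $W$ is assumed flat over $B_{\dR}^+/(t^{k+1})$ and Hodge-Tate of weights $0,k$ in the sense of \ref{LBsetup}, it suffices to produce, for each $i\geq 0$, a commutative ladder of exact sequences expressing the graded pieces, and then to take a cofinal subsystem. The main issue is that the given presentation $W=\varinjlim_j W^j$ need not consist of flat $B_{\dR}^+/(t^{k+1})$-modules, so I cannot apply Proposition \ref{FonSen} directly to each $W^j$. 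The fix is the one already used in \ref{discLB}: replace $W^j$ by its image in $W$, which is a closed subspace (since $W$ is Hausdorff) and hence carries a Banach structure, and by the Baire-category argument of \ref{discLB} these images still form a cofinal increasing system; moreover each image, being a $B_{\dR}^+/(t^{k+1})$-submodule of the flat module $W$, inherits the property that $t^i(\mathrm{im}\,W^j)=(\mathrm{im}\,W^j)\cap t^iW$, so it is itself a flat Banach $B_{\dR}^+/(t^{k+1})$-module. Thus without loss of generality I may assume each $W^j\subseteq W$ is a flat Banach $B_{\dR}^+/(t^{k+1})$-module, stable under $G_K$, with the induced $t$-adic filtration strictly compatible with that of $W$.

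Next I would check that each $W^j$ is Hodge-Tate of weights $0,k$ in the Banach sense of Definition \ref{HTBdR}. Since $(W^j)_0=W^j/tW^j$ embeds as a $G_K$-stable closed subspace of $W_0=W_{0,0}\oplus W_{0,-k}$ compatibly with the Hodge-Tate decomposition, and since by hypothesis $W_0=W_0^K\widehat\otimes_K C$ with the Sen operator semisimple of eigenvalues $0,-k$, Corollary \ref{strHT} applied to the closed embedding $(W^j)_0\hookrightarrow W_0$ shows $(W^j)_0=((W^j)_0)^K\widehat\otimes_K C$ and that the Sen operator on $((W^j)_0)^K$ is again semisimple with eigenvalues among $\{0,-k\}$. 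Hence Lemma \ref{HTK} applies to each $W^j$: the maps $\gr^i(W^j)^K\to (W^j)_i^K$ are isomorphisms and $(W^j)_0=C\widehat\otimes_K(W^j)_0^K$, for all $i\geq 0$, over $K$ itself (no finite extension needed). Taking the direct limit over $j$ — using that direct limits are exact, that $W^K=\varinjlim_j (W^j)^K$ and $W_i^K=\varinjlim_j (W^j)_i^K$ by the very definition of analytic vectors in LB-spaces (cf. \ref{roHLB}), and that the $t$-adic filtration is compatible with the direct limit — yields the isomorphisms $\gr^i W^K\xrightarrow{\ \sim\ } W_i^K$ for all $i\geq 0$.

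For the last assertion, that $W^L=W^K\otimes_K L$ for any finite extension $L$ of $K$ in $C$, I would again argue levelwise: Lemma \ref{HTK} (via the discussion above Proposition \ref{exeTtr} and the standard Tate normalized trace argument, as in Corollary \ref{strHT}) gives $(W^j)^L=(W^j)^K\otimes_K L$ for each $j$, because on each graded piece this is the statement $(W^j)_i^L=(W^j)_i^K\otimes_K L$, which holds since $(W^j)_i=C\widehat\otimes_K (W^j)_i^K$ and one may choose an orthonormal basis of $C$ over $K$. Passing to the direct limit over $j$ gives $W^L=\varinjlim_j (W^j)^L=\varinjlim_j (W^j)^K\otimes_K L=W^K\otimes_K L$, where in the middle step I use that $-\otimes_K L$ is exact and commutes with filtered colimits. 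The only genuine subtlety, and the step I would be most careful about, is verifying that the "replace $W^j$ by its image in $W$" maneuver is legitimate — i.e. that the images are closed, that they are flat over $B_{\dR}^+/(t^{k+1})$, and that they remain cofinal — but all three points are exactly instances of what is already done in \ref{discLB} and \ref{BBdrtkmod}, so no new input is required beyond carefully citing those passages.
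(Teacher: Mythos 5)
There is a genuine gap: the ``without loss of generality each $W^j$ is a flat Banach $B_{\dR}^+/(t^{k+1})$-submodule with strictly compatible filtration'' reduction fails, and it is precisely the difficulty the proposition is designed to circumvent (recall that \ref{LBsetup} explicitly warns that the $W^j$ are \emph{not} required to be flat). Over the Artinian local ring $R=B_{\dR}^+/(t^{k+1})$ a submodule of a flat module need not be flat: already $tR\subseteq R$ is not flat, and $t\cdot(tR)\neq (tR)\cap tR$. So your claim that the image $M=\im(W^j\to W)$ inherits $t^iM=M\cap t^iW$ from the flatness of $W$ is false; taking $W$ to be a large flat LB-module and $W^0$ a Banach subspace landing inside $tW$ gives a counterexample. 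Once strictness fails, $(W^j)_0=W^j/tW^j$ need not inject into $W_0$ (elements of $W^j\cap tW$ not in $tW^j$ die in $W_0$ but not in $(W^j)_0$), so there is no closed embedding $(W^j)_0\hookrightarrow W_0$ to feed into Corollary \ref{strHT}, and Lemma \ref{HTK} cannot be applied termwise. (A smaller inaccuracy: the image of $W^j$ in $W$ is not a closed subspace of $W$ in general; it carries a Banach structure only as the quotient $W^j/\ker$, as in \ref{discLB}.)

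The paper's proof takes a different route that avoids making the approximating system nice. It writes $W_0=\bigcup_j Y^j$ as an increasing union of $C$-Banach Hodge-Tate representations of weights $0,k$ (this uses only the hypothesis on $W_0$, not on the $W^j$), observes $W_i=\bigcup_j Y^j(i)=\bigcup_j\gr^iW^j$, and invokes Proposition \ref{1.1.10} to interleave: for every $j$ there are $j'\leq j''$ with $\gr^iW^j\subseteq Y^{j'}(i)\subseteq\gr^iW^{j''}$. One then reruns the arguments of Proposition \ref{FonSen} and Lemma \ref{HTK} directly on $W$, transferring the needed inputs (e.g. $H^1_{\cont}(G_{K_\infty},Y^{j'}(i))=0$ implies $H^1_{\cont}(G_{K_\infty},W_i)=0$, hence surjectivity of $W^{G_{K_\infty}}\to W_0^{G_{K_\infty}}$, then the Tate normalized trace and eigenvalue arguments) from the Banach pieces $Y^{j'}(i)$ of the \emph{graded} modules $W_i$ rather than from Banach pieces of $W$ itself. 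Your argument would need to be restructured along these lines; the termwise application of Lemma \ref{HTK} cannot be repaired.
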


\begin{proof}
The key observation is as follows. By our assumption, we can write 
\[W_0=\bigcup_{j=0}^{+\infty}Y^j\]
as an increasing sequence of $C$-Banach Hodge-Tate representations of $G_K$ of weights $0,k$.  Hence 
\[W_i=\bigcup_{j=0}^{+\infty}Y^j(i)=\bigcup_{j=0}^{+\infty}\gr^i W^j.\]
By Proposition \ref{1.1.10}, this implies that for any $j\geq 0$, there exist $j',j''\geq j$ such that 
\[\gr^i W^j\subseteq Y^{j'}(i)\subseteq \gr^i W^{j''}.\]

The rest of the argument is almost the same as  the proof of Proposition \ref{FonSen} and Lemma \ref{HTK}.  For example, using the vanishing of $H^1_\cont(G_{K_\infty}, Y^{j'}(i))$ in the proof of the first part of Lemma \ref{surj^K}, we see that $H^1_\cont(G_{K_\infty}, W_i)=0$ and $W^{G_{K_\infty}}\to W_0^{G_{K_\infty}}$ is surjective. We omit the details here.
\end{proof}

\begin{para} \label{NWLB}
Now we can carry out the same construction in \ref{FonOpsetup}. Let $W$ be as in \ref{LBsetup} and assume that $W$ is Hodge-Tate of weights $0,k$. Denote by $\nabla$ the action of $1\in\Q_p=\Lie(\Gal(K_\infty/K))$ on $W^K$ and denote by $E_0(W^K)$ the generalized eigenspace associated to the eigenvalue $0$. There is a natural exact sequence
\[0\to W^K_{k,-k}\to E_0(W^K)\to W^K_{0,0}\to 0.\]
Then $\nabla$ induces a map $W^K_{0,0}\to W^K_{k,-k}$. We can $C$-linearly extend it to a map
\[N_W:W_{0,0}\to W_{k,-k}=W_{0,-k}(k)\]
which is called the Fontaine operator. Clearly $N_W$ is functorial in $W$.

Our last result extends Proposition \ref{Bmor} to this setup. 
\end{para}

\begin{prop} \label{LBfXY}
Let $f:X\to Y$ be a continuous $B_{\dR}^+/(t^k)$-linear, $G_K$-equivariant maps between Hausdorff  flat LB $B^{+}_\dR/(t^k)$-modules equipped with continuous semilinear actions of $G_K$ such that $\varphi^K_{X,0}, \varphi^K_{Y,0}$ are isomorphisms. Suppose that
\begin{itemize}
\item  $X=\bigcup_n X_n$ and $Y=\bigcup_n Y_n$ are unions of increasing sequences of $G_K$-stable $B^{+}_\dR/(t^k)$-Banach modules;
\item $X_n$ and $Y_n$ are Hodge-Tate of weights $0,k$ for $n\geq 0$;
\item $f(X_n)\subseteq Y_n$ and $f|_{X_n}:X_n\to Y_n$ is strict; hence $\ker f=\bigcup_n \ker f|_{X_n}$ and $\coker f=\bigcup_n Y_n/f(X_n)$ are  natural $B^{+}_\dR/(t^k)$-LB modules.
\item $\coker f$ is Hausdorff, (while $\ker f$ is automatically Hausdorff;)
\item $f$ is strict with respect to  the $t$-adic filtrations on $X,Y$.
\end{itemize}
Then $\ker f$ and $\coker f$ are also Hodge-Tate of weights $0,k$ and flat over $B^{+}_\dR/(t^k)$. Moreover, $\varphi^K_{\ker f,0}, \varphi^K_{\coker f,0}$ are isomorphisms and the natural sequence
\[
0\to (\ker f)^K\to X^K\to Y^K\to (\coker f)^K\to 0
\]
is exact.
\end{prop}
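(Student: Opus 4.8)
The proof is the LB-space counterpart of the proof of Proposition \ref{Bmor}: I would run that argument at the level of the map $f_0\colon X_0\to Y_0$ induced modulo $t$, replacing Corollary \ref{strHT} by its LB-analogue and Proposition \ref{FonSen} by Proposition \ref{Kcmgr}, and the Hausdorffness hypotheses are exactly what make these replacements legitimate. First, after possibly refining the presentations $\{X_n\},\{Y_n\}$ as in \ref{discLB} (so that all transition maps are injective and, together with the given conditions $f(X_n)\subseteq Y_n$ and $f|_{X_n}$ strict, one also has strictness of $f|_{X_n}$ for the $t$-adic filtrations), the flatness part goes through verbatim: strictness of $f$ for the $t$-adic filtrations gives exactness of $0\to t^i\ker f\to t^iX\to t^iY\to t^i\coker f\to 0$ for every $i$, which with flatness of $X,Y$ over $B_{\dR}^+/(t^k)$ forces $f(X)$, $\ker f$ and $\coker f$ to be flat; passing to $t$-adic graded pieces yields the exact sequence $0\to(\ker f)_0\to X_0\xrightarrow{f_0}Y_0\to(\coker f)_0\to 0$ of Hausdorff LB $C$-semilinear $G_K$-representations with $f_0$ strict, and $(\ker f)_0=\ker f_0$, $(\coker f)_0=\coker f_0$, with $W_i\cong W_0(i)$ functorially for $W=\ker f,X,Y,\coker f$.

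Next I would establish the LB-analogue of Corollary \ref{strHT} applied to $f_0$. At each finite level $n$ the Banach map $X_n\to Y_n$ is a strict map of Hodge-Tate representations of weights $0,k$, so Corollary \ref{strHT} (parts (1) and (3)) and the final clause of Proposition \ref{FonSen} give, over some finite extension $L_n/K$ in $K_\infty$, an exact sequence $0\to(\ker f|_{X_n})^{L_n}\to X_n^{L_n}\to Y_n^{L_n}\to(Y_n/f(X_n))^{L_n}\to 0$; since $W^{L_n}=W^K\otimes_K L_n$ for these objects, taking $\Gal(L_n/K)$-invariants brings everything over $K$. Taking the colimit over $n$ is exact, and — this is where Hausdorffness of $\coker f$ together with Proposition \ref{1.1.10} enter — the natural map $\varinjlim_n(Y_n/f(X_n))^K\to(\coker f)^K$ is an isomorphism (the analogous statements for $\ker f,X,Y$ being automatic). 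This shows that $(\ker f)_0,(\coker f)_0$ are Hodge-Tate LB-spaces of weights $0,k$, that $\varphi^K_{\ker f,0}$ and $\varphi^K_{\coker f,0}$ are isomorphisms, and that $0\to(\ker f)_0^K\to X_0^K\to Y_0^K\to(\coker f)_0^K\to 0$ is exact.

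Finally, $\ker f$ and $\coker f$ are flat over $B_{\dR}^+/(t^k)$ with reductions mod $t$ that are Hodge-Tate LB-spaces of weights $0,k$, hence are Hodge-Tate of weights $0,k$ in the sense of \ref{LBsetup}; Proposition \ref{Kcmgr} then applies to all four of $\ker f,X,Y,\coker f$, giving $\gr^iW^K\cong W_i^K$ for the $t$-adic filtration. The sequence $0\to(\ker f)^K\to X^K\to Y^K\to(\coker f)^K\to 0$ carries the $t$-adic filtration, which has finitely many steps, and on the $i$-th graded piece it is — via $W_i\cong W_0(i)$ and because each $f_i$ is a Tate twist of the strict map $f_0$ — the exact sequence of the previous paragraph twisted by $C(i)$, to which the LB-analogue of Corollary \ref{strHT} applies again. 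Exactness on every graded piece gives exactness of the whole sequence.

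The main obstacle is the passage to the direct limit inside the LB-analogue of Corollary \ref{strHT}: the finite extensions $L_n$ furnished by Proposition \ref{FonSen} at level $n$ may grow with $n$, so the Banach-level exact sequences must be glued into a single exact sequence over $K$ in the colimit, and the colimit of the cokernel terms must be shown to compute $(\coker f)^K$ — this is precisely where the hypothesis that $\coker f$ be Hausdorff is indispensable, via Proposition \ref{1.1.10}. A lesser nuisance is verifying at each finite level the hypotheses of Proposition \ref{Bmor}/Corollary \ref{strHT} (flatness of $X_n,Y_n$ and strictness of $f|_{X_n}$ for the $t$-adic filtration), which is the reason for the preliminary refinement of the presentations; alternatively one can work throughout with an ``eventual'' form of $t$-adic strictness at finite level, which is harmless after passing to the colimit.
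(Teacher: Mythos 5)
Your proposal is correct and takes essentially the same approach as the paper: the paper's entire proof of this proposition is ``Same proof as Proposition \ref{Bmor}'', and your write-up is exactly that adaptation, with the LB-specific points (Proposition \ref{Kcmgr} replacing Proposition \ref{FonSen}, and Hausdorffness of $\coker f$ entering via Proposition \ref{1.1.10} to control the colimit of the finite-level cokernels) correctly identified and supplied.
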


\begin{proof}
Same proof as Proposition \ref{Bmor}.
\end{proof}

\begin{cor} \label{Ncokerf}
Same setup as in Proposition \ref{LBfXY}. Let $N_W:W_{0,0}\to W_{k,-k}$ denote the Fontaine operator of $W$ for $W=\ker f, X,Y, \coker f$. Then the following diagram commutes.
\[
\begin{tikzcd}
0 \arrow[r] & (\ker f)_{0,0}  \arrow[d,"N_{\ker f}"]\arrow[r] & X_{0,0} \arrow[r] \arrow[d, "N_X"] & Y_{0,0}  \arrow[r] \arrow[d,"N_Y"]  & (\coker f)_{0,0}  \arrow[d,"N_{\coker f}"]\arrow[r]& 0 \\
0 \arrow[r] & (\ker f)_{k,-k} \arrow[r] & X_{k,-k} \arrow[r] & Y_{k,-k}  \arrow[r]   & (\coker f)_{k,-k} \arrow[r]& 0
\end{tikzcd}.
\]
\end{cor}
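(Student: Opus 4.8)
The plan is to deduce Corollary \ref{Ncokerf} from Proposition \ref{LBfXY} together with the functoriality of the Fontaine operator recorded in \ref{NWLB}. First I would apply Proposition \ref{LBfXY} to $f$: its conclusion is that $\ker f$, $X$, $Y$ and $\coker f$ are all Hausdorff flat LB $B_\dR^+$-modules which are Hodge-Tate of weights $0,k$, that $\varphi^K_{W,0}$ is an isomorphism for $W\in\{\ker f,X,Y,\coker f\}$, and that the sequence
\[0\to(\ker f)^K\to X^K\to Y^K\to(\coker f)^K\to 0\]
of $G_{K_\infty}$-fixed, $G_K$-analytic vectors is exact. In particular the hypotheses of \ref{NWLB} hold for each of the four modules, so the Fontaine operators $N_{\ker f}$, $N_X$, $N_Y$, $N_{\coker f}$ are all defined, which produces the four vertical arrows of the diagram.

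Second, I would establish exactness of the top and bottom rows. Since $f$ is strict for the $t$-adic filtrations and $0\to\ker f\to X\to Y\to\coker f\to 0$ is strict exact, one gets (exactly as in the proof of Proposition \ref{Bmor}) that $0\to t^i\ker f\to t^iX\to t^iY\to t^i\coker f\to 0$ is exact for every $i$, hence the graded sequences $0\to(\ker f)_i\to X_i\to Y_i\to(\coker f)_i\to 0$ are exact with strict maps. Passing to $G_{K_\infty}$-fixed, $G_K$-analytic vectors, using $\gr^i W^K\cong W^K_i$ (Proposition \ref{Kcmgr}) and arguing with orthonormal bases of $C$ over $K$ as in Corollary \ref{strHT}, these remain exact; and since the transition maps are $C$-linear and $G_K$-equivariant they commute with the (semisimple) Sen operator, hence preserve the weight decompositions $W^K_i=W^K_{i,0}\oplus W^K_{i,-k}$ of \ref{HT0l} and \ref{LBsetup}. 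Taking the weight-$0$ summand for $i=0$ and the weight-$(-k)$ summand for $i=k$, and then extending scalars to $C$ via the isomorphisms $\varphi^K$, gives the exactness of the two rows.

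Third, I would obtain commutativity of the squares from naturality. The three horizontal transition maps of the diagram are induced by the $G_K$-equivariant, $B_\dR^+$-linear maps $\ker f\hookrightarrow X$, $f\colon X\to Y$ and $Y\twoheadrightarrow\coker f$; on $K$-analytic vectors each of them commutes with $\nabla$, the action of $1\in\Q_p\cong\Lie(\Gal(K_\infty/K))$, because $\nabla$ is an infinitesimal group action. Hence they carry the generalized eigenspace $E_0(\bullet^K)$ of \ref{FonOpsetup} into the corresponding one, compatibly with the restriction of $\nabla$; since $N_\bullet$ is by definition the map $W^K_{0,0}\cong E_0(W^K)/W^K_{k,-k}\xrightarrow{\nabla}W^K_{k,-k}$ followed by $C$-linear extension, each square commutes. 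This is precisely the functoriality of $N_W$ asserted in \ref{NWLB}, applied to the three maps above, and together with the second step it proves the corollary.

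The only genuine work lies in the second step — propagating strictness of $f$ through the $t$-adic graded pieces, and checking that exactness survives both the passage to $\bullet^K$ and the splitting into individual Sen-weight components — but all of this is routine once Propositions \ref{LBfXY} and \ref{Kcmgr} are in hand, so I do not expect a real obstacle.
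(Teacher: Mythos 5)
Your proposal is correct and follows exactly the route the paper takes: the paper's own proof is the one-line remark that the corollary is a direct consequence of Proposition \ref{LBfXY} (which gives the exact sequence of $G_{K_\infty}$-fixed, $G_K$-analytic vectors and hence the exact rows after passing to Sen-weight components) together with the functoriality of the Fontaine operator from \ref{NWLB} (which gives the commuting squares). You have simply written out the details that the paper leaves implicit.
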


\begin{proof}
Direct consequences of Proposition \ref{LBfXY} and the construction of the Fontaine operator.
\end{proof}

\subsection{de Rham period sheaves} \label{dRps}
\begin{para}
In this subsection, we apply our discussion in the previous subsection to the infinite level modular curves. To do this, we need the construction of de Rham period sheaves, i.e. a relative construction of $B_{\dR}^+$, cf. \ref{CBdR+}. Our reference here is \cite[\S 6]{Sch13}.

First we have the sheaf $\A_{\inf,\mathcal{X}_{K^p}}$ on $\mathcal{X}_{K^p}$ which assigns an 
affinoid perfectoid open subset $V_\infty=\Spa(B,B^+)\subseteq\mathcal{X}_{K^p}$ to $W(B^{\flat+})$,  the ring of Witt vectors of $\displaystyle B^{\flat+}:=\varprojlim_{x\mapsto x^p} B^+/p$, the tilt of $B^+$.
There is the usual surjective ring homomorphism
\[\theta: \A_{\inf,\mathcal{X}_{K^p}}\to\cO^+_{\mathcal{X}_{K^p}}.\]
By abuse of notation, we also denote by $\theta: \A_{\inf,\mathcal{X}_{K^p}}[\frac{1}{p}]\to \cO_{\mathcal{X}_{K^p}}$ the rational version. The (positive) de Rham ring $\B_{\dR,\mathcal{X}_{K^p}}^+$ is defined as the $\ker(\theta)$-adic completion of $\A_{\inf,\mathcal{X}_{K^p}}[\frac{1}{p}]$. The natural inclusion $C\subseteq \cO_{\mathcal{X}_{K^p}}$ defines a map $B_{\dR}^+\to \B_{\dR,\mathcal{X}_{K^p}}^+$. As explained in \cite[Lemma 6.3]{Sch13}, $t\in B_{\dR}^+$ is also a generator of the kernel of $\B_{\dR,\mathcal{X}_{K^p}}\to \cO_{\mathcal{X}_{K^p}}$. We define a decreasing filtration on $\B_{\dR,\mathcal{X}_{K^p}}^+$ by $\Fil^i\B_{\dR,\mathcal{X}_{K^p}}^+=t^i\B_{\dR,\mathcal{X}_{K^p}}^+,i\geq 0$. Then $\gr^i\B_{\dR,\mathcal{X}_{K^p}}^+\cong \cO_{\mathcal{X}_{K^p}}(i),i\geq 0$. Set
\[\B_{\dR}^+:={\pi_{\HT}}_*\B_{\dR,\mathcal{X}_{K^p}}^+.\]
Clearly $\gr^i \B_{\dR}^+\cong\cO_{K^p}(i)$.

We will only consider truncated sheaves. More precisely, let $k\geq 0$ be an integer. Set
\[\B_{\dR,k}^+:=\B_{\dR}^+/(t^k).\]
Let $U\in\mathfrak{B}$ be  an affinoid open subset of $\Fl$ with $V_\infty:=\pi_{\HT}^{-1}(U)=\Spa(B,B^+)$. Then 
$\B_{\dR}^+/(t^k)(U)$ is naturally a flat $B_{\dR}^+/(t^k)$-Banach module with the unit open ball given by the image of $\A_{\inf,\mathcal{X}_{K^p}}(V_\infty)\to \B_{\dR,\mathcal{X}_{K^p}}^+(V_\infty)/(t^k)$. It is clear from the construction that $\GL_2(\Q_p)$ acts on $\B_{\dR,k}^+$. We denote by
\[\B_{\dR,k}^{+,\la}\subset \B_{\dR,k}^+\]  
the subsheaf of $\GL_2(\Q_p)$-locally analytic sections. There is a natural decreasing filtration on $\B_{\dR,k}^{+,\la}$ induced from the $t$-adic filtration on $\B_{\dR,k}^+$. By the same argument in \ref{stpHT}, we see that $G_K$ acts naturally on $\B_{\dR,k}^+(U)$ for some finite extension $K$ of $\Q_p$ in $C$.

The Lie algebra $\Lie(\GL_2(\Q_p))=\mathfrak{gl}_2(\Q_p)$ acts naturally on $\B_{\dR,k}^{+,\la}$. Let $Z:=Z(U(\mathfrak{gl}_2(\Q_p)))$ the centre of the universal enveloping algebra of $\mathfrak{gl}_2(\Q_p)$ over $\Q_p$. Given an infinitesimal character $\tilde\chi:Z\to\Q_p\subseteq B_{\dR}^+$, \footnote{We can allow infinitesimal characters valued in a finite extension of $\Q_p$. But since only integral weights are considered in this paper, we restrict ourselves to characters valued in $\Q_p$ here.} we denote by 
\[\B_{\dR,k}^{+,\la,\tilde\chi}\subset \B_{\dR,k}^{+,\la}\] 
the $\tilde\chi$-isotypic part. Then there is also an induced decreasing filtration on $\B_{\dR,k}^{+,\la,\tilde\chi}$. On the other hand,  we  denote by $\cO^{\la,\tilde\chi}_{K^p}$  the $\tilde\chi$-isotypic part of $\cO^{\la}_{K^p}$. There are natural maps
\[\gr^i \B_{\dR,k}^{+,\la} \to \cO^{\la}_{K^p}(i),\]
\[\gr^i\B_{\dR,k}^{+,\la,\tilde\chi}\to \cO^{\la,\tilde\chi}_{K^p}(i)\]
for $\gr^{i}\B_{\dR,k}^{+,\la}=\cO^{\la}_{K^p}(i),i=0,\cdots,k-1$.
\end{para}

\begin{lem} \label{BdRcmgr}
For $i=0,\cdots,k-1$, the natural maps
\begin{enumerate}
\item $\gr^i \B_{\dR,k}^{+,\la} \to \cO^{\la}_{K^p}(i)$,
\item $\gr^i\B_{\dR,k}^{+,\la,\tilde\chi}\to \cO^{\la,\tilde\chi}_{K^p}(i)$
\end{enumerate}
are isomorphisms. In particular, $\B_{\dR,k}^{+,\la}$ and $\B_{\dR,k}^{+,\la,\tilde\chi}$ are flat over $B_{\dR}^+/(t^k)$.
\end{lem}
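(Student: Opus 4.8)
The plan is to compute the graded pieces of $\B^{+,\la}_{\dR,k}$ directly from the $t$-adic filtration on $\B^{+}_{\dR,k}$, using the $\mathfrak{LA}$-acyclicity results of \cite{Pan20}. It suffices to check the asserted isomorphisms on sections over $U\in\mathfrak{B}$, where by definition $\B^{+,\la}_{\dR,k}(U)=(\B^{+}_{\dR,k}(U))^{\la}$ and $\cO^{\la}_{K^p}(U)=(\cO_{K^p}(U))^{\la}$. Fontaine's element $t$ lies in $B^+_{\dR}\subseteq\B^+_{\dR}$, hence carries the trivial $\GL_2(\Q_p)$-action, so multiplication by $t$ is $\GL_2(\Q_p)$-equivariant; combined with the flatness of $\B^+_{\dR,k}(U)$ over $B^+_{\dR}/(t^k)$ recorded above, this gives for each $0\le j\le k-1$ a strict short exact sequence of $G_0$-Banach representations
\[0\to t^{j+1}\B^+_{\dR,k}(U)\to t^j\B^+_{\dR,k}(U)\to \cO_{K^p}(U)(j)\to 0 .\]
In particular $t^j\B^+_{\dR,k}(U)$ is a finite iterated extension of the Banach spaces $\cO_{K^p}(U)(i)$, $i=j,\dots,k-1$.

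First I would record that $\cO_{K^p}(U)$ is $\mathfrak{LA}$-acyclic by \cite{Pan20}, hence so is each Tate twist $\cO_{K^p}(U)(i)$ (a Tate twist only changes the $G_{\Q_p}$-action, not the $\GL_2(\Q_p)$-action), and hence so is every $t^j\B^+_{\dR,k}(U)$, $0\le j\le k$, since $\mathfrak{LA}$-acyclicity is stable under extensions via the long exact sequence for $R^\bullet\mathfrak{LA}$. Two things then follow. Applying $(\cdot)^{\la}$ to the displayed sequence is exact (the connecting map lands in $R^1\mathfrak{LA}(t^{j+1}\B^+_{\dR,k}(U))=0$), so
\[0\to (t^{j+1}\B^+_{\dR,k}(U))^{\la}\to (t^j\B^+_{\dR,k}(U))^{\la}\to \cO^{\la}_{K^p}(U)(j)\to 0 .\]
And applying $(\cdot)^{\la}$ to the $\GL_2(\Q_p)$-equivariant isomorphism $\B^+_{\dR,k}(U)/t^{k-j}\B^+_{\dR,k}(U)\xrightarrow{\ \sim\ } t^j\B^+_{\dR,k}(U)$ given by multiplication by $t^j$, and using $R^1\mathfrak{LA}(t^{k-j}\B^+_{\dR,k}(U))=0$, shows that the image of $t^j$ on $\B^{+,\la}_{\dR,k}(U)$ is exactly $(t^j\B^+_{\dR,k}(U))^{\la}=\B^{+,\la}_{\dR,k}(U)\cap t^j\B^+_{\dR,k}(U)$, the last identification by Lemma \ref{Gnanprecle}; that is, the $t$-adic filtration on $\B^{+,\la}_{\dR,k}(U)$ coincides with the filtration induced from $\B^+_{\dR,k}(U)$.

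Putting these together, $\gr^i\B^{+,\la}_{\dR,k}(U)=(t^i\B^+_{\dR,k}(U))^{\la}/(t^{i+1}\B^+_{\dR,k}(U))^{\la}\cong\cO^{\la}_{K^p}(U)(i)$ for $0\le i\le k-1$, and this isomorphism is visibly the natural one (induced by multiplication by $t^i$); sheafifying gives part (1). Flatness of $\B^{+,\la}_{\dR,k}$ over $B^+_{\dR}/(t^k)$ is then immediate, since the multiplication-by-$t^i$ maps $\B^{+,\la}_{\dR,k}/t\B^{+,\la}_{\dR,k}=\cO^{\la}_{K^p}\xrightarrow{\ \sim\ }\gr^i\B^{+,\la}_{\dR,k}$ are isomorphisms for all $0\le i\le k-1$, which is exactly the flatness criterion over the Artinian local ring $B^+_{\dR}/(t^k)$.

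For the $\tilde\chi$-isotypic statement I would use that extracting the $\tilde\chi$-isotypic part for an infinitesimal character $\tilde\chi$ of $Z(U(\mathfrak{gl}_2(\Q_p)))$ is exact (extraction of a direct summand) and commutes with $\gr$, with $(\cdot)^{\la}$, and with Tate twists (as $Z(U(\mathfrak{gl}_2(\Q_p)))$ acts trivially on $C(i)$), and that multiplication by $t$ is $\mathfrak{gl}_2(\Q_p)$-linear — since $\mathfrak{gl}_2(\Q_p)$ annihilates $t$ one has $X(tf)=t(Xf)$ — hence $Z(U(\mathfrak{gl}_2(\Q_p)))$-equivariant, so it preserves $\tilde\chi$-isotypic parts. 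Therefore $\gr^i\B^{+,\la,\tilde\chi}_{\dR,k}=(\gr^i\B^{+,\la}_{\dR,k})^{\tilde\chi}\cong(\cO^{\la}_{K^p}(i))^{\tilde\chi}=\cO^{\la,\tilde\chi}_{K^p}(i)$, and flatness follows as above. The only step that is not purely formal is establishing that passage to locally analytic vectors is exact along the $t$-adic filtration — equivalently, that this filtration stays strict after taking locally analytic vectors — which is precisely what the $\mathfrak{LA}$-acyclicity of $\cO_{K^p}(U)$ and its twists, together with their stability under extensions, provides.
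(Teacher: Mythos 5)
Your treatment of part (1) is correct and is essentially the paper's argument: the $t$-adic filtration of $\B^{+}_{\dR,k}(U)$ has graded pieces $\cO_{K^p}(U)(i)$, these are $\mathfrak{LA}$-acyclic by \cite[Proposition 4.3.15]{Pan20}, $\mathfrak{LA}$-acyclicity propagates through the finite filtration, and exactness of $(\cdot)^{\la}$ along the filtration gives both the identification of the graded pieces and (via the local criterion over the Artinian ring $B^+_{\dR}/(t^k)$) the flatness.

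Part (2), however, has a genuine gap. You assert that extracting the $\tilde\chi$-isotypic part is exact because it is ``extraction of a direct summand.'' It is not: here the $\tilde\chi$-isotypic part means the genuine eigenspace $\Hom_{Z}(\tilde\chi,\,\cdot\,)$ for the action of $Z=Z(U(\mathfrak{gl}_2(\Q_p)))$, which is only left exact, and $\cO^{\la}_{K^p}(U)$ is emphatically not a direct sum of its $Z$-isotypic components (it contains all infinitesimal characters and is a completion, not a direct sum). Already for a short exact sequence of $Z$-modules such as $0\to(x)\to \Q_p[x]/(x^2)\to\Q_p\to 0$ the eigenspace functor fails to be right exact, so the step $\gr^i\B^{+,\la,\tilde\chi}_{\dR,k}=(\gr^i\B^{+,\la}_{\dR,k})^{\tilde\chi}$ needs an argument: concretely, one must kill the connecting maps into $\Ext^1_{Z}(\tilde\chi,\,t^{i+1}\B^{+,\la}_{\dR,k}(U))$. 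The paper does exactly this by proving $\Ext^i_{Z}(\tilde\chi,\cO^{\la}_{K^p}(U))=0$ for $i\geq 1$: via Harish-Chandra one identifies $Z$ with $\Q_p[x_1,x_2^2]\subseteq S(\kh)\cong C[x_1,x_2]$ acting through the horizontal Cartan action $\theta_{\kh}$, computes these $\Ext$ groups by Koszul complexes, and invokes \cite[Lemma 5.1.2.(1)]{Pan20} for the vanishing of $\Ext^i_{S(\kh)}(\chi,\cO^{\la}_{K^p}(U))$, $i\geq 1$. This nontrivial input about the $S(\kh)$-module structure of $\cO^{\la}_{K^p}(U)$ is what your proposal is missing; without it the exactness of the isotypic-part functor along the $t$-adic filtration is unjustified.
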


\begin{proof} 
Both are local statements on $\Fl$. Fix $U\in\mathfrak{B}$ which is $G_0=1+p^m M_2(\Z_p)$-stable. For the first part, since $\B_{\dR,k}^+(U)$ is filtered by $\cO_{K^p}(U)(i)$, it suffices to show that $\cO_{K^p}(U)$ is $\mathfrak{LA}$-acyclic respect to the action of $G_0$. This is \cite[Proposition 4.3.15]{Pan20}.

For the second part, by the same argument, it is enough to show that 
\[\Ext^i_{Z}(\tilde\chi, \cO^{\la}_{K^p}(U))=\Ext^i_Z(\Q_p,\cO^{\la}_{K^p}(U)(\tilde\chi^{-1}))=0,i\geq 1.\]
Recall that there is the horizontal action $\theta_\kh$ of $\kh=\begin{pmatrix} * & 0\\ 0 &*\end{pmatrix}\subseteq\mathfrak{gl}_2(C)$ on $\cO^{\la}_{K^p}(U)$, cf. \ref{brr}. Moreover it follows from \cite[Corollary 4.2.8]{Pan20}  that  the action of $Z$ on $\cO^{\la}_{K^p}(U)$ factors through  $S(\kh)$, the symmetric algebra of $\kh$ (over $C$). In fact, by Harish-Chandra's isomorphism, we can choose an isomorphism $S(\kh)\cong C[x_1,x_2]$ and identify $Z$ with the subalgebra $\Q_p[x_1,x_2^2]$. Hence $\Ext^i_{Z}(\Q_p,\cdot)$ (resp. $\Ext^i_{S(\kh)}(C,\cdot)$) can be computed by the Koszul complex with respect to $x_1,x_2^2$ (resp. $x_1,x_2$).

Choosing a character $\chi:S(\kh)\to C$ extending $\tilde\chi$. By \cite[Lemma 5.1.2.(1)]{Pan20},
\[\Ext^i_{S(\kh)}(\chi, \cO^{\la}_{K^p}(U))=\Ext^i_{S(\kh)}(C, \cO^{\la}_{K^p}(U)(\chi^{-1}))=0,i\geq 1.\]
It is easy to see that this implies the vanishing of $\Ext^i_Z(\Q_p,\cO^{\la}_{K^p}(U)(\tilde\chi^{-1})),i\geq 1$ using the Koszul complexes.
\end{proof}

\begin{para} \label{tilchik}
By Harish-Chandra's isomorphism, $Z\otimes_{\Q_p}C=Z(U(\mathfrak{gl}_2(C)))\cong S(\kh)^W$, where $W$ denotes the Weyl group of $\mathfrak{gl}_2$ and acts on $S(\kh)$ via the dot action: $w\cdot \mu=w(\mu+\rho)-\rho,\mu\in\kh^*,w\in W$. Here $\rho$ denotes the half sum of the positive roots as usual. In particular, $\tilde\chi$ is identified with a $W$-orbit of $\kh^*$.

Let $k$ be a positive integer. Consider 
\[\tilde\chi_k=\{(0,1-k), (-k,1)\}\subseteq \kh^*,\]
the infinitesimal character  of the $(k-1)$-th symmetric power of the dual of the standard representation.
It follows from the relation between $\theta_\kh$ and the infinitesimal character  \cite[Corollary 4.2.8]{Pan20} that on $\cO^{\la,\tilde\chi_k}_{K^p}$, 
\[\left(\theta_\kh(\begin{pmatrix} a & 0\\ 0 & d\end{pmatrix})-(1-k)a \right)\left(\theta_\kh(\begin{pmatrix} a & 0\\ 0 & d\end{pmatrix})-(a-kd) \right)=0.\]
Hence we  have a natural decomposition
\[\cO^{\la,\tilde\chi_k}_{K^p}=\cO^{\la,(1-k,0)}_{K^p}\oplus \cO^{\la,(1,-k)}_{K^p}.\]
By Proposition \ref{GnanHT}, $\cO^{\la,(1-k,0)}_{K^p}(U)$ (resp. $ \cO^{\la,(1,-k)}_{K^p}(U)$) is  Hodge-Tate of weight $0$ (resp. $k$)  for any $U\in\Fl$.   This shows that $\cO^{\la,\tilde\chi_k}_{K^p}(U)$ is Hodge-Tate of weights $0,k$.
\end{para}

\begin{para} \label{noBdRK}
Let $U\in\mathfrak{B}$. Then  $G_K$ acts naturally on $\B_{\dR,k+1}^{+,\la,\tilde\chi_k}(U)$ for some finite extension $K$ of $\Q_p$ in $C$. We just verified that $\B_{\dR,k+1}^{+,\la,\tilde\chi_k}(U)$  satisfies all the assumptions in \ref{LBsetup}. Therefore  we can carry out the construction in \ref{NWLB} and get the Fontaine operator 
\[\cO^{\la,(1-k,0)}_{K^p}(U) \to \cO^{\la,(1,-k)}_{K^p}(U)(k).\]
By the functorial property of the Fontaine operator, this actually defines a map of sheaves
\[\cO^{\la,(1-k,0)}_{K^p} \to \cO^{\la,(1,-k)}_{K^p}(k).\]
\end{para}

\begin{defn}
Let $k$ be a positive integer. We denote this map by 
\[N_k: \cO^{\la,(1-k,0)}_{K^p} \to \cO^{\la,(1,-k)}_{K^p}(k).\] 
\end{defn}

Now we can state the main theorem of this section. Recall that we have introduced an intertwining operator $I_{k-1}:\cO^{\la,(1-k,0)}_{K^p} \to \cO^{\la,(1,-k)}_{K^p}(k)$ in  Definition \ref{I}.
\begin{thm} \label{MT}
Let $k$ be a positive integer. Then $N_k=c_kI_{k-1}$ for some $c_k\in\Q_p^\times$. 
\end{thm}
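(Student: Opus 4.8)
The plan is to show that both $N_k$ and $I_{k-1}$ can be computed as the same local differential operator, up to a nonzero rational scalar. The essential strategy is to work locally on an affinoid $U\in\mathfrak{B}$ and to give an explicit description of the Fontaine operator $N_k$ on $\B_{\dR,k+1}^{+,\la,\tilde\chi_k}(U)$ via the period sheaf $\cO\B_\dR^+$, which simultaneously encodes the de Rham structure (hence the monodromy operator $\nabla$ used to build the Fontaine operator) and the derivations on modular curves and on $\Fl$ (hence $d^{k+1}$ and $\bar d^{k+1}$, whose composite is $I_{k-1}$). First I would recall the structural sheaf $\cO\B_{\dR}^+$ together with its connection $\nabla_{\cO\B}$ and the Poincar\'e lemma $\gr^0(\cO\B_{\dR}^+)^{\nabla_{\cO\B}=0}=\B_{\dR}^+$; on $\mathcal{X}_{K^p}$ this lets one trivialize the variation of Hodge structure attached to $V$ over $\B_{\dR,k+1}^+$. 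Concretely, after choosing a generator of $\omega_{\Fl}|_U$ and the coordinate $x=e_2/e_1$, one gets an explicit basis of $\B_{\dR,k+1}^{+,\la}(U)$ adapted to the filtration in which the $\B_{\dR}^+$-lattice $D_{\dR}^+(V)$ (when $V$ is de Rham) is visible, and the monodromy operator $\nabla$ acts by a nilpotent matrix whose off-diagonal entry is precisely a derivation in the $x$-direction composed with a derivation coming from the Gauss-Manin connection on modular curves.

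The key steps, in order, would be: (1) recall $\cO\B_{\dR}^+$, its filtration and connection, and the comparison $H^0_{\cont}$ with the de Rham period sheaf, as in Scholze's work; (2) compute, on a small $U\in\mathfrak{B}$ where $V_\infty=\pi_{\HT}^{-1}(U)$ is affinoid perfectoid and defined over a finite extension $K$ of $\Q_p$, an explicit splitting of $\cO\B_{\dR}^+/(t^{k+1})$ adapted to the relative Hodge-Tate filtration \eqref{rHTH}, using the sections $\mathrm{t},e_1,e_2$ and the flat sections of $D$; (3) identify the operator $\nabla$ defining the Sen/Fontaine operator on $\B_{\dR,k+1}^{+,\la,\tilde\chi_k}(U)$ with a first-order operator that is, up to units, the pull-back of the derivation on $\Fl$ twisted by the Kodaira-Spencer class — this is exactly where the computations of Theorem \ref{LTde} and \cite[Theorem 4.2.7]{Pan20} reappear, and where the Kodaira-Spencer isomorphism $\Omega^1_{\mathcal{X}}(\mathcal{C})\cong \omega^2$ enters; (4) taking the $(k-1)$st symmetric power (BGG), match $\nabla^{k}$ restricted to the generalized eigenspace $E_0$ with the composite $\bar{d}'^{k}\circ d^{k}$ up to the combinatorial constant coming from the symmetric power, thereby obtaining $N_k = c_k I_{k-1}$ with $c_k\in\Q^\times$; (5) check that the local identification is independent of the choices ($K$, the generator of $\omega_\Fl|_U$, the basis of flat sections) and glues to an equality of sheaf maps, using the uniqueness of both $N_k$ (functoriality of the Fontaine operator, Corollary \ref{Ncokerf}) and $I_{k-1}$ (the density statement of Theorem \ref{str} and the uniqueness in Theorems \ref{I1} and \ref{I2}).

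I expect the main obstacle to be step (3): pinning down the precise shape of the monodromy matrix of $\cO\B_{\dR}^+/(t^{k+1})$ relative to the chosen basis, and verifying that its nilpotent part is \emph{exactly} the $x$-derivation composed with the Gauss-Manin derivation rather than some twist of it. This requires a careful bookkeeping of the two filtrations (the Hodge filtration on $D$ and the $t$-adic filtration on $\B_{\dR}^+$) and of how the Kodaira-Spencer isomorphism intertwines $\Omega^1_\Fl$ with $\Omega^1_{\mathcal{X}_{K^pK_p}}(\mathcal{C})$; it is essentially the heart of the identity. A secondary difficulty is that one must work with the weight-$\tilde\chi_k$ part of $\B_{\dR,k+1}^{+,\la}$ rather than the full sheaf, so one has to justify (using Lemma \ref{BdRcmgr}) that passing to the $\tilde\chi_k$-isotypic component is exact and compatible with the graded pieces, so that the decomposition $\cO^{\la,\tilde\chi_k}_{K^p}=\cO^{\la,(1-k,0)}_{K^p}\oplus\cO^{\la,(1,-k)}_{K^p}$ is the one induced by the Hodge-Tate weights $0,k$ and the Fontaine operator goes between the two summands as expected. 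Once the local formula for $N_k$ is in hand, comparison with $I_{k-1}=\bar d'^{k}\circ d^{k}$ and the determination that the scalar is rational (it is a ratio of factorials coming from the BGG normalization in Theorem \ref{I2}) should be a short calculation; the constant $c_k$ is then visibly in $\Q_p^\times$, in fact in $\Q^\times$.
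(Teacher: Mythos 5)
Your overall architecture is the right one and matches the paper's: resolve $\B_{\dR,k+1}^{+}$ by $\cO\B_{\dR}^{+}$ via the Poincar\'e lemma, use the explicit $\cO\B_{\dR}^{+}$-periods of $V$ coming from the relative de Rham comparison to produce $G_K$-fixed lifts, and split the comparison of $N_k$ with $I_{k-1}=\bar d'^{k}\circ d^{k}$ into a ``modular-curve direction'' half and a ``flag-variety direction'' half. But there are two concrete gaps at precisely the step you flag as the heart of the matter. First, the Fontaine operator is a \emph{single} application of $\nabla$ to the two-step generalized $0$-eigenspace $E_0(W^K)$, on which $\nabla^2=0$; it is not ``$\nabla^{k}$ restricted to $E_0$'', so there is no $k$-th power of a monodromy matrix to compute. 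What is actually needed is the decomposition of Lemma \ref{decomN}: one must construct a $G_K$-fixed section $s_{k+1}$ of $\theta_{\log}$ on the $\tilde\chi_k$-part of $\cO\B_{\dR,k+1}^{+}$ whose image under $\nabla_{\log}$ lands in $\Fil^{k-1}$, after which $N_k=-N'_k\circ(\nabla_{\log}\circ s_{k+1})$. Producing such an $s_{k+1}$ and showing $\nabla_{\log}\circ s_{k+1}=d^{k}$ (Proposition \ref{comdk}) requires the $p$-adic Legendre relation (Lemma \ref{pLegen}), which gives $\nabla_{\log}(\tilde x)\in t\cO\B_{\dR}^+$ and hence the $\cO_{\Fl}$-linearity of the composite modulo $t$ (together with Lemma \ref{tpowt} to reduce everything mod $t$); none of this appears in your outline.

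Second, and more seriously, the remaining piece $N'_k$ — the Sen operator acting on the $k$-th graded piece of $\cO\B_{\dR}^{+,\la,\tilde\chi_k}$ — cannot be identified with $\bar d'^{k}$ by ``pinning down the monodromy matrix in an explicit basis'': there is no explicit supply of $\nabla$-eigenvectors in $\B_{\dR}^{+}$, and Theorem \ref{LTde} (resp. \cite[Theorem 4.2.7]{Pan20}) only computes the weight-zero Higgs field, not the $\Gal(K_\infty/K)$-action on higher graded pieces. The paper's argument here is genuinely representation-theoretic: it exchanges the roles of $G_K$ and the center $Z$ via the quadratic relation $(2\theta_{\Sen}-z+1)^2-1=2\Omega$ of Proposition \ref{SenopCasz}, identifies Faltings's extension with the twisted relative Hodge--Tate sequence, and then concludes from the one-dimensionality of $\Hom_{U(\mathfrak{sl}_2(\Q_p))}(M^{\vee}_{(0,1-k)},M^{\vee}_{(-k,1)})$ together with the non-semisimplicity of the Casimir on the translated Verma module that $N'_k$ and $\bar d'^{k}$ are proportional and both nonzero. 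Without a substitute for this mechanism, steps (3)--(4) of your plan do not go through as written, even granting all the bookkeeping you anticipate.
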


The proof of this theorem will be given in the rest of this section.

\begin{rem}It's possible to show that $c_k\in\Q^\times$ but we don't need this here. See Remark \ref{Qstr} below.\end{rem}

\begin{para}
Theorem \ref{MT} also implies a similar result on the cohomology level. We need some constructions from subsection \ref{LBH}. Let $U_1,U_2,U_{12}$ be the open subsets of $\Fl$ introduced in \ref{Cech}. Fix an integer $k$. 
\end{para}

\begin{lem} \label{Bdrce}
The natural map $\B_{\dR,k}^+(U_1)\to \B_{\dR,k}^+(U_{12})$ is a closed embedding and is strict with respect to the $t$-adic filtrations. 
\end{lem}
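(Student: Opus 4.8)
The plan is to reduce Lemma \ref{Bdrce} to the structural facts already established for $\cO^{\la,\chi}_{K^p}$ in Section \ref{Rf20}, together with the flatness of $\B_{\dR,k}^+$ over $B_{\dR}^+/(t^k)$ proved in Lemma \ref{BdRcmgr}. First I would recall, from Lemma \ref{rescle} and its proof, that the restriction map $\cO_{K^p}(U_1)\to \cO_{K^p}(U_{12})$ is a closed embedding; the key ingredient there was that a global section of $\cO_{K^p}$ vanishing on $U_2$ is $\SL_2(\Q_p)$-invariant (since $H^0(\Fl,\cO_{K^p})$ is the completed $H^0$, on which $\SL_2(\Q_p)$ acts trivially), and $\SL_2(\Q_p)\cdot U_2=\Fl$ forces it to vanish. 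The same Weyl-group trick of Lemma \ref{keyweyl}, applied with $w=\begin{pmatrix} 0 & 1\\ 1 & 0\end{pmatrix}$, gives strictness of $\cO_{K^p}(U_1)\to\cO_{K^p}(U_{12})$ because the associated two-variable map is (up to sign) the \v{C}ech differential $\cO_{K^p}(U_1)\oplus\cO_{K^p}(U_2)\to\cO_{K^p}(U_{12})$, which we know is strict.

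Next I would bootstrap from $\gr^i$ to the whole truncated sheaf. Since $\B_{\dR,k}^+$ is flat over $B_{\dR}^+/(t^k)$ with $\gr^i\B_{\dR,k}^+\cong\cO_{K^p}(i)$ for $i=0,\dots,k-1$, the map $f_U:\B_{\dR,k}^+(U_1)\to\B_{\dR,k}^+(U_{12})$ is a map of finite filtered Banach modules whose associated graded maps are, Tate-twist by Tate-twist, the restriction maps $\cO_{K^p}(U_1)(i)\to\cO_{K^p}(U_{12})(i)$. Each of these is a strict closed embedding by the previous paragraph. A standard dévissage on the length of the $t$-adic filtration — using the five lemma / snake lemma for Banach spaces and the open mapping theorem — then shows that $f_U$ itself is injective with closed image, i.e. a closed embedding, and moreover that $f_U$ is strict for the $t$-adic filtrations: concretely, $f_U(t^i\B_{\dR,k}^+(U_1)) = f_U(\B_{\dR,k}^+(U_1))\cap t^i\B_{\dR,k}^+(U_{12})$ follows by comparing the exact sequences $0\to t^{i}/t^{i+1}\to \B_{\dR,k}^+/t^{i+1}\to \B_{\dR,k}^+/t^i\to 0$ on $U_1$ and $U_{12}$ and inducting downward on $i$. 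The interplay of the two filtrations (the $t$-adic one and the Banach topology) is harmless here because flatness identifies $t^i\B_{\dR,k}^+(U_?)$ with the kernel of multiplication by $t^{k-i}$, hence it is automatically closed, and the quotient topology on the graded piece agrees with its intrinsic Banach topology by the open mapping theorem.

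The main obstacle I anticipate is purely bookkeeping rather than conceptual: one must be careful that the topologies on $\B_{\dR,k}^+(U_?)$ used implicitly (the Banach structure coming from the image of $\A_{\inf,\mathcal{X}_{K^p}}(V_\infty)$) are the ones for which flatness gives closed graded pieces, and that the \v{C}ech-strictness input of Lemma \ref{keyweyl} really does propagate through the filtration. I would handle this by phrasing the dévissage as: for a strict map of Banach spaces with a finite filtration by closed subspaces such that each associated graded map is a strict closed embedding, the map is a strict closed embedding — a lemma whose proof is a two-line induction using that an extension of closed embeddings by closed embeddings is a closed embedding (open mapping theorem) and similarly for strictness. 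Applying this with $X=\B_{\dR,k}^+(U_1)$, $Y=\B_{\dR,k}^+(U_{12})$, and the $t$-adic filtration yields the claim. Finally, since everything is compatible with the $\GL_2(\Q_p)$-action and with passing to $G_n$-analytic vectors (by Lemma \ref{Gnanprecle}), the analogous statement for $\B_{\dR,k}^{+,\la}$ and for the $\tilde\chi$-isotypic pieces follows formally, which is what will be needed in the sequel.
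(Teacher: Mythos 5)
Your proof is correct and follows the same route as the paper: the paper's argument is precisely the dévissage along the $t$-adic filtration, using $\gr^i\B_{\dR,k}^+\cong\cO_{K^p}(i)$ together with Lemma \ref{rescle} for the graded pieces. (Only a citation nit: the flatness of $\B_{\dR,k}^+(U)$ over $B_{\dR}^+/(t^k)$ is part of the construction in Subsection \ref{dRps}, not Lemma \ref{BdRcmgr}, which concerns the locally analytic subsheaves.)
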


\begin{proof}
Since $t^i\B_{\dR,k}^+/t^{i+1}\B_{\dR,k}^+\cong\cO_{K^p}(i)$, both claims follow directly from Lemma \ref{rescle}. 
\end{proof}

\begin{cor}
The \v{C}ech complex $\B_{\dR,k}^{+}(U_1)\oplus B_{\dR,k}^{+}(U_2)\to \B_{\dR,k}^{+}(U_{12})$ is strict with respect to the topology on $\B_{\dR,k}^{+}(U_1)\oplus B_{\dR,k}^{+}(U_2),\B_{\dR,k}^{+}(U_{12})$ and is strict with respect to the $t$-adic filtrations.
\end{cor}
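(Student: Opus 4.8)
The plan is to repeat the Weyl group argument from the proof of Lemma \ref{rescle}. Write $A=\B_{\dR,k}^{+}(U_1)$, $B=\B_{\dR,k}^{+}(U_{12})$, let $i\colon A\to B$ be the restriction map, and let $w=\begin{pmatrix} 0 & 1\\ 1 & 0\end{pmatrix}\in\GL_2(\Q_p)$. Since $w$ interchanges $U_1$ and $U_2$ and fixes $U_{12}$, it acts as a continuous involution on $B$, and the \v{C}ech differential $\B_{\dR,k}^{+}(U_1)\oplus \B_{\dR,k}^{+}(U_2)\to \B_{\dR,k}^{+}(U_{12})$ is identified, up to a sign, with $(i,\,w\circ i)\colon A\oplus A\to B$. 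By Lemma \ref{Bdrce}, $i$ is a closed embedding, in particular strict for the Banach topologies, and it is also strict for the $t$-adic filtrations. The strictness of $(i,\,w\circ i)$ for the Banach topologies, hence of the \v{C}ech differential, then follows immediately from Lemma \ref{keyweyl}.

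It remains to treat strictness for the $t$-adic filtrations, for which I would prove a filtered refinement of Lemma \ref{keyweyl}. The key observation is that the $\GL_2(\Q_p)$-action on $\B_{\dR,k}^{+}$ is $B_{\dR}^+/(t^k)$-linear: the action on $\mathcal{X}_{K^p}$ is over $\Spa(C,\cO_C)$, so it is $C$-linear, and $t\in B_{\dR}^+$ comes from $C$. Hence $w$ preserves each $t^jB$, and the projector $(1+w)/2$ of $B$ onto $B^{w=1}$ is strict for the $t$-adic filtration: if $x\in(1+w)(B)$ lies in $t^jB$, then $w(x)=x$ and $x=(1+w)(x/2)$ with $x/2\in t^jB$, and similarly for $(1-w)/2$. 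Running the proof of Lemma \ref{keyweyl}: the flip involution $w'$ on $A\oplus A$ respects the direct-sum $t$-adic filtration, $(i,\,w\circ i)$ intertwines $w'$ and $w$, and on the $(\pm1)$-eigenspaces the map becomes $(1\pm w)\circ i$; each of these is the composite of the filtration-strict map $i$ with a filtration-strict projector, hence is filtration-strict. Therefore $(i,\,w\circ i)$, equivalently the \v{C}ech differential, is strict for the $t$-adic filtrations.

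The only mildly delicate point is this filtered version of Lemma \ref{keyweyl}; once one records the $B_{\dR}^+/(t^k)$-linearity of the $\GL_2(\Q_p)$-action, which makes $w$ and the projectors $(1\pm w)/2$ respect the $t$-adic filtration strictly, the argument is formally identical to the one already given for the topological statement. Alternatively, for the filtered statement one can avoid the Weyl group and pass to graded pieces: $\B_{\dR,k}^{+}$ is flat over $B_{\dR}^+/(t^k)$, and for $0\le j\le k-1$ the $j$-th graded piece of the \v{C}ech complex of $\B_{\dR,k}^{+}$ for the cover $\{U_1,U_2\}$ is the \v{C}ech complex of $\cO_{K^p}(j)$, which is strict by the discussion in \ref{Cech}; since the $t$-adic filtration on each term has finite length, this again yields the claim.
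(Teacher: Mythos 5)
Your proof is correct and follows essentially the same route as the paper: the paper's proof is a one-line application of Lemma \ref{keyweyl} and Lemma \ref{Bdrce}, together with the remark that the Weyl-group argument of Lemma \ref{keyweyl} also yields strictness for the $t$-adic filtrations, which is exactly the filtered refinement you spell out (the observation that $w$ acts $B_{\dR}^+/(t^k)$-linearly and hence preserves $t^jB$ is the point the paper leaves implicit). Your alternative argument via graded pieces and flatness is also fine, but not needed.
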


\begin{proof}
An application of Lemma \ref{keyweyl} and Lemma \ref{Bdrce}. We note that the same argument of Lemma \ref{keyweyl} also proves the strictness for the $t$-adic filtrations.
\end{proof}

Set $G_0=1+p^2M_2(\Z_p)$ and $G_n=G_0^{p^n}$. Let $\tilde\chi:Z\to\Q_p$ be a character.

\begin{cor} \label{strCeLB}
The natural maps 
\[\B_{\dR,k}^{+}(U_1)^{G_n-\an,\tilde\chi}\to \B_{\dR,k}^{+}(U_{12})^{G_n-\an,\tilde\chi},\]
\[\B_{\dR,k}^{+,\la,\tilde\chi}(U_1)\to \B_{\dR,k}^{+,\la,\tilde\chi}(U_{12})\]
 are closed embeddings. The second map is also strict with respect to the $t$-adic filtrations.
 Hence by Lemma \ref{keyweyl}, the following \v{C}ech complexes are strict
\[\B_{\dR,k}^{+}(U_1)^{G_n-\an,\tilde\chi}\oplus B_{\dR,k}^{+}(U_2)^{G_n-\an,\tilde\chi}\to \B_{\dR,k}^{+}(U_{12})^{G_n-\an,\tilde\chi},\]
\[\B_{\dR,k}^{+,\la,\tilde\chi}(U_1)\oplus B_{\dR,k}^{+,\la,\tilde\chi}(U_2)\to \B_{\dR,k}^{+,\la,\tilde\chi}(U_{12}),\]
and the second complex is strict with respect to the $t$-adic filtrations.
\end{cor}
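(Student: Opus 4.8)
The plan is to follow the proofs of Lemma~\ref{rescle} and the discussion around \ref{Celachi}, replacing $\cO_{K^p}$ throughout by $\B_{\dR,k}^{+}$ and invoking Lemma~\ref{Bdrce} in place of Lemma~\ref{rescle}, while additionally tracking the $t$-adic filtration by means of Lemma~\ref{BdRcmgr}. First I would deduce the closed-embedding assertions. By Lemma~\ref{Bdrce} the restriction map $\B_{\dR,k}^{+}(U_1)\to\B_{\dR,k}^{+}(U_{12})$ is a closed embedding of $\Q_p$-Banach spaces, and it is equivariant for the compact open subgroup $G_0=1+p^2M_2(\Z_p)$; applying Lemma~\ref{Gnanprecle} with the group $G_0$ then gives closed embeddings $\B_{\dR,k}^{+}(U_1)^{G_n-\an}\hookrightarrow\B_{\dR,k}^{+}(U_{12})^{G_n-\an}$ and $\B_{\dR,k}^{+,\la}(U_1)\hookrightarrow\B_{\dR,k}^{+,\la}(U_{12})$. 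Since these maps are equivariant for the action of $Z:=Z(U(\mathfrak{gl}_2(\Q_p)))$, which acts by bounded operators on each $G_n$-analytic Banach space, they carry $\tilde\chi$-isotypic parts into $\tilde\chi$-isotypic parts; the $\tilde\chi$-isotypic part being a closed subspace (it is cut out by the bounded operators $z-\tilde\chi(z)$, $z\in Z$), I would restrict to it and pass to the limit over $n$ to obtain the two closed embeddings in the statement.

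Next I would establish the $t$-adic strictness of $i\colon\B_{\dR,k}^{+,\la,\tilde\chi}(U_1)\to\B_{\dR,k}^{+,\la,\tilde\chi}(U_{12})$. By Lemma~\ref{BdRcmgr}, the source and target are modules over $B_{\dR}^{+}/(t^k)$ with finite $t$-adic filtration whose $j$-th graded piece is $\cO_{K^p}^{\la,\tilde\chi}(-)(j)$, and $i$ induces on $\gr^j$ the restriction map $\cO_{K^p}^{\la,\tilde\chi}(U_1)(j)\to\cO_{K^p}^{\la,\tilde\chi}(U_{12})(j)$, which is injective by Lemma~\ref{rescle} (equivalently by \cite[Corollary 5.1.3]{Pan20}). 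A map of finite-length filtered modules all of whose associated-graded maps are injective is automatically strict for the filtration: if $i(y)\in t^j(\text{target})$ but $y\notin t^j(\text{source})$, one takes the maximal $j'<j$ with $y\in t^{j'}(\text{source})$, observes that the class of $y$ in $\gr^{j'}$ is nonzero while the class of $i(y)$ there is zero, and contradicts injectivity of $\gr^{j'}i$. This yields $i(t^jX)=i(X)\cap t^jY$ for all $j$.

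Finally, for the strictness of the \v{C}ech complexes I would argue exactly as in the proof of Lemma~\ref{rescle}. Set $w=\begin{pmatrix}0 & 1\\ 1 & 0\end{pmatrix}\in\GL_2(\Q_p)$; it is an involution normalizing every $G_n$, acting trivially on $Z$, acting $B_{\dR}^{+}$-linearly (hence commuting with multiplication by $t$), and satisfying $w\cdot U_1=U_2$ and $w\cdot U_{12}=U_{12}$. Thus $w$ acts as a continuous involution on $\B_{\dR,k}^{+}(U_{12})^{G_n-\an,\tilde\chi}$ and on $\B_{\dR,k}^{+,\la,\tilde\chi}(U_{12})$ preserving the $t$-adic filtration, and the \v{C}ech differential for $\{U_1,U_2\}$ is, up to sign, the map $(i,w\circ i)$ with $i$ the restriction map. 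Lemma~\ref{keyweyl} then turns strictness of $i$ into strictness of $(i,w\circ i)$; and, as already noted in the proof of the preceding corollary, the identical argument --- using that $(1\pm w)/2$ are strict projectors which here also commute with the $t$-adic filtration --- turns $t$-adic strictness of $i$ into $t$-adic strictness of the \v{C}ech differential. Combining with the first two steps, both \v{C}ech complexes are strict and the second is $t$-adically strict.

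I do not expect a genuine obstacle here: this is a bookkeeping corollary, parallel to Lemma~\ref{rescle} and \ref{Celachi}. The only points needing care are the compatibility of the $\tilde\chi$-isotypic decomposition with being closed, with $w$, and with multiplication by $t$, and the passage of the closed-embedding property through the direct limit over $n$; both are routine.
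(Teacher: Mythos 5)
Your proposal is correct and follows essentially the same route as the paper: deduce the closed embeddings from Lemma~\ref{Bdrce} (via Lemma~\ref{Gnanprecle} and passage to $\tilde\chi$-isotypic parts), obtain $t$-adic strictness from Lemma~\ref{BdRcmgr} by checking injectivity on the graded pieces, and conclude with the Weyl-group involution and Lemma~\ref{keyweyl}. The paper's own proof is just a terser version of exactly this argument.
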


\begin{proof}
All of the claims follow  from Lemma \ref{Bdrce} except for the strictness of $\B_{\dR,k}^{+,\la,\tilde\chi}(U_1)\to \B_{\dR,k}^{+,\la,\tilde\chi}(U_{12})$ for the $t$-adic filtrations. To see this, we use Lemma \ref{BdRcmgr} which says that $\gr^i \B_{\dR,k}^{+,\la,\tilde\chi}=\cO^{\la,\tilde\chi}_{K^p}(i)$.
\end{proof}

\begin{defn}
Let $\mathcal{F}$ be one of $\B_{\dR,k}^+,\B_{\dR,k}^{+,G_n-\an,\tilde\chi},\B_{\dR,k}^{+,\la,\tilde\chi}$. We define
$\check{H}^\bullet(\mathcal{F})$ as  the cohomology of the \v{C}ech complex $\mathcal{F}(U_1)\oplus \mathcal{F}(U_2)\to \mathcal{F}(U_{12})$, where $\B_{\dR,k}^{+,G_n-\an,\tilde\chi}(U_i)$ is understood as $\B_{\dR,k}^{+}(U_i)^{G_n-\an,\tilde\chi}$. These are natural  $B_{\dR}^+/(t^k)$-Banach modules when $\mathcal{F}=\B_{\dR,k}^+,\B_{\dR,k}^{+,G_n-\an,\tilde\chi}$. It follows that
\[\check{H}^\bullet(\B_{\dR,k}^{+,\la,\tilde\chi})=\varinjlim_n \check{H}^\bullet(\B_{\dR,k}^{+,G_n-\an,\tilde\chi})\]
which defines a $B_{\dR}^+/(t^k)$-LB module structure on $\check{H}^\bullet(\B_{\dR,k}^{+,\la,\tilde\chi})$.
\end{defn}

\begin{prop}
For $\mathcal{F}=\B_{\dR,k}^+,\B_{\dR,k}^{+,\la,\tilde\chi}$, there are natural isomorphisms 
\[\check{H}^\bullet(\mathcal{F})\cong H^{\bullet}(\Fl,\mathcal{F}).\]
These cohomology groups are flat over $B_{\dR}^+/(t^k)$ and we equip them with the $t$-adic filtrations. Then
\[\gr^i H^{\bullet}(\Fl,\mathcal{F})\cong H^{\bullet}(\Fl,\gr^i\mathcal{F}).\]
Moreover, $H^{\bullet}(\Fl,\B_{\dR,k}^{+,\la,\tilde\chi})$ is Hausdorff.
\end{prop}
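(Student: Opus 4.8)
The plan is to reduce everything to the case $\mathcal{F} = \B_{\dR,k}^{+,\la,\tilde\chi}$ together with the already-established facts for $\cO^{\la,\tilde\chi}_{K^p}$ in Subsection \ref{LBH}, and to build up the filtration-graded-piece comparison inductively in $k$. First I would establish the identification $\check{H}^\bullet(\mathcal{F}) \cong H^\bullet(\Fl,\mathcal{F})$: for $\mathcal{F} = \B_{\dR,k}^+$ this follows from the vanishing of the higher \v{C}ech cohomology of $\B_{\dR,k}^+$ on any $U \in \mathfrak{B}$, which I would deduce by d\'evissage along the $t$-adic filtration from the corresponding vanishing for $\cO^+_{K^p}$ (hence $\cO_{K^p}$) proved in the course of \cite[Theorem 4.4.6]{Pan20} and recalled in \ref{Cech}; since each $\gr^i \B_{\dR,k}^+ = \cO_{K^p}(i)$ has $\check{H}^j(U,\cdot)=0$ for $j>0$, the long exact sequences in cohomology force $\check{H}^j(U,\B_{\dR,k}^+)=0$ for $j>0$, and then Cartan's criterion gives $\check{H}^\bullet = H^\bullet$ computed on the cover $\{U_1,U_2\}$. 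For $\mathcal{F} = \B_{\dR,k}^{+,\la,\tilde\chi}$ the same \v{C}ech-vanishing statement on $U \in \mathfrak{B}$ follows similarly by d\'evissage using Lemma \ref{BdRcmgr} (so $\gr^i \B_{\dR,k}^{+,\la,\tilde\chi} = \cO^{\la,\tilde\chi}_{K^p}(i)$) and the vanishing of $\check{H}^j(U,\cO^{\la,\chi}_{K^p})$ for $j>0$, $U\in\mathfrak{B}$, noted in \ref{Celachi}; passing to the inductive limit over $n$ is harmless on a fixed $U$ because the transition maps of \v{C}ech complexes are already strict by Corollary \ref{strCeLB}.

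Next I would prove the flatness of $H^\bullet(\Fl,\mathcal{F})$ over $B_{\dR}^+/(t^k)$ together with the graded identification $\gr^i H^\bullet(\Fl,\mathcal{F}) \cong H^\bullet(\Fl,\gr^i\mathcal{F})$. The key input is that the \v{C}ech complex computing $H^\bullet(\Fl,\mathcal{F})$ is strict both for the Banach/LB topology and for the $t$-adic filtration — exactly what Corollary \ref{strCeLB} provides (via Lemma \ref{keyweyl} applied to the Weyl element $w$). Given a complex of flat $B_{\dR}^+/(t^k)$-modules whose differentials are strict for the $t$-adic filtration, taking $\gr^i$ commutes with taking cohomology; since each $\gr^i\mathcal{F}$-term is $\cO_{K^p}(i)$ or $\cO^{\la,\tilde\chi}_{K^p}(i)$, whose cohomology groups are honest $C$-Banach (resp.\ Hausdorff LB, by Proposition \ref{Haus}) spaces, one gets $\gr^i H^\bullet(\Fl,\mathcal{F}) = H^\bullet(\Fl,\gr^i\mathcal{F})$, and flatness over $B_{\dR}^+/(t^k)$ follows because multiplication by $t$ is injective on each graded piece in the relevant degree range and the filtration has length $\le k$. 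Concretely I would run an induction on $k$: the short exact sequence $0 \to \B_{\dR,k-1}^{+,\la,\tilde\chi}(1) \to \B_{\dR,k}^{+,\la,\tilde\chi} \to \cO^{\la,\tilde\chi}_{K^p} \to 0$ (using the $t$-adic filtration, the first map being multiplication by $t$) gives a long exact cohomology sequence; the outer terms are flat by induction and by the $\cO^{\la,\chi}_{K^p}$-results, and strictness of the connecting maps — which follows from the $t$-strictness in Corollary \ref{strCeLB} — yields flatness and the graded identification in the middle.

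Finally, the Hausdorffness of $H^\bullet(\Fl,\B_{\dR,k}^{+,\la,\tilde\chi})$: this I would get from the graded pieces. By the previous step $\gr^i H^\bullet(\Fl,\B_{\dR,k}^{+,\la,\tilde\chi}) \cong H^\bullet(\Fl,\cO^{\la,\tilde\chi}_{K^p}(i))$, and each of these is Hausdorff by Proposition \ref{Haus} (applied to the summands $\cO^{\la,(1-k,0)}_{K^p}$, $\cO^{\la,(1,-k)}_{K^p}$, etc., using the decomposition of $\cO^{\la,\tilde\chi}_{K^p}$ from \ref{tilchik} when $\tilde\chi = \tilde\chi_k$, or more generally $\cO^{\la,\chi}_{K^p}$ for an integral weight $\chi$). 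An LB-space that is a successive extension of Hausdorff LB-spaces, with the filtration realized by strict (hence topologically split on bounded pieces) maps, is itself Hausdorff — here I would argue directly: a nonzero element of $H^\bullet$ has some nonzero image in some $\gr^i$, and pulling back an open neighborhood separating that image from $0$ (which exists by Hausdorffness of the graded piece, and by strictness of $H^\bullet \to \gr^i H^\bullet$) separates the original element from $0$.

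\textbf{Main obstacle.} The delicate point is keeping track of strictness of the \v{C}ech differentials for \emph{both} the topology and the $t$-adic filtration simultaneously after passing to $\tilde\chi$-isotypic parts and to $G_n$-analytic vectors, and then commuting $\gr^i$ with cohomology in the LB (not Banach) setting — i.e.\ knowing that the inductive limit over $n$ does not destroy strictness or Hausdorffness. This is precisely why Corollary \ref{strCeLB} and Lemma \ref{BdRcmgr} are set up as they are; the argument of Lemma \ref{keyweyl} has to be invoked in its $t$-filtered form, and the transition-map strictness from Corollary \ref{strCeLB} is what lets the limit behave well. Everything else is formal d\'evissage on top of the results already available for $\cO^{\la,\chi}_{K^p}$.
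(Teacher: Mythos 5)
Your proposal is correct and follows essentially the same route as the paper's proof: reduce to the graded pieces $\cO_{K^p}(i)$ and $\cO^{\la,\tilde\chi}_{K^p}(i)$ by $t$-adic d\'evissage (invoking the earlier \v{C}ech-vs-sheaf comparisons for those sheaves), use the strictness of the \v{C}ech complex for the $t$-adic filtration from Corollary \ref{strCeLB} to get flatness and $\gr^i H^\bullet \cong H^\bullet(\gr^i)$, and deduce Hausdorffness from Proposition \ref{Haus} applied to the graded pieces. The only difference is that you spell out the d\'evissage and the extension-of-Hausdorff-LB-spaces step in more detail than the paper, which treats both as immediate.
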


\begin{proof}
For the first isomorphism, it suffices to prove that $\check{H}^\bullet(\gr^i\mathcal{F})\cong H^{\bullet}(\Fl,\gr^i\mathcal{F})$, where $\gr^i\mathcal{F}=\cO_{K^p}(i)$ or $\cO^{\la,\tilde\chi}_{K^p}(i)$. When $\gr^i\mathcal{F}=\cO_{K^p}(i)$, this was shown in the first paragraph of the proof of \cite[Theorem 4.4.6]{Pan20}. When $\gr^i\mathcal{F}=\cO^{\la,\tilde\chi}_{K^p}(i)$, this was explained in section \ref{Celachi}. Since $\mathcal{F}=\B_{\dR,k}^+,\B_{\dR,k}^{+,\la,\tilde\chi}$ is flat over $B_{\dR}^+/(t^k)$, the strictness of $\mathcal{F}(U_1)\oplus \mathcal{F}(U_2)\to \mathcal{F}(U_{12})$ for $t$-adic filtrations implies the flatness of $\check{H}^\bullet(\mathcal{F})$ over $B_{\dR}^+/(t^k)$ and $\gr^i H^{\bullet}(\Fl,\mathcal{F})\cong H^{\bullet}(\Fl,\gr^i\mathcal{F})$. It remains to show that $H^{\bullet}(\Fl,\B_{\dR,k}^{+,\la,\tilde\chi})$ is Hausdorff. This is clear as $H^{\bullet}(\Fl,\gr^i \B_{\dR,k}^{+,\la,\tilde\chi})\cong H^\bullet(\Fl,\cO^{\la,\tilde\chi}_{K^p}(i))$ is filtered by $H^\bullet (\Fl,\cO^{\la,\chi}_{K^p}(i))$ for some characters $\chi:\kh\to\Q_p$, which is Hausdorff by Proposition \ref{Haus}.
\end{proof}

\begin{para}
Now set $\tilde\chi=\tilde\chi_k$ defined in section \ref{tilchik}. Let $K$ be a finite extension of $\Q_p$ such that $G_K$ acts on $\B_{\dR,k+1}^+(U_1),\B_{\dR,k+1}^+(U_{12})$. As explained in section \ref{noBdRK}, $\B_{\dR,k+1}^{+,\la,\tilde\chi_k}(U_{?})$ satisfies all the assumptions in \ref{LBsetup} for $?=1,2,12$. Hence by applying Proposition \ref{LBfXY} to $\B_{\dR,k+1}^{+,\la,\tilde\chi_k}(U_1)\oplus B_{\dR,k+1}^{+,\la,\tilde\chi_k}(U_2)\to \B_{\dR,k+1}^{+,\la,\tilde\chi_k}(U_{12})$, we have the following result.
\end{para}

\begin{cor} \label{I1Fono}
$H^1(\Fl,\B_{\dR,k+1}^{+,\la,\tilde\chi_k})$ is Hodge-Tate of weights $0,k$ and satisfies all of the assumptions in \ref{LBsetup}. The Fontaine operator  in this case (cf. section \ref{NWLB}) agrees with
\[I^1_{k-1}: H^1(\Fl,\cO^{\la,(1-k,0)}_{K^p})\to H^1(\Fl,\cO^{\la,(1,-k)}_{K^p}(k))\]
up to $\Q_p^\times$. Recall that $I^1_{k-1}=H^1(I_{k-1})$, cf. Definition \ref{I^1k}.
\end{cor}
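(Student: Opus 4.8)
The plan is to realize $H^1(\Fl,\B_{\dR,k+1}^{+,\la,\tilde\chi_k})$ as a cokernel of the \v{C}ech differential and then feed this into Proposition \ref{LBfXY} and Corollary \ref{Ncokerf}. Concretely, I would set $X=\B_{\dR,k+1}^{+,\la,\tilde\chi_k}(U_1)\oplus\B_{\dR,k+1}^{+,\la,\tilde\chi_k}(U_2)$ and $Y=\B_{\dR,k+1}^{+,\la,\tilde\chi_k}(U_{12})$, with $f\colon X\to Y$ the \v{C}ech differential, so that $\coker f=\check{H}^1(\B_{\dR,k+1}^{+,\la,\tilde\chi_k})\cong H^1(\Fl,\B_{\dR,k+1}^{+,\la,\tilde\chi_k})$ by the proposition preceding this corollary. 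For the filtering sequences take $X_n\subseteq X$ and $Y_n\subseteq Y$ to be the $G_n$-analytic vectors; these are $G_K$-stable $B_{\dR}^+/(t^{k+1})$-Banach modules, and by Lemma \ref{BdRcmgr} together with the weight splitting $\cO^{\la,\tilde\chi_k}_{K^p}=\cO^{\la,(1-k,0)}_{K^p}\oplus\cO^{\la,(1,-k)}_{K^p}$ of \ref{tilchik}, reducing mod $t$ yields $\cO^{\la,(1-k,0)}_{K^p}(U_?)^{G_n-\an}\oplus\cO^{\la,(1,-k)}_{K^p}(U_?)^{G_n-\an}$, which is Hodge-Tate of weights $0,k$ by Proposition \ref{GnanHT}. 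The maps $f|_{X_n}\colon X_n\to Y_n$ are strict for both the Banach topology and the $t$-adic filtration by Corollary \ref{strCeLB}, and $\coker f$ is Hausdorff by the proposition preceding this corollary. Hence all hypotheses of Proposition \ref{LBfXY} are met, and it gives at once that $H^1(\Fl,\B_{\dR,k+1}^{+,\la,\tilde\chi_k})$ is a flat Hausdorff LB $B_{\dR}^+/(t^{k+1})$-module, Hodge-Tate of weights $0,k$, with $\varphi^K_{\coker f,0}$ an isomorphism; i.e.\ it satisfies all the assumptions of \ref{LBsetup}.

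For the identification of the Fontaine operator I would apply Corollary \ref{Ncokerf} to the same map $f$. Its commutative ladder has rightmost column $N_{\coker f}\colon(\coker f)_{0,0}\to(\coker f)_{k,-k}$. Using the identification $\gr^i H^1(\Fl,\B_{\dR,k+1}^{+,\la,\tilde\chi_k})\cong H^1(\Fl,\gr^i\B_{\dR,k+1}^{+,\la,\tilde\chi_k})$ (again the preceding proposition) and the weight splitting of \ref{tilchik}, one rewrites $(\coker f)_{0,0}=H^1(\Fl,\cO^{\la,(1-k,0)}_{K^p})$ and $(\coker f)_{k,-k}=H^1(\Fl,\cO^{\la,(1,-k)}_{K^p}(k))$, with their natural $G_{\Q_p}$-actions, because $\theta_\kh$ is $G_{\Q_p}$-equivariant and splits both the sheaves and their cohomology. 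On the \v{C}ech level $N_X$ and $N_Y$ are the Fontaine operators of the sections of $\B_{\dR,k+1}^{+,\la,\tilde\chi_k}$ over $U_1\sqcup U_2$ and over $U_{12}$, which by the definition in \ref{noBdRK} are the maps $N_k$ evaluated on these open sets; by Theorem \ref{MT}, $N_k=c_kI_{k-1}$ with $c_k\in\Q_p^\times$. Since $N_{\coker f}$ is the map induced by $N_X$ and $N_Y$ on cokernels (Corollary \ref{Ncokerf}), and since $I^1_{k-1}=H^1(I_{k-1})$ is by definition the map induced by $I_{k-1}(U_1)\oplus I_{k-1}(U_2)\to I_{k-1}(U_{12})$ on cokernels, compatibility of these induced maps forces $N_{H^1(\Fl,\B_{\dR,k+1}^{+,\la,\tilde\chi_k})}=c_k I^1_{k-1}$.

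The main obstacle here is not a new idea but the bookkeeping: one has to be careful that the abstractly produced graded pieces $(\coker f)_{0,0}$ and $(\coker f)_{k,-k}$ from the $t$-adic filtration on $H^1(\Fl,\B_{\dR,k+1}^{+,\la,\tilde\chi_k})$ are canonically matched — as $G_{\Q_p}$-modules and compatibly with the \v{C}ech presentation — with $H^1(\Fl,\cO^{\la,(1-k,0)}_{K^p})$ and $H^1(\Fl,\cO^{\la,(1,-k)}_{K^p}(k))$, so that the square relating $N_Y$ to $N_{\coker f}$ in Corollary \ref{Ncokerf} is literally the square relating $I_{k-1}(U_{12})$ to its induced map on $H^1$. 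Once this matching is spelled out (it follows formally from commutation of $\gr^i$ with $\check{H}^1$ and the $G_{\Q_p}$- and Hecke-equivariance of $\theta_\kh$), and once $N_k=c_kI_{k-1}$ is invoked on each member of the cover, passing to the induced maps on $H^1$ is immediate.

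\begin{rem}
One can in fact take $c_k\in\Q^\times$ in Theorem \ref{MT}, hence also in Corollary \ref{I1Fono}; cf.\ Remark \ref{Qstr}.
\end{rem}
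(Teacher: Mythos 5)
Your proposal is correct and follows essentially the same route as the paper: the paper likewise applies Proposition \ref{LBfXY} to the \v{C}ech differential $\B_{\dR,k+1}^{+,\la,\tilde\chi_k}(U_1)\oplus\B_{\dR,k+1}^{+,\la,\tilde\chi_k}(U_2)\to\B_{\dR,k+1}^{+,\la,\tilde\chi_k}(U_{12})$ and then deduces the identification of the Fontaine operator with $c_kI^1_{k-1}$ directly from Theorem \ref{MT} together with the functoriality in Corollary \ref{Ncokerf}. You have merely spelled out the hypothesis-checking (via Corollary \ref{strCeLB}, Lemma \ref{BdRcmgr}, and Proposition \ref{GnanHT}) that the paper leaves implicit.
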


\begin{proof}
A direct consequence of Theorem \ref{MT} and Corollary \ref{Ncokerf}.
\end{proof}

\begin{para}
It will be useful to relate $H^1(\Fl,\B_{\dR,k+1}^{+,\la,\tilde\chi_k})$ with $H^1(\Fl,\B_{\dR,k+1}^{+})^{\la,\tilde\chi_k}$. Since $\B_{\dR,k+1}^{+}$ is filtered by $\cO_{K^p}(i)$ which is $\mathfrak{LA}$-acyclic as a representation of $\GL_2(\Q_p)$ by \cite[Proposition 4.3.15]{Pan20}. Hence by the same proof of \cite[Theorem 4.4.6]{Pan20}, there are natural isomorphisms
\[H^i(\Fl,\B_{\dR,k+1}^{+})^{\la}\cong H^i(\Fl,\B_{\dR,k+1}^{+,\la}).\]
\end{para}

\begin{prop} \label{compinfch}
Let $k$ be a positive integer. If $k\geq 2$, then
\[H^1(\Fl,\B_{\dR,k+1}^{+,\la,\tilde\chi_k})\cong H^1(\Fl,\B_{\dR,k+1}^{+})^{\la,\tilde\chi_k}. \]
When $k=1$, there is a natural exact sequence
\[0\to \Ext^1_{Z}(\tilde\chi_1,H^0(\B_{\dR,2}^{+})^{\la}) \to 
H^1(\B_{\dR,2}^{+,\la,\tilde\chi_1})\to
H^1(\B_{\dR,2}^{+})^{\la,\tilde\chi_1}\to
\Ext^2_{Z}(\tilde\chi_1,H^0(\B_{\dR,2}^{+})^{\la}), \]
where all of the sheaf cohomology groups are computed on $\Fl$.
\end{prop}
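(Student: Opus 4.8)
The plan is to realize passage to the $\tilde\chi_k$-isotypic part as the operation $\Hom_Z(\tilde\chi_k,-)$, whose derived functors are $\Ext^\bullet_Z(\tilde\chi_k,-)$, and to commute these against $R\Gamma(\Fl,-)$. By Harish-Chandra's isomorphism (as recalled in the proof of Lemma \ref{BdRcmgr}), $Z$ is a polynomial ring in two variables over $\Q_p$, and $\tilde\chi_k\colon Z\to\Q_p$ is a $\Q_p$-point, so $\km_{\tilde\chi_k}\subseteq Z$ is generated by a regular sequence of length $2$. Hence the Koszul complex $\mathrm{Kos}_\bullet$ gives a finite free resolution $0\to Z\to Z^{2}\to Z\to\tilde\chi_k\to 0$, and for any $Z$-module $M$ the cochain complex $\mathrm{Kos}^\bullet(M):=\Hom_Z(\mathrm{Kos}_\bullet,M)$, concentrated in degrees $0,1,2$, computes $\Ext^\bullet_Z(\tilde\chi_k,M)$ with $H^0=M^{\tilde\chi_k}$. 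The essential input is already available: by the argument in the proof of Lemma \ref{BdRcmgr}, $\Ext^{\ge 1}_Z(\tilde\chi_k,\cO^{\la}_{K^p}(U))=0$ for every $U\in\mathfrak B$, and since $\B^{+,\la}_{\dR,k+1}(U)$ is an iterated extension of the $\cO^{\la}_{K^p}(U)(i)$, $i=0,\dots,k$, dévissage along the $t$-adic filtration gives $\Ext^{\ge 1}_Z(\tilde\chi_k,\B^{+,\la}_{\dR,k+1}(U))=0$ as well; sheafifying over the basis $\mathfrak B$, the complex of sheaves $\mathcal{H}om_Z(\mathrm{Kos}_\bullet,\B^{+,\la}_{\dR,k+1})$ is concentrated in degree $0$ with cohomology $\B^{+,\la,\tilde\chi_k}_{\dR,k+1}$.

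Next I would form the double complex $D^{i,j}=\mathrm{Kos}^i(C^j)$, where $C^\bullet$ is the Čech complex $\B^{+,\la}_{\dR,k+1}(U_1)\oplus\B^{+,\la}_{\dR,k+1}(U_2)\to\B^{+,\la}_{\dR,k+1}(U_{12})$ for the cover $\{U_1,U_2\}$, which computes $H^\bullet(\Fl,\B^{+,\la}_{\dR,k+1})=H^\bullet(\Fl,\B^{+}_{\dR,k+1})^{\la}$ (as observed just before this proposition). Running the spectral sequence that takes cohomology in the Koszul direction first and using the vanishing above (applied to $U_1,U_2,U_{12}\in\mathfrak B$), the page collapses onto the row $i=0$, equal to the complex $(C^\bullet)^{\tilde\chi_k}$; its cohomology is $\check H^\bullet(\B^{+,\la,\tilde\chi_k}_{\dR,k+1})=H^\bullet(\Fl,\B^{+,\la,\tilde\chi_k}_{\dR,k+1})$, the last identity being the preceding proposition. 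Running instead the spectral sequence that takes Čech cohomology first, and using that each $\mathrm{Kos}^i(C^\bullet)$ is, after choosing a basis of $\mathrm{Kos}_i$, a finite direct sum of copies of $C^\bullet$ (so its Čech cohomology is $\mathrm{Kos}^i$ applied to $H^\bullet(\Fl,\B^{+,\la}_{\dR,k+1})$), one obtains
\[E_2^{i,j}=\Ext^i_Z\bigl(\tilde\chi_k,\,H^j(\Fl,\B^{+}_{\dR,k+1})^{\la}\bigr)\ \Longrightarrow\ H^{i+j}(\Fl,\B^{+,\la,\tilde\chi_k}_{\dR,k+1}).\]
Since $\Fl=\mathbb P^1$ has cohomological dimension one, $E_2^{i,j}=0$ for $j\ge 2$; since $Z$ is a polynomial ring in two variables, $E_2^{i,j}=0$ for $i\ge 3$. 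The only contributions to the abutment in degree $1$ are $E_\infty^{1,0}$ and $E_\infty^{0,1}$; $E_2^{1,0}=\Ext^1_Z(\tilde\chi_k,H^0(\B^{+}_{\dR,k+1})^{\la})$ has no incoming or outgoing differential so equals $E_\infty^{1,0}$, while $E_2^{0,1}=H^1(\B^{+}_{\dR,k+1})^{\la,\tilde\chi_k}$ only maps out via $d_2\colon E_2^{0,1}\to E_2^{2,0}=\Ext^2_Z(\tilde\chi_k,H^0(\B^{+}_{\dR,k+1})^{\la})$. This yields precisely the asserted four-term exact sequence when $k=1$ (i.e. $\tilde\chi_1$, $\B^{+}_{\dR,2}$).

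For $k\ge 2$ it remains to kill the two $\Ext$-terms, which forces the middle arrow of that sequence to be an isomorphism. The $t$-adic filtration on $H^0(\Fl,\B^{+,\la}_{\dR,k+1})=H^0(\Fl,\B^{+}_{\dR,k+1})^{\la}$ has graded pieces $H^0(\Fl,\cO^{\la}_{K^p})(i)$, $i=0,\dots,k$; by \cite[Corollary 5.1.3]{Pan20}, $H^0(\Fl,\cO^{\la,\chi}_{K^p})=0$ unless $\chi=(n,n)$ for some $n\in\Z$, and the dot-action $W$-orbit of $(n,n)$ is $\{(n,n),(n-1,n+1)\}$, which equals $\tilde\chi_k=\{(0,1-k),(-k,1)\}$ only for $k=1$. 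Thus for $k\ge 2$, $Z$ acts on each graded piece through a character different from $\tilde\chi_k$, so $\Ext^\bullet_Z(\tilde\chi_k,-)$ annihilates each of them (being killed by a nonzero scalar, namely the difference of the two characters evaluated on a suitable element of $Z$), and since $\tilde\chi_k$ is finitely presented over $Z$, these Ext groups also commute with the (possibly infinite) direct sums over $n$; dévissage then gives $\Ext^\bullet_Z(\tilde\chi_k,H^0(\Fl,\B^{+}_{\dR,k+1})^{\la})=0$, and the spectral sequence degenerates to $H^1(\Fl,\B^{+,\la,\tilde\chi_k}_{\dR,k+1})\cong H^1(\Fl,\B^{+}_{\dR,k+1})^{\la,\tilde\chi_k}$. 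I expect the only real points requiring care are the topological bookkeeping — ensuring the Čech complex computes the cohomology of $\B^{+,\la,\tilde\chi_k}_{\dR,k+1}$ (already done in the preceding proposition) and that the Koszul/hyperext constructions are compatible with the LB-space structures, so that the $\Ext$-groups appearing are equipped with their natural topologies — together with the $k\ge 2$ vanishing above; the spectral-sequence differentials themselves land where claimed for trivial degree reasons.
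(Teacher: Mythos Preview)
Your proof is correct and follows essentially the same approach as the paper's: establish the local vanishing $\Ext^{\ge 1}_Z(\tilde\chi_k,\B^{+,\la}_{\dR,k+1}(U))=0$ from Lemma~\ref{BdRcmgr}, derive the four-term sequence by commuting $R\Hom_Z(\tilde\chi_k,-)$ past $R\Gamma(\Fl,-)$ (the paper just cites the argument of \cite[Corollary~5.1.3]{Pan20} rather than writing out your Koszul--\v{C}ech double complex), and then for $k\ge 2$ kill the $\Ext$-terms on $H^0$. The only cosmetic difference is in this last step: you argue that only diagonal weights $(n,n)$ appear in $H^0(\Fl,\cO^{\la}_{K^p})$ and check that the dot-orbit of $(n,n)$ meets $\tilde\chi_k$ only when $k=1$, whereas the paper invokes the comparison with $\tilde H^0(K^p,C)$ (Theorem~\ref{BdRcomp}) to say directly that $\mathfrak{sl}_2(\Q_p)$ acts trivially on $H^0$, hence the Casimir eigenvalue is wrong for $k\ge 2$; these are two ways of expressing the same fact.
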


\begin{proof}
Since $\B_{\dR,k}^{+,\la}$ is filtered by $\cO^{\la}_{K^p}(i)$, it follows from the proof of Lemma \ref{BdRcmgr} that 
\[\Ext^i_Z(\tilde\chi, \B_{\dR,k}^{+,\la}(U))=0\]
for $i\geq 1$ and $U\in\mathfrak{B}$. Hence by exactly the same argument as the proof of  \cite[Corollary 5.1.3]{Pan20}, there is an exact sequence
\[0\to \Ext^1_{Z}(\tilde\chi_k,H^0(\B_{\dR,k+1}^{+})^{\la}) \to 
H^1(\B_{\dR,k+1}^{+,\la,\tilde\chi_k})\to
H^1(\B_{\dR,k+1}^{+})^{\la,\tilde\chi_k}\to
\Ext^2_{Z}(\tilde\chi_k,H^0(\B_{\dR,k+1}^{+})^{\la}). \]
Note that $H^0(\Fl,\B_{\dR,k+1}^{+,\la})$ is filtered by $H^0(\Fl, \cO^{\la}_{K^p})(i)$ on which the action of $\mathfrak{sl}_2(\Q_p)\subseteq \mathfrak{gl}_2(\Q_p)$ is trivial  by its relation with the completed cohomology (See Theorem \ref{BdRcomp} below). Hence  $\Ext^i_{Z}(\tilde\chi_k,H^0(\B_{\dR,k+1}^{+})^{\la})=0$ if $k\geq 2$.
\end{proof}

\subsection{Proof of Theorem \ref{MT} I} \label{PfMTI}
\begin{para}
Recall that in Definition \ref{I}, $I_{k-1}$ was defined as the composite $\bar{d}'^k\circ d^k$. We will also factorize $N_k$ as a composite of two maps and identify each map with $d^k$ or $\bar{d}'^k$. Not surprisingly, the basic idea is that instead of working directly with $\B_{\dR}^{+}$, we replace $\B_{\dR,k+1}^{+}$ by a resolution using the \textit{Poincar\'e lemma sequence}. We first  sketch a proof when $k=1$ in the following discussion. I hope this can help the reader understand the general arguments.

Consider the Poincar\'e lemma sequence on $\mathcal{X}_{K^p}$ (viewed as log pro-\'etale covering of $\mathcal{X}=\mathcal{X}_{K^pK_p}$ for some $K_p$.)
\[0\to \B_{\dR}^+\to \cO\B_{\dR}^+\to \cO\B_{\dR}^+\otimes\Omega_{\mathcal{X}}^1(\mathcal{C})\to 0.\]
See section \ref{setupOBdR} \ref{PlsFe} below for more details. This sequence is strict with respect to the natural decreasing filtrations on each term. Consider the quotient by $\Fil^2$ and note that $\gr^0\cO\B_{\dR}^+=\cO_{\mathcal{X}_{K^p}}$.
\[0\to \B_{\dR,2}^+\to \cO\B_{\dR}^+/\Fil^2 \cO\B_{\dR}^+\to \cO_{\mathcal{X}_{K^p}}\otimes\Omega_{\mathcal{X}}^1(\mathcal{C})\to 0.\]
It is easy to see that this sequence remains exact after restricting to the $\GL_2(\Q_p)$-locally analytic vectors and  taking the $\tilde{\chi}_1$-parts.
\[0\to \B_{\dR,2}^{+,\la,\tilde{\chi}_1}\to (\cO\B_{\dR}^+/\Fil^2 \cO\B_{\dR}^+)^{\la,\tilde{\chi}_1}\xrightarrow{\nabla_{\log}} \cO_{\mathcal{X}_K^p}^{\la,\tilde{\chi}_1}\otimes\Omega_{\mathcal{X}}^1(\mathcal{C})\to 0.\]

Now given a section $s$ of $\cO_{\mathcal{X}_K^p}^{\la,\tilde{\chi}_1}=\B_{\dR,2}^{+,\la,\tilde{\chi}_1}/ \Fil^1$ fixed by $G_K$ for some finite extension $K$ of $\Q_p$, we would like to compute $N_k(s)$. By definition, we need to find a lifting of $s$ in $\B_{\dR,2}^{+,\la,\tilde{\chi}_1,K}$ and compute its image under $\nabla$. It turns out that there exists
\begin{itemize}
\item  $\tilde{s}\in (\cO\B_{\dR}^+/\Fil^2 \cO\B_{\dR}^+)^{\la,\tilde{\chi}_1}$ fixed by $G_K$ such that $\tilde{s}\mod\Fil^1=s$.
\end{itemize}
The lifting $\tilde{s}\notin \B_{\dR,2}^{+,\la,\tilde{\chi}_1,K}$ in general, hence we need a modification. There always exists
\begin{itemize}
\item  $s'\in(\Fil^1\cO\B_{\dR}^+/\Fil^2 \cO\B_{\dR}^+)^{\la,\tilde{\chi}_1,K}$ such that $\nabla_{\log}(s')=\nabla_{\log}(\tilde{s})$.
\end{itemize}
This is essentially because the first graded piece of the Poincar\'e lemma sequence is exact.
\[0\to \cO_{\mathcal{X}_{K^p}}(1)\to \gr^1\cO\B_{\dR}^+\to  \cO_{\mathcal{X}_{K^p}}\otimes\Omega_{\mathcal{X}}^1(\mathcal{C})\to 0.\]
Clearly $\tilde{s}-s'\in \B_{\dR,2}^{+,\la,\tilde{\chi}_1}$ is a lifting of $s$. Moreover since $\tilde{s}$ is fixed by $G_K$,
\[N_k(s)=\nabla(\tilde{s}-s')=-\nabla(s').\]
It remains to compare $\nabla(s')$ with $\bar{d}'^1\circ d^1 (s)$. This is a consequence of the following results.
\begin{enumerate} \label{parta}
\item $\nabla_{\log}(\tilde{s})=d^1(s)$. \label{partb}
\item $\nabla(s')=c_1\bar{d}'^1 (\nabla_{\log}(s'))$ for some $c_1\in\Q_p^\times$.
\end{enumerate}
Indeed, combining with our choice of $s'$, we see that 
\[N_k(s)=-\nabla(s')=-c_1 \bar{d}'^1\circ d^1(s).\]
Part (1) in fact follows from the definition of $\tilde{s}$ which is constructed from certain $\cO\B_{\dR}^+$-comparison isomorphism. (This is can be viewed as  a ``$B_\dR^+$-deformation'' of our construction of elements in $\cO_{\mathcal{X}_{K^p}}$ using the Hodge-Tate sequence in section \ref{exs}.) For Part (2), one key observation (due to Faltings) is that the first graded piece of the Poincar\'e lemma sequence, also known as the Faltings's extension, is essentially isomorphic to the pull-back of the non-split exact sequence $0\to \cO_{\Fl}\to (\omega^1)^{\oplus 2}\to \omega^2_{\Fl}\to 0$ on $\Fl$ via $\pi_\HT$. Hence all the calculations are reduced to $\Fl$ and it's not hard to compare with $\bar{d}$, the derivation on $\Fl$.
\end{para}

\begin{para} \label{setupM'MM''}
Now we treat the general case. The abstract linear algebra setup is as follows.
Let $M$ be an abelian group equipped with an endomorphism $N\in \End(M)$. Suppose that $N^2=0$. Let $M'\subseteq \ker(N)$ be a subgroup such that $N(M)\subseteq M'$. Hence $N$ becomes trivial on $M'':=M/M'$ and induces a map
\[N_{M'}: M''\to M'.\]
This recovers $N^K_W$ in \ref{FonOpsetup} with $M=E_0(W^K),M'=W_{k,-k}^K,M''=W_{0,0}^K$ and $N=\nabla$.

Now suppose $(M_1,N_1,\theta:M_1\to M'')$ is a triple extending the triple $(M,N,M\to M'')$. More precisely, $M_1$ is an abelian group containing $M$ as a subgroup, $N_1\in \End(M_1)$ with $N_1^2=0$ and $N_1|_M=N$, and $\theta:M_1\to M''$ is a map such that $\theta|_M$ agrees with the natural quotient map $M\to M''$. Suppose that $M'_1\subseteq \ker \theta$ is an $N_1$-stable subgroup and  $M'_1\cap M=M'$. In particular, $M'_1$ contains $M'$. We denote the quotient by $M'_2\subseteq M_2:=M_1/M$. 

\begin{eqnarray} \label{absdiag}
\begin{tikzcd}
& M''=M/M' &&&\\
0 \arrow[r] & M  \arrow[u]\arrow[r] & M_1 \arrow[r,"\pi"] \arrow[lu, "\theta" ']& M_2  \arrow[r] & 0 \\
0 \arrow[r] & M' \arrow[r] \arrow[u] & M'_1 \arrow[r] \arrow[u] & M'_2 \arrow[r] \arrow[u] & 0 
\end{tikzcd}.
\end{eqnarray}
Assume that
\begin{itemize}
\item  there is a right inverse $s:M''\to \ker(N_1)$ of $\theta:M_1\to M''$, i.e. $\theta\circ s=\Id_{M''}$ such that $\pi\circ s(M'')\subseteq M'_2$, where $\pi:M_1\to M_2$ denotes the quotient map.

\begin{tikzcd}
&&& M'' \arrow[d,"s"] \arrow[r,"\pi\circ s"] & M'_2 \arrow[d] \\
&&& \ker(N_1) \arrow[r,"\pi"] & M_2
\end{tikzcd};
\item $N_1(M'_1)\subseteq M'$. Hence $N_1$ induces a map $N_{1.M'}:M'_{2}\to M'$.
\end{itemize}
\end{para}

\begin{lem} \label{decomN}
$N_{M'}=-N_{1,M'}\circ (\pi\circ s)$ as maps $M''\to M'$
\end{lem}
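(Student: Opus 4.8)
The plan is to prove this as a purely diagram-theoretic identity, with no input from $p$-adic Hodge theory at all; everything takes place inside the commutative diagram \eqref{absdiag} together with the two assumptions on the section $s$ and on $N_1(M'_1)$. Recall that $N_{M'}\colon M''\to M'$ is defined (as in \ref{FonOpsetup}) by the following recipe: given $\bar{m}\in M''$, choose a lift $m\in M$ of $\bar{m}$ under the quotient map $M\to M''$, and set $N_{M'}(\bar{m})=N(m)\in M'$; this is well-defined because $\ker(M\to M'')=M'\subseteq\ker N$.

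First I would unwind $N_{1,M'}\circ(\pi\circ s)$. Fix $\bar{m}\in M''$ and put $x=s(\bar{m})\in\ker(N_1)\subseteq M_1$, so $\theta(x)=\bar{m}$ and, by the first assumption, $\pi(x)\in M'_2$. Then $N_{1,M'}(\pi(x))$ is by definition $N_1(\tilde{x})$ for any lift $\tilde{x}\in M'_1$ of $\pi(x)\in M'_2=M'_1/M'$; note such a lift exists and $N_1(\tilde{x})\in M'$ by the second assumption, so $N_{1,M'}$ is well-defined. Now $x$ and $\tilde{x}$ are two elements of $M_1$ with the same image $\pi(x)$ in $M_2=M_1/M$, hence $m:=x-\tilde{x}\in M$. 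I claim $m$ is a lift of $\bar{m}$ under $M\to M''$: indeed $\theta(m)=\theta(x)-\theta(\tilde{x})=\bar{m}-0=\bar{m}$, using $\theta(x)=\bar{m}$ and $\tilde{x}\in M'_1\subseteq\ker\theta$, and on $M$ the map $\theta$ restricts to the quotient $M\to M''$. Therefore, by the definition of $N_{M'}$ recalled above, $N_{M'}(\bar{m})=N(m)=N_1(m)$, since $N_1|_M=N$.

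The final step is to compute $N_1(m)=N_1(x)-N_1(\tilde{x})=0-N_1(\tilde{x})=-N_1(\tilde{x})$, where $N_1(x)=0$ because $x=s(\bar{m})\in\ker(N_1)$, and $-N_1(\tilde{x})=-N_{1,M'}(\pi(x))=-N_{1,M'}(\pi\circ s(\bar{m}))$. Combining, $N_{M'}(\bar{m})=-N_{1,M'}(\pi\circ s(\bar{m}))$ for all $\bar{m}\in M''$, which is exactly the asserted identity $N_{M'}=-N_{1,M'}\circ(\pi\circ s)$.

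There is essentially no obstacle here: the only points requiring a line of care are the two well-definedness checks ($N_{M'}$ is insensitive to the choice of lift $m$, and $N_{1,M'}$ is insensitive to the choice of lift $\tilde{x}$), both of which are immediate from $M'\subseteq\ker N$ and $N_1(M'_1)\subseteq M'$ respectively, and the verification that $\theta(m)=\bar m$, which uses $M'_1\subseteq\ker\theta$ and the compatibility $\theta|_M=(M\to M'')$. This lemma will then be applied, in the subsequent subsection, with $M=E_0(W^K)$, $M'=W^K_{k,-k}$, $M''=W^K_{0,0}$, $N=\nabla$, and $(M_1,N_1,\theta)$ built from the truncated Poincar\'e lemma sequence for $\cO\B_{\dR}^+$; the content of Theorem \ref{MT} will be that $\pi\circ s$ realizes $d^k$ (up to the identifications) and $N_{1,M'}$ realizes $\bar d'^k$ up to a nonzero rational scalar, so that $N_k=-N_{1,M'}\circ(\pi\circ s)$ becomes $c_k I_{k-1}$.
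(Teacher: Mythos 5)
Your proof is correct and is essentially the paper's own argument: you lift $\bar m$ via $s$, correct by a lift $\tilde x\in M'_1$ of $\pi(s(\bar m))$ to land in $M$, and compute $N$ of the difference using $s(M'')\subseteq\ker N_1$ and $M'_1\subseteq\ker\theta$. One cosmetic remark: the well-definedness of $N_{1,M'}$ follows from $M'\subseteq\ker N$ (hence $M'\subseteq\ker N_1$), not from $N_1(M'_1)\subseteq M'$, which only ensures the map lands in $M'$.
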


\begin{proof}
The basic idea is very simple.
Let $a\in M''=M/M'$.  By defintion, $N_{M'}(a)=N(\tilde{a})$ if $\tilde{a}\in M$ is a lifting of $a$. Note that $s(a)$ is a lifting of $a$ in $M_1$ but not necessarily in $M$. To make it inside of $M$, we will modify it with an element in $M'_1$.

Let $b\in M'_1$ be a lifting of $\pi(s(a))\in M'_2=M'_1/M'$. By definition, 
\[-N_{1,M'}\circ \pi\circ s(a)=-N_1(b).\]
On the other hand, consider $s(a)-b\in M_1$. Since 
\[\pi(s(a)-b)=\pi(s(a))-\pi(s(a))=0\in M_2\]
by our choice of $b$, we have $s(a)-b\in M$. Note that $b\in M'_1\subseteq\ker(\theta)$. Hence $\theta(s(a)-b)=a$, i.e. $s(a)-b$ is a lifting of $a$ in $M$. By definition, 
\[N_{M'}(a)=N(s(a)-b)=-N_1(b)=-N_{1,M'}\circ \pi\circ s(a)\]
where the last equality follows from $s(M'')\subseteq \ker(N_1)$. This finishes the proof.
\end{proof}

\begin{rem}
In our application, using the notation in \ref{noBdRK} and \ref{FonOpsetup} , we will take $M=E_0(\B_{\dR,k+1}^{\la,\tilde\chi_k}(U)^K)$, $M'=\cO^{\la,(1,-k)}_{K^p}(U)(k)^K$, $M''=\cO^{\la,(1-k,0)}_{K^p}(U)^K$, $N=\nabla$ and  the exact sequence $0\to M\to M_1\to M_2\to 0$ (resp. $0\to M'\to M'_1\to M'_2\to 0$) will be constructed from the Poincar\'e lemma sequence (resp. its $k$-th graded piece).  It remains to construct $M_1, M'_1,N_1$ and $s$. 
\end{rem}

\begin{para} \label{setupOBdR}
Next we collect some results about the positive structural de Rham sheaf $\cO\B_{\dR}^+$ (and its log version). We will not recall the construction here. People can find the construction in  \cite[\S 2.2]{DLLZ2} (with a log structure) and \cite[(3)]{Sch13e} (without log structures). 

Let $U\in\mathfrak{B}$ and $V_\infty=\pi_\HT^{-1}(U)$. Then $V_\infty$ is the inverse image of some affinoid $V_{K_p}\subseteq\mathcal{X}_{K^pK_p}$ for sufficiently small open subgroup $K_p\subseteq \GL_2(\Z_p)$.  Let $K$ be a finite extension of $\Q_p$ in $C$ over which all $V_{K_p}$'s are defined. Hence we can write $V_{K_p}=V_{K_p,K}\times_{\Spa(K,\cO_K)} \Spa(C,\cO_C)$. 

Fix a sufficiently small open subgroup $G_0\subseteq\GL_2(\Z_p)$. After enlarging $K$, we may assume that all cusps in $V_{G_0}$ are defined over $K$. Let $V_0=V_{G_0,K}$. The cusps in $V_0$ define a natural log structure on $V_{0}$, cf. \cite[Example 2.1.2]{DLLZ2}. Consider the pro-Kummer \'etale site $(V_{0})_{\proket}$ of $V_{0}$. Then we can regard $V_\infty$ as the open covering $\displaystyle \varprojlim_{L,K_p} V_{K_p,L}$
in $(V_{0})_{\proket}$, where $K_p$ runs through all open subgroups of $G_0$ and $L$ runs through all finite extension of $K$ in $C$. In particular, we can evaluate the positive structural de Rham sheaf $\cO\B_{\dR,\log,V_0}^+$ (called the geometric de Rham period in \cite[Definition 2.2.10]{DLLZ2}) at $V_\infty$ and denote it by $\cO\B_{\dR}^+(V_\infty)$. It has the following ring-theoretic properties.

\begin{enumerate} 
\item There is a natural surjective ring homomorphism $\theta_{\log}:\cO\B_{\dR}^+(V_\infty)\to\cO_{\mathcal{X}_{K^p}}(V_\infty)$. It induces a decreasing  filtration on $\cO\B_{\dR}^+(V_\infty)$ with $\Fil^i\cO\B_{\dR}^+(V)=\ker\theta_{\log}^i$. This filtration is exhausted and $\cO\B_{\dR}^+(V_\infty)$  is complete with respect to it. Moreover, the natural map $\Sym^i_{\gr^0 \cO\B_{\dR}^+(V_\infty)} \gr^1\cO\B_{\dR}^+(V_\infty)\to\gr^i\cO\B_{\dR}^+(V_\infty)$ is an isomorphism for $i\geq 0$.
\item There is a natural injection $\B_\dR^+(V_\infty)\to\cO\B_{\dR}^+(V_\infty)$ which is strict with respect to the filtrations such that its composite with $\theta_{\log}$ agrees with $\theta:\B_{\dR}^+(V_\infty)\to \cO_{\mathcal{X}_{K^p}}(V_\infty)$.
\item There is a natural $\Q_p$-algebra injection $\displaystyle \varinjlim_{L,K_p} \cO_{V_{K_p,L}}(V_{K_p,L})\to \cO\B_{\dR}^+(V_\infty)$ such that its composite with $\theta_{\log}$ agrees with the natural inclusion $\cO_{V_{K_p,L}}(V_{K_p,L})\subseteq \cO_{\mathcal{X}_{K^p}}(V_\infty)$.
\item $G_{K}$ and $G_0$ act naturally on $\cO\B_{\dR}^+(V_\infty)$ and all the maps above are $G_K\times G_0$-equivariant.
\end{enumerate}
The last two properties can be seen directly from the construction. The first two properties follow from the explicit description of $\cO\B_{\dR}^+(V_\infty)$ in \cite[Proposition 2.3.15]{DLLZ2}. We will also give a description of $\cO\B_{\dR}^+(V_\infty)$ in terms of $\B_{\dR}^+(V_\infty)$ below, cf \eqref{locBdR}.

We remark that $\cO\B_{\dR}^+(V_\infty)$ does not depend on the choice of $G_0$ and $K$. See the paragraph below the proof of Proposition 2.3.15 of \cite{DLLZ2} and also \cite[(3)]{Sch13e}. 
\end{para}
 
\begin{para}[Poincar\'e lemma sequence and Faltings's extension] \label{PlsFe}
There is a natural logarithmic connection on $\cO\B_{\dR}^+(V_\infty)$, cf. \cite[2.2.15]{DLLZ2}. Following the notation in \ref{XCY}, we use $\mathcal{C}$ to denote the set of cusps. Then the logarithmic connection
\[\cO_{V_0}\to\Omega^1_{V_0}(\mathcal{C})\]
can be extended $\B_\dR^+(V_\infty)$-linearly to a logarithmic connection
\[\nabla_{\log}:\cO\B_{\dR}^+(V_\infty)\to \cO\B_{\dR}^+(V_\infty)\otimes_{\cO_{V_0}}\Omega^1_{V_0}(\mathcal{C})\]
where by abuse of notation $\cO_{V_0}=\cO_{V_0}(V_0)$ and $\Omega^1_{V_0}(\mathcal{C})=\Omega^1_{V_0}(\mathcal{C})(V_0)$. As the notation suggests, $\nabla_{\log}$ also does not depend on the choice of $V_0$. More precisely, $\nabla_{\log}$ also extends the log connection $\cO_{V_{K_p,L}}\to \Omega^1_{{V_{K_p,L}}}(\mathcal{C})$ for any $K_p,L$ under the natural identification $ \Omega^1_{{V_{K_p,L}}}(\mathcal{C})=\cO_{V_{K_p,L}}\otimes_{\cO_{V_0}}\Omega^1_{V_0}(\mathcal{C})$. We sometimes will drop the $\cO_{V_0}$ in the tensor products below.

$\nabla_{\log}$ induces a short strict exact sequence (Poincar'e lemma sequence)
\[0\to  \B_\dR^+(V_\infty) \to \cO\B_{\dR}^+(V_\infty)\to \cO\B_{\dR}^+(V_\infty)\otimes_{\cO_{V_0}}\Omega^1_{V_0}(\mathcal{C})\to 0\]
where $\Omega^1_{V_0}(\mathcal{C})$ has degree $1$. Taking the first graded piece, we obtain an exact sequence of $\cO_{\mathcal{X}_{K^p}}(V_\infty)$-modules
\begin{eqnarray} \label{Fext}
0\to \cO_{\mathcal{X}_{K^p}}(V_\infty)(1)\to \gr^1 \cO\B_{\dR}^+(V_\infty)\to \cO_{\mathcal{X}_{K^p}}(V_\infty)\otimes_{\cO_{V_0}}\Omega^1_{V_0}(\mathcal{C})\to 0
\end{eqnarray}
called the log Faltings's extension.  Note that $\Omega^1_{V_0}(\mathcal{C})\cong \omega^2$ by Kodaira-Spencer isomorphism, hence $\Omega^1_{V_0}(\mathcal{C})$ is trivial. Then $\cO_{\mathcal{X}_{K^p}}(V_\infty)\otimes_{\cO_{V_0}}\Omega^1_{V_0}(\mathcal{C})\cong  \cO_{\mathcal{X}_{K^p}}(V_\infty)$ by choosing a generator of $\Omega^1_{V_0}(\mathcal{C})$. Recall that there are natural isomorphisms $\Sym^k_{\gr^0 \cO\B_{\dR}^+(V_\infty)} \gr^1\cO\B_{\dR}^+(V_\infty)\cong\gr^k\cO\B_{\dR}^+(V_\infty)$. Hence the Faltings's extension induces a natural filtration on $\gr^k\cO\B_{\dR}^+(V_\infty)$ with graded pieces isomorphic to $\cO_{\mathcal{X}_{K^p}}(V_\infty)(i)\otimes_{\cO_{V_0}}\Omega^1_{V_0}(\mathcal{C})^{\otimes k-i} \cong \cO_{\mathcal{X}_{K^p}}(V_\infty)(i),i=0,\cdots,k$.

We also have the following explicit description of $ \cO\B_{\dR}^+(V_\infty)$. Choose an element $X\in \Fil^1 \cO\B_{\dR}^+(V_\infty)$ whose image in $\cO_{\mathcal{X}_{K^p}}(V_\infty)\otimes_{\cO_{V_0}}\Omega^1_{V_0}(\mathcal{C})$ is a generator. Then the natural map
\begin{eqnarray} \label{locBdR}
\B_{\dR}^+(V_\infty)[[X]]\to \cO\B_{\dR}^+(V_\infty)
\end{eqnarray}
is an isomorphism of filtered groups where $X$ has degree $1$. 
\end{para}

\begin{para}[$\cO\B_{\dR,k}^+$] \label{cOBdRk+}
It is clear that assigning $U$ to $\cO\B_{\dR}^+(V_\infty)$, where $V_\infty=\pi_{\HT}^{-1}(U)$,  defines a sheaf on $\Fl$, which will be denoted by $\cO\B_{\dR}^+$ by abuse of notation. It is equipped with a natural decreasing filtration and we denote $\cO\B_{\dR}^+/\Fil^k\cO\B_{\dR}^+$ by $\cO\B_{\dR,k}^+$. There is a natural map $\B_{\dR,k}^+\to \cO\B_{\dR,k}^+$. It follows from the discussion in the previous paragraph that $\cO\B_{\dR,k}^+(U)$ is a finite $\B_{\dR,k}^+(U)$-module, $U\in\mathfrak{B}$. Hence we can define a natural $p$-adic topology on $\cO\B_{\dR,k}^+(U)$. More precisely, using the notation in \eqref{locBdR}, we can define a $\Q_p$-Banach algebra structure on $\cO\B_{\dR,k}^+(U)$ with the unit open ball given by the image of 
\[\A_\inf(V_\infty)[[X]]\to \B_{\dR,k}^+(U)[[X]]\to \cO\B_{\dR,k}^+(U).\]
The $p$-adic topology defined in this way does not depend on the choice of $X$.
\end{para}

\begin{para}[subsheaves of $\cO\B_{\dR,k}^+$] \label{subexa}
It follows from the construction that $\cO\B_{\dR,k}^+$ is a $\GL_2(\Q_p)$-equivariant sheaf. We denote by
\[\cO\B_{\dR,k}^{+,\la}\subseteq \cO\B_{\dR,k}^+\]
the subsheaf of $\GL_2(\Q_p)$-locally analytic sections. The Lie algebra $\mathfrak{gl}_2(\Q_p)$ acts naturally on it. Given an infinitesimal character $\tilde\chi:Z\to\Q_p$, we denote by 
\[\cO\B_{\dR,k}^{+,\la,\tilde\chi}\subseteq \cO\B_{\dR,k}^{+,\la}\]
the $\tilde\chi$-isotypic part. Both sheaves $\cO\B_{\dR,k}^{+,\la,\tilde\chi}$ and  $\cO\B_{\dR,k}^{+,\la}$ inherit a decreasing filtration from $\cO\B_{\dR,k}^+$. Note that by Faltings's extension, $\cO\B_{\dR,k}^+$ is naturally filtered by sheaves isomorphic to $\cO_{K^p}$ (up to Tate twists). Hence  the same argument as in the proof of Lemma \ref{BdRcmgr} shows that taking graded pieces commutes with taking locally analytic vectors and $\tilde\chi$-isotypic parts. 

Now take $\tilde\chi=\tilde\chi_{l}$ for some positive integer $l$ defined in \ref{tilchik}. Let $U\in\mathfrak{U}$. Then $G_K$ acts naturally on $\cO\B_{\dR,k}^+(U)$ hence also on $\cO\B_{\dR,k}^{+,\la,\tilde\chi}(U)$  for some finite extension $K$ of $\Q_p$ in $C$. Consider the subspace of $G_{K_\infty}$-fixed, $G_K$-analytic vectors $\cO\B_{\dR,k}^{+,\la,\tilde\chi}(U)^K\subseteq \cO\B_{\dR,k}^{+,\la,\tilde\chi}(U)$ as before. We have seen that $\cO\B_{\dR,k}^{+,\la,\tilde\chi}(U)$ is naturally filtered by LB-spaces isomorphic to $\cO^{\la,\tilde\chi}_{K^p}(U)(i)$ for some integer $i$, which is Hodge-Tate. Therefore taking graded pieces  also commutes with taking $G_{K_\infty}$-fixed, $G_K$-analytic vectors by  the same argument as in the proof  of Proposition \ref{Kcmgr}. Let's summarize these results in the following proposition.
\end{para}

\begin{prop} \label{griFIl}
Let $U\in\mathfrak{B}$ and suppose $G_K$ acts on $\cO\B_{\dR,k}^+(U)$ for some finite extension $K$ of $\Q_p$ in $C$. For integers $i\geq 0$ and $l>0$, the following natural maps
\begin{enumerate}
\item $\gr^i \cO\B_{\dR,k}^{+,\la}\to (\gr^i\cO\B_{\dR}^+)^{\la}$,
\item $\gr^i \cO\B_{\dR,k}^{+,\la,\tilde\chi_l}\to (\gr^i\cO\B_{\dR}^+)^{\la,\tilde\chi_l}$,
\item $\gr^i \cO\B_{\dR,k}^{+,\la,\tilde\chi_l}(U)^K\to (\gr^i\cO\B_{\dR}^+(U))^{\la,\tilde\chi_l,K}$
\end{enumerate}
are isomorphisms. Moreover the $i$-th symmetric power of Faltings's extension induces a natural filtration on $\gr^i \cO\B_{\dR,k}^{+,\la,\tilde\chi_l}(U)^K$  whose graded pieces are  isomorphic to  
\[ \cO^{\la,\tilde\chi_l}_{K^p}(U)(j)^K\otimes_{\cO_{V_0}}\Omega^1_{V_0}(\mathcal{C})^{\otimes i-j},~~~j=0,\cdots,i\]
\end{prop}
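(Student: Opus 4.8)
The statement has two parts: the comparison isomorphisms for graded pieces, and the induced filtration on $\gr^i$ of the $G_K$-analytic, $\tilde\chi_l$-isotypic vectors. The plan is to reduce everything to the three ``compatibility'' operations commuting with $\gr^\bullet$: (a) taking $\GL_2(\Q_p)$-locally analytic vectors, (b) taking $\tilde\chi_l$-isotypic parts, and (c) taking $G_{K_\infty}$-fixed, $G_K$-analytic vectors. Each of these is a left-exact operation whose failure to commute with $\gr^\bullet$ is governed by a vanishing higher cohomology (resp.\ $R^i\mathfrak{LA}$, a $Z$-Ext group, a higher $G_{K_\infty}$-cohomology), and all these vanishings have already been established in the excerpt for the building block $\cO^{\la}_{K^p}$, $\cO^{\la,\chi}_{K^p}$, and Hodge-Tate Banach representations.

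\textbf{Step 1: Devissage of $\cO\B^+_{\dR,k}(U)$ by $\cO_{K^p}$.} First I would record that, by the Faltings extension \eqref{Fext} and the identification $\Sym^i \gr^1\cO\B^+_{\dR}\cong \gr^i\cO\B^+_{\dR}$, each $\gr^i\cO\B^+_{\dR,k}(U) = \gr^i \cO\B^+_{\dR}(U)$ (for $0\le i\le k-1$) carries a finite filtration with graded pieces $\cO_{\mathcal{X}_{K^p}}(V_\infty)(j)\otimes_{\cO_{V_0}}\Omega^1_{V_0}(\mathcal C)^{\otimes i-j}$, $j=0,\dots,i$; after choosing a generator of $\Omega^1_{V_0}(\mathcal C)$ (trivial by Kodaira–Spencer) these are just $\cO_{K^p}(U)(j)$. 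In other words, $\cO\B^+_{\dR,k}$ is an iterated extension of Tate twists of $\cO_{K^p}$. This is exactly the input needed to run the arguments of Lemma \ref{BdRcmgr}, \cite[Theorem 4.4.6]{Pan20}, and Proposition \ref{Kcmgr} verbatim.

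\textbf{Step 2: Commuting $\gr^\bullet$ with (a), (b).} For the first two maps I would argue as in the proof of Lemma \ref{BdRcmgr}: since $\cO\B^+_{\dR,k}(U)$ is filtered by the $\mathfrak{LA}$-acyclic representations $\cO_{K^p}(U)(j)$ (acyclicity is \cite[Proposition 4.3.15]{Pan20}), taking $\GL_2(\Q_p)$-locally analytic vectors is exact on this filtration, giving the isomorphism $\gr^i\cO\B^{+,\la}_{\dR,k}\xrightarrow{\ \sim\ }(\gr^i\cO\B^+_{\dR})^{\la}$. For the $\tilde\chi_l$-isotypic part I would then invoke the vanishing $\Ext^i_Z(\tilde\chi_l,\cO^{\la}_{K^p}(U)(j))=0$ for $i\ge 1$, which the excerpt derives (in the proof of Lemma \ref{BdRcmgr}) from \cite[Corollary 4.2.8, Lemma 5.1.2.(1)]{Pan20} via Harish-Chandra and the Koszul complex; this forces the $\tilde\chi_l$-part functor to be exact on the filtration of $\cO\B^{+,\la}_{\dR,k}(U)$, yielding the second isomorphism.

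\textbf{Step 3: Commuting $\gr^\bullet$ with (c), and the induced filtration.} By Step 2 the LB-space $\cO\B^{+,\la,\tilde\chi_l}_{\dR,k}(U)$ is filtered by spaces isomorphic to $\cO^{\la,\chi}_{K^p}(U)(i)$ for various weights $\chi$ appearing in $\tilde\chi_l=\{(0,1-l),(-l,1)\}$, each of which is Hodge-Tate (Proposition \ref{GnanHT}, as used in \ref{tilchik}). Hence the operation of taking $G_{K_\infty}$-fixed, $G_K$-analytic vectors is exact on this filtration, by the same argument as Proposition \ref{Kcmgr} (the key vanishing $H^1_\cont(G_{K_\infty},\text{Hodge-Tate Banach rep})=0$ and Corollary \ref{strHT}); this gives the third isomorphism $\gr^i\cO\B^{+,\la,\tilde\chi_l}_{\dR,k}(U)^K\xrightarrow{\ \sim\ }(\gr^i\cO\B^+_{\dR}(U))^{\la,\tilde\chi_l,K}$. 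Finally, applying the three exactness statements to the $i$-th symmetric power of the Faltings extension filtration of Step 1 shows its graded pieces become $\cO^{\la,\tilde\chi_l}_{K^p}(U)(j)^K\otimes_{\cO_{V_0}}\Omega^1_{V_0}(\mathcal C)^{\otimes i-j}$, as claimed.

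\textbf{Main obstacle.} The routine parts are the three ``$\gr^\bullet$ commutes with an exact-on-the-filtration functor'' arguments; the only genuinely delicate point is verifying that the chain of identifications is compatible, i.e.\ that the Faltings-extension filtration on $\gr^i\cO\B^+_{\dR,k}$ is actually stable under the $\GL_2(\Q_p)$-action, the $Z$-action, and the $G_K$-action (so that each functor can be applied to it). This is where one must unwind the construction of $\cO\B^+_{\dR}$ from \cite{DLLZ2}: the Faltings extension is $\GL_2(\Q_p)\times G_K$-equivariant by functoriality, and $Z$-stability follows since the filtration is defined by powers of $\ker\theta_{\log}$, which is Lie-algebra (hence $Z$-) stable. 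Once this compatibility is in hand, the three isomorphisms and the description of the graded pieces follow formally.
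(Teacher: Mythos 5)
Your proposal is correct and follows essentially the same route as the paper: the paper's proof (given in the paragraph preceding the proposition) likewise filters $\cO\B_{\dR,k}^+$ by Tate twists of $\cO_{K^p}$ via the symmetric powers of Faltings's extension, then invokes the argument of Lemma \ref{BdRcmgr} ($\mathfrak{LA}$-acyclicity plus the $\Ext_Z$-vanishing) for the first two isomorphisms and the argument of Proposition \ref{Kcmgr} (Hodge-Tateness of $\cO^{\la,\tilde\chi_l}_{K^p}(U)(j)$) for the third. The equivariance of the filtration that you flag as the delicate point is indeed left implicit in the paper, but your justification of it is the right one.
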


For $i\geq 0$, since $\gr^i \cO\B_{\dR,k}^{+,\la}$  does not depend on $k>i$, we will simply write it as $\gr^i\cO\B_{\dR}^{+,\la}$. Similarly, we can define $\gr^i \cO\B_{\dR}^{+,\la,\tilde\chi_l}$, $\gr^i\B_{\dR}^{+,\la}$...

\begin{para}
Consider the action of $1\in\Q_p\cong\Lie(\Gal(K_\infty/K))$ on $\cO\B_{\dR,k}^{+,\la,\tilde\chi_l}(U)^K$. It is easy to see that  $\cO\B_{\dR,k}^{+,\la,\tilde\chi_l}(U)^K$ is filtered by $\cO^{\la,\tilde\chi_l}_{K^p}(U)(i)^K$, hence there is a generalized eigenspace decomposition 
\[\cO\B_{\dR,k}^{+,\la,\tilde\chi_l}(U)^K=\bigoplus_{\lambda} E_{\lambda}(\cO\B_{\dR,k}^{+,\la,\tilde\chi_l}(U)^K)\]
where $\lambda$ runs through all eigenvalues. We will be particularly interested in the  the generalized eigenspace associated to $\lambda=0$.
\end{para}

\begin{para} \label{constdiag}
Now we can construct a diagram similar to \eqref{absdiag}. Consider the following truncated Poincar\'e lemma sequence and its $\Fil^k$-parts:
\[\begin{tikzcd}
& \cO^{\la,\tilde\chi_k}_{K^p}  &&&\\
0 \arrow[r] & \B_{\dR,k+1}^{+,\la,\tilde\chi_k}  \arrow[r] \arrow[u] &  \cO\B_{\dR,k+1}^{+,\la,\tilde\chi_k}  \arrow[r,"\nabla_{\log}"]  \arrow[lu,"\theta_{\log}" ']&  \cO\B_{\dR,k}^{+,\la,\tilde\chi_k} \otimes_{\cO_{V_0}}\Omega^1_{V_0}(\mathcal{C})  \arrow[r] & 0 \\
0 \arrow[r] & \gr^k\B_{\dR}^{+,\la,\tilde\chi_k} \arrow[r] \arrow[u] & \gr^k \cO\B_{\dR}^{+,\la,\tilde\chi_k}  \arrow[r] \arrow[u] & \gr^{k-1} \cO\B_{\dR}^{+,\la,\tilde\chi_k} \otimes_{\cO_{V_0}}\Omega^1_{V_0}(\mathcal{C})   \arrow[r] \arrow[u] & 0 
\end{tikzcd}.\]
Evaluate this diagram at $U$, take  the subspace of $G_{K_\infty}$-fixed, $G_K$-analytic vectors, and take the generalized eigenspace of $0$ with respect to the action of $1\in\Q_p\cong\Lie(\Gal(K_\infty/K))$.  We obtain the following diagram, cf. \eqref{absdiag}.
\[
\begin{tikzcd}
 \cO^{\la,(1-k,0)}_{K^p}(U)^{G_K} &&&\\
 E_0(\B_{\dR,k+1}^{+,\la,\tilde\chi_k}(U)^K)  \arrow[r,hook] \arrow[u] &  E_0(\cO\B_{\dR,k+1}^{+,\la,\tilde\chi_k} (U)^K) \arrow[r,"E_0(\nabla_{\log})"]  \arrow[lu,"E_0(\theta_{\log})" ']&  E_0(\cO\B_{\dR,k}^{+,\la,\tilde\chi_k}(U)^K) \otimes\Omega^1_{V_0}(\mathcal{C})  \arrow[r] & 0 \\
 \cO^{\la,(1,-k)}_{K^p}(U)(k)^{G_K} \arrow[r,hook] \arrow[u] & E_0(\gr^k \cO\B_{\dR}^{+,\la,\tilde\chi_k}(U)^K)  \arrow[r] \arrow[u] & \cO^{\la,(1-k,0)}_{K^p}(U)^{G_K} \otimes\Omega^1_{V_0}(\mathcal{C})^{\otimes k}  \arrow[r] \arrow[u] & 0 
\end{tikzcd}.\]
Both horizontal sequences are exact by exactly the same argument as in \ref{subexa}. Here we identify 
\begin{itemize}
\item $E_0(\cO^{\la,\tilde\chi_k}_{K^p}(U)^K)=\cO^{\la,(1-k,0)}_{K^p}(U)^K=\cO^{\la,(1-k,0)}_{K^p}(U)^{G_K}$;
\item $E_0( \gr^k\B_{\dR}^{+,\la,\tilde\chi_k} (U)^K)= E_0(\cO^{\la,\tilde\chi_k}_{K^p}(U)(k)^K)=\cO^{\la,(1,-k)}_{K^p}(U)(k)^K=\cO^{\la,(1,-k)}_{K^p}(U)(k)^{G_K}$;
\item $E_0(\gr^{k-1} \cO\B_{\dR}^{+,\la,\tilde\chi_k}(U)^K) \otimes_{\cO_{V_0}}\Omega^1_{V_0}(\mathcal{C})=\cO^{\la,(1-k,0)}_{K^p}(U)^{G_K} \otimes_{\cO_{V_0}}\Omega^1_{V_0}(\mathcal{C})^{\otimes k}$.
\end{itemize}
The first two follow from that $\cO^{\la,\tilde\chi_k}_{K^p}(U)^K$ is Hodge-Tate of weights $0,k$, cf. \ref{tilchik}. For the last one, we note that by Proposition \ref{griFIl}, $\gr^{k-1} \cO\B_{\dR}^{+,\la,\tilde\chi_k}(U)^K$ is filtered  by 
\[ \cO^{\la,\tilde\chi_k}_{K^p}(U)(j)^K\otimes_{\cO_{V_0}}\Omega^1_{V_0}(\mathcal{C})^{\otimes k-1-j} \cong \cO^{\la,\tilde\chi_k}_{K^p}(U)(j)^K,~~~j=0,\cdots,k-1.\]
Hence the only Hodge-Tate weight-$0$ part is contributed by $ \cO^{\la,\tilde\chi_k}_{K^p}(U)^K\otimes_{\cO_{V_0}}\Omega^1_{V_0}(\mathcal{C})^{\otimes k-1} $, i.e. when $j=0$. This implies our claim.
\end{para}

\begin{para}
We basically have everything in \ref{setupM'MM''} here with $N$ and $N_1$ given by the action of $1\in\Q_p\cong\Lie(\Gal(K_\infty/K))$, except that we still need to construct a section of $E_0(\theta_{\log})$ (also denoted by $s$ in  \ref{setupM'MM''})
\[s_{k+1}: \cO^{\la,(1-k,0)}_{K^p}(U)^{G_K}\to \cO\B_{\dR,k+1}^{+,\la,\tilde\chi_k} (U)^{G_K} \]
such that $\im(E_0(\nabla_{\log})\circ s_{k+1})\subseteq \Fil^{k-1}\cO\B_{\dR,k}^{+,\la,\tilde\chi_k} (U)^{G_K}\otimes\Omega^1_{V_0}(\mathcal{C})^{\otimes k} = \cO^{\la,(1-k,0)}_{K^p}(U)^{G_K} \otimes\Omega^1_{V_0}(\mathcal{C})^{\otimes k}  $.
(Note that $\ker(N_1)$ is exactly the subspace of $G_K$-invariants here.) 
This will be done in the next subsection, cf. \ref{defnsk}. Let's assume its existence at the moment. Then by Lemma \ref{decomN},
$N_k(U): \cO^{\la,(1-k,0)}_{K^p}(U)^{G_K} \to   \cO^{\la,(1,-k)}_{K^p}(U)(k)^{G_K}$ (corresponding to $N_{M'}$ in Lemma \ref{decomN}) can be written as
\[N_k(U)=-N'_k\circ (E_0(\nabla_{\log})\circ s_{k+1}),\]
where $N'_k: \cO^{\la,(1-k,0)}_{K^p}(U)^{G_K} \otimes\Omega^1_{V_0}(\mathcal{C})^{\otimes k}\to \cO^{\la,(1,-k)}_{K^p}(U)(k)^{G_K}$ is obtained by applying $1\in\Q_p\cong\Lie(\Gal(K_\infty/K))$ to the middle term of the exact sequence 
\[0 \to \cO^{\la,(1,-k)}_{K^p}(U)(k)^{G_K} \to E_0(\gr^k \cO\B_{\dR,k+1}^{+,\la,\tilde\chi_k}(U)^K)  \to \cO^{\la,(1-k,0)}_{K^p}(U)^{G_K} \otimes\Omega^1_{V_0}(\mathcal{C})^{\otimes k}  \to 0. \]
Thus in order to prove that $N_k=c_kI_{k-1}=c_k\bar{d}'^k\circ d^k$ for some $c_k\in\Q^\times$,  it remains to compare $E_0(\nabla_{\log})\circ s_{k+1}$ with $d^k$ and $N'_k$ with $\bar{d}'^k$. 
\end{para}

\begin{prop} \label{comdk}
Let $s_{k+1}: \cO^{\la,(1-k,0)}_{K^p}(U)^{G_K}\to \cO\B_{\dR,k+1}^{+,\la,\tilde\chi_k} (U)^{G_K}$ be the map defined in Definition \ref{defnsk} below. Then
\[E_0(\nabla_{\log})\circ s_{k+1}=d^k.\]
Both sides are  viewed as maps $\cO^{\la,(1-k,0)}_{K^p}(U)^{G_K}\to \cO^{\la,(1-k,0)}_{K^p}(U)^{G_K} \otimes\Omega^1_{V_0}(\mathcal{C})^{\otimes k} $.
\end{prop}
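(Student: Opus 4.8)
The plan is to reduce the identity $E_0(\nabla_{\log})\circ s_{k+1}=d^k$ to its definition, using that both sides are $\cO_{\Fl}$-linear continuous operators and that $\cO^{\la,(1-k,0)}_{K^p}$ is generated over $\cO_{\Fl}$ by a dense subspace of explicit elements coming from $\omega^{-(k-1),\sm}_{K^p}$ via the Hodge--Tate filtration (Theorem \ref{str}, Corollary \ref{density}). First I would recall from Definition \ref{defnsk} (which must be read off from the construction in the next subsection, assumed available here) that $s_{k+1}$ is built out of the $\cO\B_{\dR}^+$-comparison isomorphism: concretely, $s_{k+1}$ lifts a locally analytic section $f\in\cO^{\la,(1-k,0)}_{K^p}(U)^{G_K}$ to the ``$\B_{\dR}^+$-deformation'' of the Hodge--Tate construction of elements of $\cO_{\mathcal{X}_{K^p}}$, i.e. one replaces the relative Hodge--Tate sequence \eqref{rHTH} by its $\cO\B_{\dR}^+$-version (the full de Rham comparison) and takes the corresponding section of $\Sym^{k-1}$ of the middle term. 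The point of the $G_K$-invariance and the choice of $X$ in \eqref{locBdR} is that this lift is canonical once one fixes the trivializations; the same mechanism that produces $d^1$ in the proof of Theorem \ref{I1} (the Leibniz rule applied term by term, continuity, $\cO_{\Fl}$-linearity) produces $\nabla_{\log}$ of this lift.

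The key steps, in order, would be: (i) unwind $s_{k+1}(f)$ for $f=\mathrm{t}^{1-k}e_1^i e_2^{k-1-i}s$ with $s\in\omega^{-(k-1),\sm}_{K^p}$ a smooth section defined on a finite-level modular curve over $K$; here the deformation is governed by the Gauss--Manin connection $\nabla_k$ on $\Sym^{k-1}D^{\sm}_{K^p}$ together with the Faltings extension \eqref{Fext}, and by construction $s_{k+1}(f)$ is the image of the canonical lift $\tilde s\in\Sym^{k-1}D^{*,(0,0)}$ (Lemma \ref{KSsec}) of $s$ into $\cO\B_{\dR,k+1}^{+,\la,\tilde\chi_k}(U)^{G_K}$. (ii) Apply $\nabla_{\log}$: by the defining property of the lift $r_k$ in Lemma \ref{KSsec}, $\nabla_k(\tilde s)\in\Fil^{k-1}\Sym^{k-1}D^{*,(0,0)}\otimes\Omega^1_{K^p}(\mathcal{C})^{\sm}=\omega^{-(k-1),\sm}_{K^p}\otimes(\Omega^1_{K^p}(\mathcal{C})^{\sm})^{\otimes\ldots}$, which lands, after passing to $E_0$, exactly in $\cO^{\la,(1-k,0)}_{K^p}(U)^{G_K}\otimes\Omega^1_{V_0}(\mathcal{C})^{\otimes k}$ — this is why $E_0(\nabla_{\log})\circ s_{k+1}$ takes values in the asserted subspace at all. (iii) Compare with the recipe for $d^k$ in Theorem \ref{I1}: there $d^{k}$ is defined precisely by applying $\theta^{k}$ to smooth sections $s$ and extending $\cO_{\Fl}$-linearly, and $\theta^{k}$ is in turn defined via the Gauss--Manin connection on $\Sym^{k-1}D$ followed by projection to $\Fil^{k-1}$ — the same operation. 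So on the dense $\cO_{\Fl}$-submodule spanned by the $\mathrm{t}^{1-k}e_1^ie_2^{k-1-i}s$ the two maps agree; by $\cO_{\Fl}$-linearity and continuity of both sides (Theorem \ref{I1}, and continuity of $E_0(\nabla_{\log})$ and $s_{k+1}$ from Proposition \ref{griFIl} and the finiteness of $\cO\B_{\dR,k+1}^+$ over $\B_{\dR,k+1}^+$) they agree everywhere on $U$, hence as maps of sheaves.

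The main obstacle will be step (i): making the identification of $s_{k+1}(f)$ with the canonical Gauss--Manin lift $\tilde s$ precise, i.e. checking that the $\cO\B_{\dR}^+$-comparison isomorphism used to define $s_{k+1}$ restricts, on the associated graded and after taking $G_K$-invariants, to the Kodaira--Spencer/Gauss--Manin data that defines $\theta^k$. Concretely one must verify that the Faltings extension \eqref{Fext} is, along $\pi_{\HT}$, compatible with the pullback of the tautological extension $0\to\cO_{\Fl}\to(\omega_{\Fl})^{\oplus 2}\to\omega_{\Fl}^{2}\to 0$ in the precise way that matches $\bar d^{1}$ on the $\Fl$-side and the relative de Rham complex on the modular-curve side; this is exactly the content of the ``$p$-adic Hodge structure attached to a de Rham local system'' discussion (cf. Theorem \ref{LTde} and the use of Liu--Zhu) and is where the de Rham-ness of the universal local system enters. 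Once this compatibility is in hand, steps (ii) and (iii) are routine bookkeeping with filtrations and the Leibniz rule, identical in spirit to the computations already carried out for $d^1$, $\theta^{k+1}$ and the Kodaira--Spencer isomorphism, so I do not expect further difficulty there.
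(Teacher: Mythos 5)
Your overall shape (check the identity on explicit elements coming from smooth sections, then extend by $\cO_{\Fl}$-linearity and continuity) is the same as the paper's, and your steps (i) and (ii) are essentially definitional unwindings of $\psi_{k-1}$, $r'_{k-1}$ and Lemma \ref{KSsec}. But there is a genuine gap at the point where you invoke ``$\cO_{\Fl}$-linearity of both sides'' in step (iii). For $d^k$ this is Theorem \ref{I1}(1), but for $E_0(\nabla_{\log})\circ s_{k+1}$ it is not automatic and is in fact the heart of the proposition: $s_{k+1}$ is built from $\phi_1$, which sends $x$ to the period $\tilde{x}=-l_1(f_1\otimes(1,0)^*)/l_1(f_1\otimes(0,1)^*)\in\cO\B_{\dR}^+(U)$, a lift of $x$ but not $x$ itself, and $\nabla_{\log}$ is a connection, so a priori $\nabla_{\log}(s_{k+1}(xf))$ picks up the extra term $\nabla_{\log}(\tilde{x})\cdot(\cdots)$. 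Since a general section of $\cO^{\la,(1-k,0)}_{K^p}(U)^{G_K}$ is a convergent series in $x-x_n$ with smooth coefficients, you cannot even establish agreement on the dense $\kn$-finite subspace without controlling this term.

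The paper closes this gap with two inputs that your proposal does not supply. First, the $p$-adic Legendre relation (Lemma \ref{pLegen}): $\nabla_{\log}\circ\phi_1(x)\in t\,\cO\B_{\dR}^+(U)\otimes\Omega^1_{V_0}(\mathcal{C})$, proved by showing $\wedge^2\mathbb{M}_0\subseteq t(\wedge^2 V\otimes\B_{\dR}^+(U))$ for the horizontal sections $\mathbb{M}_0$. Second, the vanishing $t\,\cO\B_{\dR,k}^{+,\la,\tilde\chi_k}(U)^{G_K}=0$ (Lemma \ref{tpowt}), a Sen-weight computation using the filtration of $\gr^i t\cO\B_{\dR,k}^{+,\la,\tilde\chi_k}$ by $\cO^{\la,\tilde\chi_k}_{K^p}(j)\otimes\Omega^1_{V_0}(\mathcal{C})^{\otimes i-j}$ with $1\le j\le k-1$, which shows that the whole identity may be checked modulo $t$; only after this reduction does $\tilde{x}$ become $x$ and $\nabla_{\log}(\tilde{x})$ become $0$, so that the composite commutes with multiplication by $x$ and the Leibniz-rule/density argument goes through. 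Relatedly, you misplace the main obstacle: the compatibility of the Faltings extension with the tautological extension on $\Fl$ is the key input for the companion statement $N'_k=c_k\bar{d}'^k$ (Proposition \ref{comdbark}), not for this one. Here the obstacle is precisely the Legendre relation together with the mod-$t$ reduction.
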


\begin{prop} \label{comdbark}
There exists $c_k\in\Q_p^\times$ such that 
\[N'_k=c_k\bar{d}'^k.\] 
Both sides are  viewed as maps $\cO^{\la,(1-k,0)}_{K^p}(U)^{G_K} \otimes\Omega^1_{V_0}(\mathcal{C})^{\otimes k}\to \cO^{\la,(1,-k)}_{K^p}(U)(k)^{G_K}$.
\end{prop}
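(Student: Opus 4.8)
The plan is to reduce the identity $N'_k = c_k \bar d'^k$ to a purely ``flag variety'' computation by exploiting the $\cO^{\sm}_{K^p}$-linearity of both sides and the explicit description of the $k$-th symmetric power of Faltings's extension. First I would recall that $N'_k$ is obtained by applying the Sen operator $1\in\Q_p\cong\Lie(\Gal(K_\infty/K))$ to the middle term $E_0(\gr^k\cO\B_{\dR,k+1}^{+,\la,\tilde\chi_k}(U)^K)$, which sits in the exact sequence
\[
0\to \cO^{\la,(1,-k)}_{K^p}(U)(k)^{G_K}\to E_0(\gr^k\cO\B_{\dR,k+1}^{+,\la,\tilde\chi_k}(U)^K)\to \cO^{\la,(1-k,0)}_{K^p}(U)^{G_K}\otimes\Omega^1_{V_0}(\mathcal{C})^{\otimes k}\to 0.
\]
By Proposition \ref{griFIl}, $\gr^k\cO\B_{\dR,k+1}^{+,\la,\tilde\chi_k}$ carries the filtration coming from $\Sym^k$ of Faltings's extension \eqref{Fext}, with graded pieces $\cO^{\la,\tilde\chi_k}_{K^p}(j)\otimes\Omega^1_{V_0}(\mathcal{C})^{\otimes k-j}$, $j=0,\dots,k$; only $j=0$ and $j=k$ contribute Hodge–Tate weights $0$ and $k$ after passing to $E_0(-)^{G_K}$. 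So the whole computation takes place inside the two-step subquotient of $\Sym^k(\gr^1\cO\B_{\dR}^+)$ spanned by these extreme pieces, and the Sen operator acts through Faltings's extension \eqref{Fext} itself. Since both $N'_k$ and $\bar d'^k$ are $\cO^{\sm}_{K^p}$-linear (for $\bar d'^k$ this is Theorem \ref{I2}; for $N'_k$ it follows from naturality of $\cO\B_{\dR}^+$ in the modular curve), it suffices to check the equality up to a nonzero rational scalar on an open set $U\subseteq U_1$ where $e_1$ generates $\omega^1_{K^p}$ and $x$ is a regular function, and there to compare with the explicit formula $\bar d^k(s)=c(u^+)^k(s)\otimes(dx)^k$ from Theorem \ref{I2}(2).

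The key step is Faltings's classical identification: the log Faltings extension \eqref{Fext}, $0\to\cO_{\mathcal{X}_{K^p}}(1)\to\gr^1\cO\B_{\dR}^+\to\cO_{\mathcal{X}_{K^p}}\otimes_{\cO_{V_0}}\Omega^1_{V_0}(\mathcal{C})\to 0$, is (via Kodaira–Spencer $\Omega^1_{V_0}(\mathcal{C})\cong\omega^2$ and the isomorphism $\omega_{K^p}(-1)\cong\pi_{\HT}^*\omega_{\Fl}$) the pull-back along $\pi_{\HT}$ of the tautological non-split sequence $0\to\cO_{\Fl}\to(\omega^1_{\Fl})^{\oplus 2}\to\omega^2_{\Fl}(-1)\to 0$ on $\Fl$, up to a Tate twist. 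This is exactly the input used in the new proof of Theorem \ref{LTde}. Granting this, the Sen operator on $E_0$ of the relevant subquotient of $\Sym^k(\gr^1\cO\B_{\dR}^+)$ becomes, after identification, the ``$(k+1)$-st derivation'' map $\omega^k_{\Fl}\to\omega^{-k-2}_{\Fl}\otimes\det{}^{k+1}$ twisted suitably, i.e. precisely the operator $\bar d^k$ built in \ref{dbar} via the BGG mechanism — recall that in \ref{dbar}, $\bar d^{k}$ was itself extracted from the $\lambda_k$-isotypic component of the tensor of $\Sym^{k-1}V$ with $\bar d^1$, and $\bar d^1$ is the de Rham derivation on $\Fl$ pulled back by $\pi_{\HT}$, which is exactly what Faltings's extension encodes. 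I would carry this out by: (i) fixing $X\in\Fil^1\cO\B_{\dR}^+(V_\infty)$ with $\theta_{\log}$-image a generator of $\cO_{\mathcal{X}_{K^p}}\otimes\Omega^1_{V_0}(\mathcal{C})$, so $\cO\B_{\dR}^+(V_\infty)\cong\B_{\dR}^+(V_\infty)[[X]]$ by \eqref{locBdR}; (ii) computing the action of $\Gal(K_\infty/K)$ on $X$ modulo $\Fil^2$, which by Faltings's description differs from a scalar multiple of $t$ by a term proportional to $dx$ pulled back from $\Fl$ (this is the content of the differential equation \cite[Lemma 4.3.3]{Pan20} / Theorem \ref{LTde}); (iii) feeding this into the Sen operator on the degree-$k$ piece and reading off that it matches $c_k(u^+)^k\otimes(dx)^k$ after the $E_0$-truncation.

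The main obstacle I anticipate is bookkeeping: precisely matching the normalizations and Tate twists connecting (a) the $t$-adic and Faltings filtrations on $\Sym^k(\gr^1\cO\B_{\dR}^+)$, (b) the Kodaira–Spencer isomorphism $\Omega^1_{\mathcal{X}}(\mathcal{C})\cong\omega^2$ on modular curves versus $\Omega^1_{\Fl}\cong\omega^{-2}_{\Fl}\otimes\det$ on $\Fl$, and (c) the explicit identification $\pi_{\HT}^*\omega_{\Fl}\cong\omega_{K^p}(-1)$ together with the choice of trivialization of $\wedge^2 D$ discussed in \ref{HEt}. All of these are ``known'' but getting the constant $c_k\in\Q_p^\times$ (rather than, say, a constant in $C^\times$) requires checking that every intermediate identification is defined over $\Q$ — which it is, since Faltings's extension, $u^+$, the BGG maps, and Kodaira–Spencer are all defined over $\Q$. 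A secondary technical point is justifying that passing to $E_0(-)^{G_K}$ commutes with the filtration on the $\Sym^k$, but this is supplied by Proposition \ref{griFIl} and the Hodge–Tate arguments already in \ref{subexa}. Once the scalar is seen to be nonzero — which follows because the composite $\bar d^1$ of the de Rham derivation with Kodaira–Spencer is nonzero, and Faltings's extension is non-split — the proof is complete, and the same argument simultaneously pins down that the constant matches the one in Theorem \ref{I2}(2) up to $\Q^\times$.
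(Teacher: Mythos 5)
Your overall strategy --- reduce by $\cO^{\sm}_{K^p}$-linearity to a computation over an open set where $e_1$ is invertible, and read off $N'_k$ from the $k$-th symmetric power of Faltings's extension identified with the pull-back of the tautological sequence on $\Fl$ --- shares its starting point with the paper, but the core of your plan has a genuine gap. Your step (ii)--(iii) proposes to compute the Sen operator directly ``through Faltings's extension itself,'' expecting the Galois action on a generator $X$ of $\Fil^1\cO\B_{\dR}^+/\Fil^2$ to produce a term proportional to $dx$. But on the locally analytic vectors over such a $U$, Faltings's extension \emph{splits} $G_K$-equivariantly: identifying it with the twisted relative Hodge--Tate sequence $0\to\cO^{\la}(1)\to V(1)\otimes\omega^{1,\la}\to\omega^{2,\la}\to 0$, the map $f\mapsto (1,0)\otimes f/e_1$ is a $G_K\times U(\mathfrak z)$-equivariant section (this is Lemma \ref{GKzspli}). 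Carried out naively, your computation would therefore make the Sen operator diagonal with respect to this splitting and yield $N'_k=0$. The section fails only to be $\mathfrak{sl}_2$-equivariant, so the nonvanishing of $N'_k$ is created entirely by the prior restriction to the $\tilde\chi_k$-isotypic part --- a truncation your computation of the Galois action on $X$ never tracks. Relatedly, $N'_k$ connects the $j=0$ and $j=k$ graded pieces, a degree-$k$ jump, and extracting a $k$-th order differential operator from a single application of the Sen operator requires knowing that the relevant $E_0$-plus-$\tilde\chi_k$ subquotient is a specific two-step extension; your plan does not supply this.

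The paper's proof fills exactly this gap with two ingredients absent from your proposal. First, the quadratic relation $(2\theta_{\Sen}-z+1)^2-1=2\Omega$ between the Sen operator and the Casimir (Proposition \ref{SenopCasz}) lets one trade the Galois-theoretic operator $N'_k$ for the Lie-theoretic operator $\tilde N_k=\frac{1}{2k}^{-1}N'_k$ defined by $\Omega-\tilde\chi_k(\Omega)$ acting on the $G_K$-invariant, $z_k$-isotypic part --- here the splitting of Lemma \ref{GKzspli} is used positively, to prove exactness of the graded Poincar\'e lemma sequence after taking $G_K$-invariants. Second, rather than computing any constant explicitly, one observes that both $\tilde N_k$ and $\bar d'^k$ restrict to the dense $\kn$-finite subspace and are induced from $U(\mathfrak{sl}_2)$-maps $M^{\vee}_{(0,1-k)}\to M^{\vee}_{(-k,1)}(k)$ between duals of Verma modules; since $\dim\Hom_{U(\mathfrak{sl}_2(\Q_p))}(M^{\vee}_{(0,1-k)},M^{\vee}_{(-k,1)})=1$ and both maps are nonzero (for $\tilde N_k$ because the Casimir acts non-semisimply on the translation of a Verma module off the wall), they are proportional. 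If you want to rescue a direct approach, you would need to replace your step (ii) by an analysis of the $\tilde\chi_k$-generalized eigenspace of $\Sym^k V\otimes\omega^{k,\la}$ as a $\mathfrak{gl}_2$-module together with the Sen--Casimir relation; without those, the argument does not close.
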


We will prove these two propositions in the next two subsections.

\subsection{Proof of Theorem \ref{MT} II} \label{PTII}
\begin{para} \label{reldRcomp}
In this subsection, we prove Proposition \ref{comdk}.
Keep the same notation as in the previous subsection. As claimed before,  we will construct a map
\[s_{k+1}: \cO^{\la,(1-k,0)}_{K^p}(U)^{G_K}\to \cO\B_{\dR,k+1}^{+,\la,\tilde\chi_k} (U)^{G_K} \]
which is a section of $E_0(\theta_{\log}):\cO\B_{\dR,k+1}^{+,\la,\tilde\chi_k} (U)^{G_K}\to   \cO^{\la,(1-k,0)}_{K^p}(U)^K$, and we will compute its composite with $E_0(\nabla_{\log})$. Essentially we need to construct some explicit elements in $\cO\B_{\dR}^+$. Our strategy is to use a relative de Rham comparison theorem on modular curves. One reference here is \cite[4.1.3]{Pan20}. 

Keep the same setup and notation as in \ref{setupOBdR}. The first relative $p$-adic \'etale cohomology of the universal family of elliptic curves on $V_0$ defines a rank two $\hat\Z_p$-local system $\hat{\underline{V}}$ (denoted by $\hat{\underline{V}}_{\log}$ in \cite{Pan20}) on $(V_0)_{\proket}$. Recall that in \ref{XCY}, we have a rank two filtered vector bundle $D$ on $V_0$ equipped with a logarithmic connection $\nabla$, which is defined as the canonical extension of the first de Rham cohomology of the universal family of elliptic curves. Then $\hat{\underline{V}}$ and $(D,\nabla)$ are associated. Concretely, evaluating everything at $V_\infty\in (V_0)_{\proket}$, we have a natural isomorphism 
\[V\otimes_{\Q_p} \cO\B_{\dR}^+(V_\infty)[\frac{1}{t}]\cong D\otimes_{\cO_{V_0}}\cO\B_{\dR}^+(V_\infty)[\frac{1}{t}]\]
preserving the filtrations and logarithmic connections. Here $V=\Q_p^{\oplus 2}$ as $\hat{\underline{V}}$ becomes trivial on $V_\infty$ and $\frac{1}{t}$ has degree $-1$. Under this isomorphism, we even have a more refined result
\begin{eqnarray} \label{relOBdR}
V\otimes_{\Q_p} t\cO\B_{\dR}^+(V_\infty) \subseteq 
D\otimes_{\cO_{V_0}}\cO\B_{\dR}^+(V_\infty)
\subseteq V\otimes_{\Q_p} \cO\B_{\dR}^+(V_\infty), 
\end{eqnarray}
cf. \cite[(4.1.2)]{Pan20}. Thus there is a natural $\cO_{V_0}$-linear, $G_0$-equivariant map
\begin{eqnarray}\label{l_1}
l_1:D(V_0)\otimes_{\Q_p} V^*\to \cO\B_{\dR}^+(V_\infty)=\cO\B_{\dR}^+(U)
\end{eqnarray}
preserving filtrations and logarithmic connections on both sides. The images of this $l_1$ should be considered as the ``$\cO\B_{\dR}^+$-periods of $V$''. Take the composite of this map with $\theta_{\log}$.
\[ \theta_{\log}\circ l_1: D(V_0)\otimes_{\Q_p} V^* \to \gr^0 D(V_0)\otimes_{\Q_p} V^*\to \cO_{\mathcal{X}_{K^p}}(V_\infty).\]
Note that this induces a map 
\[\gr^0 D(V_0)\cong \omega^{-1}\otimes\wedge^2 D(V_0)\to \cO_{\mathcal{X}_{K^p}}(V_\infty)\otimes_{\Q_p} V\] 
which is nothing but the injection in \eqref{rHTH} (when restricting to $\omega^{-1}\otimes\wedge^2 D(V_0)$) up to a Tate twist by \cite[Proposition 7.9]{Sch13}. Let $(1,0)^*,(0,1)^*\in V^*$ be the dual basis of $V=\Q_p^{\oplus 2}$. Then it follows from our discussion in \ref{HEt} and the construction of $e_1,e_2,\mathrm{t}$ in \ref{exs} that for $f\in D(V_0)$,
\[\theta_{\log}\circ l_1(f\otimes (1,0)^*)=\frac{\bar{f}e_2}{\mathrm{t}c},\]
\[\theta_{\log}\circ l_1(f\otimes (0,1)^*)=-\frac{\bar{f}e_1}{\mathrm{t}c},\]
where $\bar{f}\equiv f \mod \Fil^1D$, viewed as an element in $\gr^0 D\cong \omega^{-1}\otimes \wedge^2 D$. Recall that $c$ is our chosen global non-vanishing section of $\wedge^2 D$ used in the construction of $\mathrm{t}$. See \ref{HEt} for more details. In the below, sometimes we will also use $c^i$ to denote twisting by $(\wedge^2 D)^i$. For example, $\gr^0 D=\omega^{-1}c$.

In general, we can consider the $k$-th symmetric powers $\Sym^k\hat{\underline{V}}$ and $\Sym^k D$ and obtain a natural map
\begin{eqnarray} \label{l_k}
l_k:\Sym^k D(V_0)\otimes_{\Q_p} \Sym^k V^*\to \cO\B_{\dR}^+(V_\infty)=\cO\B_{\dR}^+(U).
\end{eqnarray}
Similarly, it has the property that for $f\in \Sym^k D(V_0)$,
\begin{eqnarray} \label{thetalk}
\theta_{\log}\circ l_k\left(f\otimes ((1,0)^*)^{\otimes i}(-(0,1)^*)^{\otimes k-i}\right) =\frac{\bar{f}e_1^{k-i}e_2^i}{\mathrm{t}^kc^k},~~~~~i=0,\cdots,k
\end{eqnarray}
where $\bar{f}\equiv f\mod \Fil^1$.
We will use these maps to produce elements in $\cO\B_{\dR}^+$. First we need several easy lemmas.
\end{para}

\begin{lem} \label{thetainv}
$f\in \cO\B_{\dR}^+(U)$ is invertible if and only if $\theta_{\log}(f)\in\cO_{K^p}(U)$ is invertible.
\end{lem}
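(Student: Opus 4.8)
The statement to prove is Lemma \ref{thetainv}: an element $f \in \cO\B_{\dR}^+(U)$ is invertible if and only if its image $\theta_{\log}(f) \in \cO_{K^p}(U)$ is invertible. The plan is to use the fact that $\cO\B_{\dR,k}^+(U)$ is filtered with graded pieces isomorphic to $\cO_{K^p}(U)(i)$, and that $\cO\B_{\dR}^+(U)$ is complete with respect to this filtration, so invertibility can be checked modulo the filtration — a standard ``successive approximation'' argument once one knows that the associated graded is well-behaved.

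First I would observe that one direction is trivial: if $f$ is invertible then $\theta_{\log}(f)$ is invertible, since $\theta_{\log}$ is a ring homomorphism. For the converse, suppose $\theta_{\log}(f)$ is invertible in $\cO_{K^p}(U)$. By the description in \ref{setupOBdR} (property (1)) and \eqref{locBdR}, $\cO\B_{\dR}^+(U) \cong \B_{\dR}^+(V_\infty)[[X]]$ with $X$ of degree $1$, and the filtration $\Fil^\bullet$ is exhausted by $\cO\B_{\dR}^+(U)$ and $\cO\B_{\dR}^+(U)$ is complete with respect to it, with $\gr^0 = \cO_{K^p}(U)$. Choose $g_0 \in \cO_{K^p}(U)$ with $\theta_{\log}(f) g_0 = 1$, and lift $g_0$ to an element $\tilde g_0 \in \cO\B_{\dR}^+(U)$ (e.g. via the section coming from $\gr^0$). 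Then $f \tilde g_0 = 1 + h$ with $h \in \Fil^1 \cO\B_{\dR}^+(U)$. The element $1 + h$ is invertible in $\cO\B_{\dR}^+(U)$ with inverse $\sum_{n \geq 0} (-1)^n h^n$, which converges since $h \in \Fil^1$, the filtration is multiplicative ($\Fil^i \cdot \Fil^j \subseteq \Fil^{i+j}$), and $\cO\B_{\dR}^+(U)$ is complete. Hence $f \cdot \tilde g_0 (1+h)^{-1} = 1$, so $f$ is invertible.

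I should be slightly careful about one point: $\cO\B_{\dR}^+(U)$ itself is not $t$-adically separated in the naive sense because it is not $\B_\dR^+/(t^k)$ — rather it is the full $\Fil$-complete, $\Fil$-exhausted ring, so the relevant completeness is with respect to the $\Fil^\bullet$ filtration (whose graded pieces are $\cO_{K^p}(U)(i)$), not a $t$-adic one. The convergence of the Neumann series $\sum (-1)^n h^n$ then follows because $h^n \in \Fil^n$ and partial sums form a Cauchy sequence for the filtration topology, which converges by completeness. If continuity of the resulting inverse in the $p$-adic Banach topology on some truncation $\cO\B_{\dR,k}^+(U)$ is also needed, it follows from the explicit presentation $\A_\inf(V_\infty)[[X]] \to \cO\B_{\dR,k}^+(U)$ of the unit ball in \ref{cOBdRk+}, since multiplication by a topologically nilpotent (indeed $\Fil^1$) element is a compact/topologically nilpotent operation there. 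I do not anticipate a genuine obstacle; the only thing requiring attention is phrasing the completeness/convergence argument cleanly given that we are working with the filtration topology rather than a $t$-adic topology, and making sure the lift $\tilde g_0$ of $g_0$ can indeed be chosen inside $\cO\B_{\dR}^+(U)$ — which is immediate from surjectivity of $\theta_{\log}$.
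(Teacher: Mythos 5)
Your argument is correct and is exactly the paper's intended proof: the paper simply remarks that the claim is clear because $\cO\B_{\dR}^+(U)$ is complete with respect to the $\ker(\theta_{\log})$-adic topology, and your Neumann-series lift of an inverse of $\theta_{\log}(f)$ is the standard way to make that one-line justification precise. No gap.
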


\begin{proof}
This is clear as $\cO\B_{\dR}^+(U)$ is complete with respect to the $\ker(\theta_{\log})$-adic topology.
\end{proof}

\begin{lem} \label{OBdRconv}
Let $\{a_n\}_{n\geq 0}$ be a sequence of sections of $\cO_{V_0}$ on $V_0$ and $y\in\cO\B_{\dR,k}^+(U)$. Suppose $\theta_{\log}(y)\in\cO_{\mathcal{X}_{K^p}}^+(V_\infty), i.e. ||\theta_{\log}(y)||\leq 1$ and $\displaystyle \lim_{n\to+\infty} a_n=0$. Then
\[\sum_{n=0}^{+\infty} a_n y^n\]
converges in $\cO\B_{\dR,k}^+(U)$. In other words, $\sum_{n=0}^{+\infty} a_n y^n$ converges if and only if $\sum_{n=0}^{+\infty} a_n \theta_{\log}(y)^n$ converges.
\end{lem}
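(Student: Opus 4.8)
The plan is to reduce the convergence of $\sum_n a_n y^n$ in the $\Q_p$-Banach algebra $\cO\B_{\dR,k}^+(U)$ to the manifest convergence of a finite collection of auxiliary series whose general terms are governed by power-bounded elements. The first step is to split off the leading term of $y$. Since $\theta_{\log}$ carries the unit open ball of $\cO\B_{\dR,k}^+(U)$ (the image of $\A_\inf(V_\infty)[[X]]$, cf.\ \ref{cOBdRk+}) onto $\cO^+_{\mathcal{X}_{K^p}}(V_\infty)$ — indeed $\theta$ is surjective onto $\cO^+_{\mathcal{X}_{K^p}}(V_\infty)$ and $\theta_{\log}(X)=0$ — the hypothesis $\|\theta_{\log}(y)\|\le 1$ lets me choose a lift $y_0$ of $\theta_{\log}(y)$ lying in this unit ball. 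Set $y_+:=y-y_0$, so $y_+\in\ker\theta_{\log}=\Fil^1\cO\B_{\dR,k}^+(U)$.

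The second step exploits the nilpotence of the filtration on $\cO\B_{\dR,k}^+(U)=\cO\B_{\dR}^+/\Fil^k$: since $y_+\in\Fil^1$ one has $y_+^k\in\Fil^k$, hence $y_+^k=0$ in the truncation. Because $\cO\B_{\dR,k}^+(U)$ is commutative, the binomial expansion gives, for every $n\ge 0$ (with the convention $\binom nj=0$ for $j>n$),
\[
y^n=(y_0+y_+)^n=\sum_{j=0}^{k-1}\binom{n}{j}\,y_0^{\,n-j}\,y_+^{\,j}.
\]
Matching partial sums, it then suffices to prove that for each fixed $j\in\{0,\dots,k-1\}$ the series $\sum_{n\ge j} a_n\binom nj\, y_0^{\,n-j}$ converges; granting this, $\sum_n a_n y^n=\sum_{j=0}^{k-1}y_+^{\,j}\bigl(\sum_{n\ge j}a_n\binom nj y_0^{\,n-j}\bigr)$. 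For such $j$ the estimate is routine: $y_0$ lies in the unit ball so $\|y_0^{\,n-j}\|\le 1$; the integers $\binom nj$ have $p$-adic absolute value $\le 1$; and the natural algebra map $\cO_{V_0}(V_0)\to\cO\B_{\dR,k}^+(U)$ (cf.\ \ref{setupOBdR}(3)) is continuous, so the images of $a_n$ tend to $0$ because $a_n\to 0$. Hence the general term of each auxiliary series tends to $0$, and completeness of $\cO\B_{\dR,k}^+(U)$ gives convergence. This proves the ``Then'' assertion.

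For the displayed equivalence: applying the continuous ring homomorphism $\theta_{\log}$ to a convergent $\sum_n a_n y^n$ yields convergence of $\sum_n a_n\theta_{\log}(y)^n=\theta_{\log}\bigl(\sum_n a_n y^n\bigr)$ in $\cO_{\mathcal{X}_{K^p}}(V_\infty)$; conversely, under the standing hypotheses $\|\theta_{\log}(y)\|\le 1$ and $a_n\to 0$ we have just shown that $\sum_n a_n y^n$ always converges, so the two conditions hold simultaneously. The one point needing a little care is the continuity of $\cO_{V_0}(V_0)\to\cO\B_{\dR,k}^+(U)$, which I would deduce from the local presentation $\cO\B_{\dR}^+(V_\infty)\cong\B_{\dR}^+(V_\infty)[[X]]$ of \ref{setupOBdR}--\ref{cOBdRk+} together with the boundedness of the affinoid/perfectoid map $\cO_{V_0}(V_0)\to\cO^+_{\mathcal{X}_{K^p}}(V_\infty)$; everything else is elementary non-archimedean estimation.
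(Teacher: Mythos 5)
Your proof is correct and follows essentially the same route as the paper's (which is a sketch of Berger--Colmez, \emph{Lemme 4.9} of \cite{BC16}): lift $\theta_{\log}(y)$ to a power-bounded element via the surjectivity of $\theta$ onto $\cO^+_{\mathcal{X}_{K^p}}(V_\infty)$, observe that the difference lies in $\ker\theta_{\log}$ and is killed by the $k$-th power in the truncation, and regroup the binomial expansion into $k$ manifestly convergent auxiliary series. Your write-up merely supplies a bit more detail (integrality of $\binom{n}{j}$, continuity of $\cO_{V_0}(V_0)\to\cO\B_{\dR,k}^+(U)$) than the paper does.
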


\begin{proof}
This is essentially \cite[Lemme 4.9]{BC16}.  We give a sketch of their argument here. Since $\theta:\A_\inf(V_\infty)\to \cO_{\mathcal{X}_{K^p}}^+(V_\infty)$ is surjective, we can find $\tilde{y}\in \A_\inf(V_\infty)$ such that $z=y-\tilde{y}\in\ker(\theta_{\log})$. Hence $\sum_{n=0}^{+\infty} a_n \tilde{y}^n$ converges in view of the topology defined in \ref{cOBdRk+}. Using that $z^k=0$, we deduce that
\[\sum_{n=0}^{+\infty} a_n y^n=\sum_{n=0}^{+\infty} a_n (z+\tilde{y})^n=\sum_{i=0}^{k-1}z^i\sum_{n=0}^{+\infty} \binom{n}{i} a_n\tilde{y}^{n-i}\]
converges in $\cO\B_{\dR,k}^+(U)$.
\end{proof}

\begin{para}
We start with the case $k=1$.
Assume that $e_1$ is invertible on $V_\infty$. Fix a section $f_1\in D(V_0)$ whose image in $\gr^0 D(V_0)$ is a generator. Consider $l_1(f_1\otimes (0,1)^*)\in\cO\B_{\dR}^+(U)$. Note that $\theta_{\log}\circ l_1(f_1\otimes (0,1)^*)\in\cO_{K^p}(U)^\times$ by our choice of $f_1$. Hence by Lemma \ref{thetainv}, $l_1(f_1\otimes (0,1)^*)$ is invertible in $\cO\B_{\dR}^+(U)$. Let 
\[\tilde{x}:=-\frac{l_1(f_1\otimes (1,0)^*)}{l_1(f_1\otimes (0,1)^*)}\in\cO\B_{\dR}^+(U).\]
Then it follows from the discussion in \ref{reldRcomp} that $\theta_{\log}(\tilde{x})=x$. Clearly $G_K$ fixes $\tilde{x}$ and the action of $G_0$ on $\tilde{x}$ is analytic. Shrinking $G_0$ if necessary, we may assume the $||x||=||x||_{G_0}$, the norm as a $G_n$-analytic vector. A direct calculation shows that the Lie algebra $\mathfrak{gl}_2(\Q_p)$ acts on $\tilde{x}$ via
\[\begin{pmatrix} a & 0 \\ 0 & d \end{pmatrix}\cdot \tilde{x}=(d-a)\tilde{x},~~~~~~~~~~~\begin{pmatrix} 0 & 1 \\ 0 & 0\end{pmatrix} \cdot \tilde{x}=1,~~~~~~~~~~~\begin{pmatrix} 0 & 0 \\ 1& 0\end{pmatrix} \cdot \tilde{x}=-\tilde{x}^2\]
(in the same way as on $x$). In particular, $Z$ acts on powers of $\tilde{x}$ via $\tilde{\chi}_1$.

We are going to construct maps
\[ \cO^{\la,(0,0)}_{K^p}(U)^{G_K}\to \cO\B_{\dR,l}^{+,\la,\tilde\chi_1} (U)^{G_K},l\geq 0\]
which are compatible when varying $l$. When $l=2$, this will give us the desired map $s_2: \cO^{\la,(0,0)}_{K^p}(U)^{G_K}\to \cO\B_{\dR,1}^{+,\la,\tilde\chi_1} (U)^{G_K}$. In fact we will construct $G_K$-equivariant maps 
\begin{eqnarray} \label{tildesl}
 \tilde{s}_l:\bigcup_{L}\cO^{\la,(0,0)}_{K^p}(U)^{G_L}\to \bigcup_{L}\cO\B_{\dR,l}^{+,\la,\tilde\chi_1} (U)^{G_L},l\geq 0
 \end{eqnarray}
 where $L$ runs through all finite extensions of $K$ in $C$.  Now  we need the explicit description of $\cO^{\la,(0,0)}_{K^p}(U)$ in Theorem \ref{str}. As in \ref{exs}, we choose $x_n\in\cO_{V_{G_{r(n)}}}(V_{G_{r(n)}}),n\geq 0$ such that $\|x-x_n\|_{G_{r(n)}}=\|x-x_n\|\leq p^{-n}$. We may assume that $x_n$ is defined over some finite extension $K_n$ of $K$ in $C$ and $\bigcup_n K_n=\overbar\Q_p$. Then an element $f\in \cO^{\la,(0,0)}_{K^p}(U)^{G_L}$ can be written as
 \[f=\sum_{i=0}^{+\infty} c_i(x-x_n)^i\]
 for some $n\geq 0$ and $c_i\in\cO_{V_{G_{r(n)}},K_n}$ such that $c_ip^{(n-1)i}$ is uniformly bounded. Define 
 \[\tilde{s}_l(f):=\sum_{i=0}^{+\infty} c_i(\tilde{x}-x_n)^i\in\cO\B_{\dR,l}^+(U).\]
 Note that this series is convergent by Lemma \ref{OBdRconv}. It is easy to see that $\tilde{s}_l(f)$ does not depend on the choice of $x_n$ and $\tilde{s}_l(f)$ is a $G_{r(n)}$-analytic vector. Moreover $Z$ acts on $\tilde{s}_l(f)$ via $\tilde\chi_1$. Hence we obtain the map $\tilde{s}_l$ claimed in \eqref{tildesl}. Clearly the inverse limit of $\tilde{s}_l$ over $l$ defines a map
\[\phi_1:\bigcup_{L}\cO^{\la,(0,0)}_{K^p}(U)^{G_L}\to \cO\B_{\dR}(U).\]
\end{para}

\begin{para}
Our next step is to compute $\nabla_{\log}\circ \phi_1:\cO^{\la,(0,0)}_{K^p}(U)^{G_K}\to \cO\B_{\dR}^+(U) \otimes_{\cO_{V_0}}\Omega^1_{V_0}(\mathcal{C})$. Essentially we need to understand the image of $x$. In fact, the following lemma will be enough for our purpose in view of Lemma \ref{tpowt} below.  
\end{para}

\begin{lem}[$p$-adic Legendre's relation] \label{pLegen}
Recall that the map $l_1$ was introduced in \eqref{l_1}.
\begin{enumerate}
\item For any $g_1,g_2\in D(V_0)$ and $v_1,v_2\in V^*$, the periods satify
\[l_1(g_1\otimes v_1)l_1(g_2\otimes v_2)-l_1(g_1\otimes v_2)l_1(g_2\otimes v_1)\in t\cO\B_{\dR}^+(U).\]
\item $\nabla_{\log}\circ\phi_1(x)\in t\cO\B_{\dR}^+(U)\otimes\Omega^1_{V_0}(\mathcal{C})$.
\end{enumerate}
\end{lem}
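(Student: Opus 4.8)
The plan is to establish the $p$-adic Legendre relation (1) and then read off (2) from it by a short Leibniz-rule computation. Write $\iota_1\colon D(V_0)\hookrightarrow V\otimes_{\Q_p}\cO\B_{\dR}^+(U)$ for the $\cO\B_{\dR}^+(U)$-linear comparison inclusion of \eqref{relOBdR}, normalised so that $l_1(g\otimes v)=\langle v,\iota_1(g)\rangle$ for $v\in V^*$; by \eqref{l_1} it is $\cO_{V_0}$-linear and horizontal for $\nabla_{\log}$. Expanding, the quantity in (1) is bilinear and alternating in $(g_1,g_2)$ and in $(v_1,v_2)$ and equals the single Plücker coordinate $\langle v_1\wedge v_2,\ (\wedge^2\iota_1)(g_1\wedge g_2)\rangle$, where $\wedge^2\iota_1\colon \wedge^2 D(V_0)\to\wedge^2 V\otimes_{\Q_p}\cO\B_{\dR}^+(U)$ is the second exterior power of $\iota_1$ over $\cO\B_{\dR}^+(U)$. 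Now $\wedge^2 D(V_0)$ is free of rank one over $\cO_{V_0}(V_0)$ with generator $c$ (our chosen globally non-vanishing horizontal section used to define $\mathrm{t}$), and since $\wedge^2\iota_1$ is $\cO_{V_0}(V_0)$-linear and $\cO_{V_0}(V_0)\subseteq\cO\B_{\dR}^+(U)$, it suffices to prove $(\wedge^2\iota_1)(c)\in\wedge^2 V\otimes t\,\cO\B_{\dR}^+(U)$.

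This is where the real argument lies. Since $c$ is horizontal for the trivial connection on $\wedge^2 D$ and $\iota_1$ intertwines the connections, $(\wedge^2\iota_1)(c)$ is $\nabla_{\log}$-horizontal; the local system $\wedge^2 V$ being constant on $V_\infty$ and $\ker(\nabla_{\log})$ on $\cO\B_{\dR}^+(U)$ being $\B_{\dR}^+(U)$ by the Poincaré lemma sequence of \ref{PlsFe}, we get $(\wedge^2\iota_1)(c)\in\wedge^2 V\otimes\B_{\dR}^+(U)$. It remains to see that this element is divisible by $t$, i.e.\ lies in $\wedge^2 V\otimes\Fil^1\B_{\dR}^+(U)=\wedge^2 V\otimes t\,\B_{\dR}^+(U)$; equivalently, $\theta_{\log}$ kills it. Here I would use that $\theta_{\log}\circ\iota_1\colon D(V_0)\to V\otimes\cO_{\mathcal{X}_{K^p}}$ factors through the line bundle $\gr^0 D$ and hence (being, up to a Tate twist, the injection of $\wedge^2 D\otimes\omega_{K^p}^{-1}$ in \eqref{rHTH}; equivalently, by the explicit formulas \eqref{thetalk}) has image of rank at most one, so the wedge of any two elements of its image vanishes; applied to $c$ this gives $\theta_{\log}\bigl((\wedge^2\iota_1)(c)\bigr)=0$, hence $(\wedge^2\iota_1)(c)\in\wedge^2 V\otimes t\,\B_{\dR}^+(U)\subseteq\wedge^2 V\otimes t\,\cO\B_{\dR}^+(U)$, proving (1). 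I expect the only delicate point to be precisely this last step: $\Fil^1\cO\B_{\dR}^+$ is strictly larger than $t\,\cO\B_{\dR}^+$ — it also contains the Faltings extension class — so the bare vanishing of $\theta_{\log}$ on the Legendre expression (already visible from \eqref{thetalk}) is not enough on its own, and one genuinely needs both the horizontality (to discard the Faltings direction) and the rank-one Hodge–Tate property (to discard the $\gr^0$-part). This is what makes (1) a $\wedge^2$-phenomenon, the $p$-adic counterpart of the classical Legendre relation $\omega_1\eta_2-\omega_2\eta_1=2\pi i$ with $t$ in the role of $2\pi i$.

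Finally, for (2): by construction $\phi_1(x)=\tilde{x}=-\,l_1(f_1\otimes(1,0)^*)/l_1(f_1\otimes(0,1)^*)$, where $f_1\in D(V_0)$ reduces to a generator of $\gr^0 D$, so $l_1(f_1\otimes(0,1)^*)$ has invertible $\theta_{\log}$-reduction and is a unit in $\cO\B_{\dR}^+(U)$ by Lemma \ref{thetainv}. Writing $\nabla_D f_1=g\otimes\eta$ with $\eta$ a generator of $\Omega^1_{V_0}(\mathcal{C})$ over $V_0$, horizontality of $l_1$ gives $\nabla_{\log}\bigl(l_1(f_1\otimes v)\bigr)=l_1(g\otimes v)\,\eta$, and the Leibniz rule yields
\[
\nabla_{\log}(\tilde{x})=-\,\frac{l_1(g\otimes(1,0)^*)\,l_1(f_1\otimes(0,1)^*)-l_1(f_1\otimes(1,0)^*)\,l_1(g\otimes(0,1)^*)}{l_1(f_1\otimes(0,1)^*)^2}\;\eta.
\]
The numerator is exactly the Legendre expression of (1) for $(g_1,g_2)=(g,f_1)$ and $(v_1,v_2)=((1,0)^*,(0,1)^*)$, hence lies in $t\,\cO\B_{\dR}^+(U)$; since $t\,\cO\B_{\dR}^+(U)$ is an ideal and the denominator is a unit, $\nabla_{\log}(\tilde{x})\in t\,\cO\B_{\dR}^+(U)\otimes_{\cO_{V_0}}\Omega^1_{V_0}(\mathcal{C})$, which is (2).
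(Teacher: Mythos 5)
Your proof is correct and follows essentially the same route as the paper's: both arguments reduce part (1) to showing that $\wedge^2$ of the comparison inclusion lands in $t$ times the target, first using horizontality together with the Poincar\'e lemma to descend to $\wedge^2 V\otimes\B_{\dR}^+(U)$, and then using the rank-one Hodge--Tate degeneration to extract the factor of $t$ --- the paper packages this last step as the statement that the cokernel of $\mathbb{M}_0=(D\otimes_{\cO_{V_0}}\cO\B_{\dR}^+(U))^{\nabla_{\log}}\subseteq V\otimes_{\Q_p}\B_{\dR}^+(U)$ is the rank-one module $\gr^1 D\otimes_{\cO_{V_0}}\cO_{\mathcal{X}_{K^p}}(V_\infty)(-1)$, which is your $\theta_{\log}$-vanishing in disguise. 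Part (2) is the identical Leibniz-rule computation.
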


\begin{proof}
For the first part, it suffices to show that 
\[\wedge^2D\otimes_{\cO_{V_0}}\cO\B_{\dR}^+(U)
\subseteq t\left(\wedge^2V\otimes_{\Q_p} \cO\B_{\dR}^+(U)\right),\]
cf. \eqref{relOBdR}. Let $\mathbb{M}_0=(D\otimes_{\cO_{V_0}}\cO\B_{\dR}^+(U))^{\nabla_{\log}}$. It was shown in the proof of Theorem 7.6 of \cite{Sch13} that $\mathbb{M}_0\otimes_{\B_{\dR}^+(U)}\cO\B_{\dR}^+(U)=D\otimes_{\cO_{V_0}}\cO\B_{\dR}^+(U)$. Then
\[\mathbb{M}_0\subseteq V\otimes_{\Q_p}\B_{\dR}^+(U).\]
By Proposition 7.9 \textit{ibid.} or \cite[(4.1.2)]{Pan20}, the cokernel of this inclusion is nothing but $\gr^1D\otimes_{\cO_{V_0}}\cO_{\mathcal{X}_{K^p}}(V_\infty)(-1)\neq 0$. Hence 
\[\wedge^2 \mathbb{M}_0\subseteq t(\wedge^2 V\otimes_{\Q_p}\B_{\dR}^+(U)),\]
which implies our claim by taking the tensor product with $\cO\B_{\dR}^+(U)$.

For the second part, we recall that $\tilde{x}:=-\frac{l_1(f_1\otimes (1,0)^*)}{l_1(f_1\otimes (0,1)^*)}$.  Fix a generator $\omega$ of $\Omega^1_{V_0}(\mathcal{C})$ and write $\nabla_{\log}(f_1)=f_2\omega, f_2\in\cO_{V_0}$. Then
\[\nabla_{\log}(\tilde{x})=\frac{1}{l_1(f_1\otimes (0,1)^*)^2}(l_1(f_1\otimes v_1)l_1(f_2\otimes v_2)-l_1(f_1\otimes v_2)l_1(f_2\otimes v_1))\otimes\omega,\]
where $v_1=(1,0)^*,v_2=(0,1)^*$. Hence by the first part, we have $\nabla_{\log}(\tilde{x})\in t\cO\B_{\dR}^+(U)\omega$.
\end{proof}

\begin{para}
It is clear from the construction that $\tilde{x}$ and $\phi_1$ depends on the choice of $f_1\in D(V_0)$. However  this lemma shows that $\tilde{x}$ is actually well-defined up to $t\cO\B_{\dR}^+(U)$, i.e. $\tilde{x}\mod t\cO\B_{\dR}^+(U)$ is independent of the choice of $f_1$. To see this, we consider the Poincar\'e lemma sequence modulo $t$
\[0\to \B_{\dR}^+(U)/(t) \to \cO\B_{\dR}^+(U)/(t)\xrightarrow{\nabla_{\log}\mod t}  \cO\B_{\dR}^+(U)/(t)\otimes\Omega^1_{V_0}(\mathcal{C})\to 0\]
which is exact as all the terms in the Poincar\'e lemma sequence are free over $\B_{\dR}^+(U)$. Lemma \ref{pLegen} implies that $\tilde{x}\mod (t)\in \B_{\dR}^+(U)/(t)=\cO_{K^p}(U)$. On the other hand, $\theta_{\log}(\tilde{x})=x$ by our construction. Hence
\[\tilde{x}\mod (t)=x.\]
We summarize what we obtained so far.
\end{para}

\begin{prop} \label{phi1}
Given  a section $f_1\in D(V_0)$ whose image in $\gr^0 D(V_0)$ is a generator, we can define a map $G_K\times G_0$-equivariant map
\[\phi_1:\bigcup_{L}\cO^{\la,(0,0)}_{K^p}(U)^{G_L}\to \cO\B_{\dR}^+(U)\]
satisfying following the conditions
\begin{enumerate}
\item $\theta_{\log}\circ\phi_1=\Id$.
\item $\phi_1$ is $\cO_{V_{K_p},L}$-linear for any open subgroup $K_p\subseteq G_0$ and finite extension $L$ of $K$ in $C.$
\item the composite map $\bigcup_{L}\cO^{\la,(0,0)}_{K^p}(U)^{G_L}\xrightarrow{\phi_1} \cO\B_{\dR}^+(U)\to \cO\B_{\dR,l}^+(U)$ is continuous for $l\geq 0$.
\item $\phi_1(x)\equiv x \mod (t)$. In particular, $\nabla_{\log}\circ\phi_1(x)\in t\cO\B_{\dR}^+(U)\otimes\Omega^1_{V_0}(\mathcal{C})$ and $\phi_1\mod (t)$ is independent of the choice of $f_1$.
\end{enumerate}
\end{prop}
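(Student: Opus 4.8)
The statement essentially collects the constructions already carried out in \ref{reldRcomp}, so the plan is to organize them and verify the four properties. First I would fix the basic period element $\tilde x$. Since $\bar{f_1}$ generates $\gr^0 D(V_0)$ and $e_1$ is invertible on $V_\infty$, the formulas for $\theta_{\log}\circ l_1$ recorded in \ref{reldRcomp} give $\theta_{\log}(l_1(f_1\otimes(0,1)^*))=-\bar{f_1}e_1/(\mathrm{t}c)$, a unit in $\cO_{K^p}(U)$; hence by Lemma \ref{thetainv} the element $l_1(f_1\otimes(0,1)^*)$ is invertible in $\cO\B_{\dR}^+(U)$ and
\[\tilde{x}:=-\frac{l_1(f_1\otimes(1,0)^*)}{l_1(f_1\otimes(0,1)^*)}\in\cO\B_{\dR}^+(U)\]
is well defined, with $\theta_{\log}(\tilde x)=x$. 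A direct computation — using that $l_1$ is $G_0$- and $\mathfrak{gl}_2(\Q_p)$-equivariant and that, after enlarging $K$, $G_K$ fixes the de Rham periods — shows $\mathfrak{gl}_2(\Q_p)$ acts on $\tilde x$ by the same formulas as on $x$, so $\tilde x$ is $G_K$-fixed, the $G_0$-action on it is analytic, and $Z$ acts on every power of $\tilde x$ through $\tilde\chi_1$; shrinking $G_0$ we may also assume $\|x\|_{G_0}=\|x\|$.

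Next I would define $\phi_1$ by substitution $x\mapsto\tilde x$. For $f\in\cO^{\la,(0,0)}_{K^p}(U)^{G_L}$, Theorem \ref{str} — with the sections $x_n$ of \ref{exs}, taken $L$-rational after enlarging $L$ as in the proof of Proposition \ref{GnanHT} — lets us write $f=\sum_{i\ge 0}c_i(x-x_n)^i$ with $c_ip^{(n-1)i}$ uniformly bounded, and I set $\phi_1(f):=\sum_{i\ge 0}c_i(\tilde x-x_n)^i$. Since $\theta_{\log}(\tilde x-x_n)=x-x_n$ has norm $\le p^{-n}$, Lemma \ref{OBdRconv} shows this series converges in every truncation $\cO\B_{\dR,l}^+(U)$, with limit a $G_{r(n)}$-analytic vector on which $Z$ acts through $\tilde\chi_1$; the value is independent of the expansion and of the choice of $x_n$ because the finite sums $\sum_{i=0}^N c_ix^i$ are dense in $A^n$ (cf.\ \ref{An}) and on these $\phi_1$ is the $\cO_{V_0}$-linear substitution $x\mapsto\tilde x$. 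The same density argument identifies $\phi_1$ as the unique continuous, $\cO_{V_{K_p},L}$-linear extension of $x\mapsto\tilde x$, which yields property (2), the continuity into each $\cO\B_{\dR,l}^+(U)$ in property (3) (via the estimates underlying Lemma \ref{OBdRconv}), and — together with the equivariance of $\tilde x$ — the $G_K\times G_0$-equivariance; property (1) follows by applying the continuous map $\theta_{\log}$ termwise and using $\theta_{\log}(\tilde x)=x$.

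Finally, for property (4): taking $f=x$ in the formula of the previous step (so $c_0=x_n$, $c_1=1$) gives $\phi_1(x)=\tilde x$. Reducing the Poincar\'e lemma sequence modulo $t$ produces a short exact sequence of free $\B_{\dR}^+(U)/(t)$-modules, whence $\ker(\nabla_{\log}\bmod t)=\B_{\dR}^+(U)/(t)=\cO_{K^p}(U)$; by Lemma \ref{pLegen}(2) the class $\tilde x\bmod t$ lies in this kernel, and since $\theta_{\log}$ restricts to the identity there while $\theta_{\log}(\tilde x)=x$, we conclude $\tilde x\equiv x\pmod t$. In particular $\nabla_{\log}(\tilde x)\in t\cO\B_{\dR}^+(U)\otimes\Omega^1_{V_0}(\mathcal{C})$, and $\phi_1\bmod t$ is the natural inclusion $f\mapsto f$, which does not involve $f_1$. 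The one step that is more than bookkeeping is the convergence/analyticity claim in the middle paragraph — one must check that substituting $x\mapsto\tilde x$ preserves the boundedness conditions of Theorem \ref{str} after passing to the $t$-adically truncated sheaves — and this is exactly what Lemma \ref{OBdRconv} provides; once $\tilde x$ and its equivariance are in hand the rest is formal, the genuinely substantive input (the $p$-adic Legendre relation) having been isolated in Lemma \ref{pLegen}.
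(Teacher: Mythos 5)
Your proposal is correct and follows essentially the same route as the paper: define $\tilde{x}$ as the ratio of $\cO\B_{\dR}^+$-periods of $f_1$ (invertibility via Lemma \ref{thetainv}), extend by the substitution $x\mapsto\tilde{x}$ in the power-series description of Theorem \ref{str} with convergence supplied by Lemma \ref{OBdRconv}, and deduce $\tilde{x}\equiv x\pmod t$ from the $p$-adic Legendre relation together with the mod-$t$ Poincar\'e lemma sequence. The only cosmetic difference is that the paper packages the truncations as a compatible family of maps $\tilde{s}_l$ and passes to the inverse limit, whereas you verify convergence in each truncation directly; the content is identical.
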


As mentioned before, we denote by $s_2$ the map $\phi_1|_{\cO^{\la,(0,0)}_{K^p}(U)^{G_K}}\mod\Fil^2$, hence
\[s_2: \cO^{\la,(0,0)}_{K^p}(U)^{G_K}\to \cO\B_{\dR,2}^{+,\la,\tilde{\chi_1}}(U)^{G_K}.\]
Moreover the composite map
\[ \cO^{\la,(0,0)}_{K^p}(U)^{G_K}\xrightarrow{s_2} \cO\B_{\dR,2}^{+,\la,\tilde{\chi_1}}(U)^{G_K} \xrightarrow{\nabla_{\log}}  \cO^{\la,(0,0)}_{K^p}(U)^{G_K} \otimes\Omega^1_{V_0}(\mathcal{C})\]
sends $x$ to $0$ and sections $f\in\cO_{V_{G_{n}},K}\subseteq \cO^{\la,(0,0)}_{K^p}(U)^{G_K}$ to $df$. This shows that $\nabla_{\log}\circ s_2=d^1$ in view of the construction of $d^1$, cf. Theorem \ref{I1}. This proves Proposition \ref{comdk} when $k=1$.

\begin{para} \label{kgeq2}
Now for $k\geq 2$, we follow the same strategy as in Subsection \ref{Do1}. Recall that we have \eqref{l_k}
\[l_{n}:\Sym^{n} D(V_0)\otimes_{\Q_p} \Sym^{n} V^* \to \cO\B_{\dR}^+(U),~~~~n\geq 0\]
which is $\cO_{V_0}$-linear. Therefore using $\phi_1$, we can extend this map $\cO^{\la,(0,0)}_{K^p}(U)^{G_K}$-linearly to a map
\[l'_{n}:\Sym^{n} D(V_0)\otimes_{\cO_{V_0}}\cO^{\la,(0,0)}_{K^p}(U)^{G_K}\otimes_{\Q_p} \Sym^{n} V^* \to \cO\B_{\dR}^+(U).\]
Note that there are natural surjective maps coming from the Hodge filtration on $\Sym^n D$ and the Hodge-Tate filtration on $\Sym^n V^*\otimes \cO_{\mathcal{X}_{K^p}}$
\[\Sym^n D\to \gr^0 \Sym^n D=\omega^{-k}\otimes (\wedge^2 D)^{\otimes n},\]
\[\cO^{\la,(0,0)}_{K^p}\otimes_{\Q_p} \Sym^{n} V^*=\cO^{\la,(0,0)}_{K^p}\otimes_{\Q_p} \Sym^{n} V\otimes\det{}^{-n}
 \to \omega^{k,\la,(0,n)}_{K^p}\otimes\det{}^{-n}(-n),\]
(cf. \ref{genklambdak})  and the natural identification \eqref{rmtwddet1},
\[\cO^{\la,(n_1+1,n_2+1)}_{K^p}=\cO^{\la,(n_1,n_2)}_{K^p}\cdot \mathrm{t}\cong \cO^{\la,(n_1,n_2)}_{K^p}\otimes_{\cO_{K^p}^{\sm}}(\wedge^2 D^{\sm}_{K})^{-1}\otimes \det(1).\]
These induce a natural map
\[\Sym^{n} D(V_0)\otimes_{\cO_{V_0}}\cO^{\la,(0,0)}_{K^p}(U)^{G_K}\otimes_{\Q_p} \Sym^{n} V^*\to \cO^{\la,(-n,0)}_{K^p}(U)^{G_K}.\]
which  is nothing but the composite map
$\theta_{\log}\circ l'_{n}$ by our discussion in \ref{l_1}. The BGG constructions for $\Sym^n D$ and  $\Sym^n V^*$ give  a natural left inverse of this map
\begin{eqnarray} \label{psin}
\psi_n:\cO^{\la,(-n,0)}_{K^p}(U)^{G_K}\to\Sym^{n} D(V_0)\otimes_{\cO_{V_0}}\cO^{\la,(0,0)}_{K^p}(U)^{G_K}\otimes_{\Q_p} \Sym^{n} V^*.
\end{eqnarray}
More precisely, recall that $\Sym^n D(V_0)\otimes_{\cO_{V_0}}\cO^{\la,(0,0)}_{K^p}(U)$ was denoted by $\Sym^n D^{(0,0)}(U)$ in the proof of Theorem \ref{I1}. The filtration on $\Sym^n D$ naturally extends to $\Sym^n D^{(0,0)}(U)$ in an $\cO^{\la,(0,0)}_{K^p}$-linear way. By Lemma \ref{KSsec}, we have a natural $U(\mathfrak{gl}_2(\Q_p))$-equivariant map 
\[r'_n:\gr^0\Sym^n D^{(0,0)}(U)^{G_K}\xrightarrow{r'_n} \Sym^k D^{(0,0)}(U)^{G_K}\]
which is a section of $\Sym^k D^{(0,0)}(U)^{G_K}\to \gr^0 \Sym^k D^{(0,0)}(U)^{G_K}$ and was constructed using the Kodaira-Spencer isomorphism. On the other hand,
by Lemma \ref{secinf}, the $\tilde\chi_{k+1}$-isotopic part of 
\[\gr^0\Sym^n D^{(0,0)}(U)^{G_K}\otimes_{\Q_p} \Sym^{n} V^*=\omega^{-n}(V_0)c^n\otimes_{\cO_{V_0}}\cO^{\la,(0,0)}_{K^p}(U)^{G_K}\otimes_{\Q_p} \Sym^{n} V^*\]
 is canonically isomorphic to $\cO^{\la,(-n,0)}_{K^p}(U)^{G_K}$ via $\theta_{\log}\circ l'_n$. Thus we obtain a natural injective $U(\mathfrak{gl}_2(\Q_p))$-equivariant  map
 \[\cO^{\la,(-n,0)}_{K^p}(U)^{G_K}\to  \gr^0\Sym^n D^{(0,0)}(U)^{G_K}\otimes_{\Q_p} \Sym^{n} V^*\]
 and its composite with $r'_n\otimes 1$ gives the  map $\psi_n$ we are looking for. We remark that all the maps defines here are $G_0$-equivariant and continuous. 
\end{para}

\begin{defn} \label{defnsk}
Let $s_{k+1}$ be the composite map 
\[ \cO^{\la,(1-k,0)}_{K^p}(U)^{G_K}\xrightarrow{\psi_{k-1}}\Sym^{n} D(V_0)\otimes\cO^{\la,(0,0)}_{K^p}(U)^{G_K}\otimes \Sym^{n} V^*
\xrightarrow{l'_{k-1}\mod\Fil^{k+1}}\cO\B_{\dR,k+1}^{+}(U)^{G_K}\]
It follows from our previous discussion that $s_{k+1}$ actually defines a map 
\[s_{k+1}:\cO^{\la,(1-k,0)}_{K^p}(U)^{G_K}\to\cO\B_{\dR,k+1}^{+,\la,\tilde{\chi}_k}(U)^{G_K}\]
such that
 $\theta_{\log} \circ s_{k+1}=\Id$.
\end{defn}

We are ready to prove Proposition \ref{comdk}. We restate it here.

\begin{prop} \label{nlogsk+1dk}
$\im(\nabla_{\log}\circ s_{k+1})\subseteq \Fil^{k-1} \cO\B_{\dR,k}^{+,\la,\tilde{\chi}_k}(U)^{G_K}\otimes \Omega^1_{V_0}(\mathcal{C})$. Moreover, under the isomorphism $\Fil^{k-1}\cO\B_{\dR}^{+,\la,\tilde\chi_k} (U)^{G_K}\otimes\Omega^1_{V_0}(\mathcal{C}) = \cO^{\la,(1-k,0)}_{K^p}(U)^{G_K} \otimes\Omega^1_{V_0}(\mathcal{C})^{\otimes k}$, we have 
\[\nabla_{\log}\circ s_{k+1}={d}^{k}:\cO^{\la,(1-k,0)}_{K^p}(U)^{G_K}\to \cO^{\la,(1-k,0)}_{K^p}(U)^{G_K} \otimes\Omega^1_{V_0}(\mathcal{C})^{\otimes k}.\]
\end{prop}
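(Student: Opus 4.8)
The strategy is to reduce the statement to the two facts already established: the explicit behaviour of $\nabla_{\log}$ on the periods $l_n$ (the $p$-adic Legendre relation of Lemma~\ref{pLegen}) and the construction of $d^k$ via the BGG left inverse $r'_k$ in Theorem~\ref{I1} and Lemma~\ref{KSsec}. First I would unwind the definition of $s_{k+1}$: by Definition~\ref{defnsk} it is $l'_{k-1}\circ\psi_{k-1}$ modulo $\Fil^{k+1}$, where $\psi_{k-1}$ (see \eqref{psin}) is, up to the canonical identification $\theta_{\log}\circ l'_{k-1}$, the composite of the canonical inclusion $\cO^{\la,(1-k,0)}_{K^p}(U)^{G_K}\hookrightarrow\gr^0\Sym^{k-1}D^{(0,0)}(U)^{G_K}\otimes_{\Q_p}\Sym^{k-1}V^*$ with $r'_{k-1}\otimes 1$. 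Since $l'_{k-1}$ is built from the relative de Rham comparison isomorphism $l_{k-1}$ and hence intertwines the logarithmic connection $\nabla_{\log}$ on $\cO\B_{\dR}^+$ with the Gauss--Manin connection $\nabla_{k-1}$ on $\Sym^{k-1}D$ tensored with $d^1$ on $\cO^{\la,(0,0)}_{K^p}$ (this is the content of \eqref{relOBdR} and the $\cO_{V_0}$-linearity statements in Proposition~\ref{phi1}), we get
\[
\nabla_{\log}\circ s_{k+1}=l'_{k-1}\bigl(\nabla_{k-1}^{(0,0)}(r'_{k-1}(\cdot))\bigr)\otimes 1 \pmod{\Fil^{k+1}},
\]
where $\nabla_{k-1}^{(0,0)}$ is the logarithmic connection on $\Sym^{k-1}D^{(0,0)}$ extended $\cO^{\la,(0,0)}_{K^p}$-linearly using $d^1$. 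The key defining property of $r'_{k-1}$ in Lemma~\ref{KSsec} is precisely that $\nabla_{k-1}^{(0,0)}\circ r'_{k-1}$ lands in $\Fil^{k-1}\Sym^{k-1}D^{(0,0)}\otimes\Omega^1_{K^p}(\mathcal{C})^{\sm}$, and that, after passing to $\gr^0$ composed with the iterated Kodaira--Spencer maps, it recovers exactly $\theta^k$ — which is how $d^k$ was defined in Theorem~\ref{I1}(2).

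Next I would control the filtration jump. Under the period map $l'_{k-1}$, the subsheaf $\Fil^i\Sym^{k-1}D^{(0,0)}$ maps into $\Fil^i\cO\B_{\dR}^+$ (compatibility of filtrations, as in \ref{reldRcomp} and \cite[Proposition 7.9]{Sch13}), and the extra twist coming from $\Sym^{k-1}V^*$ together with the $\mathrm{t}$-identification \eqref{rmtwddet1} contributes an additional $\Fil^{k-1}$, because the weight-$(1-k,0)$ piece sits in the bottom of the Hodge-Tate filtration on $\Sym^{k-1}V^*\otimes\cO_{\mathcal{X}_{K^p}}$. Combining the $\Fil^{k-1}$ coming from $\nabla_{k-1}^{(0,0)}\circ r'_{k-1}$ with the fact that $l'_{k-1}$ is filtered, one sees that $\nabla_{\log}(s_{k+1}(f))$ lands in $\Fil^{k-1}\cO\B_{\dR,k}^{+,\la,\tilde\chi_k}(U)^{G_K}\otimes\Omega^1_{V_0}(\mathcal{C})$, which by Proposition~\ref{griFIl} is canonically $\cO^{\la,(1-k,0)}_{K^p}(U)^{G_K}\otimes\Omega^1_{V_0}(\mathcal{C})^{\otimes k}$ (only the weight-$0$ graded piece survives). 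Then I would compute the image in this graded quotient: by the compatibility of $\theta_{\log}\circ l'_{k-1}$ with the Kodaira--Spencer isomorphisms, the induced map on $\gr^{k-1}$ is exactly $\theta^k$, hence equals $d^k$ by its characterizing properties (Theorem~\ref{I1}). One subtlety is to verify that the nonzero rational constants introduced by the various Kodaira--Spencer normalizations on the two sides cancel; here I would use the uniqueness of $d^k$ in Theorem~\ref{I1} and the fact that $s_{k+1}$ was normalized so that $\theta_{\log}\circ s_{k+1}=\mathrm{id}$ — the leading term of $\nabla_{\log}\circ s_{k+1}$ on an element of the form $\mathrm{t}^{0}e_1^{1-k}c$ (a locally algebraic vector, with $c\in\omega^{-(k-1),\sm}_{K^p}$) is forced to be $\mathrm{t}^{0}e_1^{1-k}\theta^k(c)$ with constant exactly $1$ by tracing through the Legendre-type identity Lemma~\ref{pLegen}(1) and the $p$-adic Legendre relation for the $\Sym^{k-1}$ periods.

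The main obstacle I anticipate is precisely this last bookkeeping: making the identification $\nabla_{\log}\circ s_{k+1}=d^k$ on the nose (not just up to scalar), which requires carefully matching the normalization of $r'_{k-1}$, the normalization of the $\mathrm{t}$-twist \eqref{rmtwddet1}, and the normalization of $\theta^k$. The cleanest way around it is a density/uniqueness argument: by Corollary~\ref{density} the locally algebraic vectors are dense, both $\nabla_{\log}\circ s_{k+1}$ and $d^k$ are continuous and $\cO_{\Fl}$-linear, and on a locally algebraic vector $\mathrm{t}^{0}e_1^{i}e_2^{k-1-i}c$ one reduces by $\GL_2(\Q_p)$-equivariance to the case $i=k-1$ (so $e_1$ is invertible), where the period $\tilde x$ is explicit; there the iterated application of Lemma~\ref{pLegen}(1) shows that all higher-order period contributions lie in $\Fil^{k-1}$ and the top term is governed by the Gauss--Manin connection on $\Sym^{k-1}D$, i.e.\ by $\theta^k$ with constant $1$. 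Once equality holds on a dense locally algebraic subspace it holds everywhere by continuity, which finishes the proof and hence (together with Proposition~\ref{comdbark}) establishes Theorem~\ref{MT}.
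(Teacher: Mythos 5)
Your overall architecture matches the paper's (unwind $s_{k+1}=l'_{k-1}\circ\psi_{k-1}$, intertwine $\nabla_{\log}$ with the Gauss--Manin connection, and invoke the defining property of $r'_{k-1}$ from Lemma \ref{KSsec}), but your central displayed identity is false as stated, and the missing step is exactly the one that makes the proposition nontrivial. The extension $l'_{k-1}$ does \emph{not} intertwine $\nabla_{\log}$ with $\nabla'_{k-1}\otimes 1$ on the nose: writing $l'_{k-1}(x\cdot m\otimes v)=\tilde{x}\,l_{k-1}(m\otimes v)$, the Leibniz rule produces the extra term $\nabla_{\log}(\tilde{x})\,l_{k-1}(m\otimes v)$, and by Lemma \ref{pLegen} one only knows $\nabla_{\log}(\tilde{x})\in t\,\cO\B_{\dR}^+(U)\otimes\Omega^1_{V_0}(\mathcal{C})$ --- it is not zero. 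So the intertwining holds only modulo $t$, and your subsequent filtration bookkeeping inherits these nonzero error terms. Showing they ``lie in $\Fil^{k-1}$'' does not dispose of them: they would still contribute to the answer and could spoil the equality with $d^k$. What is needed, and what the paper supplies as Lemma \ref{tpowt}, is the vanishing $t\,\cO\B_{\dR,k}^{+,\la,\tilde\chi_k}(U)^{G_K}=0$, proved by a Sen-weight computation on the graded pieces of Faltings's extension (the weights on $\cO^{\la,\tilde\chi_k}_{K^p}(U)(j)^K$ for $1\le j\le k-1$ avoid $0$). This reduces the whole statement to an identity modulo $t$, where the intertwining \emph{does} hold because $\nabla_{\log}\bmod t$ kills $\tilde{x}$, matching the $\cO_{\Fl}$-linearity of $\nabla'_{k-1}$; the identity mod $t$ is then checked on genuine periods and propagated by $x$-linearity, the Leibniz rule on finite-level functions, and density.

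A second, related problem is your fallback density argument: Corollary \ref{density} asserts density of the $\kn$-\emph{finite} vectors (smooth forms tensored with the dual Verma module $M^{\vee}_{(n_2,n_1)}$, i.e.\ including all powers of $x$), not of the locally algebraic vectors. The locally algebraic vectors form a proper closed subspace (the kernel of $\bar{d}^{k}$ by Proposition \ref{dbarsurj}), so equality on them plus continuity proves nothing. On the genuinely dense set of $\kn$-finite vectors the terms $\nabla_{\log}(\tilde{x}^i)$ reappear, and you are back to needing the vanishing of the $t$-divisible $G_K$-invariants. (There is also a small notational slip: for weight $(1-k,0)$ the locally algebraic vectors have the form $\mathrm{t}^{1-k}e_1^{i}e_2^{k-1-i}s$, not $\mathrm{t}^{0}e_1^{1-k}c$.)
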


\begin{proof}
Our first observation is that it suffices to prove the proposition modulo $t$.

\begin{lem} \label{tpowt}
The natural map 
\[\cO\B_{\dR,k}^{+,\la,\tilde\chi_k}(U)^{G_K}\to \left(\cO\B_{\dR,k}^{+,\la,\tilde\chi_k}(U)/(t)\right)^{G_K}\] 
is an isomorphism.
\end{lem}

\begin{proof}
By the same argument as in the proof of Proposition \ref{griFIl}, 
\[0\to t\cO\B_{\dR,k}^{+,\la,\tilde\chi_k}(U)^K\to \cO\B_{\dR,k}^{+,\la,\tilde\chi_k}(U)^K\to \left(\cO\B_{\dR,k}^{+,\la,\tilde\chi_k}(U)/(t)\right)^{K}\to 0\] 
is an exact sequence. Moreover each term has a generalized eigenspace decomposition with respect to the action of $1\in\Q_p\cong\Lie(\Gal(K_\infty/K))$. Hence we only need to show that $0$ does not appear in the spectrum on $t\cO\B_{\dR,k}^{+,\la,\tilde\chi_k}(U)^K$, equivalently,
\[t\cO\B_{\dR,k}^{+,\la,\tilde\chi_k}(U)^{G_K}=0.\]
Note that there is a natural filtration  on $t\cO\B_{\dR,k}^{+,\la,\tilde\chi_k}(U)$ coming from the filtration on $\cO\B_{\dR}^+$. It suffices to show that
\[\left(\gr^i t\cO\B_{\dR,k}^{+,\la,\tilde\chi_k}(U)\right)^{G_K}=0,~~~~~~i\geq 0.\]
We may assume $i\in\{1,\cdots,k-1\}$ because $\gr^i\cO\B_{\dR,k}^+=0$ if $i$ is not in this range. Again by the proof of Proposition \ref{griFIl}, it follows from Faltings's extension \eqref{Fext} that
\[0\to \cO_{K^p}^{\la,\tilde{\chi}_k}(U)(1)^K\to \gr^1\cO\B_{\dR}^{+,\la,\tilde\chi_k}(U)^K\to \cO_{K^p}^{\la,\tilde{\chi}_k}(U)^K\otimes_{\cO_{V_0}}\Omega^1_{V_0}(\mathcal{C})\to 0\]
is exact and $\cO_{K^p}^{\la,\tilde{\chi}_k}(U)(1)^K = (\gr^1 t\cO\B_{\dR,k}^{+,\la,\tilde\chi_k}(U))^K$. In general, $(\gr^i t\cO\B_{\dR,k}^{+,\la,\tilde\chi_k}(U))^K$ is filtered by
\[\cO_{K^p}^{\la,\tilde{\chi}_k}(U)(j)^K\otimes_{\cO_{V_0}}\Omega^1_{V_0}(\mathcal{C})^{\otimes i-j},~~~~~~~~j=1,\cdots,i.\]
(cf. the paragraph below \eqref{Fext}.) Recall that  the eigenvalues of the Sen operator on $\cO_{K^p}^{\la,\tilde{\chi}_k}(U)^K$ are $0,-k$, cf. \ref{tilchik}. Therefore $0$ is not an eigenvalue on $\cO_{K^p}^{\la,\tilde{\chi}_k}(U)(j)^K\otimes_{\cO_{V_0}}\Omega^1_{V_0}(\mathcal{C})^{\otimes i-j}$ as $j\in\{1,\cdots,k-1\}$ by our assumption. This certainly implies our claim.
\end{proof}

Recall the construction of $d^k$ in \ref{Do1}, cf. Lemma \ref{KSsec}. The composite map
\[\omega^{-k+1,\la,(0,0)}_{K^p}(U)c^{k-1} \xrightarrow{r'_{k-1}} \Sym^{k-1} D^{(0,0)}(U) \xrightarrow{\nabla'_{k-1}} \Sym^{k-1} D^{(0,0)}(U)\otimes_{\cO_{V_0}}\Omega^1_{V_0}(\mathcal{C})\]
has images in $\Fil^{k-1} \Sym^{k-1} D^{(0,0)}(U)\otimes_{\cO_{V_0}}\Omega^1_{V_0}(\mathcal{C})$, where $\nabla'_{k-1}$ is $\cO_{\Fl}$-linear and extends the connection on $\Sym^{k-1} D$. Hence $\nabla'_{k-1}$ induces a map $d'^k:$
\[\omega^{-k+1,\la,(0,0)}_{K^p}(U)c^{k-1}\to \Fil^{k-1} \Sym^{k-1} D^{(0,0)}(U)\otimes\Omega^1_{V_0}(\mathcal{C})=\omega^{-k+1,\la,(0,0)}_{K^p}(U)c^{k-1}\otimes\Omega^1_{V_0}(\mathcal{C})^{\otimes k}.\]
Note that by \ref{kgeq2}, the $\tilde\chi_k$-part of $\omega^{-k+1,\la,(0,0)}_{K^p}(U)c^{k-1}\otimes_{\Q_p} \Sym^{k-1} V^*$ is canonically isomorphic to $\cO^{\la,(1-k,0)}_{K^p}(U)$. This shows that if we take the tensor product of $d'^k$ with $\Sym^{k-1} V^*$ and take the $\tilde\chi_k$-part, we get a map
\[\cO^{\la,(1-k,0)}_{K^p}(U)\to \cO^{\la,(1-k,0)}_{K^p}(U)\otimes \Omega^1_{V_0}(\mathcal{C})^{\otimes k}\]
which is nothing but $d^k$. In other words, recall that there is an injection
\[\psi_{k-1}: \cO^{\la,(1-k,0)}_{K^p}(U)^{G_K} \to \Sym^{k-1} D(V_0)\otimes_{\cO_{V_0}}\cO^{\la,(0,0)}_{K^p}(U)^{G_K}\otimes_{\Q_p} \Sym^{k-1} V^*
\]
cf.  \eqref{psin}. Then $d^k|_{\cO^{\la,(1-k,0)}_{K^p}(U)^{G_K}}$ is obtained by restricting $\nabla'_{k-1}\otimes 1$ to $\cO^{\la,(1-k,0)}_{K^p}(U)^{G_K}$ via $\psi_{k-1}$. For simplicity, we write $DV_{k}$ for $\Sym^{k-1} D^{(0,0)}(U)^{G_K}\otimes_{\Q_p} \Sym^{k-1} V^*$. Consider the diagram 
\[
\begin{tikzcd}
\cO^{\la,(1-k,0)}_{K^p}(U)^{G_K}  \arrow[r,"{\psi_{k-1}}"]  \arrow[rd,"s_{k+1}\mod t" ']
& DV_k   \arrow[r,"\nabla'_{k-1}\otimes 1"] \arrow[d," l'_{k-1}\mod t"] & DV_k\otimes\Omega^1_{V_0}(\mathcal{C})  \arrow[d,"{l}'_{k-1}\otimes 1\mod t"]\\
&\cO\B_{\dR,k+1}^+(U)/(t) ~~~~~~~~~~~~\arrow[r,"\nabla_{\log}~ \mathrm{ mod}~ t"] & ~~~~~~~~~~~\cO\B_{\dR,k}^+(U)/(t)\otimes_{\cO_{V_0}}\Omega^1_{V_0}(\mathcal{C})
\end{tikzcd}.
\]

\begin{lem} \label{comdiag}
This diagram is commutative.
\end{lem}

Note that this lemma will imply that 
\[\nabla_{\log}\circ s_{k+1}={d}^{k}\mod t,\]
hence $\nabla_{\log}\circ s_{k+1}={d}^{k}$  by Lemma \ref{tpowt}, which is exactly what we need to show.
\end{proof}

\begin{proof}[Proof of Lemma] \ref{comdiag}
The left triangle is commutative by the definition of $s_{k+1}$. For the right square, we observe that ${l}'_{k-1}\mod t$ is a $K$-linear mapping between $C$-LB spaces. Hence we can extend ${l}'_{k-1}\mod t$ to a $C$-linear map $l':\Sym^{k-1} D^{(0,0)}(U)\otimes_{\Q_p} \Sym^{k-1} V^*\to \cO\B_{\dR,k+1}^+(U)/(t)$. Here we use the fact $\Sym^k D^{(0,0)}(U)=\Sym^k D \otimes \cO^{\la,(0,0)}_{K^p}(U)$ is Hodge-Tate of weight $0$. Now it suffices to show
\[
\begin{tikzcd}
\Sym^{k-1} D^{(0,0)}(U)\otimes_{\Q_p} \Sym^{k-1} V^*   \arrow[r,"\nabla'_{k-1}"] \arrow[d,"{l}'"] & \Sym^{k-1} D^{(0,0)}(U) \otimes_{\cO_{V_0}}\Omega^1_{V_0}(\mathcal{C}) \otimes_{\Q_p} \Sym^{k-1} V^* \arrow[d,"l'\otimes 1"]\\
\cO\B_{\dR,k+1}^+(U)/(t) \arrow[r,"\nabla_{\log}\mod t"] & \cO\B_{\dR,k}^+(U)/(t)\otimes_{\cO_{V_0}}\Omega^1_{V_0}(\mathcal{C})
\end{tikzcd}
\]
is commutative, i.e. $(\nabla_{\log}\mod t)\circ l'=(l'\otimes 1)\circ\nabla'_{k-1}$. (Here we use $\nabla'_{k-1}$ instead of $\nabla'_{k-1}\otimes 1$ for simplicity.) First, this is clearly true when restricted to $\Sym^{k-1} D\otimes_{\Q_p} \Sym^{k-1} V^*\subseteq \Sym^{k-1} D^{(0,0)}(U)\otimes_{\Q_p} \Sym^{k-1} V^*$, cf. \ref{reldRcomp}. Second, all of the maps in the diagram commute with multiplication by $x\in\cO^{\la,(0,0)}_{K^p}$: 
\begin{itemize}
\item for $\nabla'_{k-1}$, this is because $\nabla'_{k-1}$ is $\cO_{\Fl}$-linear;
\item for $l'$, this is because $l'$ is $\cO^{\la,(0,0)}_{K^p}(U)$-linear by our construction;
\item for $\nabla_{\log}\mod t$, this is the last part of Proposition \ref{phi1}.
\end{itemize}
Third, both connections $(\nabla_{\log}\mod t)$ and $\nabla'_{k-1}$ satisfy the Leibniz rule for  functions on $V_{K_p,L}$ for any $K_p,L$. See \ref{PlsFe} for $\nabla_{\log}$ and the proof of Theorem \ref{I1} for $\nabla'_{k-1}$.
Now our claim follows from the observation that all the maps are continuous and elements of the form 
\[\sum_{i=0}^n x^i \sum_{j=1}^m a_{ij}b_j\in\Sym^{k-1} D^{(0,0)}(U),~~~~~~~~~~~a_{ij}\in \varinjlim_{K_p,L}\cO_{V_{K_p,L}}(V_{K_p,L}),b_j\in \Sym^{k-1} D(V_0),\]
are dense in $\Sym^{k-1} D^{(0,0)}(U)$ by the explicit description of $\cO^{\la,(0,0)}_{K^p}(U)$ in Theorem \ref{str}.
\end{proof}
  
\subsection{Proof of Theorem \ref{MT} III}
\begin{para}
In this subsection, we prove Proposition \ref{comdbark}. Keep the same notation in \ref{PfMTI}. We may assume that $e_1$ is invertible on $V_\infty$. First we recall the definition of $N'_k$. Consider the locally analytic vectors in the $k$-th graded piece of the Poincar'e lemma sequence
\begin{eqnarray} \label{grkPls}
~~~~~~~~0\to \gr^k \B^{+,\la}_{\dR}(U)\to \gr^k \cO\B_{\dR}^{+,\la}(U) \xrightarrow{\gr^k\nabla_{\log}} \gr^{k-1}\cO\B_{\dR}^{+,\la}(U)\otimes\Omega^1_{V_0}(\mathcal{C})\to 0.
\end{eqnarray}
Take the $\tilde\chi_k$-isotypic part  and the $G_{K_\infty}$-fixed, $G_K$-analytic vectors of this sequence.
\[0\to \gr^k \B^{+,\la,\tilde\chi_k}_{\dR}(U)^K\to \gr^k \cO\B_{\dR}^{+,\la,\tilde\chi_k}(U)^K \to \gr^{k-1}\cO\B_{\dR}^{+,\la,\tilde\chi_k}(U)^K\otimes_{\cO_{V_0}}\Omega^1_{V_0}(\mathcal{C})\to 0.\]
Note that $\Lie(\Gal(K_\infty/K))$ acts semi-simply on every term except the middle one, cf. \ref{constdiag}. The action of $1\in\Q_p\cong \Lie(\Gal(K_\infty/K))$ induces a map between the kernels of $\Lie(\Gal(K_\infty/K))$ , i.e the $G_K$-invariants,
\[N'_k:\gr^{k-1}\cO\B_{\dR}^{+,\la,\tilde\chi_k}(U)^{G_K}\otimes_{\cO_{V_0}}\Omega^1_{V_0}(\mathcal{C}) \to 
\gr^k \B^{+,\la,\tilde\chi_k}_{\dR}(U)^{G_K}.\]
We obtain the form of $N'_k$ in Proposition \ref{comdbark} by identifying $\gr^{k-1}\cO\B_{\dR}^{+,\la,\tilde\chi_k}(U)^{G_K}$ with $\cO^{\la,(1-k,0)}_{K^p}(U)^{G_K}$ and $\gr^{k-1}\cO\B_{\dR}^{+,\la,\tilde\chi_k}(U)^{G_K}\otimes_{\cO_{V_0}}\Omega^1_{V_0}(\mathcal{C})$ with $\cO^{\la,(1-k,0)}_{K^p}(U)^{G_K}\otimes \Omega^1_{V_0}(\mathcal{C})^{\otimes k}$.

In other words, $N'_k$ is  obtained by looking at the action of $G_K$ on the $\tilde\chi_k$-isotypic parts. It turns out that we can ``switch'' the roles of $G_K$ and $Z$ here. In fact, we will show that $N'_k$ can also be obtained by looking at the action of $Z$ on the $G_K$-invariants up to a non-zero constant. Then it will be quite straightforward to relate it  with $\bar{d}^k$ using this representation-theoretic interpretation of $N'_k$.

We introduce some notations. Let $\mathfrak{z}=\{\begin{pmatrix} a & 0\\ 0 & a\end{pmatrix}\}$ be the centre of $\mathfrak{gl}_2(\Q_p)$. Then there is a natural inclusion $U(\mathfrak{z})\subseteq Z$ and $\tilde\chi_k$ induces a character $z_k:\mathfrak{z}\to\Q_p$. For a $U(\mathfrak{z})$-module $M$, we denote by $M^{z_k}$ the $z_k$-isotypic part of $M$.
\end{para}

\begin{prop} \label{GKinvex}
The sequence \eqref{grkPls} remains exact after taking the $G_K$-invariants and $z_k$-isotypic parts, i.e. we have a short exact sequence
\[
0\to \gr^k \B^{+,\la}_{\dR}(U)^{G_K,z_k}\to \gr^k \cO\B_{\dR}^{+,\la}(U)^{G_K,z_k} \to \gr^{k-1}\cO\B_{\dR}^{+,\la}(U)^{G_K,z_k}\otimes \Omega^1_{V_0}(\mathcal{C})\to 0.
\]
\end{prop}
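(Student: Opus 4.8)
The plan is to deduce the statement by applying two successive exact functors — extraction of the $z_k$-isotypic part for the action of the centre $\mathfrak{z}$, and passage to $G_K$-invariants — to the exact sequence \eqref{grkPls}. First one records that \eqref{grkPls} is itself exact: the Poincar\'e lemma sequence of \ref{PlsFe} is strict exact, taking the $k$-th graded piece for the $\ker\theta_{\log}$-filtration preserves exactness, and passing to $\GL_2(\Q_p)$-locally analytic sections again preserves exactness because the right-hand term is, via Faltings's extension, filtered by copies of $\cO_{K^p}(i)$, which are $\mathfrak{LA}$-acyclic by \cite[Proposition 4.3.15]{Pan20} — this is precisely the argument already used for Proposition \ref{griFIl}.

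Next I would check that extracting the $z_k$-isotypic part is exact on \eqref{grkPls}. Via Faltings's extension together with Lemma \ref{BdRcmgr}, each of the three terms is a finite successive extension of sheaves isomorphic — up to Tate twists and twists by powers of $\Omega^1_{V_0}(\mathcal{C})$ — to $\cO^{\la,\chi}_{K^p}$ for various integral weights $\chi=(n_1,n_2)$, on which $\mathfrak{z}=\{\,\diag(a,a)\,\}$ acts through the scalar $n_1+n_2$ (by \cite[Corollary 4.2.8]{Pan20} and the semisimplicity of $\theta_{\kh}$). Hence $\mathfrak{z}$ acts locally finitely: each vector lies in a finite-dimensional $U(\mathfrak{z})$-submodule, so each term decomposes functorially as the direct sum of its generalized $\mathfrak{z}$-eigenspaces, and taking the $z_k$-summand is exact. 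This produces the $z_k$-isotypic analogue of \eqref{grkPls}.

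Finally I would take $G_K$-invariants. Left exactness is automatic; for the surjectivity, the key observation is that each of the three $z_k$-isotypic terms is of Hodge-Tate type. Indeed, each decomposes as the direct sum over the weights $\chi=(n_1,n_2)$ with $n_1+n_2=1-k$ of its $\theta_{\kh}$-isotypic component, and — since Faltings's extension is $\GL_2(\Q_p)$-equivariant, hence preserves $\theta_{\kh}$-weights — each such component is, by the $\chi$-isotypic part of $\Sym^\bullet$ of Faltings's extension, a finite extension of copies of $\cO^{\la,\chi}_{K^p}(U)(j)\otimes_{\cO_{V_0}}\Omega^1_{V_0}(\mathcal{C})^{\otimes\bullet}$ with pairwise distinct Sen operator eigenvalues $n_2, n_2+1,\dots$ (the underlying $\cO^{\la,\chi}_{K^p}(U)$ having Sen eigenvalue $n_2$ by Proposition \ref{GnanHT}); a finite extension with distinct Sen eigenvalues has semisimple Sen operator and is therefore Hodge-Tate. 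A Hodge-Tate $C$-module with semilinear $G_K$-action decomposes canonically and functorially as $\bigoplus_w C(-w)\widehat\otimes_K W(w)^{G_K}$, the functor $(-)^{G_K}$ is inverse to $C\widehat\otimes_K(-)$ on the weight-$0$ summand and annihilates the summands of weight $w\neq 0$, so $(-)^{G_K}$ is exact on short exact sequences of Hodge-Tate modules; applying it to the $z_k$-isotypic sequence gives the claim. The main obstacle is this last step — more precisely, establishing the semisimplicity of the Sen operator on the middle term $\gr^k\cO\B_{\dR}^{+,\la,z_k}(U)$ (a general extension of Hodge-Tate representations need not be Hodge-Tate), which is exactly where one uses the $\GL_2(\Q_p)$-equivariance of Faltings's extension to see that it glues only pieces of equal $\theta_{\kh}$-weight but distinct Sen weight; one should also note, for compatibility with the later computation of $N'_k$, that on these Hodge-Tate terms the honest $G_K$-invariants agree with the weight-$0$ part of the Sen-theoretic invariants $(-)^K$ appearing in \ref{constdiag}.
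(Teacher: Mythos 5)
Your reduction to showing that the two functors $(-)^{z_k}$ and $(-)^{G_K}$ are exact on \eqref{grkPls} is reasonable in spirit, but the justification you give for both steps rests on a false structural claim. The terms of \eqref{grkPls} are \emph{not} finite successive extensions of weight-isotypic sheaves $\cO^{\la,\chi}_{K^p}$: by Proposition \ref{griFIl} (i.e.\ the symmetric powers of Faltings's extension), $\gr^j\cO\B_{\dR}^{+,\la}(U)$ is filtered with graded pieces the \emph{full} locally analytic sheaves $\cO^{\la}_{K^p}(U)(i)\otimes\Omega^1_{V_0}(\mathcal{C})^{\otimes(j-i)}$, and $\cO^{\la}_{K^p}(U)$ is not a (finite or even direct) sum of its $\theta_{\kh}$-weight spaces. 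Concretely, by \cite[Theorem 4.3.9]{Pan20} a general section is a convergent series $\sum c_{i,j,m}(x-x_n)^i\bigl(\log(e_1/e_{1,n})\bigr)^j\bigl(\log(\mathrm{t}/\mathrm{t}_n)\bigr)^m$ with $j,m$ unbounded, and $\diag(1,1)\in\mathfrak{z}$ acts on such a series as a degree-lowering operator in the $\log$-variables; it therefore does \emph{not} act locally finitely, there is no generalized $\mathfrak{z}$-eigenspace decomposition of the three terms, and the exactness of the $z_k$-isotypic-part functor is not established. The same problem undermines your Hodge--Tate argument for the $G_K$-invariants: the decomposition of the $z_k$-part into $\theta_{\kh}$-weight components (which would in any case range over the infinite set $\{(n_1,n_2): n_1+n_2=1-k\}$) is exactly what is missing, and the full $\cO^{\la}_{K^p}(U)(i)$ is not Hodge--Tate (its Sen operator is not semisimple, again because of the $\log(\mathrm{t}/\mathrm{t}_n)$ terms). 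So the step you yourself flag as ``the main obstacle'' is a genuine gap, and the $\GL_2(\Q_p)$-equivariance of Faltings's extension alone does not fill it.

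The paper closes this gap by a much more concrete device: after identifying Faltings's extension with a twist of the relative Hodge--Tate sequence \eqref{twrHT}, the map $f\mapsto (1,0)\otimes\frac{f}{e_1}$ (using that $e_1$ is invertible on $V_\infty$) is an explicit $G_K\times U(\mathfrak{z})$-equivariant section of the surjection, so $\gr^1\cO\B_{\dR}^{+,\la}(U)\cong \cO^{\la}_{K^p}(U)(1)\oplus\cO^{\la}_{K^p}(U)\otimes_{\cO_{V_0}}\Omega^1_{V_0}(\mathcal{C})$ as $G_K\times U(\mathfrak{z})$-modules (Lemma \ref{GKzspli}); note this section is \emph{not} $\GL_2(\Q_p)$-equivariant, only central and Galois equivariant, which is all that is needed. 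Since $\gr^k\cO\B_{\dR}^{+,\la}\cong\Sym^k\gr^1\cO\B_{\dR}^{+,\la}$, the whole sequence \eqref{grkPls} splits $G_K\times U(\mathfrak{z})$-equivariantly, and exactness after applying the additive functor $(-)^{G_K,z_k}$ is then automatic, with no need for eigenspace decompositions or Hodge--Tate-ness of the individual terms. I would encourage you to rebuild your argument around such an explicit splitting rather than around a spectral decomposition for $\mathfrak{z}$.
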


\begin{proof}
When $k=1$,  the exact sequence \eqref{grkPls} is simply
\begin{eqnarray} \label{Fextla}
0\to \cO^{\la}_{K^p}(U)(1)\to \gr^1 \cO\B_{\dR}^{+,\la}(U)\to \cO^{\la}_{K^p}(U)\otimes_{\cO_{V_0}}\Omega^1_{V_0}(\mathcal{C})\to 0.
\end{eqnarray}

\begin{lem} \label{GKzspli}
The surjective map $ \gr^1 \cO\B_{\dR}^{+,\la}(U)\to \cO^{\la}_{K^p}(U)\otimes_{\cO_{V_0}}\Omega^1_{V_0}(\mathcal{C})$ has a $G_K\times U(\mathfrak{z})$-equivariant section. Equivalently, 
\[\gr^1 \cO\B_{\dR}^{+,\la}(U)\cong  \cO^{\la}_{K^p}(U)(1)\oplus \cO^{\la}_{K^p}(U)\otimes_{\cO_{V_0}}\Omega^1_{V_0}(\mathcal{C}) \]
as $G_K\times U(\mathfrak{z})$-modules.
\end{lem}

Clearly this lemma implies our claim when $k=1$. It also implies the general case  because
\[\gr^k \cO\B_{\dR}^{+,\la}\cong \Sym^k \gr^1\cO\B_{\dR}^{+,\la}.\]
\end{proof}

\begin{proof}[Proof of Lemma \ref{GKzspli}]
Faltings proved that  (the locally analytic vectors in) Faltings's  extension \eqref{Fextla} are (up to a sign) isomorphic to the (locally analytic vectors in) a twist of the relative Hodge-Tate filtration sequence \eqref{rHT}
\begin{eqnarray} \label{twrHT}
0\to \cO^{\la}_{K^p}(U)(1)\xrightarrow{p} V(1)\otimes_{\Q_p} \omega^{1,\la}_{K^p}(U) \xrightarrow{q}\omega^{2,\la}_{K^p}(U)\to 0,
\end{eqnarray}
after identifying $\omega^2(V_0)$ with $\Omega^1_{V_0}(\mathcal{C})$ using the Kodaira-Spencer isomorphism, cf. \cite[Theorem 4.2.2]{Pan20}. Here, we choose a trivialization of $\wedge^2 D$ which is implicitly done in the reference, cf. \ref{XCY}, \ref{KSsm}. 

The surjective map $V(1)\otimes_{\Q_p} \omega^{1,\la}_{K^p}(U) \to \omega^{2,\la}_{K^p}(U)$ sends the standard basis of $V=\Q_p^2$ to $e_1,e_2$, cf. \ref{exs}. (Recall that we fix a basis of $\Q_p(1)$ in \ref{exs}.) Then a $G_K\times U(\mathfrak{z})$-equivariant section is given by mapping $f\in \omega^{2,\la}_{K^p}(U)$ to $(1,0)\otimes \frac{f}{e_1}\in V(1)\otimes_{\Q_p} \omega^{1,\la}_{K^p}(U)$.
\end{proof}

We need to understand how $Z$ acts on the exact sequence in Proposition \ref{GKinvex}. By Harish-Chandra's isomorphism, $Z$ is generated by $z=\begin{pmatrix} 1 & 0 \\ 0 & 1 \end{pmatrix} \in \mathfrak{z}$ and the Casimir operator
\[\Omega=\begin{pmatrix} 0 & 1 \\  0 & 0\end{pmatrix}\begin{pmatrix} 0 & 0 \\ 1 & 0 \end{pmatrix}+\begin{pmatrix} 0 & 0 \\  1 & 0\end{pmatrix}\begin{pmatrix} 0 & 1 \\ 0 & 0 \end{pmatrix}+\frac{1}{2}\begin{pmatrix} 1 & 0 \\  0 & -1\end{pmatrix}^2\in U(\mathfrak{gl}_2(\Q_p)).\]
Recall that we identify a character $\tilde\chi:Z\to \Q_p$ with a pair of weights  of the form $\{(a,b),(b-1,1+a)\}$ in \ref{tilchik}. Then $\tilde\chi(z)=a+b$ and $\tilde\chi(\Omega)=\frac{1}{2}(a-b+1)^2-\frac{1}{2}$.
For example, $\tilde\chi_k(\Omega)=\frac{1}{2}k^2-\frac{1}{2}$ and $\tilde\chi_k(z)=1-k$. 

\begin{prop} \label{SenopCasz}
Let $\theta_{\Sen}$ denote the action of $1\in\Q_p\cong\Lie(\Gal(K_\infty/K))$ on $\cO^{\la}_{K^p}(U)^K$. Then it satisfies the following quadratic relation
\[(2\theta_\Sen-z+1)^2-1=2\Omega\]
\end{prop}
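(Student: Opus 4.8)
The plan is to reduce the quadratic relation to two linear identities of mutually commuting bounded operators on $\cO^{\la}_{K^p}(U)^K$, and then to combine them by elementary algebra. Writing $\theta_\kh(h)$ for the horizontal Cartan action of $h=\begin{pmatrix}1&0\\0&-1\end{pmatrix}$, cf. \ref{brr}, I claim
\[(\mathrm{A})\qquad 2\theta_\Sen=z-\theta_\kh(h),\qquad\qquad(\mathrm{B})\qquad 2\Omega=\bigl(\theta_\kh(h)-1\bigr)^2-1.\]
All of $\theta_\Sen,z,\Omega,\theta_\kh(h)$ commute with one another: the Galois and $\GL_2(\Q_p)$-actions on $\mathcal{X}_{K^p}$ commute, so $\theta_\Sen$ commutes with the $\mathfrak{gl}_2(\Q_p)$-action; $z$ and $\Omega$ are central in $U(\mathfrak{gl}_2(\Q_p))$; and $\theta_\kh$ commutes with the constant $\mathfrak{gl}_2(\Q_p)$-action on $\cO^{\la}_{K^p}$ because $\mathfrak{n}^0$ annihilates $\cO^{\la}_{K^p}$ by \cite[Theorem 4.2.7]{Pan20}. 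Granting (A) and (B), we substitute $2\theta_\Sen-z+1=1-\theta_\kh(h)=-(\theta_\kh(h)-1)$ into the left-hand side and obtain $(2\theta_\Sen-z+1)^2-1=(\theta_\kh(h)-1)^2-1=2\Omega$, which is the assertion.

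Both (A) and (B) are identities of continuous operators — for $\theta_\Sen$ the boundedness is \cite[Lemma 2.1.8]{Pan20} — so by Theorem \ref{str} (density of the $\theta_\kh$-finite vectors) it suffices to verify them on a single $\theta_\kh$-weight subspace $\cO^{\la,(n_1,n_2)}_{K^p}(U)$, where all four operators act by explicit scalars. For (A): by Proposition \ref{GnanHT}, $\cO^{\la,(n_1,n_2)}_{K^p}(U)$ is Hodge-Tate of weight $-n_2$, so by Definition \ref{HTg} the arithmetic Sen operator $\theta_\Sen$ is multiplication by $n_2$; meanwhile $\theta_\kh(h)$ is multiplication by $n_1-n_2$, and, since $z$ is central, it acts by the same scalar whether through the constant action or through $\theta_\kh$, namely $n_1+n_2$ (cf. \cite[Corollary 4.2.8]{Pan20}). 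Hence $2n_2=(n_1+n_2)-(n_1-n_2)$, which is (A) on this weight space.

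For (B): by \cite[Corollary 4.2.8]{Pan20} the action of $Z=Z(U(\mathfrak{gl}_2(\Q_p)))$ on $\cO^{\la,(n_1,n_2)}_{K^p}$ is through the Harish-Chandra character attached to the weight $(n_1,n_2)$; in particular $\Omega$ acts by the scalar $\tfrac12\bigl((n_1-n_2-1)^2-1\bigr)=\tfrac12(\theta_\kh(h)-1)^2-\tfrac12$, which is (B). (As a check, on $\cO^{\la,(1-k,0)}_{K^p}$ this gives $\tfrac12 k^2-\tfrac12$, consistent with $\tilde\chi_k(\Omega)$ in \ref{tilchik}.) Alternatively, one may compute $\Omega$ locally: on a chart where $x=e_2/e_1$ is regular, $\mathfrak{n}^0$ is generated by the nilpotent matrix occurring in the differential equation \cite[Theorem 4.2.4]{Pan20}, and $\theta_\kh(h)$ is the action of the corresponding conjugate of $h$; conjugating the standard expression for $\Omega$ by the same group element and discarding the terms that factor through $\mathfrak{n}^0$ (which acts as zero on $\cO^{\la}_{K^p}$ by \cite[Theorem 4.2.7]{Pan20}) yields the same formula $\Omega=\tfrac12\theta_\kh(h)^2-\theta_\kh(h)$, the sign being pinned down by Corollary 4.2.8 of \cite{Pan20}.

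The routine bookkeeping aside, the substantive input is identity (A) — equivalently, the clean formula $\theta_\Sen=\tfrac12\bigl(z-\theta_\kh(h)\bigr)$ for the arithmetic Sen operator on $\cO^{\la}_{K^p}$ — which is where the relative Sen theory of \cite{Pan20}, through Proposition \ref{GnanHT}, really enters. The other delicate point is the density/continuity reduction: one must know that $\theta_\Sen$ extends to a continuous operator on all of $\cO^{\la}_{K^p}(U)$ (each weight piece being Hodge-Tate, this follows by decompletion à la Corollary \ref{strHT}) and that the $G_K$-rational parts of the weight spaces span a dense subspace of $\cO^{\la}_{K^p}(U)^K$, after which checking the scalar identities on weight spaces finishes the proof.
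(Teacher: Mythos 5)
Your overall strategy --- reduce the quadratic relation to the linear identity $2\theta_\Sen=z-\theta_\kh(h)$ together with the Harish--Chandra relation $2\Omega=(\theta_\kh(h)-1)^2-1$, and then combine by algebra of commuting operators --- is exactly the route the paper takes: its proof consists of the single identity $\theta_\Sen=\theta_\kh(\begin{pmatrix}0&0\\0&1\end{pmatrix})$, quoted from \cite[Theorem 5.1.8]{Pan20} (this is your (A), since $\begin{pmatrix}0&0\\0&1\end{pmatrix}=\tfrac12\bigl(\begin{pmatrix}1&0\\0&1\end{pmatrix}-h\bigr)$ and the central element acts by the same operator through the constant and the horizontal actions), followed by \cite[Corollary 4.2.8]{Pan20}. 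Your identity (B) is unproblematic: Corollary 4.2.8 of \cite{Pan20} gives the factorization of the $Z$-action through $S(\kh)$ via $\theta_\kh$ as an \emph{operator} identity on all of $\cO^{\la}_{K^p}(U)$, not merely on weight spaces, which is how the paper itself uses it (e.g.\ in the proof of Lemma \ref{BdRcmgr}).

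The gap is in your justification of (A). You establish it only on the individual $\theta_\kh$-weight spaces (via Proposition \ref{GnanHT}) and then invoke Theorem \ref{str} for ``density of the $\theta_\kh$-finite vectors'' to extend by continuity. Theorem \ref{str} describes each fixed weight space $\cO^{\la,\chi}_{K^p}(U)$; it says nothing about the span of the weight spaces inside $\cO^{\la}_{K^p}(U)^K$, and that span is not dense: by the explicit description of the full space of locally analytic vectors (\cite[Theorem 4.3.9]{Pan20}, cf.\ Theorem \ref{expGL2} for the local analogue), $\cO^{\la}_{K^p}(U)$ contains elements such as $\log(e_1/e_{1,n})$ and $\log(\mathrm{t}/\mathrm{t}_n)$ on which $\theta_\kh(h)$ acts as a nonzero nilpotent; $\theta_\kh$ is therefore not semisimple, and genuine eigenvectors cannot approximate such generalized eigenvectors. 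This is not a cosmetic point: the proposition is applied later (Corollary \ref{ZactonGKinv}, the comparison of $N'_k$ with $\tilde N_k$) precisely on subspaces such as $\cO^{\la}_{K^p}(U)(k)^{z_k,K}$ where $\Omega$ acts non-semisimply, so the relation is needed as an honest operator identity, and a weight-by-weight scalar check does not deliver it. The repair is to replace the density argument by the operator identity $\theta_\Sen=\theta_\kh(\begin{pmatrix}0&0\\0&1\end{pmatrix})$ of \cite[Theorem 5.1.8]{Pan20} --- which is where the relative Sen theory genuinely enters --- after which your algebra goes through verbatim.
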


\begin{proof}
By \cite[Theorem 5.1.8]{Pan20}, $\theta_\Sen=\theta_\kh(\begin{pmatrix} 0 & 0 \\ 0 & 1\end{pmatrix})$. Hence this result follows directly from the relation between $\theta_\kh$ and the action of $Z$, cf. \ref{tilchik}, \cite[Corollary 4.2.8]{Pan20}.
\end{proof}

\begin{cor}\label{ZactonGKinv}
\begin{enumerate}
\item
$Z$ acts on $\gr^k \B^{+,\la}_{\dR}(U)^{G_K,z_k}$ via $\tilde\chi_k$, i.e. 
\[\gr^k \B^{+,\la}_{\dR}(U)^{G_K,z_k}=\gr^k\B_{\dR}^{+,\la,\tilde\chi_k}(U)(k)^{G_K}=\cO^{\la,(1,-k)}_{K^p}(U)(k)^{G_K}.\]
\item The action of $Z$ on $\gr^{k-1}\cO\B_{\dR}^{+,\la}(U)^{G_K,z_k}$ has a generalized eigenspace decomposition. The $\tilde\chi_k$-generalized eigenspace  is equal to the $\tilde\chi_k$-eigenspace. In particular,
\[\gr^{k-1}\cO\B_{\dR}^{+,\la}(U)^{G_K,z_k,\tilde\chi_k}=\gr^{k-1}\cO\B_{\dR}^{+,\la,\tilde\chi_k}(U)^{G_K}=\cO^{\la,(1-k,0)}_{K^p}(U)^{G_K} \otimes\Omega^1_{V_0}(\mathcal{C})^{\otimes k-1}.\]
\end{enumerate}
\end{cor}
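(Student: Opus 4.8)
\textbf{Proof plan for Corollary \ref{ZactonGKinv}.}

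The plan is to deduce both parts purely formally from Proposition \ref{SenopCasz} and Proposition \ref{GKinvex}, together with the already-established identification of $\gr^k\cO\B_\dR^{+,\la,\tilde\chi_k}$ via Faltings's extension (Proposition \ref{griFIl}). For part (1): on $\gr^k\B^{+,\la}_\dR(U)$ the Galois group $G_{K_\infty}$ acts trivially and the Sen operator $\theta_\Sen$ vanishes (it is a sum of twists $\cO^{\la}_{K^p}(U)(i)$ with $i=k$ only on the associated graded, but more precisely $\B^+_\dR$ is built from $B^+_\dR$ whose $t$-adic graded pieces are Tate twists on which $G_{K_\infty}$ acts trivially); restricting further to the $z_k$-isotypic part forces $z$ to act by $\tilde\chi_k(z)=1-k$. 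Substituting $\theta_\Sen=0$ and $z=1-k$ into the quadratic relation $(2\theta_\Sen-z+1)^2-1=2\Omega$ of Proposition \ref{SenopCasz} gives $\Omega=\tfrac12 k^2-\tfrac12=\tilde\chi_k(\Omega)$, so $Z$ (which is generated by $z$ and $\Omega$) acts through $\tilde\chi_k$. This identifies $\gr^k\B^{+,\la}_\dR(U)^{G_K,z_k}$ with $\gr^k\B^{+,\la,\tilde\chi_k}_\dR(U)^{G_K}$, and the last equality $=\cO^{\la,(1,-k)}_{K^p}(U)(k)^{G_K}$ is exactly the computation of $\gr^k\B^{+,\la,\tilde\chi_k}_\dR$ recorded in \ref{constdiag} (using that $\tilde\chi_k$ splits into weights $(0,1-k)$ and $(-k,1)$, and twisting by $(k)$).

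For part (2): on $\gr^{k-1}\cO\B_\dR^{+,\la}(U)$, the Sen operator is no longer zero, but by the filtration coming from the $(k-1)$-st symmetric power of Faltings's extension (Proposition \ref{griFIl}), $\gr^{k-1}\cO\B_\dR^{+,\la}(U)^{G_K}$ is filtered by pieces isomorphic to $\cO^{\la}_{K^p}(U)(j)^{G_K}\otimes\Omega^1_{V_0}(\mathcal{C})^{\otimes k-1-j}$ for $j=0,\dots,k-1$. On such a piece $\theta_\Sen$ acts through the Sen operator of $\cO^{\la}_{K^p}(U)$ shifted by $j$, which — after passing to $G_K$-invariants and imposing the $z_k$-condition ($z$ acts by $1-k$) — takes finitely many values; plugging these into the quadratic relation of Proposition \ref{SenopCasz} shows $\Omega$ acts through finitely many distinct scalars, giving the generalized eigenspace decomposition of $Z$. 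The key point is then that among these scalars, $\tilde\chi_k(\Omega)$ is attained only on the $j$ for which $\cO^{\la}_{K^p}(U)$ contributes its weight-$(1-k,0)$ part, and on that summand $\theta_\Sen$ is semisimple; hence the $\tilde\chi_k$-generalized eigenspace coincides with the $\tilde\chi_k$-eigenspace. This yields $\gr^{k-1}\cO\B_\dR^{+,\la}(U)^{G_K,z_k,\tilde\chi_k}=\gr^{k-1}\cO\B_\dR^{+,\la,\tilde\chi_k}(U)^{G_K}$, and the final identification with $\cO^{\la,(1-k,0)}_{K^p}(U)^{G_K}\otimes\Omega^1_{V_0}(\mathcal{C})^{\otimes k-1}$ is again the graded-piece computation from \ref{constdiag}.

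The step I expect to be the main obstacle is the bookkeeping in part (2): one must verify carefully that the quadratic relation genuinely separates the $\tilde\chi_k$-eigenvalue from all other eigenvalues appearing, i.e. that no "accidental" coincidence of Casimir eigenvalues occurs among the other graded pieces of the symmetric power of Faltings's extension. Concretely, the weights appearing are of the form $(j, 1-k-j)+(\text{twist})$ and one needs $\tfrac12(a-b+1)^2$ to distinguish the relevant one; since the possible weight pairs are constrained (the graded pieces are Tate twists of $\cO^\la_{K^p}$, whose only Sen weights are $0$ and $-k$), this is a finite check, but it must be done uniformly in $k$. Once that is in hand, the generalized-eigenspace-equals-eigenspace assertion follows because the quadratic relation of Proposition \ref{SenopCasz} forces $\theta_\Sen$ to act semisimply on the relevant isotypic piece (its minimal polynomial divides a quadratic with distinct roots once $z$ and $\Omega$ are fixed), and then one invokes the exactness in Proposition \ref{GKinvex} to propagate this through the sequence.
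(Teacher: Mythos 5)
Your plan follows the same route as the paper: identify the graded pieces via Proposition \ref{griFIl}, then feed the quadratic relation of Proposition \ref{SenopCasz} into each piece. The ``finite check'' you flag as the main obstacle is in fact a one-line computation, and it is worth recording precisely because your part (1) bookkeeping, if carried over verbatim, would break it. On $\cO^{\la}_{K^p}(U)(j)^{G_K,z_k}$ the $G_K$-invariance forces the Sen operator of the \emph{untwisted} sheaf to equal $-j$ (not $0$); with $z=1-k$ the relation gives $2\Omega=(2(-j)-(1-k)+1)^2-1=(k-2j)^2-1$, i.e.\ $\Omega$ acts by the scalar $\tfrac12(k-2j)^2-\tfrac12$, which equals $\tilde\chi_k(\Omega)=\tfrac12 k^2-\tfrac12$ iff $j\in\{0,k\}$, and only $j=0$ lies in the range $0,\dots,k-1$. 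In part (1) you substituted $\theta_\Sen=0$ instead of $-k$; by the accident $(2(-k)-(1-k)+1)^2=k^2=(0-(1-k)+1)^2$ this gives the right answer, but the same slip in part (2) (using $0$ in place of $-j$) would make every graded piece carry the eigenvalue $\tilde\chi_k(\Omega)$ and the whole argument would collapse. Note also that the scalars $c_j=\tfrac12(k-2j)^2-\tfrac12$ are \emph{not} pairwise distinct ($c_j=c_{k-j}$); all that is needed, and all that is true, is $c_j\neq c_0$ for $1\le j\le k-1$.

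Your justification of ``generalized eigenspace $=$ eigenspace'' is not the right mechanism: the quadratic relation only holds on the graded pieces, not on the filtered module, and what must be shown is semisimplicity of $\Omega$, not of $\theta_\Sen$ (which is identically zero on $G_K$-invariants anyway). The correct argument, implicit in the paper, is structural: the $j=0$ piece $\cO^{\la}_{K^p}(U)^{G_K,z_k}\otimes\Omega^1_{V_0}(\mathcal{C})^{\otimes k-1}$ is the \emph{top quotient} of the symmetric power of Faltings's extension, so $\Omega-\tilde\chi_k(\Omega)$ maps the whole module into the submodule $F$ filtered by the pieces with $j\ge 1$, on which it acts invertibly (nonzero scalar on each graded piece of a finite filtration); hence $\ker(\Omega-\tilde\chi_k(\Omega))^N\cap F=0$ and $(\Omega-\tilde\chi_k(\Omega))$ kills $\ker(\Omega-\tilde\chi_k(\Omega))^N$. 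With these two points repaired, your plan coincides with the paper's proof.
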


\begin{proof}
For the first part, we note that  $\gr^k \B^{+,\la}_{\dR}(U)^{G_K,z_k}=\cO^{\la}_{K^p}(U)(k)^{G_K,z_k}$, hence  it follows directly from  Proposition \ref{SenopCasz}. For the second part, Proposition \ref{griFIl} implies that 
$\gr^i \cO\B_{\dR,k-1}^{+,\la,\tilde\chi_l}(U)^{G_K,z_k}$  is naturally  filtered by $U(\mathfrak{gl}_2(\Q_p))$- submodules 
\[ \cO^{\la}_{K^p}(U)(j)^{G_K,z_k}\otimes_{\cO_{V_0}}\Omega^1_{V_0}(\mathcal{C})^{\otimes k-1-j},~~~j=0,\cdots,k-1.\]
Again by Proposition \ref{SenopCasz}, we see that $\Omega$ acts on $\cO^{\la}_{K^p}(U)(j)^{G_K,z_k}\otimes_{\cO_{V_0}}\Omega^1_{V_0}(\mathcal{C})^{\otimes k-1-j}$ via $\frac{1}{2}(k-2j)^2-\frac{1}{2}$, which is equal to $\frac{1}{2}k^2-\frac{1}{2}$ only when $j=0$.
\end{proof}

\begin{para}
Now we can take the $\tilde\chi_k$-generalized eigenspace $E_{\tilde\chi_k}$ of $\gr^k \cO\B_{\dR}^{+,\la}(U)^{G_K,z_k}$, or equivalently, by Corollary \ref{ZactonGKinv}, this is the same as  the kernel of $(\Omega-\tilde\chi_k(\Omega))^2$. By  Proposition \ref{GKinvex} and Corollary \ref{ZactonGKinv}, it sits inside the exact sequence 
\[0\to \cO^{\la,(1,-k)}_{K^p}(U)(k)^{G_K} \to E_{\tilde\chi_k}\to \cO^{\la,(1-k,0)}_{K^p}(U)^{G_K} \otimes\Omega^1_{V_0}(\mathcal{C})^{\otimes k}\to 0.\]
The action of $\Omega-\tilde\chi_k(\Omega)$ on $E_{\tilde\chi_k}$ induces a natural map
\[\tilde{N}_k:\cO^{\la,(1-k,0)}_{K^p}(U)^{G_K} \otimes\Omega^1_{V_0}(\mathcal{C})^{\otimes k}\to \cO^{\la,(1,-k)}_{K^p}(U)(k)^{G_K}.\]
It is continuous, hence can be extended $C$-linearly to a map (which is also denoted by $\tilde{N}_k$ by abuse of notation)
\[\tilde{N}_k:\cO^{\la,(1-k,0)}_{K^p}(U)\otimes\Omega^1_{V_0}(\mathcal{C})^{\otimes k}\to \cO^{\la,(1,-k)}_{K^p}(U)(k).\]
\end{para}

\begin{lem}
$N'_k=\frac{1}{2k}\tilde{N}_k$.
\end{lem}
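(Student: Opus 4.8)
The goal is the identity $N'_k=\frac{1}{2k}\tilde N_k$ between the two maps $\cO^{\la,(1-k,0)}_{K^p}(U)\otimes\Omega^1_{V_0}(\mathcal{C})^{\otimes k}\to\cO^{\la,(1,-k)}_{K^p}(U)(k)$ produced above. The plan is to recognise both maps as connecting homomorphisms attached to nilpotent operators on one and the same two-step extension: $N'_k$ is induced by the Sen operator $\theta_{\Sen}$ on $\gr^k\cO\B_{\dR}^{+,\la,\tilde\chi_k}(U)^K$, and $\tilde N_k$ is induced by $\Omega-\tilde\chi_k(\Omega)$ on $E_{\tilde\chi_k}$, and the two are linked by the quadratic identity of Proposition \ref{SenopCasz}. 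Concretely, the first thing to do is to upgrade Proposition \ref{SenopCasz} from $\cO^{\la}_{K^p}(U)^K=\gr^0\cO\B_{\dR}^{+,\la}(U)^K$ to the higher graded piece, establishing that on $\gr^k\cO\B_{\dR}^{+,\la}(U)^{G_K,z_k}$ — where the central element $z$ acts by the scalar $z_k=1-k$ — one has
\[(2\theta_{\Sen}+k)^2=2\Omega+1,\]
with $\theta_{\Sen}$ no longer semisimple but carrying the nilpotent contribution of the Faltings extension.

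This step is the main obstacle, since the horizontal Cartan operator $\theta_\kh$ used in the proof of Proposition \ref{SenopCasz} lives on $\cO^{\la}_{K^p}$ only, not on $\cO\B_{\dR}^{+}$. I would resolve it by reducing to $k=1$ through the canonical isomorphism $\gr^k\cO\B_{\dR}^{+,\la}\cong\Sym^k\gr^1\cO\B_{\dR}^{+,\la}$, which scales both $\theta_{\Sen}$ and $\Omega-\tilde\chi_1(\Omega)$ in the way that produces the weight gap $k$, and then using Faltings's comparison: the identification of the Faltings extension $\gr^1\cO\B_{\dR}^{+,\la}$ with the twisted relative Hodge--Tate sequence \eqref{twrHT} of \cite[Theorem 4.2.2]{Pan20}. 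On that sequence the middle term is $V(1)\otimes_{\Q_p}\omega^{1,\la}_{K^p}(U)$, and $\theta_{\Sen}$, $z$ and $\Omega$ can all be read off from the separately understood actions on the standard representation $V$, on $\omega^{1,\la}_{K^p}$, and on $\cO^{\la}_{K^p}$ (where Proposition \ref{SenopCasz} is available); the quadratic relation, together with its nilpotent correction term, then falls out, the factor $2$ being the usual discrepancy between the Casimir $\Omega$ (with its $\tfrac12 h^2$ summand) and the square of the Sen operator.

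Granting the displayed relation the rest is formal. Since $z$ acts by $1-k$, the relation forces $\theta_{\Sen}$ to have generalised eigenvalues $0$ and $-k$ on $\gr^k\cO\B_{\dR}^{+,\la}(U)^{G_K,z_k}$, and identifies its generalised $0$-eigenspace with the generalised $\tilde\chi_k$-eigenspace $E_{\tilde\chi_k}$ of $\Omega$; so $M_1:=E_0\bigl(\gr^k\cO\B_{\dR}^{+,\la,\tilde\chi_k}(U)^K\bigr)$ and $E_{\tilde\chi_k}$ coincide, and I would write $M$ for this common space. By the very construction of $N'_k$ and $\tilde N_k$, the operator $\theta_{\Sen}$ equals $N'_k$ and $\Omega-\tilde\chi_k(\Omega)$ equals $\tilde N_k$ on $M$ (each of square zero, each annihilating the sub $\cO^{\la,(1,-k)}_{K^p}(U)(k)^{G_K}$ and the quotient $\cO^{\la,(1-k,0)}_{K^p}(U)^{G_K}\otimes\Omega^1_{V_0}(\mathcal{C})^{\otimes k}$ of the extension, and each inducing the corresponding map between them). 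Substituting into the displayed relation and using $(N'_k)^2=0$ together with $\tilde\chi_k(\Omega)=\tfrac12k^2-\tfrac12$ gives
\[k^2+4k\,N'_k=(2N'_k+k)^2=(2\theta_{\Sen}+k)^2=2\Omega+1=k^2+2\tilde N_k\]
as operators on $M$; subtracting $k^2$ and noting that these operators induce the maps $N'_k,\tilde N_k\colon B\to A$ in question yields $2\tilde N_k=4k\,N'_k$, and extending $C$-linearly to $\cO^{\la,(1-k,0)}_{K^p}(U)\otimes\Omega^1_{V_0}(\mathcal{C})^{\otimes k}$ gives $N'_k=\frac{1}{2k}\tilde N_k$.
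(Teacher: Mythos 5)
Your instinct — that the lemma should fall out of the quadratic relation between the Sen operator and the Casimir — is the right one and is indeed how the paper argues, but the way you deploy the relation has a fatal flaw: the upgraded identity $(2\theta_{\Sen}+k)^2=2\Omega+1$ that you want to establish on the whole middle term is \emph{false}, and its failure is precisely the content of the lemma. To see this, restrict your proposed identity to $M_1:=E_0\bigl(\gr^k \cO\B_{\dR}^{+,\la,\tilde\chi_k}(U)^K\bigr)$: there $\Omega$ acts exactly by the scalar $\tilde\chi_k(\Omega)=\tfrac12k^2-\tfrac12$ while $\theta_K^2=0$, so the identity collapses to $4k\,\theta_K=0$, i.e.\ $N'_k=0$, contradicting the non-vanishing of $\tilde N_k$ (hence of $N'_k$) proved in the lemma immediately after this one. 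The underlying reason is that Proposition \ref{SenopCasz} rests on $\theta_{\Sen}=\theta_\kh(\begin{pmatrix}0&0\\0&1\end{pmatrix})$, which is available only on sheaves annihilated by $\kn^0$, such as $\cO^{\la}_{K^p}$ and its twists; $\gr^k\cO\B_{\dR}^{+,\la}$ is the $k$-th symmetric power of the non-split Faltings extension, and no clean operator identity of this shape holds on it — the ``nilpotent correction'' you gesture at cannot be absorbed. (There is also a sign slip: after the Tate twist the correct relation on the sub $\gr^k\B_{\dR}^{+,\la}(U)^{z_k,K}$ is $(2\theta_K-k)^2-1=2\Omega$, not $(2\theta_K+k)^2-1=2\Omega$.) A second genuine error is the assertion that $M_1$ and $E_{\tilde\chi_k}$ coincide: $M_1$ consists of exact $\tilde\chi_k$-eigenvectors that are only generalized $\theta_K$-eigenvectors, whereas $E_{\tilde\chi_k}$ sits inside the exact $G_K$-invariants (on which $\theta_K$ vanishes identically, so your statement about eigenvalues of $\theta_{\Sen}$ on $\gr^k\cO\B_{\dR}^{+,\la}(U)^{G_K,z_k}$ is vacuous as written) and consists of generalized $\Omega$-eigenvectors. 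They are two different extensions of the same quotient by the same sub, and the comparison between them is exactly what must be computed.

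The way to repair this — and what the paper does — is to never apply the quadratic relation on the extension itself. Given $f$ in the common quotient, choose a lift $f_1\in M_1$ (so $N'_k(f)=\theta_K f_1$) and a lift $f_2\in E_{\tilde\chi_k}$ (so $\tilde N_k(f)=(\Omega-\tilde\chi_k(\Omega))f_2$). Their difference $f_c=f_1-f_2$ is killed by $\gr^k\nabla_{\log}$ and therefore lies in $\gr^k\B_{\dR}^{+,\la}(U)^{z_k,K}=\cO^{\la}_{K^p}(U)(k)^{z_k,K}$, an honest twist of $\cO^{\la}_{K^p}$ where Proposition \ref{SenopCasz} genuinely applies. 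Since $\theta_K^2f_c=0$ (as $\theta_K^2f_1=0$ and $\theta_Kf_2=0$) and $\Omega f_1=\tilde\chi_k(\Omega)f_1$, the relation there yields $-2k\,\theta_Kf_c=(\Omega-\tilde\chi_k(\Omega))f_c$, i.e.\ $-2kN'_k(f)=-\tilde N_k(f)$. So you must work with the two lifts separately and apply the relation only to their difference, rather than positing a single operator identity on a single middle term.
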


\begin{proof}
Let $f\in \cO^{\la,(1-k,0)}_{K^p}(U)^{G_K} =\gr^{k-1}\cO\B_{\dR}^{+,\la,\tilde\chi_k}(U)^{G_K}\otimes \Omega^1_{V_0}(\mathcal{C})$. By definition, $N'_k(f)$ is computed as follows: we may choose $f_1\in \gr^k \cO\B_{\dR}^{+,\la,\tilde\chi_k}(U)^K$ such that 
\begin{itemize}
\item $\theta_K^2 (f_1)=0$, where $\theta_K$ denotes $1\in\Q_p\cong\Lie(\Gal(K_\infty/K))$;
\item $\gr^k\nabla_{\log}(f_1)=f$, where  $\gr^k\nabla_{\log}:\gr^k \cO\B_{\dR}^{+,\la}(U) \to \gr^{k-1}\cO\B_{\dR}^{+,\la}(U)\otimes \Omega^1_{V_0}(\mathcal{C})$, cf. \eqref{grkPls}.
\end{itemize}
Then $N'_k(f)=\theta_K(f_1)$.
Similarly, $\tilde{N}_k(f)$ is computed as follows: we may choose $f_2\in \gr^k \cO\B_{\dR}^{+,\la}(U)^{G_K,z_k}$ such that 
\begin{itemize}
\item $(\Omega-\tilde\chi_k(\Omega))^2\cdot f_2=0$;
\item $\gr^k\nabla_{\log}(f_2)=f$.
\end{itemize}
Then $\tilde{N}_k(f)=(\Omega-\tilde\chi_k(\Omega))\cdot f_2$. 

Consider $f_c=f_1-f_2\in \gr^k \cO\B_{\dR}^{+,\la}(U)^{z_k,K}$. Since $\gr^k\nabla_{\log}(f_c)=0$, by  \eqref{grkPls}, we have 
\[f_c\in \gr^k \B^{+,\la}_{\dR}(U)^{z_k,K}=\cO^{\la}_{K^p}(U)(k)^{z_k,K}. \]
By Proposition \ref{SenopCasz}, there is the quadratic relation
\[(2(\theta_K-k)-(1-k)+1)^2-1=2\Omega\]
on $\cO^{\la}_{K^p}(U)(k)^{z_k,K}$. Apply it to $f_c$. A simple computation shows that
\[2\theta_K^2\cdot f_c-2k\theta_K(f_c)=(\Omega-\tilde\chi_k(\Omega))\cdot f_c\]
Note that $\theta_K^2$ annihilates $f_1$ and $\theta_K$ annihilates $f_2$. Hence $\theta_{K}^2(f_c)=0$. On the other hand,
\begin{itemize}
\item $\theta_K(f_c)=\theta_K(f_1)=N'_k(f)$ as $\theta_K$ annihilates $f_2$;
\item $(\Omega-\tilde\chi_k(\Omega))\cdot f_c=-(\Omega-\tilde\chi_k(\Omega))\cdot f_2=-\tilde{N}_k(f)$ as $f_1$ has infinitesimal character $\tilde\chi_k$.
\end{itemize}
We conclude that 
\[N'_k(f)=\frac{1}{2k}\tilde{N}_k(f).\]
\end{proof}

\begin{para}
It remains to compare $\tilde{N}_k$ with $\bar{d}'^k$. Note that both are continuous maps
\[\cO^{\la,(1-k,0)}_{K^p}(U)\otimes\Omega^1_{V_0}(\mathcal{C})^{\otimes k}\cong \omega^{2k,\la,(1-k,0)}_{K^p}(U)\to \cO^{\la,(1,-k)}_{K^p}(U)(k)\]
which are $\cO^{\sm}_{K^p}(U)$-linear.  Here we use the Kodaira-Spencer isomorphism and our fixed trivialization of $\wedge^2 D$, cf. \ref{HEt}. Recall that in Corollary \ref{density}, we introduced 
\[\omega^{2k,\la,(1-k,0)}_{K^p}(U)^{\kn-fin}\subseteq \omega^{2k,\la,(1-k,0)}_{K^p}(U)\]
which consist of elements of the form
$(\frac{e_1}{\mathrm{t}})^{k-1}\sum_{i=0}^n a_ix^i,~~~~~a_i\in\omega^{k+1,\sm}_{K^p}(U)$. There is a natural $\mathfrak{gl}_2(\Q_p)$-equivariant isomorphism 
\[\omega^{2k,\la,(1-k,0)}_{K^p}(U)^{\kn-fin}=\omega^{k+1,\sm}_{K^p}(U)\otimes_{\Q_p} M^{\vee}_{(0,1-k)},\]
where  $M^{\vee}_{(0,k-1)}\subseteq \omega^{k-1}_{K^p}(U)$ consists of $(\frac{e_1}{\mathrm{t}})^{k-1}\sum_{i=0}^n a_ix^i,~~~~~a_i\in\Q_p$. Similarly, we have 
\[\cO^{\la,(1,-k)}_{K^p}(U)^{\kn-fin}\subseteq \cO^{\la,(1,-k)}_{K^p}(U)\]
and 
\[\cO^{\la,(1,-k)}_{K^p}(U)^{\kn-fin}=\omega^{k+1,\sm}_{K^p}(U)\otimes_{\Q_p} M^{\vee}_{(-k,1)},\]
where $M^{\vee}_{(-k,1)}$ consists of $\mathrm{t}e_1^{-k-1}\sum_{i=0}^n a_ix^i,~~~~~a_i\in\Q_p$.
\end{para}

\begin{lem} \label{redVerma}
Both $\tilde{N}_k$ and $\bar{d}'^k$ send $\omega^{2k,\la,(1-k,0)}_{K^p}(U)^{\kn-fin}$ to $\cO^{\la,(1,-k)}_{K^p}(U)^{\kn-fin}(k)$, i.e.
\[\tilde{N}_k,~\bar{d}'^k: \omega^{k+1,\sm}_{K^p}(U)\otimes_{\Q_p} M^{\vee}_{(0,1-k)}\to \omega^{k+1,\sm}_{K^p}(U)\otimes_{\Q_p} M^{\vee}_{(-k,1)}(k).\]
Moreover, both $\cO^{\sm}_{K^p}(U)$-linear maps are induced from maps
\[M^{\vee}_{(0,1-k)}\to M^{\vee}_{(-k,1)}(k)\]
which by abuse of notation are still denoted by $\tilde{N}_k$ and $\bar{d}'^k$.
\end{lem}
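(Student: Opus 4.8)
\textbf{Plan for the proof of Lemma \ref{redVerma}.} The statement has two parts: first, that both operators $\tilde{N}_k$ and $\bar{d}'^k$ preserve the $\kn$-finite parts, carrying $\omega^{2k,\la,(1-k,0)}_{K^p}(U)^{\kn-fin}$ into $\cO^{\la,(1,-k)}_{K^p}(U)^{\kn-fin}(k)$; and second, that on these $\kn$-finite subspaces they are $\cO^{\sm}_{K^p}(U)$-linear and hence induced by the $\cO^{\sm}_{K^p}(U)$-linear extension of maps $M^{\vee}_{(0,1-k)}\to M^{\vee}_{(-k,1)}(k)$ of the finite-dimensional ``Verma-type'' pieces. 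For $\bar{d}'^k$ the claim is essentially already available from the explicit formula: by Theorem \ref{I2}, $\bar{d}^{k+1}$ is $\cO^{\sm}_{K^p}$-linear and is given (up to a nonzero constant in $\Q$) by $s\mapsto c(u^+)^{k+1}(s)\otimes(dx)^{k+1}$ on any open set where $x$ is regular; since $u^+$ acts on the polynomial-in-$x$ part through $\tfrac{d}{dx}$ and kills the $\omega^{k+1,\sm}_{K^p}$-coefficients, it sends $(\tfrac{e_1}{\mathrm{t}})^{k-1}\sum_{i\le n} a_i x^i$ (with $a_i\in\omega^{k+1,\sm}_{K^p}(U)$) to another expression of the same shape, i.e. stays in the $\kn$-finite part. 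So the first task for $\bar{d}'^k$ reduces to a direct unwinding of the formula, and the second (factoring through the $M^\vee$ pieces) follows because $u^+$ and multiplication by $\cO^\sm_{K^p}$-sections commute and because $u^+$ annihilates $\cO^\sm_{K^p}$.

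The substance of the lemma is the analogous claim for $\tilde{N}_k$, where we do not have such a clean explicit formula a priori. Here I would argue representation-theoretically, using the construction of $\tilde{N}_k$ as the ``$\Omega-\tilde\chi_k(\Omega)$'' operator on the generalized-eigenspace $E_{\tilde\chi_k}\subseteq \gr^k\cO\B_{\dR}^{+,\la}(U)^{G_K,z_k}$. The key point is that $\tilde{N}_k$ is manifestly $\mathfrak{gl}_2(\Q_p)$-equivariant (it is built entirely out of the action of the center $Z$ and the connection $\gr^k\nabla_{\log}$, both of which commute with $\mathfrak{gl}_2(\Q_p)$, and out of the $G_K$-action, which commutes with $\GL_2(\Q_p)$). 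In particular $\tilde{N}_k$ intertwines the $\mathfrak{n}$-action on source and target, so it sends $\mathfrak{n}$-finite vectors to $\mathfrak{n}$-finite vectors; combined with the weight decomposition under the horizontal Cartan $\theta_\kh$ (or rather the constant $\kh$-action), and the explicit description of the $\kn$-finite parts from Corollary \ref{density}, this forces $\tilde{N}_k(\omega^{2k,\la,(1-k,0)}_{K^p}(U)^{\kn-fin})\subseteq \cO^{\la,(1,-k)}_{K^p}(U)^{\kn-fin}(k)$. Then, since $\tilde{N}_k$ is $\cO^{\sm}_{K^p}(U)$-linear (it is $C$-linear and commutes with the smooth $\GL_2(\Q_p)$-action, hence with multiplication by $\cO^{\sm}_{K^p}$-sections — this needs to be checked but follows from the construction exactly as for $\bar d'^k$), and since $\omega^{2k,\la,(1-k,0)}_{K^p}(U)^{\kn-fin}\cong \omega^{k+1,\sm}_{K^p}(U)\otimes_{\Q_p} M^\vee_{(0,1-k)}$ as a tensor product over $\Q_p$ with the smooth factor carrying the $\cO^\sm_{K^p}$-structure, $\tilde{N}_k$ is determined by its restriction to $1\otimes M^\vee_{(0,1-k)}$, i.e. by a $\mathfrak{gl}_2(\Q_p)$-equivariant map $M^\vee_{(0,1-k)}\to M^\vee_{(-k,1)}(k)$ (using that the image lands in the $\kn$-finite, $\GL_2(\Q_p)$-algebraic part, and that $M^\vee$ is the relevant dual Verma / finite-dimensional piece).

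Concretely, the steps I would carry out in order: (1) record that $\tilde{N}_k$ and $\bar{d}'^k$ are $\mathfrak{gl}_2(\Q_p)$-equivariant and $\cO^{\sm}_{K^p}(U)$-linear, the former by inspection of their definitions (Definition \ref{I}, the construction in this subsection) and the $\GL_2(\Q_p)$-equivariance of $\pi_{\HT}$, $\nabla_{\log}$, $Z$, and the $G_K$-action; (2) invoke the isomorphisms $\omega^{2k,\la,(1-k,0)}_{K^p}(U)^{\kn-fin}\cong \omega^{k+1,\sm}_{K^p}(U)\otimes_{\Q_p} M^\vee_{(0,1-k)}$ and $\cO^{\la,(1,-k)}_{K^p}(U)^{\kn-fin}\cong \omega^{k+1,\sm}_{K^p}(U)\otimes_{\Q_p} M^\vee_{(-k,1)}$ from Corollary \ref{density}; (3) observe that an $\cO^{\sm}_{K^p}(U)$-linear, $\mathfrak{gl}_2(\Q_p)$-equivariant map between such tensor products, whose target is a smooth-times-algebraic tensor product, must send the smooth factor to the smooth factor and the algebraic factor to the algebraic factor, hence restricts on $\{1\}\otimes M^\vee$ to a map of $\Q_p$-representations of $\mathfrak{gl}_2(\Q_p)$ and extends $\cO^\sm_{K^p}(U)$-linearly; (4) check explicitly (a short $x$-expansion computation) that $\bar{d}'^k$ sends $(\tfrac{e_1}{\mathrm{t}})^{k-1}x^i$, $0\le i\le k-1$, into the span of $\mathrm{t}e_1^{-k-1}x^j$, which simultaneously confirms the $\kn$-finiteness claim for $\bar d'^k$ and identifies the induced map $M^\vee_{(0,1-k)}\to M^\vee_{(-k,1)}(k)$; (5) for $\tilde{N}_k$, use that $M^\vee_{(0,1-k)}$ and $M^\vee_{(-k,1)}$ are, up to twist, the unique relevant $(k)$-dimensional $\mathfrak{gl}_2(\Q_p)$-submodules in the respective principal-series-type spaces (this follows from the $\kn$-decomposition and the infinitesimal-character bookkeeping of \ref{tilchik}), so that equivariance pins $\tilde{N}_k$ down on the $\kn$-finite part and in particular forces it to preserve that part. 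The main obstacle I anticipate is step (1) for $\tilde{N}_k$: verifying carefully that the ``$\Omega - \tilde\chi_k(\Omega)$ on $E_{\tilde\chi_k}$'' construction really is $\cO^{\sm}_{K^p}(U)$-linear and $\mathfrak{gl}_2(\Q_p)$-equivariant, i.e. that taking the $G_K$-invariant, $z_k$-isotypic, $\tilde\chi_k$-generalized-eigenspace and then applying $\Omega-\tilde\chi_k(\Omega)$ all genuinely commute with the smooth $\GL_2(\Q_p)$-action — this is where one has to be attentive, using Proposition \ref{GKinvex} and Corollary \ref{ZactonGKinv}, rather than where any deep new idea is needed.
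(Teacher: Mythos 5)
Your treatment of $\bar{d}'^k$ is fine and coincides with the paper's: one just evaluates the explicit formula $\bar{d}^{k}=\bar c\,(u^+)^{k}\otimes(dx)^{k}$ on $a_n(\tfrac{e_1}{\mathrm{t}})^{k-1}x^n$ and identifies $dx$ with $\tfrac{\mathrm t}{e_1^2}$, which simultaneously gives preservation of the $\kn$-finite part and the induced map on $M^{\vee}_{(0,1-k)}$. The problems are in your steps (1), (3) and (5) for $\tilde N_k$.

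First, a small but real error of logic: commuting with the smooth $\GL_2(\Q_p)$-action does \emph{not} imply $\cO^{\sm}_{K^p}(U)$-linearity; the correct reason $\tilde N_k$ is $\cO^{\sm}_{K^p}(U)$-linear is the Leibniz rule together with the fact that $\mathfrak{g}$ annihilates smooth sections, so that $\Omega-\tilde\chi_k(\Omega)$ acts $\cO^{\sm}_{K^p}(U)$-linearly. More seriously, your step (3) is false as stated: an $\cO^{\sm}_{K^p}(U)$-linear, $\mathfrak{g}$-equivariant map $\omega^{k+1,\sm}_{K^p}(U)\otimes_{\Q_p}M^{\vee}_{(0,1-k)}\to \omega^{k+1,\sm}_{K^p}(U)\otimes_{\Q_p}M^{\vee}_{(-k,1)}(k)$ is in general only an element of $\cO^{\sm}_{K^p}(U)\otimes\Hom_{U(\mathfrak g)}\bigl(M^{\vee}_{(0,1-k)},M^{\vee}_{(-k,1)}(k)\bigr)$, i.e.\ of the form $h\otimes f_0$ with $h$ a smooth function (even granting the one-dimensionality of the Hom space, which is only established afterwards in \ref{CatO}); nothing in equivariance forces $h$ to be a constant, and the constancy is exactly what the lemma asserts and what the subsequent comparison $\tilde N_k=c\,\bar d'^k$ with $c\in\Q_p^\times$ requires. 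Your step (5) also misdescribes $M^{\vee}_{(0,1-k)}$, which is an infinite-dimensional dual Verma module, not a $k$-dimensional submodule, so no uniqueness-of-submodule argument is available. The missing input is the paper's concrete identification: via Faltings's theorem, $\gr^k\cO\B_{\dR}^{+,\la}(U)^{G_K,z_k}$ is identified with the $k$-th symmetric power of the relative Hodge--Tate sequence \eqref{twrHT}, and the explicit formulas $p(f)=(\tfrac{fe_2}{\mathrm t},-\tfrac{fe_1}{\mathrm t})$, $q(f_1,f_2)=e_1f_1+e_2f_2$ show that the $\kn$-finite part of this three-term sequence is \emph{literally} $\omega^{k+1,\sm}_{K^p}(U)\otimes_{\Q_p}\bigl(0\to M^{\vee}_{(-k,1)}(k)\to \Sym^kV\otimes M^{\vee}_{(-k,1-k)}(k)\to M^{\vee}_{(0,1-k)}\to 0\bigr)$, compatibly with $i$ and $j$. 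Since $\Omega-\tilde\chi_k(\Omega)$ acts on this tensor product as $1\otimes(\Omega-\tilde\chi_k(\Omega))$, the connecting operator $\tilde N_k$ restricted to $\kn$-finite vectors is by construction $1\otimes(\text{a fixed map }M^{\vee}_{(0,1-k)}\to M^{\vee}_{(-k,1)}(k))$; this is what simultaneously proves preservation of the $\kn$-finite part and the factorization, and it cannot be replaced by the abstract equivariance argument you propose.
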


\begin{proof}
We first compute $\bar{d}'^k$.
By Theorem \ref{I2}, for $a_n\in\omega^{k+1,\sm}_{K^p}(U)$, there exists a constant $\bar{c}\in\Q^\times$ such that 
\[\bar{d}^k(a_n(\frac{e_1}{\mathrm{t}})^{k-1}x^n)=\bar{c}a_n(\frac{e_1}{\mathrm{t}})^{k-1}(\frac{d}{dx})^k(x^n)(dx)^k=\bar{c}a_n(\frac{e_1}{\mathrm{t}})^{k-1} \binom{n}{k}x^{n-k} (dx)^k.\]
Recall that we identify $dx$ with $\frac{\mathrm{t}}{e_1^2}$ in \ref{HEt}. Hence
\[\bar{d}'^k(a_n(\frac{e_1}{\mathrm{t}})^{k-1}x^n)=\bar{c}a_n \binom{n}{k}\frac{\mathrm{t}x^{n-k}}{e_1^{k+1}}\in\cO^{\la,(1,-k)}_{K^p}(U)^{\kn-fin}(k),\]
i.e. $\bar{d}'^k((\frac{e_1}{\mathrm{t}})^{k-1}x^n)=\bar{c} \binom{n}{k}{\mathrm{t}e_1^{-k-1}x^{n-k}}$.

Next we consider $\tilde{N}_k$. By identifying Faltings's extension with the relative Hodge-Tate filtration sequence (cf. the proof of \ref{GKzspli}), we can identify $\gr^k \cO\B_{\dR}^{+,\la}(U)^{G_K,z_k}$ with
\[\left(\Sym^k V\otimes_{\Q_p}\omega^{k,\la}_{K^p}(U)(k)\right)^{G_K,z_k}=\Sym^k V\otimes_{\Q_p} \omega^{k,\la,(1-k,-k)}_{K^p}(U)(k)^{G_K}\]
so that 
\begin{itemize}
\item  the injection $i:\cO^{\la,(1,-k)}_{K^p}(U)(k)^{G_K}\to \gr^k \cO\B_{\dR}^{+,\la}(U)^{G_K,z_k}$ is identified with the map $\Sym^k p|_{\cO^{\la,(1,-k)}_{K^p}(U)(k)^{G_K}}$, where $p$ is the injective map in \eqref{twrHT};
\item the surjection $j:\gr^k \cO\B_{\dR}^{+,\la}(U)^{G_K,z_k}\to \omega^{2k,\la,(1-k,0)}_{K^p}(U)^{G_K}$ is getting identified with the $k$-th symmetric power of the surjective map $q$ in \eqref{twrHT} (when restricted to the $G_K$-invariants and $z_k$-isotypic parts) up to a sign.
\end{itemize}
Extending everything $C$-linearly, we get
\begin{eqnarray}\label{kthsymwall}
~~~~~0\to\cO^{\la,(1,-k)}_{K^p}(k)\xrightarrow{i} \Sym^k V\otimes \omega^{k,\la,(1-k,-k)}_{K^p}(k)\xrightarrow{j} \omega^{2k,\la,(1-k,0)}_{K^p}\to 0
\end{eqnarray}
which becomes exact when passing to the $\tilde\chi_k$-generalized eigenspaces.

\begin{lem}
The sequence \eqref{twrHT} induces an exact sequence
\[0\to\cO^{\la,(1,-1)}_{K^p}(U)^{\kn-fin}(1)\to V(1)\otimes_{\Q_p} \omega^{1,\la,(0,-1)}_{K^p}(U)^{\kn-fin} \to \omega^{2,\la,(0,0)}_{K^p}(U)^{\kn-fin}\to 0.\]
Moreover under the isomorphism $\cO^{\la,(1,-1)}_{K^p}(U)^{\kn-fin}=\omega^{2,\sm}_{K^p}(U)\otimes_{\Q_p} M^{\vee}_{(-1,1)}$ and similar isomorphisms for $\omega^{1,\la,(0,-1)}_{K^p}(U)^{\kn-fin},\omega^{2,\la,(0,0)}_{K^p}(U)^{\kn-fin}$, this exact sequence is the tensor product of $\omega^{2,\sm}_{K^p}(U)$ with an exact sequence
\[0\to M^{\vee}_{(-1,1)}(1)\to V(1)\otimes_{\Q_p} M^{\vee}_{(-1,0)} \to M^{\vee}_{(0,0)}\to 0.\]
Similar results also hold for the $k$-th symmetric power of the sequence \eqref{twrHT}.
\end{lem}

\begin{proof}
In \eqref{twrHT}, we have $p(f)=(\frac{fe_2}{\mathrm{t}},-\frac{fe_1}{\mathrm{t}})$ up to a sign and $q(f_1,f_2)=e_1f_1+e_2f_2$ for $f\in\cO^{\la}_{K^p}(U)(1)$ and $f_1,f_2\in \omega^{1,\la}_{K^p}(U)(1)$. This implies all the claims here.
\end{proof}

By this lemma, there is a $\mathfrak{gl}_2(\Q_p)$-equivariant sequence
\begin{eqnarray} \label{wallcr}
0\to M^{\vee}_{(-k,1)}(k) \to \Sym^k V\otimes_{\Q_p} M^{\vee}_{(-k,1-k)}(k)\to M^{\vee}_{(0,1-k)}\to 0
\end{eqnarray}
which is induced from  the $k$-th symmetric power of \eqref{twrHT} and becomes exact after passing to the $\tilde\chi_k$-generalized eigenspaces. Its tensor product with $\omega^{k+1,\sm}_{K^p}(U)$ is naturally identified with taking $\kn-fin$ vectors  of \eqref{kthsymwall} on $U$. Note that $\tilde{N}_k$ is essentially obtained by applying the action of $Z$ to the middle term of \eqref{kthsymwall}. It follows from our discussion here that $\tilde{N}_k$ is induced from a map $M^{\vee}_{(0,1-k)}\to M^{\vee}_{(-k,1)}(k)$.
\end{proof}

\begin{para} \label{CatO}
Now we can prove Proposition \ref{comdbark}.
As we explained before, we only need to compare $\tilde{N}_k$ and $\bar{d}'^k$.
By Corollary \ref{density}, $\omega^{2k,\la,(1-k,0)}_{K^p}(U)^{\kn-fin}$ is a  dense subset of $\omega^{2k,\la,(1-k,0)}_{K^p}(U)$. Since both $\tilde{N}_k$ and $\bar{d}'^k$ are continuous, by Lemma \ref{redVerma}, it suffices to prove that
\[\tilde{N}_k,\bar{d}'^k:M^{\vee}_{(0,1-k)}\to M^{\vee}_{(-k,1)}(k)\]
differ by a non-zero constant.

There are natural actions of $\mathfrak{gl}_2(\Q_p)$ on $M^{\vee}_{(0,1-k)}$ and $M^{\vee}_{(-k,1)}(k)$ and  both maps $\tilde{N}_k,\bar{d}'^k$ are $\mathfrak{gl}_2(\Q_p)$-equivariant. We shall ignore the Tate twist here and only restrict to the action of $\mathfrak{sl}_2(\Q_p)$. Recall that $M^{\vee}_{(0,1-k)}=\{(\frac{e_1}{\mathrm{t}})^{k-1}\sum_{i=0}^n a_ix^i,~~~~~a_i\in\Q_p\}$. The action of $u^+=\begin{pmatrix} 0 & 1 \\ 0 & 0 \end{pmatrix}$ corresponds to the derivation with respect to $x$. From this, it is easy to see that 
\begin{itemize}
\item $M^{\vee}_{(0,1-k)}$ is the dual  of the Verma module of $\mathfrak{sl}_2(\Q_p)$ with highest weight $k-1$;
\item Similarly, $M^{\vee}_{(-k,1)}$  is the dual  of the Verma module with highest weight $-k-1$ (which is also isomorphic to the Verma module with highest weight $-k-1$).
\end{itemize}
Note that $k\geq 1$, hence  $-k-1$ is anti-dominant and $k-1$ is dominant. This shows that $M^{\vee}_{(-k,1)}$  is irreducible and $M^{\vee}_{(0,1-k)}$ is an extension of $M^{\vee}_{(-k,1)}$ by a finite-dimensional irreducible representation of $\mathfrak{sl}_2(\Q_p)$. Therefore,
\[\dim_{\Q_p}\Hom_{U(\mathfrak{sl}_2(\Q_p))}(M^{\vee}_{(0,1-k)},M^{\vee}_{(-k,1)})=1.\]
Since both $\tilde{N}_k$ and $\bar{d}'^k$ belong to this Hom space, it suffices to show the following lemma.
\end{para}

\begin{lem}
Both maps $\tilde{N}_k,\bar{d}'^k$ are non-zero.
\end{lem}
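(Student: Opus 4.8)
The plan is to verify that both $\tilde{N}_k$ and $\bar{d}'^k$ are non-trivial $\mathfrak{sl}_2(\Q_p)$-equivariant maps $M^{\vee}_{(0,1-k)}\to M^{\vee}_{(-k,1)}$. For $\bar{d}'^k$ this was already computed explicitly in the proof of Lemma \ref{redVerma}: one has $\bar{d}'^k((\frac{e_1}{\mathrm{t}})^{k-1}x^n)=\bar{c}\binom{n}{k}\mathrm{t}e_1^{-k-1}x^{n-k}$ for a non-zero constant $\bar{c}\in\Q^\times$, which is manifestly non-zero (e.g. take $n=k$, giving $\bar{c}\,\mathrm{t}e_1^{-k-1}\neq 0$). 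So the only real content is to show $\tilde{N}_k\neq 0$.

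For $\tilde{N}_k$ I would argue by contradiction. Suppose $\tilde{N}_k=0$. Then the action of $\Omega-\tilde\chi_k(\Omega)$ on the $\tilde\chi_k$-generalized eigenspace $E_{\tilde\chi_k}$ of $\gr^k\cO\B_{\dR}^{+,\la}(U)^{G_K,z_k}$ would be zero, i.e. $\Omega$ would act semisimply (by $\tilde\chi_k(\Omega)$) on $E_{\tilde\chi_k}$, so the extension
\[
0\to \cO^{\la,(1,-k)}_{K^p}(U)(k)^{G_K} \to E_{\tilde\chi_k}\to \cO^{\la,(1-k,0)}_{K^p}(U)^{G_K} \otimes\Omega^1_{V_0}(\mathcal{C})^{\otimes k}\to 0
\]
would split as $Z$-modules. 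Translating this back through the identification of Faltings's extension with the relative Hodge--Tate filtration sequence \eqref{twrHT} (as in the proof of Lemma \ref{GKzspli} and the $k$-th symmetric power version \eqref{kthsymwall}), the vanishing of $\tilde{N}_k$ on the dense subspace of $\kn$-finite vectors is equivalent to the vanishing, on $\tilde\chi_k$-generalized eigenspaces, of the connecting map in the sequence \eqref{wallcr}
\[
0\to M^{\vee}_{(-k,1)}(k) \to \Sym^k V\otimes_{\Q_p} M^{\vee}_{(-k,1-k)}(k)\to M^{\vee}_{(0,1-k)}\to 0 .
\]
The point is that this $\mathfrak{sl}_2(\Q_p)$-equivariant connecting homomorphism is a ``wall-crossing'' operator between duals of Verma modules, and it is classically non-zero: indeed $M^{\vee}_{(-k,1)}$ is irreducible (highest weight $-k-1$ is anti-dominant) while $M^{\vee}_{(0,1-k)}$ is a non-split extension of $M^{\vee}_{(-k,1)}$ by the finite-dimensional irreducible of highest weight $k-1$, so the $1$-dimensional $\Hom$-space is non-zero and the projection $\Sym^k V\otimes M^{\vee}_{(-k,1-k)}\to M^{\vee}_{(0,1-k)}$ genuinely produces it. Concretely one can pin this down by evaluating on highest weight vectors: lift the highest weight vector of $M^{\vee}_{(0,1-k)}$ to $\Sym^k V\otimes M^{\vee}_{(-k,1-k)}$, apply $\Omega-\tilde\chi_k(\Omega)$, and check the result is a non-zero multiple of the highest weight vector of $M^{\vee}_{(-k,1)}$ using the explicit formula for $\Omega$ and the explicit maps $p,q$ of \eqref{twrHT} (where $p(f)=(\tfrac{fe_2}{\mathrm{t}},-\tfrac{fe_1}{\mathrm{t}})$ and $q(f_1,f_2)=e_1f_1+e_2f_2$).

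The main obstacle is the bookkeeping in the second step: one must carefully track the two competing gradings — the $G_K$-action (Sen weights $0,-k$ on $\cO^{\la,\tilde\chi_k}_{K^p}$) versus the $Z$-action — through Faltings's comparison and its symmetric powers, and confirm that passing to $\tilde\chi_k$-generalized eigenspaces commutes with everything, so that the operator $\tilde{N}_k$ really does become the classical wall-crossing map on $\kn$-finite vectors. Once that identification is in place, the non-vanishing is a short highest-weight-vector computation, and then $\tilde{N}_k=2kN'_k$ and $\bar{d}'^k$ both lie in the $1$-dimensional $\Hom$-space and are both non-zero, hence proportional by a non-zero scalar $c_k\in\Q_p^\times$. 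Combined with Proposition \ref{nlogsk+1dk} (which gives $\nabla_{\log}\circ s_{k+1}=d^k$) and Lemma \ref{decomN}, this yields $N_k=-N'_k\circ(\nabla_{\log}\circ s_{k+1})=-\tfrac{1}{2k}\tilde{N}_k\circ d^k=c_k\,\bar{d}'^k\circ d^k=c_k I_{k-1}$, completing the proof of Theorem \ref{MT}.
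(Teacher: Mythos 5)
Your treatment of $\bar{d}'^k$ is correct and is exactly the paper's (the explicit formula $\bar{d}'^k\bigl((\tfrac{e_1}{\mathrm{t}})^{k-1}x^n\bigr)=\bar{c}\binom{n}{k}\mathrm{t}e_1^{-k-1}x^{n-k}$). Your reduction for $\tilde{N}_k$ — that $\tilde{N}_k=0$ would force $Z$ to act semisimply on the $\tilde\chi_k$-generalized eigenspace of $\Sym^k V\otimes_{\Q_p}M^{\vee}_{(-k,1-k)}$, i.e.\ force \eqref{wallcr} to split over $Z$ — is also the paper's framing. The gap is in your concrete verification. Any $\mathfrak{sl}_2(\Q_p)$-equivariant map $M^{\vee}_{(0,1-k)}\to M^{\vee}_{(-k,1)}(k)$ preserves $\kh$-weights (for the constant Cartan action), and the target is the Verma of highest weight $-k-1$, whose weights are $-k-1,-k-3,\dots$, while the highest weight vector of $M^{\vee}_{(0,1-k)}$ has weight $k-1$. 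So applying $\Omega-\tilde\chi_k(\Omega)$ to any lift of the highest weight vector necessarily lands in the zero weight space of the target and gives $0$; your proposed computation cannot detect non-vanishing. (Consistently, the $\binom{n}{k}$ in the formula for $\bar{d}'^k$ vanishes for all $n<k$; the highest weight vector is the case $n=0$.) The first weight at which anything can be seen is $-k-1$, i.e.\ on the singular vector $(\tfrac{e_1}{\mathrm{t}})^{k-1}x^{k}$ whose image generates the irreducible quotient $M^{\vee}_{(-k,1)}$. Likewise, the observation that $\Hom_{U(\mathfrak{sl}_2(\Q_p))}(M^{\vee}_{(0,1-k)},M^{\vee}_{(-k,1)})$ is one-dimensional does not by itself show $\tilde{N}_k\neq 0$: a priori $\tilde{N}_k$ could be the zero element of that line. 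The one-dimensionality is what yields the proportionality $\tilde{N}_k=c\,\bar{d}'^k$ \emph{after} both are known to be non-zero; it is not an argument for non-vanishing.

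The paper closes this gap differently: it observes that $M^{\vee}_{(-k,1-k)}$ is the Verma module of highest weight $-1=-\rho$ (the most singular block), so that the $\tilde\chi_k$-generalized eigenspace $E_k$ of $\Sym^kV\otimes M^{\vee}_{(-k,1-k)}$ is the translation of $M(-\rho)$ to the block of $-k-1$, i.e.\ the big projective module, on which $Z$ is well known to act non-semisimply. For $k=1$ this is a direct matrix computation (the interesting weight space is the two-dimensional weight-$(-2)$ space of $V\otimes M^{\vee}_{(-1,0)}$, where $\Omega$ has a nontrivial Jordan block — again not the highest weight space), and for $k\geq 2$ it follows because the translation functor $\cO_{-2}\to\cO_{-k-1}$ is an equivalence of categories. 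If you prefer to keep a hands-on computation, you must carry it out at weight $-k-1$ rather than at the highest weight.
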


\begin{proof}
For $\bar{d}'^k$, this follows from the explicit formula of $\bar{d}'^k$ in Theorem \ref{I2}. For $\tilde{N}_k$, recall that it is induced from the action of $Z$ on  the $\tilde\chi_k$-generalized eigenspace $E_k$ of $\Sym^k V\otimes_{\Q_p} M^{\vee}_{(-k,1-k)}$. Note that $M^{\vee}_{(-k,1-k)}$ is the Verma module of $\mathfrak{sl}_2(\Q_p)$ with highest weight $-1=-\rho$. Hence in the language of translation functors, $E$ is the translation of $M^{\vee}_{(-k,1-k)}$ from  $-1$ to $-k-1$. It is well-known that $E_k$ is a projective envelop of $M^{\vee}_{(-k,1)}$ in category $\mathcal{O}$ and the action of $Z$ on $E_k$ is non-semisimple. See for example \cite[3.12]{Hum08}. For reader's convenience, we sketch a proof here. When $k=1$, then $E_1=V\otimes_{\Q_p} M^{\vee}_{(-1,0)}$ and a simple computation shows that $Z$ acts non-semisimply on it. This implies the general case $k\geq2$ because $E_k$ is the translation of $E_1$ from $-2$ to $-k-1$ and this translation functor is an equivalence of category between $\cO_{-2}$ and $\cO_{-k-1}$, cf. \cite[7.8]{Hum08}.
\end{proof}

\begin{rem} \label{Qstr}
Clearly $M^{\vee}_{(i,j)}$ has a natural $\Q$-structure preserved by the action of $\mathfrak{sl}_2(\Q)\subseteq\mathfrak{sl}_2(\Q_p)$. Hence this proof actually shows that  $\tilde{N}_k,\bar{d}^k$  differ by a constant in $\Q$. Therefore the constant $c_k$ in Proposition \ref{comdbark} can be chosen to be in $\Q^\times$.
\end{rem}

\section{Regular de Rham representations in the completed cohomology} \label{mrs}
In this section, we will combine results obtained in the previous two sections and study regular de Rham representations appearing in the completed cohomology. 

\subsection{Completed cohomology}
\begin{para}
In order to state our main result, we first recall the completed cohomology of modular curves introduced by Emerton \cite{Eme06}. Let $K^p=\prod_{l\neq p}K_l$ be an open compact subgroup of $\GL_2(\A_f^p)$.  Set
\[\tilde{H}^i(K^p,\Z/p^n):=\varinjlim_{K_p\subseteq\GL_2(\Q_p)} H^i(X_{K^pK_p}(\bC),\Z/p^n),\]
where $K_p$ runs through all open compact subgroups of $\GL_2(\Q_p)$. Recall that $X_{K^pK_p}$ denotes the complete modular curve of level $K^pK_p$ over $\Q$. The completed cohomology of tame level $K^p$ is defined as
\[\tilde{H}^i(K^p,\Z_p):=\varprojlim_n \tilde{H}^i(K^p,\Z/p^n).\]
In general, its torsion part is annihilated by $p^n$ for some $n$ by \cite[Theorem 1.1]{CE12}. But in the case of modular curves, it is well-known that $\tilde{H}^i(K^p,\Z_p)$ is torsionfree.
Suppose $W$ is a $\Q_p$-Banach space. We denote by 
\[\tilde{H}^i(K^p,W):=\tilde{H}^i(K^p,\Z_p)\widehat\otimes_{\Z_p} W.\]
This is naturally a $\Q_p$-Banach space equipped with a continuous action of $\GL_2(\Q_p)$. If $W=\Q_p$,  then $\tilde{H}^i(K^p,\Q_p)$ is an admissible representation. Since $X_{K^pK_p}$ is defined over $\Q$, using the \'etale cohomology, we have  a natural Galois action of $G_{\Q}$ on everything, which commutes with the action of $\GL_2(\Q_p)$. 

Recall that $\kn=\{\begin{pmatrix} 0 & *\\ 0 & 0 \end{pmatrix}\}\subseteq \mathfrak{gl}_2(\Q_p)=\Lie(\GL_2(\Q_p))$.
\end{para}

\begin{thm} \label{MTCH}
Let $E$ be a finite extension of $\Q_p$ and 
\[\rho:G_{\Q}\to\GL_2(E)\]
be a two-dimensional continuous absolutely irreducible representation of $G_{\Q}$. By abuse of notation, we  also use $\rho$ to denote its underlying representation space. Suppose that
\begin{enumerate}
\item $\rho$ appears in $\tilde{H}^1(K^p,E)$, i.e. $\Hom_{E[G_{\Q}]}(\rho,\tilde{H}^1(K^p,E))\neq 0$. We denote by 
\[\tilde{H}^1(K^p,E)[\rho]\subseteq \tilde{H}^1(K^p,E)\] 
the image of the evaluation map $\rho\otimes_{E}\Hom_{E[G_{\Q}]}(\rho,\tilde{H}^1(K^p,E))\to \tilde{H}^1(K^p,E)$.
\item $\rho|_{G_{\Q_p}}$ is de Rham of Hodge-Tate weights $0,k$ for some integer $k>0$.
\end{enumerate}
Then 
\begin{enumerate}
\item (classicality) $\rho$ arises from a cuspidal eigenform of weight $k+1$.
\item (finiteness) Let $K_p=1+p^nM_2(\Z_p)$ for some $n\geq 2$. Denote by  $\tilde{H}^1(K^p,E)[\rho]^{K_p-\an}\subseteq \tilde{H}^1(K^p,E)[\rho]$ the subspace of $K_p$-analytic vectors. Then the subspace of $\kn$-invariants
\[\tilde{H}^1(K^p,E)[\rho]^{K_p-\an,\kn}\]
is a finite-dimensional vector space over $E$.
\end{enumerate}
\end{thm}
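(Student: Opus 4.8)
\textbf{Proof strategy for Theorem \ref{MTCH}.}
The plan is to transfer the question about $\tilde H^1(K^p,E)[\rho]$ to the geometry of the flag variety via the results of Sections \ref{Ioef}--\ref{IopHti}. First I would base-change along the fixed embedding $\tau:E\hookrightarrow C$ and pass to locally analytic vectors. By Schneider--Teitelbaum admissibility, $\Pi_\rho:=\Hom_{E[G_\Q]}(\rho,\tilde H^1(K^p,E))$ is an admissible Banach representation of $\GL_2(\Q_p)$, so $\Pi_\rho^{\la}\neq 0$, and by \cite[Proposition 6.1.5]{Pan20} it has infinitesimal character $\tilde\chi_k$. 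The key input is that $\tilde H^1(K^p,\Q_p)^{\la,\tilde\chi_k}\widehat\otimes C$ is Hodge-Tate of weights $0,k$, with Hodge-Tate pieces $W_0\cong H^1(\Fl,\cO^{\la,(1-k,0)}_{K^p})$ (up to the $\tilde H^0$-correction when $k=1$) and $W_k\cong H^1(\Fl,\cO^{\la,(1,-k)}_{K^p})$, and that the Fontaine operator $N$ on this Hodge-Tate module is realized geometrically. Concretely, Corollary \ref{I1Fono} identifies the Fontaine operator on $H^1(\Fl,\B^{+,\la,\tilde\chi_k}_{\dR,k+1})$ with $I^1_{k-1}$ up to a nonzero scalar, and Theorem \ref{MT} (via Theorem \ref{I1}, Theorem \ref{I2}) identifies $I_{k-1}$ with $\bar d'^{k}\circ d^{k}$; combined with Proposition \ref{compinfch} this lets me pass between $H^1(\Fl,\cO_{K^p})^{\la,\tilde\chi_k}$ and $H^1(\Fl,\B^+_{\dR,k+1})^{\la,\tilde\chi_k}$. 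The upshot of this first block of work is that, for any two-dimensional irreducible $V\subseteq \tilde H^1(K^p,\Q_p)^{\la,\tilde\chi_k}$, the restriction $N|_{(V\otimes C)_0}$ agrees with the classical Fontaine operator of $V$, so by Fontaine's criterion (Theorem \ref{FondR}) de Rham-ness of $\rho|_{G_{\Q_p}}$ forces the relevant classes to lie in $\ker N$.

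\textbf{Classicality (part 1).} Having identified $\ker N$ with $\ker I^1_{k-1}$ (on the $\lambda_\tau$-isotypic part, with the $k=1$ subtlety handled by Corollary \ref{sd'} and Remark \ref{A00rinv}), I would invoke the spectral decomposition Theorem \ref{sd}: $\ker I^1_{k-1}=\bigoplus_\lambda \ker I^1_{k-1}\widetilde{[\lambda]}$ where $\lambda$ runs over systems of Hecke eigenvalues attached to cuspidal eigenforms of weight $k+1$ (and, when $k=1$, also $\sigma_0^{K^p}$, i.e. systems coming from $H^0$ of modular curves). The de Rham hypothesis on $\rho|_{G_{\Q_p}}$ means the Galois-equivariant class attached to $\rho$ via $\tau$ lies in $\ker N$, hence in $\ker I^1_{k-1}$, so the Hecke eigensystem $\lambda_\tau$ determined by $\rho$ lies in this decomposition. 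Since $\rho$ is absolutely irreducible and two-dimensional, it cannot come from $\sigma_0^{K^p}$ (those eigensystems come from weight-one $H^0$, i.e. reducible or one-dimensional data at the relevant place), so $\lambda_\tau$ lifts to a cuspidal eigenform of weight $k+1$; by Eichler--Shimura / Deligne this eigenform has associated Galois representation isomorphic to $\rho$ after semisimplification, and irreducibility upgrades this to an isomorphism. I expect the main obstacle here to be bookkeeping: carefully checking that the geometric $N$ really restricts to the Fontaine operator on each $V$ (this uses functoriality of the Fontaine operator, Corollary \ref{Ncokerf}, applied to the inclusion of the $B_{\dR}^+$-representation attached to $V$ into $H^1(\Fl,\B^+_{\dR,k+1})$), and tracking the $k=1$ correction term cleanly.

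\textbf{Finiteness (part 2).} For the $\kn$-invariants of $K_p$-analytic vectors, I would combine part (1) with Corollary \ref{knfin}. Once $\rho$ is known to be classical, $\tilde H^1(K^p,E)[\rho]\widehat\otimes_{E,\tau}C$ is, up to the prime-to-$p$ multiplicity space $(\pi^{\infty,p})^{K^p}$, a subquotient of $\ker I^1_{k-1}$ sitting in a single Hecke eigensystem $\lambda_\tau$. Corollary \ref{knfin} states precisely that $(\ker I^1_{k-1})^{K_p-\an,\kn}$ is finite-dimensional for $K_p=1+p^nM_2(\Z_p)$, $n\ge 2$; its proof reduces, via Theorem \ref{kernk}, the structure of $(\ker I^1_{k-1})^{\kn}$ to $H^1(\mathcal X_{K^pK_p},\omega^{-k})$, $M_{k+2}(K^p)$, $M_{-k}(K^p)$ and the kernel/cokernel of $\theta^{k+1}$ on overconvergent forms, all of which become finite-dimensional after imposing $K_p$-analyticity (Lemmas \ref{bdan}, \ref{irredigsnbhd}). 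Since taking $\kn$-invariants and $K_p$-analytic vectors is exact enough to pass to the $\lambda_\tau$-part and to the $\rho$-isotypic subspace, and $(\pi^{\infty,p})^{K^p}$ is finite-dimensional, the finiteness of $\tilde H^1(K^p,E)[\rho]^{K_p-\an,\kn}$ over $E$ follows. The only delicate point I foresee is ensuring that the passage from $\ker N=\ker I^1_{k-1}$ back to the honest subspace $\tilde H^1(K^p,E)[\rho]^{K_p-\an,\kn}$ does not lose or gain infinite-dimensional pieces — i.e. that the comparison isomorphisms of Sections \ref{Ioef}--\ref{IopHti} are compatible with the $\GL_2(\Q_p)$- and Hecke-actions and with taking $K_p$-analytic vectors — which is exactly the sort of compatibility the preceding sections were set up to provide.
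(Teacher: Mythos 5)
Your proposal follows essentially the same route as the paper: establish (via the $\B_{\dR,k+1}^+$-comparison, Corollary \ref{I1Fono} and Fontaine's criterion Theorem \ref{FondR}) that $\tilde H^1(K^p,C\otimes E)[\lambda]_0^{\la}\cong\ker I^1_{k-1}\otimes E[\lambda]$, then read off classicality from the spectral decomposition Theorem \ref{sd} and finiteness from Corollary \ref{knfin}. This is exactly Theorem \ref{HT0kerF} plus the two quoted results, so the argument is correct and matches the paper's proof.
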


A description of $\tilde{H}^1(K^p,E)^{\la}[\rho]\widehat\otimes_{\Q_p} C$ as a representation of $\GL_2(\Q_p)$ will be given in Subsection \ref{rbpLLC} below.

\begin{rem} \label{relDPS}
The main result of  \cite{DPS20} (see also \cite[Proposition 6.1.5]{Pan20}) says that $\tilde{H}^1(K^p,E)[\rho]^{\la}$ has an infinitesimal character. In their follow-up work \cite{DPS22}, Dospinescu-Pa\v{s}k\=unas-Schraen  show that this is actually enough to deduce our finiteness result. 
In fact, they even prove a finiteness result on the $K_p$-analytic vectors, cf. \cite[Corolllary 6.7]{DPS22}. Their method is purely representation-theoretic.
\end{rem}

\begin{para}[Hecke algebra]
The proof of Theorem \ref{MTCH} will be given in the next subsection. To better interpret that $\rho$ arises from a cuspidal eigenform, we need the (big) Hecke algebra.  See \cite[6.1.1]{Pan20} for more details. Let $S$ be a finite set of rational primes containing $p$ such that $K_l\subseteq\GL_2(\Q_l)$ is maximal for $l\notin S$. We have the usual Hecke operators $T_l$ and $S_l$, cf.  \ref{Hd}.  For an open compact subgroup $K_p$ of $\GL_2(\Q_p)$, we define
\[\T(K^pK_p)\subseteq\End_{\Z_p}(H^1(X_{K^pK_p}(\bC),\Z_p))\]
as the $\Z_p$-subalgebra generated by Hecke operators $T_l,S_l^{\pm1},l\notin S$, which does not depend on the choice of $S$. The (big) Hecke algebra of tame level $K^p$ is defined as the projective limit
\[\T(K^p):=\varprojlim_{K_p} \T(K^pK_p).\]
This is a complete semi-local noetherian $\Z_p$-algebra and $\T(K^p)/\km$ is a finite field for any maximal ideal $\km$. It acts faithfully on $\tilde{H}^1(K^p,\Z_p)$ and commutes with  the action of $\GL_2(\Q_p)\times G_{\Q}$. Moreover, there is a continuous $2$-dimensional determinant $D_S$ of $G_{\Q,S}$ valued in $\T(K^p)$  in the
sense of Chenevier \cite{Che14} such that the characteristic polynomial of $D_S(\Frob_l)$ is 
\[X^2-l^{-1}T_l+l^{-1}S_l.\]
Its twist by the inverse of the cyclotomic character is also the Eichler-Shimura congruence relation, i.e. 
\[\Frob_l^2-T_l\Frob_l+lS_l=0\]
on $\tilde{H}^1(K^p,\Z_p)$. The Poincar\'e duality on $X_{K^pK_p}(\bC)$ implies that $\T(K^p)$  acts on $\tilde{H}^0(K^p,\Z_p)$. 

 Let $\lambda:\T(K^p)\to E$ be a $\Z_p$-algebra homomorphism. We can associate a semi-simple Galois representation (unique up to conjugation)
\[\rho_{\lambda}:G_{\Q,S}\to \GL_2(E)\]
whose determinant is $\lambda\circ D_S$. Denote the $\lambda$-isotypic part by
\[\tilde{H}^1(K^p,E)[\lambda]\subseteq \tilde{H}^1(K^p,E).\]
Then it follows from the Eichler-Shimura relation that 
\[\Hom_{E[G_{\Q}]}(\rho_\lambda(-1),\tilde{H}^1(K^p,E))=\Hom_{E[G_{\Q}]}(\rho_\lambda(-1),\tilde{H}^1(K^p,E)[\lambda]).\]
Assume that $\rho_\lambda$ is absolutely irreducible. Then by the main result of \cite{BLR91}, $\tilde{H}^1(K^p,E)[\lambda]$ is also $\rho_{\lambda}(-1)$-isotypic, that is
\[\tilde{H}^1(K^p,E)[\lambda]=\Hom_{E[G_{\Q}]}(\rho_\lambda(-1),\tilde{H}^1(K^p,E)[\lambda])\otimes_E \rho_\lambda(-1).\]
\end{para}

\subsection{Fontaine operator on completed cohomology} 
\begin{para} \label{rholambda}
Let $\rho$ be as in Theorem \ref{MTCH} and $\lambda':\T(K^p)\to E$ such that $\rho_{\lambda'}\cong \rho$. The classicality part of  Theorem \ref{MTCH}  is equivalent with saying that $M_{k+1}(K^p)\otimes_{\Q_p} E[\lambda']\neq 0$. Let $\lambda:\T(K^p)\to E$ be the twist of $\lambda'$ by the cyclotomic character, i.e. $\lambda(T_l)=l^{-1}\lambda'(T_l)$, $l\notin S$. Then $\rho_\lambda(-1)\cong \rho$ and 
\[\tilde{H}^1(K^p,E)[\lambda]=\tilde{H}^1(K^p,E)[\rho].\]
Since $\rho$ is de Rham of Hodge-Tate weights $0,k$, there is a Hodge-Tate decomposition
\[\tilde{H}^1(K^p,C\otimes_{\Q_p}E)[\lambda]=\tilde{H}^1(K^p,E)[\rho]\widehat{\otimes}_{\Q_p}C=\tilde{H}^1(K^p,C\otimes_{\Q_p}E)[\lambda]_0\oplus \tilde{H}^1(K^p,C\otimes_{\Q_p}E)[\lambda]_k,\]
where $\tilde{H}^1(K^p,C\otimes_{\Q_p}E)[\lambda]_i\neq 0$ denotes the Hodge-Tate weight $i$ part for $i=0,k$. Note that $\GL_2(\Q_p)$ acts continuously on $\tilde{H}^1(K^p,C\otimes_{\Q_p}E)[\lambda]$. We will prove the following result in this subsection. Recall that $I^1_{k-1}$ was introduced in Definition \ref{I^1k}.
\end{para}

\begin{thm} \label{HT0kerF}
Let $\rho$ be as in Theorem \ref{MTCH}. Then there is a natural $\GL_2(\Q_p)$-equivariant isomorphism of $\lambda$-isotypic parts
\[\tilde{H}^1(K^p,C\otimes_{\Q_p}E)[\lambda]_0^{\la}\cong \ker I^1_{k-1}\otimes_{\Q_p} E [\lambda]\]
\end{thm}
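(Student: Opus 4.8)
The strategy is to combine the $p$-adic Hodge-theoretic identification of the intertwining operator $I^1_{k-1}$ with the Fontaine operator (Corollary~\ref{I1Fono}) together with the relation between $H^1(\Fl, \cO^{\la,\chi}_{K^p})$-type cohomology and the completed cohomology. First I would recall from \cite{Pan20} (cf. the discussion in \ref{rholambda} and the general machinery in Subsection~\ref{dRps}) that the $\tilde\chi_k$-isotypic part $\tilde{H}^1(K^p,\Q_p)^{\la,\tilde\chi_k}$ of the completed cohomology is naturally related to $H^1(\Fl, \B_{\dR,k+1}^{+,\la,\tilde\chi_k})$, or equivalently — after the decomposition according to the horizontal Cartan action — to $H^1(\Fl,\cO^{\la,(1-k,0)}_{K^p})$ and $H^1(\Fl,\cO^{\la,(1,-k)}_{K^p}(k))$. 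Via Proposition~\ref{compinfch} (which, for $k\geq 2$, gives $H^1(\Fl,\B_{\dR,k+1}^{+,\la,\tilde\chi_k})\cong H^1(\Fl,\B_{\dR,k+1}^{+})^{\la,\tilde\chi_k}$, and for $k=1$ gives a controlled exact sequence with $\Ext$-terms), one transfers this to a statement about $\tilde{H}^1(K^p,C)$. The key input here is Theorem~\ref{BdRcomp} (the comparison between $H^i(\Fl,\B_{\dR}^+)$-type cohomology and the $B_\dR^+$-coefficient completed cohomology), which I would invoke to identify $H^1(\Fl,\B_{\dR,k+1}^{+})^{\la,\tilde\chi_k}$ with the appropriate piece of $\tilde H^1(K^p,B_{\dR}^+/(t^{k+1}))$.

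Next I would restrict to the $\lambda$-isotypic part. Since $\rho = \rho_\lambda(-1)$ is absolutely irreducible and de Rham of Hodge-Tate weights $0,k$, the module $W := \tilde{H}^1(K^p,E)[\lambda]\otimes_{E} (B_{\dR}^+/(t^{k+1}))$ is a flat Banach $B_{\dR}^+/(t^{k+1})$-module which is Hodge-Tate of weights $0,k$, so the Fontaine operator $N_W$ of Definition~\ref{defnNW}/\ref{NWLB} is defined; moreover, by functoriality of the Fontaine operator and the characterization in Theorem~\ref{FondR}, the de Rham hypothesis on $\rho|_{G_{\Q_p}}$ forces $N_W = 0$. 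The Hodge-Tate decomposition then says $W_{0,0} = \tilde{H}^1(K^p,C\otimes_{\Q_p}E)[\lambda]_0$ and $W_{0,-k}(k) = \tilde{H}^1(K^p,C\otimes_{\Q_p}E)[\lambda]_k$. The plan is to match $N_W$ on the $\lambda$-part with (a non-zero scalar multiple of) $I^1_{k-1}$ on the corresponding cohomological pieces; this is exactly the content of Corollary~\ref{I1Fono} applied after passing to $\lambda$-isotypic components, using that Hecke operators commute with all the period-sheaf constructions (they act purely away from $p$). Hence $\tilde{H}^1(K^p,C\otimes_{\Q_p}E)[\lambda]_0^{\la}$, which is the source of $N_W$, gets identified with $\ker I^1_{k-1}\otimes_{\Q_p} E[\lambda]$, which is the source of $c_k N$ — and $\ker N_W = \ker N$ on the $\lambda$-part.

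A few technical points need care along the way. For $k=1$ the $\Ext$-terms in Proposition~\ref{compinfch} involve $H^0(\B_{\dR,2}^+)^{\la}$, whose $\mathfrak{sl}_2(\Q_p)$-action is trivial (by the relation with $\tilde H^0$, on which $\SL_2(\Q_p)$ acts trivially, cf.~\cite[4.2]{Eme06}); I would need to check that after passing to the cuspidal $\lambda$-isotypic part these correction terms vanish, so that the isomorphism is clean — this is essentially why the Theorem is stated with $\rho$ absolutely irreducible (hence cuspidal, hence $H^0[\lambda]=0$). I also need to ensure the isomorphism is genuinely $\GL_2(\Q_p)$-equivariant: all the maps ($\pi_{\HT}$, the comparison isomorphisms, the period sheaves $\B_{\dR,k}^+$, $\cO\B_{\dR,k}^+$, the Fontaine operator) are $\GL_2(\Q_p)$-equivariant by construction, and taking $\la$-vectors and $\lambda$-isotypic parts preserves this, so equivariance propagates through the chain of identifications.

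\textbf{Main obstacle.} The hard part is not any single step but the bookkeeping of compatibilities: one must verify that the three identifications — (i) $\tilde H^1(K^p,C)^{\la,\tilde\chi_k}\leftrightarrow H^1(\Fl,\B_{\dR,k+1}^{+,\la,\tilde\chi_k})$ via \cite{Pan20} and Theorem~\ref{BdRcomp}, (ii) the Hodge-Tate/Fontaine-operator formalism of Subsection~\ref{dRps}, and (iii) Corollary~\ref{I1Fono} identifying the Fontaine operator with $I^1_{k-1}$ — are all compatible with one another and with the $\GL_2(\Q_p)$- and Hecke-actions, and that the passage to the $\lambda$-isotypic part interacts correctly with taking kernels (using that the generalized eigenspace decomposition of Theorem~\ref{sd} and the flatness/exactness statements of Proposition~\ref{LBfXY} and Corollary~\ref{Ncokerf} let one commute $[\lambda]$ past $\ker$). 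The cleanest route is probably to phrase everything on the $\tilde\chi_k$-isotypic completed cohomology first, apply the Fontaine-operator package once there, and only then decompose according to Hodge-Tate weights and intersect with the $\lambda$-part; I would also double-check that $\ker I^1_{k-1}\otimes_{\Q_p} E[\lambda]$ on the right-hand side means the $E$-linear extension followed by the $\lambda$-eigenspace, which matches $\ker(N_W)$ under the identification.
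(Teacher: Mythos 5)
Your proposal is correct and follows essentially the same route as the paper: identify $\tilde{H}^1(K^p,B_{\dR,k+1}^+\otimes E)^{\la}[\lambda]$ with $H^1(\Fl,\B_{\dR,k+1}^{+,\la,\tilde\chi_k})\otimes E[\lambda]$ via the infinitesimal character, Proposition \ref{BdRcomp} and Proposition \ref{compinfch} (with the $k=1$ correction terms killed by irreducibility of $\rho$), then use Theorem \ref{FondR} to see the Fontaine operator vanishes on the de Rham piece and Corollary \ref{I1Fono} plus functoriality to conclude $I^1_{k-1}\otimes 1$ vanishes on the weight-$0$ $\lambda$-part. The only simplification worth noting is that no appeal to the spectral decomposition of Theorem \ref{sd} is needed to commute $[\lambda]$ past $\ker$: since $I^1_{k-1}\otimes 1$ is shown to vanish identically on $H^1(\Fl,\cO^{\la,(1-k,0)}_{K^p})\otimes E[\lambda]$, that subspace is tautologically the $\lambda$-part of the kernel.
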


\begin{proof}[Proof of Theorem  \ref{MTCH}]
By Theorem \ref{HT0kerF}, $\ker I^1_{k-1}\otimes_{\Q_p} E [\lambda]\neq 0$. Hence Theorem \ref{sd} implies that $M_{k+1}(K^p)\otimes_{\Q_p} E[\lambda']\cong M_{k+1}\cdot D_0^{-1}[\lambda]\neq 0$, which proves the classicality part. The finiteness part follows from Corollary \ref{knfin}. 
\end{proof}

To prove this result, we recall the comparison theorem between the completed cohomology and the  cohomology of the $\mathcal{X}_{K^p}$, which is essentially due to Scholze \cite[\S IV.2]{Sch15}.

\begin{prop} \label{BdRcomp}
There is a natural $G_{\Q_p}$-equivariant isomorphism of $B_{\dR,k}^+$-modules
\[\tilde{H}^i(K^p,B_{\dR,k}^+)\cong H^i(\Fl,\B_{\dR,k}^+),\]
where $\B_{\dR,k}^+$ denotes the truncated de Rham period sheaves introduced in Subsection \ref{dRps}.
\end{prop}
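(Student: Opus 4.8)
The statement to prove is the comparison isomorphism $\tilde H^i(K^p,B_{\dR,k}^+)\cong H^i(\Fl,\B_{\dR,k}^+)$, compatibly with the $G_{\Q_p}$-action and the $B_{\dR}^+/(t^k)$-module structure. The plan is to reduce to the already-established comparison between completed cohomology and the cohomology of the structure sheaf on the flag variety, and then climb up the $t$-adic filtration by d\'evissage. Concretely, recall from Subsection \ref{dRps} that $\B_{\dR,k}^+$ is flat over $B_{\dR}^+/(t^k)$ with $\gr^i\B_{\dR,k}^+\cong\cO_{K^p}(i)$ for $i=0,\dots,k-1$, and from \ref{Cech}/\cite[Corollary 4.4.3, Theorem 4.4.6]{Pan20} that $H^j(\Fl,\cO^+_{K^p})$ computes the $\cO_C$-coefficient completed cohomology (as almost $\cO_C$-modules), so after inverting $p$ one gets $H^j(\Fl,\cO_{K^p})\cong\tilde H^j(K^p,C)$, and similarly $H^j(\Fl,\cO_{K^p}(i))\cong\tilde H^j(K^p,C(i))$ since the Tate twist on $\cO_{K^p}$ matches the Galois Tate twist. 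This is the base case $k=1$.

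First I would set up the filtration $\Fil^\bullet\B_{\dR,k}^+$ with graded pieces $\cO_{K^p}(i)$, and on the completed-cohomology side the corresponding filtration on the constant sheaf $B_{\dR,k}^+$ (which is a filtered $G_{\Q_p}$-module with $\gr^i=C(i)$, $i=0,\dots,k-1$, since $B_{\dR}^+/(t^k)$ is filtered with those graded pieces). Both filtrations are strict — on the $\Fl$-side this is exactly Lemma \ref{Bdrce} and its corollary, which show the relevant \v{C}ech complexes computing $H^i(\Fl,\B_{\dR,k}^+)$ are strict for the $t$-adic filtration, hence $\gr^i H^\bullet(\Fl,\B_{\dR,k}^+)\cong H^\bullet(\Fl,\gr^i\B_{\dR,k}^+)=H^\bullet(\Fl,\cO_{K^p}(i))$ (this is the proposition in Subsection \ref{dRps} computing $\gr^i H^\bullet(\Fl,\mathcal F)$); on the completed-cohomology side, $\tilde H^\bullet(K^p,-)$ applied to the short exact sequences $0\to\Fil^{i+1}\to\Fil^i\to C(i)\to 0$ gives long exact sequences whose connecting maps one must control. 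The key input making these connecting maps compatible is naturality: Scholze's comparison (Prop.\ \ref{BdRcomp} for $k=1$, i.e.\ \cite[\S IV.2]{Sch15} and \cite[Theorem 4.4.6]{Pan20}) is functorial in the sheaf/coefficient system, so the pro-\'etale descent along $\mathcal X_{K^p}\to\mathcal X_{K^pK_p}$ that identifies $H^j(\Fl,\pi_{\HT*}\mathcal G)$ with $\tilde H^j(K^p,\mathcal G)$ for a constant $\mathcal G$ carries the Bockstein/connecting maps for the filtration on $\B_{\dR,k}^+$ to those for the filtration on $B_{\dR,k}^+$.

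The cleanest way to organize this is: define, for each $m\le k$, the truncation $\B^+_{\dR,m}=\B^+_{\dR}/(t^m)$ and prove $\tilde H^i(K^p,B^+_{\dR,m})\cong H^i(\Fl,\B^+_{\dR,m})$ by induction on $m$, using the exact sequences $0\to\cO_{K^p}(m-1)\to\B^+_{\dR,m}\to\B^+_{\dR,m-1}\to 0$ on $\Fl$ and $0\to C(m-1)\to B^+_{\dR,m}\to B^+_{\dR,m-1}\to 0$ of coefficients, the five lemma, and the base case $m=1$. One needs the two long exact sequences to be identified by a map of complexes; I would produce this map directly from Scholze's functorial construction of the comparison (the sheaf $\B^+_{\dR}$ on $\mathcal X_{K^p}$ is $\pi_{\HT}$-pushed-forward from the relative de Rham period sheaf, and the filtration is the $t$-adic one, so $\pi_{\HT*}$ applied to the filtration SES gives the $\Fl$-side SES, while taking pro-\'etale cohomology over $\mathcal X_{K^pK_p}$ and passing to the limit gives the completed-cohomology SES), so commutativity of the resulting ladder is built in.

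The main obstacle I anticipate is purely technical bookkeeping rather than conceptual: one must check that the topological/almost-mathematical subtleties in Scholze's comparison (bounded $p$-torsion, the "almost" in $H^i(\Fl,\cO^+_{K^p})$, and the need to invert $p$) do not interfere with the d\'evissage — in particular that the $C(i)$'s really are the honest graded pieces after inverting $p$ and that the connecting maps are continuous and $G_{\Q_p}$-equivariant so that the five-lemma argument produces a $B^+_{\dR}/(t^k)$-linear topological isomorphism, not merely an abstract one. The strictness statements already assembled in Subsection \ref{dRps} (Corollary \ref{strCeLB} and the following proposition) handle exactly this on the $\Fl$-side, and the analogous strictness on the completed-cohomology side follows from the open mapping theorem since $\tilde H^i(K^p,\Q_p)$ is a $\Q_p$-Banach space (indeed $p$-torsion-free, so $\tilde H^i(K^p,B^+_{\dR,k})$ is visibly a flat $B^+_{\dR}/(t^k)$-Banach module). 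So the proof should go through once these compatibilities are spelled out.
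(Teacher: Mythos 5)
Your inductive skeleton is the same as the paper's: the base case $k=1$ is \cite[Corollary 4.4.3]{Pan20}, both sides are flat over $B_{\dR}^+/(t^k)$ (your observation that $\tilde{H}^i(K^p,\Z_p)$ is torsion-free is the right reason on the completed-cohomology side), and one climbs the $t$-adic filtration by d\'evissage. But there is a genuine gap at the one place you wave your hands: the existence of the ladder of maps between the two long exact sequences. You assert it comes "directly from Scholze's functorial construction" because "the pro-\'etale descent \dots identifies $H^j(\Fl,\pi_{\HT*}\mathcal G)$ with $\tilde H^j(K^p,\mathcal G)$ for a constant $\mathcal G$." This does not apply: $\B_{\dR,k}^+$ is not (the pushforward of) a constant sheaf, and the left-hand side $\tilde{H}^i(K^p,B_{\dR,k}^+)$ is by definition $\tilde{H}^i(K^p,\Z_p)\widehat{\otimes}_{\Z_p}B_{\dR,k}^+$ — a completed tensor product of constant-coefficient \'etale/Betti cohomology with the period ring — whereas the right-hand side is sheaf cohomology of the relative period sheaf. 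There is no formal functoriality connecting these two objects; one must actually \emph{construct} a natural $B_{\dR,k}^+$-linear, $G_{\Q_p}$-equivariant map from the former to the latter lifting the $k=1$ isomorphism. That construction is the entire content of the paper's Lemma \ref{cmap}, and it requires either the primitive comparison theorem $H^i_{\et}(\mathcal{X}_{K^pK_p},\Z/p^n)\otimes A^a_{\inf}\cong H^i_{\proet}(\mathcal{X}_{K^pK_p},\A^a_{\inf}/p^n)$ or the explicit device the paper uses: maps through the truncated Witt vector sheaves $W_n(\cO^+_{\mathcal{X}}/p)$ on the \'etale site, the observation that $A_{\inf}\to A_{\inf}/((\ker\theta)^k,p^m)$ factors through $W_l(\cO_C/p)$ for $l$ large, and then lemmas controlling torsion and commuting inverse limits with cohomology so that one may pass to $\varprojlim_m$ and invert $p$.

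Once that map exists, the paper's conclusion is even slightly cleaner than your five-lemma ladder: since the map reduces mod $t$ to the $k=1$ isomorphism and both sides are flat over $B_{\dR}^+/(t^k)$, a single induction on $k$ using $0\to\gr^{k-1}\to\B_{\dR,k}^+\to\B_{\dR,k-1}^+\to 0$ forces it to be an isomorphism — there is no need to separately verify compatibility of the connecting maps, because one has an honest map of filtered objects rather than merely two long exact sequences. So the fix for your proposal is not more bookkeeping of the kind you anticipate, but supplying the construction of the comparison map itself.
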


\begin{proof}
When $k=1$, this isomorphism is reduced to 
\begin{eqnarray} \label{cis}
\tilde{H}^i(K^p,C)\cong H^i(\Fl,\cO_{K^p}),
\end{eqnarray}
which was established in \cite[Corollary 4.4.3]{Pan20}. For $k\geq 2$, we need the following lemma.

\begin{lem} \label{cmap}
There is a natural $G_{\Q_p}$-equivariant homomorphism of $B_{\dR,k}^+$-modules
\[\tilde{H}^i(K^p,B_{\dR,k}^+)\to H^i(\Fl,\B_{\dR,k}^+)\]
whose reduction modulo $t$ agrees with the isomorphism \eqref{cis}.
\end{lem}
Assuming this lemma at the moment. Since $B_{\dR,k}^+$ and $\B_{\dR,k}^+$ are flat over $B_{\dR,k}^+$, an induction on $k$ shows that this map is an isomorphism.
\end{proof}

\begin{proof}[Proof of Lemma \ref{cmap}]
The most natural proof  is to use the primitive comparison theorem  on modular curves
\[H^i_{\et}(\mathcal{X}_{K^pK_p},\Z/{p^n})\otimes_{\Z_p} A^a_{\inf} \cong H^i_{\proet}(\mathcal{X}_{K^pK_p},\A^a_\inf/p^n)\]
cf. \cite[Proof of Theorem 8.4]{Sch13} and to argue as in \cite[Proof of Theorem IV.2.1]{Sch15}. Here we sketch a construction which only uses the \'etale site instead of the pro-\'etale site. 

For $X=\mathcal{X}_{K^pK_p}$ or $\mathcal{X}_{K^p}$, we denote by $W_n(\cO_X^+/p)$ the sheaf on $X_{\et}$ which is the sheafication of $U\mapsto W_n(\cO_X^+(U)/p)$, where $W_n$ denotes the ring of $n$-truncated Witt vectors. There are natural maps
\[\begin{multlined}
H^i_{\et}(\mathcal{X}_{K^pK_p},\Z/p^n)\otimes_{\Z_p}W_n(\cO_C/p)\to H^i_{\et}(\mathcal{X}_{K^pK_p},W_n(\cO_C/p))\\
\to H^i_{\et}(\mathcal{X}_{K^pK_p},W_n(\cO^+_{\mathcal{X}_{K^pK_p}}/p))
\to H^i_{\et}(\mathcal{X}_{K^p},W_n(\cO^+_{\mathcal{X}_{K^p}}/p)).
\end{multlined}\]
The direct limit over $K_p$ gives a map 
\[\tilde{H}^i(K^p,\Z/p^n)\otimes_{\Z_p} W_n(\cO_C/p)\to H^i_{\et}(\mathcal{X}_{K^p},W_n(\cO^+_{\mathcal{X}_{K^p}}/p)).\]
There is an almost isomorphism
\[H^i_{\et}(\mathcal{X}_{K^p},W_n(\cO^+_{\mathcal{X}_{K^p}}/p))^a\cong H^i(\mathcal{X}_{K^p},W_n(\cO^+_{\mathcal{X}_{K^p}}/p))^a\]
where the right hand side is computed on the analytic site. Indeed, by induction on $n$, it suffices to prove the case $n=1$, that is $H^i_{\et}(\mathcal{X}_{K^p},\cO^{+a}_{\mathcal{X}_{K^p}}/p)\cong H^i(\mathcal{X}_{K^p},\cO^{+a}_{\mathcal{X}_{K^p}}/p)$. But this is clear as on affinoid subsets, $\cO^{+a}_{\mathcal{X}_{K^p}}$ has no higher cohomology on both \'etale and analytic sites, cf. \cite[Proposition 6.14, 7.13]{Sch12}. Hence there is a map  of $W_n(\cO_C/p)^a$-modules
\[f_n:\tilde{H}^i(K^p,\Z/p^n)\otimes_{\Z_p} W_n(\cO_C/p)^a\to H^i(\mathcal{X}_{K^p},W_n(\cO^+_{\mathcal{X}_{K^p}}/p))^a.\]

Recall that $A_\inf=W(\cO_{C^\flat})$ and $\displaystyle \cO_{C^\flat}=\varprojlim_{x\mapsto x^p} \cO_C/p$. We denote elements in $\cO_{C^\flat}$ by $(\cdots,a_1,a_0),a_i\in\cO_C/p$ satisfying $a_{n+1}^p=a_n$. For $n\geq 0$, the projection map sending $(\cdots,a_1,a_0)$ to $a_n$ defines a ring homomorphism $\cO_{C^\flat}\to\cO_C/p$ and induces a quotient map
$\phi'_n:A_\inf\to W(\cO_C/p)$. Denote by $\phi_n:A_\inf\to W_n(\cO_C/p)$ the composite of $\phi'_n$ with the natural projection $W(\cO_C/p)\to W_n(\cO_C/p)$. We claim that the quotient map
\[A_\inf\to A_\inf/((\ker\theta)^k,p^m)\]
factors through $\phi_l$ for $l$ sufficiently large. It suffices to show that this map factors through $\phi'_n$.
For $a=(\cdots,a_1,a_0)\in\cO_{C^\flat}$ with $a_0=0$, it is clear that $[a]\in(\ker\theta,p)$. Now if $a_n=0$, then $a=b^{p^n}$ for some $b=(\cdots,b_1,b_0)\in\cO_{C^\flat}$ with $b_0=0$ and $[a]=[b]^{p^n}\in((\ker\theta)^{p^n},p^{n+1})$ by an easy induction on $n$.  This implies our claim. Similarly the quotient map $\A_{\inf,\mathcal{X}_{K^p}}\to \A_{\inf,\mathcal{X}_{K^p}}/((\ker\theta)^k,p^m)$ factors through some $W_n(\cO^+_{\mathcal{X}_{K^p}}/p)$. Therefore $f_n$ induces maps of $A^a_\inf/((\ker\theta)^k,p^m)$-modules
\[g_{k,m}:\tilde{H}^i(K^p,\Z/p^m)\otimes_{\Z_p}  A^a_\inf/((\ker\theta)^k,p^m)\to H^i(\mathcal{X}_{K^p}, \A^a_{\inf,\mathcal{X}_{K^p}}/((\ker\theta)^k,p^m))\]
such that $g_{k,m}\equiv g_{k,m+1}\mod p^m$ and $g_{k,m}\equiv g_{k+1,m}\mod (\ker\theta)^k$.

\begin{lem}
For $k\geq 1$,
\begin{enumerate}
\item The torsion part of $H^i(\mathcal{X}_{K^p}, \A^a_{\inf,\mathcal{X}_{K^p}}/(\ker\theta)^k)$ is killed by $p^n$ for some $n$;
\item $\displaystyle H^i(\mathcal{X}_{K^p}, \A^a_{\inf,\mathcal{X}_{K^p}}/(\ker\theta)^k)\cong\varprojlim_{m}H^i(\mathcal{X}_{K^p}, \A^a_{\inf,\mathcal{X}_{K^p}}/((\ker\theta)^k,p^m)).$
\end{enumerate}
\end{lem}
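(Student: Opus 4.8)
\emph{Proof strategy.} Write $\xi$ for a generator of $\ker\theta$, so that $(\ker\theta)^k=\xi^k\A_{\inf,\mathcal{X}_{K^p}}$ and $\A_{\inf,\mathcal{X}_{K^p}}/\xi\cong\cO^+_{\mathcal{X}_{K^p}}$. The plan is to prove both statements by dévissage along the $\xi$-adic filtration, reducing everything to the case $k=1$, which is already available. Since $\xi$ is a non-zero-divisor on $\A_{\inf,\mathcal{X}_{K^p}}$, multiplication by $\xi^{k-1}$ yields a short exact sequence of sheaves
\[
0\to \cO^{+a}_{\mathcal{X}_{K^p}}(k-1)\xrightarrow{\ \xi^{k-1}\ }\A^a_{\inf,\mathcal{X}_{K^p}}/\xi^k\to \A^a_{\inf,\mathcal{X}_{K^p}}/\xi^{k-1}\to 0,
\]
the Tate twist being irrelevant for the present purpose. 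First I would treat $k=1$: one has $H^i(\mathcal{X}_{K^p},\cO^{+a}_{\mathcal{X}_{K^p}})\cong H^i(\Fl,\cO^{+a}_{K^p})$ since the higher direct images of $\cO^{+a}_{\mathcal{X}_{K^p}}$ along $\pi_{\HT}$ vanish (Scholze), and $H^i(\Fl,\cO^{+a}_{K^p})$ has bounded $p$-torsion because it agrees, up to almost isomorphism, with the $\cO_C$-coefficient completed cohomology (recalled in \ref{Cech}; alternatively the latter is torsionfree for modular curves). The long exact cohomology sequence then exhibits $H^i(\A^a_{\inf,\mathcal{X}_{K^p}}/\xi^k)$ as an extension of a submodule of $H^i(\A^a_{\inf,\mathcal{X}_{K^p}}/\xi^{k-1})$ by a quotient of $H^i(\cO^{+a}_{\mathcal{X}_{K^p}})(k-1)$; since bounded $p$-torsion is inherited by subobjects, quotients and extensions (the bounds simply adding), part (1) follows by induction on $k$.

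For part (2), I would first check that $\A^a_{\inf,\mathcal{X}_{K^p}}/\xi^k$ realizes the derived inverse limit of the system $\A^a_{\inf,\mathcal{X}_{K^p}}/(\xi^k,p^m)$. The pair $p,\xi^k$ is a regular sequence on $\A_{\inf,\mathcal{X}_{K^p}}$ (on a basis of affinoid perfectoids $\A_{\inf,\mathcal{X}_{K^p}}$ is $p$-torsionfree, $p$-adically complete, and $\xi^k$ is a non-zero-divisor on $\A_{\inf,\mathcal{X}_{K^p}}/p$), so $p^m$ is injective on $\A^a_{\inf,\mathcal{X}_{K^p}}/\xi^k$, giving short exact sequences
\[
0\to \A^a_{\inf,\mathcal{X}_{K^p}}/\xi^k\xrightarrow{\ p^m\ }\A^a_{\inf,\mathcal{X}_{K^p}}/\xi^k\to \A^a_{\inf,\mathcal{X}_{K^p}}/(\xi^k,p^m)\to 0
\]
compatible in $m$. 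Because $\A_{\inf,\mathcal{X}_{K^p}}$ is $p$-adically complete on such a basis and $\cO^{+a}$ has no higher cohomology there, one gets $\A^a_{\inf,\mathcal{X}_{K^p}}/\xi^k\cong R\varprojlim_m \A^a_{\inf,\mathcal{X}_{K^p}}/(\xi^k,p^m)$ in the derived category, whence $R\Gamma(\mathcal{X}_{K^p},\A^a_{\inf,\mathcal{X}_{K^p}}/\xi^k)\cong R\varprojlim_m R\Gamma(\mathcal{X}_{K^p},\A^a_{\inf,\mathcal{X}_{K^p}}/(\xi^k,p^m))$.

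It then remains to control the Milnor exact sequence, i.e.\ to show ${\varprojlim_m}^1 H^{i-1}(\mathcal{X}_{K^p},\A^a_{\inf,\mathcal{X}_{K^p}}/(\xi^k,p^m))=0$. The cohomology sequences attached to the $p^m$-multiplication maps give, with $\mathcal{G}:=\A^a_{\inf,\mathcal{X}_{K^p}}/\xi^k$,
\[
0\to H^{i-1}(\mathcal{G})/p^m\to H^{i-1}(\A^a_{\inf,\mathcal{X}_{K^p}}/(\xi^k,p^m))\to H^{i}(\mathcal{G})[p^m]\to 0 ,
\]
where the left-hand tower has surjective transition maps, hence is Mittag--Leffler, and the right-hand tower stabilizes since $H^i(\mathcal{G})[p^\infty]$ is bounded by part (1); an extension of Mittag--Leffler towers is Mittag--Leffler, so the middle tower is as well, and part (2) follows. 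The one point requiring genuine care — and the main obstacle — is the interplay of $R\Gamma$ with $R\varprojlim$ in the almost setting, i.e.\ verifying the regular-sequence and completeness claims for $\A_{\inf,\mathcal{X}_{K^p}}$ on the chosen basis of affinoid perfectoids so that the derived-limit identification is legitimate; once that is in place the rest is formal dévissage resting on the already-established bounded torsion in the case $k=1$.
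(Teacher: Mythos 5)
Your proposal is correct and follows essentially the same route as the paper: part (1) by induction on $k$ via the short exact sequence coming from multiplication by a generator of $\ker\theta$ (the paper uses $0\to\mathcal{F}_{k-1}\xrightarrow{\tilde a}\mathcal{F}_k\to\mathcal{F}_1\to 0$ rather than your $0\to\mathcal{F}_1\xrightarrow{\xi^{k-1}}\mathcal{F}_k\to\mathcal{F}_{k-1}\to 0$, but this is immaterial), reducing to the $k=1$ case handled by the comparison with completed cohomology; and part (2) by the $p^m$-multiplication towers, using bounded torsion from part (1) to kill the $\varprojlim$ of $H^{i+1}[p^m]$ and the $p$-adic completeness of the cohomology (the paper's Lemma 4.4.4 of \cite{Pan20}, your $R\varprojlim$/Milnor-sequence formulation). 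The only difference is cosmetic: the paper works with explicit towers of short exact sequences where you phrase the same content in derived-limit language.
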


\begin{proof}
First assume $k=1$. Then $\A_{\inf,\mathcal{X}_{K^p}}/\ker\theta= \cO^+_{\mathcal{X}_{K^p}}$. By \cite[Corollary 4.4.3]{Pan20},
\[\tilde{H}^i(K^p,\cO^a_C)\cong H^i(\mathcal{X}_{K^p}, \cO^{+a}_{\mathcal{X}_{K^p}})\cong\varprojlim_m H^i(\mathcal{X}_{K^p}, \cO^{+a}_{\mathcal{X}_{K^p}}/p^m).\]
The torsion part of $\tilde{H}^i(K^p,\cO^a_C)$ is killed by some $p^n$ which implies our claims when $k=1$. In general we do induction on $k$. Note that $\ker\theta$ is principal, hence  $\A_{\inf,\mathcal{X}_{K^p}}/(\ker\theta)^k$ is torsionfree as $\A_{\inf,\mathcal{X}_{K^p}}/\ker\theta=\cO^+_{\mathcal{X}_{K^p}}$ is torsionfree. Let $\mathcal{F}_k=\A^a_{\inf,\mathcal{X}_{K^p}}/(\ker\theta)^k$ and fix a generator $\tilde{a}$ of $\ker\theta$. Then $\mathcal{F}_1=\cO^{+a}_{\mathcal{X}_{K^p}}$. There is an exact sequence
\[0\to \mathcal{F}_{k-1}\xrightarrow{\times \tilde{a}} \mathcal{F}_{k}\to \mathcal{F}_1\to 0. \]
The first claim follows by induction on $k$. To see the second claim, consider
\[0\to H^i(\mathcal{X}_{K^p},\mathcal{F}_k)/p^n \to H^i(\mathcal{X}_{K^p},\mathcal{F}_k/p^n) \to H^{i+1}(\mathcal{X}_{K^p},\mathcal{F}_k)[p^n]\]
which forms a projective system when $n$ varies.
The transition map $H^{i+1}(\mathcal{X}_{K^p},\mathcal{F}_k)[p^{n+1}]\to H^{i+1}(\mathcal{X}_{K^p},\mathcal{F}_k)[p^n]$ is multiplication by $p$. Hence $\displaystyle \varprojlim_{n} H^{i+1}(\mathcal{X}_{K^p},\mathcal{F}_k)[p^n] =0$ by what we have proved. In particular, $\displaystyle \varprojlim_{n} H^i(\mathcal{X}_{K^p},\mathcal{F}_k)/p^n \cong \varprojlim_{n} H^i(\mathcal{X}_{K^p},\mathcal{F}_k/p^n) $ with torsion parts annihilated by some $p^n$. From this and \cite[Lemma 4.4.4]{Pan20}, we deduce that
\[ H^i(\mathcal{X}_{K^p},\mathcal{F}_k)\cong \varprojlim_{n} H^i(\mathcal{X}_{K^p},\mathcal{F}_k)/p^n \cong \varprojlim_{n} H^i(\mathcal{X}_{K^p},\mathcal{F}_k/p^n)\]
in exactly the same way as the proof of  \cite[Corollary 4.4.3]{Pan20}.
\end{proof}

Now taking the inverse limit of $g_{k,m}$ with respect to $m$, we get a map
\[\varprojlim_m\tilde{H}^i(K^p,\Z/p^m)\otimes_{\Z_p}  A^a_\inf/((\ker\theta)^k,p^m)\to H^i(\mathcal{X}_{K^p}, \A^a_{\inf,\mathcal{X}_{K^p}}/(\ker\theta)^k)\]
which is clearly $G_{\Q_p}$-equivariant and $A^a_\inf$-linear. When $k=1$,  this map agrees with the isomorphism \eqref{cis}. In general, this gives  the desired map in Lemma \ref{cmap} after inverting $p$ modulo the following lemma.
\end{proof}

\begin{lem}
There are natural isomorphisms
\begin{enumerate}
\item $\displaystyle \tilde{H}^i(K^p, A_\inf/(\ker\theta)^k)\cong \varprojlim_m\tilde{H}^i(K^p,\Z/p^m)\otimes_{\Z_p}  A_\inf/((\ker\theta)^k,p^m)$.
\item $H^i(\mathcal{X}_{K^p}, \A^a_{\inf,\mathcal{X}_{K^p}}/(\ker\theta)^k)\cong H^i(\Fl,\pi_{\HT*}\A^a_{\inf,\mathcal{X}_{K^p}}/(\ker\theta)^k )$.
\end{enumerate}
\end{lem}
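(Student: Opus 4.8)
The plan is to prove the two isomorphisms separately; each reduces to a short dévissage once the relevant finiteness input is isolated, and neither is difficult.

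For part (1), recall that $\tilde H^i(K^p, A_\inf/(\ker\theta)^k)$ means $\tilde H^i(K^p,\Z_p)\widehat\otimes_{\Z_p} A_\inf/(\ker\theta)^k$, in accordance with the convention $\tilde H^i(K^p,W)=\tilde H^i(K^p,\Z_p)\widehat\otimes_{\Z_p}W$ used for Banach coefficients. The ring $A_\inf/(\ker\theta)^k$ is $p$-torsionfree — since $\ker\theta$ is principal and $A_\inf/\ker\theta=\cO_C$ is $p$-torsionfree — hence flat over $\Z_p$, $p$-adically complete, with $\bigl(A_\inf/(\ker\theta)^k\bigr)/p^m = A_\inf/((\ker\theta)^k,p^m)$. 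First I would write $\tilde H^i(K^p,\Z_p)\widehat\otimes_{\Z_p}A_\inf/(\ker\theta)^k=\varprojlim_m\bigl(\tilde H^i(K^p,\Z_p)\otimes_{\Z_p}A_\inf/(\ker\theta)^k\bigr)/p^m$, and by flatness of $A_\inf/(\ker\theta)^k$ over $\Z_p$ identify the $m$-th term with $\bigl(\tilde H^i(K^p,\Z_p)/p^m\bigr)\otimes_{\Z_p}A_\inf/((\ker\theta)^k,p^m)$. It then remains to identify $\tilde H^i(K^p,\Z_p)/p^m$ with $\tilde H^i(K^p,\Z/p^m)$, which is where the torsionfreeness of the completed cohomology of modular curves, together with the vanishing of the relevant $\varprojlim^1$ terms, enters — this is precisely the content of the argument around \cite[Lemma 4.4.4]{Pan20}. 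The resulting isomorphism is visibly compatible with the evident natural maps.

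For part (2), the point is the vanishing $R^q\pi_{\HT*}\bigl(\A^a_{\inf,\mathcal{X}_{K^p}}/(\ker\theta)^k\bigr)=0$ for all $q\ge 1$, after which the Leray spectral sequence for $\pi_{\HT}$ degenerates and yields the claim. I would check this vanishing on the basis $\mathfrak B$ of affinoid opens $U\subseteq\Fl$ whose preimages $V_\infty:=\pi_{\HT}^{-1}(U)$ are affinoid perfectoid (the same basis used in \cite[Theorem 4.1.7]{Pan20}): on such $V_\infty$ one has $H^q(V_\infty,\cO^{+a}_{\mathcal{X}_{K^p}})=0$ for $q\ge 1$ by Scholze's almost-acyclicity (\cite[Proposition 7.13]{Sch12}), and the $\ker\theta$-adic filtration on $\A^a_{\inf,\mathcal{X}_{K^p}}/(\ker\theta)^k$ is finite with successive quotients isomorphic to $\cO^{+a}_{\mathcal{X}_{K^p}}(j)$, $j=0,\dots,k-1$ — Tate twists, which do not change cohomology — so by dévissage $H^q(V_\infty,\A^a_{\inf,\mathcal{X}_{K^p}}/(\ker\theta)^k)=0$ for $q\ge 1$ as well. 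Consequently the presheaf $U\mapsto H^0(\pi_{\HT}^{-1}(U),\A^a_{\inf,\mathcal{X}_{K^p}}/(\ker\theta)^k)$ on $\mathfrak B$ already computes $\pi_{\HT*}$, the higher direct images vanish, and since the cohomology of the perfectoid space $\mathcal{X}_{K^p}$ is computed by the cover $\{\pi_{\HT}^{-1}(U)\}_{U\in\mathfrak B}$ — the \v{C}ech-to-derived functor spectral sequence degenerating, exactly as in the proof of \cite[Theorem 4.4.6]{Pan20} — both sides of the claimed isomorphism are computed by one and the same complex, and the natural comparison map is an isomorphism.

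The step I expect to be the main obstacle is the bookkeeping in part (1): one must move the inverse limit over $m$ past the completed tensor product and make sure no $\varprojlim^1$ obstruction survives, which is exactly what the torsionfreeness and topological finiteness of $\tilde H^\bullet(K^p,\Z_p)$ for modular curves guarantee. Part (2) is essentially formal once Scholze's almost-acyclicity and the finite $\ker\theta$-adic filtration are in hand.
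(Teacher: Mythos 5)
Your proposal is correct and follows essentially the same route as the paper: part (1) by dévissage through the mod-$p^m$ universal-coefficient comparison and passage to the inverse limit, and part (2) by killing $R^q\pi_{\HT*}$ via the finite $\ker\theta$-adic filtration with graded pieces $\cO^{+a}_{\mathcal{X}_{K^p}}$ (Tate twists) on affinoid perfectoid preimages of $U\in\mathfrak{B}$. The only cosmetic difference is in (1): the paper keeps the full short exact sequences $0\to\tilde H^i(K^p,\Z_p)/p^m\to\tilde H^i(K^p,\Z/p^m)\to\tilde H^{i+1}(K^p,\Z_p)[p^m]\to 0$ and kills the torsion term in the limit because the transition maps there are multiplication by $p$, whereas you identify the first two terms outright using torsionfreeness of the completed cohomology of modular curves — both are valid.
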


\begin{proof}
For the first claim, we prove the following stronger result: 
\[\tilde{H}^i(K^p,M)\cong \varprojlim_m \tilde{H}^i(K^p,\Z/p^m)\otimes_{\Z_p} M\]
for any $p$-adically complete torsion-free $\Z_p$-module $M$. Note that there exists a complex $\tilde{S}^\bullet$ of $p$-adically complete torsion-free $\Z_p$ modules such that $H^i(\tilde{S}^\bullet)\cong \tilde{H}^i(K^p,\Z_p)$ and   $H^i(\tilde{S}^\bullet/p^m)\cong \tilde{H}^i(K^p,\Z/p^m)$, cf. \cite[p. 27, 28]{Eme06}. Hence there is an projective system of short exact sequences
\[0\to \tilde{H}^i(K^p,\Z_p)/p^m\to \tilde{H}^i(K^p,\Z/p^m) \to \tilde{H}^{i+1}(K^p,\Z_p)[p^m],\]
and the transition map $\tilde{H}^{i+1}(K^p,\Z_p)[p^{m+1}]\to \tilde{H}^{i+1}(K^p,\Z_p)[p^m]$ is multiplication by $p$. From this we easily deduce our claim by taking the tensor product with $M$. 

For the second claim, it suffices to show that $R^i\pi_{\HT*}\A^a_{\inf,\mathcal{X}_{K^p}}/(\ker\theta)^k=0$ when $i\geq 1$. This follows as for $U\subseteq\mathfrak{B}$, the preimage $\pi_{\HT}^{-1}(U)$ is affinoid perfectoid, hence we have $H^i(\pi_{\HT}^{-1}(U),\A^a_{\inf,\mathcal{X}_{K^p}}/(\ker\theta)^k)=0,i\geq 1$ by induction on $k$.
\end{proof}

We need one more lemma for proving Theorem \ref{HT0kerF}.
\begin{lem} \label{dRcompne}
There is a natural $G_{\Q_p}$-equivariant isomorphism of $B_{\dR,k+1}^+$-modules
\[\tilde{H}^1(K^p,B_{\dR,k+1}^+\otimes_{\Q_p}E)^{\la}[\lambda]\cong 
H^1(\Fl,\B_{\dR,k+1}^{+,\la,\tilde\chi_k})\otimes_{\Q_p} E[\lambda].\]
\end{lem}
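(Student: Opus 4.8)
\textbf{Proof proposal for Lemma \ref{dRcompne}.}

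The plan is to combine the comparison isomorphism of Proposition \ref{BdRcomp} (applied with $k$ replaced by $k+1$) with the operation of passing to $\GL_2(\Q_p)$-locally analytic vectors and then cutting out the $\tilde\chi_k$-infinitesimal part on the sheaf side, the $\lambda$-isotypic part on both sides. First I would tensor the isomorphism
\[\tilde{H}^1(K^p,B_{\dR,k+1}^+)\cong H^1(\Fl,\B_{\dR,k+1}^+)\]
with $E$ over $\Q_p$ and take the $\lambda$-isotypic part for the action of the Hecke algebra $\T(K^p)$ (equivalently $\T^S$); this is harmless since the Hecke action commutes with everything in sight and $\lambda$-isotypic projectors are bounded. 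The left-hand side becomes $\tilde{H}^1(K^p,B_{\dR,k+1}^+\otimes_{\Q_p}E)[\lambda]$, the right-hand side $H^1(\Fl,\B_{\dR,k+1}^+)\otimes_{\Q_p}E[\lambda]$. Both carry a continuous $\GL_2(\Q_p)$-action, and the isomorphism is $\GL_2(\Q_p)$-equivariant because $\pi_{\HT}$ is $\GL_2(\Q_p)$-equivariant and the comparison map of Lemma \ref{cmap} is built functorially at finite level.

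Next I would pass to $\GL_2(\Q_p)$-locally analytic vectors. On the completed-cohomology side one uses that $\tilde{H}^1(K^p,B_{\dR,k+1}^+\otimes_{\Q_p}E)$ is filtered (via the $t$-adic filtration) by copies of $\tilde{H}^1(K^p,C\otimes_{\Q_p}E)(i)$, each of which is $\mathfrak{LA}$-acyclic as a $\GL_2(\Q_p)$-representation; this is the input needed to commute $(-)^{\la}$ with the filtration and deduce $\tilde{H}^1(K^p,B_{\dR,k+1}^+\otimes_{\Q_p}E)^{\la}$ is filtered by $\tilde{H}^1(K^p,C\otimes_{\Q_p}E)^{\la}(i)$, compatibly with the comparison. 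On the flag-variety side, $H^i(\Fl,\B_{\dR,k+1}^{+})^{\la}\cong H^i(\Fl,\B_{\dR,k+1}^{+,\la})$ by the argument already used for Theorem 4.4.6 of \cite{Pan20} and recalled in \ref{dRps} (the graded pieces $\cO_{K^p}(i)$ are $\mathfrak{LA}$-acyclic). So after taking $\la$-vectors the comparison reads
\[\tilde{H}^1(K^p,B_{\dR,k+1}^+\otimes_{\Q_p}E)^{\la}[\lambda]\cong H^1(\Fl,\B_{\dR,k+1}^{+,\la})\otimes_{\Q_p}E[\lambda],\]
as filtered $G_{\Q_p}$-equivariant $B_{\dR}^+/(t^{k+1})$-modules carrying a $\GL_2(\Q_p)$-action. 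Finally I would cut out the infinitesimal character $\tilde\chi_k$. The point is that the $\lambda$-isotypic part of $\tilde{H}^1$ attached to $\rho$, which is de Rham of Hodge-Tate weights $0,k$, has infinitesimal character $\tilde\chi_k$ on its locally analytic vectors: this is \cite[Proposition 6.1.5]{Pan20} (see also \cite{DPS20}), so $\tilde{H}^1(K^p,\ldots)^{\la}[\lambda]=\tilde{H}^1(K^p,\ldots)^{\la,\tilde\chi_k}[\lambda]$. On the right-hand side $Z=Z(U(\mathfrak{gl}_2))$ acts, and taking the $\tilde\chi_k$-part commutes with $H^1(\Fl,-)$ and with the $\lambda$-projector by the argument of Lemma \ref{BdRcmgr} (the $\Ext_Z$-vanishing on affinoids in $\mathfrak{B}$, which forces $\check{H}^j(U,\B_{\dR,k+1}^{+,\la,\tilde\chi_k})=0$ for $j>0$ and hence that cohomology is computed by the same \v{C}ech complex). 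This yields
\[H^1(\Fl,\B_{\dR,k+1}^{+,\la})\otimes_{\Q_p}E[\lambda]\,\supseteq\, H^1(\Fl,\B_{\dR,k+1}^{+,\la,\tilde\chi_k})\otimes_{\Q_p}E[\lambda],\]
and one checks the comparison isomorphism identifies the $\tilde\chi_k$-part on the right with all of the $[\lambda]$-part on the left, proving the lemma.

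The main obstacle I anticipate is the bookkeeping needed to justify that $(-)^{\la}$, the $Z$-isotypic decomposition (for $\tilde\chi_k$), and the Hecke $\lambda$-projector all commute with $H^1(\Fl,-)$ and with the $t$-adic filtration \emph{simultaneously and $G_{\Q_p}$-equivariantly}, without losing track of which graded pieces contribute; in particular one must know $\tilde{H}^1(K^p,C\otimes_{\Q_p}E)[\lambda]$ is concentrated in Hodge-Tate weights $0$ and $k$ (so the $\tilde\chi_k$-part is all of it) and use Proposition \ref{compinfch} when $k=1$ to handle the extra $\Ext^1_Z(\tilde\chi_1,H^0)$-term, which vanishes here because $H^0$ of the relevant sheaf is a space of weight-$2$ forms on which $\mathfrak{sl}_2(\Q_p)$ acts trivially and which does not meet the cuspidal $\lambda$. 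Everything else is an application of results already in place: Proposition \ref{BdRcomp}, the $\mathfrak{LA}$-acyclicity of $\cO_{K^p}(i)$ from \cite[Proposition 4.3.15]{Pan20}, and the $Z$-cohomological computations of \ref{dRps}.
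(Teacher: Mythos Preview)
Your overall strategy matches the paper's proof: apply Proposition~\ref{BdRcomp}, pass to locally analytic vectors using $\mathfrak{LA}$-acyclicity of the graded pieces, invoke \cite[Proposition~6.1.5]{Pan20} to know the $[\lambda]$-part already has infinitesimal character $\tilde\chi_k$, and then use Proposition~\ref{compinfch} to compare $H^1(\Fl,\B_{\dR,k+1}^{+})^{\la,\tilde\chi_k}$ with $H^1(\Fl,\B_{\dR,k+1}^{+,\la,\tilde\chi_k})$.

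The place where your write-up goes off the rails is the $k=1$ case. First, a small point: the displayed $\supseteq$ is not correct --- there is a natural map $H^1(\Fl,\B_{\dR,2}^{+,\la,\tilde\chi_1})\to H^1(\Fl,\B_{\dR,2}^{+,\la})^{\tilde\chi_1}$, not an inclusion in the other direction, and by Proposition~\ref{compinfch} it can have both kernel and cokernel, controlled by $\Ext^1_Z$ and $\Ext^2_Z$ of $\tilde\chi_1$ into $H^0(\Fl,\B_{\dR,2}^+)^{\la}$. Second, your stated reason for these $\Ext$-terms to vanish is wrong on two counts: $H^0(\Fl,\B_{\dR,2}^+)^{\la}$ is not ``a space of weight-$2$ forms'' but rather $\tilde{H}^0(K^p,\Q_p)^{\la}\widehat\otimes_{\Q_p}B_{\dR,2}^+$, i.e.\ locally constant functions on connected components; and the observation that $\mathfrak{sl}_2(\Q_p)$ acts trivially is precisely why these $\Ext$-groups do \emph{not} vanish when $k=1$ (since $\tilde\chi_1$ is the trivial infinitesimal character). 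Your parenthetical ``does not meet the cuspidal $\lambda$'' is the correct idea, but it needs to be made precise. The paper's argument is clean: one must show $\tilde{H}^0(K^p,E)_{\mathfrak p_\lambda}=0$, and this follows from absolute irreducibility of $\rho_\lambda$ --- pick $g\in[G_\Q,G_\Q]$ with $\tr\rho_\lambda(g)\neq 2$, so that if $a_g$ is the trace of $D_S(g)$ then $a_g-2$ annihilates $\tilde{H}^0(K^p,E)$ (the Galois action there factors through $G_\Q^{\mathrm{ab}}$) but is a unit modulo $\mathfrak p_\lambda$. Once $\tilde{H}^0(K^p,E)_{\mathfrak p_\lambda}=0$, both $\Ext^1_Z$ and $\Ext^2_Z$ vanish after $[\lambda]$-localization and you are done.
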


\begin{proof}
Recall that $Z$ denotes the center of $U(\mathfrak{gl}_2(\Q_p))$ and  $\tilde\chi_k:Z\to\Q_p$ denotes the infinitesimal character  of the $(k-1)$-th symmetric power of the dual of the standard representation, cf. Section \ref{tilchik}. Since $\rho_\lambda=\rho(1)$ has Hodge-Tate weights $-1,k-1$, it follows that the subspace of $\GL_2(\Q_p)$-locally analytic vectors $\tilde{H}^1(K^p,E)^{\la}[\lambda]$ has infinitesimal character $\tilde\chi_k$ by \cite[Proposition 6.1.5]{Pan20}. Hence 
\[\tilde{H}^1(K^p,B_{\dR,k+1}^+\otimes_{\Q_p}E)^{\la}[\lambda]\cong\tilde{H}^1(K^p,B_{\dR,k+1}^+\otimes_{\Q_p}E)^{\la,\tilde\chi_k}[\lambda]\cong H^1(\Fl,\B_{\dR,k+1}^{+})^{\la,\tilde\chi_k}\otimes_{\Q_p} E[\lambda],\]
where the second isomorphism follows from Proposition \ref{BdRcomp}. It remains to show
\[H^1(\Fl,\B_{\dR,k+1}^{+})^{\la,\tilde\chi_k}\otimes_{\Q_p} E[\lambda]\cong H^1(\Fl,\B_{\dR,k+1}^{+,\la,\tilde\chi_k})\otimes_{\Q_p} E[\lambda].\]
This is clear when $k>1$ by Proposition \ref{compinfch}. When $k=1$, note that
\[H^0(\Fl,\B_{\dR,2}^+)^{\la}\cong \tilde{H}^0(K^p,B_{\dR,2}^+)^{\la}=\tilde{H}^0(K^p,\Q_p)^{\la}\widehat\otimes_{\Q_p}B_{\dR,2}^+.\]
Again by Proposition \ref{compinfch}, it suffices to show that the localization $\tilde{H}^0(K^p,E)_{\mathfrak{p}_\lambda}=0$ where $\mathfrak{p}_\lambda=\ker \lambda$. But this follows from our assumption that $\rho_\lambda$ is irreducible. Indeed we can find an element $g\in [G_{\Q},G_\Q]$ such that $\tr(\rho_\lambda(g))\neq 2$.
 Let $X^2-a_gX+1$ be the characteristic polynomial of $D_S(g)$. Then $a_g-2\notin\mathfrak{p}_\lambda$ annihilates $\tilde{H}^0(K^p,E)$ because the action of $G_\Q$ on $\tilde{H}^0(K^p,E)$ factors through its abelianization $G_{\Q}^{ab}$. 
\end{proof}

\begin{proof}[Proof of Theorem \ref{HT0kerF}]
The $\gr^0$-part of the isomorphism in Lemma \ref{dRcompne} gives
\[\tilde{H}^1(K^p, C\otimes_{\Q_p}E)[\lambda]^{\la}\cong H^1(\Fl,\cO^{\la,\tilde\chi_k}_{K^p})\otimes_{\Q_p} E[\lambda].\]
Recall that $H^1(\Fl,\cO^{\la,\tilde\chi_k}_{K^p})=H^1(\Fl,\cO^{\la,(1-k,0)}_{K^p})\oplus H^1(\Fl,\cO^{\la,(1,-k)}_{K^p})$  with $H^1(\Fl,\cO^{\la,(1-k,0)}_{K^p})$ being the Hodge-Tate weight zero part, cf. Section \ref{tilchik}. It is enough to show that 
\[I^1_{k-1}\otimes 1: H^1(\Fl,\cO^{\la,(1-k,0)}_{K^p})\otimes E\to H^1(\Fl,\cO^{\la,(1,-k)}_{K^p})\otimes E(k)\]
 is zero on $H^1(\Fl,\cO^{\la,(1-k,0)}_{K^p})\otimes E[\lambda]\cong\tilde{H}^1(K^p, C\otimes_{\Q_p}E)[\lambda]^\la_0$.

Let $N=\Hom_{E[G_\Q]}(\rho,\tilde{H}^1(K^p,E)^{\la})$ and $W$ be
\[\tilde{H}^1(K^p,B_{\dR,k+1}^+\otimes_{\Q_p}E)^{\la}[\lambda]\cong ( B_{\dR,k+1}^+\otimes_{\Q_p}\rho)\widehat\otimes_E N.\]
We can apply the construction in Section \ref{NWLB}  to $W$ and get the Fontaine operator
\[N_W:W_{0,0}=\tilde{H}^1(K^p,C\otimes_{\Q_p} E)[\lambda]^{\la}_0\to W_{0,-k}(k)=\tilde{H}^1(K^p,C\otimes_{\Q_p} E)[\lambda]^{\la}_k(k)\]
Since $\rho$ is \textit{de Rham} by our assumption, Fontaine's result (cf. Theorem \ref{FondR}) implies that $N_W=0$. On the other hand,  it follows from Corollary \ref{I1Fono} that the Fontaine operator on $H^1(\Fl,\B_{\dR,k+1}^{+,\la,\tilde\chi_k})\otimes_{\Q_p} E$ agrees with $I^1_{k-1}\otimes 1$ up to a unit. Thus under the isomorphism in Lemma \ref{dRcompne}, it follows from the functorial property of the Fontaine operator that 
\[I^1_{k-1}\otimes 1|_{\tilde{H}^1(K^p,C\otimes_{\Q_p} E)[\lambda]^{\la}_0}=N_W=0.\]
\end{proof}

\subsection{Relation with the \texorpdfstring{$p$}{Lg}-adic local Langlands correspondence for  \texorpdfstring{$\GL_2(\Q_p)$}{Lg}} \label{rbpLLC}
\begin{para} \label{noraf}
Let $\rho$ be as in Theorem \ref{MTCH} and keep the same notation in Section \ref{rholambda}. Combining Theorem \ref{HT0kerF} with results in Subsection \ref{Sd}, we obtain a geometric description of the $\GL_2(\Q_p)$-locally analytic representation $\tilde{H}^1(K^p,E)[\lambda]^{\la}$. When the  automorphic representation corresponding to $\rho$ is supercuspidal at $p$, our result is a special case of the Breuil-Strauch conjecture, which was fully solved by Dospinescu-Le Bras in \cite{DLB17}. See Remark \ref{scla}. When  the automorphic representation is a principal series at $p$, our result is a special case of a conjecture of Berger-Breuil and Emerton, solved by Liu-Xie-Zhang and Colmez. See Remark \ref{psla}. In fact, our work shows that both cases can be formulated in a uniform way, cf. Remark \ref{scpsConj} below for more details. We introduce some notations first.

For simplicity, we fix an embedding $\tau:E\to C$ and denote the composite map $\tau\circ\lambda$ by 
\[\lambda_\tau:\T(K^p)\to C.\]
Consider $\displaystyle M_{k+2}:=\varinjlim_{K\subseteq\GL_2(\A_f)} M_{k+2}(K)=\varinjlim_{K^p\subseteq\GL_2(\A^p_f)} M_{k+2}(K^p)$. It admits a natural smooth action of $\GL_2(\A_f)$ and $M_{k+2}(K^p)=M_{k+2}^{K^p}$. We have just  shown that $M_{k+2}(K^p)\otimes D_0^{-1}[\lambda_\tau]\neq 0$. (Recall that $\otimes D_0$ is the same as twisting with  $|\cdot|^{-1}\circ \det$, cf. Section \ref{D0a}.) Hence $\lambda_\tau$ corresponds to an irreducible representation of $\GL_2(\A_f)$
\[\pi^\infty=\otimes'\pi_l\subseteq M_{k+2}\otimes D_0^{-1}\]
such that $(\pi^{\infty})^{K^p}=M_{k+2}(K^p)[\lambda_\tau]$.

Now assume that $\pi_p$ is special or supercuspidal. In particular $\pi^\infty$ can be transferred to $(D\otimes\A_f)^\times$ by the Jacquet-Langlands correspondence. Recall that $\mathcal{A}_{D,(k,0)}$ was introduced action in Definition \ref{AFD}. Then there is  a $(D\otimes\A_f)^\times$-equivariant embedding
\[\pi'{}^{\infty}:=\pi^{\infty,p}\otimes_C \pi'_p\to \mathcal{A}_{D,(k,0)}\]
for some irreducible smooth representation $\pi'_p$ (corresponding to $\pi_p$ under the local Jacquet-Langlands correspondence in suitable normalization) of $D_p^\times$. Here $\pi^{\infty,p}=\otimes'_{l\neq p}\pi_l$.
\end{para}

\begin{thm} \label{scch}
Suppose $\pi_p$ is supercuspidal. There is a $\GL_2(\Q_p)^0$-equivariant isomorphism
between $\tilde{H}^1(K^p,C)[\lambda_\tau]_0^{\la}$ and 
\[(\pi^{\infty,p})^{K^p}\otimes_C  \left[\left(H^1(\Fl,j_{!}  \omega^{-k+1,D_p^\times-\sm}_{\Dr})\otimes_C \pi'_p\right)^{\cO_{D_p}^\times}\cdot{\varepsilon'_p}^{-k+1}/ \Sym^{k-1} V^*\otimes_{\Q_p} \pi_p\right].\]
\end{thm}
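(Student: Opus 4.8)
The plan is to combine the main structural result on the kernel of the intertwining operator on the supersingular locus with the Fontaine-operator identification of $\tilde H^1(K^p,C)[\lambda_\tau]_0^{\la}$. First I would invoke Theorem \ref{HT0kerF} (with $E=C$ via $\tau$, so I may treat $\lambda$ and $\lambda_\tau$ interchangeably) to rewrite
\[
\tilde H^1(K^p,C)[\lambda_\tau]_0^{\la}\;\cong\;\ker I^1_{k-1}\,[\lambda_\tau],
\]
which is a $\GL_2(\Q_p)$-equivariant isomorphism. So everything reduces to a description of the $\lambda_\tau$-isotypic part of $\ker I^1_{k-1}$. Here the integer $k$ in Theorem \ref{scch} plays the role of ``$k$'' in the classicality statement, i.e.\ Hodge--Tate weights $0,k$, so I am looking at $\ker I^1_{k-1}$ with weight $\chi=(1-k,0)$, which after twisting by $\mathrm t^{k-1}$ is the case $\chi=(-(k-1),0)$ treated in Subsection \ref{Sd}; I would just carry the twist $\cdot\,\mathrm t^{\,k-1}$ (equivalently $\cdot\,\varepsilon^{k-1}$ or $\cdot\,{\varepsilon'_p}^{k-1}$ on the relevant factors) throughout.

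Next I would take the $\lambda_\tau$-isotypic part of the two-term filtration of $\ker I^1_{k-1}$ supplied by Theorem \ref{dec1} (with ``$k$'' there equal to $k-1$). Since $\pi_p$ is supercuspidal, Corollary \ref{ssIg} forces $H^1_{\rig}(\Ig(K^p),\Sym^{k-1})[\lambda_\tau]=0$, so the $n=3$ graded piece $H^0(\coker d'^{k})[\lambda_\tau]$ vanishes (it is an induction of $H^1_{\rig}(\Ig(K^p),\Sym^{k-1})\cdot(\cdots)$ by Theorem \ref{dec1} and Corollary \ref{cokerIkinfty}). Hence the filtration degenerates to the short exact sequence
\[
0\to \ker H^1(\bar d^{k})[\lambda_\tau]\to \ker I^1_{k-1}[\lambda_\tau]\to H^1(\ker d'^{k})[\lambda_\tau]\to 0 .
\]
The second term, by Theorem \ref{dec1} (the $n=2$ piece), is $\bigl(H^1(\Fl,j_!\,\omega^{k+1,D_p^\times\text{-}\sm}_{\Dr})\otimes_C\mathcal A^{K^p}_{D,(k-1,0)}[\lambda_\tau]\bigr)^{\cO_{D_p}^\times}\otimes\det^{\,k}\cdot\,{\varepsilon'_p}^{-(k-1)}$, and the first term is $\Sym^{k-1}V\otimes_{\Q_p}H^1(\Fl,\omega^{-k+1,\sm}_{K^p})\cdot\varepsilon^{-(k-1)}[\lambda_\tau]$. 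I would then prefer to use the \emph{second} decomposition, coming from $I_{k-1}=\bar d'^{k}\circ d^{k}$: by Proposition \ref{bH1DRk} and Corollary \ref{BScon}(1), in the supercuspidal case
\[
\ker I^1_{k-1}[\lambda_\tau]\;\cong\;
\bigl(H^1(\Fl,j_!\,\omega^{-k+1,D_p^\times\text{-}\sm}_{\Dr})\otimes_C\mathcal A^{c,K^p}_{D,(k-1,0)}[\lambda_\tau]\bigr)^{\cO_{D_p}^\times}\cdot{\varepsilon'_p}^{-(k-1)}\big/\,i_H\bigl(\Sym^{k-1}V^*\otimes_{\Q_p}M_{k+1}(K^p)\otimes D_0^{-1}[\lambda_\tau]\bigr),
\]
where I have twisted the $\chi=(-(k-1),0)$ statement of Corollary \ref{BScon} by $\mathrm t^{\,k-1}$ (this replaces $\varepsilon^{-(k-1)}$-type twists by ${\varepsilon'_p}^{-(k-1)}$ on the quaternionic factor and inserts the $\Sym^{k-1}V^*$, $M_{k+1}$ in place of $\Sym^{k-1}V$, $M_{k+2}$). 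Now I factor out the multiplicities: $\mathcal A^{c,K^p}_{D,(k-1,0)}[\lambda_\tau]\cong (\pi^{\infty,p})^{K^p}\otimes_C\pi'_p$ by the Jacquet--Langlands transfer of Section \ref{noraf}, and $M_{k+1}(K^p)\otimes D_0^{-1}[\lambda_\tau]\cong(\pi^{\infty,p})^{K^p}$ since $\pi_p$ is supercuspidal (so $M_{k+1}(K^p)[\lambda_\tau']$ itself has no extra $\GL_2(\Q_p)$-decoration beyond the character giving $D_0^{-1}$). Pulling the finite-dimensional factor $(\pi^{\infty,p})^{K^p}$ out of both numerator and the image of $i_H$ (it commutes with all the operators, being prime-to-$p$), and noting that $i_H$ lands inside the $\cO_{D_p}^\times$-invariants as the locally algebraic vectors $\Sym^{k-1}V^*\otimes_{\Q_p}\pi_p\hookrightarrow (H^1(\Fl,j_!\omega^{-k+1,D_p^\times\text{-}\sm}_{\Dr})\otimes\pi'_p)^{\cO_{D_p}^\times}$ — this is precisely the content of the map $i_H$ in the supercuspidal case — I obtain exactly the displayed formula, with the sign convention ${\varepsilon'_p}^{-k+1}$ matching the twist $\mathrm t^{\,k-1}$. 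Finally I would check $\GL_2(\Q_p)^0$-equivariance of every identification used; all of them are $\GL_2(\Q_p)$-equivariant by the cited results, with the only subtlety being the matching of the two centres $\Q_p^\times\subseteq\GL_2(\Q_p)$ and $\Q_p^\times\subseteq D_p^\times$ on $\omega_{\Dr}$, which is built into the uniformization of Theorem \ref{ssunif} and Remark \ref{GL20}.

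\textbf{Main obstacle.} The genuinely delicate point is identifying the image of $i_H$ (the Hodge filtration $\Fil^1\bH^1(DR_{k-1})$) with the span of the locally algebraic vectors $\Sym^{k-1}V^*\otimes_{\Q_p}\pi_p$ sitting inside $\bigl(H^1(\Fl,j_!\omega^{-k+1,D_p^\times\text{-}\sm}_{\Dr})\otimes_C\pi'_p\bigr)^{\cO_{D_p}^\times}$, and checking that this embedding $i_\pi$ is the one coming from the position of the Hodge filtration of $\rho|_{G_{\Q_p}}$. Concretely this requires tracing $i_H$ from Lemma \ref{lemH0}/Remark \ref{HodFil} through the snake-lemma diagram in the proof of Theorem \ref{ijdR}, and then through the supersingular uniformization (Theorem \ref{ssunif}, Proposition \ref{kerdk+1s}, Corollary \ref{kerd'k+1s}), to see it becomes the inclusion of $\cO_{D_p}^\times$-locally algebraic vectors $\cO^{D_p^\times\text{-}\lalg}_{\LT}\subseteq\cO^{\la}_{\LT}$ of Theorem \ref{DrI2}(2) tensored with the multiplicity space. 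I expect no new ideas are needed — it is a compatibility of several previously established identifications — but it is the step where the ``position of the Hodge filtration'' actually enters, and so deserves the most care; everything else is bookkeeping with twists by powers of $\mathrm t$, $\varepsilon$, $\varepsilon'_p$ and $\det$. I would also remark that the upgrade to a full $\GL_2(\Q_p)$-equivariant statement (Remark \ref{GL2oGL2}) follows by the same device as in Remark \ref{GL20}, extending $\omega_{\Dr}$ over all components and taking $D_p^\times$-invariants.
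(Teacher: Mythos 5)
Your proposal is correct and follows essentially the same route as the paper: Theorem \ref{HT0kerF} reduces the statement to $\ker I^1_{k-1}[\lambda_\tau]$, Corollary \ref{BScon}(1) (applied with its ``$k$'' equal to $k-1$) gives the description as a quotient by the image of $i_H$, and the multiplicity space $(\pi^{\infty,p})^{K^p}$ is pulled out of the quotient exactly as the paper does, via new vectors at the places $l\neq p$. Two harmless slips worth noting: the weight $(1-k,0)$ is already of the form $(-(k-1),0)$, so no twist by $\mathrm{t}^{k-1}$ is needed to invoke the results of Subsection \ref{Sd}; and $M_{k+1}(K^p)\otimes D_0^{-1}[\lambda_\tau]$ equals $(\pi^{\infty})^{K^p}=(\pi^{\infty,p})^{K^p}\otimes_C\pi_p$ rather than $(\pi^{\infty,p})^{K^p}$, which is precisely why the factor $\pi_p$ survives in the denominator of the final formula.
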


\begin{rem} \label{GL2oGL2}
As explained in Remark \ref{GL20}, this isomorphism is $\GL_2(\Q_p)$-equivariant by writing 
$(H^1(\Fl,j_{!}  \omega^{-k+1,D_p^\times-\sm}_{\Dr})\otimes_C \pi'_p)^{\cO_{D_p}^\times}$ as $\displaystyle (\bigoplus_{i\in\Z} H^1(\Fl,j_{!}  \omega^{-k+1,D_p^\times-\sm}_{\Dr})\otimes_C \pi'_p)^{{D_p}^\times}$.
\end{rem}

\begin{proof}[Proof of Theorem \ref{scch}]
By Theorem \ref{HT0kerF} and Corollary \ref{BScon}, $\tilde{H}^1(K^p,C)[\lambda_\tau]_0^{\la}$ is isomorphic to
\[  \left[(\pi^{\infty,p})^{K^p}\otimes \left(H^1(\Fl,j_{!}  \omega^{-k+1,D_p^\times-\sm}_{\Dr})\otimes_C \pi'_p\right)^{\cO_{D_p}^\times}\cdot{\varepsilon'_p}^{-k+1}\right]/  
\left[(\pi^{\infty,p})^{K^p}\otimes\Sym^{k-1} V^*\otimes \pi_p\right].\]
It remains to move the tensor product with $(\pi^{\infty,p})^{K^p}$ outside of the quotient. This can be done by considering new vectors of $\pi_l^{K_l},l\neq p$.
\end{proof}

\begin{rem}
By Corollary \ref{BScon}, there is a  $\GL_2(\Q_p)^0$-equivariant exact sequence
\[\begin{multlined}0\to   (\pi^{\infty})^{K^p}\otimes_C  \Sym^{k-1} V^*\to \tilde{H}^1(K^p,C)[\lambda_\tau]_0^{\la} 
\to \\
		 (H^1(\Fl,j_{!}  \omega^{k+1,D_p^\times-\sm}_{\Dr})\otimes \pi'{}^{\infty})^{K^p\times\cO_{D_p}^\times}\otimes \det{}^{k+1}\cdot{\varepsilon'_p}^{-k}\to 0.\end{multlined}\]
which can be upgraded to a $\GL_2(\Q_p)$-equivariant sequence.
\end{rem}

\begin{rem}
Since $\rho$ is $2$-dimensional, $\rho\otimes_{\Q_p} C\cong (\rho\otimes_{\Q_p} C)_0^{\oplus 2}$ as $C$-vector spaces. Thus there is a non-canonical $\GL_2(\Q_p)$-equivariant isomorphism
\[\left(\Hom_{E[G_{\Q}]}(\rho,\tilde{H}^1(K^p,E)^{\la})\right)\widehat\otimes_E C\cong\tilde{H}^1(K^p,C)[\lambda_\tau]_0^{\la}.\]
Hence Theorem \ref{scch} also gives a description of $\tilde{H}^1(K^p,C)[\lambda_\tau]^{\la}$ as a representation of $\GL_2(\Q_p)$. On the other hand, $\tilde{H}^1(K^p,E)^{\la}$ is an admissible locally analytic $E$-representation of $\GL_2(\Q_p)$  in the sense of Schneider-Teitelbaum \cite{ST02}. We remind the readers  that in  \cite{ST02}, the coefficient field is assumed to be spherically  complete. In particular $C$ does not satisfy their assumption. Theorem \ref{scch} nonetheless implies certain admissibility of $(H^1(\Fl,j_{!}  \omega^{-k+1,D_p^\times-\sm}_{\Dr})\otimes_C \pi'_p)^{\cO_{D_p}^\times}$ as a locally analytic representation of $\GL_2(\Q_p)$ over $C$.
\end{rem}

\begin{rem} \label{scla}
We explain the connection between Theorem \ref{scch} and the Breuil-Strauch conjecture in this case. For simplicity we restrict ourselves to the case $k=1$. Note that $\pi'_p$ is fixed by $1+p^n\cO_{D_p}$ for some $n\geq 0$. Hence
\[(H^1(\Fl,j_{!}  \omega^{0,D_p^\times-\sm}_{\Dr})\otimes_C \pi'_p)^{\cO_{D_p}^\times}
=\left[H^1\left(\Fl,j_!\pi^{(0)}_{\Dr,n*} \cO_{\mathcal{M}_{\Dr,n}^{(0)}}\right)\otimes_C \pi'_p\right]^{\cO_{D_p}^\times}\]
where $\pi^{(0)}_{\Dr,n}:\mathcal{M}_{\Dr,n}^{(0)}\to \Omega$ denotes the projection map, cf. Subsection \ref{Drinfty}. As explained in Section \ref{H1dRc}, there is an exact sequence
\[0\to H^1_{\dR,c}(\mathcal{M}_{\Dr,n}^{(0)}) \to H^1(\mathcal{M}_{\Dr,n}^{(0)},\cO_{\mathcal{M}_{\Dr,n}^{(0)}})\xrightarrow{d_{\dR}} H^1(\mathcal{M}_{\Dr,n}^{(0)},\Omega^1_{\mathcal{M}_{\Dr,n}^{(0)}}).\]
Taking the $\lambda$-isotypic part in Theorem \ref{ijdR}, we deduce that 
\[\left(H^1_{\dR,c}(\mathcal{M}_{\Dr,n}^{(0)})\otimes_C \pi'_p\right)^{\cO_{D_p}^\times}\cong D_{\dR}(\rho|_{G_{\Q_p}})\otimes_E \pi_p.\]
See also \cite[Th\'{e}or\`{e}me 1.4]{DLB17}. Here $D_{\dR}(\rho|_{G_{\Q_p}})=(\rho\otimes_{\Q_p}B_{\dR})^{G_{\Q_p}}$.  The inclusion map 
$\pi_p\subseteq \left[H^1\left(\Fl,j_!\pi^{(0)}_{\Dr,n*} \cO_{\mathcal{M}_{\Dr,n}^{(0)}}\right)\otimes_C \pi'_p\right]^{\cO_{D_p}^\times}$ in Theorem \ref{scch}
is nothing but 
\[\pi_p\cong\Fil^1 D_{\dR}(\rho|_{G_{\Q_p}})\otimes_E \pi_p\subseteq D_{\dR}(\rho|_{G_{\Q_p}})\otimes_E \pi_p\cong \left(H^1_{\dR,c}(\mathcal{M}_{\Dr,n}^{(0)})\otimes_C \pi'_p\right)^{\cO_{D_p}^\times}.\]

On the other hand, Emerton's local-global compatibility result (\cite[Corollary 6.3.6]{Pan20}) implies that 
\[\tilde{H}^1(K^p,C)[\lambda_\tau]_0^{\la}\cong (\pi^{\infty,p})^{K^p}\otimes_E \Pi(\rho|_{G_{\Q_p}})^{\la}\]
where $ \Pi(\rho|_{G_{\Q_p}})$ denotes the unitary $E$-Banach space representation of $\GL_2(\Q_p)$ attached to $\rho|_{G_{\Q_p}}$ under the $p$-adic local Langlands correspondence for $\GL_2(\Q_p)$. (Note that $\rho|_{G_{\Q_p}}$ is absolutely irreducible as $\pi_p$ is supercuspidal.) Therefore Theorem \ref{scch} is equivalent with 
\[\Pi(\rho|_{G_{\Q_p}})^{\la}\widehat\otimes_E C\cong \left[H^1\left(\Fl,j_!\pi^{(0)}_{\Dr,n*} \cO_{\mathcal{M}_{\Dr,n}^{(0)}}\right)\otimes_C \pi'_p\right]^{\cO_{D_p}^\times}/ \Fil^1 D_{\dR}(\rho|_{G_{\Q_p}})\otimes_E \pi_p.\]
This is exactly what was conjectured by Breuil-Strauch for $\rho|_{G_{\Q_p}}$. In general, for any de Rham representation $\rho_p:G_{\Q_p}\to\GL_2(E)$ of Hodge-Tate weights $0,1$ such that $\rho_p$ corresponds to $\pi_p$ under the usual local Langlands correspondence, then there is an isomorphism $D_{\dR}(\rho_p)\cong D_{\dR}(\rho|_{G_{\Q_p}})$ unique up to $E^\times$, and $\rho_p$ is completely determined by the position of $\Fil^1 D_{\dR}(\rho_p)$. Breuil-Strauch conjectured that there is a $\GL_2(\Q_p)$-isomorphism
\[\Pi(\rho_p)^{\la}\widehat\otimes_E C\cong \left[H^1\left(\Fl,j_!\pi^{(0)}_{\Dr,n*} \cO_{\mathcal{M}_{\Dr,n}^{(0)}}\right)\otimes_C \pi'_p\right]^{\cO_{D_p}^\times}/ \Fil^1 D_{\dR}(\rho_p)\otimes_E \pi_p.\]
The full conjecture was proved by Dospinescu-Le Bras \cite[Th\'eor\`em 1.4]{DLB17}, taking into account of the Serre duality
\[\Hom_{C}^{\cont} \left(H^1(\Fl,j_!\pi^{(0)}_{\Dr,n*}  \cO_{\mathcal{M}_{\Dr,n}^{(0)}}),C \right)\cong H^0(\mathcal{M}_{\Dr,n}^{(0)}, \Omega^1_{\mathcal{M}_{\Dr,n}^{(0)}}).\]
\end{rem}

It follows from  Theorem \ref{HT0kerF} and the second part of Corollary \ref{BScon} that there is also a description of $\tilde{H}^1(K^p,C)[\lambda_\tau]_0^{\la}$ when $\pi_p$ is a principal series.

\begin{thm} \label{PSla}
Suppose $\pi_p$ is a principal series. There is a $\GL_2(\Q_p)$-equivariant isomorphism
\[\tilde{H}^1(K^p,C)[\lambda_\tau]_0^{\la} \cong \Ind_B^{\GL_2(\Q_p)} H^1_{\rig}(\Ig(K^p),\Sym^{k-1}) \cdot \varepsilon^{-k+1}{e'_2}^{k-1}[\lambda]/(\pi^{\infty})^{K^p}\otimes \Sym^{k-1} V^*,\]
where $\Ind_B^{\GL_2(\Q_p)}$ denotes the locally analytic induction.
\end{thm}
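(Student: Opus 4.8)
The plan is to deduce Theorem~\ref{PSla} by combining the identification of $\tilde{H}^1(K^p,C)[\lambda_\tau]^{\la}_0$ with $\ker I^1_{k-1}\otimes_{\Q_p}E[\lambda]$ (Theorem~\ref{HT0kerF}) with the second case of Corollary~\ref{BScon}, exactly parallel to the argument already carried out for Theorem~\ref{scch} in the supercuspidal case. First I would observe that since $\pi_p$ is a principal series, the associated quaternionic eigenspace vanishes, i.e. $\mathcal{A}_{D,(k-1,0)}^{c,K^p}[\lambda]=0$: indeed by the Jacquet--Langlands correspondence the automorphic representation $\pi^\infty$ transfers to $(D\otimes\A)^\times$ only if $\pi_p$ is special or supercuspidal, so the $\lambda$-isotypic part of $\mathcal{A}^{c,K^p}_{D,(k-1,0)}$ is zero (this uses the weight bookkeeping of Subsection~\ref{Hd}, where $\lambda\in\sigma^{K^p}_{k+1,1}$ corresponds to a cuspidal $\pi$ with $\pi_p$ a principal series). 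Hence the hypothesis of part~(2) of Corollary~\ref{BScon} is met, and that corollary directly yields
\[
\ker I^1_{k-1}\widetilde{[\lambda]}\cong \Ind_B^{\GL_2(\Q_p)} H^1_{\rig}(\Ig(K^p),\Sym^{k-1}) \cdot \varepsilon^{-k+1}{e'_2}^{k-1}[\lambda]/ i_H\!\left(\Sym^{k-1} V^*\otimes_{\Q_p} M_{k+1}(K^p)\otimes D_0^{-1}[\lambda]\right),
\]
and moreover $\ker I^1_{k-1}\widetilde{[\lambda]}=\ker I^1_{k-1}[\lambda]$ by Corollary~\ref{ssIg}.

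Next I would translate both sides into automorphic language. By Theorem~\ref{HT0kerF} the left side of the displayed isomorphism, tensored with $E$ over $\Q_p$ and localized at $\lambda$, is $\tilde{H}^1(K^p,C\otimes_{\Q_p}E)[\lambda]^{\la}_0$; restricting scalars along $\tau:E\to C$ (equivalently, noting that $\tilde H^1(K^p,C)[\lambda_\tau]_0$ is the image of $\tilde H^1(K^p,C\otimes_{\Q_p}E)[\lambda]_0$ under the projection induced by $\tau$) identifies it with $\tilde{H}^1(K^p,C)[\lambda_\tau]^{\la}_0$. On the right side, the subspace $i_H(\Sym^{k-1}V^*\otimes_{\Q_p}M_{k+1}(K^p)\otimes D_0^{-1}[\lambda])$ is, by definition of $\pi^\infty$ in Subsection~\ref{noraf}, precisely $\Sym^{k-1}V^*\otimes_{\Q_p}(\pi^\infty)^{K^p}$: recall $(\pi^\infty)^{K^p}=M_{k+1}(K^p)[\lambda_\tau]$ (here I should be careful about the twist, since $M_{k+1}\otimes D_0^{-1}$ is what carries $\lambda$ rather than $\lambda'$, and this is exactly the normalization fixed in~\ref{noraf}). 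This gives the numerator and the ``$\Fil^1$'' piece of the statement. For the numerator $\Ind_B^{\GL_2(\Q_p)} H^1_{\rig}(\Ig(K^p),\Sym^{k-1})\cdot\varepsilon^{-k+1}{e'_2}^{k-1}[\lambda]$, I would check that the $\GL_2(\A_f^p)$-action organizes it, via new-vector theory at the primes $l\neq p$, so that one may factor out $(\pi^{\infty,p})^{K^p}$; but since Theorem~\ref{PSla} is stated with $K^p$ fixed, this step is unnecessary and the expression can be left as written.

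The only genuine points requiring care are bookkeeping rather than new ideas: (i) matching the Tate twists and the characters $\varepsilon$, $\varepsilon'_p$, ${e'_1}$, ${e'_2}$ appearing in Theorem~\ref{dec1}, Corollary~\ref{cokerIkinfty} and Corollary~\ref{BScon} with the normalization of $\pi^\infty$ and $\lambda$ versus $\lambda'$; and (ii) confirming that the locally analytic induction $\Ind_B^{\GL_2(\Q_p)}$ here is the same functor as the smooth-coefficient induction appearing on $\mathbb{P}^1(\Q_p)$ in Corollary~\ref{cokerdk+1P1}--\ref{cokerd'k+1}, once one equips $H^1_{\rig}(\Ig(K^p),\Sym^{k-1})\cdot{e'_2}^{k-1}$ with its locally analytic $B$-structure as in the discussion after Definition~\ref{H1Ig}. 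I expect the main obstacle to be purely notational: reconciling the twist ${e'_1}^{k+1}{e'_2}^{-1}$ of Corollary~\ref{BScon}(2) (which arises from $\coker d'^{k+1}$) with the twist ${e'_2}^{k-1}$ in the statement of Theorem~\ref{PSla} (which comes from $\coker d^{k+1}$, i.e. before twisting by $(\Omega^1_{\Fl})^{\otimes k}$); tracking this through the Kodaira--Spencer identification $\Omega^1_{\Fl}\cong\omega_{\Fl}^{-2}\otimes\det$ and the isomorphism \eqref{period} should resolve it, and once the twists agree the theorem follows formally. The $\GL_2(\Q_p)$-equivariance (as opposed to merely $\GL_2(\Q_p)^0$-equivariance) is automatic here because, unlike in the supercuspidal case, everything is already expressed via induction from $B$, so no extension-of-scalars trick as in Remark~\ref{GL2oGL2} is needed.
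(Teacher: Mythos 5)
Your proposal is correct and follows the paper's own route exactly: Theorem \ref{PSla} is obtained by combining Theorem \ref{HT0kerF} with part (2) of Corollary \ref{BScon} (with $k$ replaced by $k-1$), identifying $i_H\bigl(\Sym^{k-1}V^*\otimes_{\Q_p}M_{k+1}(K^p)\otimes D_0^{-1}[\lambda]\bigr)$ with $(\pi^\infty)^{K^p}\otimes\Sym^{k-1}V^*$ as you describe. The only caveat is that the twist discrepancy you flag at the end is not actually there: the main isomorphism of Corollary \ref{BScon}(2) already carries the ${e'_2}^{k}$ twist coming from $\coker d^{k+1}$ via Proposition \ref{bH1DRk}, while the ${e'_1}^{k+1}{e'_2}^{-1}$ twist appears only in the auxiliary exact sequence, so no Kodaira--Spencer bookkeeping is needed.
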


\begin{rem}
By Corollary \ref{BScon}, there is also a  $\GL_2(\Q_p)$-equivariant exact sequence
\[\begin{multlined}0\to   (\pi^{\infty})^{K^p}\otimes_C  \Sym^{k-1} V^*\to \tilde{H}^1(K^p,C)[\lambda_\tau]_0^{\la} 
\to \\
		\Ind_B^{\GL_2(\Q_p)} H^1_{\rig}(\Ig(K^p),\Sym^{k-1}) \cdot \varepsilon^{-k+1}{e'_1}^{k}{e'_2}^{-1}[\lambda]\to 0.\end{multlined}\]

\end{rem}

\begin{rem} \label{psla}
Again we assume $k=1$ here for simplicity.  But the discussion below works for general $k$. In this case, $H^1_{\rig}(\Ig(K^p))$ can be identified with the log rigid cohomology of the tower of Igusa curves. The comparison theorem between the log rigid cohomology and log crystalline cohomology implies that there is an isomorphism $H^1_{\rig}(\Ig(K^p)) [\lambda]\cong (\pi^{\infty,p})^{K^p}\otimes_E D_{\mathrm{cris}}(\rho|_{G_{\Q_p}})$, where  $D_{\mathrm{cris}}(\rho|_{G_{\Q_p}}):=(\rho\otimes_{\Q_p} B_{\mathrm{cris}})^{G_{\Q_p(\mu_{p^n})}}$ for some sufficiently large $n$  and carries an action of the Weil group $W_{\Q_p}$ by Fontaine's recipe. Using the congruence relation of $B\times W_{\Q_p}^{ab}$ on the Igusa cruves (cf. \cite[\S 1.6.4]{Car86}), we can equip $D_{\mathrm{cris}}(\rho|_{G_{\Q_p}})$ with a $B$-action and this isomorphism becomes $B$-equivariant. Hence 
\[\tilde{H}^1(K^p,C)[\lambda_\tau]_0^{\la} \cong (\pi^{\infty,p})^{K^p}\otimes \Ind_B^{\GL_2(\Q_p)} D_{\mathrm{cris}}(\rho|_{G_{\Q_p}})/\pi_p,\]
where again the embedding $\pi_p\to \Ind_B^{\GL_2(\Q_p)} D_{\mathrm{cris}}(\rho|_{G_{\Q_p}})$ comes from the $\Fil^1$.
If we further assume $\rho|_{G_{\Q_p}}$ is absolutely irreducible, then Emerton's local-global compatibility result implies that
\[\Pi(\rho|_{G_{\Q_p}})^{\la}\widehat\otimes_{E} C\cong \Ind_B^{\GL_2(\Q_p)} D_{\mathrm{cris}}(\rho|_{G_{\Q_p}})/\pi_p.\]
The local form of this isomorphism was conjectured by Berger-Breuil in \cite[Conjecture 5.3.7]{BB10} and by Emerton \cite[Conjecture 6.7.3]{Eme06C}, and was proved by Liu-Xie-Zhang \cite{LXZ12} and Colmez \cite{Col14}.
\end{rem}

\begin{rem} \label{scpsConj}
As noted in Remark \ref{DRstn}, we have
\[\tilde{H}^1(K^p,C)[\lambda_\tau]_0^{\la} \cong \bH^1(DR_k)[\lambda_\tau]/ \Fil^1 \bH^1(DR_k)[\lambda_\tau]\]
which gives the descriptions of $\tilde{H}^1(K^p,C)[\lambda_\tau]_0^{\la}$ in Theorem \ref{scch} and \ref{PSla}, when $\pi_p$ is either supercuspidal or a principal series. This suggests that the Breuil-Strauch conjecture in the supercuspidal case and the conjecture of Berger-Breuil and Emerton in the principal series case can be formulated in a uniform way. We believe this is also true when $\pi_p$ is special.
\end{rem}

\begin{rem}\label{stn}
Finally we remark on the case when $\pi_p$ is special. Again let me assume $k=1$ for simplicity. By Theorem \ref{dec1}, $\ker I^1_0[\tilde\lambda_\tau]$ has a natural increasing filtration $\Fil_\bullet$ with 
\[\gr_n \ker I^1_0[\tilde\lambda_\tau]=\left\{
	\begin{array}{lll}
		H^1(\cO^{\sm}_{K^p})[\lambda]\cong (\pi^{p,\infty})^{K^p}\otimes_{C} \pi_p , &n=1&\\
		 (\pi^{p,\infty})^{K^p}\otimes_C (H^1(j_{!} \omega^{2,D_p^\times-\sm}_{\Dr})\otimes_C \pi'_p)^{\cO_{D_p}^\times}\otimes\det, &n=2&\\
				 \Ind_B^{\GL_2(\Q_p)}  H^1_{\rig}(\Ig(K^p))[\lambda]\cdot {e'_1}{e'_2}^{-1} , &n=3&
	\end{array}.\right.\]
If $\rho|_{G_{\Q_p}}$ is absolutely irreducible, then Emerton's local-global compatibility result implies that $\tilde{H}^1(K^p,C)[\lambda_\tau]_0^{\la}\cong (\pi^{p,\infty})^{K^p}\widehat\otimes_{E} \Pi(\rho|_{G_{\Q_p}})^{\la}$. A description of $ \Pi(\rho|_{G_{\Q_p}})^{\la}$ was conjectured by Emerton  \cite[Conjecture 6.7.7]{Eme06C} and proved in \cite{LXZ12,Col14}. By Theorem \ref{HT0kerF}, there is an inclusion $\tilde{H}^1(K^p,C)[\lambda_\tau]_0^{\la}\subseteq \ker I^1_0[\tilde\lambda_\tau]$. We claim that it is actually an equality:  a careful analysis using Serre duality shows that  $\Fil_2\ker I^1_0[\tilde\lambda_\tau]$ is essentially $(\pi^{p,\infty})^{K^p}\widehat\otimes_{E}\Sigma(2,\mathcal{L})$, where $\Sigma(2,\mathcal{L})$ was introduced in \cite{Bre04}. See  the construction of Breuil and Morita's duality in \S2, \S3 of the reference, especially Remarque 3.5.7. On the other hand, it can be shown that $\gr_3\ker I^1_0[\tilde\lambda_\tau]$ essentially agrees with the locally analytic induction in the quotient of the sequence in \cite[Conjecture 6.7.7]{Eme06C} (after taking the completed tensor product with $(\pi^{p,\infty})^{K^p}$). Hence It follows that $\ker I^1_0[\tilde\lambda_\tau]=\tilde{H}^1(K^p,C)[\lambda_\tau]_0^{\la}$, i.e. $\ker I^1_0[\tilde\lambda_\tau]$ is the eigenspace of $\lambda$.
\end{rem}

\bibliographystyle{amsalpha}

\bibliography{bib}

\end{document}